%
%

\documentclass[draft]{memo-l}



\usepackage{amssymb}


\newcommand{\diag}{\mathop{\mathrm {diag}}\nolimits}

\newcommand{\Ad}{\mathop{\mathrm {Ad}}\nolimits}
\newcommand{\adj}{\mathop{\mathrm {ad}}\nolimits}
\newcommand{\Hom}{\mathop{\mathrm {Hom}}\nolimits}
\newcommand{\sgn}{\mathop{\mathrm {sgn}}\nolimits}

\newcommand{\id}{\mathop{\mathrm {id}}\nolimits}

\newcommand{\Res}{\mathop{\mathrm {Res}}\nolimits}

\newcommand{\sI}{\sqrt{-1}}


\newcommand{\cC}{\mathcal{C}}

\newcommand{\cE}{\mathcal{E}}

\newcommand{\cI}{\mathcal{I}}

\newcommand{\cR}{\mathcal{R}}
\newcommand{\cS}{\mathcal{S}}

\newcommand{\cV}{\mathcal{V}}



\newcommand{\bC}{\mathbb{C}}

\newcommand{\bH}{\mathbb{H}}

\newcommand{\bQ}{\mathbb{Q}}
\newcommand{\bR}{\mathbb{R}}
\newcommand{\bZ}{\mathbb{Z}}


\newcommand{\me}{\mathbf{e}}

\newcommand{\mk}{\mathbf{k}}

\newcommand{\mn}{\mathbf{n}}
\newcommand{\mmp}{\mathbf{p}}


\newcommand{\gH}{\mathfrak{H}}

\newcommand{\gS}{\mathfrak{S}}
\newcommand{\ga}{\mathfrak{a}}

\newcommand{\g }{\mathfrak{g}}

\newcommand{\gk}{\mathfrak{k}}
\newcommand{\gl}{\mathfrak{l}}

\newcommand{\gn}{\mathfrak{n}}

\newcommand{\gp}{\mathfrak{p}}
\newcommand{\gs}{\mathfrak{s}}


\newtheorem{thm}{Theorem}[chapter]
\newtheorem{lem}[thm]{Lemma}
\newtheorem{prop}[thm]{Proposition}
\newtheorem{cor}[thm]{Corollary}

\theoremstyle{definition}

\theoremstyle{remark}
\newtheorem{rem}[thm]{Remark}

\numberwithin{section}{chapter}
\numberwithin{equation}{chapter}

\makeindex

\begin{document}

\allowdisplaybreaks 

\frontmatter

\title{Archimedean zeta integrals for $GL(3)\times GL(2)$}


\author{Miki Hirano}
\address{Department of Mathematics, 
Faculty of Science, 
Ehime University,
2-5, Bunkyo-cho, Matsuyama, Ehime, 790-8577, Japan}
\curraddr{}
\email{hirano.miki.mf@ehime-u.ac.jp}
\thanks{}

\author{Taku Ishii}
\address{Faculty of Science and Technology, 
Seikei University, 3-3-1 Kichijojikitamachi, 
Musashino, Tokyo, 180-8633, Japan}
\curraddr{}
\email{ishii@st.seikei.ac.jp}
\thanks{}

\author{Tadashi Miyazaki}
\address{Department of Mathematics, 
College of Liberal Arts and Sciences, Kitasato University, 
1-15-1, Kitasato, Minamiku, Sagamihara, Kanagawa, 252-0373, Japan}
\curraddr{}
\email{miyaza@kitasato-u.ac.jp}
\thanks{}

\date{}

\subjclass[2010]{Primary 11F70; Secondary 11F30, 22E46;}

\keywords{{Whittaker functions}, {automorphic forms}, {zeta integrals}}


\begin{abstract}
In this article, 
we give explicit formulas of archimedean Whittaker functions 
on $GL(3)$ and $GL(2)$. Moreover, we apply those to the calculation 
of archimedean zeta integrals for $GL(3)\times GL(2)$, 
and show that the zeta integral for 
appropriate Whittaker functions is equal to the associated $L$-factors. 
\end{abstract}

\maketitle 

\tableofcontents

\chapter*{Introduction}

The archimedean theory of automorphic forms on 
a real reductive group $G$ is based on the $(\g ,K)$-module 
structures associated to irreducible admissible representations of $G$. 
In pioneering work by Jacquet-Langlands 
\cite{Jacquet_Langlands_001,Jacquet_003}, 
the $(\g, K)$-module structures for archimedean representations 
of $GL(2)$ are explicitly described and the important properties of 
$L$-functions are derived from 
an evaluation of the zeta integrals by using their description. 
The group $GL(2,\bR)$ is the easiest one to handle for the following reason; 
the dimensions of irreducible representations of 
the maximal compact subgroup $K$ are at most two (and essentially one), 
and the multiplicities of $K$-types for any irreducible $(\g, K)$-modules 
are free. It is not easy to describe the $(\g, K)$-module structures 
for general $G$, even if the case of $G=GL(2,\bC)$. 
For this reason, the explicit study for the 
archimedean theory of $L$-functions has been developed 
with a focus on the class one cases. 
We naturally expect that some of good properties 
for the class one cases also hold for general cases. 
However, precise knowledge is not enough to show good properties of 
$L$-functions by analyzing the $(\g, K)$-module structures. 

Under these background, explicit calculus of $(\g, K)$-module structures 
for real reductive groups has been developed 
in the context of automorphic forms. 
In particular, we have obtained a result of the automorphic $L$-functions 
by applying some explicit harmonic analysis on 
groups with low real rank. 
Here, a technique for analysis of representations of 
the maximal compact group $K$ is important, 
such as a suitable choice of a basis or a system of generators 
for the representation space. 
Also, an advanced treatment for special functions of several variables 
is needed, such as a suitable integral expression of 
the generalized spherical function which is considered 
a model of the associated irreducible $(\g ,K)$-module.

This article consists of two parts together with an appendix. 
The purpose of Part \ref{part:1} is to give explicit formulas of 
archimedean Whittaker functions for the irreducible (generalized) 
principal series representations on $GL(3)$ and $GL(2)$. 
These formulas are the same as in previous papers essentially. 
However we give here a refined form suitably for our application, 
since the relation among formulas for $GL(3)$ and $GL(2)$ is important 
in Part \ref{part:2}. 
Beside our purpose, we also expect these formulas are effective 
in further analytic study of automorphic forms on $GL(3)$. 
In Part \ref{part:2} of this article, the archimedean zeta integrals 
for $GL(3) \times GL(2)$ are discussed by using our explicit formulas 
of Whittaker functions. 
Our explicit evaluation of the archimedean zeta integrals leads 
the following main result in this article. 
\begin{thm}
Let $F$ be $\bR$ or $\bC$. 
Let $\Pi$ and $\Pi'$ be irreducible generalized principal series 
representations of $GL(3,F)$ and $GL(2,F)$, respectively. 
Then there exist Whittaker functions $W$ and 
$W'$ for $\Pi$ and $\Pi'$, respectively, such that 
\[
Z(s,W,W')=L(s,\Pi\times \Pi').
\]
Here $Z(s,W,W')$ is the local zeta integral for $W, W'$, 
and $L(s,\Pi\times \Pi')$ is the local $L$-factor for 
$\Pi \times \Pi'$. 
\end{thm}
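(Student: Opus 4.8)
The plan is to establish the identity $Z(s,W,W')=L(s,\Pi\times\Pi')$ by an explicit computation, using the formulas for Whittaker functions on $GL(3,F)$ and $GL(2,F)$ proved in Part~\ref{part:1}. The strategy divides naturally into the archimedean cases $F=\bR$ and $F=\bC$, and within each case into the possible types of generalized principal series (induced from the minimal parabolic, or from a maximal parabolic with a discrete series on the $GL(2)$-factor when $F=\bR$), but the mechanism is uniform. First I would recall the Jacquet integral expression for $W$ and $W'$: by the multiplicity-one theorem for Whittaker models, each Whittaker function is determined by its restriction to the diagonal torus, and Part~\ref{part:1} provides this restriction as an explicit integral (a Mellin--Barnes type integral, or an integral of products of $K$-Bessel functions and power functions). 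The zeta integral $Z(s,W,W')$ is an integral over $N_2\backslash GL(2,F)$ of $W\!\left(\begin{smallmatrix}g&\\&1\end{smallmatrix}\right)W'(g)|\det g|^{s-1/2}\,dg$; using the Iwasawa decomposition $g=n\,a\,k$ one reduces this, after choosing the $K$-fixed or minimal-$K$-type vectors appropriately so that the $k$-integration is trivial or elementary, to an integral over the one-dimensional (for $GL(2)$) torus.

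The key steps, in order: (1) choose $W$ and $W'$ to correspond to the ``corner'' vectors in Part~\ref{part:1}, i.e. the specific $K$-types for which the radial parts are simplest (this is the analogue of Jacquet--Langlands' choice in the $GL(2)$ case, and the whole point of the ``refined form suitable for our application'' mentioned in the introduction); (2) substitute the explicit radial formulas into $Z(s,W,W')$ and perform the Iwasawa decomposition to collapse the group integral to a torus integral; (3) recognize the resulting torus integral as a Mellin transform of a product of Bessel functions (in the $F=\bR$ case) or its $\bC$-analogue, and evaluate it using the classical Barnes-type integral formulas — e.g. the first and second Barnes lemmas, or the Gustafson-type integral evaluations, or Stade's formula — which produce precisely a product of Gamma factors; (4) match the resulting product of Gamma functions against the definition $L(s,\Pi\times\Pi')=\prod_{i,j}L(s,\chi_i\chi_j')$, where $\chi_i$ and $\chi_j'$ are the characters defining $\Pi$ and $\Pi'$, using $\Gamma_F$-factor identities (for $F=\bC$, $\Gamma_\bC(s)=2(2\pi)^{-s}\Gamma(s)$, and the duplication formula relating $\Gamma_\bR$ and $\Gamma_\bC$ when a real place induces from a complex one). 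In the cases where $\Pi'$ (or a constituent) is a discrete series of $GL(2,\bR)$, one uses the explicit Whittaker function given by a single $K$-Bessel function and the corresponding Mellin transform identity.

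I expect the main obstacle to be step~(3) together with the bookkeeping in step~(4): the torus integrals that arise for $GL(3)\times GL(2)$ are genuinely several-variable integrals (two toral variables from $GL(3)$, one from $GL(2)$, coupled through the Whittaker integrand), and evaluating them in closed form requires either an iterated application of Barnes' lemmas in the right order or an appeal to a more elaborate integral identity; ensuring convergence throughout the manipulation (interchanging integrals, shifting contours) and tracking all the exponential and power-of-$2\pi$ constants so that the final answer is \emph{exactly} $L(s,\Pi\times\Pi')$ with no spurious constant is delicate. A secondary difficulty is organizing the argument uniformly so that the minimal-parabolic case and the maximal-parabolic (discrete-series) cases, over both $\bR$ and $\bC$, are handled without excessive case-by-case repetition; the approach is to phrase the Whittaker formulas from Part~\ref{part:1} in a common Mellin--Barnes shape so that a single integral evaluation specializes to all cases. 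Once the closed-form evaluation is in hand, the final comparison with the definition of the local $L$-factor is routine.
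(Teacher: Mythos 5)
Your outline is essentially the approach the paper follows: explicit Mellin--Barnes formulas for the Whittaker functions from Part~\ref{part:1}, Iwasawa reduction of $Z(s,W,W')$ to a torus integral, iterated Mellin inversion and Barnes-type lemmas (Lemmas \ref{lem:F32_Mellin}, \ref{lem:F32_Barnes_1st}--\ref{lem:F32_gauss_sum}), and a case-by-case match against the explicit $L$-factor over $\bR$ and $\bC$. The bookkeeping worries you flag (contour interchanges, constants, the case split between the principal-series and maximal-parabolic/discrete-series inductions) are indeed where the real labor sits.

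There is, however, one concrete point where your sketch understates what has to happen. You propose to ``choose the $K$-fixed or minimal-$K$-type vectors appropriately so that the $k$-integration is trivial or elementary,'' taking both $W$ and $W'$ to be the ``corner'' vectors with the simplest radial part. That works for $W'$: the paper does take $W'$ in the minimal $K_2$-type of $\Pi'$. But it does \emph{not} work in general for $W$. The $K_2$-integral is handled by Schur orthogonality (Lemmas \ref{lem:R32_zeta32_schur} and \ref{lem:C32_zeta32_schur}), which forces $W$ to sit inside a $K_2$-isotypic component of type $\tau_\lambda^{(2)}$ matching the minimal $K_2$-type $\lambda$ of $\Pi'$, viewed through the embedding $K_2\hookrightarrow K_3$. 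For generic $\Pi'$ this $\lambda$ does \emph{not} branch from the minimal $K_3$-type $\mu$ of $\Pi$, so one cannot simply restrict the minimal-$K_3$-type Whittaker functional. The paper resolves this by constructing $K_2$-equivariant maps $\varphi_{\sigma,\lambda}^{[\varepsilon]}$ (\S\ref{subsec:R32_Wh_GL3}, \S\ref{subsec:C32_Wh_GL3}) that apply suitable elements of $U(\g_{3\bC})$ (built from $E_{1,3}^{\g_3}, E_{2,3}^{\g_3}$ and their $\bC$-analogues) to the minimal-$K_3$-type Whittaker functions; these differential operators fortunately act on the restricted torus $\diag(y_1y_2,y_2,1)$ by multiplication by powers of $y_2$, which is what keeps the radial integral tractable. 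Without naming this construction, your step~(1) does not actually produce a vector $W$ for which the subsequent steps go through. Once this $K_2$-equivariant test-vector construction is supplied, the rest of your outline tracks the paper's argument.
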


It is expected that the archimedean zeta integrals 
for appropriate Whittaker functions equal to the associated $L$-factors 
in the case of general $GL(n+1)\times GL(n)$. 
This expectation is well-grounded by Stade's result \cite{Stade_002} for 
the spherical $GL(n+1,\bR)\times GL(n,\bR)$-case 
and Popa's result \cite{Popa_001} for general $GL(2)\times GL(1)$-case. 
Adding to the above cases, 
our result gives a supporting evidence for the expectation. 
By using our explicit formulas of archimedean Whittaker functions 
on $GL(2)$ in Part \ref{part:1}, we give an explicit evaluation of 
the archimedean zeta integrals for $GL(2) \times GL(1)$ 
and $GL(2) \times GL(2)$. 
The results in this appendix are restatement of wellknown results.

\section*{Acknowledgments}

This work was supported by JSPS KAKENHI
Grant Numbers JP24540022, JP15K04796, JP15K04800, JP18K03252. 

\mainmatter

\part{Whittaker functions}
\label{part:1}

\chapter{Basic objects}
\label{sec:Fn_basic_objects}

\section{Notation}
\label{subsec:Fn_notation}

We denote by $\bZ $, $\bQ$, $\bR $ and $\bC $ the ring of rational integers, 
the rational number field, the real number field and 
the complex number field, respectively. 
For $m\in \bZ$, we set 
$\bZ_{\geq m}=\{i\in \bZ \mid i\geq m\}$ and 
$\bZ_{\leq m}=\{i\in \bZ \mid i\leq m\}$. 
Let $\bR_+$ be the set of positive real numbers. 
For $z\in \bC$, 
we denote the real part, the imaginary part and 
the complex conjugate of $z$ 
by $\mathrm{Re}(z)$, $\mathrm{Im}(z)$ and $\overline{z}$, respectively. 
For $i,j\in \bZ$, we denote by $\delta_{i,j}$ the Kronecker delta, that is, 
\[
\delta_{i,j}=\left\{\begin{array}{ll}
1&\text{if}\ i=j,\\
0&\text{otherwise}. 
\end{array}\right.
\]
For $m,i\in \bZ_{\geq 0}$ such that $m\geq i$, 
let $\displaystyle 
\binom{m}{i}$ be the binomial coefficient 
$\dfrac{m!}{i!(m-i)!}$. 
For $t\in \bR$, we set 
\[
\sgn (t)=\left\{\begin{array}{ll}
1&\text{if}\ t\geq 0,\\
-1&\text{if}\ t<0.
\end{array}\right.
\]

Let ${F}$ be $\bR$ or $\bC$. 
We define the standard character $\psi_{F} \colon {F}\to \bC^\times$ by 
\begin{align*}
&\psi_{\bR} (t)=\exp (2\pi \sI t),&
&\psi_{\bC} (t)=\exp (2\pi \sI (t +\overline{t})).
\end{align*}

We denote by $1_n$ the unit matrix of degree $n$. We denote by 
$E_{i,j}=E_{i,j}^{(n)}$ the square matrix of degree $n$ 
with $1$ at the $(i,j)$-th entry and $0$ at other entries. 

For a subset $S$ of a $\bC$-vector space $V$, 
we denote by $\bC \textrm{-span}\,S$ the subspace of $V$ 
consisting of all linear combinations of vectors in $S$.

\section{Groups and algebras}
\label{subsec:Fn_group_algebra}

Let ${G}=G_n$ be the general linear group 
$GL(n,{F} )$ of degree $n$ over ${F}$. 
In this article, we view ${G}$ as a real reductive Lie group 
even if ${F} =\bC$. 
Let ${N}=N_n$ be the group of 
upper triangular matrices in ${G}$ 
with diagonal entries equal to $1$, 
and let ${A}=A_n$ be the group 
of diagonal matrices in ${G}$ with positive diagonal entries. 
Moreover, we fix a maximal compact subgroup ${K}=K_n$ of ${G}$ by 
\[
{K}=\left\{
\begin{array}{ll}
O(n)&\text{if }{F} =\bR,\\
U(n)&\text{if }{F} =\bC,
\end{array}
\right. 
\]
where $O(n)$ and $U(n)$ are the orthogonal group and the unitary group 
of degree $n$, respectively. 
Then we have 
an Iwasawa decomposition ${G}={N}{A}{K}$ of ${G}$. 
It is convenient to fix 
the coordinates on ${N}$ and ${A}$ as follows:
\begin{align*}
&x=\left(\begin{array}{ccccc}
1&x_{1,2}&x_{1,3}&\cdots &x_{1,n}\\
0&1&x_{2,3}&\cdots &x_{2,n}\\
\vdots &\ddots&\ddots&\ddots &\vdots\\
0&\cdots &0&1&x_{n-1,n}\\
0&\cdots &0&0&1
\end{array}\right)\in {N},\\
&y=\diag (y_1y_2\cdots y_n,\ y_2\cdots y_n,\ \cdots ,\ y_n)\in {A},
\end{align*}
where $x_{i,j}\in {F}\ (1\leq i<j\leq n)$ and 
$y_k\in \bR_+\ (1\leq k\leq n)$. 

Let ${\g}=\g_n$, ${\gn}=\gn_n$, ${\ga}=\ga_n$ and ${\gk}=\gk_n$ be 
the associated Lie algebras of ${G}$, ${N}$, ${A}$ and ${K}$, respectively. 
Let ${\gp}=\gp_n$ be the orthogonal complement of ${\gk}$ in ${\g}$ 
with respect to the Killing form, that is, 
${\gp}=\{ X\in {\g}\mid {}^tX=X\}$. 
We denote by $\gp_{\bC}=\gp_{n\bC}$ the complexification 
$\gp \otimes_{\bR}\bC$ of $\gp$. 
Let $L$ be a Lie subgroup of ${G}$, and $\gl$ 
the associated Lie algebra of $L$. 
We denote by $\gl_{\bC}$ the complexification 
$\gl \otimes_\bR \bC$ of $\gl$, by $U(\gl_{\bC} )$ 
the universal enveloping algebra of $\gl_\bC$, and 
by $Z(\gl_{\bC})$ the center of $U(\gl_\bC)$. 
In this article, $c(X\otimes z)$ denotes $X\otimes (cz)$ 
for $X\otimes z\in \gl_{\bC}$ and $c\in \bC$. 
We identify $\gl$ with the space of left $L$-invariant 
vector fields on $L$ in the usual way, that is, 
\begin{align*}
&(R(X)f)(g)=\frac{d}{dt}f(g\exp (tX))|_{t=0}&&(g\in L)
\end{align*}
for $X\in \gl$ and a differentiable function $f$ on $L$. 
Then $U(\gl_{\bC} )$ is identified with the algebra of 
left $L$-invariant differential operators on $L$. 
For a Fr\'{e}chet space $V$, let $C^\infty (L,V)$ be 
the space of $V$-valued smooth functions on $L$. 
We equip the space $C^\infty (L,V)$ with the topology 
of uniform convergence on compact sets of a function 
and its derivatives. 
We denote $C^\infty (L,\bC)$ simply by $C^\infty (L)$.

\section{Whittaker functions}
\label{subsec:Fn_def_whittaker}

For $(c_1,c_2,\cdots ,c_{n-1})\in {F}^{n-1}$, 
we define a character $\psi_{(c_1,c_2,\cdots ,c_{n-1})}$ of ${N}$ by 
\begin{align*}
&\psi_{(c_1,c_2,\cdots ,c_{n-1})}(x)
=\psi_{F} (c_1x_{1,2}+c_2x_{2,3}+\cdots +c_{n-1}x_{n-1,n})&
&(x=(x_{i,j})\in {N}). 
\end{align*}
Then unitary characters of ${N}$ are exhausted 
by the characters of this form. 
We say that $\psi_{(c_1,c_2,\cdots ,c_{n-1})}$ is non-degenerate 
if $(c_1,c_2,\cdots ,c_{n-1})\in ({F}^\times )^{n-1}$. For $c\in F$, 
we denote $\psi_{(c ,c ,\cdots ,c )}$ simply by $\psi_c$.

We regard $C^\infty ({G})$ as a $G$-module 
via the right translation. 
For a non-degenerate character $\psi$ of $N$, 
let $C^\infty ({N} \backslash {G} ;\psi )$ be 
the subspace of $C^\infty ({G})$ consisting of 
all functions $f$ satisfying 
\begin{align*}
f(xg)&=\psi (x)f(g)
&(x\in {N},\ g\in {G}).
\end{align*}
For an admissible representation $(\Pi ,H_{\Pi})$ of ${G}$, let 
\begin{align*}
&{\cI}_{\Pi,\psi }=\Hom_{({\g_\bC},{K})}
(H_{\Pi ,{K}} ,C^\infty ({N}\backslash {G};\psi )_{{K}}).
\end{align*}
Here $H_{\Pi ,{K}}$ and $C^\infty ({N}\backslash {G};\psi )_{{K}}$ are 
the subspaces of $H_{\Pi}$ and $C^\infty ({N}\backslash {G};\psi )$ 
consisting of all ${K}$-finite vectors, respectively. 
We define the subspace ${\cI}_{\Pi,\psi }^{\mathrm{mg}}$ 
of ${\cI}_{\Pi,\psi }$ consisting of 
all homomorphisms $\Phi$ such that  
$\Phi (f)$ $(f\in H_{\Pi ,{K}})$ are 
moderate growth functions. 
We define the space ${\mathrm{Wh}}(\Pi ,\psi )$ of 
Whittaker functions for $(\Pi ,\psi )$ by 
\begin{align*}
&{\mathrm{Wh}}(\Pi ,\psi )=
\bC \textrm{-span}\{\Phi (f) \mid f\in H_{\Pi ,{K}},\ 
\Phi \in {\cI}_{\Pi,\psi }\}, 
\end{align*}
and define the subspace 
${\mathrm{Wh}}(\Pi ,\psi )^{\mathrm{mg}}$ of 
${\mathrm{Wh}}(\Pi ,\psi )$ by 
\begin{align*}
&{\mathrm{Wh}}(\Pi ,\psi )^{\mathrm{mg}}=
\bC \textrm{-span}\{\Phi (f) \mid f\in H_{\Pi ,{K}},\ 
\Phi \in {\cI}_{\Pi,\psi }^{\mathrm{mg}}\}.
\end{align*}

Let $\varphi \colon V_{\tau}\to {\mathrm{Wh}}(\Pi ,\psi )$ 
be a ${K}$-embedding with a ${K}$-type $(\tau ,V_\tau )$ of $\Pi$. 
By definition, we have 
\begin{align}
\label{eqn:Fn_whitt_ngk}
&\varphi (v)(xgk)=\psi (x)\varphi (\tau (k)v)(g)&
&(v\in V_\tau,\ x\in {N},\ g\in {G},\ k\in {K}).
\end{align}
Because of the Iwasawa decomposition ${G}={N}{A}{K}$, 
we note that $\varphi $ 
is characterized by its restriction $v\mapsto \varphi (v)|_{{A}}$ to ${A}$. 
We call $v\mapsto \varphi (v)|_{{A}}$ 
the radial part of $\varphi$.

Assume that $\Pi$ is irreducible. 
Then the multiplicity one theorem 
(\textit{cf.} \cite{Shalika_001}, \cite{Wallach_001}) 
tells that the intertwining space 
${\cI}_{\Pi,\psi }^{\mathrm{mg}}$ 
is at most one dimensional. 
By the result of Matumoto 
\cite[Corollary 2.2.2, Theorem 6.2.1]{Matumoto_001} 
for $SL(n,F)$, we know that ${\cI}_{\Pi,\psi }\neq 0$ if and only if 
$\Pi$ is large in the sense of Vogan \cite{Vogan_001}.

In Part \ref{part:1}, we consider explicit formulas of the radial parts 
of ${K}$-embeddings $\varphi \colon V_{\tau}\to {\mathrm{Wh}}(\Pi ,\psi )$ 
for an irreducible admissible 
large representation $\Pi$ of $G=GL(n,F)$ ($n=2,3$). 
For $(c_1,c_2,\cdots ,c_{n-1})\in (F^\times)^{n-1}$, 
there is a $G$-isomorphism 
\begin{align}
\label{eqn:Fn_psi_change}
\Xi_{(c_1,c_2,\cdots ,c_{n-1})}\colon C^\infty ({N}\backslash {G};\psi_1)
\to C^\infty ({N}\backslash {G};\psi_{(c_1,c_2,\cdots ,c_{n-1})})
\end{align}
defined by $\Xi_{(c_1,c_2,\cdots ,c_{n-1})}(f)(g)
=f(m_{(c_1,c_2,\cdots ,c_{n-1})}g)$ ($g\in G$) with 
\[
m_{(c_1,c_2,\cdots ,c_{n-1})}=\diag (c_1c_2\cdots c_{n-1},\,
c_2\cdots c_{n-1},\,\cdots \,,\,c_{n-1},\,1). 
\]
Hence, it suffices to consider the case of $\psi_1$. 
In \S \ref{sec:R2_whittaker} and \S \ref{sec:R3_whittaker}, 
we give the explicit formulas for $G=GL(2,\bR )$ and 
$G=GL(3,\bR )$, respectively. 
In \S \ref{sec:C2_whittaker} and \S \ref{sec:C3_whittaker}, 
we give the explicit formulas for $G=GL(2,\bC )$ and 
$G=GL(3,\bC )$, respectively.

\section{Capelli elements}
\label{subsec:Fn_capelli}

We denote by $U(\g \gl (n,\bC ))$ 
the universal enveloping algebra  of the complex Lie algebra 
$\g \gl (n,\bC )$, and by $Z(\g \gl (n,\bC ))$ the center of 
$U(\g \gl (n,\bC ))$. 
The Capelli elements are known as generators of 
$Z(\g \gl (n,\bC ))$ (\textit{cf}. \cite[\S 11]{MR1116239}). 
We define a matrix $\cE=(\cE_{i,j})_{1\leq i,j\leq n}$ of size $n$ 
with entries in $U(\g \gl (n,\bC ))$ by 
\[
\cE_{i,j}=\left\{\begin{array}{ll} 
E_{i,i}-\tfrac{n+1-2i}{2}&\text{ if }i=j,\\[1mm]
E_{i,j}&\text{ if }i\neq j.
\end{array}\right.
\]
Then the Capelli elements $\cC_h\ (1\leq h\leq n)$ are defined by 
the identity 
\begin{align*}
\mathrm{Det}(t1_n+\cE )
=t^n+\sum_{h=1}^n\cC_ht^{n-h}
\end{align*}
in a variable $t$. Here $\mathrm{Det}$ means the vertical determinant 
defined by 
\begin{align*}
&\mathrm{Det}(X)=\sum_{w\in \gS_n}\sgn (w)
X_{1,w(1)}X_{2,w(2)}\cdots X_{n,w(n)},&
&X=(X_{i,j})_{1\leq i,j\leq n}
\end{align*}
with the symmetric group $\gS_n$ of degree $n$. 
For $1\leq h\leq n$, 
the explicit expression of $\cC_h$ is 
given as follows:
\begin{align}
\label{eqn:Fn_capelli}
\cC_h=\underset{w\in \gS_h}
{\sum_{1\leq i_1<i_2<\cdots <i_h\leq n,}}
\sgn (w)\cE_{i_1,i_{w(1)}}\cE_{i_2,i_{w(2)}}\cdots 
\cE_{i_h,i_{w(h)}}. 
\end{align}

\section{The gamma function and the Bessel functions}
\label{subsec:Fn_special1}

For later use, 
we recall basic facts of the gamma function and the modified Bessel functions. 
Proofs of these facts are found in the standard textbooks, 
for example, in \cite{Whittaker_Watson_001} and \cite[\S 1.9]{Bump_004}.  

The gamma function $\Gamma (s)$ is defined by 
\[
\Gamma (s)=\int_{0}^\infty \exp (-t)t^s\frac{dt}{t}
\]
for $\mathrm{Re}(s)>0$, and 
has the meromorphic continuation to $\bC$. 
The Gamma function $\Gamma (s)$ 
is holomorphic on $\bC -\{0,-1,-2,\cdots \}$ and 
has a simple pole at $s=-m$ with the residue 
\[
\mathop{\rm Res}_{s=-m}
\Gamma (s)=\frac{(-1)^m}{m!}
\]
for $m\in \bZ_{\geq 0}$. 
As usual, we set 
\begin{align*}
&\Gamma_\bR (s)=\pi^{-s/2}\Gamma (s/2),&
&\Gamma_\bC (s)=2(2\pi )^{-s}\Gamma (s). 
\end{align*}

The wellknown functional equation $\Gamma (s+1)=s\Gamma (s)$ implies 
\begin{align}
\label{eqn:Fn_FE_GammaRC}
&\Gamma_\bR (s+2)=(2\pi)^{-1}s\Gamma_\bR (s),&
&\Gamma_\bC (s+1)=(2\pi)^{-1}s\Gamma_\bC (s).
\end{align}
Moreover, the duplication formula   
\begin{align}
&\label{eqn:Fn_gamma_duplication}
\Gamma (s)\Gamma \left(s+\tfrac{1}{2}\right)=
2^{1-2s}\sqrt{\pi}\Gamma (2s)
\end{align}
implies 
\begin{align}
\label{eqn:Fn_gammaRC_duplication}
&\Gamma_\bR (s)\Gamma_\bR (s+1)=\Gamma_\bC (s).
\end{align}

For $a\in \bC$ and $i\in \bZ_{\geq 0}$, 
we introduce the Pochhammer symbol $(a)_i$, 
which is defined by 
$(a)_i=a(a+1)\cdots (a+i-1)$. 
It is easy to see that 
\begin{align}
\label{eqn:Fn_Pochhammer_rel}
&(a)_i=\frac{\Gamma (a+i)}{\Gamma (a)}
=(-1)^i\frac{\Gamma (1-a)}{\Gamma (1-a-i)}
\end{align}
for $a\in \bC$ and $i\in \bZ_{\geq 0}$.

Next, we recall the modified Bessel functions. 
Let $r\in \bC$. We call 
\begin{align}
\label{eqn:Fn_bessel_DE}
&\left(z^2\frac{d^2}{dz^2}
+z\frac{d}{dz}-r^2-z^2\right){f}(z)=0
\end{align}
the modified Bessel differential equation. 
The space of smooth solutions of (\ref{eqn:Fn_bessel_DE}) on $\bR_+$ 
is $2$ dimensional, and contains 
the unique moderate growth solution 
\begin{align}
\label{eqn:def_K_Bessel}
&K_r(z)=\frac{1}{2}\int_{0}^\infty 
\exp \left(-\frac{z}{2}(t+t^{-1})\right)
t^r\frac{dt}{t}&
&(z\in \bR_+)
\end{align}
up to scalar multiple. 
The function $K_r(z)$ is called the modified Bessel function 
of the second kind. 
The Mellin transform of $K_r(z)$ is given by
\begin{align}
\label{eqn:Mellin_K_Bessel}
\int^\infty_0K_r(z)z^s\frac{dz}{z}
=2^{s-2}
\Gamma \left(\frac{s+r}{2}\right)
\Gamma \left(\frac{s-r}{2}\right)
\end{align}
for $\mathrm{Re}(s)>|\mathrm{Re}(r)|$. 
Applying the Mellin-inversion formula, we have 
\begin{align}
\label{eqn:Barnes_K_Bessel}
K_r(z)=\frac{1}{4}\cdot \frac{1}{2\pi \sI}
\int_{s}
\Gamma \left(\frac{s+r}{2}\right)
\Gamma \left(\frac{s-r}{2}\right)
\left(\frac{z}{2}\right)^{-s}ds.
\end{align} 
Here the path of integration $\int_{s}$ is the vertical line 
from $\mathrm{Re}(s)-\sI \infty$ to $\mathrm{Re}(s)+\sI \infty$ 
with the real part $\mathrm{Re}(s)>|\mathrm{Re}(r)|$.

Assume $r\not\in \bZ$, and we set  
\begin{align*}
&\hat{I}_r(z)=\frac{-\pi}{\sin (\pi r)}I_r(z)
=\sum_{k=0}^\infty \frac{(-1)^k}{k!}
\Gamma (-r-k)\left(\frac{z}{2}\right)^{r+2k}&
&(z\in \bR_+)
\end{align*}
with the modified Bessel function $I_r(z)$ of the first kind. 
Then $\{\hat{I}_r,\hat{I}_{-r}\}$ is a basis of 
the space of smooth solutions of (\ref{eqn:Fn_bessel_DE}) on $\bR_+$, 
and satisfies 
\begin{align}
\label{eqn:expansion_K_Bessel}
K_r(z)&=\frac{1}{2}\{\hat{I}_{r}(z)+\hat{I}_{-r}(z)\}.
\end{align}

\section{Special functions of two variables}
\label{subsec:Fn_special2}

In order to describe the explicit formulas of Whittaker functions 
on $GL(3,F)$, 
we prepare some special functions of two variables. 
Let $r=(r_1,r_2,r_3)\in \bC^3$. 
In this section, we consider the system of partial differential equations 
\begin{align}
\begin{split}
\label{eqn:Fn_Sol_PDE2}
&\{-\partial_{z_1}^2+\partial_{z_1}\partial_{z_2}-\partial_{z_2}^2
+(r_1+r_2+r_3)(\partial_{z_1}-\partial_{z_2})\\
&-r_1r_2-r_1r_3-r_2r_3
+4z_1^2+4z_2^2\}{f}(z_1,z_2)
=0,
\end{split}\\[1mm]
&\label{eqn:Fn_Sol_PDE3}
\{(\partial_{z_1}-r_1)(\partial_{z_1}-r_2)(\partial_{z_1}-r_3)
-4z_1^2(\partial_{z_1}+\partial_{z_2}+2)\}f(z_1,z_2)=0,
\end{align}
where $\displaystyle 
\partial_{z_i}=z_i\frac{\partial}{\partial z_i}$ ($i=1,2$). 
Let $\mathrm{Sol}(r)$ be 
the space of smooth functions ${f}$ on $(\bR_+)^2$, 
which satisfy 
the system of partial differential equations (\ref{eqn:Fn_Sol_PDE2}) 
and (\ref{eqn:Fn_Sol_PDE3}). 

\begin{lem}
\label{lem:Fn_Sol_dim}
For $r\in \bC^3$, 
it holds that 
$\dim_\bC \mathrm{Sol}(r)\leq 6$.
\end{lem}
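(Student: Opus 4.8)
The plan is to treat the system \eqref{eqn:Fn_Sol_PDE2}--\eqref{eqn:Fn_Sol_PDE3} as a holonomic system and bound the dimension of its solution space by controlling the associated differential operators. First I would switch to the coordinates $u_i=\log z_i$, so that $\partial_{z_i}=\partial/\partial u_i$ and the two equations become a system of constant-plus-exponential-coefficient PDEs in $(u_1,u_2)$; concretely, \eqref{eqn:Fn_Sol_PDE3} is third order in $\partial_{u_1}$ with a term $4e^{2u_1}(\partial_{u_1}+\partial_{u_2}+2)$, and \eqref{eqn:Fn_Sol_PDE2} is second order with a term $4(e^{2u_1}+e^{2u_2})$. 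The strategy is then a Cauchy--Kovalevskaya / characteristic-variety count: I want to show that a solution $f$ is determined by finitely many initial data, and that this finite number is at most $6$.

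The key computation is to extract from the system an ordinary differential equation (or a triangular system of ODEs) in one variable whose order gives the bound. Here is the concrete route. Equation \eqref{eqn:Fn_Sol_PDE3} expresses $(\partial_{z_1}+\partial_{z_2}+2)f$, hence after dividing by $4z_1^2$, the quantity $\partial_{z_2}f$ in terms of $\partial_{z_1}^{\le 3}f$. Differentiating \eqref{eqn:Fn_Sol_PDE2} and \eqref{eqn:Fn_Sol_PDE3} and using them to eliminate all $z_2$-derivatives, one obtains a single linear ODE for $f$ as a function of $z_1$ alone (with $z_2$ as a parameter), whose order I expect to be $6$: the product of the order $3$ of \eqref{eqn:Fn_Sol_PDE3} in $\partial_{z_1}$ with the order $2$ of \eqref{eqn:Fn_Sol_PDE2}, after the resultant-type elimination, yields a sixth-order operator in $\partial_{z_1}$. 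Thus, fixing $z_2=z_2^0$, the restriction $z_1\mapsto f(z_1,z_2^0)$ lies in a solution space of dimension $\le 6$. It then remains to see that the full two-variable solution is recovered from this one-variable restriction: \eqref{eqn:Fn_Sol_PDE3}, read as above, recursively determines $\partial_{z_2}f$, hence $\partial_{z_2}^k f$, along $z_1=$ const from the data on $z_2=z_2^0$, so a solution $f\in \mathrm{Sol}(r)$ is uniquely determined by the six-dimensional datum $z_1\mapsto f(z_1,z_2^0)$. Since the restriction map $\mathrm{Sol}(r)\to\{$solutions of the sixth-order ODE$\}$ is thereby injective, $\dim_\bC\mathrm{Sol}(r)\le 6$.

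I expect the main obstacle to be the elimination step: carrying out the resultant of \eqref{eqn:Fn_Sol_PDE2} and \eqref{eqn:Fn_Sol_PDE3} in $\partial_{z_1}$ cleanly, keeping track of the exponential coefficients $z_1^2$ and $z_2^2$, and verifying that no degeneration drops the order or, conversely, that spurious factors do not inflate it. In particular one must check that the leading coefficient of the resulting sixth-order operator in $\partial_{z_1}$ is a unit (nowhere vanishing on $(\bR_+)^2$), so that the ODE genuinely has a six-dimensional solution space and no smaller; and one must confirm that the recursion for $\partial_{z_2}^k f$ is consistent, i.e.\ that the values produced satisfy \eqref{eqn:Fn_Sol_PDE2} identically, which follows from the integrability of the original holonomic system but should be spelled out. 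A cleaner alternative, if the bookkeeping becomes unwieldy, is to compute the characteristic variety of the $D$-module and show it is $0$-dimensional of degree $6$; but for a self-contained argument the explicit ODE extraction above is the most direct.
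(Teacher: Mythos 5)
Your outline is plausible, but the injectivity step---``a solution is uniquely determined by the six-dimensional datum $z_1\mapsto f(z_1,z_2^0)$''---has a genuine gap, and the paper takes a cleaner route that avoids it. Iterating the relation extracted from \eqref{eqn:Fn_Sol_PDE3} along the slice gives you $\partial_{z_2}^k f(z_1,z_2^0)$ for all $k\ge 0$, but this is only the Taylor data of $f$ in $z_2$: to conclude that $f$ is recovered from it you would need $f$ to be real-analytic, whereas $\mathrm{Sol}(r)$ is \emph{a priori} a space of smooth functions. Nor can you integrate $\partial_{z_2}f=(4z_1^2)^{-1}(\partial_{z_1}-r_1)(\partial_{z_1}-r_2)(\partial_{z_1}-r_3)f-\partial_{z_1}f-2f$ as an ODE in the $z_2$-direction at fixed $z_1$, since the right-hand side involves $z_1$-derivatives of $f$ at points off the slice. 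What rescues the argument is a Frobenius-type uniqueness theorem for a first-order linear system in \emph{both} variables---and this is precisely the ingredient the paper invokes, at which point the reduction to a sixth-order ODE in $z_1$ becomes superfluous.

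The paper's proof reads \eqref{eqn:Fn_Sol_PDE3} as an expression for $\partial_{z_1}^3 f$ and \eqref{eqn:Fn_Sol_PDE2} as an expression for $\partial_{z_2}^2 f$; in both cases the highest derivative appears with coefficient $\pm 1$, so unlike your route no division by $4z_1^2$ is needed and all coefficients stay polynomial. Setting $f_{j+3k}=\partial_{z_1}^j\partial_{z_2}^k f$ for $0\le j\le 2$, $0\le k\le 1$, one checks that each $\partial_{z_1}f_i$ and $\partial_{z_2}f_i$ is a polynomial-coefficient combination of $f_0,\dots,f_5$, so $F=(f_0,\dots,f_5)$ satisfies a first-order system $\partial_{z_\ell}F=M_\ell(z_1,z_2)F$, $\ell=1,2$. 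The uniqueness part of \cite[Theorems B.8 and B.9]{Knapp_002} then says that two solutions of such a system on the connected domain $(\bR_+)^2$ that agree at one point $(z_1^0,z_2^0)$ agree everywhere, so $f\mapsto F(z_1^0,z_2^0)$ is an injection $\mathrm{Sol}(r)\hookrightarrow\bC^6$ and $\dim_\bC\mathrm{Sol}(r)\le 6$. If you want to keep your elimination strategy, you must still prove this $z_2$-direction uniqueness, and once you have the first-order system the sixth-order ODE adds nothing; note also that the obstacle you flagged (consistency of the recursion) is not the real issue---the recursion is automatically consistent because it is applied to an actual solution $f$---whereas the missing uniqueness statement is.
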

\begin{proof}
For $f\in \mathrm{Sol}(r)$, 
we define the functions 
$f_{i}\ (0\leq i\leq 5)$ by 
\begin{align*}
&f_{j+3k}(z_1,z_2)
=\partial_{z_1}^j\partial_{z_2}^kf(z_1,z_2)
&&(0\leq j\leq 2,\ 0\leq k\leq 1). 
\end{align*}
Then we know from (\ref{eqn:Fn_Sol_PDE2}) and (\ref{eqn:Fn_Sol_PDE3}) 
that there exists some polynomial functions 
$m_{i,j}^1(z_1,z_2)$, $m_{i,j}^2(z_1,z_2)$ $(0\leq i,j\leq 5)$ such that 
\begin{align*}
&\partial_{z_k}f_{i}(z_1,z_2)
=\sum_{0\leq j\leq 5}m_{i,j}^k(z_1,z_2)
f_{j}(z_1,z_2)&
&(k=1,2).
\end{align*}
Applying \cite[Theorems B.8 and B.9]{Knapp_002}, 
we have $\dim_\bC \mathrm{Sol}(r)\leq 6$. 
\end{proof}

The holonomic system of partial differential equations 
(\ref{eqn:Fn_Sol_PDE2}) and (\ref{eqn:Fn_Sol_PDE3}) 
has regular singularities along two divisors $z_1=0$ and $z_2=0$ 
which are of simple normal crossing at $(z_1,z_2)=(0,0)$. 
Now we consider the power series solutions of 
this holonomic system at the point $(z_1,z_2)=(0,0)$.

\begin{lem}
\label{lem:Fn_Sol_power_series}
Let $r=(r_1,r_2,r_3)\in \bC^3$ such that 
$r_p-r_q \notin 2\bZ$ for any $1\leq p\neq q\leq 3$. 
For a permutation $(i,j,k)$ of $\{1,2,3\}$, set  
\begin{align*}
&f^{{(i,j,k)}}_r(z_1,z_2) =
\sum_{m_1,m_2 \geq 0}
C_{m_1,m_2}^{(i,j,k)}z_1^{2m_1+r_i} z_2^{2m_2-r_j}&
&(z_1,z_2\in \bR_+)
\end{align*}
with 
\begin{align*}
C_{m_1,m_2}^{(i,j,k)}=
&\frac{(-1)^{m_1+m_2}\Gamma \bigl(-m_1-\tfrac{r_i-r_j}{2}\bigr)
\Gamma \bigl(-m_1-\tfrac{r_i-r_k}{2}\bigr)}
{m_1!m_2!\Gamma \bigl(-m_1-m_2-\tfrac{r_i-r_j}{2}\bigr)}\\
&\times 
\Gamma \bigl(-m_2-\tfrac{r_i-r_j}{2}\bigr)
\Gamma \bigl(-m_2-\tfrac{r_k-r_j}{2}\bigr).
\end{align*}
Then $\{f^{{(i,j,k)}}_r\mid \{i,j,k\}=\{1,2,3\}\, \}$ 
is a basis of $\mathrm{Sol}(r)$. 
\end{lem}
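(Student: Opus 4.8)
The plan is to show that the six functions $f^{(i,j,k)}_r$ are genuine solutions of the system and that they are linearly independent; Lemma~\ref{lem:Fn_Sol_dim} then forces them to span $\mathrm{Sol}(r)$. The first task, linear independence, is the easy part: since $r_p-r_q\notin 2\bZ$, the leading exponents $(r_i,-r_j)$ appearing in the six series are pairwise distinct modulo $2\bZ\times 2\bZ$, so each $f^{(i,j,k)}_r$ has a distinct dominant monomial $z_1^{r_i}z_2^{-r_j}$ as $(z_1,z_2)\to(0,0)$ and no nontrivial linear combination can vanish. The hypothesis $r_p-r_q\notin 2\bZ$ also guarantees that none of the Gamma factors in $C^{(i,j,k)}_{m_1,m_2}$ has a pole, so the coefficients are well-defined, and a routine ratio estimate on the Gamma quotients shows the double series converges for all $(z_1,z_2)$ (indeed it is entire in $z_1^2,z_2^2$ after factoring out $z_1^{r_i}z_2^{-r_j}$), so term-by-term differentiation is legitimate.

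The substantive step is verifying that $f=f^{(i,j,k)}_r$ satisfies (\ref{eqn:Fn_Sol_PDE2}) and (\ref{eqn:Fn_Sol_PDE3}). Here I would substitute the series and use $\partial_{z_1}(z_1^{2m_1+r_i}z_2^{2m_2-r_j})=(2m_1+r_i)z_1^{2m_1+r_i}z_2^{2m_2-r_j}$ and similarly for $\partial_{z_2}$, while $z_1^2$ (resp.\ $z_2^2$) shifts $m_1\mapsto m_1-1$ (resp.\ $m_2\mapsto m_2-1$). Collecting the coefficient of a fixed monomial $z_1^{2m_1+r_i}z_2^{2m_2-r_j}$ in each equation then reduces everything to a pair of three-term (in one index at a time) recurrences among the $C^{(i,j,k)}_{m_1,m_2}$. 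For (\ref{eqn:Fn_Sol_PDE3}), the cubic operator $(\partial_{z_1}-r_1)(\partial_{z_1}-r_2)(\partial_{z_1}-r_3)$ acts on the $z_1$-exponent $2m_1+r_i$ by the scalar $(2m_1+r_i-r_1)(2m_1+r_i-r_2)(2m_1+r_i-r_3)=2m_1\cdot(2m_1+r_i-r_j)(2m_1+r_i-r_k)$ (the $r_i$-factor kills one term), and $-4z_1^2(\partial_{z_1}+\partial_{z_2}+2)$ contributes $-4(2m_1-2+r_i+2m_2-r_j+2)$ times $C^{(i,j,k)}_{m_1-1,m_2}$; matching these gives precisely the recursion in $m_1$ that the stated formula for $C^{(i,j,k)}_{m_1,m_2}$ satisfies, as one checks by substituting the explicit Gamma-quotient and using $\Gamma(a+1)=a\Gamma(a)$. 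Equation (\ref{eqn:Fn_Sol_PDE2}) is handled the same way, producing a companion recursion; the two recursions together determine the $C$'s up to the overall constant, and one verifies the stated closed form is consistent with both (in particular the symmetric roles of $m_1$ in the $z_1$-direction and $m_2$ in the $z_2$-direction, together with the mixed factor $\Gamma(-m_1-m_2-\tfrac{r_i-r_j}{2})^{-1}$ coming from the cross term $\partial_{z_1}\partial_{z_2}$ in (\ref{eqn:Fn_Sol_PDE2})).

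The main obstacle is bookkeeping rather than conceptual: one must be careful that the indicial analysis at $(0,0)$ really yields exactly these six exponent pairs and no logarithmic solutions. Because the divisors $z_1=0$ and $z_2=0$ are simple normal crossing with regular singularities, the exponents along $z_1=0$ are the roots of the indicial polynomial of (\ref{eqn:Fn_Sol_PDE3}) in the $z_1$-direction, namely $\{r_1,r_2,r_3\}$, and along $z_2=0$ the indicial polynomial of (\ref{eqn:Fn_Sol_PDE2}) gives $\{-r_1,-r_2,-r_3\}$; the non-resonance hypothesis $r_p-r_q\notin 2\bZ$ is exactly what rules out coincidences (mod $2\bZ$) of these exponents and hence the appearance of logarithms, so the solution space near $(0,0)$ is spanned by the six pure power series indexed by choices of a $z_1$-exponent $r_i$ and an independent $z_2$-exponent $-r_j$. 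Combined with $\dim_\bC\mathrm{Sol}(r)\le 6$ from Lemma~\ref{lem:Fn_Sol_dim}, the six explicit independent solutions must form a basis. A cleaner alternative for the verification step, if the direct recursion turns out to be messy, is to recognize each $f^{(i,j,k)}_r$ as (a constant multiple of) a product $\hat I$-type Bessel-function ansatz in suitable variables or as an explicit Mellin--Barnes double integral whose contour shifts reproduce the series; but the recurrence check above is self-contained and I would present that.
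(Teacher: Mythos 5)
Your proposal is correct and follows essentially the same route as the paper: substitute the Frobenius ansatz into the two equations, read off the indicial roots $(\gamma_1,\gamma_2)=(r_i,-r_j)$ and the resulting two-index recurrences, verify that the explicit Gamma-quotient coefficients $C^{(i,j,k)}_{m_1,m_2}$ satisfy them via $\Gamma(a+1)=a\Gamma(a)$, and close with linear independence (from distinct exponents mod $2\bZ$) together with $\dim_\bC\mathrm{Sol}(r)\le 6$. The paragraph on ruling out logarithmic solutions is harmless but unnecessary for the stated conclusion, since exhibiting six linearly independent elements in a space of dimension at most six already forces a basis.
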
 
\begin{proof}
For a power series 
\begin{align*}
f(z_1,z_2)= 
\sum_{m_1,m_2 \geq 0} C_{m_1,m_2}
z_1^{2m_1+\gamma_1} z_2^{2m_2+\gamma_2}&
&(C_{0,0} \neq 0)
\end{align*}
with a characteristic index $(\gamma_1,\gamma_2)$, 
it is easy to see that (\ref{eqn:Fn_Sol_PDE2}) and (\ref{eqn:Fn_Sol_PDE3}) 
can be translated into 
\begin{align}
\label{eqn:Fn_Sol_rec-i1}
\begin{split} 
&\{ -(2m_1+\gamma_1)^2+(2m_1+\gamma_1)(2m_2+\gamma_2)-(2m_2+\gamma_2)^2\\
&+(r_1+r_2+r_3)(2m_1+\gamma_1-2m_2-\gamma_2)-r_1r_2-r_2r_3-r_3r_1\} C_{m_1,m_2}\\
&+4C_{m_1-1,m_2}+ 4C_{m_1,m_2-1}= 0
\end{split} \end{align}
and 
\begin{align}
\label{eqn:Fn_Sol_rec-i2}
\begin{split}
&(2m_1+\gamma_1-r_1)(2m_1+\gamma_1-r_2)(2m_1+\gamma_1-r_3)
C_{m_1,m_2}\\
&-4(2m_1+\gamma_1+2m_2+\gamma_2)C_{m_1-1,m_2}= 0,
\end{split} \end{align}
respectively. Here we put $C_{m_1,m_2}=0$ 
if $m_1<0$ or $m_2<0$. 
Since $ C_{0,0} \neq 0 $, 
these equalities with $(m_1,m_2)= (0,0) $ 
show that 
\[
-\gamma_1^2+\gamma_1\gamma_2-\gamma_2^2+(r_1+r_2+r_3)
(\gamma_1-\gamma_2)-r_1r_2-r_2r_3-r_3r_1= 0, 
\]
and 
\[
(\gamma_1-r_1)(\gamma_1-r_2)(\gamma_1-r_3) = 0. 
\]
We can easily find that the characteristic index $(\gamma_1,\gamma_2)$ 
is of the form 
\begin{align*}
&(\gamma_1,\gamma_2)=(r_i,-r_j)&&(1 \le i \neq j \le 3). 
\end{align*}
It is convenient to take $1\leq k\leq 3$ 
so that $(i,j,k)$ is a permutation of $\{1,2,3\}$. 
Then the recurrence relations (\ref{eqn:Fn_Sol_rec-i1}) and 
(\ref{eqn:Fn_Sol_rec-i2}) can be written as follows:
\begin{align}
\begin{split}\label{eqn:Fn_Sol_rec-i1'}
&\bigl(m_1^2-m_1m_2+m_2^2+\tfrac{r_i-r_k}{2} m_1
+ \tfrac{r_k-r_j}{2} m_2\bigr)
C_{m_1,m_2}\\
&=C_{m_1-1,m_2}+C_{m_1,m_2-1} 
\end{split}
\end{align}
and
\begin{align} 
\begin{split}\label{eqn:Fn_Sol_rec-i2'}
 &m_1\bigl(m_1+\tfrac{r_i-r_j}{2}\bigr)
\bigl(m_1+\tfrac{r_i-r_k}{2}\bigr) C_{m_1,m_2}
= \bigl(m_1+m_2+\tfrac{r_i-r_j}{2}\bigr) C_{m_1-1,m_2}. 
\end{split}
\end{align}
Using the equalities 
\begin{align*}
&\frac{C_{m_1-1,m_2}^{(i,j,k)}}
{C_{m_1,m_2}^{(i,j,k)}}
 = \frac{m_1\bigl(m_1+\frac{r_i-r_j}{2}\bigr)
\bigl(m_1+\frac{r_i-r_k}{2}\bigr)}{m_1+m_2+\frac{r_i-r_j}{2}}, \\
&\frac{C_{m_1,m_2-1}^{(i,j,k)}}
{ C_{m_1,m_2}^{(i,j,k)}}
 = \frac{m_2\bigl(m_2+\frac{r_i-r_j}{2}\bigr)
\bigl(m_2+\frac{r_k-r_j}{2}\bigl)}{m_1+m_2+\frac{r_i-r_j}{2}},
\end{align*}
it is easy to show that 
$C_{m_1,m_2}=C_{m_1,m_2}^{(i,j,k)}$
satisfy (\ref{eqn:Fn_Sol_rec-i1'}) and (\ref{eqn:Fn_Sol_rec-i2'}). 
Hence, we have $f^{(i,j,k)}_r\in \mathrm{Sol}(r)$. 
Since $\{f^{{(i,j,k)}}_r\mid 
\{i,j,k\}=\{1,2,3\}\, \}$ is linearly independent, 
we obtain the assertion by Lemma \ref{lem:Fn_Sol_dim}.
\end{proof}

\begin{lem}
\label{lem:Fn_Sol_mg}
Let $r=(r_1,r_2,r_3)\in \bC^3$. Set  
\begin{align*}
&f^{\mathrm{mg}}_r(z_1,z_2)
=\frac{1}{(4\pi \sqrt{-1})^2} \int_{s_2}\int_{s_1} 
\cV (s_1,s_2) \,z_1^{-s_1} z_2^{-s_2} \,ds_1ds_2&
&(z_1,z_2\in \bR_+)
\end{align*}
with 
\begin{align*}
\cV (s_1,s_2)=&
\frac{\Gamma \bigl(\tfrac{s_1+r_1}{2}\bigr)
\Gamma \bigl(\tfrac{s_1+r_2}{2}\bigr)\Gamma \bigl(\tfrac{s_1+r_3}{2}\bigr) 
\Gamma \bigl(\tfrac{s_2-r_1}{2}\bigr)
\Gamma \bigl(\tfrac{s_2-r_2}{2}\bigr)\Gamma \bigl(\tfrac{s_2-r_3}{2}\bigr)}
{ \Gamma \bigl(\tfrac{s_1+s_2}{2}\bigr)}.
\end{align*}
Here the path of the integration $\int_{s_i}$ is the vertical line 
from $\mathrm{Re}(s_i)-\sI \infty$ to $\mathrm{Re}(s_i)+\sI \infty$ 
with sufficiently large real part to keep the poles of the integrand 
on its left. 
Then $f_r^{\mathrm{mg}}$ is 
a moderate growth function in $\mathrm{Sol}(r)$. 
Moreover, 
if $r_p-r_q \notin 2\bZ$ for any 
$1\leq p\neq q\leq 3$, 
then it holds that 
\begin{align}
\label{eqn:Fn_Sol_sol_expansion}
f_r^{\mathrm{mg}}
& = \sum_{ \{i,j,k\}=\{1,2,3\} } f^{(i,j,k)}_r,
\end{align}
where $f_r^{(i,j,k)}$ are the functions in 
Lemma \ref{lem:Fn_Sol_power_series}. 
\end{lem}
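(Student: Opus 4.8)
The plan is to verify the two claims in turn, using the Barnes-type integral representation as the primary tool. First I would show that $f_r^{\mathrm{mg}} \in \mathrm{Sol}(r)$ by direct substitution into the differential equations \eqref{eqn:Fn_Sol_PDE2} and \eqref{eqn:Fn_Sol_PDE3}. Since $\partial_{z_i}$ acts on the kernel $z_i^{-s_i}$ by multiplication by $-s_i$, applying the differential operator in \eqref{eqn:Fn_Sol_PDE3} under the integral sign turns the operator $(\partial_{z_1}-r_1)(\partial_{z_1}-r_2)(\partial_{z_1}-r_3)$ into the scalar $-(s_1+r_1)(s_1+r_2)(s_1+r_3)$, which is (up to sign and a power of $2$) exactly the factor cancelling three of the numerator gamma factors of $\cV$ via $\Gamma(\tfrac{s_i+r}{2}+1)=\tfrac{s_i+r}{2}\Gamma(\tfrac{s_i+r}{2})$, while the term $-4z_1^2(\partial_{z_1}+\partial_{z_2}+2)$ produces a shift $s_1\mapsto s_1+2$ together with the factor $-4(-s_1-s_2+2\cdot\text{(shift)})$; after the shift in the contour one reads off that the integrands match, so the total is zero. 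The equation \eqref{eqn:Fn_Sol_PDE2} is handled the same way: it becomes a three-term contiguous relation among $\cV(s_1,s_2)$, $\cV(s_1-2,s_2)$, $\cV(s_1,s_2-2)$ (coming from the $4z_1^2$ and $4z_2^2$ terms) which is verified directly from the functional equation of the gamma function applied to the numerator factors and to the denominator factor $\Gamma(\tfrac{s_1+s_2}{2})$. Moderate growth follows by shifting the contours far to the right and using Stirling's estimate to bound the integrand, exactly as one argues for the classical $K$-Bessel integral \eqref{eqn:Barnes_K_Bessel}.

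For the second assertion, I would close the contours to the left and sum residues. Under the hypothesis $r_p-r_q\notin 2\bZ$, the poles of $\cV(s_1,s_2)$ in $s_1$ are simple and located at $s_1=-r_i-2m_1$ for $i\in\{1,2,3\}$ and $m_1\in\bZ_{\geq 0}$ (the denominator $\Gamma(\tfrac{s_1+s_2}{2})^{-1}$ contributes no poles), and similarly the poles in $s_2$ are at $s_2=r_i+2m_2$. Carrying out the $s_1$-integral by the residue theorem picks up, for each choice of $i$ and each $m_1$, a single residue; the resulting expression still has simple poles in $s_2$ at $s_2=r_j+2m_2$ for $j\neq i$ (the factor $\Gamma(\tfrac{s_2-r_i}{2})$ has been partially cancelled against the denominator — this is where the non-resonance condition is essential to keep everything simple), and carrying out the $s_2$-integral by residues at $s_2=r_j+2m_2$ then produces, for the ordered triple $(i,j,k)$, precisely a double power series in $z_1^{2m_1+r_i}z_2^{2m_2-r_j}$. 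The computation of $\Res_{s_1=-r_i-2m_1}\Res_{s_2=r_j+2m_2}\cV$ using \eqref{eqn:Fn_Pochhammer_rel} and the residue of $\Gamma$ then gives exactly the constant $C_{m_1,m_2}^{(i,j,k)}$ of Lemma \ref{lem:Fn_Sol_power_series}, so summing over all six ordered triples $(i,j,k)$ yields \eqref{eqn:Fn_Sol_sol_expansion}.

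Two points require care. The first is the bookkeeping of which gamma factors survive after the first residue and how the denominator $\Gamma(\tfrac{s_1+s_2}{2})$ is reshaped when $s_1$ is specialized to $-r_i-2m_1$: one gets $\Gamma(\tfrac{s_2-r_i}{2}-m_1)^{-1}$, which combines with the surviving numerator $\Gamma(\tfrac{s_2-r_i}{2})$ to give the Pochhammer-type ratio appearing in $C_{m_1,m_2}^{(i,j,k)}$ — matching this against the stated formula is the step most prone to sign and shift errors, and I would double-check it against the recurrences \eqref{eqn:Fn_Sol_rec-i1'}--\eqref{eqn:Fn_Sol_rec-i2'} already verified in the previous lemma. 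The second, and the main obstacle, is the analytic justification of closing the double contour and summing the iterated residues: one must check that the arcs at infinity contribute nothing and that the resulting double series converges, which again rests on Stirling estimates for the ratio of gamma functions in $\cV$ but needs to be done uniformly in both variables; alternatively, one can sidestep the hardest estimates by noting that $f_r^{\mathrm{mg}}\in\mathrm{Sol}(r)$ is already established, that $\mathrm{Sol}(r)$ is spanned by the $f_r^{(i,j,k)}$ (Lemma \ref{lem:Fn_Sol_power_series}), and that therefore $f_r^{\mathrm{mg}}=\sum_{\{i,j,k\}=\{1,2,3\}}a_{(i,j,k)}f_r^{(i,j,k)}$ for some constants $a_{(i,j,k)}$; the constants are then pinned down to be $1$ by comparing the leading coefficient of $z_1^{r_i}z_2^{-r_j}$ on both sides, which only requires computing the single residue $\Res_{s_1=-r_i}\Res_{s_2=r_j}\cV$ and comparing with $C_{0,0}^{(i,j,k)}$. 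This reduction makes the argument clean, and I would present it this way.
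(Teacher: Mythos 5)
Your proposal is correct, but it takes a genuinely different route from the paper's proof, and it is worth spelling out the contrast. The paper does \emph{not} directly check that $f_r^{\mathrm{mg}}$ satisfies \eqref{eqn:Fn_Sol_PDE2}--\eqref{eqn:Fn_Sol_PDE3}; instead it first proves the residue expansion \eqref{eqn:Fn_Sol_sol_expansion} for non-resonant $r$ (shifting both contours left and summing all residues, using $\Res_{t=-2m}\Gamma(\tfrac{t}{2})=\tfrac{2(-1)^m}{m!}$ to match $C_{m_1,m_2}^{(i,j,k)}$), and then observes that since each $f_r^{(i,j,k)}$ lies in $\mathrm{Sol}(r)$ and $\partial_{z_1}^p\partial_{z_2}^q f_r^{\mathrm{mg}}$ is entire in $(r_1,r_2,r_3)$, the membership $f_r^{\mathrm{mg}}\in\mathrm{Sol}(r)$ for general $r$ follows by continuation from the generic case. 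Your argument inverts this logic. You verify $f_r^{\mathrm{mg}}\in\mathrm{Sol}(r)$ directly by substitution, which does work: under the kernel $z_1^{-s_1}z_2^{-s_2}$ the operator in \eqref{eqn:Fn_Sol_PDE3} becomes the identity $4(s_1+s_2)\cV(s_1+2,s_2)=(s_1+r_1)(s_1+r_2)(s_1+r_3)\cV(s_1,s_2)$, and \eqref{eqn:Fn_Sol_PDE2} becomes a three-term relation among $\cV(s_1,s_2)$, $\cV(s_1+2,s_2)$, $\cV(s_1,s_2+2)$ that reduces, after dividing $s_1^3+s_2^3+\sigma_1(s_1^2-s_2^2)+\sigma_2(s_1+s_2)$ by $s_1+s_2$, to exactly the negative of the polynomial coefficient in \eqref{eqn:Fn_Sol_PDE2} (with $\sigma_1,\sigma_2$ the elementary symmetric functions of the $r_i$). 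This requires only a fixed contour shift by $2$, which is harmless if the contours are taken far enough right. Then your preferred argument for \eqref{eqn:Fn_Sol_sol_expansion} uses Lemmas \ref{lem:Fn_Sol_dim} and \ref{lem:Fn_Sol_power_series} to write $f_r^{\mathrm{mg}}=\sum a_{(i,j,k)}f_r^{(i,j,k)}$ and determines $a_{(i,j,k)}=1$ by comparing only the $(m_1,m_2)=(0,0)$ residue at $(s_1,s_2)=(-r_i,r_j)$; the non-resonance hypothesis guarantees that the six leading exponents $(r_i,-r_j)$ are pairwise distinct modulo $2\bZ^2$, so this pins down the coefficients. The net effect is that you trade the paper's full double-residue expansion (and the attendant uniform Stirling estimates) plus the continuation-in-$r$ trick for a direct PDE verification plus a single residue. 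Both routes are sound, but yours is a cleaner way to organize the analytic work, and you correctly flag the exact places where care is needed.
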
 
\begin{proof}
The Stirling formula for the Gamma function 
(\cite[\S 13.6]{Whittaker_Watson_001}) shows that 
$f^{\mathrm{mg}}_{r}$ is 
well-defined and is a moderate growth function. 
Since $\partial_{z_1}^{p}\partial_{z_2}^{q}f^{\mathrm{mg}}_{r}\ 
(p,q\geq 0)$ are entire functions of $(r_1,r_2,r_3)$ 
and $f=f^{(i,j,k)}_{r}$ satisfies the system of partial differential equations 
(\ref{eqn:Fn_Sol_PDE2}) and (\ref{eqn:Fn_Sol_PDE3}), 
in order to complete a proof, 
it suffices to show the expansion formula 
(\ref{eqn:Fn_Sol_sol_expansion}) 
under the assumption 
$r_p-r_q \notin 2\bZ$ $(1\leq p\neq q\leq 3)$. 
By shifting the paths of integrations, we have 
\begin{align*}
&f^{\mathrm{mg}}_{r}(z_1,z_2)\\
&=
\sum_{\{i,j,k\}=\{1,2,3\}}\sum_{\ m_1,m_2\geq 0}
\frac{1}{4}\,{\rm Res}_{(s_1,s_2)=(-2m_1-r_i,-2m_2+r_j)}
\cV (s_1,s_2)z_1^{-s_1}z_2^{-s_2}.
\end{align*}
Since 
$\displaystyle 
\Res_{t=-2m} \Gamma \bigl(\tfrac{t}{2}\bigr) =
\frac{2(-1)^m}{m!}$ for $ m \in \bZ_{\geq 0} $, we find that 
\begin{align*}
& {\rm Res}_{(s_1,s_2)=(-2m_1-r_i,-2m_2+r_j)}
\cV (s_1,s_2) z_1^{-s_1}z_2^{-s_2}
=4C_{m_1,m_2}^{(i,j,k)}z_1^{2m_1+r_i}z_2^{2m_2-r_j},
\end{align*}
and obtain the expansion formula 
(\ref{eqn:Fn_Sol_sol_expansion}). 
\end{proof}

\chapter{Preliminaries for $GL(n,\bR)$}
\label{sec:Rn_rep_theory}

Throughout this chapter, we set $F=\bR $.

\section{Generalized principal series representations}
\label{subsec:Rn_def_gps}
We shall specify certain representations of 
$GL(1,\bR)$ and $GL(2,\bR)$ as follows. 
\begin{itemize}
\item 
For $\nu \in \bC$ and $\delta \in \{0,1\}$, 
we define a one dimensional representation 
$(\chi_{(\nu ,\delta )},\bC_{(\nu ,\delta )})$ of $GL(1,\bR)$ by 
$\bC_{(\nu ,\delta )}=\bC $ and 
\begin{align*}
\hspace{10mm}&\chi_{(\nu ,\delta )}(t)
=\left(\frac{t}{|t|}\right)^{\delta }|t|^{\nu }&
&(t\in GL(1,\bR)=\bR^\times ).
\end{align*}
\item 
For $\nu \in \bC$ and $\kappa \in \bZ_{\geq 2}$, 
let $(D_{(\nu , \kappa )},\gH_{(\nu , \kappa )})$ be 
an irreducible Hilbert representation of $GL(2,\bR)$ 
such that 
$D_{(\nu ,\kappa )}(t1_2)=t^{2\nu }\ (t\in \bR_+)$ and 
$D_{(\nu ,\kappa )}\simeq D^+_{\kappa }\oplus D^-_{\kappa }$ 
as $(\gs \gl (2,\bR ),SO(2))$-modules, where 
$D^\pm_{\kappa }$ is the 
discrete series representations of $SL(2,\bR )$ with 
the minimal $SO(2)$-type 
\[
\hspace{10mm}SO(2)\ni k_\theta^{(2)} =
\left( \begin{array}{cc}
\cos \theta &\sin \theta \\
-\sin \theta &\cos \theta
\end{array}\right) 
\mapsto e^{\pm \sI \kappa \theta }\in \bC^\times .
\]
Such representation $D_{(\nu , \kappa )}$ is unique 
up to infinitesimal equivalence. 
\end{itemize}

Let us define generalized principal series representations of $G=GL(n,\bR)$. 
Let $\mn =(n_1,n_2,\cdots ,n_l)$ be a partition of $n$ into $1$'s and $2$'s, 
that is, $n_i\in \{1,2\}$ ($i=1,2,\cdots ,l$) 
and $n_1+n_2+\cdots +n_l=n$. 
For this partition, we associate the block diagonal subgroup 
\begin{align*}
M_{\mn }&=\left\{\left.
\left(\begin{array}{cccc}
g_1&&& \\
&g_2&& \\
&&\ddots & \\
&&&g_l
\end{array}\right)\in {G}\ 
\right| \ 
g_i\in GL(n_i,\bR)\quad 
(1\leq i\leq l)\ 
\right\},
\end{align*}
the block strictly upper triangular subgroup 
$N_{\mn }$, 
and the block upper triangular subgroup 
$P_{\mn}=N_{\mn}M_{\mn}$. 
For $1\leq i\leq l$, 
let $\sigma_i=\chi_{(\nu_i,\delta_i)}$ if $n_i=1$ 
and $\sigma_i=D_{(\nu_i,\kappa_i)}$ if $n_i=2$. 
Then $(\sigma_1,\sigma_2,\cdots ,\sigma_l)$ defines 
a Hilbert representation $(\sigma ,U_\sigma)$ of 
$M_\mn \simeq GL(n_1,\bR)\times GL(n_2,\bR)\times \cdots 
\times GL(n_l,\bR)$ 
by the tensor product 
$\sigma =\sigma_1\boxtimes \sigma_2\boxtimes \cdots \boxtimes \sigma_l$, 
and we extend this representation 
to $P_\mn =N_\mn M_\mn $ by 
\begin{align*}
&\sigma (xm)=\sigma (m)&(x\in N_{\mn},\ m\in M_\mn). 
\end{align*}
We define the function $\rho_{\mn} $ on $P_{\mn}$ by 
\begin{align*}
&\rho_{\mn} (p)=|\det (\Ad_{\gn_{\mn}} (p))|^{\frac{1}{2}}&&(p\in P_{\mn}), 
\end{align*}
where $\Ad_{\gn_{\mn}} $ means the adjoint action 
on the Lie algebra $\gn_{\mn}$ of $N_{\mn}$.

Let $H(\sigma )^0$ be the space 
of $U_\sigma$-valued continuous functions $f$ on $K$ 
satisfying 
\begin{align*}
&f(mk)=\sigma (m)f(k)&
&(m\in {K}\cap M_{\mn},\ k\in {K}),
\end{align*}
on which ${G}$ acts by 
\begin{align*}
&(\Pi_\sigma (g)f)(k)=\rho_{\mn} (\mmp (kg))
\sigma (\mmp (kg))f(\mk (kg))&
&(g\in {G},\ k\in {K},\ f\in H(\sigma )^0),
\end{align*}
where $kg=\mmp (kg)\mk (kg)$ is the decomposition of $kg$ 
with respect to the decomposition ${G}=P_{\mn}{K}$. 
Although the decomposition $kg=\mmp (kg)\mk (kg)$ is not unique, 
the action $\Pi_\sigma (g)$ does not depend on the choices of 
$\mmp (kg)$ and $\mk (kg)$. 
We define a generalized principal series representation 
$(\Pi_\sigma ,H(\sigma ))$ of $G$ as the completion of 
$(\Pi_\sigma ,H(\sigma )^0)$ 
with respect to the inner product 
\begin{align*}
&\langle f_1,f_2\rangle_{H({\sigma})}
=\int_{{K}}\langle f_1(k),f_2(k)\rangle_{{\sigma}}dk&
&(f_1,f_2\in H(\sigma )^0),
\end{align*}
where $dk$ is the Haar measure on ${K}$, and 
$\langle \cdot ,\cdot \rangle_{{\sigma}}$ is the inner product 
on the Hilbert space $U_\sigma$. 
If $l=n$, that is, $\sigma $ is of the form 
\[
\sigma =\chi_{(\nu_1,\delta_1)}\boxtimes \chi_{(\nu_2,\delta_2)}\boxtimes 
\cdots \boxtimes \chi_{(\nu_n,\delta_n)}, 
\]
then we call $(\Pi_\sigma ,H(\sigma ))$ 
a principal series representation.

Any generalized principal series representation of $G$ 
can be regarded as a subrepresentation 
of a principal series representation (as in \S \ref{subsec:R2_ds_whittaker}, 
\S \ref{subsec:R3_gps}), and the quotient representation 
is not large in the sense of Vogan. 
Hence, combining the results of Kostant 
\cite[Theorems 5.5 and 6.6.2]{Kostant_001} and 
Matumoto \cite[Corollary 2.2.2, Theorem 6.1.6]{Matumoto_001} 
together with the standard arguments, we have
\begin{align}
\label{eqn:Rn_dimWh_gps}
&\dim_\bC  {\cI}_{\Pi_\sigma ,\psi_1}=n!,&
&\dim_\bC  {\cI}_{\Pi_\sigma ,\psi_1}^{\rm mg}=1
\end{align} 
for any generalized principal series 
representation $\Pi_\sigma$ of $G=GL(n,\bR )$. 
Moreover, 
by Vogan's characterization \cite[Theorem 6.2 (f)]{Vogan_001}, 
any irreducible admissible large representation of ${G}$ is 
infinitesimally equivalent 
to some irreducible generalized principal series representation 
$\Pi_\sigma $.

For an element $w$ of the symmetric group $\mathfrak{S}_l$ of degree $l$, 
we set 
\[
w(\sigma)=\sigma_{w^{-1}(1)}\boxtimes \sigma_{w^{-1}(2)}
\boxtimes \cdots \boxtimes \sigma_{w^{-1}(l)}.
\]
When $\Pi_\sigma $ is irreducible, 
it is known that 
$\Pi_\sigma \simeq \Pi_{w(\sigma)}$ 
for any $w\in \mathfrak{S}_l$ 
(\textit{cf.} \cite[Corollary 2.8]{Speh_Vogan_001}). 
Hence, it suffices to consider Whittaker functions for 
irreducible generalized principal series 
representations $\Pi_\sigma $ defined from 
$\sigma$ of the form 
\begin{align*}
D_{(\nu_1,\kappa_1)}\boxtimes D_{(\nu_2,\kappa_2)}\boxtimes \cdots 
\boxtimes D_{(\nu_i,\kappa_i)}\boxtimes \chi_{(\nu_{i+1},\delta_{i+1})}
\boxtimes \chi_{(\nu_{i+2},\delta_{i+2})}\boxtimes \cdots 
\boxtimes \chi_{(\nu_l,\delta_l)}
\end{align*}
with $\kappa_1\geq \kappa_2\geq \cdots \geq \kappa_i$ and 
$\delta_{i+1}\geq \delta_{i+2}\geq \cdots \geq \delta_l$.

We denote by $H(\sigma)_K$ the subspace of $H(\sigma )$ 
consisting of all ${K}$-finite vectors. 
We denote the action of $\g_\bC$ on $H(\sigma)_K$ 
induced from $\Pi_\sigma$ by the same symbol $\Pi_\sigma$. 
The $K$-module structure of $\Pi_\sigma$ is wellknown. 
Let $(\tau ,V_\tau )$ be an irreducible representation of ${K}$. 
By the Frobenius reciprocity law \cite[Theorem 1.14]{Knapp_002}, 
the $\bC$-linear map $\Hom_{{K}\cap M_{\mn}}(V_\tau ,U_\sigma )
\ni \eta \mapsto 
\hat{\eta} \in \Hom_{{K}}(V_\tau ,H(\sigma ))$ defined by 
\begin{align*}
&\hat{\eta}(v)(k)=\eta (\tau (k)v)&&(v\in V_\tau ,\ k\in {K})
\end{align*}
is an isomorphism of $\bC$-vector spaces. 
Since any Hilbert representation of $K$ is completely reducible, we have 
\begin{align}
\label{eqn:Rn_ps_Kstr}
&H({\sigma})_K=\bigoplus_{(\tau ,V_\tau )}
\bC \textrm{-span}
\{\hat{\eta}(v)\mid \eta \in \Hom_{{K}\cap M_{\mn}}(V_\tau ,U_\sigma ),\ 
v\in V_\tau\}. 
\end{align}
Here $(\tau ,V_\tau )$ runs through the set of equivalence classes of 
irreducible representations of ${K}$.

\section{The elements of $\g_\bC$ and $Z(\g_\bC)$}
\label{subsec:Rn_g_gen_Zg}

We define the elements 
$E^{{\g}}_{i,j}=E^{{\g}_n}_{i,j}$, 
$E^{{\gk}}_{i,j}=E^{{\gk}_n}_{i,j}$ 
and $E^{{\gp}}_{i,j}=E^{{\gp}_n}_{i,j}$ of $\g_\bC$ by 
\begin{align*}
&E^{{\g}}_{i,j}=E_{i,j}\otimes 1,&
&E^{{\gk}}_{i,j}=E^{{\g}}_{i,j}-E^{{\g}}_{j,i},&
&E^{{\gp}}_{i,j}=E^{{\g}}_{i,j}+E^{{\g}}_{j,i}
\end{align*}
for $1\leq i,j\leq n$. Then we have 
\begin{align*}
&\g_\bC =\bigoplus_{1\leq i,j\leq n}\bC\,E^{{\g}}_{i,j},&
&\gn_\bC =\bigoplus_{1\leq i<j\leq n}\bC\,E^{{\g}}_{i,j},&
&\ga_\bC =\bigoplus_{i=1}^n\bC\,E^{{\gp}}_{i,i},\\
&\gk_\bC =\bigoplus_{1\leq i<j\leq n}\bC\,E^{{\gk}}_{i,j},&
&\gp_\bC =\bigoplus_{1\leq i\leq j\leq n}\bC\,E^{{\gp}}_{i,j}.
\end{align*}

\begin{lem}
\label{lem:Rn_g_act_Cpsi}
Let $f$ be a function in $C^\infty (N\backslash G;\psi_\varepsilon )$ with 
$\varepsilon \in \{\pm 1\}$. 
Then, for $1\leq i\leq j\leq n$ and 
$y=\diag (y_1y_2\cdots y_n,\,y_2\cdots y_n,\,\cdots ,\,y_n)\in A$, 
it holds that 
\begin{align*}
&(R(E_{i,j}^{{\g}})f)(y)
=\left\{\begin{array}{ll}
(-\partial_{i-1}+\partial_i)f(y)&\text{if}\ j=i,\\[2pt]
2\pi \varepsilon \sI y_if(y)&\text{if}\ j=i+1,\\[2pt]
0&\text{otherwise},
\end{array}\right.
\end{align*}
where $\partial_{0}=0$ and 
$\displaystyle \partial_i=y_i\frac{\partial}{\partial y_i}$ 
$(1\leq i\leq n)$. 
\end{lem}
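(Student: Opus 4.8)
The plan is to prove Lemma~\ref{lem:Rn_g_act_Cpsi} by a direct computation using the Iwasawa-coordinate description of $G = N A K$ and the defining quasi-invariance $f(xg) = \psi_\varepsilon(x) f(g)$ for $f \in C^\infty(N \backslash G; \psi_\varepsilon)$. The key observation is that $R(E_{i,j}^{\g})$ is the left-invariant vector field associated to $E_{i,j}$, so $(R(E_{i,j}^{\g})f)(y) = \frac{d}{dt} f(y \exp(t E_{i,j}))|_{t=0}$, and I need only understand, to first order in $t$, how the point $y \exp(t E_{i,j})$ decomposes in the $NAK$ coordinates when $y \in A$ is diagonal.

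First I would treat the diagonal case $j = i$. Here $y \exp(t E_{i,i}) = y \cdot \diag(1,\dots,1,e^t,1,\dots,1)$ (the $e^t$ in the $i$-th slot) stays in $A$. Writing this new diagonal matrix back in the coordinates $y = \diag(y_1 \cdots y_n,\, y_2 \cdots y_n,\, \dots,\, y_n)$, multiplying the $i$-th diagonal entry by $e^t$ corresponds to scaling $y_i \mapsto e^t y_i$ and $y_{i-1} \mapsto e^{-t} y_{i-1}$ (the parametrization is such that $y_k$ appears in diagonal entries $1,\dots,k$, so the ratio of consecutive entries isolates one $y_k$). Differentiating at $t=0$ gives $(-\partial_{i-1} + \partial_i) f(y)$, with the convention $\partial_0 = 0$ when $i=1$; this is the first case.

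Next I would handle the superdiagonal case $j = i+1$. The matrix $y \exp(t E_{i,i+1}) = y + t\, y E_{i,i+1}$ has a single off-diagonal entry in the strictly upper triangular part. Since $y$ is diagonal with positive entries, this matrix is already of the form (element of $N$)$\cdot$(element of $A$): precisely, $y \exp(t E_{i,i+1}) = x(t)\, y$ where $x(t) = 1_n + t\, (\text{$i$-th diagonal entry of }y)/(\text{$(i+1)$-th diagonal entry of }y)\, E_{i,i+1} \in N$, and the ratio of the $i$-th to the $(i+1)$-th diagonal entry of $y$ is exactly $y_i$ in the given coordinates. Applying the quasi-invariance, $f(x(t) y) = \psi_\varepsilon(x(t)) f(y)$, and since $\psi_\varepsilon(x(t)) = \psi_F(\varepsilon\, t\, y_i)$, differentiating at $t=0$ yields the factor $\frac{d}{dt}\psi_F(\varepsilon t y_i)|_{t=0}$. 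For $F = \bR$ this is $2\pi\sI \varepsilon y_i$ directly from $\psi_\bR(u) = \exp(2\pi\sI u)$; for $F = \bC$ one must be slightly careful that $t$ ranges over $\bR$ inside the one-parameter subgroup $\exp(t E_{i,i+1})$ (as $G$ is viewed as a real Lie group), so the relevant character value is $\psi_\bC(\varepsilon t y_i) = \exp(2\pi\sI \varepsilon t y_i \cdot 2) $ when $t, y_i$ are real — giving the same $2\pi\varepsilon\sI y_i$ up to the normalization already built into the definition of $\psi_\bC$; I would state the computation uniformly using $\psi_F$ and note that both cases give $2\pi\varepsilon\sI y_i$.

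Finally, for $j > i+1$ the same factorization $y \exp(t E_{i,j}) = x(t)\, y$ holds with $x(t) = 1_n + t\,(\text{ratio of diagonal entries})\, E_{i,j} \in N$, but now $E_{i,j}$ lies in the kernel of the character $\psi_\varepsilon$ (only the entries $x_{k,k+1}$ enter $\psi_{(c_1,\dots,c_{n-1})}$), so $\psi_\varepsilon(x(t)) = 1$ for all $t$ and the derivative vanishes; this is the third case. The main obstacle, such as it is, is purely bookkeeping: correctly tracking which $y_k$'s are affected by the multiplicative perturbation in the chosen coordinate system for $A$ — i.e., getting the signs and the $-\partial_{i-1}+\partial_i$ pattern right — and handling the $F = \bC$ normalization of $\psi_\bC$ consistently. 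No deep input is needed beyond the explicit Iwasawa parametrization fixed in \S\ref{subsec:Fn_group_algebra}.
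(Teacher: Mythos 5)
Your proof is correct and takes exactly the same route the paper intends: the paper's proof is simply ``Direct computation,'' and your direct computation via the $NAK$ parametrization — tracking how $y\exp(tE_{i,j})$ decomposes, with $y\exp(tE_{i,i})$ scaling $y_i$ up and $y_{i-1}$ down, and $y\exp(tE_{i,j}) = (1_n + t(a_i/a_j)E_{i,j})\,y$ for $i<j$ so that only $j=i+1$ picks up the character derivative $2\pi\varepsilon\sI y_i$ — is the right bookkeeping. One small caveat: this lemma lives in the chapter where $F=\bR$ is fixed, and your parenthetical digression about $F=\bC$ is irrelevant here (the complex analogue is the separate Lemma~\ref{lem:Cn_g_act_Cpsi}, where $E_{i,j}^{\g}$ is not $E_{i,j}\otimes 1$ but $\tfrac12\{E_{i,j}\otimes 1 - (\sI E_{i,j})\otimes\sI\}$, so that discussion does not carry over as you sketch it); dropping that remark makes the argument cleaner without changing anything essential.
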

\begin{proof}
Direct computation.
\end{proof}

The $\bC$-algebra $Z(\g \gl (n,\bC))$ is generated by 
the Capelli elements $\cC_h\ (1\leq h\leq n)$, 
which are introduced in \S \ref{subsec:Fn_capelli}. 
Since ${\g_\bC}=\g \gl (n,\bR)\otimes_{\bR}\bC 
\simeq \g \gl (n,\bC)$ via $X\otimes c \mapsto cX$, 
the center $Z({\g_\bC})$ of $U({\g_\bC})$ is generated by elements 
\begin{align*}
&\cC_h^{{\g}}=\underset{w\in \gS_h}
{\sum_{1\leq i_1<i_2<\cdots <i_h\leq n,}}
\sgn (w)\cE_{i_1,i_{w(1)}}^{{\g}}\cE_{i_2,i_{w(2)}}^{{\g}}\cdots 
\cE_{i_h,i_{w(h)}}^{{\g}}
&&(1\leq h\leq n)
\end{align*}
with 
\[
\cE_{i,j}^{{\g}}=\left\{\begin{array}{ll} 
E_{i,i}^{{\g}}-\tfrac{n+1-2i}{2}&\text{ if }i=j,\\[1mm]
E_{i,j}^{{\g}}&\text{ if }i\neq j.
\end{array}\right.
\]
For $1\leq h\leq n$, we have 
\begin{align}
\label{eqn:Rn_Ch_modN}
\cC_h^{{\g}}\equiv 
\sum_{1\leq i_1<i_2<\cdots <i_h\leq n}
\cE_{i_1,i_1}^{{\g}}\cE_{i_2,i_2}^{{\g}}\cdots \cE_{i_h,i_h}^{{\g}}
\mod \gn_{\bC}U(\g_{\bC}).
\end{align}
Here $X\equiv Y\bmod \cR$ means $X-Y\in \cR$ for $X,Y\in U(\g_{\bC})$ and 
a subspace $\cR$ of $U(\g_{\bC})$.

\section{The eigenvalues of generators of $Z(\g_\bC)$}
\label{subsec:Rn_inf_char_ps}

In this section, let 
$\sigma =\chi_{(\nu_1,\delta_1)}\boxtimes \chi_{(\nu_2,\delta_2)} 
\boxtimes \cdots \boxtimes \chi_{(\nu_n,\delta_n)}$ 
with $\nu_1,\nu_2,\cdots ,\nu_n\in \bC$, 
$\delta_1,\delta_2,\cdots ,\delta_n\in \{0,1\}$, 
and we consider a principal series representation 
$(\Pi_\sigma ,H(\sigma))$. 
We note that $\sigma $ is a character of $P_{(1,1,\cdots ,1)}$, 
and identify the representation space 
\[
U_\sigma =
\bC_{(\nu_1,\delta_1)}\boxtimes_\bC \bC_{(\nu_2,\delta_2)}\boxtimes_\bC 
\cdots \boxtimes_\bC \bC_{(\nu_n,\delta_n)}
\]
of $\sigma$ with $\bC$ via the correspondence 
$c_1\boxtimes c_2\boxtimes \cdots \boxtimes c_n
\leftrightarrow c_1c_2\cdots c_n$. 
It is easy to see that the explicit form of $\rho_{(1,1,\cdots ,1)} $ 
is given by 
\begin{align*}
&\rho_{(1,1,\cdots ,1)}(xm)
=\prod_{i=1}^n|m_i|^{{(n+1-2i)}/{2}}
\end{align*}
for $x\in N_{(1,1,\cdots ,1)}=N$ and 
$m=\diag (m_1,m_2,\cdots ,m_n)\in M_{(1,1,\cdots ,1)}$. 
By the definition of $\Pi_\sigma$, we have 
\begin{align}
\label{eqn:Rn_gact_value1}
&(\Pi_\sigma (X)f)(1_n)=0&&(X\in \gn_{\bC}U(\g_\bC)),\\
\label{eqn:Rn_gact_value2}
&(\Pi_\sigma (E_{i,i}^{{\g}})f)(1_n)
=\bigl(\nu_i+\tfrac{n+1-2i}{2}\bigr)f(1_n)&
&(1\leq i\leq n)
\end{align}
for $f\in H(\sigma)_K$. 
\begin{prop}
\label{prop:Rn_Ch_eigenvalue}
Retain the notation. Then, for $f\in H(\sigma)_K$ and $1\leq h\leq n$, 
it holds that 
\begin{align*}
\Pi_\sigma (\cC_h^{{\g}})f=
\left(\sum_{1\leq i_1<i_2<\cdots <i_h\leq n}
\nu_{i_1}\nu_{i_2}\cdots \nu_{i_h}
\right)f.
\end{align*}
\end{prop}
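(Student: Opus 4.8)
The plan is to compute the action of $\cC_h^{\g}$ on the specific vector $f(1_n)$ and then transport the result to all of $H(\sigma)_K$ by $\g_\bC$-equivariance and irreducibility-free reasoning. The point is that $\cC_h^{\g}$ lies in $Z(\g_\bC)$, so $\Pi_\sigma(\cC_h^{\g})$ acts as a scalar on the quotient by any infinitesimal character; more directly, since $\Pi_\sigma$ generates $H(\sigma)_K$ from the "base point" and $\cC_h^{\g}$ is central, it suffices to evaluate the scalar at $1_n$. First I would invoke the congruence \eqref{eqn:Rn_Ch_modN}, namely
\[
\cC_h^{{\g}}\equiv
\sum_{1\leq i_1<i_2<\cdots <i_h\leq n}
\cE_{i_1,i_1}^{{\g}}\cE_{i_2,i_2}^{{\g}}\cdots \cE_{i_h,i_h}^{{\g}}
\mod \gn_{\bC}U(\g_{\bC}),
\]
together with \eqref{eqn:Rn_gact_value1}, which kills the $\gn_{\bC}U(\g_\bC)$ part when evaluated at $1_n$. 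This reduces the computation to the diagonal part.

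Next I would evaluate the diagonal monomials. Using the definition $\cE_{i,i}^{\g}=E_{i,i}^{\g}-\tfrac{n+1-2i}{2}$ and the eigenvalue formula \eqref{eqn:Rn_gact_value2}, we get
\[
(\Pi_\sigma(\cE_{i,i}^{\g})f)(1_n)
=\Bigl(\nu_i+\tfrac{n+1-2i}{2}-\tfrac{n+1-2i}{2}\Bigr)f(1_n)
=\nu_i f(1_n).
\]
Here one must be slightly careful: the $\cE_{i_1,i_1}^{\g},\dots,\cE_{i_h,i_h}^{\g}$ all commute (they are diagonal), so there is no ordering ambiguity, and an easy induction on the length of the monomial — peeling off one factor at a time, noting each intermediate vector is again a $K$-finite vector in $H(\sigma)_K$ so \eqref{eqn:Rn_gact_value2} reapplies — yields
\[
(\Pi_\sigma(\cE_{i_1,i_1}^{\g}\cE_{i_2,i_2}^{\g}\cdots \cE_{i_h,i_h}^{\g})f)(1_n)
=\nu_{i_1}\nu_{i_2}\cdots \nu_{i_h} f(1_n).
\]
Summing over $1\leq i_1<\cdots <i_h\leq n$ and combining with the congruence gives
\[
(\Pi_\sigma(\cC_h^{\g})f)(1_n)
=\Bigl(\sum_{1\leq i_1<i_2<\cdots <i_h\leq n}\nu_{i_1}\nu_{i_2}\cdots \nu_{i_h}\Bigr)f(1_n).
\]

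Finally I would promote this pointwise identity to the operator identity. Since $\cC_h^{\g}\in Z(\g_\bC)$, the operator $\Pi_\sigma(\cC_h^{\g})$ commutes with the $\g_\bC$-action, hence with the $G$-action on $H(\sigma)_K$; as $H(\sigma)_K$ is generated over $U(\g_\bC)$ by vectors supported near $1_n$ in the Iwasawa picture (equivalently, because evaluation at $1_n$ together with the $G$-action separates points of $H(\sigma)^0$), a central element acting by a scalar at $1_n$ acts by that same scalar everywhere. More carefully: for any $f\in H(\sigma)_K$ and $g\in G$, write $(\Pi_\sigma(\cC_h^{\g})f)(g)=(\Pi_\sigma(\cC_h^{\g})\Pi_\sigma(g)f)(1_n)$ using centrality to move $\cC_h^{\g}$ past $\Pi_\sigma(g)$, then apply the pointwise formula to $\Pi_\sigma(g)f\in H(\sigma)_K$. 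This yields $\Pi_\sigma(\cC_h^{\g})f=\bigl(\sum_{1\le i_1<\cdots<i_h\le n}\nu_{i_1}\cdots\nu_{i_h}\bigr)f$, as claimed. The only mild subtlety — the "main obstacle" such as it is — is justifying that evaluation at $1_n$ combined with right translation determines elements of $H(\sigma)_K$, i.e. that $P_{(1,\dots,1)}\backslash G$ is covered by the $K$-orbit of the identity coset; this is immediate from the decomposition $G=P_{(1,\dots,1)}K$, so the argument goes through without difficulty.
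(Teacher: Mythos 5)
Your computation of the scalar at $1_n$ is the same as the paper's: both invoke \eqref{eqn:Rn_Ch_modN}, \eqref{eqn:Rn_gact_value1} and \eqref{eqn:Rn_gact_value2} and peel off the diagonal factors. The promotion step is packaged differently (the paper decomposes by $K$-type and finds the matrix $\gamma_{i,j}$ of $\Pi_\sigma(\cC_h^{\g})$ on $\Hom_{K}(V_\tau,H(\sigma)_K)$ via Schur's lemma and evaluation at $1_n$; you argue directly from centrality), but these are essentially the same idea. One small inaccuracy to repair in your last paragraph: $H(\sigma)_K$ is not $G$-stable, so there is no ``$G$-action on $H(\sigma)_K$'' for $\Pi_\sigma(\cC_h^{\g})$ to commute with. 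What you actually need — and what suffices, since elements of $H(\sigma)$ in the compact picture are functions on $K$ and $G=P_{(1,\dots,1)}K$ — is only that $\Pi_\sigma(\cC_h^{\g})$ commutes with $\Pi_\sigma(k)$ for $k\in K$, which holds because $\cC_h^{\g}\in Z(\g_\bC)$ is $\Ad(K)$-invariant. Replacing ``$g\in G$'' by ``$k\in K$'' throughout that paragraph fixes the argument without changing its substance.
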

\begin{proof}
Let $1\leq h\leq n$. 
Let $(\tau ,V_\tau )$ be a $K$-type of $\Pi_\sigma$. 
Let $\{\eta_1,\eta_2,\cdots ,\eta_d\}$ be a basis of 
$\Hom_{{K}\cap M_{(1,1,\cdots ,1)}}(V_\tau ,U_\sigma )$. 
Because of (\ref{eqn:Rn_ps_Kstr}), 
it suffices to show that 
\begin{align*}
&\Pi_\sigma (\cC_h^{{\g}})\circ \hat{\eta}_i=
\left(\sum_{1\leq i_1<i_2<\cdots <i_h\leq n}
\nu_{i_1}\nu_{i_2}\cdots \nu_{i_h}\right)\hat{\eta}_i
&&(1\leq i\leq d)
\end{align*}
with $\hat{\eta}_i(v)(k)=\eta_i(\tau (k)v)\ 
(k\in K,\ v\in V_\tau)$. 
Since 
$\Pi_\sigma (\cC_h^{{\g}})\circ \hat{\eta}_i$ 
is an element of $\Hom_{{K}}(V_\tau ,H(\sigma )_K)$, 
there exist $\gamma_{i,1},\gamma_{i,2},\cdots ,\gamma_{i,d}\in \bC$ such that 
\begin{align*}
\Pi_\sigma (\cC_h^{{\g}})\circ \hat{\eta}_i
=\sum_{j=1}^d\gamma_{i,j}\hat{\eta}_j.
\end{align*}
We take elements $v_{\eta_1},v_{\eta_2},\cdots ,v_{\eta_d}$ of $V_\tau $
such that 
\begin{align*}
\eta_{i}(v_{\eta_j})
=\left\{
\begin{array}{ll}
1&\text{ if }\ i=j,\\
0&\text{ otherwise}
\end{array}
\right.
\end{align*}
for $1\leq i,j\leq d$. 
By the equalities (\ref{eqn:Rn_Ch_modN}), (\ref{eqn:Rn_gact_value1}) 
and (\ref{eqn:Rn_gact_value2}), we have 
\begin{align*}
\gamma_{i,j}=
(\Pi_\sigma (\cC_h^{{\g}})\hat{\eta}_{i}(v_{\eta_j}))(1_n)
=\left\{
\begin{array}{ll}
\displaystyle \sum_{1\leq i_1<i_2<\cdots <i_h\leq n}
\nu_{i_1}\nu_{i_2}\cdots \nu_{i_h}
&\text{ if }\ i=j,\\[5mm]
0&\text{ otherwise}
\end{array}
\right.
\end{align*}
for $1\leq i,j\leq d$, and obtain the assertion. 
\end{proof}

\chapter{Whittaker functions on $GL(2,\bR)$}
\label{sec:R2_whittaker}

Throughout this chapter, we set $n=2$ and $F=\bR $.

\section{Representations of $O(2)$}
\label{subsec:R2_rep_K}
In this section, we discuss the representation theory of ${K}=O(2)$. 

Let $\Lambda_2=\{(0,1)\}\cup \{(\lambda_1,0)\mid \lambda_1\in \bZ_{\geq 0}\}$. 
For $\lambda =(\lambda_1,\lambda_2)\in \Lambda_2$, 
let $\mathcal{P}_\lambda =\mathcal{P}_\lambda^{(2)}$ 
be the $\bC$-vector space of degree $\lambda_1$ 
homogeneous polynomials of two variables ${z}_1,{z}_2$, 
and define the action $T_\lambda $ of $K$ on $\mathcal{P}_\lambda $ by 
\begin{align*}
&(T_{\lambda}(k)p)({z}_1,{z}_2)=(\det k)^{\lambda_2}p(({z}_1,{z}_2)\cdot k)&
&(k\in K,\ p\in \mathcal{P}_\lambda ). 
\end{align*}
Here $({z}_1,{z}_2)\cdot k$ is the ordinal product of matrices. 
Since ${z}_1^2+{z}_2^2$ is ${K}$-invariant, 
we can define the quotient representation 
$\tau_\lambda =\tau_\lambda^{(2)}$ of $T_\lambda$ on the space 
\[
V_{\lambda}=V_{\lambda}^{(2)}=\mathcal{P}_\lambda /
(({z}_1^2+{z}_2^2)\mathcal{P}_{\lambda -(2,0)})
\]
for $\lambda \in \Lambda_2$. 
Here we put $\mathcal{P}_{\lambda -(2,0)}=\{0\}$ 
if $\lambda -(2,0)\not\in \Lambda_2$. 
Then $(\tau_{\lambda },V_{\lambda })$ 
is an irreducible representation of ${K}$, 
and we note that $\dim_\bC  V_\lambda =1$ or $\dim_\bC  V_\lambda =2$ 
according as $\lambda_1=0$ or $\lambda_1>0$. 
The set of equivalence classes of 
irreducible representations of ${K}$ is 
exhausted by $\{\tau_\lambda \mid \lambda \in \Lambda_2\}$. 

For $\lambda=(\lambda_1,\lambda_2)\in \Lambda_2$, 
let $Q_{\lambda}=\{\pm \lambda_1\}$. 
For $q\in Q_{\lambda}$, 
let $v_{\lambda ,q}$ 
be the image of 
\[
(\sgn (q) {z}_1+\sqrt{-1}{z}_2)^{|q|}
\]
under the natural surjection 
$\mathcal{P}_\lambda \to V_{\lambda}$. 
Then $\{v_{\lambda ,q}\mid q\in Q_\lambda \}$ is a basis of $V_{\lambda}$. 
By direct computation, 
for $\theta \in \bR$ and 
$\varepsilon_1,\varepsilon_2\in \{\pm 1\}$, 
we have 
\begin{align}
\label{eqn:R2_Kact}
&\tau_{\lambda }\bigl(k_\theta^{(2)} \bigr)
v_{\lambda,q}=e^{\sI q\theta }v_{\lambda,q},&
&\tau_{\lambda }\bigl(k_{(\varepsilon_1,\varepsilon_2)}^{(1,1)}\bigr)
v_{\lambda,q}
=\varepsilon_1^{\lambda_2}\varepsilon_2^{\lambda_1+\lambda_2} 
v_{\lambda,\,\varepsilon_1\varepsilon_2q}
\end{align}
with 
\begin{align*}
k_\theta^{(2)} &=\left(
\begin{array}{cc}
\cos \theta &\sin \theta \\
-\sin \theta &\cos \theta 
\end{array}
\right) ,&
k_{(\varepsilon_1,\varepsilon_2)}^{(1,1)}&=\left(
\begin{array}{cc}
\varepsilon_1&0\\
0&\varepsilon_2
\end{array}
\right) \in {K}.
\end{align*}
We denote the differential of $\tau_{\lambda }$ 
again by $\tau_{\lambda }$. Then we have 
\begin{align}
\label{eqn:R2_gkact}
&\tau_{\lambda }(E_{1,2}^{\gk} )v_{\lambda,q}=\sI q v_{\lambda,q}.
\end{align}

For $r\in \{\pm 2\}$, we define the element $X_{r}^{\gp}=X_{r}^{\gp_2}$ 
of $\gp_{\bC}$ by 
\begin{align}
&X_{r}^{\gp}=E_{1,1}^{{\gp}}-E_{2,2}^{{\gp}}+r\sI E_{1,2}^{{\gp}}. 
\label{eqn:R2_def_Xr}
\end{align}
Then $\{\cC_1^{\g} ,X_{2}^{\gp},X_{-2}^{\gp}\}$ is 
a basis of $\gp_{\bC}$. 
We regard $\gp_\bC$ as a $K$-module by the adjoint action $\Ad$. 
By direct computation, for $\theta \in \bR$, $\varepsilon_1,\varepsilon_2\in \{\pm 1\}$ 
and $r\in \{\pm 2\}$, we have 
\begin{align}
\label{eqn:R2_Kact_p1}
&\Ad \bigl(k^{(2)}_\theta \bigr)\cC_1^{\g}=
\Ad \bigl(k^{(1,1)}_{(\varepsilon_1,\varepsilon_2)} \bigr)\cC_1^{\g}=
\cC_1^{\g},\\
\label{eqn:R2_Kact_p2}
&\Ad \bigl(k^{(2)}_\theta \bigr)X^{\gp}_{r}=e^{\sI r\theta }X^{\gp}_r,&
&\Ad \bigl(k^{(1,1)}_{(\varepsilon_1,\varepsilon_2)} \bigr)X^{\gp}_{r}
=X^{\gp}_{\varepsilon_1\varepsilon_2r}.&
\end{align}
Comparing (\ref{eqn:R2_Kact_p1}) and (\ref{eqn:R2_Kact_p2}) with 
(\ref{eqn:R2_Kact}), 
we have 
\begin{align*}
\gp_{\bC}=(\bC \, \cC_1^{\g})\oplus 
(\bC \, X^{\gp}_{2}\oplus \bC \, X^{\gp}_{-2})\simeq V_{(0,0)}\oplus V_{(2,0)}
\end{align*}
as $K$-modules, via $\cC_1^{\g}\mapsto v_{(0,0),0}$ and 
$X^{\gp}_{r}\mapsto v_{(2,0),r}$ ($r\in \{\pm 2\}$).

\section{Principal series representations}
\label{subsec:R2_ps}

In this section, 
let $\sigma =\chi_{(\nu_1,\delta_1)}\boxtimes \chi_{(\nu_2,\delta_2)}$ 
with $\nu_1,\nu_2\in \bC$ and $\delta_1,\delta_2\in \{0,1\}$ 
such that $\delta_1\geq \delta_2$. 
We recall the $(\g_{\bC},K)$-module structure 
of a principal series representation 
$(\Pi_\sigma ,H(\sigma ))$ of $G=GL(2,\bR)$. 
We identify the representation space $U_\sigma $ of $\sigma$ 
with $\bC$ as in \S \ref{subsec:Rn_inf_char_ps}.

Let $\Lambda (\sigma)=\{(\delta_1-\delta_2,\delta_2)\}\cup 
\{(\lambda_1,0)\mid \lambda_1\in \delta_1-\delta_2+2\bZ_{\geq 1}\}$. 
Because of 
\[
{K}\cap M_{(1,1)}=
\bigl\{\,k_{(\varepsilon_1,\varepsilon_2)}^{(1,1)}
\,\big| \,
\varepsilon_1,\varepsilon_2\in \{\pm 1\}\,\bigr\} 
\]
and (\ref{eqn:R2_Kact}), 
for $\lambda =(\lambda_1,\lambda_2)\in \Lambda_2$, we have 
\begin{align*}
&\Hom_{{K}\cap M_{(1,1)}}(V_{\lambda},U_{\sigma})
=\left\{\begin{array}{ll}
\bC \,\eta_{(\sigma ;\lambda )}&\text{ if }\ 
\lambda \in \Lambda (\sigma),\\
0&\text{ if }\ 
\lambda \not\in \Lambda (\sigma),
\end{array}\right.
\end{align*} 
where $\eta_{(\sigma ;\lambda )}\colon V_{\lambda }\to U_\sigma $ 
is a $\bC$-linear map defined by 
$\eta_{(\sigma ;\lambda )}(v_{\lambda,q})=\sgn (q)^{\delta_1}$ 
$(q\in Q_\lambda)$. 
Hence, 
we know from (\ref{eqn:Rn_ps_Kstr}) that 
\begin{align*}
&H({\sigma})_K=\bigoplus_{\lambda \in \Lambda (\sigma)}
\{\hat{\eta}_{(\sigma ;\lambda )}(v)\mid 
v\in V_{\lambda }\} 
\end{align*}
with 
\begin{align}
\label{eqn:R2_def_hateta}
&\hat{\eta}_{(\sigma ;\lambda )}(v)(k)=\eta_{(\sigma ;\lambda )}
(\tau_{\lambda} (k)v)&&(v\in V_{\lambda},\ k\in {K}). 
\end{align}
We set 
${\zeta}_{(\sigma;q)}
=\sgn (q)^{\delta_1}\hat{\eta}_{(\sigma ;\lambda )}(v_{\lambda,q})$ 
with $\lambda =(|q|,\lambda_2)\in \Lambda (\sigma)$ 
for $q \in \delta_1-\delta_2+2\bZ$. 
Then we have 
\begin{align}
\label{eqn:R2_ps_str}
&H({\sigma})_K=\bigoplus_{q \in \delta_1-\delta_2+2\bZ}
\bC\,{\zeta}_{(\sigma;q)},
\end{align}
and 
\begin{align}
\label{eqn:R2_Kact1_ps}
&\Pi_{\sigma}(k_\theta^{(2)} ){\zeta}_{(\sigma ;q)}
=e^{\sI q\theta }{\zeta}_{(\sigma ;q)}&&(\theta \in \bR),\\
\label{eqn:R2_Kact2_ps}
&\Pi_\sigma 
(k_{(\varepsilon_1,\varepsilon_2)}^{(1,1)}){\zeta}_{(\sigma ;q)}
=
\varepsilon_1^{\delta_1} \varepsilon_2^{\delta_2} 
{\zeta}_{(\sigma ;\varepsilon_1\varepsilon_2q)}&
&(\varepsilon_1,\varepsilon_2\in \{\pm 1\}),\\
\label{eqn:R2_basis_ps}
&{\zeta}_{(\sigma ;q)}(1_2)=1.
\end{align}
We consider the action of the basis 
$\{E^{\gk}_{1,2},\cC_1^{{\g}},X_2^{\gp},X_{-2}^{\gp}\}$ 
of $\g_\bC$ on $H(\sigma)_K$. We have already given the action 
of $\cC_1^{{\g}}$ in Proposition \ref{prop:Rn_Ch_eigenvalue}. 
By (\ref{eqn:R2_Kact1_ps}), we have 
\begin{align}
\label{eqn:R2_gk_act_ps}
&\Pi_\sigma (E^{\gk}_{1,2}){\zeta}_{(\sigma ;q)}=\sI q\,{\zeta}_{(\sigma ;q)}.
\end{align}
We give the actions of $X_2^\gp$ and $X_{-2}^\gp$ 
in the following lemma.

\begin{lem}
\label{lem:R2_g_act_ps}
Retain the notation. For $q \in \delta_1-\delta_2+2\bZ$, 
it holds that 
\begin{align*}
&\Pi_\sigma (X_{r}^{\gp}){\zeta}_{(\sigma;q)}
=(2\nu_1-2\nu_2+2+rq){\zeta}_{(\sigma;q+r)}&&(r\in \{\pm 2\}).
\end{align*}
\end{lem}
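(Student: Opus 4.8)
The plan is to separate the problem into a ``shape'' part and a ``scalar'' part. First I would show, using only $K$-equivariance, that $\Pi_\sigma(X_r^{\gp})\zeta_{(\sigma;q)}$ is a scalar multiple of $\zeta_{(\sigma;q+r)}$; then I would determine that scalar by evaluating at $1_2$.

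For the shape part, apply $\Pi_\sigma(k_\theta^{(2)})$ to $\Pi_\sigma(X_r^{\gp})\zeta_{(\sigma;q)}$ and use the compatibility $\Pi_\sigma(k)\Pi_\sigma(X)\Pi_\sigma(k)^{-1}=\Pi_\sigma(\Ad(k)X)$ of the $(\g_\bC,K)$-module structure together with (\ref{eqn:R2_Kact_p2}) and (\ref{eqn:R2_Kact1_ps}); this multiplies the vector by $e^{\sI r\theta}e^{\sI q\theta}=e^{\sI(q+r)\theta}$. Since $r\in\{\pm 2\}$ we have $q+r\in\delta_1-\delta_2+2\bZ$, so by (\ref{eqn:R2_ps_str}) and (\ref{eqn:R2_Kact1_ps}) the common $e^{\sI(q+r)\theta}$-eigenspace of the $\Pi_\sigma(k_\theta^{(2)})$ in $H(\sigma)_K$ is exactly $\bC\,\zeta_{(\sigma;q+r)}$. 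Hence $\Pi_\sigma(X_r^{\gp})\zeta_{(\sigma;q)}=c\,\zeta_{(\sigma;q+r)}$ for some $c\in\bC$, and by (\ref{eqn:R2_basis_ps}) we get $c=(\Pi_\sigma(X_r^{\gp})\zeta_{(\sigma;q)})(1_2)$.

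For the scalar part, rewrite $X_r^{\gp}$ using $E_{i,j}^{\gp}=E_{i,j}^{\g}+E_{j,i}^{\g}$ and $E_{i,j}^{\gk}=E_{i,j}^{\g}-E_{j,i}^{\g}$, so that from (\ref{eqn:R2_def_Xr})
\[
X_r^{\gp}=2E_{1,1}^{\g}-2E_{2,2}^{\g}+2r\sI E_{1,2}^{\g}-r\sI E_{1,2}^{\gk}.
\]
Then evaluate $\Pi_\sigma(\cdot)\zeta_{(\sigma;q)}$ at $1_2$ term by term: the summand $2r\sI E_{1,2}^{\g}$ lies in $\gn_\bC$, hence contributes $0$ by (\ref{eqn:Rn_gact_value1}); the summands $2E_{1,1}^{\g}$ and $-2E_{2,2}^{\g}$ contribute $2\bigl(\nu_1+\tfrac{1}{2}\bigr)$ and $-2\bigl(\nu_2-\tfrac{1}{2}\bigr)$ by (\ref{eqn:Rn_gact_value2}) with $n=2$; and $-r\sI E_{1,2}^{\gk}$ contributes $-r\sI\cdot\sI q=rq$ by (\ref{eqn:R2_gk_act_ps}) and (\ref{eqn:R2_basis_ps}). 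Adding these gives $c=2\nu_1-2\nu_2+2+rq$, which is the assertion.

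The essential idea is the $K$-equivariance reduction, which replaces the computation of the full radial part of $\Pi_\sigma(X_r^{\gp})\zeta_{(\sigma;q)}$ by a single evaluation at $1_2$; once this is in place the remaining calculation is short. The only points requiring attention are the rewriting of $X_r^{\gp}$ — the lower-triangular generator $E_{2,1}^{\g}$ occurring in $E_{1,2}^{\gp}$ must be traded for $E_{1,2}^{\g}-E_{1,2}^{\gk}$ before (\ref{eqn:Rn_gact_value1}) and (\ref{eqn:R2_gk_act_ps}) can be applied — and the bookkeeping of the shifts $\tfrac{n+1-2i}{2}$ for $i=1,2$.
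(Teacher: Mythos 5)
Your proof is correct and follows the paper's own argument almost verbatim: both first use the $SO(2)$-weight (via $\Ad(k_\theta^{(2)})X_r^{\gp}=e^{\sI r\theta}X_r^{\gp}$ together with (\ref{eqn:R2_Kact1_ps}) and the weight-space decomposition (\ref{eqn:R2_ps_str})) to reduce to a scalar multiple of $\zeta_{(\sigma;q+r)}$, and then evaluate at $1_2$ after rewriting $X_r^{\gp}=2E_{1,1}^{\g}-2E_{2,2}^{\g}+r\sI(2E_{1,2}^{\g}-E_{1,2}^{\gk})$. No issues.
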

\begin{proof}
Let $r\in \{\pm 2\}$. 
By (\ref{eqn:R2_Kact_p2}) and (\ref{eqn:R2_Kact1_ps}), we have 
\begin{align*}
&\Pi_\sigma (k^{(2)}_\theta )
\Pi_\sigma (X_{r}^{\gp})
{\zeta}_{(\sigma;q)}
=e^{\sI (q+r)\theta }\,\Pi_\sigma (X_{r}^{\gp}){\zeta}_{(\sigma;q)}&
&(\theta \in \bR ).
\end{align*}
From (\ref{eqn:R2_ps_str}), (\ref{eqn:R2_Kact1_ps}) and 
this equality, we know that 
$\Pi_\sigma (X_{r}^{\gp}){\zeta}_{(\sigma;q)}$ is a constant multiple 
of ${\zeta}_{(\sigma ;q+r)}$. 
Since ${\zeta}_{(\sigma ;q+r)}(1_2)=1$ and 
\begin{align*}
(\Pi_\sigma (X_{r}^{\gp}){\zeta}_{(\sigma;q)})(1_2)
&=\bigl(\Pi_\sigma \!
\bigl(2E_{1,1}^{{\g}}-2E_{2,2}^{{\g}}
+r\sI (2E_{1,2}^{{\g}}- E_{1,2}^{{\gk}})\bigr)
{\zeta}_{(\sigma;q)}\bigr)(1_2)\\
&=(2\nu_1+1)-(2\nu_2-1)+0+rq
=2\nu_1-2\nu_2+2+rq,
\end{align*}
we obtain the assertion. 
\end{proof}
From the above formulas of the actions of $\g_{\bC}$ and $K$ 
on $H(\sigma)_K$, we know that 
$H(\sigma )_K$ is an irreducible $(\g_\bC ,K)$-module 
if and only if 
\begin{align}
\label{eqn:R2_cdn_ps_irred}
&\nu_1-\nu_2+1 \not\in \delta_1-\delta_2+2\bZ&
&\text{ or }&
&\nu_1-\nu_2=0. 
\end{align}

\section{Principal series Whittaker functions}
\label{subsec:R2_ps_whittaker}

We use the notation in \S \ref{subsec:R2_ps}. 
We will give the explicit formulas of Whittaker functions 
for $\Pi_\sigma$ with 
$\sigma =\chi_{(\nu_1,\delta_1)}\boxtimes \chi_{(\nu_2,\delta_2)}$ 
$(\delta_1\geq \delta_2)$. 
Let $\Phi \in {\cI}_{\Pi_\sigma ,\psi_1}$. 
We note that $\Phi$ is characterized by 
\begin{align*}
&\Phi ({\zeta}_{(\sigma;q)})(y)&&(q \in \delta_1-\delta_2 +2\bZ)
\end{align*}
with $y=\diag(y_1y_2,y_2)\in A$. 
We set $\partial_i = y_i \dfrac{\partial}{\partial y_i}$ for $i=1,2$.

\begin{lem} Retain the notation. 
The function $\Phi ({\zeta}_{(\sigma;q)})(y) $ on $A$ satisfies
\begin{align}
\begin{split} \label{eqn:R2_ps_PDE1_1}
& (\partial_2-\nu_1-\nu_2) \Phi ({\zeta}_{(\sigma;q)})(y)= 0,  
\end{split} \\[1mm]
\begin{split} \label{eqn:R2_ps_PDE1_2}
& \bigl\{\bigl(\partial_1-\tfrac{1}{2}\bigr)
\bigl(-\partial_1+\partial_2+\tfrac{1}{2}\bigr)
+(2\pi y_1)^2-(2\pi y_1)q-\nu_1\nu_2\bigr\}
\Phi ({\zeta}_{(\sigma;q)})(y)=0,
\end{split}
\end{align}
and 
\begin{align} \label{eqn:R2_ps_PDE1_3}
\begin{split}
& (4 \partial_1 -2 \partial_2-4\pi ry_1+rq)\Phi ({\zeta}_{(\sigma;q)})(y)
 =(2\nu_1-2\nu_2+2+rq) \Phi ({\zeta}_{(\sigma;q+r)})(y) 
\end{split}
\end{align}
for $q\in \delta_1-\delta_2+2\bZ$ and $r\in \{\pm 2\}$. 
\end{lem}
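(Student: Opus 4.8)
The plan is to use that every $\Phi\in {\cI}_{\Pi_\sigma,\psi_1}$ is a $(\g_\bC,K)$-homomorphism, so that $R(X)\circ \Phi=\Phi\circ \Pi_\sigma(X)$ for all $X\in U(\g_\bC)$ and $\Phi$ is $K$-equivariant, and to feed into this the eigenvector identities already available on the representation side. Concretely, Proposition \ref{prop:Rn_Ch_eigenvalue} gives $\Pi_\sigma(\cC_1^{\g})\zeta_{(\sigma;q)}=(\nu_1+\nu_2)\zeta_{(\sigma;q)}$ and $\Pi_\sigma(\cC_2^{\g})\zeta_{(\sigma;q)}=\nu_1\nu_2\,\zeta_{(\sigma;q)}$, while Lemma \ref{lem:R2_g_act_ps} gives $\Pi_\sigma(X_r^{\gp})\zeta_{(\sigma;q)}=(2\nu_1-2\nu_2+2+rq)\zeta_{(\sigma;q+r)}$. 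Applying $\Phi$ and writing $W=\Phi(\zeta_{(\sigma;q)})$, these become $R(\cC_1^{\g})W=(\nu_1+\nu_2)W$, $R(\cC_2^{\g})W=\nu_1\nu_2 W$, and $R(X_r^{\gp})W=(2\nu_1-2\nu_2+2+rq)\Phi(\zeta_{(\sigma;q+r)})$; the three asserted identities (\ref{eqn:R2_ps_PDE1_1}), (\ref{eqn:R2_ps_PDE1_2}), (\ref{eqn:R2_ps_PDE1_3}) are exactly what these say once the left-hand operators are evaluated on $A$.

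To carry this out I would first rewrite the three elements in terms of the diagonal generators $E_{i,i}^{\g}$, the single upper unipotent generator $E_{1,2}^{\g}$, and the single compact generator $E_{1,2}^{\gk}$, using $E_{2,1}^{\g}=E_{1,2}^{\g}-E_{1,2}^{\gk}$ together with $E_{i,i}^{\gp}=2E_{i,i}^{\g}$ and $E_{1,2}^{\gp}=2E_{1,2}^{\g}-E_{1,2}^{\gk}$. This yields $\cC_1^{\g}=E_{1,1}^{\g}+E_{2,2}^{\g}$, $\cC_2^{\g}=\cE_{1,1}^{\g}\cE_{2,2}^{\g}-(E_{1,2}^{\g})^2+E_{1,2}^{\g}E_{1,2}^{\gk}$, and $X_r^{\gp}=2E_{1,1}^{\g}-2E_{2,2}^{\g}+r\sI(2E_{1,2}^{\g}-E_{1,2}^{\gk})$. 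Next, Lemma \ref{lem:Rn_g_act_Cpsi} describes $R(E_{i,j}^{\g})$ for $i\leq j$ on the radial part of any function in $C^\infty(N\backslash G;\psi_1)$: $R(E_{1,1}^{\g})$ and $R(E_{2,2}^{\g})$ act as $\partial_1$ and $-\partial_1+\partial_2$, and $R(E_{1,2}^{\g})$ acts as multiplication by $2\pi\sI y_1$. The compact pieces are handled separately by $K$-equivariance: $R(E_{1,2}^{\gk})W=\Phi(\Pi_\sigma(E_{1,2}^{\gk})\zeta_{(\sigma;q)})=\sI q\,W$ using (\ref{eqn:R2_gk_act_ps}). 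Assembling the pieces (and tracking the shifts $\cE_{i,i}^{\g}=E_{i,i}^{\g}-\tfrac{3-2i}{2}$ and the factors $2\pi\sI$), one gets $R(\cC_1^{\g})W|_A=\partial_2 W$, which is (\ref{eqn:R2_ps_PDE1_1}); $R(\cC_2^{\g})W|_A=\{(\partial_1-\tfrac12)(-\partial_1+\partial_2+\tfrac12)+(2\pi y_1)^2-(2\pi y_1)q\}W$, which is (\ref{eqn:R2_ps_PDE1_2}); and $R(X_r^{\gp})W|_A=(4\partial_1-2\partial_2-4\pi ry_1+rq)W$, which is (\ref{eqn:R2_ps_PDE1_3}).

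There is no deep obstacle here; the content is bookkeeping. The one point I would write out with care is the legitimacy of iterating the radial formulas of Lemma \ref{lem:Rn_g_act_Cpsi} when handling the second-order operator $\cC_2^{\g}$: one must observe that $R(X)f$ again lies in $C^\infty(N\backslash G;\psi_1)$ whenever $f$ does, since $R(X)$ is a left-$G$-invariant differential operator, so that, for instance, $R(\cE_{1,1}^{\g}\cE_{2,2}^{\g})$ genuinely acts on $W|_A$ as the composite $(\partial_1-\tfrac12)(-\partial_1+\partial_2+\tfrac12)$ and $R((E_{1,2}^{\g})^2)$ as multiplication by $(2\pi\sI y_1)^2=-(2\pi y_1)^2$. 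It is also essential to move every lower-triangular entry $E_{2,1}^{\g}$ into a $\gk$-term \emph{before} applying $R$, since Lemma \ref{lem:Rn_g_act_Cpsi} only covers the case $i\leq j$; once this is done, the half-integer shifts and the powers of $2\pi\sI$ combine to give precisely the stated coefficients.
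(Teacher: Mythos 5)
Your proposal is correct and follows essentially the same route as the paper: express $\cC_1^{\g}$, $\cC_2^{\g}$, and $X_r^{\gp}$ in terms of $E_{i,i}^{\g}$, $E_{1,2}^{\g}$, $E_{1,2}^{\gk}$ via $E_{2,1}^{\g}=E_{1,2}^{\g}-E_{1,2}^{\gk}$, use Proposition \ref{prop:Rn_Ch_eigenvalue} and Lemma \ref{lem:R2_g_act_ps} for the representation-side identities, then evaluate the radial parts with Lemma \ref{lem:Rn_g_act_Cpsi} and \eqref{eqn:R2_gk_act_ps}. The bookkeeping you carry out, including the remark on iterating the radial formulas and moving lower-triangular entries into the $\gk$-part before applying $R$, is exactly the content of the paper's proof.
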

\begin{proof}
Recall that the generators $\cC_1^{{\g}},\cC_2^{{\g}}$ 
of $Z({\g_\bC})$ in \S \ref{subsec:Rn_g_gen_Zg} are given by 
\begin{align}
\label{eqn:R2_C1}
\cC_1^{{\g}}&=E_{1,1}^{{\g}}+E_{2,2}^{{\g}}, \\ 
\begin{split}\label{eqn:R2_C2}
\cC_2^{{\g}}&=
\bigl(E_{1,1}^{{\g}}-\tfrac{1}{2}\bigr)
\bigl(E_{2,2}^{{\g}}+\tfrac{1}{2}\bigr)
-E_{1,2}^{{\g}}E_{2,1}^{{\g}} \\
& =\bigl(E_{1,1}^{{\g}}-\tfrac{1}{2}\bigr)
\bigl(E_{2,2}^{{\g}}+\tfrac{1}{2}\bigr)
-(E_{1,2}^{{\g}})^2 +E_{1,2}^{{\g}}E_{1,2}^{{\gk}}.
\end{split}
\end{align}
Moreover, we can see that
\begin{align}
\label{eqn:R2_Xpm}
X_r^{\gp} = 
2(E_{1,1}^{\g}-E_{2,2}^{\g})+r\sqrt{-1}(2E_{1,2}^{\g}-E_{1,2}^{\gk}).
\end{align}
By Proposition \ref{prop:Rn_Ch_eigenvalue} and Lemma \ref{lem:R2_g_act_ps}, 
we have
\begin{align*}
&(R(\cC_1^{{\g}})\Phi ({\zeta}_{(\sigma;q)}))(y)
  -(\nu_1+\nu_2)\Phi ({\zeta}_{(\sigma;q)})(y)= 0, \\
&(R(\cC_2^{{\g}})\Phi ({\zeta}_{(\sigma;q)}))(y)
-\nu_1\nu_2\Phi ({\zeta}_{(\sigma;q)})(y)= 0, 
\end{align*}
and 
\[ 
(R(X_r^{\gp}) \Phi ({\zeta}_{(\sigma;q)}))(y) 
= (2\nu_1-2\nu_2+2+rq)\Phi ({\zeta}_{(\sigma;q+r)})(y).
\]
Applying (\ref{eqn:R2_gk_act_ps}) 
and Lemma \ref{lem:Rn_g_act_Cpsi} 
to these equalities with 
the expressions (\ref{eqn:R2_C1}), (\ref{eqn:R2_C2}) and 
(\ref{eqn:R2_Xpm}), 
we obtain the assertion. 
\end{proof}

By (\ref{eqn:R2_ps_PDE1_1}), 
for $q\in \delta_1-\delta_2+2\bZ$, 
we can define a function $\hat{\varphi}_q $ on $\bR_+$ by
\begin{align}
\label{eqn:R2_def_hat_varphi}
\Phi ({\zeta}_{(\sigma;q)})(y)
=(\sI )^{q}y_1^{1/2}y_2^{\nu_1+\nu_2}\hat{\varphi}_q(y_1)
\end{align}
with $y=\diag (y_1y_2,y_2)\in A$. 
Then the equations (\ref{eqn:R2_ps_PDE1_2}) and (\ref{eqn:R2_ps_PDE1_3}) 
lead that 
\begin{align} \label{eqn:R2_ps_PDE1_2'}
\{(\partial_1-\nu_1)(\partial_1-\nu_2) -(2\pi y_1)^2 + (2\pi y_1)q \} 
\hat{\varphi}_q=0
\end{align}
and 
\begin{align} \label{eqn:R2_ps_PDE1_3'}
\begin{split} 
&(-4\partial_1+4\pi ry_1+2\nu_1+2\nu_2-2-rq) \hat{\varphi}_q\\
&=(2\nu_1-2\nu_2+2+rq) \hat{\varphi}_{q+r},
\end{split}
\end{align}
respectively, for $q\in \delta_1-\delta_2+2\bZ$ and $r\in \{\pm 2\}$. 
Here we understand $\partial_1 = y_1\dfrac{d}{dy_1}$.

Until the end of this section, we assume 
$\nu_1-\nu_2+\delta_1-\delta_2+1\not\in 2\bZ_{\leq 0}$. Then 
the functions $\hat{\varphi}_q$ $(q\in \delta_1-\delta_2+2\bZ )$ 
are determined from 
$\hat{\varphi}_{\pm (\delta_1-\delta_2)}$ by the equation 
(\ref{eqn:R2_ps_PDE1_3'}). 
Hence, we know that 
$\hat{\varphi}_{\delta_1-\delta_2}=\hat{\varphi}_{\delta_2-\delta_1}=0$ 
if and only if $\Phi =0$. 
We first consider explicit formulas of 
Whittaker functions at the minimal $K$-type 
$\tau_{(\delta_1-\delta_2,\delta_2)}$, which are 
corresponding to $\hat{\varphi}_{\pm (\delta_1-\delta_2)}$.


\begin{thm}
\label{thm:R2_ps_Whittaker}
Let $\sigma = \chi_{(\nu_1,\delta_1)}\boxtimes \chi_{(\nu_2,\delta_2)}$ with 
$\nu_1,\nu_2\in \bC$ and $ \delta_1,\delta_2 \in \{0,1\}$ 
$(\delta_1\geq \delta_2)$ 
such that $\nu_1-\nu_2+\delta_1-\delta_2+1\not\in 2\bZ_{\leq 0}$. \\[1mm] 
(1) There exists a homomorphism 
$\Phi_{\sigma}^{\rm mg} \in {\cI}_{\Pi_\sigma ,\psi_1}^{\rm mg}$ 
with the following radial part.
\begin{itemize}
\item The case $\delta_1=\delta_2$: 
\begin{align*}
\hspace{10mm}
\Phi_{\sigma}^{\rm mg}({\zeta}_{(\sigma;0)})(y) 
&=2y_1^{(\nu_1+\nu_2+1)/2}y_2^{\nu_1+\nu_2}
K_{(\nu_1-\nu_2)/2}(2\pi y_1)\\
&=\frac{y_1^{1/2}y_2^{\nu_1+\nu_2}}{4\pi \sI}
\int_s
\Gamma_\bR (s+\nu_1)\Gamma_\bR (s+\nu_2)y_1^{-s}ds.
\end{align*}

\item The case $(\delta_1,\delta_2)=(1,0)$: 
For $q\in \{\pm1\} $, 
\begin{align*}
\hspace{10mm}
\Phi_{\sigma}^{\rm mg}({\zeta}_{(\sigma;q)})(y)=
& \,2(\sI)^qy_1^{(\nu_1+\nu_2+2)/2}y_2^{\nu_1+\nu_2} \\
& \times 
  \bigl(K_{(\nu_1-\nu_2+1)/2}(2\pi y_1)+q 
K_{(\nu_1-\nu_2-1)/2}(2\pi y_1) \bigr)\\
=&\,(\sI )^{q}\frac{y_1^{1/2}y_2^{\nu_1+\nu_2}}{4\pi \sI}
\int_s
\{\Gamma_\bR (s+\nu_1+1)\Gamma_\bR (s+\nu_2)\\
&+q\Gamma_\bR (s+\nu_1)\Gamma_\bR (s+\nu_2+1)\}
y_1^{-s}ds.
\end{align*}
\end{itemize}
Here $y=\diag (y_1y_2,y_2)\in A $, 
and the path of integration $\int_{s}$ is the vertical line 
from $\mathrm{Re}(s)-\sI \infty$ to $\mathrm{Re}(s)+\sI \infty$ 
with the sufficiently large real part to keep the poles of the integrand 
on its left.  \\
\noindent 
(2) Assume $\nu_1-\nu_2 \notin \delta_1-\delta_2+2\bZ$. 
Then there exist homomorphisms $\Phi_{\sigma}^{+}$ and $\Phi_{\sigma}^{-}$ 
in ${\cI}_{\Pi_\sigma ,\psi_1}$ 
with the following radial parts.
\begin{itemize}
\item The case $\delta_1=\delta_2$:
\begin{align*}
\Phi_{\sigma}^{\pm }({\zeta}_{(\sigma;0)})(y) 
= y_1^{(\nu_1+\nu_2+1)/2}y_2^{\nu_1+\nu_2}
\hat{I}_{\pm (\nu_1-\nu_2)/2}(2\pi y_1).
\end{align*}
\item The case $(\delta_1,\delta_2)=(1,0)$: For $q\in \{\pm 1\} $, 
\begin{align*}
\begin{split}
\hspace{10mm}
\Phi_{\sigma}^{\pm }({\zeta}_{(\sigma;q)})(y) = 
\,& (\sI)^qy_1^{(\nu_1+\nu_2+2)/2}y_2^{\nu_1+\nu_2} \\
& \times 
  \bigl(\hat{I}_{\pm(\nu_1-\nu_2+1)/2}(2\pi y_1)+q 
\hat{I}_{\pm(\nu_1-\nu_2-1)/2}(2\pi y_1) \bigr).
\end{split}
\end{align*}
\end{itemize}
Here $y=\diag (y_1y_2,y_2)\in A $. 
Moreover, $\{\Phi_{\sigma}^{+},\Phi_{\sigma}^{-}\}$ forms a basis of
${\cI}_{\Pi_\sigma ,\psi_1}$ and satisfies 
$\Phi_{\sigma}^{\rm mg}  = \Phi_{\sigma}^{+} + \Phi_{\sigma}^{-}$. 
\end{thm}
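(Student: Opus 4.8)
The plan is to reduce everything to the two ordinary differential recursions \eqref{eqn:R2_ps_PDE1_2'} and \eqref{eqn:R2_ps_PDE1_3'} for the radial functions $\hat\varphi_q$, and to verify the claimed formulas by checking these. First I would recall that, by \eqref{eqn:R2_def_hat_varphi}, a homomorphism $\Phi\in\cI_{\Pi_\sigma,\psi_1}$ is the same datum as a family $(\hat\varphi_q)_{q\in\delta_1-\delta_2+2\bZ}$ of smooth functions on $\bR_+$ satisfying \eqref{eqn:R2_ps_PDE1_2'} for every $q$ and \eqref{eqn:R2_ps_PDE1_3'} for every $q$ and $r\in\{\pm2\}$; under the running hypothesis $\nu_1-\nu_2+\delta_1-\delta_2+1\notin2\bZ_{\leq0}$ the whole family is determined by $\hat\varphi_{\pm(\delta_1-\delta_2)}$, and $\dim\cI_{\Pi_\sigma,\psi_1}\le 2$ follows (consistent with \eqref{eqn:Rn_dimWh_gps}). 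So it suffices to (a) exhibit solutions with the stated radial parts at the minimal $K$-type, (b) check they genuinely extend to a full consistent family via \eqref{eqn:R2_ps_PDE1_3'}, and (c) settle the growth, spanning, and the decomposition $\Phi^{\rm mg}_\sigma=\Phi^+_\sigma+\Phi^-_\sigma$.

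For part (1), I would substitute the Mellin–Barnes expression into \eqref{eqn:R2_ps_PDE1_2'}: the operator $(\partial_1-\nu_1)(\partial_1-\nu_2)$ acts on $y_1^{-s}$ as $(s+\nu_1)(s+\nu_2)$ up to sign bookkeeping, while $(2\pi y_1)^2$ and $(2\pi y_1)q$ shift $s\mapsto s-2$ and $s\mapsto s-1$; using the functional equations \eqref{eqn:Fn_FE_GammaRC} for $\Gamma_\bR$ one sees the contour integrand with $\Gamma_\bR(s+\nu_1)\Gamma_\bR(s+\nu_2)$ (resp.\ the $(1,0)$ combination) is annihilated, after shifting contours, which is the standard verification that $K_\nu$ solves the modified Bessel equation \eqref{eqn:Fn_bessel_DE}. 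The equality of the two displayed forms in the $\delta_1=\delta_2$ case is exactly the Mellin inversion \eqref{eqn:Barnes_K_Bessel} for $K_r$; for $(\delta_1,\delta_2)=(1,0)$ it is the same after writing the two $K$-Bessel terms as two Barnes integrals and using $\Gamma_\bR(s+\nu_i+1)=\Gamma_\bR((s+\nu_i)+2)$ only implicitly via the index shift. Then I would check \eqref{eqn:R2_ps_PDE1_3'} directly: the contiguous relation among the integrands — multiplying $\Gamma_\bR(s+\nu_1)\Gamma_\bR(s+\nu_2)$ by $(s+\nu_1)$ or $(s+\nu_2)$ and translating $s$ — matches the differential-difference operator $-4\partial_1+4\pi r y_1+2\nu_1+2\nu_2-2-rq$ applied termwise; equivalently one invokes the known contiguous/derivative relations for $K_\nu$, namely $z K_\nu'(z)\pm\nu K_\nu(z)=-zK_{\nu\mp1}(z)$. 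Moderate growth of $\Phi^{\rm mg}_\sigma$ on $A$ is immediate from the exponential decay of $K_r(2\pi y_1)$ as $y_1\to\infty$ and the elementary behavior as $y_1\to0$; combined with $\dim\cI^{\rm mg}_{\Pi_\sigma,\psi_1}=1$ from \eqref{eqn:Rn_dimWh_gps}, this pins down $\Phi^{\rm mg}_\sigma$ up to scalar, and the normalization is fixed by the explicit leading term.

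For part (2), under the stronger hypothesis $\nu_1-\nu_2\notin\delta_1-\delta_2+2\bZ$ the index $(\nu_1-\nu_2)/2$ (resp.\ $(\nu_1-\nu_2\pm1)/2$) is never an integer, so $\{\hat I_r,\hat I_{-r}\}$ is a genuine basis of the solution space of \eqref{eqn:Fn_bessel_DE} by the discussion around \eqref{eqn:expansion_K_Bessel}; replacing $K$ by $\hat I_{\pm\bullet}$ in the formulas of part (1) therefore still produces solutions of \eqref{eqn:R2_ps_PDE1_2'}, and I would again verify \eqref{eqn:R2_ps_PDE1_3'} using the analogous contiguous relations $z\hat I_\nu'(z)\pm\nu\hat I_\nu(z)=z\hat I_{\nu\mp1}(z)$ (up to the normalizing constants built into $\hat I$), which is the reason the $\hat I$-normalization rather than the raw $I$-normalization is used. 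Linear independence of $\Phi^+_\sigma,\Phi^-_\sigma$ comes from the distinct leading exponents $y_1^{\pm(\nu_1-\nu_2)/2}$ as $y_1\to0$, hence they form a basis of the (at most, now exactly) two-dimensional space $\cI_{\Pi_\sigma,\psi_1}$; the relation $\Phi^{\rm mg}_\sigma=\Phi^+_\sigma+\Phi^-_\sigma$ is then literally \eqref{eqn:expansion_K_Bessel} applied coefficientwise, $K_r=\tfrac12(\hat I_r+\hat I_{-r})$, together with the matching normalizations, noting the factor $2$ in the $K$-formula versus its absence in the $\hat I$-formulas. The main obstacle I expect is purely bookkeeping: correctly tracking the half-integer shifts, the powers of $2\pi$, and the factors of $(\sI)^q$ so that \eqref{eqn:R2_ps_PDE1_3'} holds on the nose for both signs of $r$ and all $q$ in the $(\delta_1,\delta_2)=(1,0)$ case — i.e.\ confirming that the two-term $K$-Bessel (resp.\ $\hat I$-Bessel) combinations are stable under the recursion with exactly the stated coefficient $(2\nu_1-2\nu_2+2+rq)$ — rather than any conceptual difficulty.
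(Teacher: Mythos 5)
Your approach is correct in substance but takes a more computational route than the paper, and one step of your logic is stated as an unproved equivalence that it would be cleaner to avoid. The paper's proof does not verify the explicit Mellin--Barnes formulas against the coupled system \eqref{eqn:R2_ps_PDE1_2'}--\eqref{eqn:R2_ps_PDE1_3'} directly. Instead, it reduces each case to a \emph{single} second-order modified Bessel equation at the minimal $K$-type and then invokes the Bessel-function facts in \S\ref{subsec:Fn_special1}. For $\delta_1=\delta_2$ this is \eqref{eqn:R2_ps_PDE1_2'} with $q=0$. For $(\delta_1,\delta_2)=(1,0)$ the reduction is the nontrivial step you do not reproduce: adding the $q=\pm1$ instances of \eqref{eqn:R2_ps_PDE1_2'} gives \eqref{eqn:R2_ps_PDE1_2a}, specializing \eqref{eqn:R2_ps_PDE1_3'} to $(q,r)=(\pm1,\mp2)$ gives \eqref{eqn:R2_ps_PDE1_3a}, and their sum is the uncoupled equation \eqref{eqn:R2_ps_PDE1_2aaa} for $\hat\varphi_1+\hat\varphi_{-1}$ alone (again a modified Bessel equation, now with index $(\nu_1-\nu_2+1)/2$); $\hat\varphi_1-\hat\varphi_{-1}$ is then recovered algebraically from \eqref{eqn:R2_ps_PDE1_3a}. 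This spares one from checking each two-term $K$- or $\hat I$-Bessel combination against the PDEs termwise via contiguous relations, which is what your plan entails.

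The crucial logical plank common to both arguments is the dimension count: the map $\Phi\mapsto\hat\varphi_0$ (resp.\ $\Phi\mapsto\hat\varphi_1+\hat\varphi_{-1}$) is an injective linear map from the $2$-dimensional space $\cI_{\Pi_\sigma,\psi_1}$ into the $2$-dimensional solution space of the relevant Bessel ODE, hence a bijection; therefore \emph{every} smooth solution of that ODE is the radial part of some intertwiner, and the existence assertions of the theorem are automatic. You gesture at this (``$\dim\cI_{\Pi_\sigma,\psi_1}\le 2$ follows''), but your opening claim that ``a homomorphism $\Phi\in\cI_{\Pi_\sigma,\psi_1}$ is the same datum as a family $(\hat\varphi_q)$ satisfying \eqref{eqn:R2_ps_PDE1_2'} and \eqref{eqn:R2_ps_PDE1_3'}'' asserts a sufficiency (that the listed differential--difference relations characterize $(\g_\bC,K)$-equivariance) which your argument does not prove and which the dimension count renders unnecessary. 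Once that step is made explicit in the paper's form, the rest of your plan --- in particular the moderate-growth observation, the distinct leading exponents giving linear independence of $\Phi^\pm_\sigma$, and the identity $K_r=\tfrac12(\hat I_r+\hat I_{-r})$ of \eqref{eqn:expansion_K_Bessel} giving $\Phi^{\rm mg}_\sigma=\Phi^+_\sigma+\Phi^-_\sigma$ --- is sound and reaches the same conclusions.
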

\begin{proof}
Let us consider the case of $\delta_1=\delta_2$. 
The equation (\ref{eqn:R2_ps_PDE1_2'}) with $q=0$ 
implies that $f(z)=z^{-(\nu_1+\nu_2)/2} \hat{\varphi}_0((2\pi)^{-1}z)$ 
satisfies the modified Bessel differential equation 
(\ref{eqn:Fn_bessel_DE}) with $r=(\nu_1-\nu_2)/2$. 
Since the dimensions of the solution space 
and ${\cI}_{\Pi_\sigma ,\psi_1}$ are both $2$, 
we note that smooth solutions of (\ref{eqn:R2_ps_PDE1_2'}) 
correspond to elements of ${\cI}_{\Pi_\sigma ,\psi_1}$. 
Hence, we obtain the statement in this case 
by means of the formulas in \S \ref{subsec:Fn_special1}.

Let us consider the case of $(\delta_1,\delta_2)=(1,0)$. 
From the equation (\ref{eqn:R2_ps_PDE1_2'}) with $q=\pm 1$, 
we get
\begin{align}
& \begin{split} \label{eqn:R2_ps_PDE1_2a}
 \{(\partial_1-\nu_1)(\partial_1-\nu_2)-(2\pi y_1)^2\}
(\hat{\varphi}_1+\hat{\varphi}_{-1})
  + (2\pi y_1) (\hat{\varphi}_1-\hat{\varphi}_{-1}) = 0.  
\end{split}
\end{align}
Set $ (q,r)=(\pm 1, \mp2) $ in (\ref{eqn:R2_ps_PDE1_3'}) to find that
\begin{align}
&-(\partial_1-\nu_2)(\hat{\varphi}_1+\hat{\varphi}_{-1})
-(2\pi y_1)(\hat{\varphi}_1-\hat{\varphi}_{-1})=0.
\label{eqn:R2_ps_PDE1_3a}
\end{align}
Adding the respective sides of 
(\ref{eqn:R2_ps_PDE1_2a}) and (\ref{eqn:R2_ps_PDE1_3a}), 
we have 
\begin{align}
\label{eqn:R2_ps_PDE1_2aaa}
& \{(\partial_1-\nu_1-1)(\partial_1-\nu_2)-(2\pi y_1)^2\}
(\hat{\varphi}_1+\hat{\varphi}_{-1}) =0.
\end{align}
This equation implies that 
$f(z)=z^{-(\nu_1+\nu_2+1)/2}(\hat{\varphi}_1+\hat{\varphi}_{-1})
((2\pi )^{-1}z)$ satisfies the modified Bessel differential equation 
(\ref{eqn:Fn_bessel_DE}) with $r=(\nu_1-\nu_2+1)/2$. 
Hence, as is the case of $\delta_1=\delta_2$, 
we note that solutions of (\ref{eqn:R2_ps_PDE1_2aaa}) 
correspond to elements of ${\cI}_{\Pi_\sigma ,\psi_1}$. 
By means of the formulas in \S \ref{subsec:Fn_special1}, 
we obtain the 
explicit expression of $\hat{\varphi}_1+\hat{\varphi}_{-1}$. 
Moreover, we can get 
the explicit expression of $\hat{\varphi}_1-\hat{\varphi}_{-1}$ 
from that of $\hat{\varphi}_1+\hat{\varphi}_{-1}$ by 
(\ref{eqn:R2_ps_PDE1_3a}) with $\Gamma (s+1)=s\Gamma (s)$. 
Hence, we obtain the statement in this case. 
\end{proof}

\begin{rem}
Because of (\ref{eqn:R2_cdn_ps_irred}), we note that, 
for $\sigma = \chi_{(\nu_1,\delta_1)}\boxtimes \chi_{(\nu_2,\delta_2)}$ 
with $\nu_1,\nu_2\in \bC$ and $ \delta_1,\delta_2 \in \{0,1\}$ 
$(\delta_1\geq \delta_2)$, 
the condition $\nu_1-\nu_2+\delta_1-\delta_2+1\not\in 2\bZ_{\leq 0}$ 
holds if $\Pi_\sigma$ is irreducible. 
\end{rem}

Now we want to describe explicit formulas of $\hat{\varphi}_q $ 
$(q \in \delta_1-\delta_2+2\bZ)$. 
By means of (\ref{eqn:R2_ps_PDE1_3'}), 
the functions $\hat{\varphi}_q $ 
$(q \in \delta_1-\delta_2+2\bZ)$ can be uniquely determined 
from $\hat{\varphi}_{\pm (\delta_1-\delta_2)}$. 
We start with another Mellin-Barnes integral representation for 
the modified Bessel function. In view of the formula  
\[
 \int_0^{\infty} e^{-z} K_{r}(z) z^s \frac{dz}{z} 
 = \pi^{1/2} 2^{-s} \frac{\Gamma(s+r)\Gamma(s-r)}{\Gamma(s+1/2)} 
\]
for ${\mathrm{Re}}(s)>|{\mathrm{Re}}(r)| $ 
(\cite[6.8. (28)]{Erdelyi_001}), the 
Mellin-inversion formula implies 
\begin{align}
\label{eqn:R2_Bessel_another}
K_{r}(z) = \frac{\pi^{1/2} e^{z}}{2\pi \sqrt{-1}}\int_s
\frac{\Gamma(s+r)\Gamma(s-r)}{\Gamma(s+1/2)} (2z)^{-s} ds.
\end{align}
Here the path of integration $\int_{s}$ is the vertical line 
from $\mathrm{Re}(s)-\sI \infty$ to $\mathrm{Re}(s)+\sI \infty$ 
with the real part $\mathrm{Re}(s)>|\mathrm{Re}(r)|$.


\begin{thm}
\label{thm:R2_ps_Whittaker2}
Let $\sigma = \chi_{(\nu_1,\delta_1)}\boxtimes \chi_{(\nu_2,\delta_2)}$ with 
$\nu_1,\nu_2\in \bC$ and $ \delta_1,\delta_2 \in \{0,1\}$ 
$(\delta_1\geq \delta_2)$ such that 
$\nu_1-\nu_2+\delta_1-\delta_2+1\not\in 2\bZ_{\leq 0}$. 
Let $\Phi_{\sigma}^{\rm mg}\in {\cI}_{\Pi_\sigma ,\psi_1}^{\rm mg}$ 
be the homomorphism in Theorem \ref{thm:R2_ps_Whittaker}. 
For $q \in \delta_1-\delta_2+2\bZ$, it holds that 
\begin{align*}
\begin{split}
&\Phi_{\sigma}^{\rm mg} ({\zeta}_{(\sigma;q)})(y) 
=C_q(\sI )^qy_1^{1/2}y_2^{\nu_1+\nu_2}
\frac{e^{2\pi y_1} }{4\pi \sI}
\int_s\cV_q(s) 
y_1^{-s}ds
\end{split}
\end{align*}
with 
\begin{align*} 
&C_q  =\frac{\Gamma_\bR (\nu_1-\nu_2+1+\delta_1-\delta_2)}
{\Gamma_\bR (\nu_1-\nu_2+1+q)},&
&\cV_q(s)=
\frac{\Gamma_\bC (s+\nu_1)\Gamma_\bC (s+\nu_2)}
{\Gamma_\bR (2s+\nu_1+\nu_2+1-q)}.
\end{align*}
Here $y=\diag (y_1y_2,y_2)\in A $, 
and the path of integration $\int_{s}$ is the vertical line 
from $\mathrm{Re}(s)-\sI \infty$ to $\mathrm{Re}(s)+\sI \infty$ 
with the sufficiently large real part to keep the poles of the integrand 
on its left.  
\end{thm}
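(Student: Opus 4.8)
The plan is to propagate the formula out from the minimal $K$-type by means of the recurrence (\ref{eqn:R2_ps_PDE1_3'}). By (\ref{eqn:R2_def_hat_varphi}), writing $\Phi_\sigma^{\rm mg}({\zeta}_{(\sigma;q)})(y)=(\sI)^qy_1^{1/2}y_2^{\nu_1+\nu_2}\hat\varphi_q^{\rm mg}(y_1)$, the assertion is equivalent to
\[
\hat\varphi_q^{\rm mg}(y_1)=C_q\,\frac{e^{2\pi y_1}}{4\pi\sI}\int_s\cV_q(s)\,y_1^{-s}\,ds\qquad(q\in\delta_1-\delta_2+2\bZ).
\]
Let $\tilde\varphi_q$ denote the right-hand side. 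Under the standing hypothesis $\nu_1-\nu_2+\delta_1-\delta_2+1\notin2\bZ_{\leq0}$ the functions $\hat\varphi_q^{\rm mg}$ are, as recalled just before the statement, uniquely determined from $\hat\varphi_{\pm(\delta_1-\delta_2)}^{\rm mg}$ through (\ref{eqn:R2_ps_PDE1_3'}). Hence it suffices to prove (i) $\tilde\varphi_{\pm(\delta_1-\delta_2)}=\hat\varphi_{\pm(\delta_1-\delta_2)}^{\rm mg}$ and (ii) that the family $\{\tilde\varphi_q\}_q$ satisfies (\ref{eqn:R2_ps_PDE1_3'}) for every $q$ and every $r\in\{\pm2\}$.

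For (i) I would feed the $K$-Bessel expressions for $\Phi_\sigma^{\rm mg}$ at the minimal $K$-type from Theorem \ref{thm:R2_ps_Whittaker}(1) into the Mellin--Barnes representation (\ref{eqn:R2_Bessel_another}) of each $K_r(2\pi y_1)$ occurring there, and shift the integration variable so that the two numerator gamma factors become $\Gamma(s+\nu_1+\ast)$ and $\Gamma(s+\nu_2+\ast)$. When $(\delta_1,\delta_2)=(1,0)$ one uses $\Gamma(t+1)=t\Gamma(t)$ to rewrite $\Gamma(s+\nu_1+1)\Gamma(s+\nu_2)+q\Gamma(s+\nu_1)\Gamma(s+\nu_2+1)=\bigl((s+\nu_1)+q(s+\nu_2)\bigr)\Gamma(s+\nu_1)\Gamma(s+\nu_2)$ and $\Gamma\bigl(s+\tfrac{\nu_1+\nu_2}{2}+1\bigr)=\bigl(s+\tfrac{\nu_1+\nu_2}{2}\bigr)\Gamma\bigl(s+\tfrac{\nu_1+\nu_2}{2}\bigr)$; for $q=1$ the two linear factors cancel, while for $q=-1$ the numerator factor is the constant $\nu_1-\nu_2$ and survives, which is precisely what produces $C_{-1}\neq1$. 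Matching the leftover constants against the definitions of $\Gamma_\bR$, $\Gamma_\bC$ and (\ref{eqn:Fn_gammaRC_duplication}) then yields (i); the case $\delta_1=\delta_2$ is the same computation but simpler.

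For (ii), substitute $\hat\varphi_q=\tilde\varphi_q=C_qe^{2\pi y_1}g_q$ with $g_q(y_1)=\tfrac1{4\pi\sI}\int_s\cV_q(s)y_1^{-s}ds$ into (\ref{eqn:R2_ps_PDE1_3'}). Since $\partial_1(e^{2\pi y_1}g_q)=e^{2\pi y_1}(2\pi y_1g_q+\partial_1g_q)$, the left-hand side of (\ref{eqn:R2_ps_PDE1_3'}) becomes $C_qe^{2\pi y_1}$ times a linear combination of $y_1g_q$, $\partial_1g_q$ and $g_q$. Inside the Mellin--Barnes integral $\partial_1$ acts by multiplication by $-s$, and multiplication by $y_1$ corresponds to the contour shift $s\mapsto s+1$ (legitimate because the contour lies far to the right of all poles of $\cV_q$); the functional equations (\ref{eqn:Fn_FE_GammaRC}) give
\[
(2s+\nu_1+\nu_2-1-q)\,\cV_q(s)=2\pi\,\cV_{q+2}(s),\qquad
\frac{\cV_q(s)}{2s+\nu_1+\nu_2+1-q}=\frac1{2\pi}\,\cV_{q-2}(s),
\]
as well as $\cV_q(s+1)=(2\pi)^{-1}\tfrac{(s+\nu_1)(s+\nu_2)}{2s+\nu_1+\nu_2+1-q}\,\cV_q(s)$. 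For $r=2$ the coefficient of $y_1g_q$ vanishes identically and the integrand collapses at once to a multiple of $\cV_{q+2}(s)$. For $r=-2$ one obtains, after the shift, the integrand $\bigl(\tfrac{-8(s+\nu_1)(s+\nu_2)}{2s+\nu_1+\nu_2+1-q}+4s+2\nu_1+2\nu_2-2+2q\bigr)\cV_q(s)$; clearing the denominator $B:=2s+\nu_1+\nu_2+1-q$ one checks that the numerator equals the $s$-free quantity $2\bigl((\nu_1-\nu_2)^2-(q-1)^2\bigr)$, so the integrand is again a multiple of $\cV_{q-2}(s)$. In both cases the only scalar identity left to verify is $4\pi C_q=(2\nu_1-2\nu_2+2+2q)C_{q+2}$ for all $q$, which follows from the defining formula $C_q=\Gamma_\bR(\nu_1-\nu_2+1+\delta_1-\delta_2)/\Gamma_\bR(\nu_1-\nu_2+1+q)$ together with $\Gamma_\bR(t+2)=(2\pi)^{-1}t\Gamma_\bR(t)$ from (\ref{eqn:Fn_FE_GammaRC}).

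The bookkeeping with the $e^{2\pi y_1}$ prefactor and, above all, the polynomial identity that makes the integrand $s$-free in the $r=-2$ case are the delicate points; I expect that $r=-2$ computation to be the main obstacle, with step (i) and the $r=2$ case being comparatively routine.
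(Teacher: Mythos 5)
Your proposal is correct and follows essentially the same route as the paper: verify the identity at the minimal $K$-type $q=\pm(\delta_1-\delta_2)$ by converting the $K$-Bessel formulas of Theorem \ref{thm:R2_ps_Whittaker} to Mellin--Barnes form via (\ref{eqn:R2_Bessel_another}), then propagate along $q$ by checking that the right-hand side satisfies the recurrence (\ref{eqn:R2_ps_PDE1_3'}), reading $\partial_1$ as multiplication by $-s$, $y_1\cdot$ as the shift $s\mapsto s+1$, and using the functional equations (\ref{eqn:Fn_FE_GammaRC}). The only cosmetic difference is that the paper first divides (\ref{eqn:R2_ps_PDE1_3'}) through to isolate $\hat\varphi_{q\pm2}^{\rm mg}$ (its displays (\ref{eqn:R2_ps_PDE1_3'r2})--(\ref{eqn:R2_ps_PDE1_3'r-2})), whereas you work with the undivided equation; your $s$-free numerator $2\bigl((\nu_1-\nu_2)^2-(q-1)^2\bigr)$ is exactly the paper's factored form $(\nu_1-\nu_2+1-q)(\nu_1-\nu_2-1+q)$ up to the factor of $2$ coming from that normalization, and your scalar identity $4\pi C_q=(2\nu_1-2\nu_2+2+2q)C_{q+2}$ is the same consistency check.
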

\begin{proof} 
For $q\in \delta_1-\delta_2+2\bZ$, we set 
\begin{align}
\label{eqn:R2_pf_psWh2_001}
\hat{\varphi}_{q}^{\rm mg}(y_1) 
&=C_q\frac{e^{2\pi y_1}}{4\pi \sqrt{-1}}
\int_s \cV_q(s) y_1^{-s} ds&
&(y_1\in \bR_+).
\end{align}
By the formulas in Theorem \ref{thm:R2_ps_Whittaker}, 
(\ref{eqn:R2_Bessel_another}) and (\ref{eqn:Fn_FE_GammaRC}), 
we know that 
\begin{align*}
&\Phi_{\sigma}^{\rm mg} ({\zeta}_{(\sigma;q)})(y)
=(\sI)^{q}y_1^{1/2}y_2^{\nu_1+\nu_2}
\hat{\varphi}_{q}^{\rm mg}(y_1)&
&(\,q=\pm (\delta_1-\delta_2)\,)
\end{align*}
holds for $y=\diag (y_1y_2,y_2)\in A$. 
Hence, in order to complete a proof, 
it suffices to show that $\hat{\varphi}_{q}=\hat{\varphi}_{q}^{\rm mg}$ 
($q\in \delta_1-\delta_2+2\bZ$) satisfy 
(\ref{eqn:R2_ps_PDE1_3'}) for $r\in \{\pm 2\}$. 
Our task is to show the equalities 
\begin{align}   
&\hat{\varphi}_{q+2}^{\rm mg}(y_1)
=\frac{-2\partial_1+4\pi y_1+\nu_1+\nu_2-1-q}
{\nu_1-\nu_2+1+q} \hat{\varphi}_q^{\rm mg}(y_1),
\label{eqn:R2_ps_PDE1_3'r2} \\
&\hat{\varphi}_{q-2}^{\rm mg}(y_1)
=\frac{-2\partial_1-4\pi y_1+\nu_1+\nu_2-1+q}
{\nu_1-\nu_2+1-q} \hat{\varphi}_q^{\rm mg}(y_1).
 \label{eqn:R2_ps_PDE1_3'r-2}
\end{align}
Since the right hand side of (\ref{eqn:R2_ps_PDE1_3'r2}) becomes 
\begin{align*}
& \frac{C_q}{\nu_1-\nu_2+1+q}  \frac{e^{2\pi y_1}}{4\pi \sqrt{-1}} 
   \int_s (2s+\nu_1+\nu_2-1-q) \cV_q(s) y_1^{-s} ds\\
& =  C_{q+2}  \frac{e^{2\pi y_1}}{4\pi \sqrt{-1}} 
   \int_s  \cV_{q+2}(s)y_1^{-s} ds
\end{align*}
by (\ref{eqn:Fn_FE_GammaRC}), 
the equality (\ref{eqn:R2_ps_PDE1_3'r2}) follows. 
Similarly the right hand side of (\ref{eqn:R2_ps_PDE1_3'r-2}) 
can be written as 
\begin{align*}
\frac{C_q}{\nu_1-\nu_2+1-q}\frac{e^{2\pi y_1}}{4\pi \sqrt{-1}} 
\int_s \left\{ 
(2s+\nu_1+\nu_2-1+q)\cV_q(s)- 8\pi \cV_q(s+1)\right\}y_1^{-s} ds.
\end{align*}
By (\ref{eqn:Fn_FE_GammaRC}), 
the bracket $\{ \ \ \} $ in the above is equal to 
\begin{align*} 
& \left\{(2s+\nu_1+\nu_2-1+q)- \frac{4(s+\nu_1)(s+\nu_2)}
{2s+\nu_1+\nu_2+1-q}\right\}\cV_q(s)\\
&=
\frac{(\nu_1-\nu_2+1-q)(\nu_1-\nu_2-1+q)}
{2s+\nu_1+\nu_2+1-q} \cV_q(s)  \\
& = (2\pi)^{-1} (\nu_1-\nu_2+1-q)(\nu_1-\nu_2-1+q)\cV_{q-2}(s). 
\end{align*}
Thus we can see (\ref{eqn:R2_ps_PDE1_3'r-2}).
\end{proof}

\section{Essentially discrete series Whittaker functions}
\label{subsec:R2_ds_whittaker}

Let $\nu \in \bC$ and $\kappa \in \bZ_{\geq 2}$. 
For $\sigma =D_{(\nu ,\kappa )}$, the space  
$H(\sigma)$ is isomorphic to $U_\sigma =\gH_{(\nu ,\kappa )}$ 
as $G$-modules, 
via the isomorphism 
\begin{align}
\label{eqn:R2_isom_gps_eds}
H(\sigma)\ni f\mapsto f(1_2)
\in \gH_{(\nu ,\kappa )}. 
\end{align}
In this section, we introduce a convenient realization of 
$(D_{(\nu ,\kappa )},\gH_{(\nu  ,\kappa  )})$. 
We set 
\begin{align}
\label{eqn:R2_param_gps_eds}
\widehat{\sigma}=\chi_{(\nu +(\kappa -1)/2,\delta )}
\boxtimes \chi_{(\nu -(\kappa -1)/2,0)}
\end{align}
with $\delta \in \{0,1\}$ such that 
$\delta \equiv \kappa \bmod 2$. 
Then, from the formulas of the actions of $\g_\bC$ and $K$ 
in \S \ref{subsec:R2_ps}, we know that 
\begin{align}
\label{eqn:R2_ds_in_ps}
&\bigoplus_{q\in \kappa +2\bZ_{\geq 0}}
\{\bC\,{\zeta}_{(\widehat{\sigma},q)}
+\bC\,{\zeta}_{(\widehat{\sigma},-q)}\}
\end{align}
is a $(\g_\bC ,K)$-submodule of $H(\sigma)_K$, 
and this submodule is isomorphic to 
$D^+_{\kappa }\oplus D^-_{\kappa }$ 
as $(\gs \gl (2,\bR ),SO(2))$-modules (\textit{cf.} \cite[\S 2.5]{Bump_004}). 
Since $\Pi_{\widehat{\sigma}}(t1_2)=t^{2\nu }\ (t\in \bR_+)$, 
we note that the subrepresentation of $\Pi_{\widehat{\sigma}}$ on 
the closure of (\ref{eqn:R2_ds_in_ps}) satisfies the definition 
of $(D_{(\nu ,\kappa )},\gH_{(\nu ,\kappa )})$ 
in \S \ref{subsec:Rn_def_gps}. Hereafter, 
we regard $(D_{(\nu ,\kappa )},\gH_{(\nu ,\kappa )})$ as 
this subrepresentation of $\Pi_{\widehat{\sigma}}$. 
We note that the $K$-finite part $\gH_{(\nu ,\kappa ),K}$ 
of $\gH_{(\nu ,\kappa )}$ coincides with 
the space (\ref{eqn:R2_ds_in_ps}). 

For $\widehat{\sigma}=\chi_{(\nu +(\kappa -1)/2,\delta )}
\boxtimes \chi_{(\nu -(\kappa -1)/2,0)}$, 
we take $\Phi_{\widehat{\sigma}}^{\rm mg}\in 
{\cI}_{\Pi_{\widehat{\sigma}},\psi_1}^{\rm mg}$ 
as in Theorem \ref{thm:R2_ps_Whittaker}. Then we note that 
\[
{\cI}_{D_{(\nu,\kappa )},\psi_1}^{\rm mg}
=\bC \cdot \Phi_{\widehat{\sigma}}^{\rm mg}|_{\gH_{(\nu,\kappa )}}. 
\]
As a corollary of Theorem \ref{thm:R2_ps_Whittaker2}, 
we obtain the following.

\begin{cor}
\label{cor:R2_ds_Whittaker}
Retain the notation. 
For $y=\diag (y_1y_2,y_2)\in A$ and $q \in \kappa +2\bZ_{\geq 0}$, 
it holds that 
\begin{align*}
\begin{split}
\Phi_{\widehat{\sigma}}^{\rm mg}
({\zeta}_{(\widehat{\sigma};q)})(y) =
\,&
(\sI )^q
\frac{2^{-\nu +\frac{\kappa +1}{2}}\pi^{\frac{\kappa -\delta }{2}}}
{\bigl(\tfrac{\kappa +\delta }{2}\bigr)_{\frac{q-\delta }{2}}}
\frac{y_1^{1/2}y_2^{2\nu }e^{2\pi y_1}}{4\pi \sI}
\int_s\bigl(s+\nu -\tfrac{q-1}{2}\bigr)_{\frac{q-\kappa}{2}}\\
&\times 
\Gamma_\bC \bigl(s+\nu +\tfrac{\kappa -1}{2}\bigr)
(2y_1)^{-s}ds,\\
\Phi_{\widehat{\sigma}}^{\rm mg}
({\zeta}_{(\widehat{\sigma};-q)})(y)=
\,&0. 
\end{split}
\end{align*}
If $q=\kappa$, these formulas are simplified as  
\begin{align*}
\Phi_{\widehat{\sigma}}^{\rm mg}
({\zeta}_{(\widehat{\sigma};\kappa )})(y) =
\,&
(\sI )^{\kappa }\frac{2^{\kappa }\pi^{\frac{\kappa -\delta }{2}}}
{\bigl(\tfrac{\kappa +\delta }{2}\bigr)_{\frac{\kappa -\delta }{2}}}
y_1^{\nu  + \frac{\kappa}{2}}y_2^{2\nu  }
e^{-2\pi y_1}\\
=\,&
(\sI )^{\kappa }
\frac{\Gamma_{\bR}(\kappa +\delta )}{\Gamma_{\bC}(\kappa )}
\frac{y_1^{1/2}y_2^{2\nu  }}{2\pi \sI}
\int_s \Gamma_\bC \bigl(s+\nu  +\tfrac{\kappa  -1}{2}\bigr)y_1^{-s}ds,\\
\Phi_{\widehat{\sigma}}^{\rm mg}
({\zeta}_{(\widehat{\sigma};-\kappa )})(y)=\,&0.
\end{align*}
Here the path of integration $\int_{s}$ is the vertical line 
from $\mathrm{Re}(s)-\sI \infty$ to $\mathrm{Re}(s)+\sI \infty$ 
with the sufficiently large real part to keep the poles of the integrand 
on its left.  
\end{cor}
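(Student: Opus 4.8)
The plan is to obtain the corollary from Theorem \ref{thm:R2_ps_Whittaker2} by a substitution of parameters followed by gamma-function bookkeeping. By construction $(D_{(\nu,\kappa)},\gH_{(\nu,\kappa)})$ sits inside $\Pi_{\widehat{\sigma}}$ with $\widehat{\sigma}$ as in (\ref{eqn:R2_param_gps_eds}) and $\gH_{(\nu,\kappa),K}$ equal to the space (\ref{eqn:R2_ds_in_ps}), while ${\cI}_{D_{(\nu,\kappa)},\psi_1}^{\mathrm{mg}}=\bC\cdot\Phi_{\widehat{\sigma}}^{\mathrm{mg}}|_{\gH_{(\nu,\kappa)}}$. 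So it suffices to apply Theorem \ref{thm:R2_ps_Whittaker2} with $\sigma=\widehat{\sigma}$, i.e. with $\nu_1=\nu+\tfrac{\kappa-1}{2}$, $\nu_2=\nu-\tfrac{\kappa-1}{2}$, $\delta_1=\delta$, $\delta_2=0$, and to read off the output on $\zeta_{(\widehat{\sigma};\pm q)}$ for $q\in\kappa+2\bZ_{\geq 0}$. Here $\nu_1-\nu_2+\delta_1-\delta_2+1=\kappa+\delta\geq 2\notin 2\bZ_{\leq 0}$, so the hypothesis of that theorem holds automatically and no further restriction on $(\nu,\kappa)$ is needed.

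First I would settle the vanishing claims. For these parameters the scalar appearing in Theorem \ref{thm:R2_ps_Whittaker2} is $C_{q'}=\Gamma_{\bR}(\kappa+\delta)/\Gamma_{\bR}(\kappa+q')$; taking $q'=-q$ with $q\in\kappa+2\bZ_{\geq 0}$ gives $\kappa+q'=\kappa-q\in 2\bZ_{\leq 0}$, at which $\Gamma_{\bR}$ has a pole while $\Gamma_{\bR}(\kappa+\delta)$ is finite, so $C_{-q}=0$ and hence $\Phi_{\widehat{\sigma}}^{\mathrm{mg}}(\zeta_{(\widehat{\sigma};-q)})=0$.

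For $\zeta_{(\widehat{\sigma};q)}$ with $q\in\kappa+2\bZ_{\geq 0}$, set $n=\tfrac{q-\kappa}{2}\in\bZ_{\geq 0}$. Since $\nu_1-\nu_2+1=\kappa$, the denominator of $\cV_q(s)$ is $\Gamma_{\bR}(2(s+\nu_2)-2n)$. The identity I would invoke is $\Gamma_{\bC}(t)=2^{1-t}\Gamma_{\bR}(2t)$ — this is the duplication formula (\ref{eqn:Fn_gamma_duplication}) combined with the definitions of $\Gamma_{\bR}$ and $\Gamma_{\bC}$, or equivalently (\ref{eqn:Fn_gammaRC_duplication}) rewritten through $\Gamma_{\bR}(2t)=\pi^{-t}\Gamma(t)$ — together with the Pochhammer relation (\ref{eqn:Fn_Pochhammer_rel}) in the shape $\Gamma_{\bR}(2(s+\nu_2))=\pi^{-n}(s+\nu_2-n)_n\,\Gamma_{\bR}(2(s+\nu_2)-2n)$. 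Using $\nu_2-n=\nu-\tfrac{q-1}{2}$, these turn $\Gamma_{\bC}(s+\nu_2)/\Gamma_{\bR}(2s+\nu_1+\nu_2+1-q)$ into $2^{(\kappa+1)/2-\nu}\,\pi^{(\kappa-q)/2}\,2^{-s}\,(s+\nu-\tfrac{q-1}{2})_{n}$. Pulling the factor $2^{-s}$ into $y_1^{-s}$ to produce $(2y_1)^{-s}$, and assembling the remaining scalars together with $C_q=\Gamma_{\bR}(\kappa+\delta)/\Gamma_{\bR}(\kappa+q)$, which (again from the definition of $\Gamma_{\bR}$ and (\ref{eqn:Fn_Pochhammer_rel})) equals $\pi^{(q-\delta)/2}/(\tfrac{\kappa+\delta}{2})_{(q-\delta)/2}$, reproduces the first displayed formula.

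For $q=\kappa$ one has $n=0$, so the Pochhammer factor is $1$ and the integrand reduces to $\Gamma_{\bC}(s+\nu+\tfrac{\kappa-1}{2})(2y_1)^{-s}$; using the definition of $\Gamma_{\bC}$ together with the Mellin-inversion identity $e^{-w}=\tfrac{1}{2\pi\sI}\int_u\Gamma(u)w^{-u}\,du$ (Mellin inversion of $\Gamma(u)=\int_0^\infty\exp(-t)t^u\,dt/t$), which amounts to $\tfrac{1}{2\pi\sI}\int_s\Gamma_{\bC}(s+a)x^{-s}\,ds=2x^{a}e^{-2\pi x}$, and then multiplying by $e^{2\pi y_1}$, gives the elementary form $y_1^{\nu+\kappa/2}y_2^{2\nu}e^{-2\pi y_1}$ up to the stated constant. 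Applying the same identity with $x=y_1$ and the normalization $\tfrac{1}{2\pi\sI}$ instead gives the Mellin--Barnes form, and the equality of the two constant prefactors comes down to $(\tfrac{\kappa+\delta}{2})_{(\kappa-\delta)/2}=\Gamma(\kappa)/\Gamma(\tfrac{\kappa+\delta}{2})$, a case of (\ref{eqn:Fn_Pochhammer_rel}). The computation is almost entirely bookkeeping with powers of $2$ and $\pi$; the one step that really uses the discrete-series shape of $\widehat{\sigma}$, namely $\nu_1-\nu_2=\kappa-1\in\bZ$, is the reduction of $\Gamma_{\bC}(s+\nu_2)/\Gamma_{\bR}(2s+\nu_1+\nu_2+1-q)$ to a polynomial in $s$ — this is exactly where the vanishing of half the Whittaker functions and the collapse to an elementary function at the minimal $K$-type originate, so I expect it to be the conceptual heart of the argument even though it is technically routine.
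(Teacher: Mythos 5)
Your proof is correct and takes the same route the paper intends: the corollary is explicitly presented as a direct consequence of Theorem \ref{thm:R2_ps_Whittaker2}, and your specialization to $\widehat{\sigma}$ followed by the Gamma-function bookkeeping (the identity $\Gamma_\bC(t)=2^{1-t}\Gamma_\bR(2t)$, the Pochhammer shift converting $\Gamma_\bR(2(s+\nu_2)-2n)$ into a polynomial factor times $\Gamma_\bR(2(s+\nu_2))$, and $C_{-q}=\Gamma_\bR(\kappa+\delta)/\Gamma_\bR(\kappa-q)=0$) is exactly the verification the paper leaves to the reader. The reduction of the $q=\kappa$ case via the Mellin inverse of $\Gamma_\bC$ and the Pochhammer identity $(\tfrac{\kappa+\delta}{2})_{(\kappa-\delta)/2}=\Gamma(\kappa)/\Gamma(\tfrac{\kappa+\delta}{2})$ are likewise the expected steps.
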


\chapter{Whittaker functions on $GL(3,\bR)$}
\label{sec:R3_whittaker}

Throughout this chapter, we set $n=3$ and $F=\bR $. 

\section{Representations of $O(3)$}
\label{subsec:R3_rep_K}

In this section, we discuss 
the representation theory of ${K}=O(3)$. 

We set $\Lambda_3
=\{\mu =(\mu_1,\mu_2)\mid \mu_1\in \bZ_{\geq 0},\ 
\mu_2\in \{0,1\}\}$. 
For $\mu =(\mu_1,\mu_2)\in \Lambda_3$, 
let $\mathcal{P}_\mu =\mathcal{P}_\mu^{(3)}$ 
be the $\bC$-vector space of degree $\mu_1$ 
homogeneous polynomials of three variables ${z}_1,{z}_2,{z}_3$. 
We define the action $T_\mu$ of $K$ on 
$\mathcal{P}_\mu$ by 
\begin{align*}
&(T_{\mu}(k)p)({z}_1,{z}_2,{z}_3)
=(\det k)^{\mu_2}p(({z}_1,{z}_2,{z}_3)\cdot k)&
&(k\in {K},\ p\in \mathcal{P}_{\mu}).
\end{align*}
Here $({z}_1,{z}_2,{z}_3)\cdot k$ is the ordinal product of matrices. 
Since ${z}_1^2+{z}_2^2+{z}_3^2$ is ${K}$-invariant, 
we can define the quotient representation $\tau_\mu =\tau_\mu^{(3)}$ 
of $T_\mu$ on the space 
\[
V_\mu=V_\mu^{(3)}=\mathcal{P}_\mu/
(({z}_1^2+{z}_2^2+{z}_3^2)\mathcal{P}_{\mu -(2,0)})
\] 
for $\mu \in \Lambda_3$. 
Here we put $\mathcal{P}_{\mu -(2,0)}=\{0\}$ 
if $\mu -(2,0)\not\in \Lambda_3$. 
Then the representation 
$(\tau_{\mu },V_{\mu })$ 
is irreducible and $\dim_\bC  V_\mu =2\mu_1+1$. 
The set of equivalence classes of 
irreducible representations of ${K}$ is 
exhausted by $\{\tau_{\mu }\mid \mu \in \Lambda_3\}$.

For $\mu =(\mu_1,\mu_2)\in \Lambda_3$ 
and $-\mu_1\leq q\leq \mu_1$, let 
$v_{q}^{\mu}$ be the image of 
\[
(\sgn (q) z_1+\sqrt{-1}z_2)^{|q|}z_3^{\mu_1-|q|}
\]
under the natural surjection 
$\mathcal{P}_\mu \to V_{\mu}$. 
Then $\{v_{q}^{\mu}\mid -\mu_1\leq q\leq \mu_1 \}$ 
is a basis of $V_{\mu }$. 
By direct computation, 
for $\theta \in \bR$ and 
$\varepsilon_1,\varepsilon_2,\varepsilon_3\in \{\pm 1\}$, 
we have 
\begin{align}
\label{eqn:R3_M21act_basis}
&\tau_{\mu }\bigl(k_\theta^{(2,1)} \bigr)
v^{\mu }_q=e^{\sI q \theta }v^{\mu }_q,\\
\label{eqn:R3_M111act_basis}
&\tau_{\mu }
\bigl(k_{(\varepsilon_1,\varepsilon_2,\varepsilon_3)}^{(1,1,1)}\bigr)
v^{\mu}_{q}
=\varepsilon_1^{\mu_2} \varepsilon_2^{\mu_2+q}\varepsilon_3^{\mu_1+\mu_2+q}
v^{\mu }_{\varepsilon_1\varepsilon_2 q},
\end{align}
with 
\begin{align*}
&k_\theta^{(2,1)} =
\left(\begin{array}{ccc}
\cos \theta &\sin \theta &0\\
-\sin \theta &\cos \theta &0\\
0&0&1
\end{array}\right),&
&k_{(\varepsilon_1,\varepsilon_2,\varepsilon_3)
}^{(1,1,1)}=
\left(\begin{array}{ccc}
\varepsilon_1&0&0\\
0&\varepsilon_2&0\\
0&0&\varepsilon_3
\end{array}\right)\in {K}.
\end{align*}

For $\mu=(\mu_1,\mu_2)\in \Lambda_3$, we set 
\[
S_\mu =
\{l=(l_1,l_2,l_3)\in (\bZ_{\geq 0})^3\mid l_1+l_2+l_3=\mu_1 \}. 
\]
For $l=(l_1,l_2,l_3)\in S_{\mu}$, 
let $u_{l}$ be the image of 
${z}_1^{l_1}{z}_2^{l_2}{z}_3^{l_3}$ 
under the natural surjection 
$\mathcal{P}_\mu \to V_{\mu}$. 
Then $\{u_{l}\mid l\in S_\mu\}$ is a system 
of generators of $V_\mu$. 
Here we note that, if $\mu_1<2$, the vectors 
$u_{l}$ ($l\in S_\mu$) are linearly independent, 
and if $\mu_1\geq 2$, the relation 
$u_{l+2\me_1}+u_{l+2\me_2}+u_{l+2\me_3}=0$
$(l\in S_{\mu -(2,0)})$ 
holds with 
\begin{align*}
&\me_1=(1,0,0),&
&\me_2=(0,1,0),&
&\me_3=(0,0,1). 
\end{align*}
It is convenient to set $u_l =0$ if $l\not\in (\bZ_{\geq 0})^3$. 
We denote the differential of $\tau_{\mu}$ 
again by $\tau_{\mu }$. 
By direct computation,  we have  
\begin{align}
\label{eqn:R3_LieKact_gen}
&\tau_{\mu } (E_{i,j}^{{\gk}} )u_{l} 
=l_ju_{l-\me_j+\me_i}-l_iu_{l-\me_i+\me_j}&
&(1\leq i, j\leq 3),\\
&\label{eqn:R3_M111act_gen}
\tau_{\mu} 
\bigl(k_{(\varepsilon_1,\varepsilon_2,\varepsilon_3)}^{(1,1,1)}\bigr)u_l 
=
\varepsilon_1^{l_1+\mu_2}
\varepsilon_2^{l_2+\mu_2}
\varepsilon_3^{l_3+\mu_2}
u_{l}&&(\varepsilon_1,\varepsilon_2,\varepsilon_3\in \{\pm 1\}) 
\end{align}
for  $l=(l_1,l_2,l_3)\in S_\mu $.

We regard $\gp_\bC$ as a $K$-module via the adjoint action $\Ad$. 
We denote by $\adj$ the differential of $\Ad$. 
For later use, we prepare the following lemma. 
\begin{lem}
\label{lem:R3_tensor}
(1) Let $\mu =(\mu_1,\mu_2)\in \Lambda_3$ such that 
$\mu_1\geq 1$. It holds that 
\begin{align}
\label{eqn:R3_tensor11}
\Hom_K(V_{(1,0)}\otimes_{\bC}V_{\mu -(1,0)}, 
V_{\mu })
=\bC \,\mathrm{B}_{\mu }, 
\end{align}
where $\mathrm{B}_{\mu }\colon 
V_{(1,0)}\otimes_{\bC}V_{\mu -(1,0)}\to 
V_{\mu}$ is a surjective $\bC$-linear map characterized by  
$\mathrm{B}_{\mu }(u_{\me_i}\otimes u_l)=u_{l+\me_i}$\ 
$(1\leq i\leq 3,\ l\in S_{\mu -(1,0)})$.
Moreover, for $\mu'=(\mu_1',\mu_2')\in \Lambda_3$, 
it holds that 
\begin{align}
\label{eqn:R3_tensor12}
\Hom_K(V_{(1,0)}\otimes_{\bC}V_{\mu -(1,0)}, 
V_{\mu'})
=\{0\} 
\end{align}
if $\mu_1'>\mu_1 $ or $\mu_1'+\mu_2'\not\equiv 
\mu_1 +\mu_2 \mod 2$. 

\noindent (2) We define a $\bC$-linear map 
$\mathrm{I}^{\gp}_{(1,\mu_2)}\colon V_{(1,\mu_2)}\to 
\gp_{\bC}\otimes_{\bC}V_{(1,\mu_2)}$ by 
\begin{align*}
&\mathrm{I}^{\gp}_{(1,\mu_2)}(u_{\me_i})
=\sum_{j=1}^3E_{i,j}^{\gp}\otimes u_{\me_j}&
&(1\leq i\leq 3).
\end{align*}
Then $\mathrm{I}^{\gp}_{(1,\mu_2)}$ is a $K$-homomorphism.

\end{lem}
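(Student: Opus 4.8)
For part (1) the point is that $\mathrm{B}_{\mu}$ is nothing but the map induced by multiplication of polynomials, so I would first set that up. Multiplication $\mathcal{P}_{(1,0)}\otimes_{\bC}\mathcal{P}_{\mu-(1,0)}\to\mathcal{P}_{\mu}$ is a surjective $K$-homomorphism (the determinant twists add up, $0+\mu_{2}=\mu_{2}$), and it carries $\mathcal{P}_{(1,0)}\otimes\bigl((z_{1}^{2}+z_{2}^{2}+z_{3}^{2})\mathcal{P}_{\mu-(3,0)}\bigr)$ into $(z_{1}^{2}+z_{2}^{2}+z_{3}^{2})\mathcal{P}_{\mu-(2,0)}=\kernel(\mathcal{P}_{\mu}\to V_{\mu})$. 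Hence it descends to a surjective $K$-homomorphism $V_{(1,0)}\otimes_{\bC}V_{\mu-(1,0)}\to V_{\mu}$, and evaluating on the generators $u_{\me_{i}}\otimes u_{l}$ (which correspond to $z_{i}$ and $z^{l}$) identifies this descended map with the $\mathrm{B}_{\mu}$ of the statement; in particular this shows $\mathrm{B}_{\mu}$ is well defined and surjective.

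Next I would decompose $V_{(1,0)}\otimes_{\bC}V_{\mu-(1,0)}$ as an $O(3)$-module. Writing $\mathcal{P}_{j}=\mathcal{H}_{j}\oplus(z_{1}^{2}+z_{2}^{2}+z_{3}^{2})\mathcal{P}_{j-2}$ with $\mathcal{H}_{j}$ the space of degree-$j$ harmonic polynomials, one has $V_{(j,\varepsilon)}\cong\mathcal{H}_{j}\otimes(\det)^{\varepsilon}$, an irreducible $O(3)$-module on which $-1_{3}$ acts by $(-1)^{j+\varepsilon}$. The Clebsch--Gordan rule for $O(3)$ gives, for $\mu_{1}\geq2$, $\mathcal{H}_{1}\otimes\mathcal{H}_{\mu_{1}-1}\cong\mathcal{H}_{\mu_{1}}\oplus(\mathcal{H}_{\mu_{1}-1}\otimes\det)\oplus\mathcal{H}_{\mu_{1}-2}$, where the determinant twist of the middle constituent is the one forced by the action of $-1_{3}$; tensoring with $(\det)^{\mu_{2}}$,
\[
V_{(1,0)}\otimes_{\bC}V_{\mu-(1,0)}\ \cong\ V_{(\mu_{1},\mu_{2})}\oplus V_{(\mu_{1}-1,\,1-\mu_{2})}\oplus V_{(\mu_{1}-2,\mu_{2})}
\]
(when $\mu_{1}=1$ only the first summand is present, and the claims below are immediate since then $V_{(1,0)}\otimes V_{(0,\mu_{2})}=V_{(1,\mu_{2})}$ is irreducible). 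Since $V_{\mu}=V_{(\mu_{1},\mu_{2})}$ occurs with multiplicity one, $\Hom_{K}(V_{(1,0)}\otimes V_{\mu-(1,0)},V_{\mu})$ is one-dimensional, and as $\mathrm{B}_{\mu}$ is a nonzero element of it we obtain (\ref{eqn:R3_tensor11}). Every constituent on the right has first coordinate $\leq\mu_{1}$ and the same parity of (first coordinate $+$ second coordinate) as $\mu$, so $V_{\mu'}$ does not occur when $\mu_{1}'>\mu_{1}$ or $\mu_{1}'+\mu_{2}'\not\equiv\mu_{1}+\mu_{2}\bmod2$, which is (\ref{eqn:R3_tensor12}).

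For part (2), $\mathrm{I}^{\gp}_{(1,\mu_{2})}$ is well defined since $\{u_{\me_{1}},u_{\me_{2}},u_{\me_{3}}\}$ is an honest basis of $V_{(1,\mu_{2})}$ ($\mu_{1}=1<2$, so there is no relation). To see that it is a $K$-homomorphism I would identify $V_{(1,\mu_{2})}$ with $\bC^{3}\otimes(\det)^{\mu_{2}}$ (the defining $K$-module twisted by $(\det)^{\mu_{2}}$, via $u_{\me_{i}}\leftrightarrow\me_{i}$) and $\gp_{\bC}$ with $\Sym^{2}(\bC^{3})$ via $v\odot w\mapsto v\,{}^{t}w+w\,{}^{t}v$, which is a $K$-isomorphism for the adjoint action carrying $\me_{i}\odot\me_{j}$ to $E^{\gp}_{i,j}$ (in particular $\me_{i}\odot\me_{i}\mapsto 2E^{\g}_{i,i}=E^{\gp}_{i,i}$). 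Under these identifications $\mathrm{I}^{\gp}_{(1,\mu_{2})}$ becomes $\id_{(\det)^{\mu_{2}}}$ tensored with the composite
\[
\bC^{3}\ \xrightarrow{\ v\,\mapsto\,v\otimes g\ }\ \bC^{3}\otimes\bC^{3}\otimes\bC^{3}\ \longrightarrow\ \Sym^{2}(\bC^{3})\otimes\bC^{3},
\]
where the second arrow is symmetrization in the first two tensor factors and $g=\sum_{j=1}^{3}\me_{j}\otimes\me_{j}$. One checks $k\cdot g=g$ for $k\in O(3)$ from $k\,{}^{t}k=1_{3}$, so $g$ is $K$-invariant and both arrows are $K$-equivariant; hence $\mathrm{I}^{\gp}_{(1,\mu_{2})}$ is too. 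Alternatively one can verify the $K$-equivariance by direct computation, checking compatibility with $\gk_{\bC}$ via (\ref{eqn:R3_LieKact_gen}) and the bracket relations for $\adj(E^{\gk}_{a,b})$ on $\gp_{\bC}$, together with compatibility with $-1_{3}\in K$, which is automatic since $-1_{3}$ acts trivially on $\gp_{\bC}$ and by $(-1)^{1+\mu_{2}}$ on $V_{(1,\mu_{2})}$.

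There is no deep obstacle here. What needs care is the bookkeeping of the determinant twists and of the action of the component group $\{\pm1_{3}\}$ of $O(3)$ in the decomposition used for part (1), and, in part (2), matching the $\Sym^{2}(\bC^{3})\cong\gp_{\bC}$ identification with the stated formula, in particular keeping the factor $2$ in the diagonal terms $E^{\gp}_{i,i}=2E^{\g}_{i,i}$.
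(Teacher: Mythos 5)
Your proof is correct, but both parts take a genuinely different route from the paper's. In part (1), where the paper proves only what it needs — the parity constraint from the action of $-1_3$ and the dimension bound $\dim_\bC\Hom_K(V_{(1,0)}\otimes V_{\mu-(1,0)},V_\mu)\leq 1$ by locating the unique (up to scalar) $k_\theta^{(2,1)}$-weight-$\mu_1$ vector — you instead invoke the full $O(3)$ Clebsch--Gordan decomposition $V_{(1,0)}\otimes_\bC V_{\mu-(1,0)}\cong V_{(\mu_1,\mu_2)}\oplus V_{(\mu_1-1,1-\mu_2)}\oplus V_{(\mu_1-2,\mu_2)}$, from which both (\ref{eqn:R3_tensor11}) and (\ref{eqn:R3_tensor12}) drop out immediately (and with care for the $\mu_1=1$ degenerate case, which you handle). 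Your route gives more than is needed but is cleaner if the Clebsch--Gordan rule is taken as known; the paper's is more self-contained and stays entirely inside its own notation. In part (2), the paper does a brute-force $\gk_\bC$-equivariance computation followed by connectedness of $SO(3)$ and a check of $-1_3$; your argument instead identifies $\gp_\bC\cong\Sym^2(\bC^3)$ (with the correct matching $\me_i\odot\me_j\mapsto E^\gp_{i,j}$, including the diagonal factor of $2$) and writes $\mathrm{I}^\gp_{(1,\mu_2)}$ as an explicit composite of manifestly $K$-equivariant maps built from the $O(3)$-invariant tensor $g=\sum_j\me_j\otimes\me_j$; the $K$-invariance of $g$ is exactly the relation $k\,{}^tk=1_3$. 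This is the conceptual reason the map in the statement works, whereas the paper's computation verifies it without explaining it. Both proofs are sound; yours buys transparency at the cost of invoking slightly more machinery.
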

\begin{proof}
Let $\mu =(\mu_1,\mu_2),\, \mu'=(\mu_1',\mu_2')
\in \Lambda_3$ with $\mu_1\geq 1$. 
Since 
\begin{align*}
&\tau_{\mu '}(-1_3)=(-1)^{\mu_1' +\mu_2'},&
&(\tau_{(1,0)}\otimes \tau_{\mu -(1,0)})(-1_3)
=(-1)^{\mu_1+\mu_2}, 
\end{align*}
we know that (\ref{eqn:R3_tensor12}) holds 
if $\mu_1'+\mu_2'\not\equiv 
\mu_1+\mu_2 \mod 2$. 
By (\ref{eqn:R3_M21act_basis}), 
we have 
\begin{align*}
&\Bigl\{v\in V_{\mu'}\,\Big|\,
\tau_{\mu '}\bigl(k_\theta^{(2,1)}\bigr)v=e^{\sI \mu_1' \theta }v\quad 
(\theta \in \bR)\Bigr\}=\bC \,v_{\mu_1'}^{\mu'},\\[1mm]
&\Bigl\{v\in V_{(1,0)}\otimes_{\bC}V_{\mu -(1,0)}\, \Big| \, 
(\tau_{(1,0)}\otimes \tau_{\mu -(1,0)})
\bigl(k_\theta^{(2,1)}\bigr)v=e^{\sI \mu_1' \theta }v\quad 
(\theta \in \bR)\Bigr\}\\
&=
\left\{\begin{array}{ll}
\bC \,v^{(1,0)}_{1}\otimes v^{\mu -(1,0)}_{\mu_1-1}&
\text{if}\ \mu_1'=\mu_1,\\[1mm]
\{0\}&\text{if}\ \mu_1'>\mu_1.
\end{array}\right.
\end{align*} 
Hence, we know that (\ref{eqn:R3_tensor12}) holds 
if $\mu_1'>\mu_1 $, and 
\[
\dim_\bC  \Hom_K(V_{(1,0)}\otimes_{\bC}V_{\mu -(1,0)}, 
V_{\mu })\leq 1. 
\]
Since $\mathrm{B}_{\mu}$ 
is a $\bC$-linear map induced from 
\begin{align*}
\mathcal{P}_{(1,0)}\otimes_{\bC}
\mathcal{P}_{\mu -(1,0)}\ni 
p_1\otimes 
p_2\mapsto 
p_1p_2
\in \mathcal{P}_{\mu }, 
\end{align*}
it is obvious that 
$\mathrm{B}_{\mu }$ is a surjective $K$-homomorphism. 
Hence, we obtain (\ref{eqn:R3_tensor11}), 
and complete a proof of the statement (1).

By (\ref{eqn:R3_LieKact_gen}), 
we have 
\begin{align*}
&(\adj \otimes \tau_{(1,\mu_2)})(E_{a,b}^{{\gk}})
\mathrm{I}^{\gp}_{(1,\mu_2)}(u_{\me_i})\\
&=\sum_{j=1}^3
\bigl\{(\adj (E_{a,b}^{{\gk}})E_{i,j}^{\gp})\otimes u_{\me_j}
+E_{i,j}^{\gp}\otimes (\tau_{(1,\mu_2)}(E_{a,b}^{{\gk}})u_{\me_j})\bigr\}\\
&=\sum_{j=1}^3
\bigl\{(\delta_{b,i}E_{j,a}^{{\gp}}
+\delta_{b,j}E_{i,a}^{{\gp}}
-\delta_{a,i}E_{j,b}^{{\gp}}
-\delta_{a,j}E_{i,b}^{{\gp}})\otimes u_{\me_j}\\
&\hspace{11mm}+E_{i,j}^{\gp}\otimes 
(\delta_{b,j}u_{\me_a}-\delta_{a,j}u_{\me_b})\bigr\}\\
&=
\delta_{b,i}\sum_{j=1}^3E_{j,a}^{{\gp}}\otimes u_{\me_j}
+E_{i,a}^{{\gp}}\otimes u_{\me_b}
-\delta_{a,i}\sum_{j=1}^3E_{j,b}^{{\gp}}\otimes u_{\me_j}
-E_{i,b}^{{\gp}}\otimes u_{\me_a}\\
&\hspace{4mm}
+E_{i,b}^{\gp}\otimes u_{\me_a}-E_{i,a}^{\gp}\otimes u_{\me_b}\\[1mm]
&=
\delta_{b,i}\mathrm{I}^{\gp}_{1}(u_{\me_a})
-\delta_{a,i}\mathrm{I}^{\gp}_{1}(u_{\me_b})
\hspace{4.5cm} (1\leq i\leq 3)
\end{align*}
for $1\leq a,b\leq 3$. 
Comparing this equality with (\ref{eqn:R3_LieKact_gen}), 
we know that $\mathrm{I}^{\gp}_{(1,\mu_2)}$ is a $\gk_\bC $-homomorphism. 
Since the identity component of $K=O(3)$ is $SO(3)$, 
we know that $\mathrm{I}^{\gp}_{(1,\mu_2)}$ is an $SO(3)$-homomorphism. 
Since $K=SO(3)\cup ((-1_3)SO(3))$ and 
$(\Ad \otimes \tau_{(1,\mu_2)})(-1_3)
=\tau_{(1,\mu_2)}(-1_3)=(-1)^{1+\mu_2}$, 
we obtain the statement (2). 
\end{proof}

\section{Principal series representations}
\label{subsec:R3_ps}

Let $\sigma =\chi_{(\nu_1,\delta_1)}\boxtimes \chi_{(\nu_2,\delta_2)}
\boxtimes \chi_{(\nu_3,\delta_3)}$ 
with $\nu_1,\nu_2,\nu_3\in \bC$ and $\delta_1,\delta_2,\delta_3\in \{0,1\}$ 
such that $\delta_1\geq \delta_2\geq \delta_3$. 
We identify $U_\sigma $ 
with $\bC$ as in \S \ref{subsec:Rn_inf_char_ps}. 
In this section, 
we consider the action of $\g_\bC$ at the minimal $K$-type of $\Pi_\sigma$. 

Because of (\ref{eqn:R3_M111act_gen}) and 
${K}\cap M_{(1,1,1)}=
\bigl\{k_{(\varepsilon_1,\varepsilon_2,\varepsilon_3)}^{(1,1,1)}\,\big|\,
\varepsilon_1,\varepsilon_2,\varepsilon_3\in \{\pm 1\}\bigr\}$, 
for $\mu =(\mu_1,\mu_2)\in \Lambda_3$, we have 
\begin{align*}
&\Hom_{{K}\cap M_{(1,1,1)}}(V_{\mu },U_{\sigma})\\
&=\left\{\begin{array}{ll}
\bC\, \eta_{\sigma}
&\text{ if }\ \mu =(\delta_1-\delta_3,\delta_2),
\\[1mm]
\{0\}&\text{ if }\ \mu_1 < \delta_1-\delta_3 \ \text{ or }\ 
\mu_1+\mu_2 \not\equiv \delta_1 +\delta_2+\delta_3 \bmod 2. 
\end{array}\right. 
\end{align*}
Here $\eta_{\sigma }\colon V_{(\delta_1-\delta_3 ,\delta_2)}
\to U_\sigma $ is a $\bC$-linear map characterized by 
\begin{align*}
&\eta_{\sigma}(u_l)
=\left\{\begin{array}{ll}
1&\text{ if }\ l=(\delta_1-\delta_2,\,
0,\,\delta_2-\delta_3),\\[1mm]
0&\text{ otherwise}
\end{array}\right.&
&(l\in S_{(\delta_1-\delta_3 ,\delta_2)}). 
\end{align*}
By the Frobenius reciprocity law, 
for $\mu =(\mu_1,\mu_2)\in \Lambda_3$, we have 
\begin{align*}
&\Hom_{{K}}(V_{\mu },H({\sigma}))\\
&=\left\{\begin{array}{ll}
\bC\, \hat{\eta}_{\sigma}
&\text{ if }\ \mu =(\delta_1-\delta_3,\delta_2),
\\[1mm]
\{0\}&\text{ if }\ \mu_1 < \delta_1-\delta_3 \ \text{ or }\ 
\mu_1+\mu_2 \not\equiv \delta_1 +\delta_2+\delta_3 \bmod 2
\end{array}\right. 
\end{align*}
with $\hat{\eta}_{\sigma }(v)(k)=\eta_{\sigma }
(\tau_{(\delta_1-\delta_3 ,\delta_2)}(k)v)\ 
(v\in V_{(\delta_1-\delta_3 ,\delta_2)},\ k\in {K})$. 
We call $\tau_{(\delta_1-\delta_3 ,\delta_2)}$ 
the minimal $K$-type of $\Pi_\sigma$.

\begin{prop}
\label{prop:R3_DSE_ps}
Retain the notation. 
If $(\delta_1,\delta_2,\delta_3)=(1,0,0)$, 
it holds that 
\begin{align*}
&\sum_{j=1}^3\Pi_\sigma (E_{i,j}^{\gp})
\hat{\eta}_{\sigma }(u_{\me_j})
=2\nu_1\hat{\eta}_{\sigma }(u_{\me_i})&
&(1\leq i\leq 3).
\end{align*}
If $(\delta_1,\delta_2,\delta_3)=(1,1,0)$, 
it holds that 
\begin{align*}
&\sum_{j=1}^3\Pi_\sigma (E_{i,j}^{\gp})
\hat{\eta}_{\sigma }(u_{\me_j})
=2\nu_3\hat{\eta}_{\sigma }(u_{\me_i})&
&(1\leq i\leq 3).
\end{align*}
\end{prop}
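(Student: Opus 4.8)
The plan is to exploit the $K$-equivariance hidden in the statement. By Lemma \ref{lem:R3_tensor}(2), the map $\mathrm{I}^{\gp}_{(1,\delta_2)}\colon V_{(1,\delta_2)}\to \gp_\bC\otimes_\bC V_{(1,\delta_2)}$ sending $u_{\me_i}$ to $\sum_{j=1}^3 E_{i,j}^{\gp}\otimes u_{\me_j}$ is a $K$-homomorphism. Composing it with $\id_{\gp_\bC}\otimes \hat{\eta}_{\sigma}$ and then with the action map $\gp_\bC\otimes H(\sigma)_K\to H(\sigma)_K$, $X\otimes f\mapsto \Pi_\sigma(X)f$ (which is $K$-equivariant since $\Pi_\sigma(k)\Pi_\sigma(X)=\Pi_\sigma(\Ad(k)X)\Pi_\sigma(k)$), I obtain a $K$-homomorphism $\Psi\colon V_{(1,\delta_2)}\to H(\sigma)_K$ with $\Psi(u_{\me_i})=\sum_{j=1}^3\Pi_\sigma(E_{i,j}^{\gp})\hat{\eta}_{\sigma}(u_{\me_j})$, that is, the left-hand side of the asserted identity.

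In both cases $(\delta_1,\delta_2,\delta_3)=(1,0,0)$ and $(\delta_1,\delta_2,\delta_3)=(1,1,0)$ we have $(1,\delta_2)=(\delta_1-\delta_3,\delta_2)$, so by the description of $\Hom_K(V_\mu,H(\sigma))$ recalled in \S\ref{subsec:R3_ps} the $\tau_{(1,\delta_2)}$-isotypic component of $H(\sigma)_K$ is $\bC \textrm{-span}\{\hat{\eta}_{\sigma}(v)\mid v\in V_{(1,\delta_2)}\}$ and has multiplicity one. Since $V_{(1,\delta_2)}$ is irreducible, $\Psi$ factors through this component, and by Schur's lemma $\Psi=c\,\hat{\eta}_{\sigma}$ for some scalar $c\in\bC$; it remains only to determine $c$.

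For that I would evaluate $\sum_{j=1}^3\Pi_\sigma(E_{i,j}^{\gp})\hat{\eta}_{\sigma}(u_{\me_j})=c\,\hat{\eta}_{\sigma}(u_{\me_i})$ at $k=1_3$, choosing the index $i$ with $\eta_\sigma(u_{\me_i})=1$, namely $i=1$ for $(1,0,0)$ and $i=3$ for $(1,1,0)$. For the term $j=i$ one has $E_{i,i}^{\gp}=2E_{i,i}^{\g}$, whose value at $1_3$ comes from (\ref{eqn:Rn_gact_value2}). For each term $j\neq i$ one writes $E_{i,j}^{\gp}=2E_{a,b}^{\g}-E_{a,b}^{\gk}$ with $a=\min(i,j)$ and $b=\max(i,j)$: the $\gn_\bC$-part $E_{a,b}^{\g}$ contributes $0$ at $1_3$ by (\ref{eqn:Rn_gact_value1}), while $\Pi_\sigma(E_{a,b}^{\gk})\hat{\eta}_{\sigma}(u_{\me_j})=\hat{\eta}_{\sigma}(\tau_{(1,\delta_2)}(E_{a,b}^{\gk})u_{\me_j})$ is computed from (\ref{eqn:R3_LieKact_gen}) and then evaluated via $\hat{\eta}_{\sigma}(w)(1_3)=\eta_\sigma(w)$. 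Adding the three contributions yields $c=2\nu_1$ in the first case and $c=2\nu_3$ in the second.

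The computation of $c$ is purely mechanical, so the only point that really needs attention is the multiplicity-one claim: it is essential here that $(1,\delta_2)$ is exactly the minimal $K$-type $(\delta_1-\delta_3,\delta_2)$, for otherwise $\Psi$ might a priori reach $K$-types other than $\tau_{(1,\delta_2)}$ and the argument would have to be supplemented. Beyond careful bookkeeping of signs in the $\gk_\bC$-action, I do not expect any genuine obstacle.
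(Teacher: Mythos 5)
Your proposal is correct and follows essentially the same route as the paper's proof: apply Lemma \ref{lem:R3_tensor}(2) to build the $K$-homomorphism $\mathrm{P}_\sigma\circ(\id_{\gp_\bC}\otimes\hat{\eta}_\sigma)\circ\mathrm{I}^{\gp}_{(1,\delta_2)}$, use $\dim_\bC\Hom_K(V_{(1,\delta_2)},H(\sigma)_K)=1$ to identify it with $c\,\hat{\eta}_\sigma$, and then determine $c$ by evaluating at $1_3$ via (\ref{eqn:Rn_gact_value1}), (\ref{eqn:Rn_gact_value2}) and (\ref{eqn:R3_LieKact_gen}), choosing $i=1$ for $(1,0,0)$ and $i=3$ for $(1,1,0)$ — exactly as the paper does. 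One small phrasing point: a $K$-homomorphism from the irreducible $V_{(1,\delta_2)}$ automatically lands in the $\tau_{(1,\delta_2)}$-isotypic component, so the worry that $\Psi$ ``might reach other $K$-types'' does not arise; the multiplicity-one statement is needed only to conclude $\Psi\in\bC\,\hat{\eta}_\sigma$ rather than something larger, and that is exactly what the Frobenius reciprocity computation in \S\ref{subsec:R3_ps} provides.
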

\begin{proof}
Let $\mathrm{P}_\sigma 
\colon \gp_{\bC}\otimes_{\bC}H(\sigma)_K\to H(\sigma)_K$ 
be a natural $K$-homomorphism defined by 
$X\otimes f\mapsto \Pi_{\sigma}(X)f$. 
Assume $\delta_1>\delta_3$. 
Since the composite 
\[
\mathrm{P}_\sigma \circ 
(\id_{\gp_{\bC}}\otimes \hat{\eta}_{\sigma })\circ 
\mathrm{I}^\gp_{(1,\delta_2)}
\]
is an element of 
$\Hom_K(V_{(1,\delta_2)},H(\sigma)_K)=\bC\,\hat{\eta}_\sigma$, 
there exists $c\in \bC$ such that 
\begin{align*}
c\,\hat{\eta}_{\sigma }=
\mathrm{P}_\sigma \circ 
(\id_{\gp_{\bC}}\otimes \hat{\eta}_{\sigma })\circ 
\mathrm{I}^\gp_{(1,\delta_2)}.
\end{align*}
Considering the image of $u_{\me_i}$ 
under the both sides of this equality, we have 
\begin{align*}
c\,\hat{\eta}_{\sigma }(u_{\me_i})
=\sum_{j=1}^3\Pi_\sigma (E_{i,j}^{\gp})
\hat{\eta}_{\sigma }(u_{\me_j})
\end{align*}
for $1\leq i\leq 3$. 
When $(\delta_1,\delta_2,\delta_3)=(1,0,0)$, we have 
\begin{align*}
c&=c\,\hat{\eta}_{\sigma }(u_{\me_{1}})(1_3)=
\sum_{j=1}^3(\Pi_\sigma (E_{1,j}^{\gp})
\hat{\eta}_{\sigma }(u_{\me_j}))(1_3)\\
&=
2(\Pi_\sigma (E_{1,1}^{\g})\hat{\eta}_{\sigma}(u_{\me_1}))(1_3)\\
&\hphantom{=,}
+2(\Pi_\sigma (E_{1,2}^{\g})\hat{\eta}_{\sigma}(u_{\me_2}))(1_3)
-\hat{\eta}_{\sigma}
(\tau_{(1,\delta_2)}(E_{1,2}^{\gk})u_{\me_2})(1_3)\\
&\hphantom{=,}
+2(\Pi_\sigma (E_{1,3}^{\g})\hat{\eta}_{\sigma}(u_{\me_3}))(1_3)
-\hat{\eta}_{\sigma}
(\tau_{(1,\delta_2)}(E_{1,3}^{\gk})u_{\me_3})(1_3)\\
&=2(\nu_1+1)+0-1+0-1=2\nu_1.
\end{align*}
When $(\delta_1,\delta_2,\delta_3)=(1,1,0)$, we have 
\begin{align*}
c
&=
c\,\hat{\eta}_{\sigma}(u_{\me_{3}})(1_3)
=
\sum_{j=1}^3(\Pi_\sigma (E_{3,j}^{\gp})
\hat{\eta}_{\sigma}(u_{\me_j}))(1_3)\\
&=
2(\Pi_\sigma (E_{1,3}^{\g})\hat{\eta}_{\sigma}(u_{\me_1}))(1_3)
-\hat{\eta}_{\sigma}
(\tau_{(1,\delta_2)}(E_{1,3}^{\gk})u_{\me_1})(1_3)\\
&\hphantom{=,}
+2(\Pi_\sigma (E_{2,3}^{\g})\hat{\eta}_{\sigma}(u_{\me_2}))(1_3)
-\hat{\eta}_{\sigma}
(\tau_{(1,\delta_2 )}(E_{2,3}^{\gk})u_{\me_2})(1_3)\\
&\hphantom{=,}
+2(\Pi_\sigma (E_{3,3}^{\g})\hat{\eta}_{\sigma}(u_{\me_3}))(1_3)\\
&=0-(-1)+0-(-1)+2(\nu_3-1)=2\nu_3.
\end{align*}
Therefore, we obtain the assertion. 
\end{proof}

\section{Principal series Whittaker functions at scalar $K$-types}
\label{subsec:R3_ps1_whittaker}

We use the notation in \S \ref{subsec:R3_ps}. 
We assume $\delta_1-\delta_3=0$, that is, 
\[
\sigma =\chi_{(\nu_1,\delta_2)}\boxtimes \chi_{(\nu_2,\delta_2)}
\boxtimes \chi_{(\nu_3,\delta_2)}
\] 
with $\nu_1,\nu_2,\nu_3\in \bC$ and 
$\delta_2\in \{0,1\}$. 
Then the minimal $K$-type $\tau_{(0,\delta_2)}$ of $\Pi_\sigma$ 
is 1 dimensional. 
Let $\varphi \colon V_{(0,\delta_2)} \to 
{\mathrm{Wh}}(\Pi_\sigma ,\psi_1)$ be a $K$-homomorphism. 
Since $S_{(0,\delta_2)}=\{\mathbf{0}\}$ with $\mathbf{0}=(0,0,0)$, 
we note that $\varphi$ is characterized by 
$\varphi (u_{\mathbf{0}})(y)$ with 
$y=\diag (y_1y_2y_3,y_2y_3,y_3)\in A$. 
We set $\displaystyle \partial_i=y_i\frac{\partial}{\partial y_i}$ 
for $1\leq i\leq 3$.

We will construct the system of 
partial differential equations satisfied by 
the function $\varphi (u_{\mathbf{0}})(y)$, 
from the actions of the generators 
$\cC_1^{{\g}}$, $\cC_2^{{\g}}$ and $\cC_3^{{\g}}$ of $Z(\g_\bC)$. 
The explicit forms of $\cC_1^{{\g}}$, $\cC_2^{{\g}}$ and $\cC_3^{{\g}}$ 
are given by 
\begin{align}
\label{eqn:R3_C1}
\cC_1^{{\g}}=&E_{1,1}^{{\g}}+E_{2,2}^{{\g}}+E_{3,3}^{{\g}},\\ 
\nonumber 
\cC_2^{{\g}}=&(E_{1,1}^{{\g}}-1)E_{2,2}^{{\g}}
+(E_{1,1}^{{\g}}-1)(E_{3,3}^{{\g}}+1)
+E_{2,2}^{{\g}}(E_{3,3}^{{\g}}+1)\\
\nonumber 
&-E_{1,2}^{{\g}}E_{2,1}^{{\g}}-E_{1,3}^{{\g}}E_{3,1}^{{\g}}
-E_{2,3}^{{\g}}E_{3,2}^{{\g}},\\
\nonumber 
\cC_3^{{\g}}
=&(E_{1,1}^{{\g}}-1)E_{2,2}^{{\g}}(E_{3,3}^{{\g}}+1)
+E_{1,2}^{{\g}}E_{2,3}^{{\g}}E_{3,1}^{{\g}}
+E_{1,3}^{{\g}}E_{2,1}^{{\g}}E_{3,2}^{{\g}}\\
\nonumber 
&-E_{1,2}^{{\g}}E_{2,1}^{{\g}}(E_{3,3}^{{\g}}+1)
-E_{1,3}^{{\g}}E_{2,2}^{{\g}}E_{3,1}^{{\g}}
-(E_{1,1}^{{\g}}-1)E_{2,3}^{{\g}}E_{3,2}^{{\g}}.
\end{align}
By the equality $E_{j,i}^{{\g}}=E_{i,j}^{{\g}}-E_{i,j}^{{\gk}}$ 
$(1\leq i,j\leq 3)$, we have 
\begin{align}%
\begin{split} \label{eqn:R3_C2_modE13}
\cC_2^{{\g}}
\equiv 
&(E_{1,1}^{{\g}}-1)E_{2,2}^{{\g}}
+(E_{1,1}^{{\g}}-1)(E_{3,3}^{{\g}}+1)
+E_{2,2}^{{\g}}(E_{3,3}^{{\g}}+1)\\
&-(E_{1,2}^{{\g}})^2-(E_{2,3}^{{\g}})^2
+E_{1,2}^{{\g}}E_{1,2}^{{\gk}}+E_{2,3}^{{\g}}E_{2,3}^{{\gk}}
\hspace{0.5cm} \mod E_{1,3}^{{\g}}U(\g_\bC),
\end{split}\\
\begin{split}\label{eqn:R3_C3_modE13}
\cC_3^{{\g}}
\equiv 
&(E_{1,1}^{{\g}}-1)E_{2,2}^{{\g}}(E_{3,3}^{{\g}}+1)
-(E_{1,2}^{{\g}})^2(E_{3,3}^{{\g}}+1)
-(E_{2,3}^{{\g}})^2(E_{1,1}^{{\g}}-1)\\
&+E_{1,2}^{{\g}}(E_{3,3}^{{\g}}+1)E^{{\gk}}_{1,2}
+E_{2,3}^{{\g}}(E_{1,1}^{{\g}}-1)E^{{\gk}}_{2,3}
-E_{1,2}^{{\g}}E_{2,3}^{{\g}}E^{{\gk}}_{1,3}\\
&
\hspace{7cm}\mod E_{1,3}^{{\g}}U(\g_\bC).
\end{split}
\end{align}

\begin{lem}
\label{lem:R3_ps1_ZgDSE1_original}
Retain the notation. It holds that 
\begin{align}
&\label{eqn:R3_ps1_PDE1_C1}
(\partial_3-\nu_1-\nu_2-\nu_3)\varphi (u_{\mathbf{0}})(y)=0,\\[3pt] 
\begin{split}\label{eqn:R3_ps1_PDE1_C2}
&\{(\partial_1-1)(-\partial_1+\partial_2)
+(\partial_2-1)(-\partial_2+\partial_3+1)\\
&+(2\pi y_1)^2+(2\pi y_2)^2
-\nu_1\nu_2-\nu_1\nu_3-\nu_2\nu_3\}\varphi (u_{\mathbf{0}})(y)
=0,
\end{split}\\[3pt]
\begin{split}\label{eqn:R3_ps1_PDE1_C3}
&\{(\partial_1-1)(-\partial_1+\partial_2)(-\partial_2+\partial_3+1)
+(2\pi y_1)^2(-\partial_2+\partial_3+1)\\
&+(2\pi y_2)^2(\partial_1-1)-\nu_1\nu_2\nu_3\}\varphi (u_{\mathbf{0}})(y)=0.
\end{split}
\end{align}
\end{lem}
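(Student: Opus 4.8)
The plan is to obtain the three equations by letting the Capelli generators $\cC_1^{\g},\cC_2^{\g},\cC_3^{\g}$ of $Z(\g_\bC)$ act on $\varphi(u_{\mathbf{0}})$. Since $\delta_1-\delta_3=0$, the minimal $K$-type $\tau_{(0,\delta_2)}$ occurs in $\Pi_\sigma$ with multiplicity one, so we may write $\varphi=\Phi\circ\hat{\eta}_{\sigma}$ for some $\Phi\in\cI_{\Pi_\sigma,\psi_1}$, where $\hat{\eta}_{\sigma}\colon V_{(0,\delta_2)}\to H(\sigma)_K$ is the $K$-embedding of \S\ref{subsec:R3_ps}. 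Because $\Phi$ is a $(\g_\bC,K)$-homomorphism and $\cC_h^{\g}\in Z(\g_\bC)$, Proposition \ref{prop:Rn_Ch_eigenvalue} gives
\[
R(\cC_h^{\g})\varphi(u_{\mathbf{0}})=\Phi\bigl(\Pi_\sigma(\cC_h^{\g})\hat{\eta}_{\sigma}(u_{\mathbf{0}})\bigr)=e_h\,\varphi(u_{\mathbf{0}})\qquad(h=1,2,3),
\]
where $e_h=e_h(\nu_1,\nu_2,\nu_3)$ is the $h$-th elementary symmetric polynomial (so $e_1=\nu_1+\nu_2+\nu_3$, $e_2=\nu_1\nu_2+\nu_1\nu_3+\nu_2\nu_3$, $e_3=\nu_1\nu_2\nu_3$). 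It then suffices to rewrite each left-hand side, restricted to $y\in A$, as an explicit differential operator applied to $\varphi(u_{\mathbf{0}})(y)$ by means of Lemma \ref{lem:Rn_g_act_Cpsi}.

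For $h=1$ this is immediate: by (\ref{eqn:R3_C1}) and Lemma \ref{lem:Rn_g_act_Cpsi} the operator $R(\cC_1^{\g})$ acts on $A$ as $(-\partial_0+\partial_1)+(-\partial_1+\partial_2)+(-\partial_2+\partial_3)=\partial_3$, which gives (\ref{eqn:R3_ps1_PDE1_C1}). For $h=2,3$ the Capelli elements contain the lower-triangular generators $E_{j,i}^{\g}$ ($i<j$), so I would first rewrite them modulo $E_{1,3}^{\g}U(\g_\bC)$ via $E_{j,i}^{\g}=E_{i,j}^{\g}-E_{i,j}^{\gk}$; this is exactly what (\ref{eqn:R3_C2_modE13}) and (\ref{eqn:R3_C3_modE13}) record. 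In those reduced expressions two kinds of terms disappear after being applied to $\varphi(u_{\mathbf{0}})$ and evaluated at $y\in A$. First, a term lying in $E_{1,3}^{\g}U(\g_\bC)$, say $E_{1,3}^{\g}Z$, contributes $\bigl(R(E_{1,3}^{\g})\,R(Z)\varphi(u_{\mathbf{0}})\bigr)(y)$, and $R(Z)\varphi(u_{\mathbf{0}})$ again lies in $C^\infty(N\backslash G;\psi_1)$ (right translation preserves this space), so this vanishes on $A$ by Lemma \ref{lem:Rn_g_act_Cpsi}. Second, a term whose rightmost factor is some $E_{i,j}^{\gk}$ ($i<j$) kills $\varphi(u_{\mathbf{0}})$, because $R(X)\varphi(u_{\mathbf{0}})=\varphi(\tau_{(0,\delta_2)}(X)u_{\mathbf{0}})$ for $X\in\gk$ by (\ref{eqn:Fn_whitt_ngk}), while $\tau_{(0,\delta_2)}(E_{i,j}^{\gk})u_{\mathbf{0}}=0$ by (\ref{eqn:R3_LieKact_gen}) since $S_{(0,\delta_2)}=\{\mathbf{0}\}$.

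Once these terms are discarded, only products of the diagonal generators $E_{i,i}^{\g}$ and of the super-diagonal generators $E_{1,2}^{\g},E_{2,3}^{\g}$ remain, and Lemma \ref{lem:Rn_g_act_Cpsi} translates them factor by factor: $R(E_{i,i}^{\g})$ becomes $-\partial_{i-1}+\partial_i$ and $R(E_{i,i+1}^{\g})$ becomes multiplication by $2\pi\sI y_i$, all these operators commuting on functions of $y$. Thus the $h=2$ identity becomes
\begin{align*}
R(\cC_2^{\g})\varphi(u_{\mathbf{0}})(y)=\Bigl\{&(\partial_1-1)(-\partial_1+\partial_2)+(\partial_1-1)(-\partial_2+\partial_3+1)\\
&+(-\partial_1+\partial_2)(-\partial_2+\partial_3+1)+(2\pi y_1)^2+(2\pi y_2)^2\Bigr\}\varphi(u_{\mathbf{0}})(y),
\end{align*}
and combining with $R(\cC_2^{\g})\varphi(u_{\mathbf{0}})=e_2\varphi(u_{\mathbf{0}})$ together with $(\partial_1-1)+(-\partial_1+\partial_2)=\partial_2-1$ produces (\ref{eqn:R3_ps1_PDE1_C2}); the same procedure applied to $\cC_3^{\g}$ yields (\ref{eqn:R3_ps1_PDE1_C3}) directly. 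I do not expect a real obstacle: this is a mechanical computation once the reductions (\ref{eqn:R3_C2_modE13})--(\ref{eqn:R3_C3_modE13}) are granted. The only points demanding a little care are the vanishing of the $E_{1,3}^{\g}U(\g_\bC)$-terms and of the $\gk$-ending terms on $A$, and the bookkeeping of operator orderings when iterating Lemma \ref{lem:Rn_g_act_Cpsi} — harmless, since the $\partial_i$ and the multiplications by $2\pi\sI y_i$ pairwise commute in the combinations that occur.
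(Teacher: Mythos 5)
Your proposal is correct and follows essentially the same route as the paper's proof: express $\varphi=\Phi\circ\hat{\eta}_\sigma$, apply Proposition~\ref{prop:Rn_Ch_eigenvalue} to get the eigenvalue equations for $\cC_h^{\g}$, and translate via (\ref{eqn:R3_C1}), (\ref{eqn:R3_C2_modE13}), (\ref{eqn:R3_C3_modE13}) together with (\ref{eqn:R3_LieKact_gen}) and Lemma~\ref{lem:Rn_g_act_Cpsi}. You merely spell out the details — the vanishing of the $E_{1,3}^{\g}U(\g_\bC)$ and $\gk$-ending terms, and the commutativity needed to factor $(\partial_1-1)+(-\partial_1+\partial_2)=\partial_2-1$ — that the paper leaves as a terse citation.
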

\begin{proof}
We note that there is a homomorphism $\Phi \in {\cI}_{\Pi_\sigma ,\psi_1}$ 
such that 
$\varphi =\Phi \circ \hat{\eta}_\sigma$. 
Hence, by Proposition \ref{prop:Rn_Ch_eigenvalue}, we have 
\begin{align*}
&(R(\cC_1^{{\g}})\varphi (u_{\mathbf{0}}))(y)
-(\nu_1+\nu_2+\nu_3)\varphi (u_{\mathbf{0}})(y)=0,\\
&(R(\cC_2^{{\g}})\varphi (u_{\mathbf{0}}))(y)
-(\nu_1\nu_2+\nu_1\nu_3+\nu_2\nu_3)\varphi (u_{\mathbf{0}})(y)=0,\\
&(R(\cC_3^{{\g}})\varphi (u_{\mathbf{0}}))(y)
-\nu_1\nu_2\nu_3\varphi (u_{\mathbf{0}})(y)=0.
\end{align*}
Applying (\ref{eqn:R3_LieKact_gen}) and Lemma \ref{lem:Rn_g_act_Cpsi} 
to these equalities with 
the expressions (\ref{eqn:R3_C1}), (\ref{eqn:R3_C2_modE13}) and 
(\ref{eqn:R3_C3_modE13}), 
we obtain the assertion. 
\end{proof}

By (\ref{eqn:R3_ps1_PDE1_C1}), 
we can define a function $\hat{\varphi}_{\mathbf{0}}$ on $(\bR_+)^2$ by  
\begin{align}
\label{eqn:R3_ps1_def_varphi_l}
&\varphi (u_{\mathbf{0}})(y)=
y_1y_2(y_2y_3)^{\nu_1+\nu_2+\nu_3}\hat{\varphi}_{\mathbf{0}}(y_1,y_2) 
\end{align}
with $y=\diag (y_1y_2y_3,y_2y_3,y_3)\in A$.

\begin{lem}
\label{lem:R3_ps1_PDE3}
Retain the notation. 
Then the function $\hat{\varphi}_{\mathbf{0}}$ satisfies the following system 
of partial differential equations: 
\begin{align}
\begin{split}\label{eqn:R3_ps1_PDE3_2}
&\{-\partial_1^2+\partial_1\partial_2-\partial_2^2
+(\nu_1+\nu_2+\nu_3)(\partial_1-\partial_2)\\
&-\nu_1\nu_2-\nu_1\nu_3-\nu_2\nu_3
+(2\pi y_1)^2+(2\pi y_2)^2\}\hat{\varphi}_{\mathbf{0}}
=0,
\end{split}\\[0.5mm]
&\label{eqn:R3_ps1_PDE3_3}
\{(\partial_1-\nu_1)(\partial_1-\nu_2)(\partial_1-\nu_3)
-(2\pi y_1)^2(\partial_1+\partial_2+2)\}\hat{\varphi}_{\mathbf{0}}=0.
\end{align}
\end{lem}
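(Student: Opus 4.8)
The plan is to substitute the ansatz (\ref{eqn:R3_ps1_def_varphi_l}) into the three equations of Lemma \ref{lem:R3_ps1_ZgDSE1_original} and to rewrite them in terms of $\hat{\varphi}_{\mathbf{0}}$. Put $s=\nu_1+\nu_2+\nu_3$ and let $f_0(y)=y_1y_2(y_2y_3)^{s}$, so that $\varphi (u_{\mathbf{0}})(y)=f_0(y)\,\hat{\varphi}_{\mathbf{0}}(y_1,y_2)$ for $y=\diag (y_1y_2y_3,y_2y_3,y_3)\in A$. Since $f_0$ is a monomial in $y_1,y_2,y_3$, conjugating the Euler operators by $f_0$ gives $f_0^{-1}\circ\partial_1\circ f_0=\partial_1+1$, $f_0^{-1}\circ\partial_2\circ f_0=\partial_2+1+s$ and $f_0^{-1}\circ\partial_3\circ f_0=\partial_3+s$, while the multiplication operators $(2\pi y_i)^2$ are unchanged. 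With these substitutions, (\ref{eqn:R3_ps1_PDE1_C1}) becomes $\partial_3\hat{\varphi}_{\mathbf{0}}=0$, consistent with $\hat{\varphi}_{\mathbf{0}}$ being independent of $y_3$; I will use $\partial_3\hat{\varphi}_{\mathbf{0}}=0$ freely below.

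Next I would conjugate (\ref{eqn:R3_ps1_PDE1_C2}): the factor $\partial_1-1$ becomes $\partial_1$, the factor $-\partial_1+\partial_2$ becomes $-\partial_1+\partial_2+s$, the factor $\partial_2-1$ becomes $\partial_2+s$, and the factor $-\partial_2+\partial_3+1$ becomes $-\partial_2+\partial_3$, which acts as $-\partial_2$ on $\hat{\varphi}_{\mathbf{0}}$. Since $\partial_1$ and $\partial_2$ commute, expanding
\[
\partial_1(-\partial_1+\partial_2+s)+(\partial_2+s)(-\partial_2)+(2\pi y_1)^2+(2\pi y_2)^2-(\nu_1\nu_2+\nu_1\nu_3+\nu_2\nu_3)
\]
gives precisely the operator in (\ref{eqn:R3_ps1_PDE3_2}), which proves that equation.

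For (\ref{eqn:R3_ps1_PDE1_C3}) the same substitutions, together with $\partial_3\hat{\varphi}_{\mathbf{0}}=0$, turn the equation into $P\hat{\varphi}_{\mathbf{0}}=0$, where
\[
P=\partial_1^2\partial_2-\partial_1\partial_2^2-s\,\partial_1\partial_2-(2\pi y_1)^2\partial_2+(2\pi y_2)^2\partial_1-\nu_1\nu_2\nu_3.
\]
Let $L_2$ denote the differential operator occurring in (\ref{eqn:R3_ps1_PDE3_2}), which annihilates $\hat{\varphi}_{\mathbf{0}}$ by the previous step. A direct computation, using $\partial_1\circ(2\pi y_1)^2=(2\pi y_1)^2\circ(\partial_1+2)$ and that $\partial_1$ commutes with $(2\pi y_2)^2$, shows
\[
P-\partial_1 L_2=(\partial_1-\nu_1)(\partial_1-\nu_2)(\partial_1-\nu_3)-(2\pi y_1)^2(\partial_1+\partial_2+2),
\]
that is, $P-\partial_1 L_2$ is exactly the operator in (\ref{eqn:R3_ps1_PDE3_3}). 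Since $L_2\hat{\varphi}_{\mathbf{0}}=0$ implies $\partial_1 L_2\hat{\varphi}_{\mathbf{0}}=0$, combining this with $P\hat{\varphi}_{\mathbf{0}}=0$ yields (\ref{eqn:R3_ps1_PDE3_3}).

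The one part that is not entirely mechanical is the third step: one must keep careful track of the commutators between the Euler operators $\partial_i$ and the multiplication operators $(2\pi y_i)^2$ when expanding $P$ and $\partial_1 L_2$. Everything else reduces to polynomial identities in the commuting operators $\partial_1,\partial_2$, matched against the expanded forms of the elementary symmetric functions $\nu_1+\nu_2+\nu_3$, $\nu_1\nu_2+\nu_1\nu_3+\nu_2\nu_3$ and $\nu_1\nu_2\nu_3$.
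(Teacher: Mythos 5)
Your argument is correct and follows essentially the same route as the paper's proof: conjugating the equations of Lemma \ref{lem:R3_ps1_ZgDSE1_original} by the weight factor to pass to $\hat{\varphi}_{\mathbf{0}}$, reading off (\ref{eqn:R3_ps1_PDE3_2}) from the second one, and then obtaining (\ref{eqn:R3_ps1_PDE3_3}) by combining the conjugated third equation with $\partial_1$ applied to (\ref{eqn:R3_ps1_PDE3_2}), which is exactly the paper's manipulation of (\ref{eqn:R3_ps1_PDE2_C3}) and (\ref{eqn:R3_ps1_PDE2_D1C2}). Your explicit record of the commutation $\partial_1\circ(2\pi y_1)^2=(2\pi y_1)^2\circ(\partial_1+2)$ is the one non-trivial bookkeeping point, and you handled it correctly.
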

\begin{proof}
From (\ref{eqn:R3_ps1_PDE1_C2}) and (\ref{eqn:R3_ps1_PDE1_C3}), 
we have (\ref{eqn:R3_ps1_PDE3_2}) and  
\begin{align}
\begin{split}
&\{-\partial_1(-\partial_1+\partial_2+\nu_1+\nu_2+\nu_3)\partial_2
-\nu_1\nu_2\nu_3\\
&-(2\pi y_1)^2\partial_2
+(2\pi y_2)^2\partial_1\}\hat{\varphi}_{\mathbf{0}}=0,
\end{split}\label{eqn:R3_ps1_PDE2_C3}
\end{align}
respectively. 
Multiplying the both sides of (\ref{eqn:R3_ps1_PDE3_2}) 
by $-\partial_1$ from the left, we have 
\begin{align}
\begin{split}\label{eqn:R3_ps1_PDE2_D1C2}
&\{\partial_1^3
-\partial_1^2\partial_2
+\partial_1\partial_2^2
-(\nu_1+\nu_2+\nu_3)(\partial_1^2-\partial_1\partial_2)\\
&+(\nu_1\nu_2+\nu_1\nu_3+\nu_2\nu_3)\partial_1
-(2\pi y_1)^2(\partial_1+2)-(2\pi y_2)^2\partial_1
\}\hat{\varphi}_{\mathbf{0}}=0.
\end{split}
\end{align}
Adding the respective sides of 
(\ref{eqn:R3_ps1_PDE2_C3}) and (\ref{eqn:R3_ps1_PDE2_D1C2}), 
we obtain (\ref{eqn:R3_ps1_PDE3_3}). 
\end{proof}

\begin{thm}[{\cite[\S 1]{Bump_003}}]
\label{thm:R3_ps1_Whittaker}
Let $\sigma = \chi_{(\nu_1,\delta_2)}\boxtimes \chi_{(\nu_2,\delta_2)}
\boxtimes \chi_{(\nu_3,\delta_2)}$ with $\delta_2\in \{0,1\}$ and 
$\nu_1,\nu_2,\nu_3\in \bC$ such that $\Pi_\sigma$ is irreducible. \\
(1) There exists a $K$-homomorphism 
\[
\varphi^{\mathrm{mg}}_\sigma \colon V_{(0,\delta_2)}\to 
{\mathrm{Wh}}(\Pi_{\sigma},\psi_1)^{\mathrm{mg}},
\]
whose radial part is given by 
\begin{align*}
\varphi^{\mathrm{mg}}_\sigma (u_{\mathbf{0}})(y) =&
y_1y_2(y_2y_3)^{\nu_1+\nu_2+\nu_3}\\
&\times \frac{1}{(4\pi \sqrt{-1})^2} \int_{s_2}\int_{s_1}
\frac{\Gamma_{\bR}(s_1+\nu_1)\Gamma_{\bR}(s_1+\nu_2)\Gamma_{\bR}(s_1+\nu_3) }
{ \Gamma_{\bR}(s_1+s_2)}\\
&\times \Gamma_{\bR}(s_2-\nu_1) \Gamma_{\bR}(s_2-\nu_2)\Gamma_{\bR}(s_2-\nu_3)
\,y_1^{-s_1} y_2^{-s_2} \,ds_1ds_2
\end{align*}
with $y=\diag (y_1y_2y_3,y_2y_3,y_3)\in A$. 
Here the path of the integration $\int_{s_i}$ is the vertical line 
from $\mathrm{Re}(s_i)-\sI \infty$ to $\mathrm{Re}(s_i)+\sI \infty$ 
with sufficiently large real part to keep the poles of the integrand 
on its left. \\[2pt]
(2) Assume 
$\nu_p-\nu_q \notin 2\bZ$ for any $1\leq p\neq q\leq 3$. 
For a permutation $(i,j,k)$ of $\{1,2,3\}$, 
there exists a $K$-homomorphism 
\[
\varphi^{(i,j,k)}_\sigma \colon V_{(0,\delta_2)}\to 
{\mathrm{Wh}}(\Pi_{\sigma},\psi_1),
\]
whose radial part is given by the power series 
\begin{align*}
\begin{split}
\varphi^{{(i,j,k)}}_\sigma (u_{\mathbf{0}})(y) &=
y_1y_2(y_2y_3)^{\nu_1+\nu_2+\nu_3}\\
&\hphantom{=}\times \sum_{m_1,m_2 \geq 0}
\frac{(-1)^{m_1+m_2}
\Gamma \bigl(-m_1-\tfrac{\nu_i-\nu_j}{2}\bigr)
\Gamma \bigl(-m_1-\tfrac{\nu_i-\nu_k}{2}\bigr)}
{m_1!m_2!\Gamma \bigl(-m_1-m_2-\tfrac{\nu_i-\nu_j}{2}\bigr)}\\
&\hphantom{=}\times \Gamma \bigl(-m_2-\tfrac{\nu_i-\nu_j}{2}\bigr)
\Gamma \bigl(-m_2-\tfrac{\nu_k-\nu_j}{2}\bigr)
(\pi y_1)^{2m_1+\nu_i} (\pi y_2)^{2m_2-\nu_j}
\end{split}
\end{align*}
with $y=\diag (y_1y_2y_3,y_2y_3,y_3)\in A$. 
Moreover, $\{\varphi^{(i,j,k)}_\sigma \mid \{i,j,k\}=\{1,2,3\}\,\}$ 
forms a basis of 
the $\bC$-vector space $\Hom_K(V_{(0,\delta_2)},
{\mathrm{Wh}}(\Pi_{\sigma},\psi_1))$, and satisfies 
\begin{align*}
\varphi^{\mathrm{mg}}_\sigma 
& = \sum_{ (i,j,k) } \varphi^{(i,j,k)}_\sigma ,
\end{align*}
where $ (i,j,k) $ runs all permutations of $\{1,2,3\}$. 
\end{thm}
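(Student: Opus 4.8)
The plan is to reduce the whole statement to the analysis of the solution space $\mathrm{Sol}(\nu)$, $\nu=(\nu_1,\nu_2,\nu_3)$, carried out in \S\ref{subsec:Fn_special2}. First I would note that for a $K$-homomorphism $\varphi\colon V_{(0,\delta_2)}\to\mathrm{Wh}(\Pi_\sigma,\psi_1)$, the function $\hat\varphi_{\mathbf 0}$ attached to $\varphi$ by (\ref{eqn:R3_ps1_def_varphi_l}) has the property that $(z_1,z_2)\mapsto\hat\varphi_{\mathbf 0}(z_1/\pi,z_2/\pi)$ lies in $\mathrm{Sol}(\nu)$: the operators $\partial_{z_i}$ are invariant under the rescaling $z_i=\pi y_i$ and $(2\pi y_i)^2=4z_i^2$, so by Lemma~\ref{lem:R3_ps1_PDE3} the equations (\ref{eqn:R3_ps1_PDE3_2}) and (\ref{eqn:R3_ps1_PDE3_3}) become precisely (\ref{eqn:Fn_Sol_PDE2}) and (\ref{eqn:Fn_Sol_PDE3}). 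Since $\varphi$ is determined by its radial part $\varphi(u_{\mathbf 0})|_A$, hence by $\hat\varphi_{\mathbf 0}$, this produces an injective linear map $\Theta\colon\Hom_K(V_{(0,\delta_2)},\mathrm{Wh}(\Pi_\sigma,\psi_1))\to\mathrm{Sol}(\nu)$.

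The key step is to show $\Theta$ is an isomorphism, by comparing dimensions. By Lemma~\ref{lem:Fn_Sol_dim}, $\dim_\bC\mathrm{Sol}(\nu)\le 6$. On the other hand $\Phi\mapsto\Phi\circ\hat\eta_\sigma$ embeds $\cI_{\Pi_\sigma,\psi_1}$ into $\Hom_K(V_{(0,\delta_2)},\mathrm{Wh}(\Pi_\sigma,\psi_1))$ --- this map is injective because $\tau_{(0,\delta_2)}$, being the minimal $K$-type of the irreducible module $\Pi_{\sigma,K}$, generates it, so a homomorphism $\Phi$ vanishing on $\hat\eta_\sigma(V_{(0,\delta_2)})$ must vanish --- and $\dim_\bC\cI_{\Pi_\sigma,\psi_1}=3!=6$ by (\ref{eqn:Rn_dimWh_gps}). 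Hence $6\le\dim_\bC\Hom_K(V_{(0,\delta_2)},\mathrm{Wh}(\Pi_\sigma,\psi_1))\le\dim_\bC\mathrm{Sol}(\nu)\le 6$, equality holds throughout, and $\Theta$ is bijective; in particular every element of $\mathrm{Sol}(\nu)$ is the rescaled, normalized radial part of a unique $K$-homomorphism into $\mathrm{Wh}(\Pi_\sigma,\psi_1)$. This coincidence between the representation-theoretic count $3!$ and the a priori holonomic-rank bound $6$ is the heart of the matter; everything after it is bookkeeping through $\Theta^{-1}$.

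For part (1), I would set $\varphi^{\mathrm{mg}}_\sigma=\Theta^{-1}(f^{\mathrm{mg}}_\nu)$ with $f^{\mathrm{mg}}_\nu$ from Lemma~\ref{lem:Fn_Sol_mg}; its radial part is then $y_1y_2(y_2y_3)^{\nu_1+\nu_2+\nu_3}f^{\mathrm{mg}}_\nu(\pi y_1,\pi y_2)$. Substituting the Mellin--Barnes integral for $f^{\mathrm{mg}}_\nu$ and rewriting each $\Gamma$-factor by $\Gamma_\bR(s)=\pi^{-s/2}\Gamma(s/2)$, the powers of $\pi$ from numerator and denominator combine to $\pi^{-s_1-s_2}$, which exactly cancels the $\pi^{-s_1-s_2}$ coming from $(\pi y_1)^{-s_1}(\pi y_2)^{-s_2}$, leaving the asserted formula. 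Moderate growth of $f^{\mathrm{mg}}_\nu$ gives moderate growth of $\varphi^{\mathrm{mg}}_\sigma(v)$ on $A$, hence on $G$ via the Iwasawa decomposition $G=NAK$ (the $N$-variable contributes a unitary character, $K$ is compact); a standard argument --- the moderate-growth vectors of an irreducible Whittaker $(\g_\bC,K)$-module form a submodule --- then shows that $\varphi^{\mathrm{mg}}_\sigma$ takes values in $\mathrm{Wh}(\Pi_\sigma,\psi_1)^{\mathrm{mg}}$.

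For part (2), assuming $\nu_p-\nu_q\notin 2\bZ$, Lemma~\ref{lem:Fn_Sol_power_series} provides the basis $\{f^{(i,j,k)}_\nu\}$ of $\mathrm{Sol}(\nu)$; I would put $\varphi^{(i,j,k)}_\sigma=\Theta^{-1}(f^{(i,j,k)}_\nu)$. Its radial part $y_1y_2(y_2y_3)^{\nu_1+\nu_2+\nu_3}f^{(i,j,k)}_\nu(\pi y_1,\pi y_2)$ is, once $f^{(i,j,k)}_\nu$ is expanded term by term, exactly the power series in the statement. Since $\Theta$ is an isomorphism, $\{\varphi^{(i,j,k)}_\sigma\}$ is a basis of $\Hom_K(V_{(0,\delta_2)},\mathrm{Wh}(\Pi_\sigma,\psi_1))$, and applying $\Theta^{-1}$ to the expansion formula (\ref{eqn:Fn_Sol_sol_expansion}) gives $\varphi^{\mathrm{mg}}_\sigma=\sum_{(i,j,k)}\varphi^{(i,j,k)}_\sigma$.
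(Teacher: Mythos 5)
Your proposal is correct and follows essentially the same route as the paper: you reduce everything to the holonomic system via the map $\varphi\mapsto\hat{\varphi}_{\mathbf 0}(\pi^{-1}\cdot,\pi^{-1}\cdot)\in\mathrm{Sol}(\nu)$, use the count $\dim_\bC{\cI}_{\Pi_\sigma,\psi_1}=3!=6$ together with Lemma~\ref{lem:Fn_Sol_dim} to force bijectivity, and then pull back the basis and moderate-growth solution from Lemmas~\ref{lem:Fn_Sol_power_series} and~\ref{lem:Fn_Sol_mg}. The minor cosmetic difference — you establish $\dim_\bC\Hom_K(V_{(0,\delta_2)},\mathrm{Wh}(\Pi_\sigma,\psi_1))=6$ via an inequality chain through the injective map $\Phi\mapsto\Phi\circ\hat\eta_\sigma$, while the paper asserts the equality directly from $\dim_\bC\Hom_K(V_{(0,\delta_2)},H(\sigma)_K)=1$ — does not change the substance; your $\pi$-bookkeeping relating $\Gamma_{\bR}$ to $\Gamma$ is also correct.
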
 
\begin{proof}
By Lemma \ref{lem:R3_ps1_PDE3}, 
we can define an injective homomorphism 
\begin{align}
\label{eqn:R3_ps1_Wh_to_Sol}
\Hom_K(V_{(0,\delta_2)},{\mathrm{Wh}}(\Pi_{\sigma},\psi_1))
\ni \varphi 
\mapsto f_\varphi 
\in \mathrm{Sol}((\nu_1,\nu_2,\nu_3))
\end{align}
of $\bC$-vector spaces by $f_\varphi (z_1,z_2)
=\hat{\varphi}_{\mathbf{0}}(\pi^{-1}z_1,\,\pi^{-1}z_2)$
with (\ref{eqn:R3_ps1_def_varphi_l}). Here, for $r\in \bC^3$,
the space $\mathrm{Sol}(r)$ is defined in \S \ref{subsec:Fn_special2}. 
Since 
\[
\dim_\bC \Hom_K(V_{(0,\delta_2)} ,H(\sigma)_K)=1
\]
and $\Pi_\sigma$ is irreducible, we have 
\begin{align*}
\dim_\bC \Hom_K(V_{(0,\delta_2)} ,{\mathrm{Wh}}(\Pi_{\sigma} ,\psi_1))
&=\dim_\bC {\cI}_{\Pi_{\sigma},\psi_1}=6
\geq 
\dim_{\bC}\mathrm{Sol}((\nu_1,\nu_2,\nu_3)).
\end{align*}
Here the last inequality follows from Lemma \ref{lem:Fn_Sol_dim}.
This inequality implies that (\ref{eqn:R3_ps1_Wh_to_Sol}) 
is bijective. Hence, the assertion follows from 
Lemmas \ref{lem:Fn_Sol_power_series} and \ref{lem:Fn_Sol_mg}. 
\end{proof}

\section{Principal series Whittaker functions at $3$ dimensional $K$-types}
\label{subsec:R3_ps3_whittaker}

We use the notation in \S \ref{subsec:R3_ps}. 
We assume $\delta_1-\delta_3=1$, that is, 
\[
\sigma =\chi_{(\nu_1,1)}\boxtimes \chi_{(\nu_2,\delta_2)}
\boxtimes \chi_{(\nu_3,0)}
\] 
with $\nu_1,\nu_2,\nu_3\in \bC$ and 
$\delta_2\in \{0,1\}$. 
Then the minimal $K$-type $\tau_{(1,\delta_2)}$ of $\Pi_\sigma$ is 
$3$ dimensional. 
Let $\varphi \colon V_{(1,\delta_2)} \to 
{\mathrm{Wh}}(\Pi_\sigma ,\psi_1)$ be a $K$-homomorphism. 
Since $S_{(1,\delta_2)}=\{\me_1,\me_2,\me_3\}$, 
we note that $\varphi$ is characterized by 
$\varphi (u_{\me_i})(y)$ $(1\leq i\leq 3)$ with 
$y=\diag (y_1y_2y_3,y_2y_3,y_3)\in A$. 
We set $\displaystyle \partial_i=y_i\frac{\partial}{\partial y_i}$ 
for $1\leq i\leq 3$.

\begin{lem}
\label{lem:R3_ps3_ZgDSE1_original}
Retain the notation. 

\noindent (1) It holds that 
\begin{align}
&\label{eqn:R3_ps3_PDE1_C1}
(\partial_3-\nu_1-\nu_2-\nu_3)\varphi (u_{\me_i})(y)=0
\hspace{20mm}(1\leq i\leq 3),\\[3pt] 
&\label{eqn:R3_ps3_PDE1_C2}
(\Delta_2-\nu_1\nu_2-\nu_1\nu_3-\nu_2\nu_3)\varphi (u_{\me_1})(y)
-2\pi \sI y_1\varphi (u_{\me_2})(y)
=0,\\[3pt]
&\nonumber 
(\Delta_2-\nu_1\nu_2-\nu_1\nu_3-\nu_2\nu_3)\varphi (u_{\me_2})(y)\\
&\nonumber 
+2\pi \sI y_1\varphi (u_{\me_1})(y)
-2\pi \sI y_2\varphi (u_{\me_3})(y)=0,\\[3pt]
&\nonumber 
(\Delta_2-\nu_1\nu_2-\nu_1\nu_3-\nu_2\nu_3)\varphi (u_{\me_3})(y)
+2\pi \sI y_2\varphi (u_{\me_2})(y)=0,\\[3pt]
\begin{split}\label{eqn:R3_ps3_PDE1_C3}
&(\Delta_3-\nu_1\nu_2\nu_3)\varphi (u_{\me_1})(y)
-2\pi \sI y_1(-\partial_2+\partial_3+1)
\varphi (u_{\me_2})(y)\\
&-(2\pi y_1)(2\pi y_2)
\varphi (u_{\me_3})(y)
=0,
\end{split}\\[3pt]
&\nonumber 
(\Delta_3-\nu_1\nu_2\nu_3)\varphi (u_{\me_2})(y)
+2\pi \sI y_1(-\partial_2+\partial_3+1)
\varphi (u_{\me_1})(y)\\
&\nonumber 
-2\pi \sI y_2(\partial_1-1)\varphi (u_{\me_3})(y)=0,\\[3pt]
&\nonumber 
(\Delta_3-\nu_1\nu_2\nu_3)\varphi (u_{\me_3})(y)
+2\pi \sI y_2(\partial_1-1)
\varphi (u_{\me_2})(y)\\
&\nonumber 
+(2\pi y_1)(2\pi y_2)
\varphi (u_{\me_1})(y)
=0,
\end{align}
where 
\begin{align*}
\Delta_2=&\,
(\partial_1-1)(-\partial_1+\partial_2)
+(\partial_2-1)(-\partial_2+\partial_3+1)
+(2\pi y_1)^2+(2\pi y_2)^2,\\
\Delta_3=&\,
(\partial_1-1)(-\partial_1+\partial_2)(-\partial_2+\partial_3+1)\\
&+(2\pi y_1)^2(-\partial_2+\partial_3+1)
+(2\pi y_2)^2(\partial_1-1).
\end{align*}
\noindent (2) If $\delta_2=0$, it holds that 
\begin{align}
&\label{eqn:R3_ps31_PDE1_DSE1}
(2\partial_1-2-2\nu_1)\varphi (u_{\me_1})(y)
+4\pi \sI y_1\varphi (u_{\me_2})(y)=0,\\[2pt]
\begin{split}
\label{eqn:R3_ps31_PDE1_DSE2}
&(-2\partial_1+2\partial_2-2\nu_1)\varphi (u_{\me_2})(y)
+4\pi \sI y_1\varphi (u_{\me_1})(y)\\
&+4\pi \sI y_2\varphi (u_{\me_3})(y)
=0,
\end{split}\\[2pt]
&\nonumber 
(-2\partial_2+2\partial_3+2-2\nu_1)\varphi (u_{\me_3})(y)
+4\pi \sI y_2\varphi (u_{\me_2})(y)=0. 
\end{align}
If $\delta_2=1$, it holds that 
\begin{align*}
&(2\partial_1-2-2\nu_3)\varphi (u_{\me_1})(y)
+4\pi \sI y_1\varphi (u_{\me_2})(y)=0,\\[2pt]
\begin{split}
&
(-2\partial_1+2\partial_2-2\nu_3)\varphi (u_{\me_2})(y)
+4\pi \sI y_1\varphi (u_{\me_1})(y)\\
&+4\pi \sI y_2\varphi (u_{\me_3})(y)
=0,
\end{split}\\[2pt]
&\nonumber 
(-2\partial_2+2\partial_3+2-2\nu_3)\varphi (u_{\me_3})(y)
+4\pi \sI y_2\varphi (u_{\me_2})(y)=0. 
\end{align*}
\end{lem}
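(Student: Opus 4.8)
The plan is to follow the template of Lemma \ref{lem:R3_ps1_ZgDSE1_original} for part (1) and to combine Proposition \ref{prop:R3_DSE_ps} with Lemma \ref{lem:Rn_g_act_Cpsi} for part (2). Since $\varphi$ takes values in $\mathrm{Wh}(\Pi_\sigma,\psi_1)$, we may write $\varphi=\Phi\circ\hat{\eta}_\sigma$ for some $\Phi\in\cI_{\Pi_\sigma,\psi_1}$; as $\Phi$ intertwines the $(\g_\bC,K)$-actions, for every $X\in U(\g_\bC)$ we get $R(X)\varphi(u_{\me_i})=\Phi(\Pi_\sigma(X)\hat{\eta}_\sigma(u_{\me_i}))$. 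Taking $X=\cC_h^{\g}$ ($h=1,2,3$) and invoking Proposition \ref{prop:Rn_Ch_eigenvalue}, whose eigenvalues are the elementary symmetric functions in $\nu_1,\nu_2,\nu_3$, the three central identities become $(R(\cC_1^{\g})\varphi(u_{\me_i}))(y)=(\nu_1+\nu_2+\nu_3)\varphi(u_{\me_i})(y)$, and similarly for $\cC_2^{\g},\cC_3^{\g}$ with eigenvalues $\nu_1\nu_2+\nu_1\nu_3+\nu_2\nu_3$ and $\nu_1\nu_2\nu_3$, so everything reduces to evaluating $R(\cC_h^{\g})\varphi(u_{\me_i})$ on $A$.

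For that evaluation I would substitute the expressions (\ref{eqn:R3_C1}), (\ref{eqn:R3_C2_modE13}) and (\ref{eqn:R3_C3_modE13}). The terms lying in $E_{1,3}^{\g}U(\g_\bC)$ may be discarded: $C^\infty(N\backslash G;\psi_1)$ is stable under right translation, and $(R(E_{1,3}^{\g})f)(y)=0$ on $A$ for any $f$ in that space by Lemma \ref{lem:Rn_g_act_Cpsi}, so such terms contribute $0$. What remains are products of the $E_{i,i}^{\g}$, $E_{i,i+1}^{\g}$ and $E_{a,b}^{\gk}$; on a function $\varphi(v)$ one has $R(E_{a,b}^{\gk})\varphi(v)=\varphi(\tau_{(1,\delta_2)}(E_{a,b}^{\gk})v)$ as functions on $G$, and then Lemma \ref{lem:Rn_g_act_Cpsi} replaces each $R(E_{i,i}^{\g})$ by $-\partial_{i-1}+\partial_i$ and each $R(E_{i,i+1}^{\g})$ by multiplication by $2\pi\sI y_i$, the $\partial_i$'s acting also on the $y_j$-factors produced to their right. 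The relevant $\gk$-action on the minimal $K$-type is read off from (\ref{eqn:R3_LieKact_gen}): $E_{1,2}^{\gk}$ sends $u_{\me_1}\mapsto-u_{\me_2}$, $u_{\me_2}\mapsto u_{\me_1}$, $u_{\me_3}\mapsto0$; $E_{2,3}^{\gk}$ sends $u_{\me_2}\mapsto-u_{\me_3}$, $u_{\me_3}\mapsto u_{\me_2}$, $u_{\me_1}\mapsto0$; and $E_{1,3}^{\gk}$ sends $u_{\me_1}\mapsto-u_{\me_3}$, $u_{\me_3}\mapsto u_{\me_1}$, $u_{\me_2}\mapsto0$. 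Collecting the contributions for each $i$ and each $h$ then produces the nine equations (\ref{eqn:R3_ps3_PDE1_C1})–(\ref{eqn:R3_ps3_PDE1_C3}); in particular $\Delta_2$ and $\Delta_3$ are exactly the operators that, in Lemma \ref{lem:R3_ps1_ZgDSE1_original}, acted on $\varphi(u_{\mathbf 0})$ (the diagonal telescoping, e.g. $(\partial_1-1)+(-\partial_1+\partial_2)=\partial_2-1$, being the same there), while the off-diagonal $\varphi(u_{\me_j})$-terms arise precisely from the $E_{1,2}^{\g}E_{1,2}^{\gk}$, $E_{2,3}^{\g}E_{2,3}^{\gk}$ and $E_{1,2}^{\g}E_{2,3}^{\g}E_{1,3}^{\gk}$ summands hitting the $K$-type nontrivially.

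For part (2) I would apply $\Phi$ to the identities of Proposition \ref{prop:R3_DSE_ps}. When $\delta_2=0$, i.e.\ $(\delta_1,\delta_2,\delta_3)=(1,0,0)$, this yields $\sum_{j=1}^3R(E_{i,j}^{\gp})\varphi(u_{\me_j})=2\nu_1\varphi(u_{\me_i})$ for $1\le i\le3$, and when $\delta_2=1$, i.e.\ $(1,1,0)$, the same with $2\nu_1$ replaced by $2\nu_3$. Writing $E_{i,i}^{\gp}=2E_{i,i}^{\g}$ and $E_{i,j}^{\gp}=2E_{i,j}^{\g}-E_{i,j}^{\gk}$ for $i<j$ (and using $E_{i,j}^{\gp}=E_{j,i}^{\gp}$ for $i>j$), evaluating on $A$ with Lemma \ref{lem:Rn_g_act_Cpsi} and the $\gk$-action above turns each of the three identities into the corresponding first-order equation; the entry $E_{1,3}^{\g}$ drops out on $A$, leaving only the $-E_{1,3}^{\gk}$ contribution, which is how the coefficient $-2$ (rather than $-1$) appears on $\varphi(u_{\me_1})$ in (\ref{eqn:R3_ps31_PDE1_DSE1}) and analogously on $\varphi(u_{\me_3})$ in the last equation.

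The whole argument is mechanical, mirroring the earlier lemmas; the one place that demands genuine care is the $\cC_3^{\g}$ identity in part (1): the degree-three operator (\ref{eqn:R3_C3_modE13}) has six surviving monomials, each must be applied to each of $u_{\me_1},u_{\me_2},u_{\me_3}$, and the non-commutativity $[\partial_i,y_j]=\delta_{i,j}y_j$ has to be tracked through every product before the result is matched with the claimed form of $\Delta_3$ and the stated coupling terms.
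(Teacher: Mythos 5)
Your proposal matches the paper's proof in essence and in detail.  The paper's argument is: (i) write $\varphi=\Phi\circ\hat{\eta}_\sigma$ with $\Phi\in\cI_{\Pi_\sigma,\psi_1}$ and invoke Proposition~\ref{prop:Rn_Ch_eigenvalue} for the Capelli eigenvalues; (ii) substitute the expressions (\ref{eqn:R3_C1}), (\ref{eqn:R3_C2_modE13}), (\ref{eqn:R3_C3_modE13}), discard $E_{1,3}^{\g}U(\g_\bC)$-terms since they vanish on $A$, and apply (\ref{eqn:R3_LieKact_gen}) together with Lemma~\ref{lem:Rn_g_act_Cpsi}; (iii) for part~(2) apply $\Phi$ to the identities of Proposition~\ref{prop:R3_DSE_ps}, write $E_{i,j}^\gp=E_{j,i}^\gp=2E_{i,j}^\g-E_{i,j}^\gk$, and reduce to first-order equations by the same mechanism.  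You reconstruct exactly this chain, correctly compute the $\gk$-action on $u_{\me_1},u_{\me_2},u_{\me_3}$ from (\ref{eqn:R3_LieKact_gen}), correctly explain where the $-2$ on $\varphi(u_{\me_1})$ in (\ref{eqn:R3_ps31_PDE1_DSE1}) comes from (the $-E_{1,2}^\gk$ and $-E_{1,3}^\gk$ both landing on $u_{\me_1}$), and correctly flag the noncommutativity $[\partial_i,y_j]=\delta_{i,j}y_j$ as the point requiring care in the degree-three Capelli element.  This is the same proof, merely more explicit than the paper's terse statement.
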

\begin{proof}
We note that there is a homomorphism $\Phi \in {\cI}_{\Pi_\sigma ,\psi_1}$ 
such that 
$\varphi =\Phi \circ \hat{\eta}_\sigma$. 
Hence, by Proposition \ref{prop:Rn_Ch_eigenvalue}, we have 
\begin{align*}
&(R(\cC_1^{{\g}})\varphi (u_{\me_i}))(y)
-(\nu_1+\nu_2+\nu_3)\varphi (u_{\me_i})(y)=0,\\
&(R(\cC_2^{{\g}})\varphi (u_{\me_i}))(y)
-(\nu_1\nu_2+\nu_1\nu_3+\nu_2\nu_3)\varphi (u_{\me_i})(y)=0,\\
&(R(\cC_3^{{\g}})\varphi (u_{\me_i}))(y)
-\nu_1\nu_2\nu_3\,\varphi (u_{\me_i})(y)=0
\end{align*}
for $1\leq i\leq 3$. 
Applying (\ref{eqn:R3_LieKact_gen}) and Lemma \ref{lem:Rn_g_act_Cpsi} 
to these equalities with the expressions (\ref{eqn:R3_C1}), 
(\ref{eqn:R3_C2_modE13}) and (\ref{eqn:R3_C3_modE13}), 
we obtain the statement (1).

By Proposition \ref{prop:R3_DSE_ps}, we have 
\begin{align*}
&\sum_{j=1}^3(R(E_{i,j}^{\gp})\varphi (u_{\me_j}))(y)
-2\nu_1\varphi (u_{\me_i})(y)=0&
&(1\leq i\leq 3)
\end{align*}
if $\delta_2=0$, and 
\begin{align*}
&\sum_{j=1}^3(R(E_{i,j}^{\gp})\varphi (u_{\me_j}))(y)
-2\nu_3\varphi (u_{\me_i})(y)=0&
&(1\leq i\leq 3)
\end{align*}
if $\delta_2=1$. 
Applying (\ref{eqn:R3_LieKact_gen}) and Lemma \ref{lem:Rn_g_act_Cpsi} 
to these equalities with 
\begin{align*}
&E_{i,j}^{\gp}=E_{j,i}^{\gp}=2E_{i,j}^{{\g}}-E_{i,j}^{{\gk}}&
&(1\leq i\leq j\leq 3), 
\end{align*}
we obtain the statement (2). 
\end{proof}

We will give explicit formulas of 
$\varphi (u_{\me_i})(y)$ $(1\leq i\leq 3)$ by 
solving the system of partial differential equations 
in Lemma \ref{lem:R3_ps3_ZgDSE1_original}. 
Here we note that the equations in Lemma \ref{lem:R3_ps3_ZgDSE1_original} 
for $\delta_2=0$ become those for $\delta_2=1$ by 
exchanging $\nu_1$ and $\nu_3$. 
Hence, it suffices to 
consider only the case of either $\delta_2=0$ or 
$\delta_2=1$. 

We set $\delta_2=0$. 
By (\ref{eqn:R3_ps3_PDE1_C1}), 
for $l=(l_1,l_2,l_3)\in S_{(1,0)}=\{\me_1,\me_2,\me_3\}$, 
we can define a function $\hat{\varphi}_l$ on $(\bR_+)^2$ by  
\begin{align}
\label{eqn:R3_ps3_def_varphi}
&\varphi (u_{l})(y)=
(\sI )^{l_1-l_3}y_1y_2(y_2y_3)^{\nu_1+\nu_2+\nu_3}
\hat{\varphi}_{l}(y_1,y_2)
\end{align}
with $y=\diag (y_1y_2y_3,y_2y_3,y_3)\in A$. 

\begin{lem}
\label{lem:R3_ps3_ZgDSE2}
Retain the notation. 
It holds that 
\begin{align}
&\label{eqn:R3_ps31_PDE2_DSE1}
(-\partial_1+\nu_1)\hat{\varphi}_{\me_1}
-2\pi y_1\hat{\varphi}_{\me_2}=0,\\
&\label{eqn:R3_ps31_PDE2_DSE2}
(-\partial_1+\partial_2+\nu_2+\nu_3)\hat{\varphi}_{\me_2}
-2\pi y_1\hat{\varphi}_{\me_1}
+2\pi y_2\hat{\varphi}_{\me_3}
=0. 
\end{align}
\end{lem}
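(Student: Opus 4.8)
The plan is to derive both equations directly from the two linear relations of Lemma~\ref{lem:R3_ps3_ZgDSE1_original}~(2) in the case $\delta_2=0$ --- namely (\ref{eqn:R3_ps31_PDE1_DSE1}) and (\ref{eqn:R3_ps31_PDE1_DSE2}), which are the consequences of Proposition~\ref{prop:R3_DSE_ps} --- by inserting the normalization (\ref{eqn:R3_ps3_def_varphi}). No new representation-theoretic input is needed; the statement is merely a rewriting of those relations in terms of the new unknowns $\hat\varphi_{\me_i}$. (The third relation in Lemma~\ref{lem:R3_ps3_ZgDSE1_original}~(2) yields a further identity, which is not needed for the present lemma.)

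The one computational ingredient is the action of $\partial_1=y_1\partial/\partial y_1$ and $\partial_2=y_2\partial/\partial y_2$ on the prefactor $P(y)=y_1y_2(y_2y_3)^{\nu_1+\nu_2+\nu_3}$ occurring in (\ref{eqn:R3_ps3_def_varphi}). Writing $P=y_1\,y_2^{\,1+\nu_1+\nu_2+\nu_3}\,y_3^{\,\nu_1+\nu_2+\nu_3}$, one reads off $\partial_1P=P$ and $\partial_2P=(1+\nu_1+\nu_2+\nu_3)P$, so that $\partial_1(Pg)=P(\partial_1+1)g$ and $\partial_2(Pg)=P(\partial_2+1+\nu_1+\nu_2+\nu_3)g$ for any smooth $g$ on $(\bR_+)^2$. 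One also keeps track of the constants $(\sI)^{l_1-l_3}$, which equal $\sI$, $1$, $(\sI)^{-1}=-\sI$ for $l=\me_1,\me_2,\me_3$, respectively; combined with the explicit $\sI$'s multiplying $y_1$ and $y_2$ in (\ref{eqn:R3_ps31_PDE1_DSE1}) and (\ref{eqn:R3_ps31_PDE1_DSE2}) these produce purely real coefficients after cancellation.

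Carrying this out: substituting $\varphi(u_{\me_1})=\sI P\hat\varphi_{\me_1}$ and $\varphi(u_{\me_2})=P\hat\varphi_{\me_2}$ into (\ref{eqn:R3_ps31_PDE1_DSE1}) converts $(2\partial_1-2-2\nu_1)$ into $2(\partial_1-\nu_1)$ (using $\partial_1P=P$), and dividing through by the nowhere-vanishing factor $2\sI P$ gives exactly (\ref{eqn:R3_ps31_PDE2_DSE1}). Likewise, substituting $\varphi(u_{\me_1})$, $\varphi(u_{\me_2})$ and $\varphi(u_{\me_3})=-\sI P\hat\varphi_{\me_3}$ into (\ref{eqn:R3_ps31_PDE1_DSE2}) converts $(-2\partial_1+2\partial_2-2\nu_1)$ into $2(-\partial_1+\partial_2+\nu_2+\nu_3)$ --- the coefficient $\nu_2+\nu_3$ arising from $2(\nu_1+\nu_2+\nu_3)-2\nu_1$ once the $\partial_2P$ shift is accounted for --- while the two cross terms acquire the signs $\sI\cdot\sI=-1$ and $\sI\cdot(-\sI)=1$; dividing by $2P$ then yields (\ref{eqn:R3_ps31_PDE2_DSE2}). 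There is no genuine obstacle in this argument; the only thing requiring care is the consistent bookkeeping of the powers of $\sqrt{-1}$ (in particular $(\sI)^{-1}=-\sI$ for the $u_{\me_3}$ term) and of the exponent shifts induced by $P$, so that all coefficients come out real and the stated identities appear after the division by $2P$ (respectively $2\sI P$).
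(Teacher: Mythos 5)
Your proposal is correct and follows exactly the same route the paper takes: the paper's proof is the one-line assertion that (\ref{eqn:R3_ps31_PDE2_DSE1}) and (\ref{eqn:R3_ps31_PDE2_DSE2}) ``follow immediately'' from (\ref{eqn:R3_ps31_PDE1_DSE1}) and (\ref{eqn:R3_ps31_PDE1_DSE2}) via the substitution (\ref{eqn:R3_ps3_def_varphi}), and you have simply filled in the bookkeeping of $\partial_1 P = P$, $\partial_2 P = (1+\nu_1+\nu_2+\nu_3)P$, and the powers of $\sqrt{-1}$. The only trivial wrinkle is that dividing by $2\sI P$ in the first step produces $(\partial_1-\nu_1)\hat\varphi_{\me_1}+2\pi y_1\hat\varphi_{\me_2}=0$, the negative of the displayed form of (\ref{eqn:R3_ps31_PDE2_DSE1}); since both sides vanish this is of course the same identity.
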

\begin{proof}
The equations (\ref{eqn:R3_ps31_PDE2_DSE1}) and (\ref{eqn:R3_ps31_PDE2_DSE2})
follow immediately 
from (\ref{eqn:R3_ps31_PDE1_DSE1}) and (\ref{eqn:R3_ps31_PDE1_DSE2}), 
respectively. 
\end{proof}
The functions $\hat{\varphi}_{\me_i}$ $(1\leq i\leq 3)$ 
are determined from $\hat{\varphi}_{\me_1}$ by 
the equations in Lemma \ref{lem:R3_ps3_ZgDSE2}. 
Hence, we know that 
$\varphi \colon V_{(1,0)}\to {\mathrm{Wh}}(\Pi_\sigma ,\psi_1)$ 
is uniquely determined from $\hat{\varphi}_{\me_1}$. 

\begin{lem}
\label{lem:R3_ps3_PDE3}
Retain the notation. 
Then $\hat{\varphi}_{\me_1}$ satisfies the following system of partial 
differential equations: 
\begin{align}
\begin{split}
&
\{-\partial_1^2+\partial_1\partial_2-\partial_2^2
+(\nu_1+\nu_2+\nu_3+1)\partial_1
-(\nu_1+\nu_2+\nu_3)\partial_2\\
&\label{eqn:R3_ps31_PDE3_2}
-\nu_1\nu_2-\nu_1\nu_3-\nu_2\nu_3-\nu_1
+(2\pi y_1)^2+(2\pi y_2)^2\}\hat{\varphi}_{\me_1}
=0,
\end{split}\\[3pt]
&\label{eqn:R3_ps31_PDE3_3}
\{(\partial_1-\nu_1)(\partial_1-\nu_2-1)(\partial_1-\nu_3-1)
-(2\pi y_1)^2(\partial_1+\partial_2+1)
\}\hat{\varphi}_{\me_1}
=0. 
\end{align}
\end{lem}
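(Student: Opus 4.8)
The two equations to be established are second- and third-order PDEs for the single function $\hat{\varphi}_{\me_1}$, and by Lemma~\ref{lem:R3_ps3_ZgDSE2} the other two radial functions $\hat{\varphi}_{\me_2},\hat{\varphi}_{\me_3}$ are expressed linearly (with coefficients involving $\partial_1,\partial_2,y_1,y_2$) in terms of $\hat{\varphi}_{\me_1}$. So the strategy is: feed the substitution (\ref{eqn:R3_ps3_def_varphi}) into the system of Lemma~\ref{lem:R3_ps3_ZgDSE1_original}, rewrite everything in terms of $\hat{\varphi}_{\me_1}$ using Lemma~\ref{lem:R3_ps3_ZgDSE2}, and take suitable $\bC[y_1^{\pm 1},y_2^{\pm 1}]$-linear combinations of the resulting relations to isolate (\ref{eqn:R3_ps31_PDE3_2}) and (\ref{eqn:R3_ps31_PDE3_3}). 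This is exactly parallel to the derivation of Lemma~\ref{lem:R3_ps1_PDE3} from Lemma~\ref{lem:R3_ps1_ZgDSE1_original} in the scalar $K$-type case, but now with the extra bookkeeping coming from the three-dimensional $K$-type and the discrete-series equations of Proposition~\ref{prop:R3_DSE_ps}.

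First I would record how the operators transform under (\ref{eqn:R3_ps3_def_varphi}): writing $\varphi(u_l)(y)=(\sI)^{l_1-l_3}y_1y_2(y_2y_3)^{\nu_1+\nu_2+\nu_3}\hat{\varphi}_l$, the operator $\partial_i$ acting on $\varphi(u_l)$ becomes $\partial_i$ shifted by the exponents of $y_1,y_2,y_3$ it sees; in particular $\partial_3-\nu_1-\nu_2-\nu_3$ annihilates the prefactor, which is why (\ref{eqn:R3_ps3_PDE1_C1}) lets us define $\hat{\varphi}_l$ in the first place, and after the substitution the operator $\partial_3$ is effectively replaced by $-\nu_1-\nu_2-\nu_3$ wherever it acts on $\hat{\varphi}_l$ (the functions $\hat{\varphi}_l$ depend only on $y_1,y_2$). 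With this in hand, $\Delta_2$ and $\Delta_3$ acting on $\varphi(u_{\me_i})$ turn into explicit second- and third-order operators in $\partial_1,\partial_2,y_1,y_2$ acting on $\hat{\varphi}_{\me_i}$, and the $2\pi\sI y_1$, $2\pi\sI y_2$ coefficients combine cleanly with the $(\sI)^{l_1-l_3}$ factors to produce real coefficients $2\pi y_1$, $2\pi y_2$.

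Next I would eliminate $\hat{\varphi}_{\me_2}$ and $\hat{\varphi}_{\me_3}$. From (\ref{eqn:R3_ps31_PDE2_DSE1}) we get $2\pi y_1\hat{\varphi}_{\me_2}=(-\partial_1+\nu_1)\hat{\varphi}_{\me_1}$, and then (\ref{eqn:R3_ps31_PDE2_DSE2}) gives $2\pi y_2\hat{\varphi}_{\me_3}$ in terms of $\hat{\varphi}_{\me_1}$ and $\hat{\varphi}_{\me_2}$, hence in terms of $\hat{\varphi}_{\me_1}$ alone. Substituting these into the transformed form of the first equation of (\ref{eqn:R3_ps3_PDE1_C2}) — namely $(\Delta_2-\nu_1\nu_2-\nu_1\nu_3-\nu_2\nu_3)\hat{\varphi}_{\me_1}=2\pi y_1\hat{\varphi}_{\me_2}=(-\partial_1+\nu_1)\hat{\varphi}_{\me_1}$ — and using the explicit form of $\Delta_2$ with $\partial_3\rightsquigarrow -(\nu_1+\nu_2+\nu_3)$ should directly yield (\ref{eqn:R3_ps31_PDE3_2}) after collecting the $\partial_1,\partial_2$ terms. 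For the third-order equation, the same elimination applied to the transformed version of (\ref{eqn:R3_ps3_PDE1_C3}) produces a relation involving $\Delta_3\hat{\varphi}_{\me_1}$, $(-\partial_2+\partial_3+1)(-\partial_1+\nu_1)\hat{\varphi}_{\me_1}$, and a $(2\pi y_1)(2\pi y_2)\hat{\varphi}_{\me_3}$ term; after replacing $2\pi y_2\hat{\varphi}_{\me_3}$ and multiplying (\ref{eqn:R3_ps31_PDE3_2}) on the left by an appropriate first-order operator (mimicking the "multiply by $-\partial_1$'' step used to pass from (\ref{eqn:R3_ps1_PDE2_C3}) and (\ref{eqn:R3_ps1_PDE2_D1C2}) to (\ref{eqn:R3_ps1_PDE3_3}) in Lemma~\ref{lem:R3_ps1_PDE3}), the $\partial_2$-dependent terms cancel and (\ref{eqn:R3_ps31_PDE3_3}) drops out.

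The main obstacle is purely computational bookkeeping: keeping track of the noncommutativity of $\partial_1,\partial_2$ with $y_1,y_2$ (so that $\partial_1 y_1 = y_1(\partial_1+1)$ etc.), of the precise constant shifts coming from the $y_1y_2(y_2y_3)^{\nu_1+\nu_2+\nu_3}$ prefactor, and of the sign factors $(\sI)^{l_1-l_3}$, while choosing exactly the right operator to multiply (\ref{eqn:R3_ps31_PDE3_2}) by so that the cross terms cancel. Since the analogous elimination already worked in Lemma~\ref{lem:R3_ps1_PDE3}, I expect no conceptual difficulty — the proof will simply say that (\ref{eqn:R3_ps31_PDE3_2}) follows from the transformed (\ref{eqn:R3_ps3_PDE1_C2}) together with (\ref{eqn:R3_ps31_PDE2_DSE1}), and (\ref{eqn:R3_ps31_PDE3_3}) follows by combining the transformed (\ref{eqn:R3_ps3_PDE1_C3}), (\ref{eqn:R3_ps31_PDE2_DSE1})–(\ref{eqn:R3_ps31_PDE2_DSE2}), and a left multiple of (\ref{eqn:R3_ps31_PDE3_2}), as in Lemma~\ref{lem:R3_ps1_PDE3}.
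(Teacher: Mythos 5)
Your plan is essentially the paper's proof: the transformed Capelli equations (\ref{eqn:R3_ps3_PDE1_C2}), (\ref{eqn:R3_ps3_PDE1_C3}) are rewritten for $\hat{\varphi}_l$, the relations in Lemma~\ref{lem:R3_ps3_ZgDSE2} are used to eliminate $\hat{\varphi}_{\me_2},\hat{\varphi}_{\me_3}$, and a left multiplication by $-\partial_1$ together with suitable operator multiples of (\ref{eqn:R3_ps31_PDE2_DSE1}), (\ref{eqn:R3_ps31_PDE2_DSE2}) kills the remaining cross terms, exactly paralleling the derivation of Lemma~\ref{lem:R3_ps1_PDE3}. One small slip in the exposition: under the substitution (\ref{eqn:R3_ps3_def_varphi}) the operator $\partial_3$ is effectively replaced by $+(\nu_1+\nu_2+\nu_3)$, not $-(\nu_1+\nu_2+\nu_3)$, since the prefactor is $(y_2y_3)^{\nu_1+\nu_2+\nu_3}$.
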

\begin{proof}
From (\ref{eqn:R3_ps3_PDE1_C2}) and (\ref{eqn:R3_ps3_PDE1_C3}),
we have  
\begin{align}
\begin{split}
&
\{-\partial_1^2+\partial_1\partial_2-\partial_2^2
+(\nu_1+\nu_2+\nu_3)\partial_1-(\nu_1+\nu_2+\nu_3)\partial_2\\
&\label{eqn:R3_ps3_PDE2_C2}
-\nu_1\nu_2-\nu_1\nu_3-\nu_2\nu_3
+(2\pi y_1)^2+(2\pi y_2)^2\}\hat{\varphi}_{\me_1}
-(2\pi y_1)\hat{\varphi}_{\me_2}=0,
\end{split}\\[3pt]
\begin{split}
&
\{-\partial_1(-\partial_1+\partial_2+\nu_1+\nu_2+\nu_3)\partial_2
-\nu_1\nu_2\nu_3\\
&\label{eqn:R3_ps3_PDE2_C3}
-(2\pi y_1)^2\partial_2
+(2\pi y_2)^2\partial_1\}\hat{\varphi}_{\me_1}
+(2\pi y_1)\partial_2\hat{\varphi}_{\me_2}
+(2\pi y_1)(2\pi y_2)\hat{\varphi}_{\me_3}
=0,
\end{split}
\end{align} 
respectively. 
Multiplying the both sides of (\ref{eqn:R3_ps31_PDE2_DSE1}) 
by $-1$, we have 
\begin{align*}
&\left(\partial_1-\nu_1\right)\hat{\varphi}_{\me_1}
+(2\pi y_1)\hat{\varphi}_{\me_2}=0.
\end{align*}
Adding the respective sides of 
(\ref{eqn:R3_ps3_PDE2_C2}) and this equation, 
we obtain (\ref{eqn:R3_ps31_PDE3_2}).

Multiplying the both sides of (\ref{eqn:R3_ps3_PDE2_C2}) 
by $-\partial_1$ from the left, we have 
\begin{align}
\begin{split}
& 
\{\partial_1^3
-\partial_1^2\partial_2
+\partial_1\partial_2^2
-(\nu_1+\nu_2+\nu_3)\partial_1^2
+(\nu_1+\nu_2+\nu_3)\partial_1\partial_2\\
&+(\nu_1\nu_2+\nu_1\nu_3+\nu_2\nu_3)\partial_1
-(2\pi y_1)^2(\partial_1+2)-(2\pi y_2)^2\partial_1
\}\hat{\varphi}_{\me_1}\\
&\label{eqn:R3_ps3_PDE2_D1C2}
+(2\pi y_1)(\partial_1+1)\hat{\varphi}_{\me_2}=0.
\end{split}
\end{align}
Adding the respective sides of 
the equations (\ref{eqn:R3_ps3_PDE2_C3}) and (\ref{eqn:R3_ps3_PDE2_D1C2}), 
we have 
\begin{align}
\begin{split}&
\{(\partial_1-\nu_1)(\partial_1-\nu_2)(\partial_1-\nu_3)
-(2\pi y_1)^2(\partial_1+\partial_2+2)
\}\hat{\varphi}_{\me_1}\\
&\label{eqn:R3_ps3_pf1_C3C2}
+(2\pi y_1)(\partial_1+\partial_2+1)\hat{\varphi}_{\me_2}
+(2\pi y_1)(2\pi y_2)\hat{\varphi}_{\me_3}=0.
\end{split}
\end{align}
Multiplying 
$-(-2\partial_1+\nu_2+\nu_3+1)$ 
and $-2\pi y_1$ the both sides of 
(\ref{eqn:R3_ps31_PDE2_DSE1}) and (\ref{eqn:R3_ps31_PDE2_DSE2}) from the left, 
respectively, 
we have 
\begin{align*}
&
(\partial_1-\nu_1)(-2\partial_1+\nu_2+\nu_3+1)
\hat{\varphi}_{\me_1}
+(2\pi y_1)
\left(-2\partial_1+\nu_2+\nu_3-1\right)
\hat{\varphi}_{\me_2}=0,\\[2pt]
&
(2\pi y_1)
\left(\partial_1-\partial_2-\nu_2-\nu_3\right)
\hat{\varphi}_{\me_2}
+(2\pi y_1)^2\hat{\varphi}_{\me_1}
-(2\pi y_1)(2\pi y_2)\hat{\varphi}_{\me_3}=0. 
\end{align*}
Adding up the respective sides of 
(\ref{eqn:R3_ps3_pf1_C3C2}) and these equations, 
we obtain (\ref{eqn:R3_ps31_PDE3_3}). 
\end{proof}

\begin{lem}
\label{lem:R3_ps3_MW_sub1}
Let $\nu_1,\nu_2,\nu_3\in \bC $ and $(\delta_1,\delta_2,\delta_3)=(1,0,0)$. \\
(1) The space of smooth solutions of the system in 
Lemma \ref{lem:R3_ps3_PDE3} on $(\bR_+)^2$ is at most 
$6$ dimensional. \\
\noindent (2) Set  
\[
\hat{\varphi}^{\mathrm{mg}}_{\me_1}(y_1,y_2)
=\frac{1}{(4\pi \sqrt{-1})^2} \int_{s_2}\int_{s_1} 
\cV_{\me_1}(s_1,s_2) \,
y_1^{-s_1} y_2^{-s_2} \,ds_1ds_2
\]
with 
\begin{align*}
\cV_{\me_1}(s_1,s_2)=&
\frac{\Gamma_{\bR}(s_1+\nu_1)
\Gamma_{\bR}(s_1+\nu_2+1)\Gamma_{\bR}(s_1+\nu_3+1)}
{\Gamma_{\bR}(s_1+s_2+1)}\\
&\times \Gamma_{\bR}(s_2-\nu_1+1)
\Gamma_{\bR}(s_2-\nu_2)\Gamma_{\bR}(s_2-\nu_3).
\end{align*}
Here the path of the integration $\int_{s_i}$ is the vertical line 
from $\mathrm{Re}(s_i)-\sI \infty$ to $\mathrm{Re}(s_i)+\sI \infty$ 
with sufficiently large real part to keep the poles of the integrand 
on its left. 
Then $\hat{\varphi}_{\me_1}=\hat{\varphi}^{\mathrm{mg}}_{\me_1}$ is 
a moderate growth solution of the system in Lemma \ref{lem:R3_ps3_PDE3} 
on $(\bR_+)^2$. 
\\[1mm]
(3) Assume $\nu_p-\nu_q-\delta_p+\delta_q \notin 2\bZ$ 
for any $1\leq p\neq q\leq 3$. 
For a permutation $(i,j,k)$ of $\{1,2,3\}$, set  
\[
\hat{\varphi}^{{(i,j,k)}}_{\me_1}(y_1,y_2) =
\sum_{m_1,m_2 \geq 0}
C_{\me_1,(m_1,m_2)}^{(i,j,k)}(\pi y_1)^{2m_1+\nu_i+1-\delta_i}
(\pi y_2)^{2m_2-\nu_j+\delta_j}
\]
with 
\begin{align*}
C_{\me_1,(m_1,m_2)}^{(i,j,k)}=
&\frac{(-1)^{m_1+m_2} 
\Gamma \bigl(-m_1-\tfrac{\nu_i-\nu_j-\delta_i+\delta_j}{2}\bigr)
\Gamma \bigl(-m_1-\tfrac{\nu_i-\nu_k-\delta_i+\delta_k}{2}\bigr)}
{\pi \, m_1!\,m_2!\,\Gamma 
\bigl(-m_1-m_2-\tfrac{\nu_i-\nu_j-\delta_i+\delta_j}{2}\bigr)} \\
& \times \Gamma \bigl(-m_2-\tfrac{\nu_i-\nu_j-\delta_i+\delta_j}{2}\bigr)
\Gamma \bigl(-m_2-\tfrac{\nu_k-\nu_j-\delta_k+\delta_j}{2}\bigr).
\end{align*}
Then $\{\hat{\varphi}^{{(i,j,k)}}_{\me_1}\mid 
\{i,j,k\}=\{1,2,3\}\,\}$ forms a basis of 
the space of smooth solutions of the system in Lemma \ref{lem:R3_ps3_PDE3} 
on $(\bR_+)^2$. Moreover, 
it holds that 
\begin{align*}
\hat{\varphi}^{\mathrm{mg}}_{\me_1}
& = \sum_{ (i,j,k) } \hat{\varphi}^{(i,j,k)}_{\me_1},
\end{align*}
where $ (i,j,k) $ runs all permutations of $\{1,2,3\}$. 
\end{lem}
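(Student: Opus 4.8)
The plan is to deduce all three assertions from Lemmas \ref{lem:Fn_Sol_dim}, \ref{lem:Fn_Sol_power_series} and \ref{lem:Fn_Sol_mg} by an explicit change of unknown function. Put $r=(r_1,r_2,r_3)=(\nu_1,\nu_2+1,\nu_3+1)$; since $(\delta_1,\delta_2,\delta_3)=(1,0,0)$ one has $r_i=\nu_i+1-\delta_i$, hence $r_p-r_q=\nu_p-\nu_q-\delta_p+\delta_q$ for all $p\neq q$, so the hypothesis of part (3) is exactly the condition $r_p-r_q\notin 2\bZ$ appearing in Lemmas \ref{lem:Fn_Sol_power_series} and \ref{lem:Fn_Sol_mg}. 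First I would show that
\[
\hat{\varphi}_{\me_1}(y_1,y_2)=y_2\,f(\pi y_1,\pi y_2),
\qquad
f(z_1,z_2)=\pi z_2^{-1}\,\hat{\varphi}_{\me_1}(\pi^{-1}z_1,\pi^{-1}z_2),
\]
sets up a $\bC$-linear bijection between the space of smooth solutions on $(\bR_+)^2$ of the system of Lemma \ref{lem:R3_ps3_PDE3} and $\mathrm{Sol}(r)$.

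To verify this I would observe that multiplication by $y_2$ conjugates $\partial_2$ into $\partial_2+1$ (because $\partial_2(y_2g)=y_2(\partial_2+1)g$) while leaving $\partial_1$ fixed, and that the rescaling $z_i=\pi y_i$ fixes each $\partial_{z_i}$ but turns $4z_i^2$ into $(2\pi y_i)^2$. A direct expansion then shows that after these two operations the second-order operator of Lemma \ref{lem:R3_ps3_PDE3} becomes the operator in (\ref{eqn:Fn_Sol_PDE2}) for the parameter $r$ (the coefficient of $\partial_1-\partial_2$ becoming $r_1+r_2+r_3=\nu_1+\nu_2+\nu_3+2$ and the constant term becoming $-(r_1r_2+r_1r_3+r_2r_3)$), and the third-order operator becomes the operator in (\ref{eqn:Fn_Sol_PDE3}), since $(\partial_1-\nu_1)(\partial_1-\nu_2-1)(\partial_1-\nu_3-1)=\prod_i(\partial_1-r_i)$ and $\partial_1+\partial_2+1$ turns into $\partial_1+\partial_2+2$. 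Granting the bijection, part (1) is immediate from Lemma \ref{lem:Fn_Sol_dim}.

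For part (2) I would check the identity $\hat{\varphi}^{\mathrm{mg}}_{\me_1}(y_1,y_2)=y_2\,f^{\mathrm{mg}}_r(\pi y_1,\pi y_2)$ with $f^{\mathrm{mg}}_r$ as in Lemma \ref{lem:Fn_Sol_mg}: substituting $s_2\mapsto s_2-1$ in the Mellin--Barnes integral defining $\hat{\varphi}^{\mathrm{mg}}_{\me_1}$ and rewriting each factor via $\Gamma_\bR(s)=\pi^{-s/2}\Gamma(s/2)$, a short computation of the resulting $\pi$-exponent gives $\cV_{\me_1}(s_1,s_2-1)=\pi^{-(s_1+s_2)}\,\cV(s_1,s_2)$, where $\cV$ is the kernel of Lemma \ref{lem:Fn_Sol_mg} for the parameter $r$; the power $\pi^{-(s_1+s_2)}$ is absorbed by $z_i=\pi y_i$ and the factor $y_2^{-(s_2-1)}=y_2\,y_2^{-s_2}$ produces the prefactor $y_2$. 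Since multiplication by $y_2$ and rescaling of variables preserve moderate growth, part (2) then follows from Lemma \ref{lem:Fn_Sol_mg} and the bijection. Part (3) is handled the same way: one checks term by term that $\hat{\varphi}^{(i,j,k)}_{\me_1}(y_1,y_2)=y_2\,f^{(i,j,k)}_r(\pi y_1,\pi y_2)$ — the $\pi y_1$- and $\pi y_2$-exponents $2m_1+\nu_i+1-\delta_i$ and $2m_2-\nu_j+\delta_j$ become $2m_1+r_i$ and $2m_2-r_j+1$, the extra $+1$ coming from the prefactor $y_2$, and the coefficient $C^{(i,j,k)}_{\me_1,(m_1,m_2)}$, whose extra factor $1/\pi$ cancels one power of $\pi$, equals $C^{(i,j,k)}_{m_1,m_2}$ of Lemma \ref{lem:Fn_Sol_power_series} after the rescaling — and then invokes Lemmas \ref{lem:Fn_Sol_power_series} and \ref{lem:Fn_Sol_mg} together with the bijection and its linearity.

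The main obstacle is the bookkeeping in the second paragraph: one must carry out the expansion carefully to confirm that conjugation by $y_2$ followed by the rescaling $z_i=\pi y_i$ sends the system of Lemma \ref{lem:R3_ps3_PDE3} onto $\mathrm{Sol}(r)$ with precisely $r=(\nu_1,\nu_2+1,\nu_3+1)$ and with no leftover terms. After that, and after the analogous but lighter $\pi$-power and $\Gamma_\bR$-versus-$\Gamma$ bookkeeping in part (2), the three assertions are mere translations of Lemmas \ref{lem:Fn_Sol_dim}, \ref{lem:Fn_Sol_power_series} and \ref{lem:Fn_Sol_mg}.
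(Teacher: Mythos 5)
Your argument is correct and follows the paper's own strategy: reduce the system of Lemma \ref{lem:R3_ps3_PDE3} to $\mathrm{Sol}(r)$ by an explicit change of unknown and rescaling, then invoke Lemmas \ref{lem:Fn_Sol_dim}, \ref{lem:Fn_Sol_power_series} and \ref{lem:Fn_Sol_mg}. The only difference is the choice of conjugation — the paper sets $f(z_1,z_2)=z_1^{-1}\hat{\varphi}_{\me_1}(\pi^{-1}z_1,\pi^{-1}z_2)$ with $r=(\nu_1-1,\nu_2,\nu_3)$, while you conjugate by $z_2^{-1}$ with $r=(\nu_1,\nu_2+1,\nu_3+1)$; both choices give $r_p-r_q=\nu_p-\nu_q-\delta_p+\delta_q$ and carry the explicit formulas over exactly as you check, so the two routes are equivalent.
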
 
\begin{proof}
If we set 
$f(z_1,z_2)=
z_1^{-1}\hat{\varphi}_{\me_1}(\pi^{-1}z_1,\,\pi^{-1}z_2)$ and 
\begin{align*}
r=(r_1,r_2,r_3)=(\nu_1-1,\nu_2,\nu_3),
\end{align*}
then (\ref{eqn:R3_ps31_PDE3_2}) and (\ref{eqn:R3_ps31_PDE3_3}) 
imply 
(\ref{eqn:Fn_Sol_PDE2}) and (\ref{eqn:Fn_Sol_PDE3}), 
respectively. 
Hence, the assertion follows from Lemmas \ref{lem:Fn_Sol_dim}, 
\ref{lem:Fn_Sol_power_series} and \ref{lem:Fn_Sol_mg}. 
\end{proof}

\begin{lem}
\label{lem:R3_ps3_MW_sub2}
Set $(\delta_1,\delta_2,\delta_3)=(1,0,0)$. 
Let $\nu_1,\nu_2,\nu_3\in \bC $ such that 
$\nu_p-\nu_q-\delta_p+\delta_q \notin 2\bZ$ for any $1\leq p\neq q\leq 3$. 
Let $(i,j,k)$ be a permutation of $\{1,2,3\}$. 
Let $\hat{\varphi}_{\me_2}$ and $\hat{\varphi}_{\me_3}$ 
be the functions determined from the function 
$\hat{\varphi}_{\me_1}=\hat{\varphi}^{{(i,j,k)}}_{\me_1}$ 
in Lemma \ref{lem:R3_ps3_MW_sub1} (3) by the equations 
in Lemma \ref{lem:R3_ps3_ZgDSE2}. 
Then it holds that 
\begin{align*}
\hat{\varphi}_{\me_2}(y_1,y_2) = 
&\sum_{m_1, m_2 \geq 0} \frac{(-1)^{m_1+m_2} 
\Gamma\bigl(-m_1-\tfrac{\nu_i-\nu_j+\delta_i-\delta_j}{2}\bigr)
\Gamma\bigl(-m_1-\tfrac{\nu_i-\nu_k+\delta_i-\delta_k}{2}\bigr)}
{\pi \, m_1!m_2! 
\Gamma\bigl(-m_1-m_2-\frac{\nu_i-\nu_j+\delta_i+\delta_j}{2}\bigr)} \\
& \times \Gamma\bigl(-m_2-\tfrac{\nu_i-\nu_j-\delta_i+\delta_j}{2}\bigr)
\Gamma\bigl(-m_2-\tfrac{\nu_k-\nu_j-\delta_k+\delta_j}{2}\bigr)\\
& \times (\pi y_1)^{2m_1+\nu_i+\delta_i}(\pi y_2)^{2m_2-\nu_j+\delta_j},\\[2pt]
\hat{\varphi}_{\me_3}(y_1,y_2) = &\sum_{m_1,m_2 \geq 0} 
\frac{(-1)^{m_1+m_2} 
\Gamma\bigl(-m_1-\tfrac{\nu_i-\nu_j+\delta_i-\delta_j}{2}\bigr)
\Gamma\bigl(-m_1-\tfrac{\nu_i-\nu_k+\delta_i-\delta_k}{2}\bigr)}
{\pi \, m_1!m_2! 
\Gamma\bigl(-m_1-m_2-\tfrac{\nu_i-\nu_j+\delta_i-\delta_j}{2}\bigr)} \\
& \times \Gamma\bigl(-m_2-\tfrac{\nu_i-\nu_j+\delta_i-\delta_j}{2}\bigr)
\Gamma\bigl(-m_2-\tfrac{\nu_k-\nu_j+\delta_k-\delta_j}{2}\bigr)\\
& \times (\pi y_1)^{2m_1+\nu_i+\delta_i}(\pi y_2)^{2m_2-\nu_j+1-\delta_j}.
\end{align*}
\end{lem}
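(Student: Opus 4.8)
The plan is to read off $\hat{\varphi}_{\me_2}$ and $\hat{\varphi}_{\me_3}$ as images of $\hat{\varphi}_{\me_1}$ under explicit first-order differential operators, substitute the explicit series of Lemma~\ref{lem:R3_ps3_MW_sub1}(3), and simplify. Equation~(\ref{eqn:R3_ps31_PDE2_DSE1}) gives $\hat{\varphi}_{\me_2}=\frac{1}{2\pi y_1}(\nu_1-\partial_1)\hat{\varphi}_{\me_1}$, which is well defined since $\hat{\varphi}_{\me_1}$ is a monomial in $y_1,y_2$ times a power series convergent near the origin; and (\ref{eqn:R3_ps31_PDE2_DSE2}) then gives $\hat{\varphi}_{\me_3}=\frac{1}{2\pi y_2}\{(\partial_1-\partial_2-\nu_2-\nu_3)\hat{\varphi}_{\me_2}+2\pi y_1\hat{\varphi}_{\me_1}\}$ (equivalently, and more cleanly, the third equation of Lemma~\ref{lem:R3_ps3_ZgDSE1_original}(2) reads, in the $\hat{\varphi}$-variables, $(\partial_2+\nu_1)\hat{\varphi}_{\me_3}=-2\pi y_2\hat{\varphi}_{\me_2}$, expressing $\hat{\varphi}_{\me_3}$ through a single first-order operator applied to $\hat{\varphi}_{\me_2}$). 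Since $\partial_1,\partial_2$ act diagonally on monomials $(\pi y_1)^{a}(\pi y_2)^{b}$ and multiplication or division by $2\pi y_1$, $2\pi y_2$ shifts the exponents by $\mp 1$, both passages are termwise manipulations of the coefficients $C^{(i,j,k)}_{\me_1,(m_1,m_2)}$, and it suffices, by linearity, to work with $\hat{\varphi}_{\me_1}=\sum_{(i,j,k)}\hat{\varphi}^{(i,j,k)}_{\me_1}=\hat{\varphi}^{\mathrm{mg}}_{\me_1}$.

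I would run the operator applications on the Mellin--Barnes integral $\hat{\varphi}^{\mathrm{mg}}_{\me_1}$ of Lemma~\ref{lem:R3_ps3_MW_sub1}(2): there $\nu_1-\partial_1$ acts simply as multiplication of the integrand by $\nu_1+s_1$ followed by the substitution $s_1\mapsto s_1-1$, and the functional equations (\ref{eqn:Fn_FE_GammaRC}) of $\Gamma_\bR$ collapse the resulting linear factor $\nu_1+s_1-1$ into the gamma product (it raises $\nu_1$ to $\nu_1+1$ in one numerator factor); the same device handles the $\me_3$ step. This yields clean Mellin--Barnes representations of $\hat{\varphi}^{\mathrm{mg}}_{\me_2}$ and $\hat{\varphi}^{\mathrm{mg}}_{\me_3}$, whose integrands are of the same shape as $\cV_{\me_1}$ but with the $\nu_p$'s shifted by $\pm\delta_q$'s in the pattern visible in the statement.

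Finally I would recover the individual series $\hat{\varphi}^{(i,j,k)}_{\me_2}$ and $\hat{\varphi}^{(i,j,k)}_{\me_3}$ as the residue contributions of these Mellin--Barnes integrals at the poles with characteristic exponents $(2m_1+\nu_i+\delta_i,\,2m_2-\nu_j+\delta_j)$ and $(2m_1+\nu_i+\delta_i,\,2m_2-\nu_j+1-\delta_j)$ respectively --- these are genuinely the images of $\hat{\varphi}^{(i,j,k)}_{\me_1}$, since the operators preserve the $(i,j,k)$-labelled exponent pattern --- using $\Res_{t=-2m}\Gamma(\tfrac{t}{2})=\frac{2(-1)^m}{m!}$ and the definitions $\Gamma_\bR(s)=\pi^{-s/2}\Gamma(s/2)$, $\Gamma_\bC(s)=2(2\pi)^{-s}\Gamma(s)$. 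This produces the claimed formulas.

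An equivalent, more pedestrian route is to substitute the power series directly into (\ref{eqn:R3_ps31_PDE2_DSE1}) and (\ref{eqn:R3_ps31_PDE2_DSE2}) and apply the operators term by term; there one must distinguish $i=1$ from $i\neq 1$ (resp. $j=1$ from $j\neq 1$), according to whether a linear factor vanishes on the leading term --- forcing a reindexing $m_1\mapsto m_1+1$ (resp. $m_2\mapsto m_2+1$) --- or is absorbed into a gamma factor, and one uses the recurrences satisfied by $C^{(i,j,k)}_{\me_1,(m_1,m_2)}$ (those of Lemma~\ref{lem:Fn_Sol_power_series} for $r=(\nu_1-1,\nu_2,\nu_3)$, as in the proof of Lemma~\ref{lem:R3_ps3_MW_sub1}) together with the Pochhammer relations (\ref{eqn:Fn_Pochhammer_rel}) to merge the two contributions for $\me_3$ into a single gamma product, exactly as at the end of the proof of Theorem~\ref{thm:R2_ps_Whittaker2}. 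The main obstacle is precisely this gamma-function bookkeeping: tracking which $+\delta$ turns into a $-\delta$ under each operator and keeping the $(i,j,k)$ case distinctions straight; carrying out the direct computation alongside the Mellin--Barnes one, and checking that the resulting $\hat{\varphi}_{\me_3}$ indeed satisfies (\ref{eqn:R3_ps31_PDE2_DSE2}), provides the needed cross-checks.
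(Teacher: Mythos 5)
Your main route is genuinely different from the paper's. The paper computes $\hat{\varphi}_{\me_2}$ and $\hat{\varphi}_{\me_3}$ by substituting the explicit series $\hat{\varphi}^{(i,j,k)}_{\me_1}$ into (\ref{eqn:R3_ps31_PDE2_DSE1}) and (\ref{eqn:R3_ps31_PDE2_DSE2}) and manipulating the coefficients termwise (with the reindexing $m_1\to m_1+\delta_i$ resp.\ $(m_1,m_2)\to(m_1+\delta_i,m_2+1-\delta_j)$ and the case split on which of $i,j,k$ equals $1$) --- this is exactly the ``more pedestrian route'' you mention at the end. Your preferred path instead runs the operators through the Mellin--Barnes integral $\hat{\varphi}^{\mathrm{mg}}_{\me_1}$, which is what the paper does in a \emph{separate} lemma (Lemma~\ref{lem:R3_ps3_WW_sub2}), and then extracts the series for the individual permutations $(i,j,k)$ as residue contributions, following the same device as Lemma~\ref{lem:Fn_Sol_mg}. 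The key extra ingredient you correctly identify is that the first-order operators $(2\pi y_1)^{-1}(\nu_1-\partial_1)$ and $(2\pi y_2)^{-1}\{(\partial_1-\partial_2-\nu_2-\nu_3)(\cdot)+2\pi y_1\hat{\varphi}_{\me_1}\}$ preserve the characteristic-exponent classes $\{\nu_i+\delta_i\bmod 2\bZ\}_i$ and $\{-\nu_j+\delta_j\bmod 2\bZ\}_j$ (resp.\ $\{-\nu_j+1-\delta_j\bmod 2\bZ\}_j$), which under the genericity hypothesis $\nu_p-\nu_q-\delta_p+\delta_q\notin 2\bZ$ are still pairwise distinct; this is what makes the residue expansion match the image of each $\hat{\varphi}^{(i,j,k)}_{\me_1}$ rather than merely their sum. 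The trade-off is that the paper needs both Lemma~\ref{lem:R3_ps3_MW_sub2} and Lemma~\ref{lem:R3_ps3_WW_sub2} for Theorem~\ref{thm:R3_ps31_Whittaker}, so the direct termwise calculation is not redundant in the paper's scheme; your route effectively derives one from the other, which is cleaner conceptually but requires the extra distinctness-of-exponent argument to be spelled out. One small caveat: your parenthetical rewrite via the ``third equation of Lemma~\ref{lem:R3_ps3_ZgDSE1_original}(2)'' as $(\partial_2+\nu_1)\hat{\varphi}_{\me_3}=-2\pi y_2\hat{\varphi}_{\me_2}$ is arithmetically correct, but the lemma's statement fixes $\hat{\varphi}_{\me_3}$ via the equations of Lemma~\ref{lem:R3_ps3_ZgDSE2} specifically, so appealing to a third relation not in that list either presumes Theorem~\ref{thm:R3_ps31_Whittaker} (circular at this stage) or requires a consistency check --- better to use (\ref{eqn:R3_ps31_PDE2_DSE2}) directly as you also propose.
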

\begin{proof}
By the differential equation (\ref{eqn:R3_ps31_PDE2_DSE1}), we have 
\begin{align*}
&\hat{\varphi}_{\me_2}(y_1,y_2)
 = (2\pi y_1)^{-1}(-\partial_1+\nu_1)\hat{\varphi}_{\me_1}(y_1,y_2)\\
& = \sum_{m_1,m_2 \geq 0} 
\bigl(-m_1-\tfrac{\nu_i-\nu_1+1-\delta_i}{2}\bigr)
C_{\me_1,(m_1,m_2)}^{(i,j,k)}
(\pi y_1)^{2m_1+\nu_i-\delta_i}(\pi y_2)^{2m_2-\nu_j+\delta_j}.
\end{align*}
Hence, the replacement $m_1 \to m_1+\delta_i$ implies 
the desired expression for $\hat{\varphi}_{\me_2}$, 
in view of 
\begin{align*}
 -m_1-\tfrac{\nu_i-\nu_1+1-\delta_i}{2} 
 = \begin{cases}
-m_1 & \mbox{ if }i=1, \\[1pt]
-m_1-\tfrac{\nu_i-\nu_j-\delta_i+\delta_j}{2} 
& \mbox{ if }j=1,\\[1pt] 
-m_1-\tfrac{\nu_i-\nu_k-\delta_i+\delta_k}{2} 
& \mbox{ if }k=1.
\end{cases}
\end{align*}

From (\ref{eqn:R3_ps31_PDE2_DSE2}), we have 
\begin{align*}
&\hat{\varphi}_{\me_3}(y_1,y_2) 
 = (2\pi y_2)^{-1}\bigl\{
\bigl(\partial_1-\partial_2-\nu_2-\nu_3\bigr)\hat{\varphi}_{\me_2}(y_1,y_2)
+(2\pi y_1)\hat{\varphi}_{\me_1}(y_1,y_2)\}
\\
& = \sum_{m_1,m_2 \ge0} 
\bigl(m_1-m_2+\tfrac{\nu_i+\nu_j-\nu_2-\nu_3-\delta_i-\delta_j}{2}\bigr)
\bigl(-m_1-\tfrac{\nu_i-\nu_1+1-\delta_i}{2}\bigr) 
C_{\me_1,(m_1,m_2)}^{(i,j,k)}\\
&\qquad 
\times  (\pi y_1)^{2m_1+\nu_i-\delta_i} (\pi y_2)^{2m_2-\nu_j-1+\delta_j}\\
&\qquad 
+\sum_{m_1,m_2 \geq 0} C_{\me_1,(m_1,m_2)}^{(i,j,k)} 
(\pi y_1)^{2m_1+\nu_i+2-\delta_i}
(\pi y_2)^{2m_2-\nu_j-1+\delta_j}\\
& = \sum_{m_1,m_2 \geq 0} 
\bigl\{\bigl(m_1-m_2+\tfrac{\nu_i+\nu_j-\nu_2-\nu_3-\delta_i-\delta_j}{2}\bigr)
\bigl(-m_1-\tfrac{\nu_i-\nu_1+1-\delta_i}{2}\bigr)
C_{\me_1,(m_1,m_2)}^{(i,j,k)}  \\
& \qquad +C_{\me_1,(m_1-1,m_2)}^{(i,j,k)}\bigr\}
(\pi y_1)^{2m_1+\nu_i-\delta_i} (\pi y_2)^{2m_2-\nu_j-1+\delta_j}
\\
& = \sum_{m_1,m_2\geq 0}\Biggl\{ 
\bigl(m_1-m_2+\tfrac{\nu_i+\nu_j-\nu_2-\nu_3-\delta_i-\delta_j}{2}\bigr)
\bigl(-m_1-\tfrac{\nu_i-\nu_1+1-\delta_i}{2}\bigr)\\
& \qquad +
\frac{m_1\bigl(m_1+\tfrac{\nu_i-\nu_j-\delta_i+\delta_j}{2}\bigr)
\bigl(m_1+\tfrac{\nu_i-\nu_k-\delta_i+\delta_k}{2}\bigr)}
{m_1+m_2+\tfrac{\nu_i-\nu_j-\delta_i+\delta_j}{2}}\Biggr\} 
C_{\me_1,(m_1,m_2)}^{(i,j,k)}\\
& \qquad  \times 
(\pi y_1)^{2m_1+\nu_i-\delta_i} (\pi y_2)^{2m_2-\nu_j-1+\delta_j}.
\end{align*}
Here the third equality follows from 
the replacement $m_1\to m_1-1$ in the second term and 
setting $C_{\me_1,(-1,m_2)}^{(i,j,k)}=0$. 
Since 
\begin{align*}
&\bigl(m_1-m_2+\tfrac{\nu_i+\nu_j-\nu_2-\nu_3-\delta_i-\delta_j}{2}\bigr)
\bigl(-m_1-\tfrac{\nu_i-\nu_1+1-\delta_i}{2}\bigr)\\
& +
\frac{m_1\bigl(m_1+\tfrac{\nu_i-\nu_j-\delta_i+\delta_j}{2}\bigr)
\bigl(m_1+\tfrac{\nu_i-\nu_k-\delta_i+\delta_k}{2}\bigr)}
{m_1+m_2+\tfrac{\nu_i-\nu_j-\delta_i+\delta_j}{2}}\\
& = 
\frac{1}{m_1+m_2+\tfrac{\nu_i-\nu_j-\delta_i+\delta_j}{2}} \\
&\hphantom{=.}
\times \begin{cases} 
m_1m_2\bigl(m_2+\tfrac{\nu_k-\nu_j-\delta_k+\delta_j}{2}\bigr) 
& \mbox{ if }i=1,\\[1pt]
\bigl(m_1+\tfrac{\nu_i-\nu_j-\delta_i+\delta_j}{2}\bigr)
\bigl(m_2+\tfrac{\nu_i-\nu_j-\delta_i+\delta_j}{2}\bigr)
\bigl(m_2+\tfrac{\nu_k-\nu_j-\delta_k+\delta_j}{2}\bigr) 
& \mbox{ if }j=1,\\[1pt]
m_2\bigl(m_1+\tfrac{\nu_i-\nu_k-\delta_i+\delta_k}{2}\bigr)
\bigl(m_2+\tfrac{\nu_i-\nu_j-\delta_i+\delta_j}{2}\bigr) 
& \mbox{ if }k=1,
\end{cases}
\end{align*}
the replacement $(m_1,m_2) \to (m_1+\delta_i,m_2+1-\delta_j) $ implies 
the desired expression for $\hat{\varphi}_{\me_3}$.
\end{proof}

\begin{lem}
\label{lem:R3_ps3_WW_sub2}
Let $\nu_1,\nu_2,\nu_3\in \bC $ and $(\delta_1,\delta_2,\delta_3)=(1,0,0)$. 
Let $\hat{\varphi}_{\me_2}$ and $\hat{\varphi}_{\me_3}$ 
be the functions determined from the function 
$\hat{\varphi}_{\me_1}=\hat{\varphi}^{\mathrm{mg}}_{\me_1}$ 
in Lemma \ref{lem:R3_ps3_MW_sub1} (2) by the equations 
in Lemma \ref{lem:R3_ps3_ZgDSE2}. 
Then it holds that 
\begin{align*}
\hat{\varphi}_{\me_2}(y_1,y_2) = &
\frac{1}{(4\pi \sqrt{-1})^2} 
\int_{s_2}\int_{s_1} \frac{\Gamma_{\bR}(s_1+\nu_1+1)\Gamma_{\bR}(s_1+\nu_2)
\Gamma_{\bR}(s_1+\nu_3)}{\Gamma_{\bR}(s_1+s_2)}\\
&\times \Gamma_{\bR}(s_2-\nu_1+1)\Gamma_{\bR}(s_2-\nu_2)
\Gamma_{\bR}(s_2-\nu_3) \,y_1^{-s_1} y_2^{-s_2} \,ds_1ds_2,\\[2pt]
\hat{\varphi}_{\me_3}(y_1,y_2) = &
\frac{1}{(4\pi \sqrt{-1})^2} 
\int_{s_2}\int_{s_1} \frac{\Gamma_{\bR}(s_1+\nu_1+1)\Gamma_{\bR}(s_1+\nu_2)
\Gamma_{\bR}(s_1+\nu_3)}{\Gamma_{\bR}(s_1+s_2+1)}\\
&\times \Gamma_{\bR}(s_2-\nu_1)\Gamma_{\bR}(s_2-\nu_2+1)
\Gamma_{\bR}(s_2-\nu_3+1)\,y_1^{-s_1} y_2^{-s_2} \,ds_1ds_2.
\end{align*}
Here the path of the integration $\int_{s_i}$ is the vertical line 
from $\mathrm{Re}(s_i)-\sI \infty$ to $\mathrm{Re}(s_i)+\sI \infty$ 
with sufficiently large real part to keep the poles of the integrand 
on its left. 
\end{lem}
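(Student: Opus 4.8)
The plan is to substitute the Mellin--Barnes representation of $\hat{\varphi}^{\mathrm{mg}}_{\me_1}$ from Lemma~\ref{lem:R3_ps3_MW_sub1}~(2) into the two first-order relations of Lemma~\ref{lem:R3_ps3_ZgDSE2} and to read off the resulting double integrals using only the functional equation (\ref{eqn:Fn_FE_GammaRC}) for $\Gamma_{\bR}$. On a function represented by a double Mellin--Barnes integral with integrand $h(s_1,s_2)\,y_1^{-s_1}y_2^{-s_2}$, the operator $\partial_i=y_i\,\partial/\partial y_i$ multiplies $h$ by $-s_i$, while multiplication by $y_i$ replaces $h(s_1,s_2)$ by the appropriate shift of $h$ in its $i$-th argument after the contour is moved back; since the integrands decay rapidly on vertical lines by Stirling's formula, these contour displacements are legitimate provided the real parts are kept large enough that no poles of the relevant $\Gamma_{\bR}$-factors are crossed.

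First I would handle $\hat{\varphi}_{\me_2}$. Equation (\ref{eqn:R3_ps31_PDE2_DSE1}) gives $\hat{\varphi}_{\me_2}=(2\pi y_1)^{-1}(-\partial_1+\nu_1)\hat{\varphi}^{\mathrm{mg}}_{\me_1}$, so its integrand is $(2\pi)^{-1}(s_1+\nu_1-1)\,\cV_{\me_1}(s_1-1,s_2)$. Rewriting $(s_1+\nu_1-1)\Gamma_{\bR}(s_1+\nu_1-1)=2\pi\,\Gamma_{\bR}(s_1+\nu_1+1)$ and simplifying the remaining shifted $\Gamma_{\bR}$-factors by (\ref{eqn:Fn_FE_GammaRC}) collapses this integrand precisely to the kernel asserted for $\hat{\varphi}_{\me_2}$, the denominator automatically changing from $\Gamma_{\bR}(s_1+s_2+1)$ to $\Gamma_{\bR}(s_1+s_2)$.

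Next I would compute $\hat{\varphi}_{\me_3}=(2\pi y_2)^{-1}\bigl[(\partial_1-\partial_2-\nu_2-\nu_3)\hat{\varphi}_{\me_2}+2\pi y_1\hat{\varphi}^{\mathrm{mg}}_{\me_1}\bigr]$ from (\ref{eqn:R3_ps31_PDE2_DSE2}). Writing $\cV_{\me_2}$ for the kernel of $\hat{\varphi}_{\me_2}$ just obtained, three applications of (\ref{eqn:Fn_FE_GammaRC}) identify the kernel of $2\pi y_1\hat{\varphi}^{\mathrm{mg}}_{\me_1}$ with $(s_1+\nu_2)(s_1+\nu_3)(s_1+s_2)^{-1}\cV_{\me_2}(s_1,s_2)$, so the bracket has kernel $\bigl[(-s_1+s_2-\nu_2-\nu_3)+(s_1+\nu_2)(s_1+\nu_3)(s_1+s_2)^{-1}\bigr]\cV_{\me_2}(s_1,s_2)$. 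A one-line algebraic simplification of the numerator shows this bracket equals $(s_2-\nu_2)(s_2-\nu_3)(s_1+s_2)^{-1}$. Finally $(2\pi y_2)^{-1}$ shifts $s_2\mapsto s_2-1$, and rewriting $(s_2-\nu_2-1)\Gamma_{\bR}(s_2-\nu_2-1)=2\pi\,\Gamma_{\bR}(s_2-\nu_2+1)$, $(s_2-\nu_3-1)\Gamma_{\bR}(s_2-\nu_3-1)=2\pi\,\Gamma_{\bR}(s_2-\nu_3+1)$ and $(s_1+s_2-1)\Gamma_{\bR}(s_1+s_2-1)=2\pi\,\Gamma_{\bR}(s_1+s_2+1)$ by (\ref{eqn:Fn_FE_GammaRC}), the factors of $2\pi$ cancel and one arrives at the stated integral for $\hat{\varphi}_{\me_3}$.

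The computation involves no conceptual difficulty; the only thing requiring care is the bookkeeping of the contour shifts and of the powers of $2\pi$ produced by (\ref{eqn:Fn_FE_GammaRC}), together with the routine justification — via the Stirling estimate already used in the proof of Lemma~\ref{lem:Fn_Sol_mg} — that differentiation under the integral sign and the contour displacements above are permitted.
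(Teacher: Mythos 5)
Your proposal is correct and follows essentially the same route as the paper: substitute the Mellin--Barnes integral for $\hat{\varphi}^{\mathrm{mg}}_{\me_1}$ into the two first-order relations (\ref{eqn:R3_ps31_PDE2_DSE1}) and (\ref{eqn:R3_ps31_PDE2_DSE2}), read off the effect of $\partial_i$ and of multiplication by $y_i$ on the kernel, shift contours, and reduce with the functional equation (\ref{eqn:Fn_FE_GammaRC}); the polynomial identity you use, namely $(-s_1+s_2-\nu_2-\nu_3)(s_1+s_2)+(s_1+\nu_2)(s_1+\nu_3)=(s_2-\nu_2)(s_2-\nu_3)$, is the same algebraic collapse the paper performs (there phrased as simplifying $\cV_{\me_1}(s_1+2,s_2)+(2\pi)^{-2}(-s_1+s_2-\nu_2-\nu_3)\cV_{\me_1}(s_1,s_2)$). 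The only cosmetic difference is that you re-express the $2\pi y_1\hat{\varphi}^{\mathrm{mg}}_{\me_1}$ term through the already-computed kernel $\cV_{\me_2}$ rather than keeping everything in terms of $\cV_{\me_1}$ as the paper does, which changes nothing of substance.
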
 
\begin{proof}
By (\ref{eqn:R3_ps31_PDE2_DSE1}), we have 
\begin{align*}
&\hat{\varphi}_{\me_2}(y_1,y_2)
= (2\pi y_1)^{-1}(-\partial_1+\nu_1)
\hat{\varphi}_{\me_1}^{\mathrm{mg}}(y_1,y_2)\\
&=\frac{1}{(4\pi \sqrt{-1})^2} \int_{s_2}\int_{s_1} 
(2\pi )^{-1}(s_1+\nu_1)\cV_{\me_1}(s_1,s_2) \,
y_1^{-s_1-1} y_2^{-s_2} \,ds_1ds_2.
\end{align*}
Hence, we obtain the desired expression of $\hat{\varphi}_{\me_2}$ by 
the substitution $s_1\to s_1-1$.

From (\ref{eqn:R3_ps31_PDE2_DSE2}), we have 
\begin{align*}
&\hat{\varphi}_{\me_3}(y_1,y_2)= (2\pi y_2)^{-1}
\{(2\pi y_1)\hat{\varphi}_{\me_1}^{\mathrm{mg}}(y_1,y_2)+
(\partial_1-\partial_2-\nu_2-\nu_3)\hat{\varphi}_{\me_2}
(y_1,y_2)\}\\
&=\frac{1}{(4\pi \sqrt{-1})^2} \int_{s_2}\int_{s_1} \cV_{\me_1}(s_1,s_2) 
\,y_1^{-s_1+1} y_2^{-s_2-1}\,ds_1ds_2\\
&\hphantom{=}+\frac{1}{(4\pi \sqrt{-1})^2}
\int_{s_2}\int_{s_1} 
(2\pi )^{-2}(s_1+\nu_1)(-s_1+s_2-\nu_2-\nu_3-1)
\cV_{\me_1}(s_1,s_2) \\
&\qquad \times y_1^{-s_1-1} y_2^{-s_2-1}\,ds_1ds_2\\
&=\frac{1}{(4\pi \sqrt{-1})^2} \int_{s_2}\int_{s_1} 
\bigl\{\cV_{\me_1}(s_1+2,s_2)\\
&\hphantom{=}+(2\pi )^{-2}
(s_1+\nu_1)(-s_1+s_2-\nu_2-\nu_3-1)
\cV_{\me_1}(s_1,s_2) \bigr\}
y_1^{-s_1-1} y_2^{-s_2-1}\,ds_1ds_2.
\end{align*}
Here the last equality follows from the substitution $s_1\to s_1+2$ 
in the first term. 
Since 
\begin{align*}
&\cV_{\me_1}(s_1+2,s_2)+(2\pi )^{-2}
(-s_1+s_2-\nu_2-\nu_3-1)
\cV_{\me_1}(s_1,s_2)\\
&=\biggl\{
\frac{(s_1+\nu_2+1)(s_1+\nu_3+1)}{s_1+s_2+1}
+(-s_1+s_2-\nu_2-\nu_3-1)\biggr\}\\
&\hphantom{=.}\times (2\pi)^{-2}(s_1+\nu_1)\cV_{\me_1}(s_1,s_2)\\
&=(2\pi)^{-2}
\frac{(s_1+\nu_1)(s_2-\nu_2)(s_2-\nu_3)}{s_1+s_2+1}
\cV_{\me_1}(s_1,s_2),
\end{align*}
the substitution $(s_1,s_2) \to (s_1-1,s_2-1) $ implies 
the desired expression for $\hat{\varphi}_{\me_3}$. 
\end{proof}

\begin{thm}[{\cite{Manabe_Ihii_Oda_001}}]
\label{thm:R3_ps31_Whittaker}
Let $\sigma = \chi_{(\nu_1,1)}\boxtimes \chi_{(\nu_2,0)}
\boxtimes \chi_{(\nu_3,0)}$ with 
$\nu_1,\nu_2,\nu_3\in \bC$ such that $\Pi_\sigma$ is irreducible. \\
(1) There exists a $K$-homomorphism 
\[
\varphi^{\mathrm{mg}}_\sigma \colon V_{(1,0)}\to 
{\mathrm{Wh}}(\Pi_{\sigma},\psi_1)^{\mathrm{mg}},
\]
whose radial part is given by 
\begin{align*}
\begin{split}
&\varphi^{\mathrm{mg}}_\sigma (u_{l})(y) =
(\sI )^{l_1-l_3}y_1y_2(y_2y_3)^{\nu_1+\nu_2+\nu_3}\\
&\hphantom{=}
\times \frac{1}{(4\pi \sqrt{-1})^2} \int_{s_2}\int_{s_1}
\frac{\Gamma_{\bR}(s_1+\nu_1+1-l_1)\Gamma_{\bR}(s_1+\nu_2+l_1)
\Gamma_{\bR}(s_1+\nu_3+l_1)}{\Gamma_{\bR}(s_1+s_2+l_1+l_3)}\\
&\hphantom{=}
\times \Gamma_{\bR}(s_2-\nu_1+1-l_3)\Gamma_{\bR}(s_2-\nu_2+l_3)
\Gamma_{\bR}(s_2-\nu_3+l_3)
\,y_1^{-s_1} y_2^{-s_2} \,ds_1ds_2
\end{split}
\end{align*}
with $l=(l_1,l_2,l_3)\in S_{(1,0)}$ 
and $y=\diag (y_1y_2y_3,y_2y_3,y_3)\in A$. 
Here the path of the integration $\int_{s_i}$ is the vertical line 
from $\mathrm{Re}(s_i)-\sI \infty$ to $\mathrm{Re}(s_i)+\sI \infty$ 
with sufficiently large real part to keep the poles of the integrand 
on its left. \\[2pt]
(2) Assume 
$\nu_p-\nu_q-\delta_p+\delta_q \notin 2\bZ$ 
with $(\delta_1,\delta_2,\delta_3)=(1,0,0)$, for any $1\leq p\neq q\leq 3$. 
For a permutation $(i,j,k)$ of $\{1,2,3\}$, 
there is a $K$-homomorphism 
\[
\varphi^{(i,j,k)}_\sigma \colon V_{(1,0)}\to 
{\mathrm{Wh}}(\Pi_{\sigma},\psi_1),
\]
whose radial part is given by the power series 
\begin{align*}
\begin{split}
&\varphi^{{(1,j,k)}}_\sigma (u_{l})(y) =
(\sI )^{l_1-l_3}y_1y_2(y_2y_3)^{\nu_1+\nu_2+\nu_3}\\
&\hphantom{=}\times \sum_{m_1, m_2 \geq 0} 
\frac{(-1)^{m_1+m_2}
\Gamma\bigl(-m_1+l_1-\tfrac{\nu_1-\nu_2+1}{2}\bigr)
\Gamma\bigl(-m_1+l_1-\tfrac{\nu_1-\nu_3+1}{2}\bigr)
}{\pi \, m_1!m_2! \Gamma\bigl(-m_1-m_2+l_1-\tfrac{\nu_1-\nu_j+1}{2}\bigr)} \\
&\hphantom{=}\times \Gamma\bigl(-m_2-l_3-\tfrac{\nu_1-\nu_j-1}{2}\bigr)
\Gamma\bigl(-m_2-\tfrac{\nu_k-\nu_j}{2}\bigr)
(\pi y_1)^{2m_1+\nu_1+1-l_1}(\pi y_2)^{2m_2-\nu_j+l_3},
\end{split}\\[3pt]
\begin{split}
&\varphi^{{(i,1,k)}}_\sigma (u_{l})(y) =
(\sI )^{l_1-l_3}y_1y_2(y_2y_3)^{\nu_1+\nu_2+\nu_3}\\
&\hphantom{=}\times \sum_{m_1, m_2 \geq 0} 
\frac{(-1)^{m_1+m_2} 
\Gamma\bigl(-m_2+l_3-\tfrac{\nu_2-\nu_1+1}{2}\bigr)
\Gamma\bigl(-m_2+l_3-\tfrac{\nu_3-\nu_1+1}{2}\bigr)}
{\pi \, m_1!m_2!\Gamma\bigl(-m_1-m_2+l_3-\tfrac{\nu_i-\nu_1+1}{2}\bigr)} \\
&\hphantom{=}\times 
\Gamma\bigl(-m_1-l_1-\tfrac{\nu_i-\nu_1-1}{2}\bigr)
\Gamma\bigl(-m_1-\tfrac{\nu_i-\nu_k}{2}\bigr)
(\pi y_1)^{2m_1+\nu_i+l_1}(\pi y_2)^{2m_2-\nu_1+1-l_3},
\end{split}\\[3pt]
\begin{split}
&\varphi^{{(i,j,1)}}_\sigma (u_{l})(y) =
(\sI )^{l_1-l_3}y_1y_2(y_2y_3)^{\nu_1+\nu_2+\nu_3}\\
&\hphantom{=}\times \sum_{m_1, m_2 \geq 0} 
\frac{(-1)^{m_1+m_2} 
\Gamma\bigl(-m_1-l_1-\tfrac{\nu_i-\nu_1-1}{2}\bigr)
\Gamma\bigl(-m_1-\tfrac{\nu_i-\nu_j}{2}\bigr)}
{\pi \, m_1!m_2! \Gamma\bigl(-m_1-m_2-\tfrac{\nu_i-\nu_j}{2}\bigr)} \\
&\hphantom{=}\times 
\Gamma\bigl(-m_2-l_3-\tfrac{\nu_1-\nu_j-1}{2}\bigr)
\Gamma\bigl(-m_2-\tfrac{\nu_i-\nu_j}{2}\bigr)
(\pi y_1)^{2m_1+\nu_i+l_1}(\pi y_2)^{2m_2-\nu_j+l_3}
\end{split}
\end{align*}
with $l=(l_1,l_2,l_3)\in S_{(1,0)}$ and 
$y=\diag (y_1y_2y_3,y_2y_3,y_3)\in A$. 
Moreover, $\{\varphi^{(i,j,k)}_\sigma \mid \{i,j,k\}=\{1,2,3\}\,\}$ 
forms a basis of  $\Hom_K(V_{(1,0)},
{\mathrm{Wh}}(\Pi_{\sigma},\psi_1))$, and satisfies 
\begin{align*}
\varphi^{\mathrm{mg}}_\sigma 
& = \sum_{ (i,j,k) } \varphi^{(i,j,k)}_\sigma ,
\end{align*}
where $(i,j,k)$ runs all permutations of $\{1,2,3\}$. 
\end{thm}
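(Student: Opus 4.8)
The plan is to imitate the proof of Theorem~\ref{thm:R3_ps1_Whittaker}, with the scalar minimal $K$-type $\tau_{(0,\delta_2)}$ replaced by the $3$-dimensional minimal $K$-type $\tau_{(1,0)}$ and the space $\mathrm{Sol}((\nu_1,\nu_2,\nu_3))$ replaced by the solution space of the holonomic system in Lemma~\ref{lem:R3_ps3_PDE3}. After the normalization~\eqref{eqn:R3_ps3_def_varphi}, any $K$-homomorphism $\varphi\colon V_{(1,0)}\to\mathrm{Wh}(\Pi_\sigma,\psi_1)$ is characterized by its radial parts $\varphi(u_{\me_i})|_A$, hence by the functions $\hat\varphi_{\me_1},\hat\varphi_{\me_2},\hat\varphi_{\me_3}$, and by Lemma~\ref{lem:R3_ps3_ZgDSE2} the latter two are determined by $\hat\varphi_{\me_1}$, which by Lemma~\ref{lem:R3_ps3_PDE3} solves the system there. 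This gives an injective $\bC$-linear map
\[
\Hom_K\bigl(V_{(1,0)},\mathrm{Wh}(\Pi_\sigma,\psi_1)\bigr)\ni\varphi\;\longmapsto\;\hat\varphi_{\me_1}
\]
into the space of smooth solutions on $(\bR_+)^2$ of the system of Lemma~\ref{lem:R3_ps3_PDE3}. Exactly as in Theorem~\ref{thm:R3_ps1_Whittaker}, the domain has dimension $\dim_\bC\cI_{\Pi_\sigma,\psi_1}=6$ by~\eqref{eqn:Rn_dimWh_gps} (using $\dim_\bC\Hom_K(V_{(1,0)},H(\sigma)_K)=1$ and the irreducibility of $\Pi_\sigma$), while the target has dimension at most $6$ by Lemma~\ref{lem:R3_ps3_MW_sub1}(1); hence the map is bijective.

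For part~(1) I would take $\hat\varphi_{\me_1}=\hat\varphi^{\mathrm{mg}}_{\me_1}$ as in Lemma~\ref{lem:R3_ps3_MW_sub1}(2), let $\varphi^{\mathrm{mg}}_\sigma$ be the corresponding $K$-homomorphism, and use Lemma~\ref{lem:R3_ps3_WW_sub2} for its components at $u_{\me_2}$ and $u_{\me_3}$. It then remains to observe that the three Mellin--Barnes integrals obtained for $u_{\me_1},u_{\me_2},u_{\me_3}$ are precisely the $l=\me_1,\me_2,\me_3$ specializations of the uniform expression in the statement (the prefactor $(\sI)^{l_1-l_3}y_1y_2(y_2y_3)^{\nu_1+\nu_2+\nu_3}$ being supplied by~\eqref{eqn:R3_ps3_def_varphi}), and that $\hat\varphi^{\mathrm{mg}}_{\me_1}$, being a moderate growth solution, corresponds to a $K$-homomorphism with values in $\mathrm{Wh}(\Pi_\sigma,\psi_1)^{\mathrm{mg}}$, which is $1$-dimensional by~\eqref{eqn:Rn_dimWh_gps}.

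For part~(2) I would run the basis $\{\hat\varphi^{(i,j,k)}_{\me_1}\}$ of Lemma~\ref{lem:R3_ps3_MW_sub1}(3) through the bijection to obtain $K$-homomorphisms $\varphi^{(i,j,k)}_\sigma$; linearity and bijectivity of the map make them a basis of $\Hom_K(V_{(1,0)},\mathrm{Wh}(\Pi_\sigma,\psi_1))$, and the identity $\hat\varphi^{\mathrm{mg}}_{\me_1}=\sum_{(i,j,k)}\hat\varphi^{(i,j,k)}_{\me_1}$ transports to $\varphi^{\mathrm{mg}}_\sigma=\sum_{(i,j,k)}\varphi^{(i,j,k)}_\sigma$. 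The explicit power series for $\varphi^{(i,j,k)}_\sigma(u_{\me_2})$ and $\varphi^{(i,j,k)}_\sigma(u_{\me_3})$ are exactly the series $\hat\varphi_{\me_2},\hat\varphi_{\me_3}$ evaluated in Lemma~\ref{lem:R3_ps3_MW_sub2}; there the computation naturally splits into the three cases $i=1$, $j=1$, $k=1$, and after specializing $(\delta_1,\delta_2,\delta_3)=(1,0,0)$ and relabelling indices these become the three displayed families $\varphi^{(1,j,k)}_\sigma$, $\varphi^{(i,1,k)}_\sigma$, $\varphi^{(i,j,1)}_\sigma$ for $l\in S_{(1,0)}$. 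All the analytic content is already in Lemmas~\ref{lem:R3_ps3_PDE3}, \ref{lem:R3_ps3_MW_sub1}, \ref{lem:R3_ps3_MW_sub2} and~\ref{lem:R3_ps3_WW_sub2}; inside the theorem the only genuine step is the dimension count forcing bijectivity, and the main nuisance will be keeping the normalizations consistent across the three generators $u_{\me_1},u_{\me_2},u_{\me_3}$ of $V_{(1,0)}$ and matching the case split in Lemma~\ref{lem:R3_ps3_MW_sub2} with the three displayed formulas --- in particular checking that each component carries exactly the leading exponent prescribed by~\eqref{eqn:R3_ps3_def_varphi}, which is what fixes the $(i,j,k)$-labelling.
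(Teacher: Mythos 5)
Your argument is correct and is essentially the paper's own proof: the same injective map $\varphi\mapsto\hat\varphi_{\me_1}$ from $\Hom_K(V_{(1,0)},\mathrm{Wh}(\Pi_\sigma,\psi_1))$ to $\mathrm{Sol}(\Pi_\sigma;\psi_1)$, the same dimension count ($6\geq\dim_\bC\mathrm{Sol}(\Pi_\sigma;\psi_1)$ forcing bijectivity), and the same invocation of Lemmas \ref{lem:R3_ps3_MW_sub1}, \ref{lem:R3_ps3_MW_sub2} and \ref{lem:R3_ps3_WW_sub2} to read off the radial parts. Your closing remark about matching the $i=1$, $j=1$, $k=1$ case split in Lemma \ref{lem:R3_ps3_MW_sub2} with the three displayed families is exactly the only bookkeeping the paper also leaves implicit.
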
 
\begin{proof}
We denote by $\mathrm{Sol}(\Pi_{\sigma};\psi_1)$ 
the space of smooth solutions of 
the system in Lemma \ref{lem:R3_ps3_PDE3} on $(\bR_+)^2$. 
We define an injective homomorphism 
\begin{align}
\label{eqn:R3_ps3_Wh_to_Sol}
\Hom_K(V_{(1,0)},{\mathrm{Wh}}(\Pi_{\sigma},\psi_1))
\ni \varphi 
\mapsto \hat{\varphi}_{\me_1}
\in \mathrm{Sol}(\Pi_{\sigma};\psi_1)
\end{align}
of $\bC$-vector spaces 
by (\ref{eqn:R3_ps3_def_varphi}). 

Since $\dim_\bC \Hom_K(V_{(1,0)} ,H(\sigma)_K)=1$ and 
$\Pi_\sigma$ is irreducible, we have 
\begin{align*}
\dim_\bC \Hom_K(V_{(1,0)} ,{\mathrm{Wh}}(\Pi_{\sigma} ,\psi_1))
&=\dim_\bC {\cI}_{\Pi_{\sigma},\psi_1}=6
\geq 
\dim_{\bC}\mathrm{Sol}(\Pi_{\sigma};\psi_1).
\end{align*}
Here the last inequality follows from Lemma \ref{lem:R3_ps3_MW_sub1} (1).
This inequality implies that (\ref{eqn:R3_ps3_Wh_to_Sol}) 
is bijective. Hence, the assertion follows from 
Lemmas \ref{lem:R3_ps3_MW_sub1}, 
\ref{lem:R3_ps3_MW_sub2} and \ref{lem:R3_ps3_WW_sub2}. 
\end{proof}

Since the equations in Lemma \ref{lem:R3_ps3_ZgDSE1_original} 
for $\delta_2=0$ become those for $\delta_2=1$ by 
exchanging $\nu_1$ and $\nu_3$, 
we note that the following theorem also holds.

\begin{thm}[{\cite{Manabe_Ihii_Oda_001}}]
\label{thm:R3_ps33_Whittaker}
Let $\sigma = \chi_{(\nu_1,1)}\boxtimes \chi_{(\nu_2,1)}
\boxtimes \chi_{(\nu_3,0)}$ with 
$\nu_1,\nu_2,\nu_3\in \bC$ such that $\Pi_\sigma$ is irreducible. \\
(1) There exists a $K$-homomorphism 
\[
\varphi^{\mathrm{mg}}_\sigma \colon V_{(1,1)}\to 
{\mathrm{Wh}}(\Pi_{\sigma},\psi_1)^{\mathrm{mg}},
\]
whose radial part is given by 
\begin{align*}
\begin{split}
&\varphi^{\mathrm{mg}}_\sigma (u_{l})(y) =
(\sI )^{l_1-l_3}y_1y_2(y_2y_3)^{\nu_1+\nu_2+\nu_3}\\
&\hphantom{=}
\times \frac{1}{(4\pi \sqrt{-1})^2} \int_{s_2}\int_{s_1}
\frac{\Gamma_{\bR}(s_1+\nu_1+l_1)\Gamma_{\bR}(s_1+\nu_2+l_1)
\Gamma_{\bR}(s_1+\nu_3+1-l_1)}{\Gamma_{\bR}(s_1+s_2+l_1+l_3)}\\
&\hphantom{=}
\times \Gamma_{\bR}(s_2-\nu_1+l_3)\Gamma_{\bR}(s_2-\nu_2+l_3)
\Gamma_{\bR}(s_2-\nu_3+1-l_3)
\,y_1^{-s_1} y_2^{-s_2} \,ds_1ds_2
\end{split}
\end{align*}
with $l=(l_1,l_2,l_3)\in S_{(1,1)}$ 
and $y=\diag (y_1y_2y_3,y_2y_3,y_3)\in A$. 
Here the path of the integration $\int_{s_i}$ is the vertical line 
from $\mathrm{Re}(s_i)-\sI \infty$ to $\mathrm{Re}(s_i)+\sI \infty$ 
with sufficiently large real part to keep the poles of the integrand 
on its left. \\[2pt]
(2) Assume 
$\nu_p-\nu_q-\delta_p+\delta_q \notin 2\bZ$ 
with $(\delta_1,\delta_2,\delta_3)=(1,1,0)$, for any $1\leq p\neq q\leq 3$. 
For a permutation $(i,j,k)$ of $\{1,2,3\}$, 
there is a $K$-homomorphism 
\[
\varphi^{(i,j,k)}_\sigma \colon V_{(1,1)}\to 
{\mathrm{Wh}}(\Pi_{\sigma},\psi_1),
\]
whose radial part is given by the power series 
\begin{align*}
\begin{split}
&\varphi^{{(3,j,k)}}_\sigma (u_{l})(y) =
(\sI )^{l_1-l_3}y_1y_2(y_2y_3)^{\nu_3+\nu_2+\nu_1}\\
&\hphantom{=}\times \sum_{m_1, m_2 \geq 0} 
\frac{(-1)^{m_1+m_2}
\Gamma\bigl(-m_1+l_1-\tfrac{\nu_3-\nu_1+1}{2}\bigr)
\Gamma\bigl(-m_1+l_1-\tfrac{\nu_3-\nu_2+1}{2}\bigr)
}{\pi \, m_1!m_2! \Gamma\bigl(-m_1-m_2+l_1-\tfrac{\nu_3-\nu_j+1}{2}\bigr)} \\
&\hphantom{=}\times \Gamma\bigl(-m_2-\tfrac{\nu_k-\nu_j}{2}\bigr)
\Gamma\bigl(-m_2-l_3-\tfrac{\nu_3-\nu_j-1}{2}\bigr)
(\pi y_1)^{2m_1+\nu_3+1-l_1}(\pi y_2)^{2m_2-\nu_j+l_3},
\end{split}\\[3pt]
\begin{split}
&\varphi^{{(i,3,k)}}_\sigma (u_{l})(y) =
(\sI )^{l_1-l_3}y_1y_2(y_2y_3)^{\nu_1+\nu_2+\nu_3}\\
&\hphantom{=}\times \sum_{m_1, m_2 \geq 0} 
\frac{(-1)^{m_1+m_2} 
\Gamma\bigl(-m_2+l_3-\tfrac{\nu_1-\nu_3+1}{2}\bigr)
\Gamma\bigl(-m_2+l_3-\tfrac{\nu_2-\nu_3+1}{2}\bigr)}
{\pi \, m_1!m_2!\Gamma\bigl(-m_1-m_2+l_3-\tfrac{\nu_i-\nu_3+1}{2}\bigr)} \\
&\hphantom{=}\times 
\Gamma\bigl(-m_1-\tfrac{\nu_i-\nu_k}{2}\bigr)
\Gamma\bigl(-m_1-l_1-\tfrac{\nu_i-\nu_3-1}{2}\bigr)
(\pi y_1)^{2m_1+\nu_i+l_1}(\pi y_2)^{2m_2-\nu_3+1-l_3},
\end{split}\\[3pt]
\begin{split}
&\varphi^{{(i,j,3)}}_\sigma (u_{l})(y) =
(\sI )^{l_1-l_3}y_1y_2(y_2y_3)^{\nu_1+\nu_2+\nu_3}\\
&\hphantom{=}\times \sum_{m_1, m_2 \geq 0} 
\frac{(-1)^{m_1+m_2} \Gamma\bigl(-m_1-\tfrac{\nu_i-\nu_j}{2}\bigr)
\Gamma\bigl(-m_1-l_1-\tfrac{\nu_i-\nu_3-1}{2}\bigr)}
{\pi \, m_1!m_2! \Gamma\bigl(-m_1-m_2-\tfrac{\nu_i-\nu_j}{2}\bigr)} \\
&\hphantom{=}\times \Gamma\bigl(-m_2-\tfrac{\nu_i-\nu_j}{2}\bigr)
\Gamma\bigl(-m_2-l_3-\tfrac{\nu_3-\nu_j-1}{2}\bigr)
(\pi y_1)^{2m_1+\nu_i+l_1}(\pi y_2)^{2m_2-\nu_j+l_3}
\end{split}
\end{align*}
with $l=(l_1,l_2,l_3)\in S_{(1,1)}$ and 
$y=\diag (y_1y_2y_3,y_2y_3,y_3)\in A$. 
Moreover, $\{\varphi^{(i,j,k)}_\sigma \mid \{i,j,k\}=\{1,2,3\}\,\}$ 
forms a basis of $\Hom_K(V_{(1,1)},
{\mathrm{Wh}}(\Pi_{\sigma},\psi_1))$, and satisfies 
\begin{align*}
\varphi^{\mathrm{mg}}_\sigma 
& = \sum_{ (i,j,k) } \varphi^{(i,j,k)}_\sigma ,
\end{align*}
where $(i,j,k)$ runs all permutations of $\{1,2,3\}$. 
\end{thm}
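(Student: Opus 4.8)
The plan is to deduce Theorem~\ref{thm:R3_ps33_Whittaker} from Theorem~\ref{thm:R3_ps31_Whittaker} by exploiting the symmetry that exchanges $\nu_1$ and $\nu_3$. For $\sigma=\chi_{(\nu_1,1)}\boxtimes\chi_{(\nu_2,1)}\boxtimes\chi_{(\nu_3,0)}$ one has $(\delta_1,\delta_2,\delta_3)=(1,1,0)$, so $\delta_1-\delta_3=1$ and the minimal $K$-type is $\tau_{(1,\delta_2)}=\tau_{(1,1)}$ with $S_{(1,1)}=\{\me_1,\me_2,\me_3\}$, which is formally the same setup as in \S\ref{subsec:R3_ps3_whittaker}. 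The first step is to observe that the system of Lemma~\ref{lem:R3_ps3_ZgDSE1_original} for $\delta_2=1$ is obtained from the one for $\delta_2=0$ by the substitution $\nu_1\leftrightarrow\nu_3$: the part~(1) equations depend on $\nu_1,\nu_2,\nu_3$ only through their elementary symmetric functions (Proposition~\ref{prop:Rn_Ch_eigenvalue}), while the part~(2) equations carry the distinguished eigenvalue $2\nu_3$ (Proposition~\ref{prop:R3_DSE_ps} in the case $(\delta_1,\delta_2,\delta_3)=(1,1,0)$) in place of $2\nu_1$; everything else entering the derivation --- (\ref{eqn:R3_LieKact_gen}), Lemma~\ref{lem:Rn_g_act_Cpsi}, and the expressions (\ref{eqn:R3_C2_modE13}), (\ref{eqn:R3_C3_modE13}) --- is untouched by this change.

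Granting this, I would rerun the argument of \S\ref{subsec:R3_ps3_whittaker} verbatim. Using (\ref{eqn:R3_ps3_PDE1_C1}) (which is symmetric in the $\nu_i$) one normalizes $\varphi(u_l)(y)=(\sI)^{l_1-l_3}y_1y_2(y_2y_3)^{\nu_1+\nu_2+\nu_3}\hat{\varphi}_l(y_1,y_2)$ as in (\ref{eqn:R3_ps3_def_varphi}); then $\hat{\varphi}_{\me_1}$ solves the system of Lemma~\ref{lem:R3_ps3_PDE3} with $\nu_1\leftrightarrow\nu_3$, and $\hat{\varphi}_{\me_2},\hat{\varphi}_{\me_3}$ are recovered from $\hat{\varphi}_{\me_1}$ via Lemma~\ref{lem:R3_ps3_ZgDSE2} with $\nu_1\leftrightarrow\nu_3$. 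Putting $f(z_1,z_2)=z_1^{-1}\hat{\varphi}_{\me_1}(\pi^{-1}z_1,\pi^{-1}z_2)$ identifies the solution space of that system with $\mathrm{Sol}(r)$ for the triple $r$ obtained from $(\nu_1-1,\nu_2,\nu_3)$ by $\nu_1\leftrightarrow\nu_3$, so it is at most $6$-dimensional by Lemma~\ref{lem:Fn_Sol_dim}. Since $\dim_\bC\Hom_K(V_{(1,1)},H(\sigma)_K)=1$ and $\Pi_\sigma$ is irreducible, $\dim_\bC\Hom_K(V_{(1,1)},{\mathrm{Wh}}(\Pi_\sigma,\psi_1))=\dim_\bC{\cI}_{\Pi_\sigma,\psi_1}=6$, so the injection $\varphi\mapsto\hat{\varphi}_{\me_1}$ is a bijection onto the solution space, and the explicit descriptions follow by applying Lemmas~\ref{lem:R3_ps3_MW_sub1}, \ref{lem:R3_ps3_MW_sub2} and \ref{lem:R3_ps3_WW_sub2} with $\nu_1\leftrightarrow\nu_3$ and transporting $\hat{\varphi}_{\me_i}$ back through (\ref{eqn:R3_ps3_def_varphi}). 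Under this substitution the three families $\varphi^{(1,j,k)}_\sigma$, $\varphi^{(i,1,k)}_\sigma$, $\varphi^{(i,j,1)}_\sigma$ of Theorem~\ref{thm:R3_ps31_Whittaker} become $\varphi^{(3,j,k)}_\sigma$, $\varphi^{(i,3,k)}_\sigma$, $\varphi^{(i,j,3)}_\sigma$, the Mellin--Barnes integral for $\varphi^{\mathrm{mg}}_\sigma$ transforms into the stated one, and $\varphi^{\mathrm{mg}}_\sigma=\sum_{(i,j,k)}\varphi^{(i,j,k)}_\sigma$ comes from (\ref{eqn:Fn_Sol_sol_expansion}) in Lemma~\ref{lem:Fn_Sol_mg}.

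The only genuine work is bookkeeping, and that is where I expect the care to be needed: one must apply the transposition $(1\,3)$ coherently throughout --- to the subscripts of the parameters, to the distinguished index in the superscripts of the $\varphi^{(i,j,k)}_\sigma$, to the ordering chosen for $r$, and to the non-integrality hypothesis --- and then check that every individual Gamma-factor argument matches the stated formula. In particular one verifies that the hypothesis $\nu_p-\nu_q-\delta_p+\delta_q\notin 2\bZ$ for $(\delta_1,\delta_2,\delta_3)=(1,1,0)$ corresponds, after $\nu_1\leftrightarrow\nu_3$, to the hypothesis for $(1,0,0)$; this works because only the residues of those quantities modulo $2$ matter and because only the differences $\nu_p-\nu_q$ occur in the formulas and conditions. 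There is no analytic content beyond Theorem~\ref{thm:R3_ps31_Whittaker} and the solution-space lemmas of \S\ref{subsec:Fn_special2}.
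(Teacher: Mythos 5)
Your argument is correct and is precisely the paper's own: immediately before stating Theorem~\ref{thm:R3_ps33_Whittaker} the authors observe that the systems in Lemma~\ref{lem:R3_ps3_ZgDSE1_original} for $\delta_2=0$ and $\delta_2=1$ are exchanged by $\nu_1\leftrightarrow\nu_3$ and deduce the theorem from Theorem~\ref{thm:R3_ps31_Whittaker} with no further proof. You have simply spelled out the bookkeeping (relabeling the superscripts by the transposition $(1\,3)$, tracking the genericity hypothesis, re-applying Lemmas~\ref{lem:R3_ps3_MW_sub1}--\ref{lem:R3_ps3_WW_sub2}) that the paper leaves implicit.
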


\section{Generalized principal series representations}
\label{subsec:R3_gps}

Let $\sigma =\sigma_1\boxtimes \sigma_2
=D_{(\nu_1,\kappa_1 )}\boxtimes \chi_{(\nu_2,\delta_2 )}$ 
with $\nu_1,\nu_2\in \bC$, $\kappa_1 \in \bZ_{\geq 2}$ 
and $\delta_2 \in \{0,1\}$. 
We identify the representation space 
$U_\sigma =\gH_{(\nu_1,\kappa_1 )}\boxtimes_\bC \bC_{(\nu_2,\delta_2 )}$ 
of $\sigma $ with $\gH_{(\nu_1,\kappa_1 )}$ via 
the correspondence $f\boxtimes c\leftrightarrow cf$. 
We use the realization of 
$(D_{(\nu_1,\kappa_1)},\gH_{(\nu_1,\kappa_1)})$ 
introduced in \S \ref{subsec:R2_ds_whittaker}, that is, 
we regard $U_\sigma =\gH_{(\nu_1,\kappa_1)}$ as the closure of 
\begin{align}
\label{eqn:R3_target_ds}
&\bigoplus_{q\in \kappa_1+2\bZ_{\geq 0}}
\{\bC\,{\zeta}_{(\widehat{\sigma}_1;q)}
+\bC\,{\zeta}_{(\widehat{\sigma}_1;-q)}\}
\end{align}
in $H(\widehat{\sigma}_1)$, where 
$\widehat{\sigma}_1=\chi_{(\nu_1+(\kappa_1-1)/2,\delta_1)}
\boxtimes \chi_{(\nu_1-(\kappa_1-1)/2,0)}$
with $\delta_1\in \{0,1\}$ such that 
$\delta_1\equiv \kappa_1\bmod 2$. 
In this section, 
we consider the action of $\g_{\bC}$ at the minimal $K$-type 
of $\Pi_\sigma$. 

The group $K\cap M_{(2,1)}$ is generated by the elements 
$k_\theta^{(2,1)}$ ($\theta \in \bR$) and 
$k_{(\varepsilon_1,\varepsilon_2,\varepsilon_3)}^{(1,1,1)}$ 
($\varepsilon_1,\varepsilon_2,\varepsilon_3\in \{\pm 1\}$). 
Since $\sigma =D_{(\nu_1,\kappa_1)}\boxtimes \chi_{(\nu_2,\delta_2)}$, 
we note that 
\begin{align*} 
\sigma \bigl(k_\theta^{(2,1)} \bigr){\zeta}_{(\widehat{\sigma}_1;q)}
&=
\Pi_{\widehat{\sigma}_1}\bigl(k_\theta^{(2)} \bigr)
{\zeta}_{(\widehat{\sigma}_1;q)}
=e^{\sI q\theta }{\zeta}_{(\widehat{\sigma}_1;q)},\\
\sigma \bigl(k_{(\varepsilon_1,\varepsilon_2,\varepsilon_3)}^{(1,1,1)} \bigr)
{\zeta}_{(\widehat{\sigma}_1;q)}
&=\chi_{(\nu_2,\delta_2)}(\varepsilon_3)
\Pi_{\widehat{\sigma}_1}
\bigl(k_{(\varepsilon_1,\varepsilon_2)}^{(1,1)}\bigr)
{\zeta}_{(\widehat{\sigma}_1;q)}
=\varepsilon_1^{\kappa_1}\varepsilon_3^{\delta_2} 
{\zeta}_{(\widehat{\sigma}_1;\varepsilon_1\varepsilon_2q)}
\end{align*}
for $\theta \in \bR$, $\varepsilon_1,\varepsilon_2,\varepsilon_3\in \{\pm 1\}$ 
and $q\in \kappa_1+2\bZ$ such that $|q|\geq \kappa_1$. 
Comparing (\ref{eqn:R3_M21act_basis}) and (\ref{eqn:R3_M111act_basis}) 
with these equalities, 
for $\mu =(\mu_1,\mu_2)\in \Lambda_3$, 
we have 
\begin{align*}
&\Hom_{K\cap M_{(2,1)}}(V_{\mu},U_{\sigma})
=\left\{\!\begin{array}{ll}
\displaystyle 
\bC\,\eta_{\sigma}
&\text{if}\,\ \mu =(\kappa_1,\delta_2),\\
\{0\}&\text{if}\,\ \mu_1 < \kappa_1 \ \,\text{or}\,\ 
\mu_1 +\mu_2 \not\equiv \kappa_1+\delta_2\bmod 2,
\end{array}\!\right.
\end{align*}
where $\eta_{\sigma}\colon 
V_{(\kappa_1,\delta_2)}\to U_\sigma$ is a $\bC$-linear map characterized by 
\begin{align*}
&\eta_{\sigma}(v^{(\kappa_1,\delta_2)}_{q})
=\left\{
\begin{array}{ll}
\sgn (q)^{\kappa_1+\delta_2}  {\zeta}_{(\widehat{\sigma}_1;q)}&\text{ if }\ 
q\in \{\pm \kappa_1\},\\[1mm]
0&\text{ otherwise}
\end{array}
\right.&
&(-\kappa_1\leq q\leq \kappa_1). 
\end{align*}
By the Frobenius reciprocity law, 
for $\mu =(\mu_1,\mu_2)\in \Lambda_3$, we have 
\begin{align}
&\label{eqn:R3_gps_minKtype} 
\Hom_{K}(V_{\mu },H({\sigma}))
=\left\{\!\begin{array}{ll}
\displaystyle 
\bC\,\hat{\eta}_{\sigma}\!&\!\text{if}\,\ \mu =(\kappa_1,\delta_2),\\[1mm]
\{0\}\!&\!\text{if}\,\ \mu_1 < \kappa_1 \ \,\text{or}\,\ 
\mu_1 +\mu_2 \not\equiv \kappa_1+\delta_2\bmod 2,
\end{array}\!\right.
\end{align}
with $\hat{\eta}_{\sigma}(v)(k)
=\eta_{\sigma}
(\tau_{(\kappa_1,\delta_2)}(k)v)\ (v\in V_{(\kappa_1,\delta_2)},\ k\in {K})$. 
We call $\tau_{(\kappa_1,\delta_2)}$ 
the minimal $K$-type of $\Pi_\sigma$. 

\begin{prop}
\label{prop:R3_ZgDSE_gps}
Retain the notation. For 
$f\in H(\sigma)_K$, it holds that 
\begin{align}
\label{eqn:R3_Zg_act_gps1}
&\Pi_\sigma (\cC_1^{{\g}})f
=(2\nu_1+\nu_2)f,\\
\label{eqn:R3_Zg_act_gps2}
&\Pi_\sigma (\cC_2^{{\g}})f
=\bigl(\nu_1^2+2\nu_1\nu_2-\tfrac{(\kappa_1-1)^2}{4}\bigr)f,\\
\label{eqn:R3_Zg_act_gps3}
&\Pi_\sigma (\cC_3^{{\g}})f
=\bigl(\nu_1^2\nu_2-\tfrac{(\kappa_1-1)^2\nu_2}{4}\bigr)f. 
\end{align}
Moreover, for $l\in S_{(\kappa_1-1,\delta_2)}$ and $1\leq i\leq 3$, 
it holds that 
\begin{align*}
&\sum_{j=1}^3\Pi_\sigma (E_{i,j}^{\gp})
\hat{\eta}_{\sigma }(u_{l+\me_j})
=2\nu_{1}\hat{\eta}_{\sigma }(u_{l+\me_i}).
\end{align*}
\end{prop}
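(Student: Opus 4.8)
## Proof plan for Proposition \ref{prop:R3_ZgDSE_gps}

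The statement has two essentially independent parts: the eigenvalue computations \eqref{eqn:R3_Zg_act_gps1}--\eqref{eqn:R3_Zg_act_gps3} for the Capelli elements on $H(\sigma)_K$, and the ``Dirac--Schmid''-type relation for the $\gp_\bC$-action at the minimal $K$-type. I plan to treat them separately.

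For the eigenvalue formulas, the plan is to exploit the realization of $\Pi_\sigma$ as a subrepresentation of the principal series $\Pi_{\widehat\sigma}$ with $\widehat\sigma=\chi_{(\nu_1+(\kappa_1-1)/2,\delta_1)}\boxtimes\chi_{(\nu_1-(\kappa_1-1)/2,0)}\boxtimes\chi_{(\nu_2,\delta_2)}$, exactly as set up in \S\ref{subsec:R3_gps} (and following the $GL(2)$ template of \S\ref{subsec:R2_ds_whittaker}). Since elements of $Z(\g_\bC)$ act by the same scalar on a submodule as on the ambient module, Proposition \ref{prop:Rn_Ch_eigenvalue} applied to $\Pi_{\widehat\sigma}$ gives the Capelli eigenvalues as the elementary symmetric polynomials in the parameters $\nu_1+\tfrac{\kappa_1-1}{2},\ \nu_1-\tfrac{\kappa_1-1}{2},\ \nu_2$. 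A direct expansion of $e_1, e_2, e_3$ in these three quantities yields $2\nu_1+\nu_2$, then $\nu_1^2+2\nu_1\nu_2-\tfrac{(\kappa_1-1)^2}{4}$, then $\nu_2\bigl(\nu_1^2-\tfrac{(\kappa_1-1)^2}{4}\bigr)$, which are precisely the asserted scalars. This step is routine algebra and I would not write out the expansion.

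For the $\gp_\bC$-relation, I plan to mimic the proof of Proposition \ref{prop:R3_DSE_ps}. Let $\mathrm{P}_\sigma\colon\gp_\bC\otimes_\bC H(\sigma)_K\to H(\sigma)_K$ be the $K$-homomorphism $X\otimes f\mapsto \Pi_\sigma(X)f$. The key input is Lemma \ref{lem:R3_tensor}: the map $\mathrm{I}^\gp_{(1,\delta_2)}\colon V_{(1,\delta_2)}\to\gp_\bC\otimes_\bC V_{(1,\delta_2)}$ is a $K$-homomorphism, but here we need a version landing in $\gp_\bC\otimes_\bC V_{(\kappa_1,\delta_2)}$, since the minimal $K$-type of $\Pi_\sigma$ is $\tau_{(\kappa_1,\delta_2)}$ rather than $\tau_{(1,\delta_2)}$. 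I would construct the composite
\[
V_{(\kappa_1,\delta_2)}\ \xrightarrow{\ \mathrm{I}\ }\ \gp_\bC\otimes_\bC V_{(\kappa_1,\delta_2)}\ \xrightarrow{\ \id\otimes\hat\eta_\sigma\ }\ \gp_\bC\otimes_\bC H(\sigma)_K\ \xrightarrow{\ \mathrm{P}_\sigma\ }\ H(\sigma)_K,
\]
where $\mathrm{I}(u_{l+\me_i})=\sum_{j=1}^3 E_{i,j}^\gp\otimes u_{l+\me_j}$ for $l\in S_{(\kappa_1-1,\delta_2)}$; one must first check this $\mathrm{I}$ is well-defined on $V_{(\kappa_1,\delta_2)}$ (compatible with the relations $u_{m+2\me_1}+u_{m+2\me_2}+u_{m+2\me_3}=0$) and is a $K$-homomorphism — this follows from the same $\gk_\bC$-equivariance computation as in Lemma \ref{lem:R3_tensor}(2) together with the $-1_3$-parity check, now carried out on all of $V_{(\kappa_1-1,\delta_2)}$ rather than just $V_{(1,\delta_2)-(1,0)}$. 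Because $\Hom_K(V_{(\kappa_1,\delta_2)},H(\sigma)_K)=\bC\,\hat\eta_\sigma$ by \eqref{eqn:R3_gps_minKtype}, the composite equals $c\,\hat\eta_\sigma$ for some scalar $c$, and evaluating both sides at a convenient basis vector $u_{l+\me_i}$ gives $\sum_j\Pi_\sigma(E_{i,j}^\gp)\hat\eta_\sigma(u_{l+\me_j})=c\,\hat\eta_\sigma(u_{l+\me_i})$.

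It remains to compute $c$, which I would do by evaluating at $1_3$ a suitable choice of $l$ and $i$, using $E_{i,j}^\gp=2E_{i,j}^\g-E_{i,j}^\gk$ for $i\le j$ and $E_{j,i}^\gp=2E_{i,j}^\g-E_{i,j}^\gk$, together with the facts that $\Pi_\sigma(\gn_\bC U(\g_\bC))$ kills the value at $1_3$, that the action of the $E_{i,i}^\g$ at $1_3$ is via the infinitesimal character of $\widehat\sigma$ shifted by $\rho$ (here $E_{1,1}^\g\mapsto \nu_1+\tfrac{\kappa_1-1}{2}+1$, $E_{2,2}^\g\mapsto \nu_1-\tfrac{\kappa_1-1}{2}$, $E_{3,3}^\g\mapsto \nu_2-1$), and the explicit $\gk_\bC$-action on $V_{(\kappa_1,\delta_2)}$ from \eqref{eqn:R3_LieKact_gen}. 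The main obstacle — and the only place real bookkeeping is needed — is this last computation of $c$: unlike in Proposition \ref{prop:R3_DSE_ps}, the relevant basis vectors $u_{l+\me_i}$ have the form $z_1^{l_1}z_2^{l_2}z_3^{l_3}$ with $|l|=\kappa_1-1\geq 1$, so the terms $\tau_{(\kappa_1,\delta_2)}(E_{i,j}^\gk)u_{l+\me_j}=l_j u_{l+\me_i}-\cdots$ no longer vanish, and one must pick $l$ so that the surviving contributions collapse cleanly to $2\nu_1$. A judicious choice such as $i=1$, $l=(\kappa_1-1,0,0)$ (so $\hat\eta_\sigma(u_{l+\me_1})$ corresponds to the highest-weight vector ${\zeta}_{(\widehat\sigma_1;\kappa_1)}$, on which $E_{1,2}^\gk$ acts by $\sI\kappa_1$ and $E_{1,3}^\gk,E_{2,3}^\gk$ annihilate the relevant images) should make the arithmetic $2(\nu_1+\tfrac{\kappa_1-1}{2}+1)+0-\kappa_1-\text{(correction)}=2\nu_1$ come out, and I expect the contributions from the $E_{1,2}^\g, E_{1,3}^\g$ pieces and the $\gk_\bC$-corrections to cancel just as in the $(1,1,0)$ case of Proposition \ref{prop:R3_DSE_ps}. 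I would present only the setup and the final evaluation, leaving the index chase implicit.
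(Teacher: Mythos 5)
The Capelli-eigenvalue half of your plan is exactly the paper's: embed $H(\sigma)$ into $H(\widehat\sigma)$ with $\widehat\sigma=\chi_{(\nu_1+(\kappa_1-1)/2,\delta_1)}\boxtimes \chi_{(\nu_1-(\kappa_1-1)/2,0)}\boxtimes \chi_{(\nu_2,\delta_2)}$ via $\mathrm{I}_\sigma(f)(k)=f(k)(1_2)$ and read the elementary symmetric polynomials from Proposition~\ref{prop:Rn_Ch_eigenvalue}; your arithmetic for $e_1,e_2,e_3$ is right. The $\gp_\bC$-relation, however, contains a genuine gap, and the issue is not the one you flag.

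The map $\mathrm{I}\colon V_{(\kappa_1,\delta_2)}\to\gp_\bC\otimes_\bC V_{(\kappa_1,\delta_2)}$ you propose via $\mathrm{I}(u_{l+\me_i})=\sum_{j}E_{i,j}^\gp\otimes u_{l+\me_j}$ is not well-defined, and the obstruction appears before the quotient relation $u_{m+2\me_1}+u_{m+2\me_2}+u_{m+2\me_3}=0$ enters at all: once $\kappa_1\geq 2$, a monomial $u_m$ admits several decompositions $u_m=u_{l+\me_i}$ with $l\in S_{(\kappa_1-1,\delta_2)}$, and the formula gives different answers for different choices. For instance with $\kappa_1=2$, $m=(1,1,0)$, the choice $(l,i)=((0,1,0),1)$ yields $E_{1,1}^\gp\otimes u_{(1,1,0)}+E_{1,2}^\gp\otimes u_{(0,2,0)}+E_{1,3}^\gp\otimes u_{(0,1,1)}$, whereas $(l,i)=((1,0,0),2)$ yields $E_{2,1}^\gp\otimes u_{(2,0,0)}+E_{2,2}^\gp\otimes u_{(1,1,0)}+E_{2,3}^\gp\otimes u_{(1,0,1)}$. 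No $K$-homomorphism out of $V_{(\kappa_1,\delta_2)}$ has these prescribed values, so the ``compose and compare with $\hat\eta_\sigma$'' step cannot be run. The paper avoids this entirely by \emph{not} factoring through $V_{(\kappa_1,\delta_2)}$: the indices $(i,l)$ are kept as honest input by working with the $K$-map $\mathrm{P}_\sigma\circ(\id_{\gp_\bC}\otimes(\hat\eta_\sigma\circ\mathrm{B}_{(\kappa_1,\delta_2)}))\circ(\mathrm{I}^\gp_{(1,0)}\otimes\id_{V_{(\kappa_1-1,\delta_2)}})$ out of $V_{(1,0)}\otimes_\bC V_{(\kappa_1-1,\delta_2)}$. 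Lemma~\ref{lem:R3_tensor}(1) together with~\eqref{eqn:R3_gps_minKtype} gives $\Hom_K(V_{(1,0)}\otimes V_{(\kappa_1-1,\delta_2)},H(\sigma)_K)=\bC\,\hat\eta_\sigma\circ\mathrm{B}_{(\kappa_1,\delta_2)}$, so the composite is $c\,\hat\eta_\sigma\circ\mathrm{B}_{(\kappa_1,\delta_2)}$, and evaluating at $u_{\me_i}\otimes u_l$ produces precisely the asserted identity with constant $c$.

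The evaluation of $c$ is also more delicate than your sketch suggests. The identity $c\,\eta_\sigma(u_{l+\me_1})(1_2)=2\nu_1\,\eta_\sigma(u_{l+\me_1})(1_2)$ holds for every $l$, so what one needs is some $l$ with $\eta_\sigma(u_{l+\me_1})(1_2)\neq 0$. But $\hat\eta_\sigma(u_{(\kappa_1,0,0)})$ is \emph{not} a scalar multiple of $\zeta_{(\widehat\sigma_1;\kappa_1)}$ --- expanding $z_1^{\kappa_1}$ in the $v_q^{(\kappa_1,\delta_2)}$ basis mixes $q=\kappa_1$ and $q=-\kappa_1$ --- so one cannot simply evaluate on a single highest-weight-like generator. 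The paper instead shows $\eta_\sigma(u_{(\kappa_1,0,0)})+\sI\,\eta_\sigma(u_{(\kappa_1-1,1,0)})=2^{-\kappa_1+1}\zeta_{(\widehat\sigma_1;\kappa_1)}$, which gives nonvanishing after evaluating at $1_2$ and thereby pins down $c=2\nu_1$.
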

\begin{proof}
We set 
$\widehat{\sigma}=\widehat{\sigma}_1\boxtimes \sigma_2
=\chi_{(\nu_1+(\kappa_1-1)/2,\delta_1)}
\boxtimes \chi_{(\nu_1-(\kappa_1-1)/2,0)}
\boxtimes \chi_{(\nu_2,\delta_2)}$. 
Since there is an injective $G$-homomorphism 
$\mathrm{I}_{\sigma}\colon 
H(\sigma)\to H(\widehat{\sigma})$ 
defined by 
\begin{align*}
&\mathrm{I}_{\sigma}(f)(k)
=f(k)(1_2)
&&(k\in K,\ f\in H(\sigma)), 
\end{align*}
we obtain the equalities 
(\ref{eqn:R3_Zg_act_gps1}), (\ref{eqn:R3_Zg_act_gps2}) 
and (\ref{eqn:R3_Zg_act_gps3}) from Proposition \ref{prop:Rn_Ch_eigenvalue}.

By (\ref{eqn:R3_gps_minKtype}) and 
Lemma \ref{lem:R3_tensor} (1), we have 
\[
\Hom_K(V_{(1,0)}\otimes_{\bC}V_{(\kappa_1-1,\delta_2)}, H(\sigma)_K)
=\bC\,\hat{\eta}_{\sigma }\circ 
\mathrm{B}_{(\kappa_1,\delta_2)}.
\]
Let $\mathrm{P}_\sigma \colon \gp_{\bC}\otimes_{\bC}H(\sigma)_K
\to H(\sigma)_K$ be a natural $K$-homomorphism 
defined by $X\otimes f\mapsto \Pi_{\sigma}(X)f$. 
Since the composite 
\[
\mathrm{P}_\sigma \circ 
(\id_{\gp_{\bC}}\otimes (\hat{\eta}_{\sigma }
\circ \mathrm{B}_{(\kappa_1,\delta_2)}))\circ 
(\mathrm{I}^\gp_{(1,0)}\otimes \id_{V_{(\kappa_1-1,\delta_2)}})
\]
is an element of 
$\Hom_K(V_{(1,0)}\otimes_{\bC}V_{(\kappa_1-1,\delta_2)}, 
H(\sigma)_K)$, 
there is a constant $c$ such that 
\begin{align*}
c\,\hat{\eta}_{\sigma }\circ \mathrm{B}_{(\kappa_1,\delta_2)}=
\mathrm{P}_\sigma \circ 
(\id_{\gp_{\bC}}\otimes (\hat{\eta}_{\sigma }
\circ \mathrm{B}_{(\kappa_1,\delta_2)}))\circ 
(\mathrm{I}^\gp_{(1,0)}\otimes \id_{V_{(\kappa_1-1,\delta_2)}}).
\end{align*}
Considering the image of $u_{\me_i}\otimes u_l$ 
under the both sides of this equality, we have 
\begin{align*}
c\,\hat{\eta}_{\sigma }(u_{l+\me_i})
=\sum_{j=1}^3\Pi_\sigma (E_{i,j}^{\gp})
\hat{\eta}_{\sigma }(u_{l+\me_j})
\end{align*}
for $1\leq i\leq 3$ and $l\in S_{(\kappa_1-1,\delta_2)}$. 
For $l=(l_1,l_2,l_3)\in S_{(\kappa_1-1,\delta_2)}$, we have 
\begin{align*}
&c\,{\eta}_{\sigma }(u_{l+\me_{1}})(1_2)
=\mathrm{I}_{\sigma}(c\,\hat{\eta}_{\sigma }(u_{l+\me_{1}}))(1_3)\\
&=\mathrm{I}_{\sigma}\!\left(
\sum_{j=1}^3\Pi_\sigma (E_{i,j}^{\gp})
\hat{\eta}_{\sigma }(u_{l+\me_j})\right)\!(1_3)
=\sum_{j=1}^3\bigl(
\Pi_{\widehat{\sigma}} (E_{1,j}^{\gp})\mathrm{I}_{\sigma}(
\hat{\eta}_{\sigma }(u_{l+\me_j}))
\bigr)(1_3)\\
&=2\bigl(
\Pi_{\widehat{\sigma}} (E_{1,1}^{\g})
\mathrm{I}_{\sigma}(\hat{\eta}_{\sigma}(u_{l}))
\bigr)(1_3)\\
&\hphantom{=,}
+2\bigl(
\Pi_{\widehat{\sigma}}(E_{1,2}^{\g})
\mathrm{I}_{\sigma}(\hat{\eta}_{\sigma}(u_{l+\me_2}))
\bigr)(1_3)
-\mathrm{I}_{\sigma}(\hat{\eta}_{\sigma}(
\tau_{(\kappa_1,\delta_2)}(E_{1,2}^{\gk})u_{l+\me_2}))(1_3)\\
&\hphantom{=,}
+2\bigl(\Pi_{\widehat{\sigma}}(E_{1,3}^{\g})
\mathrm{I}_{\sigma}(\hat{\eta}_{\sigma}(u_{l+\me_3}))
\bigr)(1_3)
-\mathrm{I}_{\sigma}(\hat{\eta}_{\sigma}(
\tau_{(\kappa_1,\delta_2)}(E_{1,3}^{\gk})u_{l+\me_3}))(1_3)\\
&=
(2\nu_1+\kappa_1+1)\eta_{\sigma}(u_{l+\me_1})(1_2)\\
&\hphantom{=,}
+0
-\eta_{\sigma}((l_2+1)u_{l+\me_1}-l_1u_{l-\me_1+2\me_2})(1_2)\\
&\hphantom{=,}
+0
-\eta_{\sigma}((l_3+1)u_{l+\me_1}-l_1u_{(\kappa_1-2)\me_1+2\me_3})(1_2)\\
&=2\nu_1\,\eta_{\sigma}(u_{l+\me_1})(1_2)
+l_1\eta_{\sigma}(u_{l+\me_1}+u_{l-\me_1+2\me_2}+u_{l-\me_1+2\me_3})(1_2)\\
&=2\nu_1\,\eta_{\sigma }(u_{l+\me_{1}})(1_2).
\end{align*}
Hence, in order to complete a proof, 
it suffices to show the existence of $l\in S_{(\kappa_1-1,\delta_2)}$ 
such that $\eta_{\sigma }(u_{l+\me_{1}})(1_2)\neq 0$. 
Since $u_{(\kappa_1,0,0)}$, $u_{(\kappa_1-1,1,0)}$ and 
$v^{(\kappa_1,\delta_2)}_{\kappa_1-2i}$ are the images of $z_1^{\kappa_1}$, 
$z_1^{\kappa_1-1}z_2$ and $(z_1+\sI z_2)^{\kappa_1-i}(-z_1+\sI z_2)^{i}$ 
under the natural surjection 
$\mathcal{P}_{(\kappa_1,\delta_2)}\to V_{(\kappa_1,\delta_2)}$, 
respectively, the equality 
\begin{align*}
&z_1^{\kappa_1}+\sI z_1^{\kappa_1-1}z_2\\
&=2^{-\kappa_1+1}
\{(z_1+\sI z_2)-(-z_1+\sI z_2)\}^{\kappa_1-1}(z_1+\sI z_2)\\
&=2^{-\kappa_1+1}\sum_{i=0}^{\kappa_1-1}
(-1)^i\binom{\kappa_1-1}{i}
(z_1+\sI z_2)^{\kappa_1-i}(-z_1+\sI z_2)^i
\end{align*}
implies 
\begin{align*}
u_{(\kappa_1,0,0)}+\sI u_{(\kappa_1-1,1,0)}
=2^{-\kappa_1+1}\sum_{i=0}^{\kappa_1-1}
(-1)^i\binom{\kappa_1-1}{i}
v^{(\kappa_1,\delta_2)}_{\kappa_1-2i}.
\end{align*}
By the definition of $\eta_\sigma$, we have 
\begin{align*}
\eta_{\sigma}(u_{(\kappa_1,0,0)})(1_2)
+\sI \eta_{\sigma}(u_{(\kappa_1-1,1,0)})(1_2)
=2^{-\kappa_1+1}
{\zeta}_{(\widehat{\sigma}_1;\kappa_1)}(1_2)
=2^{-\kappa_1+1}\neq 0,
\end{align*}
and complete a proof. 
\end{proof}

\section{Generalized principal series Whittaker functions}
\label{subsec:R3_gps_whittaker}

We use the notation in \S \ref{subsec:R3_gps}. 
Then the minimal $K$-type of $\Pi_\sigma$ is $\tau_{(\kappa_1,\delta_2)}$. 
Let $\varphi \colon V_{(\kappa_1,\delta_2)} \to 
{\mathrm{Wh}}(\Pi_\sigma ,\psi_1)$ be a $K$-homomorphism. 
We note that $\varphi$ is characterized by 
$\varphi (u_{l})(y)$ $(l\in S_{(\kappa_1,\delta_2)} )$ with 
$y=\diag (y_1y_2y_3,y_2y_3,y_3)\in A$. 
We set $\displaystyle \partial_i=y_i\frac{\partial}{\partial y_i}$ 
for $1\leq i\leq 3$.

\begin{lem}
\label{lem:R3_gps_ZgDSE1_original}
Retain the notation. 

\noindent (1) For $l=(l_1,l_2,l_3)\in S_{(\kappa_1,\delta_2)}$, it holds that 
\begin{align}
&\label{eqn:R3_gps_PDE1_C1}
(\partial_3-2\nu_1-\nu_2)\varphi (u_l)(y)=0,\\[3pt] 
\begin{split}
&
\bigl\{(\partial_1-1)(-\partial_1+\partial_2)
+(\partial_2-1)(-\partial_2+\partial_3+1)\\
&
-\nu_1^2-2\nu_1\nu_2+\tfrac{(\kappa_1-1)^2}{4}
+(2\pi y_1)^2+(2\pi y_2)^2\bigr\}\varphi (u_l)(y)\\
&
+2\pi \sI y_1\{l_2\varphi (u_{l-\me_2+\me_1})(y)
-l_1\varphi (u_{l-\me_1+\me_2})(y)\}\\
&\label{eqn:R3_gps_PDE1_C2}
+2\pi \sI y_2\{l_3\varphi (u_{l-\me_3+\me_2})(y)
-l_2\varphi (u_{l-\me_2+\me_3})(y)\}=0,
\end{split}\\[3pt]
\begin{split}
&
\bigl\{(\partial_1-1)(-\partial_1+\partial_2)(-\partial_2+\partial_3+1)
+(2\pi y_1)^2(-\partial_2+\partial_3+1)\\
&
+(2\pi y_2)^2(\partial_1-1)
-\nu_1^2\nu_2+\tfrac{(\kappa_1-1)^2\nu_2}{4}\bigr\}\varphi (u_l)(y)\\
&
+2\pi \sI y_1(-\partial_2+\partial_3+1)
\{l_2\varphi (u_{l-\me_2+\me_1})(y)-l_1\varphi (u_{l-\me_1+\me_2})(y)\}\\
&
+2\pi \sI y_2(\partial_1-1)
\{l_3\varphi (u_{l-\me_3+\me_2})(y)-l_2\varphi (u_{l-\me_2+\me_3})(y)\}\\
&\label{eqn:R3_gps_PDE1_C3}
+(2\pi y_1)(2\pi y_2)
\{l_3\varphi (u_{l-\me_3+\me_1})(y)-l_1\varphi (u_{l-\me_1+\me_3})(y)\}
=0.
\end{split}
\end{align}
\noindent (2) For $l=(l_1,l_2,l_3)\in S_{(\kappa_1-1,\delta_2)}$, it holds that 
\begin{align}
&\label{eqn:R3_gps_PDE1_DSE1}
(2\partial_1-\kappa_1-1-2\nu_1)\varphi (u_{l+\me_1})(y)
+4\pi \sI y_1\varphi (u_{l+\me_2})(y)=0,\\[2pt]
\begin{split}
&
(-2\partial_1+2\partial_2+l_1-l_3-2\nu_1)\varphi (u_{l+\me_2})(y)
+4\pi \sI y_1\varphi (u_{l+\me_1})(y)\\
&\label{eqn:R3_gps_PDE1_DSE2}
+4\pi \sI y_2\varphi (u_{l+\me_3})(y)
-l_2\{\varphi (u_{l-\me_2+2\me_1})(y)
-\varphi (u_{l-\me_2+2\me_3})(y)\}
=0,
\end{split}\\[2pt]
&\nonumber 
(-2\partial_2+2\partial_3+\kappa_1 +1-2\nu_1)\varphi (u_{l+\me_3})(y)
+4\pi \sI y_2\varphi (u_{l+\me_2})(y)=0. 
\end{align}
\end{lem}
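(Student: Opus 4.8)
The plan is to follow the template of the proofs of Lemmas~\ref{lem:R3_ps1_ZgDSE1_original} and \ref{lem:R3_ps3_ZgDSE1_original}; the only genuinely new feature is that the minimal $K$-type $\tau_{(\kappa_1,\delta_2)}$ has dimension $2\kappa_1+1$, so that the $\gk_{\bC}$-action on a generator $u_l$ no longer stays in a small span but produces, via (\ref{eqn:R3_LieKact_gen}), the vectors $l_ju_{l-\me_j+\me_i}-l_iu_{l-\me_i+\me_j}$, which are precisely the source of the bracketed correction terms in (\ref{eqn:R3_gps_PDE1_C2}), (\ref{eqn:R3_gps_PDE1_C3}) and (\ref{eqn:R3_gps_PDE1_DSE2}). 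To set up, I would first fix a homomorphism $\Phi\in\cI_{\Pi_\sigma,\psi_1}$ with $\varphi=\Phi\circ\hat{\eta}_{\sigma}$, exactly as in the earlier lemmas, using the description $\Hom_K(V_{(\kappa_1,\delta_2)},H(\sigma))=\bC\,\hat{\eta}_{\sigma}$ of (\ref{eqn:R3_gps_minKtype}). Since $\Phi$ intertwines the actions, $R(X)\varphi(u_l)=\Phi(\Pi_\sigma(X)\hat{\eta}_{\sigma}(u_l))$ for $X\in U(\g_{\bC})$, and for $X\in\gk_{\bC}$ one has in addition $R(X)\varphi(u_l)=\varphi(\tau_{(\kappa_1,\delta_2)}(X)u_l)$ directly from (\ref{eqn:Fn_whitt_ngk}).

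For part~(1) I would insert the eigenvalue formulas (\ref{eqn:R3_Zg_act_gps1})--(\ref{eqn:R3_Zg_act_gps3}) of Proposition~\ref{prop:R3_ZgDSE_gps}, so that $R(\cC_h^{\g})\varphi(u_l)(y)=\lambda_h\varphi(u_l)(y)$ with $\lambda_1=2\nu_1+\nu_2$, $\lambda_2=\nu_1^2+2\nu_1\nu_2-\tfrac{(\kappa_1-1)^2}{4}$ and $\lambda_3=\nu_1^2\nu_2-\tfrac{(\kappa_1-1)^2\nu_2}{4}$. Then I would substitute the expressions (\ref{eqn:R3_C1}), (\ref{eqn:R3_C2_modE13}) and (\ref{eqn:R3_C3_modE13}) for $\cC_h^{\g}$ modulo $E_{1,3}^{\g}U(\g_{\bC})$ — legitimate on $A$ because $R(E_{1,3}^{\g})$ kills every function in $C^\infty(N\backslash G;\psi_1)$ there by Lemma~\ref{lem:Rn_g_act_Cpsi} — and evaluate at $y=\diag(y_1y_2y_3,y_2y_3,y_3)$ using Lemma~\ref{lem:Rn_g_act_Cpsi} (which turns $E_{i,i}^{\g}$ into $-\partial_{i-1}+\partial_i$ and $E_{i,i+1}^{\g}$ into $2\pi\sI y_i$) together with (\ref{eqn:R3_LieKact_gen}) for the residual $E^{\gk}_{1,2},E^{\gk}_{2,3},E^{\gk}_{1,3}$ factors. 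The $\gk_{\bC}$-factors rewrite $\varphi(u_l)$ as combinations of $\varphi(u_{l-\me_p+\me_q})$, producing exactly the stated correction terms.

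For part~(2) I would instead start from the identity $\sum_{j=1}^3\Pi_\sigma(E_{i,j}^{\gp})\hat{\eta}_{\sigma}(u_{l+\me_j})=2\nu_1\hat{\eta}_{\sigma}(u_{l+\me_i})$ of Proposition~\ref{prop:R3_ZgDSE_gps}, apply $\Phi$ to obtain $\sum_j R(E_{i,j}^{\gp})\varphi(u_{l+\me_j})=2\nu_1\varphi(u_{l+\me_i})$, and expand each $E_{i,j}^{\gp}=E_{j,i}^{\gp}=2E_{i,j}^{\g}-E_{i,j}^{\gk}$ ($i\le j$) as in the proof of Lemma~\ref{lem:R3_ps3_ZgDSE1_original}, again evaluating on $A$ with Lemma~\ref{lem:Rn_g_act_Cpsi} and (\ref{eqn:R3_LieKact_gen}). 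The $\gk_{\bC}$-terms produce vectors $u_{l-\me_a+2\me_b}$, which I would consolidate throughout using the generator relation $u_{m+2\me_1}+u_{m+2\me_2}+u_{m+2\me_3}=0$ for $m\in S_{(\kappa_1-1,\delta_2)}$; for $i=1,3$ this collapses the correction entirely into a shift of the scalar coefficient (using $l_1+l_2+l_3=\kappa_1-1$), while for $i=2$ an asymmetric remainder $-l_2\{\varphi(u_{l-\me_2+2\me_1})-\varphi(u_{l-\me_2+2\me_3})\}$ survives, matching (\ref{eqn:R3_gps_PDE1_DSE2}).

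The main obstacle is bookkeeping rather than conceptual: in the cubic $\cC_3^{\g}$-equation the $\gk_{\bC}$-corrections must be composed with the first-order factors $(-\partial_2+\partial_3+1)$ and $(\partial_1-1)$ in the correct order, since these are differential operators and not scalars, and in part~(2) one has to keep straight which coordinate of $l$ is raised or lowered when invoking the linear relations among the $u_l$. Both of these are of the same character as, and no harder than, the computations already carried out for the principal series cases, so no new idea is required.
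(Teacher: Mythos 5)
Your plan reproduces the paper's argument exactly: fix $\Phi$ with $\varphi=\Phi\circ\hat\eta_\sigma$, use Proposition~\ref{prop:R3_ZgDSE_gps} for the $Z(\g_\bC)$-eigenvalues and the $E_{i,j}^{\gp}$-identity, substitute the Capelli expressions modulo $E_{1,3}^{\g}U(\g_\bC)$ and the decomposition $E_{i,j}^{\gp}=2E_{i,j}^{\g}-E_{i,j}^{\gk}$, then evaluate on $A$ via Lemma~\ref{lem:Rn_g_act_Cpsi} and (\ref{eqn:R3_LieKact_gen}), using the generator relation to absorb the $\gk$-corrections for $i=1,3$. The only slip is benign: the relation $u_{m+2\me_1}+u_{m+2\me_2}+u_{m+2\me_3}=0$ holds for $m\in S_{(\kappa_1-2,\delta_2)}$, not $S_{(\kappa_1-1,\delta_2)}$, which is precisely the range you actually use since you apply it with $m=l-\me_i$ for $l\in S_{(\kappa_1-1,\delta_2)}$.
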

\begin{proof}
We note that there is a homomorphism $\Phi \in {\cI}_{\Pi_\sigma ,\psi_1}$ 
such that 
$\varphi =\Phi \circ \hat{\eta}_\sigma$. 
Hence, by Proposition \ref{prop:R3_ZgDSE_gps}, we have 
\begin{align*}
&(R(\cC_1^{{\g}})\varphi (u_{l}))(y)
-(2\nu_1+\nu_2)\varphi (u_{l})(y)=0,\\
&(R(\cC_2^{{\g}})\varphi (u_{l}))(y)
-\bigl(\nu_1^2+2\nu_1\nu_2-\tfrac{(\kappa_1-1)^2}{4}\bigr)
\varphi (u_{l})(y)=0,\\
&(R(\cC_3^{{\g}})\varphi (u_{l}))(y)
-\bigl(\nu_1^2\nu_2-\tfrac{(\kappa_1-1)^2\nu_2}{4}\bigr)
\varphi (u_{l})(y)=0
\end{align*}
for $l=(l_1,l_2,l_3)\in S_{(\kappa_1,\delta_2)}$. 
Applying (\ref{eqn:R3_LieKact_gen}) and Lemma \ref{lem:Rn_g_act_Cpsi} 
to these equalities with 
the expressions (\ref{eqn:R3_C1}), (\ref{eqn:R3_C2_modE13}) and 
(\ref{eqn:R3_C3_modE13}), we obtain the statement (1).

By Proposition \ref{prop:R3_ZgDSE_gps}, 
we have 
\begin{align*}
&\sum_{j=1}^3(R(E_{i,j}^{\gp})\varphi (u_{l+\me_j}))(y)
-2\nu_1\varphi (u_{l+\me_i})(y)=0&
&(1\leq i\leq 3)
\end{align*}
for $l=(l_1,l_2,l_3)\in S_{(\kappa_1-1,\delta_2)}$. 
Applying (\ref{eqn:R3_LieKact_gen}) and Lemma \ref{lem:Rn_g_act_Cpsi} 
to this equality with the decomposition 
$E_{i,j}^{\gp}=E_{j,i}^{\gp}=2E_{i,j}^{{\g}}-E_{i,j}^{{\gk}}$ 
$(1\leq i\leq j\leq 3)$, 
we obtain the statement (2). 
\end{proof}

By the equation (\ref{eqn:R3_gps_PDE1_C1}), 
for $l=(l_1,l_2,l_3)\in S_{(\kappa_1,\delta_2)}$, 
we can define a function $\hat{\varphi}_l$ on $(\bR_+)^2$ by  
\begin{align}
\label{eqn:R3_gps_def_varphi_l}
&\varphi (u_l)(y)=
(\sI)^{l_1-l_3}y_1y_2(y_2y_3)^{2\nu_1+\nu_2}\hat{\varphi}_l(y_1,y_2) 
\end{align}
with $y=\diag (y_1y_2y_3,y_2y_3,y_3)\in A$. 
It is convenient to set $\hat{\varphi}_l=0$ 
if $l\not\in (\bZ_{\geq 0})^3$. 

\begin{lem}
\label{lem:R3_gps_ZgDSE2}
Retain the notation. 
Let $l=(l_1,l_2,l_3)\in S_{(\kappa_1-1,\delta_2)}$. 
Then it holds that 
\begin{align}
&\label{eqn:R3_gps_PDE2_DSE1}
(-2\partial_1+2\nu_1+\kappa_1-1)\hat{\varphi}_{l+\me_1}
-4\pi y_1\hat{\varphi}_{l+\me_2}=0.
\end{align}
If $l_2=0$, it holds that 
\begin{align}
&\label{eqn:R3_gps_PDE2_DSE2}
(-2\partial_1+2\partial_2+l_1-l_3+2\nu_1+2\nu_2)\hat{\varphi}_{l+\me_2}
-4\pi y_1\hat{\varphi}_{l+\me_1}
+4\pi y_2\hat{\varphi}_{l+\me_3}=0. 
\end{align}
\end{lem}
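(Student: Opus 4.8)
The two asserted identities are nothing but the images of the first--order relations (\ref{eqn:R3_gps_PDE1_DSE1}) and (\ref{eqn:R3_gps_PDE1_DSE2}) of Lemma \ref{lem:R3_gps_ZgDSE1_original} (2) under the substitution (\ref{eqn:R3_gps_def_varphi_l}), so the plan is simply to perform that substitution and cancel the common factor, exactly as in the proof of Lemma \ref{lem:R3_ps3_ZgDSE2}. First I would record the two elementary facts that run the computation. Since the prefactor in (\ref{eqn:R3_gps_def_varphi_l}) equals $y_1y_2^{1+2\nu_1+\nu_2}y_3^{2\nu_1+\nu_2}$, for any smooth $g$ on $(\bR_+)^2$ we have
\[
\partial_1\bigl(y_1y_2(y_2y_3)^{2\nu_1+\nu_2}g\bigr)=y_1y_2(y_2y_3)^{2\nu_1+\nu_2}(\partial_1+1)g,
\]
\[
\partial_2\bigl(y_1y_2(y_2y_3)^{2\nu_1+\nu_2}g\bigr)=y_1y_2(y_2y_3)^{2\nu_1+\nu_2}(\partial_2+1+2\nu_1+\nu_2)g;
\]
and the power $(\sI)^{l_1-l_3}$ attached to $u_l$ acquires a factor $\sI$ when $l$ is replaced by $l+\me_1$, a factor $\sI^{-1}=-\sI$ when $l$ is replaced by $l+\me_3$, and a factor $-1$ when $l$ is replaced by $l-\me_2+2\me_1$ or by $l-\me_2+2\me_3$.

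For the first identity I would substitute (\ref{eqn:R3_gps_def_varphi_l}) into (\ref{eqn:R3_gps_PDE1_DSE1}) for $l\in S_{(\kappa_1-1,\delta_2)}$. Every term then carries the common factor $\sI\,(\sI)^{l_1-l_3}y_1y_2(y_2y_3)^{2\nu_1+\nu_2}$ (the extra $\sI$ coming from $u_{l+\me_1}$), and using the shift $\partial_1\mapsto\partial_1+1$ the operator $2\partial_1-\kappa_1-1-2\nu_1$ becomes $2\partial_1+1-\kappa_1-2\nu_1$. Dividing by the common factor and multiplying through by $-1$ yields precisely (\ref{eqn:R3_gps_PDE2_DSE1}).

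For the second identity the hypothesis $l_2=0$ is exactly what is needed: it annihilates the correction term $-l_2\{\varphi(u_{l-\me_2+2\me_1})-\varphi(u_{l-\me_2+2\me_3})\}$ in (\ref{eqn:R3_gps_PDE1_DSE2}), which would otherwise introduce $\hat{\varphi}_{l-\me_2+2\me_1}$ and $\hat{\varphi}_{l-\me_2+2\me_3}$, not expressible through $\hat{\varphi}_{l+\me_1},\hat{\varphi}_{l+\me_2},\hat{\varphi}_{l+\me_3}$ alone. With that term discarded, I would substitute (\ref{eqn:R3_gps_def_varphi_l}) into the remaining three terms and divide by $(\sI)^{l_1-l_3}y_1y_2(y_2y_3)^{2\nu_1+\nu_2}$; the $\sI$--bookkeeping above turns $4\pi\sI y_1\varphi(u_{l+\me_1})$ into $-4\pi y_1\hat{\varphi}_{l+\me_1}$ and $4\pi\sI y_2\varphi(u_{l+\me_3})$ into $+4\pi y_2\hat{\varphi}_{l+\me_3}$, while the shifts $\partial_1\mapsto\partial_1+1$, $\partial_2\mapsto\partial_2+1+2\nu_1+\nu_2$ transform $-2\partial_1+2\partial_2+l_1-l_3-2\nu_1$ into $-2\partial_1+2\partial_2+l_1-l_3+2\nu_1+2\nu_2$. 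This is exactly (\ref{eqn:R3_gps_PDE2_DSE2}).

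There is no real obstacle in this argument; the entire proof parallels that of Lemma \ref{lem:R3_ps3_ZgDSE2}. The only points demanding care are the bookkeeping of the powers of $\sI$ and of the constant shifts produced by differentiating the prefactor, together with the observation that the assumption $l_2=0$ is precisely what allows the extra term in (\ref{eqn:R3_gps_PDE1_DSE2}) to be dropped.
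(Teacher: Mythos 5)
Your proof is correct and takes exactly the approach the paper indicates: the paper's own proof simply says that (\ref{eqn:R3_gps_PDE2_DSE1}) and (\ref{eqn:R3_gps_PDE2_DSE2}) ``follow immediately'' from (\ref{eqn:R3_gps_PDE1_DSE1}) and (\ref{eqn:R3_gps_PDE1_DSE2}) via the substitution (\ref{eqn:R3_gps_def_varphi_l}), and you have carried out exactly that substitution with the correct bookkeeping of the $\sqrt{-1}$-powers and the shifts $\partial_1\mapsto\partial_1+1$, $\partial_2\mapsto\partial_2+1+2\nu_1+\nu_2$ induced by differentiating the prefactor.
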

\begin{proof}
The equations (\ref{eqn:R3_gps_PDE2_DSE1}) and 
(\ref{eqn:R3_gps_PDE2_DSE2}) follow immediately 
from (\ref{eqn:R3_gps_PDE1_DSE1}) 
and (\ref{eqn:R3_gps_PDE1_DSE2}), respectively. 
\end{proof}

The functions $\hat{\varphi}_l$ $(l\in S_{(\kappa_1,\delta_2)})$ 
are determined from $\hat{\varphi}_{(\kappa_1,0,0)}$ by 
the equations in Lemma \ref{lem:R3_gps_ZgDSE2}. 
Hence, we know that 
$\varphi \colon V_{(\kappa_1,\delta_2)} \to 
{\mathrm{Wh}}(\Pi_\sigma ,\psi_1)$ 
is uniquely determined from $\hat{\varphi}_{(\kappa_1,0,0)}$. 

\begin{lem}
\label{lem:R3_gps_PDE3}
Retain the notation. 
Then $\hat{\varphi}_{(\kappa_1,0,0)}$ satisfies the following system 
of partial differential equations: 
\begin{align}
\begin{split}
&
\bigl\{-\partial_1^2+\partial_1\partial_2-\partial_2^2
+(2\nu_1+\nu_2+\kappa_1)\partial_1
-(2\nu_1+\nu_2)\partial_2\\
&\label{eqn:R3_gps_PDE3_2}
-\nu_1^2-2\nu_1\nu_2-\nu_1\kappa_1
-\tfrac{\kappa_1^2-1}{4}+(2\pi y_1)^2+(2\pi y_2)^2
\bigr\}\hat{\varphi}_{(\kappa_1,0,0)}
=0,
\end{split}\\[3pt]
\begin{split}
&\bigl\{\bigl(\partial_1-\nu_1-\tfrac{\kappa_1+1}{2}\bigr)
\bigl(\partial_1-\nu_1-\tfrac{\kappa_1-1}{2}\bigr)
(\partial_1-\nu_2-\kappa_1)\\
&\label{eqn:R3_gps_PDE3_3}
-(2\pi y_1)^2(\partial_1+\partial_2-\kappa_1+2)
\bigr\}\hat{\varphi}_{(\kappa_1,0,0)}=0. 
\end{split}
\end{align}
\end{lem}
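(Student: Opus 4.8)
The plan is to repeat, with the parameter shifts dictated by Proposition~\ref{prop:R3_ZgDSE_gps}, the computation carried out in the proof of Lemma~\ref{lem:R3_ps3_PDE3}. First I would specialize the Casimir equations (\ref{eqn:R3_gps_PDE1_C2}) and (\ref{eqn:R3_gps_PDE1_C3}) of Lemma~\ref{lem:R3_gps_ZgDSE1_original}~(1) to $l=(\kappa_1,0,0)$. Since $l_2=l_3=0$ for this $l$, almost every auxiliary term drops out: the $\cC_2^{\g}$-equation then involves only $\varphi (u_{(\kappa_1,0,0)})$ and $\varphi (u_{(\kappa_1-1,1,0)})$, while the $\cC_3^{\g}$-equation involves $\varphi (u_{(\kappa_1,0,0)})$, $\varphi (u_{(\kappa_1-1,1,0)})$ and $\varphi (u_{(\kappa_1-1,0,1)})$. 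Substituting (\ref{eqn:R3_gps_def_varphi_l}) converts each $\partial_i$ acting on $\varphi (u_l)(y)$ into the shifted Euler operator acting on $\hat{\varphi}_l$, namely $\partial_1\mapsto \partial_1+1$, $\partial_2\mapsto \partial_2+1+2\nu_1+\nu_2$ and $\partial_3\mapsto 2\nu_1+\nu_2$ (the last using (\ref{eqn:R3_gps_PDE1_C1})), and the powers of $\sI$ combine so that the resulting equations for $\hat{\varphi}_{(\kappa_1,0,0)}$, $\hat{\varphi}_{(\kappa_1-1,1,0)}$, $\hat{\varphi}_{(\kappa_1-1,0,1)}$ have real coefficients; these are the exact analogues of (\ref{eqn:R3_ps3_PDE2_C2}) and (\ref{eqn:R3_ps3_PDE2_C3}).

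To obtain (\ref{eqn:R3_gps_PDE3_2}) I would add the appropriate scalar multiple of the first-order relation (\ref{eqn:R3_gps_PDE2_DSE1}) of Lemma~\ref{lem:R3_gps_ZgDSE2} at $l=(\kappa_1-1,0,0)$, i.e. $(-2\partial_1+2\nu_1+\kappa_1-1)\hat{\varphi}_{(\kappa_1,0,0)}-4\pi y_1\hat{\varphi}_{(\kappa_1-1,1,0)}=0$ multiplied by $\kappa_1/2$, to the translated $\cC_2^{\g}$-equation; this cancels $\hat{\varphi}_{(\kappa_1-1,1,0)}$ and leaves a second-order equation for $\hat{\varphi}_{(\kappa_1,0,0)}$ alone. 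The one non-obvious point in collecting the constant term is the identity $\tfrac{(\kappa_1-1)^2}{4}-\tfrac{\kappa_1(\kappa_1-1)}{2}=-\tfrac{\kappa_1^2-1}{4}$, after which the equation is exactly (\ref{eqn:R3_gps_PDE3_2}).

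For (\ref{eqn:R3_gps_PDE3_3}) I would follow the route of Lemma~\ref{lem:R3_ps3_PDE3}: multiply the translated $\cC_2^{\g}$-equation by $-\partial_1$ from the left, add it to the translated $\cC_3^{\g}$-equation to produce an equation whose leading part is a product $(\partial_1-\ast)(\partial_1-\ast)(\partial_1-\ast)-(2\pi y_1)^2(\partial_1+\partial_2+\ast)$ applied to $\hat{\varphi}_{(\kappa_1,0,0)}$, together with residual first-order terms in $\hat{\varphi}_{(\kappa_1-1,1,0)}$ and $\hat{\varphi}_{(\kappa_1-1,0,1)}$ (the analogue of (\ref{eqn:R3_ps3_pf1_C3C2})). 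Then I would multiply (\ref{eqn:R3_gps_PDE2_DSE1}) and (\ref{eqn:R3_gps_PDE2_DSE2}) at $l=(\kappa_1-1,0,0)$ — here $l_2=0$, so the $u_{l-\me_2+2\me_1},u_{l-\me_2+2\me_3}$ terms of (\ref{eqn:R3_gps_PDE1_DSE2}) are absent — by suitable first-order operators in $\partial_1$ and add them, so that both residual functions drop out and (\ref{eqn:R3_gps_PDE3_3}) remains, the factor $\partial_1-\nu_2-\kappa_1$ and the two shifted factors $\partial_1-\nu_1-\tfrac{\kappa_1\pm 1}{2}$ emerging exactly as in the derivation of (\ref{eqn:R3_ps31_PDE3_3}).

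The main obstacle is purely the bookkeeping: keeping track of the non-commutativity of $\partial_1$ with $y_1$ when applying $-\partial_1$, of the constant shifts introduced by conjugation with $y_1y_2(y_2y_3)^{2\nu_1+\nu_2}$, and of the numerical coefficients that must match so that $\hat{\varphi}_{(\kappa_1-1,1,0)}$ and $\hat{\varphi}_{(\kappa_1-1,0,1)}$ cancel. There is no new conceptual difficulty beyond what was already handled in the principal series case.
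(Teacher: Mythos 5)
Your proposal reproduces the paper's own proof: specialize the Casimir equations to $l=(\kappa_1,0,0)$, pass to $\hat{\varphi}$, cancel $\hat{\varphi}_{(\kappa_1-1,1,0)}$ using a multiple of (\ref{eqn:R3_gps_PDE2_DSE1}) to obtain (\ref{eqn:R3_gps_PDE3_2}), and then multiply the second-order equation by $-\partial_1$, add the third-order equation, and eliminate the two auxiliary functions via (\ref{eqn:R3_gps_PDE2_DSE1}) and (\ref{eqn:R3_gps_PDE2_DSE2}) to get (\ref{eqn:R3_gps_PDE3_3}) — exactly as in the principal series case. The only slip is cosmetic: the scalar you want in front of (\ref{eqn:R3_gps_PDE2_DSE1}) is $-\kappa_1/2$, not $\kappa_1/2$, as your own identity $\tfrac{(\kappa_1-1)^2}{4}-\tfrac{\kappa_1(\kappa_1-1)}{2}=-\tfrac{\kappa_1^2-1}{4}$ already presupposes.
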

\begin{proof}
We have  
\begin{align}
\begin{split}
&
\bigl\{-\partial_1^2+\partial_1\partial_2-\partial_2^2
+(2\nu_1+\nu_2)\partial_1-(2\nu_1+\nu_2)\partial_2
-\nu_1^2-2\nu_1\nu_2\\
&\label{eqn:R3_gps_PDE2_C2}
+\tfrac{(\kappa_1-1)^2}{4}
+(2\pi y_1)^2+(2\pi y_2)^2\bigr\}\hat{\varphi}_{(\kappa_1,0,0)}
-\kappa_1(2\pi y_1)\hat{\varphi}_{(\kappa_1-1,1,0)}=0,
\end{split}\\[3pt]
\begin{split}
&
\bigl\{-\partial_1(-\partial_1+\partial_2+2\nu_1+\nu_2)\partial_2
-\nu_1^2\nu_2+\tfrac{(\kappa_1-1)^2\nu_2}{4}\\
&\label{eqn:R3_gps_PDE2_C3}
-(2\pi y_1)^2\partial_2
+(2\pi y_2)^2\partial_1\bigr\}\hat{\varphi}_{(\kappa_1,0,0)}
+\kappa_1(2\pi y_1)\partial_2\hat{\varphi}_{(\kappa_1-1,1,0)}\\
&+\kappa_1(2\pi y_1)(2\pi y_2)\hat{\varphi}_{(\kappa_1-1,0,1)}
=0
\end{split}
\end{align}
from (\ref{eqn:R3_gps_PDE1_C2}) and (\ref{eqn:R3_gps_PDE1_C3}) 
with $l=(\kappa_1,0,0)$, respectively. 
Multiplying the both sides of (\ref{eqn:R3_gps_PDE2_C2}) 
by $-\partial_1$ from the left, we have 
\begin{align}
\begin{split}
&
\bigl\{\partial_1^3
-\partial_1^2\partial_2
+\partial_1\partial_2^2
-(2\nu_1+\nu_2)\partial_1^2
+(2\nu_1+\nu_2)\partial_1\partial_2\\
&
+\bigl(\nu_1^2+2\nu_1\nu_2-\tfrac{(\kappa_1-1)^2}{4}\bigr)
\partial_1-(2\pi y_1)^2(\partial_1+2)
-(2\pi y_2)^2\partial_1\bigr\}\hat{\varphi}_{(\kappa_1,0,0)}\\
&\label{eqn:R3_gps_PDE2_D1C2}
+\kappa_1(2\pi y_1)(\partial_1+1)\hat{\varphi}_{(\kappa_1-1,1,0)}=0.
\end{split}
\end{align}
Adding the respective sides of 
the equations (\ref{eqn:R3_gps_PDE2_C3}) and (\ref{eqn:R3_gps_PDE2_D1C2}), 
we have 
\begin{align}
\begin{split}
&\bigl\{\partial_1^3-(2\nu_1+\nu_2)\partial_1^2
+\bigl(\nu_1^2+2\nu_1\nu_2-\tfrac{(\kappa_1-1)^2}{4}\bigr)\partial_1
-\nu_1^2\nu_2+\tfrac{(\kappa_1-1)^2\nu_2}{4}\\
&
-(2\pi y_1)^2(\partial_1+\partial_2+2)
\bigr\}\hat{\varphi}_{(\kappa_1,0,0)}
+\kappa_1(2\pi y_1)(\partial_1+\partial_2+1)\hat{\varphi}_{(\kappa_1-1,1,0)}
\\
&\label{eqn:R3_gps_pf1_C3C2}
+\kappa_1(2\pi y_1)(2\pi y_2)\hat{\varphi}_{(\kappa_1-1,0,1)}=0.
\end{split}
\end{align}
By Lemma \ref{lem:R3_gps_ZgDSE2}, we have 
\begin{align}
&\label{eqn:R3_gps_pf1_DSE1}
(-2\partial_1+2\nu_1+\kappa_1-1)\hat{\varphi}_{(\kappa_1,0,0)}
-4\pi y_1\hat{\varphi}_{(\kappa_1-1,1,0)}=0,\\[2pt]
\begin{split}
&
(-2\partial_1+2\partial_2+2\nu_1+2\nu_2+\kappa_1-1)
\hat{\varphi}_{(\kappa_1-1,1,0)}\\
&\label{eqn:R3_gps_pf1_DSE2}
-4\pi y_1\hat{\varphi}_{(\kappa_1,0,0)}
+4\pi y_2\hat{\varphi}_{(\kappa_1-1,0,1)}=0. 
\end{split}
\end{align}
Multiplying the both sides of (\ref{eqn:R3_gps_pf1_DSE1}) 
by $-\kappa_1/2$, we have 
\begin{align*}
&\kappa_1\bigl(\partial_1-\nu_1
-\tfrac{\kappa_1-1}{2}\bigr)\hat{\varphi}_{(\kappa_1,0,0)}
+\kappa_1(2\pi y_1)\hat{\varphi}_{(\kappa_1-1,1,0)}=0.
\end{align*}
Adding the respective sides of 
(\ref{eqn:R3_gps_PDE2_C2}) and this equation, 
we obtain (\ref{eqn:R3_gps_PDE3_2}). 
Multiplying the both sides of 
(\ref{eqn:R3_gps_pf1_DSE1}) and (\ref{eqn:R3_gps_pf1_DSE2}) 
by $\kappa_1(4\partial_1-2-2\nu_1-2\nu_2-\kappa_1+1)/4$ 
and $-\kappa_1\pi y_1$ from the left, 
respectively, we have 
\begin{align*}
&
\kappa_1\bigl(2\partial_1-1-\nu_1-\nu_2-\tfrac{\kappa_1-1}{2}\bigr)
\bigl(-\partial_1+\nu_1+\tfrac{\kappa_1-1}{2}\bigr)
\hat{\varphi}_{(\kappa_1,0,0)}\\
&+\kappa_1(2\pi y_1)
\bigl(-2\partial_1-1+\nu_1+\nu_2+\tfrac{\kappa_1-1}{2}\bigr)
\hat{\varphi}_{(\kappa_1-1,1,0)}=0,\\[2pt]
&
\kappa_1(2\pi y_1)
\bigl(\partial_1-\partial_2-\nu_1-\nu_2-\tfrac{\kappa_1-1}{2}\bigr)
\hat{\varphi}_{(\kappa_1-1,1,0)}\\
&+\kappa_1(2\pi y_1)^2\hat{\varphi}_{(\kappa_1,0,0)}
-\kappa_1(2\pi y_1)(2\pi y_2)\hat{\varphi}_{(\kappa_1-1,0,1)}=0. 
\end{align*}
Adding up the respective sides of 
(\ref{eqn:R3_gps_pf1_C3C2}) and these equations, 
we obtain (\ref{eqn:R3_gps_PDE3_3}). 
\end{proof}

\begin{lem}
\label{lem:R3_gps_MW_sub1}
Let $\nu_1,\nu_2\in \bC $, $\kappa_1\in \bZ_{\geq 2}$ and 
\begin{align*}
r=(r_1,r_2,r_3)=
\bigl(\nu_1-\tfrac{\kappa_1-1}{2},\,
\nu_1-\tfrac{\kappa_1+1}{2},\,
\nu_2\bigr).
\end{align*}
(1) The space of smooth solutions of the system in 
Lemma \ref{lem:R3_gps_PDE3} on $(\bR_+)^2$ is at most 
$6$ dimensional. \\
\noindent (2) Set  
\[
\hat{\varphi}^{\mathrm{mg}}_{(\kappa_1,0,0)}(y_1,y_2)
=\frac{1}{(4\pi \sqrt{-1})^2} \int_{s_2}\int_{s_1} 
\cV_{(\kappa_1,0,0)}(s_1,s_2) \,
y_1^{-s_1} y_2^{-s_2} \,ds_1ds_2
\]
with 
\begin{align*}
\cV_{(\kappa_1,0,0)}(s_1,s_2)=&
\frac{\Gamma_{\bR}(s_1+r_1+\kappa_1)\Gamma_{\bR}(s_1+r_2+\kappa_1)
\Gamma_{\bR}(s_1+r_3+\kappa_1)}
{\Gamma_{\bR}(s_1+s_2+\kappa_1)}\\
&\times \Gamma_{\bR}(s_2-r_1)\Gamma_{\bR}(s_2-r_2)\Gamma_{\bR}(s_2-r_3).
\end{align*}
Here the path of the integration $\int_{s_i}$ is the vertical line 
from $\mathrm{Re}(s_i)-\sI \infty$ to $\mathrm{Re}(s_i)+\sI \infty$ 
with sufficiently large real part to keep the poles of the integrand 
on its left. 
Then $\hat{\varphi}_{(\kappa_1,0,0)}
=\hat{\varphi}^{\mathrm{mg}}_{(\kappa_1,0,0)}$ is 
a moderate growth solution of the system in Lemma \ref{lem:R3_gps_PDE3} 
on $(\bR_+)^2$. 
\\[1mm]
(3) Assume $2\nu_1-2\nu_2-\kappa_1+1\not\in 2\bZ$. 
For a permutation $(i,j,k)$ of $\{1,2,3\}$, set  
\[
\hat{\varphi}^{{(i,j,k)}}_{(\kappa_1,0,0)}(y_1,y_2) =
\sum_{m_1,m_2 \geq 0}
C_{(\kappa_1,0,0),(m_1,m_2)}^{(i,j,k)}(\pi y_1)^{2m_1+r_i+\kappa_1}
(\pi y_2)^{2m_2-r_j}
\]
with 
\begin{align*}
C_{(\kappa_1,0,0),(m_1,m_2)}^{(i,j,k)}=
&\frac{(-1)^{m_1+m_2} 
\Gamma \bigl(-m_1-\tfrac{r_i-r_j}{2}\bigr)
\Gamma \bigl(-m_1-\tfrac{r_i-r_k}{2}\bigr)}
{\pi^{\kappa_1}\,m_1!\,m_2!\,\Gamma 
\bigl(-m_1-m_2-\tfrac{r_i-r_j}{2}\bigr)} \\
& \times \Gamma \bigl(-m_2-\tfrac{r_i-r_j}{2}\bigr)
\Gamma \bigl(-m_2-\tfrac{r_k-r_j}{2}\bigr).
\end{align*}
Then $\{\hat{\varphi}^{{(i,j,k)}}_{(\kappa_1,0,0)}\mid 
\{i,j,k\}=\{1,2,3\}\,\}$ forms a basis of 
the space of smooth solutions of the system in Lemma \ref{lem:R3_gps_PDE3} 
on $(\bR_+)^2$. Moreover, 
it holds that 
\begin{align*}
\hat{\varphi}^{\mathrm{mg}}_{(\kappa_1,0,0)}
& = \sum_{ (i,j,k) } \hat{\varphi}^{(i,j,k)}_{(\kappa_1,0,0)},
\end{align*}
where $ (i,j,k) $ runs all permutations of $\{1,2,3\}$. 
\end{lem}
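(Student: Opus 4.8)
The plan is to reduce the holonomic system of Lemma~\ref{lem:R3_gps_PDE3} to the model system (\ref{eqn:Fn_Sol_PDE2})--(\ref{eqn:Fn_Sol_PDE3}) of \S\ref{subsec:Fn_special2} by an explicit change of the unknown function, and then read all three assertions off from Lemmas~\ref{lem:Fn_Sol_dim}, \ref{lem:Fn_Sol_power_series} and \ref{lem:Fn_Sol_mg}, exactly in the style of the proof of Lemma~\ref{lem:R3_ps3_MW_sub1}. With $r=(r_1,r_2,r_3)$ as in the statement, I would put
\[
f(z_1,z_2)=z_1^{-\kappa_1}\,\hat{\varphi}_{(\kappa_1,0,0)}(\pi^{-1}z_1,\pi^{-1}z_2),
\]
and show that $f\in\mathrm{Sol}(r)$ if and only if $\hat{\varphi}_{(\kappa_1,0,0)}$ solves the system in Lemma~\ref{lem:R3_gps_PDE3}. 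This is a direct operator computation: multiplication by $z_1^{-\kappa_1}$ conjugates $\partial_{z_1}$ into $\partial_{z_1}-\kappa_1$ and fixes $\partial_{z_2}$, $z_1^2$, $z_2^2$, while the rescaling $z_i=\pi y_i$ turns $\partial_{z_i}$ into $\partial_i$ and $4z_i^2$ into $(2\pi y_i)^2$. One then checks that, after this substitution, (\ref{eqn:Fn_Sol_PDE2}) becomes (\ref{eqn:R3_gps_PDE3_2}) and (\ref{eqn:Fn_Sol_PDE3}) becomes (\ref{eqn:R3_gps_PDE3_3}), the relevant identities being $\kappa_1+r_1=\nu_1+\tfrac{\kappa_1+1}{2}$, $\kappa_1+r_2=\nu_1+\tfrac{\kappa_1-1}{2}$, $\kappa_1+r_3=\nu_2+\kappa_1$, $r_1+r_2+r_3=2\nu_1+\nu_2-\kappa_1$, and $r_1r_2+r_1r_3+r_2r_3=\nu_1^2-\nu_1\kappa_1+\tfrac{\kappa_1^2-1}{4}+2\nu_1\nu_2-\nu_2\kappa_1$. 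Since the map $\hat{\varphi}_{(\kappa_1,0,0)}\mapsto f$ is a linear bijection on smooth functions on $(\bR_+)^2$, part~(1) is immediate from Lemma~\ref{lem:Fn_Sol_dim}.

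For parts~(2) and~(3) I would match the explicit expressions under the same substitution. Running $f_r^{\mathrm{mg}}$ of Lemma~\ref{lem:Fn_Sol_mg} through it and shifting the Mellin variable $s_1\mapsto s_1+\kappa_1$, and then converting each factor $\Gamma(\,\cdot\,/2)$ into the corresponding $\Gamma_\bR$, one finds that the kernel $\cV$ turns into $\pi^{s_1+\kappa_1+s_2}\cV_{(\kappa_1,0,0)}(s_1,s_2)$; the powers of $\pi$ introduced by $z_i=\pi y_i$ and by the Mellin shift exactly cancel this factor, so that $\hat{\varphi}_{(\kappa_1,0,0)}^{\mathrm{mg}}$ of the statement is (up to the overall constant $\pi^{\kappa_1}$) the function corresponding to $f_r^{\mathrm{mg}}$, hence a moderate growth solution. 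The same bookkeeping identifies $f_r^{(i,j,k)}$ with $\hat{\varphi}_{(\kappa_1,0,0)}^{(i,j,k)}$, the constant $\pi^{\kappa_1}$ in the denominator of $C_{(\kappa_1,0,0),(m_1,m_2)}^{(i,j,k)}$ being precisely what is required. I would also record that the hypothesis $2\nu_1-2\nu_2-\kappa_1+1\notin 2\bZ$ is equivalent to $r_p-r_q\notin 2\bZ$ for all $p\neq q$: one has $r_1-r_2=1\notin 2\bZ$, while $r_1-r_3$ and $r_2-r_3$ differ by $1$, so neither of them lies in $2\bZ$ exactly when $r_1-r_3\notin\bZ$, i.e. when $2\nu_1-2\nu_2-\kappa_1+1\notin 2\bZ$. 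Given these translations, parts~(2) and~(3) follow from Lemmas~\ref{lem:Fn_Sol_mg} and \ref{lem:Fn_Sol_power_series} together with the bijectivity established in part~(1).

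The only genuinely delicate point is this bookkeeping of the powers of $\pi$ and of the Mellin shift that relates $\cV_{(\kappa_1,0,0)}$ to $\cV$ and $C_{(\kappa_1,0,0),(m_1,m_2)}^{(i,j,k)}$ to the coefficients in Lemma~\ref{lem:Fn_Sol_power_series}; once the normalization $f(z_1,z_2)=z_1^{-\kappa_1}\hat{\varphi}_{(\kappa_1,0,0)}(\pi^{-1}z_1,\pi^{-1}z_2)$ is fixed, everything is forced, but it is worth double-checking that the $\Gamma_\bR$-factors appearing in the statement are exactly those produced by Lemma~\ref{lem:Fn_Sol_mg} after the substitution. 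There is no conceptual obstacle beyond this.
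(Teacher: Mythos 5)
Your proposal is correct and is essentially the paper's own proof: the same normalization $f(z_1,z_2)=z_1^{-\kappa_1}\hat{\varphi}_{(\kappa_1,0,0)}(\pi^{-1}z_1,\pi^{-1}z_2)$ reducing the system of Lemma~\ref{lem:R3_gps_PDE3} to the model system of \S\ref{subsec:Fn_special2}, followed by an appeal to Lemmas~\ref{lem:Fn_Sol_dim}, \ref{lem:Fn_Sol_power_series} and \ref{lem:Fn_Sol_mg}. The paper states this in one line; you have supplied the operator identities, the $\pi^{\kappa_1}$ bookkeeping, and the equivalence of the nonresonance hypothesis $2\nu_1-2\nu_2-\kappa_1+1\notin 2\bZ$ with $r_p-r_q\notin 2\bZ$, all of which check out.
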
 
\begin{proof}
If we take 
$r=(r_1,r_2,r_3)$ as the statement, and set 
\[
f(z_1,z_2)=
z_1^{-\kappa_1}\hat{\varphi}_{(\kappa_1,0,0)}(\pi^{-1}z_1,\,\pi^{-1}z_2), 
\]
then (\ref{eqn:R3_gps_PDE3_2}) and (\ref{eqn:R3_gps_PDE3_3})
imply (\ref{eqn:Fn_Sol_PDE2}) and (\ref{eqn:Fn_Sol_PDE3}), 
respectively. 
Hence, the assertion follows from Lemmas \ref{lem:Fn_Sol_dim}, 
\ref{lem:Fn_Sol_power_series} and \ref{lem:Fn_Sol_mg}. 
\end{proof}

\begin{lem}
\label{lem:R3_gps_MW_sub2}
We use the notation in Lemma \ref{lem:R3_gps_MW_sub1}, 
and let $\delta_2\in \{0,1\}$. Assume $2\nu_1-2\nu_2-\kappa_1+1\not\in 2\bZ$. 
Let $(i,j,k)$ be a permutation of $\{1,2,3\}$. 
Let $\hat{\varphi}_{l}\ (l\in S_{(\kappa_1,\delta_2)})$  
be the functions determined from the function 
$\hat{\varphi}_{(\kappa_1,0,0)}=\hat{\varphi}^{{(i,j,k)}}_{(\kappa_1,0,0)}$ 
in Lemma \ref{lem:R3_gps_MW_sub1} (3) by the equations 
in Lemma \ref{lem:R3_gps_ZgDSE2}. 
Then it holds that 
\begin{align*}
\hat{\varphi}_{l}(y_1,y_2) = 
&\sum_{m_1, m_2 \geq 0}
C_{l,(m_1,m_2)}^{(i,j,k)}
(\pi y_1)^{2m_1+r_i+l_1}(\pi y_2)^{2m_2-r_j-l_3},
\end{align*}
where $l=(l_1,l_2,l_3)\in S_{(\kappa_1,\delta_2)}$ and 
\begin{align*}
\begin{split}
&C_{l,(m_1,m_2)}^{(i,j,k)}
=(-1)^{m_1+m_2}\\
&\times \frac{\Gamma \bigl(-m_1-\tfrac{r_i-r_j}{2}\bigr)
\Gamma \bigl(-m_1-\tfrac{r_i-r_k}{2}\bigr)
\Gamma \bigl(-m_2-\tfrac{r_i-r_j}{2}\bigr)
\Gamma \bigl(-m_2-\tfrac{r_k-r_j}{2}\bigr)}
{\pi^{\kappa_1}\,m_1!\,m_2!\,
\Gamma \bigl(-m_1-m_2-\tfrac{r_i-r_j}{2}+l_3\bigr)}
\\
&\times \!\left(\prod_{p=0}^{l_2+l_3-1}(-m_1-\tfrac{r_i-r_2-p}{2})
\right)\!\left(
\prod_{q=0}^{l_3-1}
\bigl(-m_2-\tfrac{r_1-r_j-q}{2}\bigr)
\bigl(-m_2-\tfrac{r_3-r_j}{2}+q\bigr)
\right)\!.
\end{split}
\end{align*}
\end{lem}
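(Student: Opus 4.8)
The plan is to follow the pattern of the proof of Lemma~\ref{lem:R3_ps3_MW_sub2} (which is the $\kappa_1=1$ case), translating the two recursions of Lemma~\ref{lem:R3_gps_ZgDSE2} into operations on the power–series coefficients and iterating them along a convenient path in $S_{(\kappa_1,\delta_2)}$. Since the uniqueness statement preceding Lemma~\ref{lem:R3_gps_PDE3} guarantees that $\varphi$, hence every $\hat{\varphi}_l$, is determined by $\hat{\varphi}_{(\kappa_1,0,0)}$, it suffices to exhibit a chain of recursions from Lemma~\ref{lem:R3_gps_ZgDSE2} linking $\hat{\varphi}_{(\kappa_1,0,0)}$ to $\hat{\varphi}_l$ and to check that the proposed closed forms are compatible with it; at $l=(\kappa_1,0,0)$ the two displayed products are empty, $\Gamma(-m_1-m_2-\tfrac{r_i-r_j}{2}+l_3)=\Gamma(-m_1-m_2-\tfrac{r_i-r_j}{2})$, and the formula reduces to $C^{(i,j,k)}_{(\kappa_1,0,0),(m_1,m_2)}$ of Lemma~\ref{lem:R3_gps_MW_sub1}(3), so the base point is consistent. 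The path I would use is: first apply (\ref{eqn:R3_gps_PDE2_DSE1}) repeatedly to obtain $\hat{\varphi}_{(\kappa_1-p,p,0)}$ for $0\le p\le\kappa_1$; then induct on $l_3$, the point being that for $l=(l_1,l_2,l_3)$ the base point $(l_1+l_2,0,l_3-1)$ lies in $S_{(\kappa_1-1,\delta_2)}$ and has middle coordinate $0$, so (\ref{eqn:R3_gps_PDE2_DSE2}) produces $\hat{\varphi}_{(l_1+l_2,0,l_3)}$ from $\hat{\varphi}_{(l_1+l_2,1,l_3-1)}$ and $\hat{\varphi}_{(l_1+l_2+1,0,l_3-1)}$ (both at level $l_3-1$), after which (\ref{eqn:R3_gps_PDE2_DSE1}) applied $l_2$ times raises the middle index to give $\hat{\varphi}_{(l_1,l_2,l_3)}$.

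For the bookkeeping one uses that on a monomial $(\pi y_1)^a(\pi y_2)^b$ one has $\partial_1=a$, $\partial_2=b$, while multiplication by $4\pi y_1$ (resp.\ $4\pi y_2$) shifts $a\mapsto a+1$ (resp.\ $b\mapsto b+1$) up to an elementary constant. Thus dividing (\ref{eqn:R3_gps_PDE2_DSE1}) by $4\pi y_1$ multiplies the coefficient of the term with $y_1$-exponent $2m_1+r_i+l_1$ by a factor linear in $m_1$ and lowers that exponent by one, exactly as in the computation of $\hat{\varphi}_{\me_2}$ in Lemma~\ref{lem:R3_ps3_MW_sub2}; carrying this out repeatedly builds up the product $\prod_{p=0}^{l_2+l_3-1}\bigl(-m_1-\tfrac{r_i-r_2-p}{2}\bigr)$, the extra $l_3$ in the range coming from the $l_3$ earlier uses of (\ref{eqn:R3_gps_PDE2_DSE2}) that have already lowered the $y_1$-exponent. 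Each use of (\ref{eqn:R3_gps_PDE2_DSE2}) contributes the two-factor block $\bigl(-m_2-\tfrac{r_1-r_j-q}{2}\bigr)\bigl(-m_2-\tfrac{r_3-r_j}{2}+q\bigr)$ and the shift $\Gamma\bigl(-m_1-m_2-\tfrac{r_i-r_j}{2}\bigr)\mapsto\Gamma\bigl(-m_1-m_2-\tfrac{r_i-r_j}{2}+l_3\bigr)$ in the denominator. One then reindexes the summation in $(m_1,m_2)$, precisely as in Lemma~\ref{lem:R3_ps3_MW_sub2}, to bring the exponents into the displayed form $2m_1+r_i+l_1$, $2m_2-r_j-l_3$; the Gamma-function identities (\ref{eqn:Fn_Pochhammer_rel}) convert all Pochhammer factors into the quotients of Gamma functions appearing in $C^{(i,j,k)}_{l,(m_1,m_2)}$.

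The main obstacle, as in Lemma~\ref{lem:R3_ps3_MW_sub2}, is the algebraic identity required at each application of (\ref{eqn:R3_gps_PDE2_DSE2}): one must show that a linear combination of the shape $(\text{linear in }m_1,m_2)\cdot C^{(i,j,k)}_{(l_1,1,l_3-1),(m_1,m_2)}+C^{(i,j,k)}_{(l_1+1,0,l_3-1),(m_1-1,m_2)}$ collapses, after clearing a common denominator of the form $m_1+m_2+\tfrac{r_i-r_j}{2}+(\mathrm{const})$, into the single product defining $C^{(i,j,k)}_{(l_1,0,l_3),(m_1,m_2)}$; this splits into the three cases $i=1$, $j=1$, $k=1$, each reducing to an elementary polynomial identity in $m_1,m_2$, exactly the three-case computation at the end of the proof of Lemma~\ref{lem:R3_ps3_MW_sub2}. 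Because here these steps are iterated $l_3$ times and then $l_2$ further times rather than performed once, the real effort is to set up a clean inductive invariant—explicit closed formulas for $\hat{\varphi}_{(l_1,l_2,0)}$ and then for $\hat{\varphi}_{(l_1,l_2,l_3)}$—so that the telescoping of the two products is transparent. Finally, for completeness one notes that the equations of Lemma~\ref{lem:R3_gps_ZgDSE2} not used in the chain hold automatically, by the uniqueness of $\varphi$ or by the same type of direct verification.
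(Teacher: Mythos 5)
The proposal is correct in outline but it follows a genuinely different route from the paper.  You set up an explicit iteration scheme: fix a path through $S_{(\kappa_1,\delta_2)}$ (first raise $l_2$ from $(\kappa_1,0,0)$ via (\ref{eqn:R3_gps_PDE2_DSE1}), then induct on $l_3$ via (\ref{eqn:R3_gps_PDE2_DSE2}), then raise $l_2$ again), track how each step acts on power-series coefficients, and at the end clean up the telescoping products.  This is exactly the pattern of the paper's proof of Lemma~\ref{lem:R3_ps3_MW_sub2}.  The paper's proof of the present lemma is organized differently and is considerably shorter: it treats $C_{l,(m_1,m_2)}^{(i,j,k)}$ as a \emph{candidate} for the unknown coefficients rather than deriving it, translates (\ref{eqn:R3_gps_PDE2_DSE1}) and (\ref{eqn:R3_gps_PDE2_DSE2}) into the single-step coefficient recursions (\ref{eqn:R3_gps_PDE2_DSE1_coeff}) and (\ref{eqn:R3_gps_PDE2_DSE2_coeff}), and then simply checks that the three ratios $C_{l+\me_2}^{(i,j,k)}/C_{l+\me_1}^{(i,j,k)}$, $C_{l+\me_1,(m_1-1,m_2)}^{(i,j,k)}/C_{l+\me_1,(m_1,m_2)}^{(i,j,k)}$ and $C_{l+\me_3}^{(i,j,k)}/C_{l+\me_1}^{(i,j,k)}$ are compatible with those recursions; uniqueness (stated just before Lemma~\ref{lem:R3_gps_PDE3}) then forces the candidate to be the actual coefficient.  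The verification route buys you a one-step check instead of an $l_2+l_3$-fold iteration, and in particular there is no need to produce or maintain intermediate closed forms for $\hat{\varphi}_{(l_1,l_2,0)}$; the ``telescoping'' you worry about never has to be made explicit.  Two small cautions about your plan if you do pursue it: (a) the products in $C_{l,(m_1,m_2)}^{(i,j,k)}$ reference the fixed positions $r_1,r_2,r_3$ (not the permuted $r_i,r_j,r_k$), so the case analysis is not quite ``exactly'' the three-case computation of Lemma~\ref{lem:R3_ps3_MW_sub2}---the asymmetry of $r=(\nu_1-\tfrac{\kappa_1-1}{2},\nu_1-\tfrac{\kappa_1+1}{2},\nu_2)$ really does show up; and (b) after each iteration you must re-index $(m_1,m_2)$ to keep the exponents in the form $2m_1+r_i+l_1$, $2m_2-r_j-l_3$, which is straightforward but error-prone over many steps.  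The paper's single-step ratio check avoids both issues.
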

\begin{proof}
We set 
\begin{align*}
\hat{\varphi}_{l}(y_1,y_2) = 
&\sum_{m_1, m_2 \geq 0}
C_{l,(m_1,m_2)}
(\pi y_1)^{2m_1+r_i+l_1}(\pi y_2)^{2m_2-r_j-l_3}
\end{align*}
for $l=(l_1,l_2,l_3)\in S_{(\kappa_1,\delta_2)}$. 
Then (\ref{eqn:R3_gps_PDE2_DSE1}) and (\ref{eqn:R3_gps_PDE2_DSE2}) 
are equivalent to 
\begin{align}
\label{eqn:R3_gps_PDE2_DSE1_coeff}
&C_{l+\me_2,(m_1,m_2)}=
\bigl(-m_1-\tfrac{r_i-r_2-l_2-l_3}{2}\bigr)C_{l+\me_1,(m_1,m_2)}
\end{align}
and 
\begin{align}
\label{eqn:R3_gps_PDE2_DSE2_coeff}
\begin{split}
C_{l+\me_3,(m_1,m_2)}=\,&
C_{l+\me_1,(m_1-1,m_2)}\\
&+\bigl(m_1-m_2+\tfrac{r_i+r_j-r_1-r_3+l_3}{2}\bigr)
C_{l+\me_2,(m_1,m_2)}
\hspace{5mm}(l_2=0), 
\end{split}
\end{align}
respectively, where 
$l=(l_1,l_2,l_3)\in S_{(\kappa_1-1,\delta_2)}$.  
Here we put $C_{l,(m_1,m_2)}=0$ if $m_1<0$ or $m_2<0$. 
Using the equalities 
\begin{align*}
&C_{l+\me_2,(m_1,m_2)}^{(i,j,k)}=
\bigl(-m_1-\tfrac{r_i-r_2-l_2-l_3}{2}\bigr)
C_{l+\me_1,(m_1,m_2)}^{(i,j,k)},\\
&{C_{l+\me_1,(m_1-1,m_2)}^{(i,j,k)}}\\
&=\frac{
\bigl(m_1+\tfrac{r_i-r_1-l_2-l_3}{2}\bigr)
\bigl(m_1+\tfrac{r_i-r_2-l_2-l_3}{2}\bigr)
\bigl(m_1+\tfrac{r_i-r_3}{2}\bigr)}
{m_1+m_2+\tfrac{r_i-r_j}{2}-l_3}
{C_{l+\me_1,(m_1,m_2)}^{(i,j,k)}},\\
&{C_{l+\me_3,(m_1,m_2)}^{(i,j,k)}}\\
&=\frac{
\bigl(m_1+\tfrac{r_i-r_2-l_2-l_3}{2}\bigr)
\bigl(m_2+\tfrac{r_1-r_j-l_3}{2}\bigr)
\bigl(m_2+\tfrac{r_3-r_j}{2}-l_3\bigr)}
{m_1+m_2+\tfrac{r_i-r_j}{2}-l_3}
{C_{l+\me_1,(m_1,m_2)}^{(i,j,k)}}
\end{align*}
with $l=(l_1,l_2,l_3)\in S_{(\kappa_1-1,\delta_2)}$, 
we know that 
$C_{l,(m_1,m_2)}=C_{l,(m_1,m_2)}^{(i,j,k)}$ 
$(l\in S_{(\kappa_1,\delta_2)})$ 
satisfy (\ref{eqn:R3_gps_PDE2_DSE1_coeff}) and 
(\ref{eqn:R3_gps_PDE2_DSE2_coeff}). 
\end{proof}

\begin{lem}
\label{lem:R3_gps_WW_sub2}
Let $\nu_1,\nu_2\in \bC $, $\kappa_1\in \bZ_{\geq 2}$ and 
$\delta_2\in \{0,1\}$. 
Let $\hat{\varphi}_{l}\ (l\in S_{(\kappa_1,\delta_2)})$  
be the functions determined from 
$\hat{\varphi}_{(\kappa_1,0,0)}=\hat{\varphi}^{\mathrm{mg}}_{(\kappa_1,0,0)}$ 
in Lemma \ref{lem:R3_gps_MW_sub1} (2) by the equations 
in Lemma \ref{lem:R3_gps_ZgDSE2}. 
Then it holds that 
\begin{align}
\label{eqn:R3_gps_WW_sub2}
\begin{split}
\hat{\varphi}_{l}(y_1,y_2)
=&\frac{1}{(4\pi \sqrt{-1})^2} \int_{s_2}\int_{s_1} 
\frac{\Gamma_{\bC}\bigl(s_1+\nu_1+\tfrac{\kappa_1-1}{2}\bigr)
\Gamma_{\bR}(s_1+\nu_2+l_1)}
{\Gamma_{\bR}(s_1+s_2+l_1+l_3)}\\
&\times \Gamma_{\bC}\bigl(s_2-\nu_1+\tfrac{\kappa_1-1}{2}\bigr)
\Gamma_{\bR}(s_2-\nu_2+l_3)\,
y_1^{-s_1} y_2^{-s_2} \,ds_1ds_2
\end{split}
\end{align}
for $l=(l_1,l_2,l_3)\in S_{(\kappa_1,\delta_2)}$. 
Here the path of the integration $\int_{s_i}$ is the vertical line 
from $\mathrm{Re}(s_i)-\sI \infty$ to $\mathrm{Re}(s_i)+\sI \infty$ 
with sufficiently large real part to keep the poles of the integrand 
on its left. 
\end{lem}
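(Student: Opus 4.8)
\emph{Proof plan.} Write $G_l(y_1,y_2)$ for the right-hand side of (\ref{eqn:R3_gps_WW_sub2}) and $\cW_l(s_1,s_2)$ for its Mellin kernel. By the remark following Lemma \ref{lem:R3_gps_ZgDSE2}, the family $\{\hat{\varphi}_l\}_{l\in S_{(\kappa_1,\delta_2)}}$ is the unique family attached to the given moderate-growth function $\hat{\varphi}_{(\kappa_1,0,0)}=\hat{\varphi}^{\mathrm{mg}}_{(\kappa_1,0,0)}$ by the recurrences (\ref{eqn:R3_gps_PDE2_DSE1}) and (\ref{eqn:R3_gps_PDE2_DSE2}): concretely one propagates from $(\kappa_1,0,0)$ to every $(a,0,c)$ with $a+c=\kappa_1$ by induction on $c$, applying (\ref{eqn:R3_gps_PDE2_DSE1}) and then (\ref{eqn:R3_gps_PDE2_DSE2}) with $l=(a,0,c-1)$, and then to a general $(l_1,l_2,l_3)$ by applying (\ref{eqn:R3_gps_PDE2_DSE1}) $l_2$ times. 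It therefore suffices to check (i) $G_{(\kappa_1,0,0)}=\hat{\varphi}^{\mathrm{mg}}_{(\kappa_1,0,0)}$, and (ii) the functions $G_l$ satisfy the recurrences of Lemma \ref{lem:R3_gps_ZgDSE2}. Throughout, the absolute convergence of these double Mellin--Barnes integrals (Stirling's formula, exactly as in the proof of Lemma \ref{lem:Fn_Sol_mg}) legitimates differentiation under the integral sign and the contour translations $s_i\mapsto s_i+1$ used below, all paths being taken far enough to the right to keep the poles on their left.

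For (i), set $l=(\kappa_1,0,0)$ in $\cW_l$ and apply the duplication formula $\Gamma_\bR(s)\Gamma_\bR(s+1)=\Gamma_\bC(s)$ from (\ref{eqn:Fn_gammaRC_duplication}), which yields $\Gamma_\bC(s_1+\nu_1+\tfrac{\kappa_1-1}{2})=\Gamma_\bR(s_1+r_1+\kappa_1)\Gamma_\bR(s_1+r_2+\kappa_1)$ and $\Gamma_\bC(s_2-\nu_1+\tfrac{\kappa_1-1}{2})=\Gamma_\bR(s_2-r_1)\Gamma_\bR(s_2-r_2)$ with $r=(r_1,r_2,r_3)$ as in Lemma \ref{lem:R3_gps_MW_sub1}; since $r_3=\nu_2$, the remaining factors match on the nose, so $\cW_{(\kappa_1,0,0)}=\cV_{(\kappa_1,0,0)}$ and $G_{(\kappa_1,0,0)}=\hat{\varphi}^{\mathrm{mg}}_{(\kappa_1,0,0)}$.

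For (ii), translate each recurrence into an identity of Mellin kernels: $\partial_i$ multiplies the kernel by $-s_i$, multiplication by $y_i$ replaces $s_i$ by $s_i+1$ in the kernel, and the functional equations (\ref{eqn:Fn_FE_GammaRC}) write the shifted kernels as explicit rational multiples of $\cW_l$. The recurrence (\ref{eqn:R3_gps_PDE2_DSE1}) becomes $(2s_1+2\nu_1+\kappa_1-1)\cW_{l+\me_1}(s_1,s_2)=4\pi\,\cW_{l+\me_2}(s_1+1,s_2)$, which follows at once from $\Gamma_\bC(s+1)=(2\pi)^{-1}s\,\Gamma_\bC(s)$ applied to the factor $\Gamma_\bC(s_1+\nu_1+\tfrac{\kappa_1-1}{2})$. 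For (\ref{eqn:R3_gps_PDE2_DSE2}), where $l=(l_1,0,l_3)$ with $l_1+l_3=\kappa_1-1$, the same bookkeeping, after clearing the common denominator $\Gamma_\bR(s_1+s_2+l_1+l_3)$, reduces the assertion to the polynomial identity
\begin{align*}
&(2s_1-2s_2+l_1-l_3+2\nu_1+2\nu_2)(s_1+s_2+l_1+l_3)-(2s_1+2\nu_1+\kappa_1-1)(s_1+\nu_2+l_1)\\
&\qquad {}+(2s_2-2\nu_1+\kappa_1-1)(s_2-\nu_2+l_3)=0,
\end{align*}
in which the $s_1^2$- and $s_2^2$-terms cancel identically, and the $s_1$-, $s_2$- and constant terms all vanish once the constraint $l_1+l_3=\kappa_1-1$ is used.

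I do not expect a genuine obstacle here: the whole argument is a routine, if slightly lengthy, manipulation of the functional equations for $\Gamma_\bR$ and $\Gamma_\bC$ together with the elementary polynomial identity above. The only point demanding a little care is the analytic justification of differentiation under the integral and of the contour shifts, which is uniform and is handled by the Stirling estimates precisely as in Lemma \ref{lem:Fn_Sol_mg}.
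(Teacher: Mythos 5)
Your proposal is correct and follows essentially the same route as the paper's own proof: identify $G_{(\kappa_1,0,0)}$ with $\hat{\varphi}^{\mathrm{mg}}_{(\kappa_1,0,0)}$ via the duplication formula, and then verify by direct computation (via the Mellin kernels and the functional equations for $\Gamma_\bR$, $\Gamma_\bC$) that the proposed integrals satisfy the recurrences of Lemma \ref{lem:R3_gps_ZgDSE2}, invoking the uniqueness of the family so determined. Your write-up simply makes explicit the bookkeeping (including the polynomial identity for (\ref{eqn:R3_gps_PDE2_DSE2}), which does indeed reduce to $l_1+l_3=\kappa_1-1$) that the paper dismisses as "easy to show by direct computation."
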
 
\begin{proof}
Let $\hat{\varphi}_{l}\ (l\in S_{(\kappa_1,\delta_2)})$  
be the functions defined by (\ref{eqn:R3_gps_WW_sub2}), 
and let $\hat{\varphi}^{\mathrm{mg}}_{(\kappa_1,0,0)}$ be 
the function in Lemma \ref{lem:R3_gps_MW_sub1} (2). 
Using the duplication formula (\ref{eqn:Fn_gammaRC_duplication}), 
we know that 
$\hat{\varphi}_{(\kappa_1,0,0)}=\hat{\varphi}^{\mathrm{mg}}_{(\kappa_1,0,0)}$. 
Moreover, it is easy to show that 
the functions $\hat{\varphi}_{l}\ (l\in S_{(\kappa_1,\delta_2)})$ satisfy 
the equations in Lemma \ref{lem:R3_gps_ZgDSE2} by direct computation. 
\end{proof}

\begin{thm}[{\cite{Miyazaki_002}}]
\label{thm:R3_gps1_Whittaker}
Let $\sigma = D_{(\nu_1,\kappa_1)}\boxtimes \chi_{(\nu_2,\delta_2)}$ with 
$\nu_1,\nu_2\in \bC$, $\kappa_1\in \bZ_{\geq 2}$ and $\delta_2\in \{0,1\}$ 
such that $\Pi_\sigma$ is irreducible. \\
(1) There exists a $K$-homomorphism 
\[
\varphi^{\mathrm{mg}}_\sigma \colon V_{(\kappa_1,\delta_2)}\to 
{\mathrm{Wh}}(\Pi_{\sigma},\psi_1)^{\mathrm{mg}},
\]
whose radial part is given by 
\begin{align*}
\begin{split}
\varphi^{\mathrm{mg}}_\sigma (u_{l})(y) =&
(\sI )^{l_1-l_3}y_1y_2(y_2y_3)^{2\nu_1+\nu_2}\\
&\times \frac{1}{(4\pi \sqrt{-1})^2} \int_{s_2}\int_{s_1}
\frac{\Gamma_{\bC}\bigl(s_1+\nu_1+\tfrac{\kappa_1-1}{2}\bigr)
\Gamma_{\bR}(s_1+\nu_2+l_1)}
{ \Gamma_{\bR}(s_1+s_2+l_1+l_3)}\\
&\times \Gamma_{\bC}\bigl(s_2-\nu_1+\tfrac{\kappa_1-1}{2}\bigr)
\Gamma_{\bR}(s_2-\nu_2+l_3)
\,y_1^{-s_1} y_2^{-s_2} \,ds_1ds_2
\end{split}
\end{align*}
with $l=(l_1,l_2,l_3)\in S_{(\kappa_1,\delta_2)}$ 
and $y=\diag (y_1y_2y_3,y_2y_3,y_3)\in A$. 
Here the path of the integration $\int_{s_i}$ is the vertical line 
from $\mathrm{Re}(s_i)-\sI \infty$ to $\mathrm{Re}(s_i)+\sI \infty$ 
with sufficiently large real part to keep the poles of the integrand 
on its left. \\[2pt]
(2) Assume 
$2\nu_1-2\nu_2-\kappa_1+1\not\in 2\bZ$, and set 
\begin{align*}
&\xi (m)=\left\{\begin{array}{ll}
0&\text{if }m\equiv 0\bmod 2,\\
1&\text{if }m\equiv 1\bmod 2
\end{array}\right.&
&(m\in \bZ).
\end{align*} 
For a permutation $(i,j,k)$ of $\{1,2,3\}$, 
there exists a $K$-homomorphism 
\[
\varphi^{(i,j,k)}_\sigma \colon V_{(\kappa_1,\delta_2)}\to 
{\mathrm{Wh}}(\Pi_{\sigma},\psi_1),
\]
whose radial part is given by the power series 
\begin{align*}
\begin{split}
\varphi^{(3,j,k)}_\sigma (u_{l})(y) =&
(2\pi)^{-\kappa_1+1}(\sI )^{l_1-l_3}y_1y_2(y_2y_3)^{2\nu_1+\nu_2}\\
&\times \sum_{m_1, m_2 \geq 0}\frac{(-1)^{m_1+l_3+j+1}
2\Gamma (-2m_1+\nu_1-\nu_2+\tfrac{\kappa_1-1}{2}-l_1)}
{m_1!\,(2m_2+\xi (l_3+j+1))!}\\
&\times 
\frac{\Gamma \bigl(-m_2+\tfrac{\nu_1-\nu_2+l_3-\xi (l_3+j+1)}{2}
-\tfrac{\kappa_1-1}{4}\bigr)}
{\Gamma \bigl(-m_1-m_2+\tfrac{\nu_1-\nu_2+l_3-\xi (l_3+j+1)}{2}
-\tfrac{\kappa_1-1}{4}\bigr)}\\
&\times 
(2\pi y_1)^{2m_1+\nu_2+l_1}
(2\pi y_2)^{2m_2-\nu_1+\tfrac{\kappa_1-1}{2}+\xi (l_3+j+1)},
\end{split}\\[5pt]
\begin{split}
\varphi^{(i,3,k)}_\sigma (u_{l})(y) =&
(2\pi)^{-\kappa_1+1}(\sI )^{l_1-l_3}y_1y_2(y_2y_3)^{2\nu_1+\nu_2}\\
&\times \sum_{m_1, m_2 \geq 0}\frac{(-1)^{m_2+\kappa_1-l_1+i}
2\Gamma (-2m_2-\nu_1+\nu_2+\tfrac{\kappa_1-1}{2}-l_3)}
{(2m_1+\xi (\kappa_1-l_1+i))!\,m_2!}\\
&\times \frac{\Gamma \bigl(-m_1-\tfrac{\nu_1-\nu_2-l_1
+\xi (\kappa_1-l_1+i)}{2}-\tfrac{\kappa_1-1}{4}\bigr)}
{\Gamma \bigl(-m_1-m_2
-\tfrac{\nu_1-\nu_2-l_1+\xi (\kappa_1-l_1+i)}{2}
-\tfrac{\kappa_1-1}{4}\bigr)}\\
&\times 
(2\pi y_1)^{2m_1+\nu_1+\tfrac{\kappa_1-1}{2}+\xi (\kappa_1-l_1+i)}
(2\pi y_2)^{2m_2-\nu_2+l_3},
\end{split}\\[5pt]
\begin{split}
\varphi^{(i,j,3)}_\sigma (u_{l})(y) =&
(2\pi)^{-\kappa_1+1}(\sI )^{l_1-l_3}y_1y_2(y_2y_3)^{2\nu_1+\nu_2}\\
&\times 
\sum_{m_1, m_2 \geq 0}\frac{(-1)^{l_2}
\Gamma \bigl(-m_1-\tfrac{\nu_1-\nu_2-l_1+\xi (\kappa_1-l_1+i)}{2}
-\tfrac{\kappa_1-1}{4}\bigr)}
{(2m_1+\xi (\kappa_1-l_1+i))!\,(2m_2+\xi (l_3+j+1))!}\\
&\times 
\frac{\Gamma \bigl(-m_2+\tfrac{\nu_1-\nu_2+l_3-\xi (l_3+j+1)}{2}
-\tfrac{\kappa_1-1}{4}\bigr)}{
\Gamma \bigl(-m_1-m_2-\tfrac{l_2-1+\xi (\kappa_1-l_1+i)+\xi (l_3+j+1)}{2}\bigr)}\\
&\times 
(2\pi y_1)^{2m_1+\nu_1+\tfrac{\kappa_1-1}{2}+\xi (\kappa_1-l_1+i)}
(2\pi y_2)^{2m_2-\nu_1+\tfrac{\kappa_1-1}{2}+\xi (l_3+j+1)}
\end{split}
\end{align*}
with $l=(l_1,l_2,l_3)\in S_{(\kappa_1,\delta_2)}$ and 
$y=\diag (y_1y_2y_3,y_2y_3,y_3)\in A$. 
Moreover, $\{\varphi^{(i,j,k)}_\sigma \mid \{i,j,k\}=\{1,2,3\}\,\}$ 
forms a basis of $\Hom_K(V_{(\kappa_1,\delta_2)},
{\mathrm{Wh}}(\Pi_{\sigma},\psi_1))$, and satisfies 
\begin{align*}
\varphi^{\mathrm{mg}}_\sigma 
& = \sum_{ (i,j,k) } \varphi^{(i,j,k)}_\sigma ,
\end{align*}
where $(i,j,k)$ runs all permutations of $\{1,2,3\}$. 
\end{thm}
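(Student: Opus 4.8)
The proof will parallel those of Theorems \ref{thm:R3_ps1_Whittaker} and \ref{thm:R3_ps31_Whittaker}, using the generalized-principal-series analogues of the preparatory lemmas proved above in place of the principal-series ones. Write $\mathrm{Sol}(\Pi_\sigma;\psi_1)$ for the space of smooth solutions on $(\bR_+)^2$ of the system in Lemma \ref{lem:R3_gps_PDE3}. First I would introduce the $\bC$-linear map
\[
\Hom_K(V_{(\kappa_1,\delta_2)},{\mathrm{Wh}}(\Pi_\sigma,\psi_1))\ni \varphi \longmapsto \hat{\varphi}_{(\kappa_1,0,0)}\in \mathrm{Sol}(\Pi_\sigma;\psi_1)
\]
defined through (\ref{eqn:R3_gps_def_varphi_l}). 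This map is injective: by the remark following Lemma \ref{lem:R3_gps_ZgDSE2}, the functions $\hat{\varphi}_l$ $(l\in S_{(\kappa_1,\delta_2)})$, hence $\varphi$ itself, are reconstructed from $\hat{\varphi}_{(\kappa_1,0,0)}$ through the recursions in that lemma.

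Next I would run the dimension count. By (\ref{eqn:R3_gps_minKtype}) the minimal $K$-type $\tau_{(\kappa_1,\delta_2)}$ appears in $H(\sigma)_K$ with multiplicity one, so, since $\Pi_\sigma$ is irreducible, $\dim_\bC \Hom_K(V_{(\kappa_1,\delta_2)},{\mathrm{Wh}}(\Pi_\sigma,\psi_1))=\dim_\bC {\cI}_{\Pi_\sigma,\psi_1}=6$ by (\ref{eqn:Rn_dimWh_gps}). On the other hand $\dim_\bC \mathrm{Sol}(\Pi_\sigma;\psi_1)\le 6$ by Lemma \ref{lem:R3_gps_MW_sub1} (1). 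Hence the displayed map is a bijection, and every $K$-homomorphism into ${\mathrm{Wh}}(\Pi_\sigma,\psi_1)$ comes from a unique solution of the system in Lemma \ref{lem:R3_gps_PDE3}.

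For statement (1), I would take $\varphi^{\mathrm{mg}}_\sigma$ to be the $K$-homomorphism corresponding to the moderate growth solution $\hat{\varphi}^{\mathrm{mg}}_{(\kappa_1,0,0)}$ of Lemma \ref{lem:R3_gps_MW_sub1} (2); its radial part at a general $u_l$ is the Mellin--Barnes integral recorded in Lemma \ref{lem:R3_gps_WW_sub2}, which is exactly the formula in the theorem. For statement (2), the assumption $2\nu_1-2\nu_2-\kappa_1+1\notin 2\bZ$ guarantees $r_p-r_q\notin 2\bZ$ for all $p\neq q$ with $r=(\nu_1-\tfrac{\kappa_1-1}{2},\,\nu_1-\tfrac{\kappa_1+1}{2},\,\nu_2)$, so by Lemma \ref{lem:R3_gps_MW_sub1} (3) the six power series $\hat{\varphi}^{(i,j,k)}_{(\kappa_1,0,0)}$ form a basis of $\mathrm{Sol}(\Pi_\sigma;\psi_1)$; the corresponding $\varphi^{(i,j,k)}_\sigma$ thus form a basis of $\Hom_K(V_{(\kappa_1,\delta_2)},{\mathrm{Wh}}(\Pi_\sigma,\psi_1))$, the expansion $\varphi^{\mathrm{mg}}_\sigma=\sum_{(i,j,k)}\varphi^{(i,j,k)}_\sigma$ being inherited from the one for $\hat{\varphi}^{\mathrm{mg}}_{(\kappa_1,0,0)}$, and the radial parts at $u_l$ are given by the power series in Lemma \ref{lem:R3_gps_MW_sub2}.

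The remaining and most laborious step is to verify that the coefficients $C^{(i,j,k)}_{l,(m_1,m_2)}$ of Lemma \ref{lem:R3_gps_MW_sub2}, written in terms of $r=(r_1,r_2,r_3)$, agree with the closed forms displayed in part (2), which are written in terms of $\nu_1,\nu_2,\kappa_1$ and the parity function $\xi$. This is where one splits into the three cases according to which slot of the permutation $(i,j,k)$ carries the index $3$ — the parameter $r_3=\nu_2$ of the $\chi$-factor — and where the special relation $r_1-r_2=1$ is used: via (\ref{eqn:Fn_Pochhammer_rel}) and the duplication formula (\ref{eqn:Fn_gamma_duplication}) (together with the reflection formula for $\Gamma$), the products of Gamma values over the indices $p,q$ in Lemma \ref{lem:R3_gps_MW_sub2} collapse into factorials of the shape $(2m+\xi(\cdot))!$, and rescaling $\pi y_i$ to $2\pi y_i$ produces the overall factor $(2\pi)^{-\kappa_1+1}$ and the remaining powers of $2$. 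I expect this index bookkeeping, rather than any conceptual point, to be the main obstacle; the skeleton of the argument is the short dimension count above.
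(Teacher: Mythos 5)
Your proposal reproduces the paper's argument essentially verbatim: define the injective map $\varphi\mapsto\hat\varphi_{(\kappa_1,0,0)}$ via (\ref{eqn:R3_gps_def_varphi_l}), use (\ref{eqn:R3_gps_minKtype}) and (\ref{eqn:Rn_dimWh_gps}) together with Lemma \ref{lem:R3_gps_MW_sub1}~(1) for the $6\ge\dim_\bC\mathrm{Sol}(\Pi_\sigma;\psi_1)$ count to conclude bijectivity, then read off (1) from Lemmas \ref{lem:R3_gps_MW_sub1}~(2), \ref{lem:R3_gps_WW_sub2} and (2) from Lemmas \ref{lem:R3_gps_MW_sub1}~(3), \ref{lem:R3_gps_MW_sub2}, with the final coefficient conversion done via the duplication formula, Pochhammer identities, and the relation $r_1-r_2=1$. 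This is exactly the paper's proof (including the sketch-level treatment of the final bookkeeping), so no further comment is needed.
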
 
\begin{proof}
We denote by $\mathrm{Sol}(\Pi_{\sigma};\psi_1)$ 
the space of smooth solutions of 
the system in Lemma \ref{lem:R3_gps_PDE3} on $(\bR_+)^2$. 
We define an injective homomorphism 
\begin{align}
\label{eqn:R3_gps_Wh_to_Sol}
\Hom_K(V_{(\kappa_1,\delta_2)},{\mathrm{Wh}}(\Pi_{\sigma},\psi_1))
\ni \varphi 
\mapsto \hat{\varphi}_{(\kappa_1,0,0)}
\in \mathrm{Sol}(\Pi_{\sigma};\psi_1)
\end{align}
of $\bC$-vector spaces 
by (\ref{eqn:R3_gps_def_varphi_l}). 

Since $\dim_\bC \Hom_K(V_{(\kappa_1,\delta_2)} ,H(\sigma)_K)=1$ and 
$\Pi_\sigma$ is irreducible, we have 
\begin{align*}
\dim_\bC \Hom_K(V_{(\kappa_1,\delta_2)},
{\mathrm{Wh}}(\Pi_{\sigma} ,\psi_1))
&=\dim_\bC {\cI}_{\Pi_{\sigma},\psi_1}=6
\geq 
\dim_{\bC}\mathrm{Sol}(\Pi_{\sigma};\psi_1).
\end{align*}
Here the last inequality follows from Lemma \ref{lem:R3_gps_MW_sub1} (1).
This inequality implies that (\ref{eqn:R3_gps_Wh_to_Sol}) 
is bijective. Hence, the assertion follows from 
Lemmas \ref{lem:R3_gps_MW_sub1}, 
\ref{lem:R3_gps_MW_sub2} and \ref{lem:R3_gps_WW_sub2}. 
Here we obtain the expressions of $\varphi^{(i,j,k)}_\sigma (u_{l})(y)$ 
in the statement (2) from Lemma \ref{lem:R3_gps_MW_sub2} 
using the duplication formula (\ref{eqn:Fn_gamma_duplication}) 
and the equalities 
\begin{align*}
&\prod_{p=0}^{q-1}(s+\tfrac{p}{2})
=2^{-q}(2s)_q,\hspace{1cm} 
\prod_{p=0}^{q-1}(s+p)
=(s)_q,\\[2mm]
&\frac{(-1)^{m}}{m!}
\Gamma \bigl(-m+\tfrac{\varepsilon}{2}\bigr)
=\frac{2^{2m+\frac{1-\varepsilon }{2}}
(-1)^{\frac{1-\varepsilon }{2}} \sqrt{\pi}}
{\bigl(2m+\tfrac{1-\varepsilon }{2}\bigr)!},\\
&(-m)_q
=\left\{\begin{array}{ll}
\dfrac{(-1)^{q}m!}{(m-q)!}&
\text{if}\ m\geq q,\\[10pt]
0&\text{if}\ m<q,
\end{array}\right.
\end{align*}
for $s\in \bC$, $q,m\in \bZ_{\geq 0}$ and $\varepsilon \in \{\pm 1\}$.
\end{proof}

\chapter{Preliminaries for $GL(n,\bC)$}
\label{sec:Cn_rep_theory}

Throughout this chapter, we set ${F}=\bC$.

\section{Principal series representations}
\label{subsec:Cn_def_ps}

In this section, 
we define principal series representations of $G=GL(n,\bC)$. 
For $\nu \in \bC$ and $d\in \bZ$, 
we define a character 
$\chi_{(\nu ,d)}\colon GL(1,\bC) \to \bC^\times$ by 
\begin{align*}
\chi_{(\nu ,d)}(t)
&=\left(\frac{t}{|t|}\right)^{d}|t|^{2\nu }
&(t\in GL(1,\bC)=\bC^\times ). 
\end{align*}
We set 
\begin{align*}
&M=\{\diag (m_1,m_2,\cdots ,m_n)\in {G}\}\simeq (\bC^\times )^n,
&&P={N}M.
\end{align*}
Let $(\nu_i,d_i)\in \bC \times \bZ$ for $1\leq i\leq n$. 
Then $(\chi_{(\nu_1,d_1)},\chi_{(\nu_2,d_2)},\cdots ,\chi_{(\nu_n,d_n)})$ 
defines a character $\chi$ of $M$ by 
$\chi =\chi_{(\nu_1,d_1)}\boxtimes \chi_{(\nu_2,d_2)}\boxtimes 
\cdots \boxtimes \chi_{(\nu_n,d_n)}$, that is, 
\begin{align*}
&\chi (m)=\prod_{i=1}^n\chi_{(\nu_i,d_i)}(m_i)
&&(m=\diag (m_1,m_2,\cdots ,m_n)\in M).
\end{align*}
We extend the character $\chi$ to 
$P={N}M$ by $\chi (xm)=\chi (m)\ (x\in {N},\ m\in M)$. 
We define the function $\rho $ on $P$ by  
\begin{align*}
&\rho (xm)
=\prod_{i=1}^n|m_i|^{n+1-2i}&
&(x\in N,\ m=\diag (m_1,m_2,\cdots ,m_n)\in M).
\end{align*}

Let $H(\chi )^0$ be the space 
of $\bC $-valued continuous functions $f$ on $K$ satisfying 
\begin{align*}
&f(mk)=\chi (m)f(k)&
&(m\in {K}\cap M,\ k\in {K}),
\end{align*}
on which ${G}$ acts by 
\begin{align*}
&(\Pi_\chi (g)f)(k)=\rho (\mmp (kg))
\chi (\mmp (kg))f(\mk (kg))&
&(g\in {G},\ k\in {K},\ f\in H(\chi )^0),
\end{align*}
where $kg=\mmp (kg)\mk (kg)$ is the decomposition of $kg$ 
with respect to the decomposition ${G}=PK$. 
Although the decomposition $kg=\mmp (kg)\mk (kg)$ is not unique, 
the action $\Pi_\chi (g)$ does not depend on the choices of 
$\mmp (kg)$ and $\mk (kg)$. 
We define a principal series representation 
$(\Pi_\chi ,H(\chi ))$ of $G$ as the completion of 
$(\Pi_\chi ,H(\chi )^0)$ 
with respect to the inner product 
\begin{align*}
&\langle f_1,f_2\rangle_{H({\chi})}
=\int_{{K}}f_1(k)\,\overline{f_2(k)}\,dk&
&(f_1,f_2\in H(\chi )^0)
\end{align*}
where $dk$ is the Haar measure on ${K}$.

Combining the results of Kostant 
\cite[Theorems 5.5 and 6.6.2]{Kostant_001} and 
Matumoto \cite[Theorem 6.1.6]{Matumoto_001}, we have
\begin{align}
\label{eqn:Rn_dimWh_gps}
&\dim_\bC  {\cI}_{\Pi_\chi ,\psi_1}=n!,&
&\dim_\bC  {\cI}_{\Pi_\chi ,\psi_1}^{\rm mg}=1
\end{align} 
for a principal series representation $\Pi_\chi$ of $G=GL(n,\bC )$. 
Moreover, 
by Vogan's characterization \cite[Theorem 6.2 (f)]{Vogan_001}, 
any irreducible admissible large representation of ${G}$ is 
infinitesimally equivalent 
to some irreducible principal series representation $\Pi_\chi $.

For an element $w$ of the symmetric group $\mathfrak{S}_n$ of degree $n$, 
we set 
\[
w(\chi )=\chi_{(\nu_{w^{-1}(1)} ,d_{w^{-1}(1)})}\boxtimes 
\chi_{(\nu_{w^{-1}(2)} ,d_{w^{-1}(2)})}\boxtimes 
\cdots \boxtimes \chi_{(\nu_{w^{-1}(n)} ,d_{w^{-1}(n)})}.
\]
When $\Pi (\chi)$ is irreducible, 
it is known that $\Pi_\chi \simeq \Pi_{w(\chi)}$ 
for any $w\in \mathfrak{S}_n$ 
(\textit{cf.} \cite[Corollary 2.8]{Speh_Vogan_001}). 
Hence, it suffices to consider Whittaker functions for 
irreducible principal series representations 
$\Pi_\chi$ defined from 
$\chi =\chi_{(\nu_{1} ,d_{1})}\boxtimes 
\chi_{(\nu_{2} ,d_{2})}\boxtimes 
\cdots \boxtimes \chi_{(\nu_{n} ,d_{n})}$
with $d_1\geq d_2\geq \cdots \geq d_n$.

We denote by $H(\chi)_K$ the subspace of $H(\chi )$ 
consisting of all ${K}$-finite vectors. 
We denote the action of $\g_\bC$ on $H(\chi )_K$ 
induced from $\Pi_\chi $ by the same symbol $\Pi_\chi$. 
The $K$-module structure of $\Pi_\chi$ is wellknown. 
Let $(\tau ,V_\tau )$ be an irreducible representation of ${K}$. 
By the Frobenius reciprocity law \cite[Theorem 1.14]{Knapp_002}, 
the $\bC$-linear map $\Hom_{{K}\cap M}(V_\tau ,\bC_\chi)
\ni \eta \mapsto \hat{\eta} \in \Hom_{{K}}(V_\tau ,H(\chi))$ defined by 
$\hat{\eta}(v)(k)=\eta (\tau (k)v)\ (v\in V_\tau ,\ k\in {K})$ is an 
isomorphism of $\bC$-vector spaces. Here $\bC_\chi =\bC$ is 
a $M$-module on which $M$ acts by the character $\chi $. 
Since any Hilbert representation of $K$ is completely reducible, we have 
\begin{align}
\label{eqn:Cn_ps_Kstr}
&H({\chi})_K=\bigoplus_{(\tau ,V_\tau )}
\bC \textrm{-span}
\{\hat{\eta}(v)\mid \eta \in \Hom_{{K}\cap M}(V_\tau ,\bC_\chi ),\ 
v\in V_\tau\}. 
\end{align}
Here $(\tau ,V_\tau )$ runs through the set of equivalence classes of 
irreducible representations of ${K}$.

\section{The elements of $\g_\bC$ and $Z(\g_\bC)$}
\label{subsec:Cn_g_gen_Zg}

The Lie algebra ${\g_\bC}=\g \gl (n,\bC)\otimes_{\bR}\bC$ is isomorphic to 
$\g \gl (n,\bC ) \oplus \g \gl (n,\bC )$, via 
\begin{align}
\label{eqn:Cn_isom_gC_to_gg}
{\g_\bC}\ni X\otimes c\mapsto (cX,c\overline{X})
\in \g \gl (n,\bC ) \oplus \g \gl (n,\bC ).
\end{align}
For $1\leq i,j\leq n$, 
let $E_{i,j}^{\g}=E_{i,j}^{\g_n}$ and 
$\widetilde{E}_{i,j}^{\g}=\widetilde{E}_{i,j}^{\g_n}$ 
be elements of ${\g_\bC}$ corresponding 
to $(E_{i,j},0)$ and $(0,E_{i,j})$, respectively, that is, 
\begin{align*}
&E_{i,j}^{\g}
=\frac{1}{2}\{E_{i,j}\otimes 1-(\sI E_{i,j})\otimes \sI\},\\
&\widetilde{E}_{i,j}^{\g}
=\frac{1}{2}\{E_{i,j}\otimes 1+(\sI E_{i,j})\otimes \sI\}.
\end{align*}
For $1\leq i,j\leq n$, we set 
\begin{align*}
&E_{i,j}^{{\gk}}=E_{i,j}^{\g}-\widetilde{E}_{j,i}^{\g},&
&E_{i,j}^{{\gp}}=E_{i,j}^{\g}+\widetilde{E}_{j,i}^{\g}.
\end{align*}
Then we have 
\begin{align*}
&\g_\bC =\bigoplus_{1\leq i,j\leq n}
(\bC\,E_{i,j}^{\g}\oplus \bC\,\widetilde{E}_{i,j}^{\g}),
\hspace{40pt}
\gn_\bC =\bigoplus_{1\leq i<j\leq n}
(\bC\,E_{i,j}^{\g}\oplus \bC\,\widetilde{E}_{i,j}^{\g}),\\
&\ga_\bC =\bigoplus_{i=1}^n
\bC\,E_{i,i}^{\gp},\hspace{30pt}
\gk_\bC =\bigoplus_{1\leq i,j\leq n}
\bC\,E_{i,j}^{\gk},\hspace{30pt}
\gp_\bC =\bigoplus_{1\leq i,j\leq n}
\bC\,E_{i,j}^{\gp}.
\end{align*}

\begin{lem}
\label{lem:Cn_g_act_Cpsi}
Let $f$ be a function in $C^\infty (N\backslash G;\psi_\varepsilon )$ 
with $\varepsilon \in \{\pm 1\}$. 
Then, for $1\leq i\leq j\leq n$ 
and $y=\diag (y_1y_2\cdots y_n,\,y_2\cdots y_n,\,\cdots ,\,y_n)\in A$, 
it holds that 
\begin{align*}
&2(R(E_{i,j}^{{\g}})f)(y)
=\left\{\begin{array}{ll}
\displaystyle 
(-\partial_{i-1}+\partial_i)f(y)+(R(E_{i,i}^{\gk})f)(y)
&\text{if}\ j=i,\\[2pt]
4\pi \varepsilon \sI y_if(y)&\text{if}\ j=i+1,\\[2pt]
0&\text{otherwise},
\end{array}\right.\\
&2(R(\widetilde{E}_{i,j}^{{\g}})f)(y)
=\left\{\begin{array}{ll}
\displaystyle 
(-\partial_{i-1}+\partial_i)f(y)-(R(E_{i,i}^{\gk})f)(y)
&\text{if}\ j=i,\\[2pt]
4\pi \varepsilon \sI y_if(y)&\text{if}\ j=i+1,\\[2pt]
0&\text{otherwise},
\end{array}\right.
\end{align*}
where $\partial_{0}=0$ and 
$\displaystyle \partial_i=y_i\frac{\partial}{\partial y_i}$ 
$(1\leq i\leq n)$. 
\end{lem}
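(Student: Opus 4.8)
The plan is to compute directly, starting from the explicit formula for $\psi_\varepsilon$ and the isomorphism $\g_\bC \simeq \g\gl(n,\bC)\oplus\g\gl(n,\bC)$ recorded in \eqref{eqn:Cn_isom_gC_to_gg}. First I would observe that, by the very definitions of $E_{i,j}^{\g}$, $\widetilde{E}_{i,j}^{\g}$, $E_{i,j}^{\gk}$ and $E_{i,j}^{\gp}$, one has the matrix identities
\begin{align*}
&2E_{i,j}^{\g}=E_{i,j}^{\gp}+E_{i,j}^{\gk}+\bigl(E_{i,j}^{\gp}-E_{i,j}^{\gk}-2\widetilde{E}_{j,i}^{\g}\bigr),
\end{align*}
but cleaner is to note $2E_{i,j}^{\g}=E_{i,j}^{\g}-\widetilde{E}_{j,i}^{\g}+E_{i,j}^{\g}+\widetilde{E}_{j,i}^{\g}=E_{i,j}^{\gk}+E_{i,j}^{\gp}$ only when $i=j$; in general $2E_{i,j}^{\g}=(E_{i,j}^{\g}-\widetilde{E}_{j,i}^{\g})+(E_{i,j}^{\g}+\widetilde{E}_{j,i}^{\g})=E_{i,j}^{\gk}+E_{i,j}^{\gp}$ always, and likewise $2\widetilde{E}_{i,j}^{\g}=E_{j,i}^{\gp}-E_{j,i}^{\gk}$. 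So I would express $2E_{i,j}^{\g}=E_{i,j}^{\gk}+E_{i,j}^{\gp}$ and $2\widetilde{E}_{i,j}^{\g}=E_{j,i}^{\gp}-E_{j,i}^{\gk}$, reducing everything to the action of the $\gk$-part and the $\gp$-part.

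Next I would handle the $\gp$-part by a reduction to the real-form computation. The key point is that for a function $f$ on $A$, the action of $E_{i,j}^{\gp}$ (symmetric matrices, the $\gp$-direction) is governed by the Iwasawa/Killing-form structure exactly as in the real case treated in Lemma~\ref{lem:Rn_g_act_Cpsi}, except that here $F=\bC$ so the character $\psi_{\bC}$ contributes a factor differing from $\psi_{\bR}$; concretely $R(E_{i,i}^{\gp})f=(-\partial_{i-1}+\partial_i)f$ off the nilpotent part, $R(E_{i,i+1}^{\gp})f=4\pi\varepsilon\sI y_i f$ (the $4\pi$ rather than $2\pi$ coming from $\psi_{\bC}(t)=\exp(2\pi\sI(t+\bar t))$), and $R(E_{i,j}^{\gp})f=0$ for $j>i+1$. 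The $\gk$-part $E_{i,j}^{\gk}$ acts trivially on the radial direction except that $R(E_{i,i}^{\gk})f$ need not vanish — it is the "angular'' derivative that survives in the formula. I would therefore argue as in the proof of Lemma~\ref{lem:Rn_g_act_Cpsi}: write $g=xyk$ in Iwasawa coordinates, differentiate $f(g\exp(tX))$ at $t=0$ using the transformation law $f(xg)=\psi_\varepsilon(x)f(g)$, and read off that only the "$j=i$'' and "$j=i+1$'' positions of the triangular root structure contribute, with the stated coefficients.

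Finally I would assemble the two pieces: from $2E_{i,j}^{\g}=E_{i,j}^{\gk}+E_{i,j}^{\gp}$,
\begin{align*}
2(R(E_{i,j}^{\g})f)(y)=(R(E_{i,j}^{\gp})f)(y)+(R(E_{i,j}^{\gk})f)(y),
\end{align*}
and substituting the $\gp$-computation (which gives $(-\partial_{i-1}+\partial_i)f$, $4\pi\varepsilon\sI y_i f$, or $0$) together with $R(E_{i,j}^{\gk})f=0$ for $i\neq j$ (since $E_{i,j}^{\gk}\in\gk$ annihilates the left-$N$-equivariant radial part unless $i=j$) yields the first displayed identity. For the second, $2\widetilde{E}_{i,j}^{\g}=E_{j,i}^{\gp}-E_{j,i}^{\gk}$ with $i\le j$ means the relevant cases are $j=i$ (giving $(-\partial_{i-1}+\partial_i)f-(R(E_{i,i}^{\gk})f)(y)$) and $j=i+1$, where $E_{i+1,i}^{\gp}=E_{i,i+1}^{\gp}$ gives the same $4\pi\varepsilon\sI y_i f$ while $E_{i+1,i}^{\gk}$ contributes nothing to the value at $y$; the off-diagonal $\gk$-terms vanish, giving the second identity.

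The main obstacle I anticipate is bookkeeping the $\gk$-contribution correctly: unlike the $GL(n,\bR)$ case where the diagonal of $\gk$ is zero (skew-symmetric real matrices have zero diagonal), here $E_{i,i}^{\gk}=E_{i,i}^{\g}-\widetilde{E}_{i,i}^{\g}$ is genuinely nonzero in $\gk_\bC$, so the diagonal formulas must retain the $\pm(R(E_{i,i}^{\gk})f)(y)$ term rather than dropping it — and one must verify that this term is exactly what appears when one differentiates along the $U(1)$-factor of the diagonal torus in $K=U(n)$. I would make this precise by checking the $n=1$ case $G=GL(1,\bC)=\bC^\times$ first, where $E_{1,1}^{\g}$ and $\widetilde{E}_{1,1}^{\g}$ correspond to $\partial_r$ and the angular derivative, and then noting the general diagonal case is the same computation blockwise. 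The nilpotent-direction and strictly-off-diagonal cases are routine consequences of the root-space grading and $\psi_\varepsilon$ being a character supported on the simple root directions.
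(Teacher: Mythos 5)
There is a genuine gap in the proposal. The central mistake is the assertion that $R(E_{i,j}^{\gk})f(y)=0$ for $i\neq j$, together with the companion assertion that $R(E_{i,j}^{\gp})$ already reproduces the right-hand side of the lemma. Neither is true. Elements of $\gk_\bC$ act by right translation in the $K$-direction, and for a general $f\in C^\infty(N\backslash G;\psi_\varepsilon)$ this derivative does not vanish: for instance, formula (\ref{eqn:C2_gkact_ps12}) gives $R(E_{1,2}^{\gk_2})\Phi(\zeta_{(\chi;l,q)})=q\,\Phi(\zeta_{(\chi;l,q-1)})$, which is generically nonzero. The reason the paper's formulas for $j>i$ contain \emph{no} $\gk$-term is not that the $\gk$-piece vanishes; it is that $E_{i,j}^{\g}$ and $\widetilde{E}_{i,j}^{\g}$ for $i<j$ lie \emph{already} in $\gn_\bC=\bigoplus_{i<j}(\bC E_{i,j}^{\g}\oplus\bC\widetilde{E}_{i,j}^{\g})$, so their right action at $y$ is simply $d\psi_\varepsilon(\Ad(y)\cdot)f(y)$ with no Iwasawa bookkeeping at all. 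By splitting $2E_{i,j}^{\g}=E_{i,j}^{\gk}+E_{i,j}^{\gp}$ you introduce a lower-triangular component (both $E_{i,j}^{\gk}$ and $E_{i,j}^{\gp}$ contain $\widetilde{E}_{j,i}^{\g}\in\overline{\gn}_\bC$ when $i<j$), and computing each of $R(E_{i,j}^{\gk})f(y)$, $R(E_{i,j}^{\gp})f(y)$ then genuinely requires an Iwasawa decomposition and produces nontrivial $\gk$-derivatives that cancel only in the sum. Asserting they vanish individually is circular.

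There is also a smaller error: $E_{i+1,i}^{\gp}\neq E_{i,i+1}^{\gp}$ in $\gp_{\bC}$. Indeed $E_{i,j}^{\gp}=E_{i,j}^{\g}+\widetilde{E}_{j,i}^{\g}$, so under the identification $\g_\bC\simeq\g\gl(n,\bC)\oplus\g\gl(n,\bC)$ one has $E_{i,i+1}^{\gp}\leftrightarrow(E_{i,i+1},E_{i+1,i})$ while $E_{i+1,i}^{\gp}\leftrightarrow(E_{i+1,i},E_{i,i+1})$; these are distinct. The symmetry $E_{i,j}^{\gp}=E_{j,i}^{\gp}$ holds for $GL(n,\bR)$ but not for $GL(n,\bC)$.

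The correct (and short) direct computation goes as follows. For $j>i$, write $E_{i,j}^{\g}=\tfrac12\{E_{i,j}\otimes 1-(\sI E_{i,j})\otimes\sI\}\in\gn_\bC$ and $\widetilde{E}_{i,j}^{\g}=\tfrac12\{E_{i,j}\otimes 1+(\sI E_{i,j})\otimes\sI\}\in\gn_\bC$; then $R(X)f(y)=d\psi_\varepsilon(\Ad(y)X)f(y)$. One has $\Ad(y)E_{i,i+1}=y_iE_{i,i+1}$, $d\psi_\varepsilon(E_{i,i+1})=4\pi\varepsilon\sI$ (from $\psi_\bC(t)=\exp(2\pi\sI(t+\overline t))$), and $d\psi_\varepsilon(\sI E_{i,i+1})=0$, giving $2R(E_{i,i+1}^{\g})f(y)=2R(\widetilde{E}_{i,i+1}^{\g})f(y)=4\pi\varepsilon\sI y_if(y)$; for $j>i+1$ everything is annihilated by $d\psi_\varepsilon$. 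For $j=i$ one writes $E_{i,i}^{\g}=\tfrac12(E_{i,i}^{\gp}+E_{i,i}^{\gk})$ and $\widetilde{E}_{i,i}^{\g}=\tfrac12(E_{i,i}^{\gp}-E_{i,i}^{\gk})$, with $E_{i,i}^{\gp}=E_{i,i}\otimes 1\in\ga_\bC$ acting by $(-\partial_{i-1}+\partial_i)$ and the $E_{i,i}^{\gk}$-term kept as is. Only on the diagonal is the $\gk/\gp$ decomposition compatible with the Iwasawa structure; off the diagonal it is not, and the proposed reduction does not go through.
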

\begin{proof}
Direct computation.
\end{proof}

The $\bC$-algebra $U({\g_\bC})$ is isomorphic to 
$U(\g \gl (n,\bC )\oplus \g \gl (n,\bC ))$ via the map 
induced from (\ref{eqn:Cn_isom_gC_to_gg}). 
Moreover, the $\bC$-algebra 
$U(\g \gl (n,\bC )\oplus \g \gl (n,\bC ))$ 
is isomorphic to $U(\g \gl (n,\bC ))\otimes_\bC U(\g \gl (n,\bC ))$ 
via the map induced from 
\begin{align*}
\g \gl (n,\bC ) \!\oplus \!\g \gl (n,\bC ) \ni (X_1,X_2)\mapsto 
X_1\otimes 1+1\otimes X_2
\in U(\g \gl (n,\bC ))\otimes_\bC U(\g \gl (n,\bC )). 
\end{align*}
Therefore, 
we may identify $U({\g_\bC} )$ 
with $U(\g \gl (n,\bC ))\otimes_\bC U(\g \gl (n,\bC ))$ 
via the composite of the above isomorphisms. 
For $1\leq h\leq n$, let $\cC_{h}^{\g}$ 
and $\widetilde{\cC}_{h}^{\g}$ be 
the elements of $Z({\g_\bC} )$ corresponding to 
$\cC_h\otimes 1$ and $1\otimes \cC_h$, respectively. 
Here $\cC_h\ (1\leq h\leq n)$ are the Capelli elements 
introduced in \S \ref{subsec:Fn_capelli}.
Since $\cC_h\ (1\leq h\leq n)$ generate $Z(\g \gl (n,\bC))$, 
we know that $\cC_{h}^{\g},\widetilde{\cC}_{h}^{\g}\ 
(1\leq h\leq n)$ generate $Z({\g_\bC})$ as a $\bC$-algebra. 
The explicit forms of 
$\cC_{h}^{\g},\widetilde{\cC}_{h}^{\g}\ 
(1\leq h\leq n)$ are given by 
\begin{align*}
&\cC_h^{\g}=\underset{w\in \gS_h}
{\sum_{1\leq i_1<i_2<\cdots <i_h\leq n,}}
\sgn (w)\cE_{i_1,i_{w(1)}}^{\g} 
\cE_{i_2,i_{w(2)}}^{\g} \cdots 
\cE_{i_h,i_{w(h)}}^{\g} ,\\
&\widetilde{\cC}_h^{\g}=\underset{w\in \gS_h}
{\sum_{1\leq i_1<i_2<\cdots <i_h\leq n,}}
\sgn (w)\widetilde{\cE}_{i_1,i_{w(1)}}^{\g} 
\widetilde{\cE}_{i_2,i_{w(2)}}^{\g} \cdots 
\widetilde{\cE}_{i_h,i_{w(h)}}^{\g},
\end{align*}
with 
\begin{align*}
&\cE_{i,j}^{\g} =\left\{\begin{array}{ll} 
E_{i,i}^{\g} -\tfrac{n+1-2i}{2}&\text{if }\,i=j,\\[1mm]
E_{i,j}^{\g} &\text{if }\,i\neq j,
\end{array}\right.&
&\widetilde{\cE}_{i,j}^{\g} =\left\{\begin{array}{ll} 
\widetilde{E}_{i,i}^{\g} -\tfrac{n+1-2i}{2}&\text{if }\,i=j,\\[1mm]
\widetilde{E}_{i,j}^{\g} &\text{if }\,i\neq j.
\end{array}\right.
\end{align*}

\section{The eigenvalues of generators of $Z(\g_\bC)$}
\label{subsec:Cn_inf_char_ps}

In this section, let 
$\chi =\chi_{(\nu_1,d_1)}\boxtimes \chi_{(\nu_2,d_2)} 
\boxtimes \cdots \boxtimes \chi_{(\nu_n,d_n)}$ 
with $\nu_1,\nu_2,\cdots ,\nu_n\in \bC$ and 
$d_1,d_2,\cdots ,d_n\in \bZ$. 
By the definition of $\Pi_\chi$, we have 
\begin{align}
\label{eqn:Cn_gact_value1}
&(\Pi_\chi (X)f)(1_n)=0&&(X\in \gn_{\bC}U(\g_\bC)),\\
\label{eqn:Cn_gact_value2}
&(\Pi_\chi (E_{i,i}^{{\g}})f)(1_n)
=\bigl(\nu_i+\tfrac{d_i}{2}+\tfrac{n+1-2i}{2}\bigr)f(1_n),\\
\label{eqn:Cn_gact_value3}
&(\Pi_\chi (\widetilde{E}_{i,i}^{{\g}})f)(1_n)
=\bigl(\nu_i-\tfrac{d_i}{2}+\tfrac{n+1-2i}{2}\bigr)f(1_n)&
&(1\leq i\leq n)
\end{align}
for $f\in H(\chi)_K$. 
Using the equalities (\ref{eqn:Cn_gact_value1}), 
(\ref{eqn:Cn_gact_value2}) and (\ref{eqn:Cn_gact_value3}), 
we obtain the following proposition 
as is Proposition \ref{prop:Rn_Ch_eigenvalue}.

\begin{prop}
\label{prop:Cn_Ch_eigenvalue}
Retain the notation. Then, for $f\in H(\chi)_K$ and $1\leq h\leq n$, 
it holds that 
\begin{align*}
&\Pi_\chi (\cC_h^{{\g}})f=
\left\{\sum_{1\leq i_1<i_2<\cdots <i_h\leq n}\!
\Bigl(\nu_{i_1}+\tfrac{d_{i_1}}{2}\Bigr)\!
\Bigl(\nu_{i_2}+\tfrac{d_{i_2}}{2}\Bigr)\cdots 
\Bigl(\nu_{i_h}+\tfrac{d_{i_h}}{2}\Bigr)
\right\}f,\\
&\Pi_\chi (\widetilde{\cC}_h^{{\g}})f=
\left\{\sum_{1\leq i_1<i_2<\cdots <i_h\leq n}\!
\Bigl(\nu_{i_1}-\tfrac{d_{i_1}}{2}\Bigr)\!
\Bigl(\nu_{i_2}-\tfrac{d_{i_2}}{2}\Bigr)\cdots 
\Bigl(\nu_{i_h}-\tfrac{d_{i_h}}{2}\Bigr)
\right\}f.
\end{align*}
\end{prop}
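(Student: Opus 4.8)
The plan is to reduce Proposition~\ref{prop:Cn_Ch_eigenvalue} to the real case, namely Proposition~\ref{prop:Rn_Ch_eigenvalue}, by exploiting the isomorphism (\ref{eqn:Cn_isom_gC_to_gg}) and the identification of $U(\g_\bC)$ with $U(\g\gl(n,\bC))\otimes_\bC U(\g\gl(n,\bC))$ explained in \S\ref{subsec:Cn_g_gen_Zg}. Under that identification, $\cC_h^{\g}$ corresponds to $\cC_h\otimes 1$ and $\widetilde{\cC}_h^{\g}$ to $1\otimes \cC_h$, so the two assertions are completely symmetric and it suffices to treat $\cC_h^{\g}$. First I would recall, as in the proof of Proposition~\ref{prop:Rn_Ch_eigenvalue}, the congruence analogous to (\ref{eqn:Rn_Ch_modN}): modulo $\gn_\bC U(\g_\bC)$ one has
\begin{align*}
\cC_h^{\g}\equiv \sum_{1\leq i_1<i_2<\cdots <i_h\leq n}
\cE_{i_1,i_1}^{\g}\cE_{i_2,i_2}^{\g}\cdots \cE_{i_h,i_h}^{\g},
\end{align*}
which holds because the Capelli element $\cC_h$ itself satisfies this congruence in $U(\g\gl(n,\bC))$ and the isomorphism sends the strictly-upper-triangular part into $\gn_\bC U(\g_\bC)$.

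Next I would argue exactly as in Proposition~\ref{prop:Rn_Ch_eigenvalue} that, since $\cC_h^{\g}$ lies in $Z(\g_\bC)$, it acts by a scalar on the $K$-finite vectors belonging to any fixed isotypic component of $H(\chi)_K$; combined with the $K$-module decomposition (\ref{eqn:Cn_ps_Kstr}) and the Frobenius-reciprocity description of the $K$-types, the scalar is detected by evaluating at $1_n$. Concretely, for a basis $\{\eta_i\}$ of $\Hom_{K\cap M}(V_\tau,\bC_\chi)$ and suitable $v_{\eta_j}\in V_\tau$ with $\eta_i(v_{\eta_j})=\delta_{i,j}$, the matrix coefficient $\gamma_{i,j}=(\Pi_\chi(\cC_h^{\g})\hat\eta_i(v_{\eta_j}))(1_n)$ is computed using the displayed congruence together with (\ref{eqn:Cn_gact_value1}) and (\ref{eqn:Cn_gact_value2}). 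By (\ref{eqn:Cn_gact_value2}) the operator $\cE_{i,i}^{\g}=E_{i,i}^{\g}-\tfrac{n+1-2i}{2}$ acts at $1_n$ by $\nu_i+\tfrac{d_i}{2}$, so the diagonal sum evaluates to
\begin{align*}
\sum_{1\leq i_1<i_2<\cdots <i_h\leq n}
\Bigl(\nu_{i_1}+\tfrac{d_{i_1}}{2}\Bigr)
\Bigl(\nu_{i_2}+\tfrac{d_{i_2}}{2}\Bigr)\cdots
\Bigl(\nu_{i_h}+\tfrac{d_{i_h}}{2}\Bigr),
\end{align*}
while the off-diagonal terms $\gamma_{i,j}$ with $i\neq j$ vanish by (\ref{eqn:Cn_gact_value1}). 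The same computation with $\widetilde{E}_{i,i}^{\g}$ and (\ref{eqn:Cn_gact_value3}) gives the parallel formula for $\widetilde{\cC}_h^{\g}$, with $+\tfrac{d_i}{2}$ replaced by $-\tfrac{d_i}{2}$.

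The only real bookkeeping obstacle is making precise that the ``diagonal'' reduction modulo $\gn_\bC U(\g_\bC)$ transports correctly through the tensor identification $U(\g_\bC)\simeq U(\g\gl(n,\bC))\otimes_\bC U(\g\gl(n,\bC))$, i.e.\ that $\cE_{i_1,i_1}^{\g}\cdots \cE_{i_h,i_h}^{\g}$ really corresponds to $\cE_{i_1,i_1}\cdots\cE_{i_h,i_h}\otimes 1$ and that the error term produced by the vertical-determinant expansion (\ref{eqn:Fn_capelli}) lands in $\gn_\bC U(\g_\bC)$; but this is a formal consequence of the fact that the isomorphism (\ref{eqn:Cn_isom_gC_to_gg}) is compatible with the triangular decomposition $\g_\bC=\gn_\bC\oplus\ga_\bC\oplus\overline{\gn_\bC}\oplus(\text{anti-holomorphic part})$ and carries $E_{i,j}^{\g}$ for $i<j$ into $\gn_\bC$. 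Since Proposition~\ref{prop:Rn_Ch_eigenvalue} has already been established by precisely this mechanism over $\bR$, the proof here amounts to observing that the argument applies verbatim in each of the two tensor factors; I would state this explicitly as the phrase ``as in Proposition~\ref{prop:Rn_Ch_eigenvalue}'' already anticipates.
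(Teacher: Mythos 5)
Your proposal is correct and follows essentially the same route as the paper, which simply invokes the argument of Proposition~\ref{prop:Rn_Ch_eigenvalue} with (\ref{eqn:Cn_gact_value1})--(\ref{eqn:Cn_gact_value3}) in place of the real-case equalities; the congruence of $\cC_h^{\g}$ modulo $\gn_\bC U(\g_\bC)$ transports through the tensor identification exactly as you indicate. One small caution: the parenthetical ``triangular decomposition $\g_\bC=\gn_\bC\oplus\ga_\bC\oplus\overline{\gn_\bC}\oplus(\text{anti-holomorphic part})$'' is mis-stated, since $\gn_\bC$ already contains both $E_{i,j}^{\g}$ and $\widetilde{E}_{i,j}^{\g}$ for $i<j$ (the holomorphic/anti-holomorphic split is transverse to the triangular one), but this does not affect the substance of the argument.
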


\chapter{Whittaker functions on $GL(2,\bC)$}
\label{sec:C2_whittaker}

Throughout this chapter, we set $n=2$ and $F=\bC $.

\section{Representations of $U(2)$}
\label{subsec:C2_rep_K}

In this section, we discuss 
the representation theory of ${K}=U(2)$. 
Let $\Lambda_2 =\{\lambda 
=(\lambda_1,\lambda_2)\in \bZ^{2}\mid 
\lambda_1\geq \lambda_2\}$. 
For $\lambda =(\lambda_1,\lambda_2)\in \Lambda_2$, 
let $V_{\lambda}=V_{\lambda}^{(2)}$ be the $\bC$-vector space of 
degree $\lambda_1-\lambda_2$ homogeneous polynomials of two variables 
${z}_1,{z}_2$, and define the action 
$\tau_\lambda =\tau_\lambda^{(2)}$ of $K$ on $V_{\lambda}$ by 
\begin{align*}
&(\tau_{\lambda }(k)p)({z}_1,{z}_2)
=(\det k)^{\lambda_2}p(({z}_1,{z}_2)\cdot k)&
&(k\in K,\ p\in V_\lambda ).
\end{align*}
Here $({z}_1,{z}_2)\cdot k$ is the ordinal product of matrices. 
Then $(\tau_{\lambda },V_{\lambda })$ 
is irreducible, and $\dim_\bC  V_{\lambda }=\lambda_1-\lambda_2+1$. 
The set of equivalence classes of 
irreducible representations of ${K}$ is 
exhausted by 
$\{\tau_{\lambda }\mid \lambda \in \Lambda_2 \}$. 

For $\lambda=(\lambda_1,\lambda_2)\in \Lambda_2$, 
we set $Q_{\lambda}=\{q\in \bZ \mid 0\leq q\leq \lambda_1-\lambda_2\}$. 
For $q\in Q_{\lambda}$, 
we define a basis $\{v_{\lambda ,q}\mid q\in Q_\lambda \}$ of $V_{\lambda}$ 
by 
\[
v_{\lambda ,q}(z_1,z_2)=z_1^{\lambda_1-\lambda_2-q}z_2^q. 
\]
It is convenient to set 
$v_{\lambda,q}=0$ if $q\not\in Q_\lambda$. 
The action of 
\[
K\cap M=\{\diag (m_1,m_2)\mid m_1,m_2\in U(1)\} 
\]
on this basis is given by 
\begin{align}
\label{eqn:C2_Mact}
&\tau_{\lambda }(m)v_{\lambda,q}
=m_1^{\lambda_1-q}m_2^{q+\lambda_2}v_{\lambda,q}&
&(m=\diag (m_1,m_2)\in K\cap M).
\end{align}
We denote the differential of $\tau_{\lambda }$ 
again by $\tau_{\lambda }$. 
Then, for $q\in Q_\lambda$, we have 
\begin{align}
\label{eqn:C2_gKact11_22}
&\tau_{\lambda }(E_{1,1}^{\gk} )v_{\lambda,q}=(\lambda_1-q)v_{\lambda,q},&
&\tau_{\lambda }(E_{2,2}^{\gk} )v_{\lambda,q}=(q+\lambda_2)v_{\lambda,q},\\
\label{eqn:C2_gKact12_21}
&\tau_{\lambda }(E_{1,2}^{\gk} )v_{\lambda,q}=qv_{\lambda,q-1},&
&\tau_{\lambda }(E_{2,1}^{\gk} )v_{\lambda,q}
=(\lambda_1-\lambda_2-q)v_{\lambda,q+1}.
\end{align}

For later use, we prepare the following lemma. 
\begin{lem}
\label{lem:C2_tensor}
For $\alpha =(\alpha_1,\alpha_2),
\beta =(\beta_1,\beta_2)\in \Lambda_2$, 
it holds that 
\begin{align}
\label{eqn:C2_tensor1}
\Hom_K(V_{\alpha +\beta},V_{\alpha}\otimes_{\bC}V_{\beta})
=\bC \,\mathrm{I}_{\alpha +\beta}^{\alpha ,\beta}, 
\end{align}
where $\mathrm{I}_{\alpha +\beta}^{\alpha ,\beta}\colon 
V_{\alpha +\beta}\to V_{\alpha}\otimes_{\bC}V_{\beta}$ 
is a $\bC$-linear map characterized by 
\begin{align*}
\mathrm{I}_{\alpha +\beta}^{\alpha ,\beta}
(v_{\alpha +\beta ,q})
=\sum_{i=\max \{0,q-\beta_1 +\beta_2\}}^{\min \{q,\alpha_1-\alpha_2\}}
\binom{q}{i}
\binom{\alpha_1+\beta_1-\alpha_2-\beta_2-q}{\alpha_1-\alpha_2-i}
v_{\alpha ,i}\otimes v_{\beta,q-i}&\\
(0\leq q\leq \alpha_1+\beta_1-\alpha_2-\beta_2)&.
\end{align*}
\end{lem}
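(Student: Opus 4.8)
The plan is to establish (\ref{eqn:C2_tensor1}) by verifying three things: that $\mathrm{I}_{\alpha+\beta}^{\alpha,\beta}$ is nonzero, that it is a $K$-homomorphism, and that $\dim_\bC\Hom_K(V_{\alpha+\beta},V_\alpha\otimes_\bC V_\beta)\le 1$. Nonvanishing is immediate from the $q=0$ term, which is a positive multiple of $v_{\alpha,0}\otimes v_{\beta,0}$. For the dimension bound I would argue with $K\cap M$-weights: by (\ref{eqn:C2_Mact}) the vector $v_{\lambda,q}$ has weight $(\lambda_1-q,\lambda_2+q)$, so $v_{\alpha,i}\otimes v_{\beta,q-i}$ has weight $(\alpha_1+\beta_1-q,\alpha_2+\beta_2+q)$, and hence the weight $(\alpha_1+\beta_1,\alpha_2+\beta_2)$ of the highest weight vector $v_{\alpha+\beta,0}$ occurs in $V_\alpha\otimes_\bC V_\beta$ only along $\bC\,v_{\alpha,0}\otimes v_{\beta,0}$. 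Any $\phi\in\Hom_K(V_{\alpha+\beta},V_\alpha\otimes_\bC V_\beta)$ therefore sends $v_{\alpha+\beta,0}$ into this one-dimensional weight space, and since $\{\tau_{\alpha+\beta}(E_{2,1}^{\gk})^{q}v_{\alpha+\beta,0}\}_{q}$ recovers the whole basis $\{v_{\alpha+\beta,q}\}$ up to nonzero scalars by (\ref{eqn:C2_gKact12_21}), $\phi$ is determined by $\phi(v_{\alpha+\beta,0})$; this forces $\dim_\bC\Hom_K\le 1$.

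The substantive step is the $K$-equivariance of $\mathrm{I}_{\alpha+\beta}^{\alpha,\beta}$. Since $K=U(2)$ is connected, it suffices to check that $\mathrm{I}_{\alpha+\beta}^{\alpha,\beta}$ intertwines the $\gk_\bC$-actions on the bases. Compatibility with $E_{1,1}^{\gk}$ and $E_{2,2}^{\gk}$ is the weight bookkeeping already used; by the symmetry $z_1\leftrightarrow z_2$, which sends $v_{\lambda,q}\mapsto v_{\lambda,\lambda_1-\lambda_2-q}$, preserves $\mathrm{I}_{\alpha+\beta}^{\alpha,\beta}$, and exchanges $E_{1,2}^{\gk}$ with $E_{2,1}^{\gk}$, it is enough to treat $E_{2,1}^{\gk}$. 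Substituting (\ref{eqn:C2_gKact12_21}) into both $\tau_{\alpha+\beta}(E_{2,1}^{\gk})$ applied first and $(\tau_\alpha\otimes\tau_\beta)(E_{2,1}^{\gk})$ applied last, reindexing, and comparing the coefficients of $v_{\alpha,i}\otimes v_{\beta,q+1-i}$ reduces the assertion to the elementary binomial identity
\[
(N-q)\binom{q+1}{i}\binom{N-q-1}{M-i}=(M-i+1)\binom{q}{i-1}\binom{N-q}{M-i+1}+(L-q+i)\binom{q}{i}\binom{N-q}{M-i},
\]
with $M=\alpha_1-\alpha_2$, $L=\beta_1-\beta_2$, $N=M+L$. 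This follows from $(M-i+1)\binom{N-q}{M-i+1}=(L-q+i)\binom{N-q}{M-i}$ together with Pascal's rule $\binom{q}{i-1}+\binom{q}{i}=\binom{q+1}{i}$.

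Combining the three points, $\mathrm{I}_{\alpha+\beta}^{\alpha,\beta}$ is a nonzero element of the at-most-one-dimensional space $\Hom_K(V_{\alpha+\beta},V_\alpha\otimes_\bC V_\beta)$, hence spans it, which is (\ref{eqn:C2_tensor1}); in particular $\mathrm{I}_{\alpha+\beta}^{\alpha,\beta}$, being a nonzero $K$-map out of the irreducible $V_{\alpha+\beta}$, is automatically injective. As a conceptual check on where the binomial coefficients come from, I would note that $\mathrm{I}_{\alpha+\beta}^{\alpha,\beta}$ is, up to scalar, the adjoint of the polynomial multiplication map $V_\alpha\otimes_\bC V_\beta\to V_{\alpha+\beta}$ with respect to the $U(2)$-invariant inner products normalized by $\langle z_1^{n-a}z_2^{a},z_1^{n-a}z_2^{a}\rangle=\binom{n}{a}^{-1}$, which gives an alternative, computation-free proof of the equivariance. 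The only real pitfall in the plan is keeping the $\max/\min$ summation ranges in the definition of $\mathrm{I}_{\alpha+\beta}^{\alpha,\beta}$ consistent under reindexing, together with the usual convention $v_{\lambda,q}=0$ for $q\notin Q_\lambda$; beyond that the argument is routine and there is no serious obstacle.
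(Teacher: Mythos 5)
Your proof is correct and uses essentially the same ingredients as the paper's: the uniqueness (up to scalar) of the weight-$(\alpha_1+\beta_1,\alpha_2+\beta_2)$ vector $v_{\alpha,0}\otimes v_{\beta,0}$ gives the one-dimensionality of the $\Hom$ space, and the $E_{2,1}^{\gk}$ lowering relation gives the explicit coefficients. The only difference is the direction of the second step — the paper asserts the abstract existence of the highest-weight map and then generates the formula by iterating $E_{2,1}^{\gk}$, while you start from the stated formula and verify equivariance by checking the resulting binomial identity; the computation is the same one read in reverse, and your identity checks out.
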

\begin{proof}
Since $V_{\alpha}\otimes_{\bC}V_{\beta }$ contains 
a unique highest weight vector 
$v_{\alpha ,0}\otimes v_{\beta,0}$ with weight $\alpha +\beta$ 
up to scalar multiple, we know that 
there exists a $K$-homomorphism 
$\mathrm{I}_{\alpha +\beta}^{\alpha,\beta}$ 
satisfying (\ref{eqn:C2_tensor1}) and $
\mathrm{I}_{\alpha +\beta }^{\alpha,\beta }(v_{\alpha +\beta ,0})
=v_{\alpha,0}\otimes v_{\beta ,0}$. 
Using 
\begin{align*}
\mathrm{I}_{\alpha +\beta }^{\alpha,\beta}(v_{\alpha +\beta ,q+1})
=(\alpha_1+\beta_1-\alpha_2-\beta_2-q)^{-1}
(\tau_{\alpha}\otimes \tau_{\beta})
(E_{2,1}^{\gk} )\mathrm{I}_{\alpha +\beta}^{\alpha,\beta}
(v_{\alpha +\beta ,q})
\end{align*}
for $0\leq q<\alpha_1 +\beta_1-\alpha_2 -\beta_2$, 
we can obtain the explicit expressions of 
$\mathrm{I}_{\alpha +\beta}^{\alpha,\beta}(v_{\alpha +\beta ,q})$ 
by direct computation. 
\end{proof}

We regard $\gp_\bC$ as a $K$-module via the adjoint action $\Ad$, 
and denote the differential of $\Ad$ by $\adj$. 
By direct computation, we have 
\begin{align}
\label{eqn:C2_gKact_gp}
&\adj (E_{a,b}^{{\gk}})E_{i,j}^{{\gp}}
=\delta_{b,i}E_{a,j}^{{\gp}}-\delta_{j,a}E_{i,b}^{{\gp}}
\end{align}
for $1\leq a,b,i,j\leq 2$. Let 
\begin{align*}
&\cC_1^{\gp}=E^{\gp}_{1,1}+E^{\gp}_{2,2},&
&X_{0}^{\gp}=2E_{1,2}^{{\gp}},&
&X_{1}^{\gp}=E_{2,2}^{{\gp}}-E_{1,1}^{{\gp}},&
&X_{2}^{\gp}=-2E_{2,1}^{{\gp}}.
\end{align*}
Then, comparing (\ref{eqn:C2_gKact11_22}) and (\ref{eqn:C2_gKact12_21}) with 
(\ref{eqn:C2_gKact_gp}), we have 
\begin{align}
\label{eqn:C2_gp_Kisom}
\gp_{\bC}=(\bC \, \cC_1^{\gp})\oplus 
(\bC \, X^{\gp}_{0}\oplus \bC \, X^{\gp}_{1}\oplus \bC \, X^{\gp}_{2})
\simeq V_{(0,0)}\oplus V_{(1,-1)}
\end{align}
as $K$-modules, via $\cC_1^{\gp}\mapsto v_{(0,0),0}$ and 
$X^{\gp}_{q}\mapsto v_{(1,-1),q}$ ($0\leq q\leq 2$).

\section{Principal series representations}
\label{subsec:C2_ps}

In this section, 
let $\chi =\chi_{(\nu_1,d_1)}\boxtimes \chi_{(\nu_2,d_2)}$ 
with $(\nu_1,\nu_2)\in \bC^2$ and 
$(d_1,d_2)\in \Lambda_2$. 
We recall the $(\g_{\bC},K)$-module structure 
of a principal series representation 
$(\Pi_\chi ,H(\chi ))$ of $G=GL(2,\bC)$.

Let $\Lambda (\chi)=\{(d_1+l,d_2-l)\in \Lambda_2\mid 
l\in \bZ_{\geq 0}\}$. 
Because of (\ref{eqn:C2_Mact}), 
for $\lambda \in \Lambda_2$, we have 
\begin{align*}
&\Hom_{{K}\cap M}(V_{\lambda},\bC_{\chi})
=\left\{\begin{array}{ll}
\bC \,\eta_{(\chi ;\lambda )}&\text{ if }\ 
\lambda \in \Lambda (\chi),\\
0&\text{ if }\ 
\lambda \not\in \Lambda (\chi), 
\end{array}\right.
\end{align*} 
where $\eta_{(\chi ;\lambda )}\colon V_{\lambda }\to \bC_{\chi }$ 
is a $\bC$-linear map defined by 
\begin{align*}
&\eta_{(\chi ;\lambda )}(v_{\lambda,q})
=\delta_{q,l}&
&(\,\lambda =(d_1+l,d_2-l)\in \Lambda (\chi),\ 
q\in Q_\lambda \,).
\end{align*} 
Hence, we know from (\ref{eqn:Cn_ps_Kstr}) that 
\begin{align*}
&H({\chi})_K=\bigoplus_{\lambda \in \Lambda (\chi)}
\{\hat{\eta}_{(\chi ;\lambda )}(v)\mid 
v\in V_{\lambda }\} 
\end{align*}
with 
\begin{align}
\label{eqn:C2_eta_chi_lambda}
&\hat{\eta}_{(\chi ;\lambda )}(v)(k)=\eta_{(\chi ;\lambda )}
(\tau_{\lambda} (k)v)&&(v\in V_{\lambda},\ k\in {K}). 
\end{align}
We set 
$\zeta_{(\chi;l,q)}=
\hat{\eta}_{(\chi ;\lambda )}(v_{\lambda,q})$ 
for $\lambda =(d_1+l,d_2-l)\in \Lambda (\chi)$ and $q\in Q_\lambda$. 
It is convenient to 
set $\zeta_{(\chi;l,q)}=0$ unless $0\leq q\leq d_1-d_2+2l$. 
Then we have 
\begin{align}
H(\chi)_K=\bigoplus_{l=0}^\infty 
\left(\bigoplus_{q=0}^{d_1-d_2+2l}
\bC \zeta_{(\chi;l,q)}\right)
\end{align}
and 
\begin{align}
\label{eqn:C2_gkact_ps11}
&\Pi_{\chi }(E_{1,1}^{\gk} )\zeta_{(\chi;l,q)}
=(d_1+l-q)\zeta_{(\chi;l,q)},\\
\label{eqn:C2_gkact_ps22}
&\Pi_{\chi }(E_{2,2}^{\gk} )\zeta_{(\chi;l,q)}
=(d_2-l+q)\zeta_{(\chi;l,q)},\\
\label{eqn:C2_gkact_ps12}
&\Pi_{\chi }(E_{1,2}^{\gk} )\zeta_{(\chi;l,q)}
=q\zeta_{(\chi;l,q-1)},\\
\label{eqn:C2_gkact_ps21}
&\Pi_{\chi }(E_{2,1}^{\gk} )\zeta_{(\chi;l,q)}
=(d_1-d_2+2l-q)\zeta_{(\chi;l,q+1)},\\
\label{eqn:C2_value_ps}
&\zeta_{(\chi;l,q)}(1_2)
=\delta_{q,l}.
\end{align}

\begin{lem}
\label{lem:C2_DSE_ps}
Retain the notation. Then, for non-negative integers $l$, $q$ 
such that $q\leq d_1-d_2+2l+2$, 
it holds that 
\begin{align*}
&-(l+1)(d_1-d_2+l+1)(2\nu_1-2\nu_2+d_1-d_2+2l+2)
\zeta_{(\chi;l+1,q)}\\
&=
(d_1-d_2+2l+2-q)(d_1-d_2+2l+1-q)
\Pi_\chi (E^{\gp}_{1,2})\zeta_{(\chi;l,q)}\\
&\phantom{=,}
-q(d_1-d_2+2l+2-q)
\Pi_\chi (E^{\gp}_{1,1}-E^{\gp}_{2,2})\zeta_{(\chi;l,q-1)}\\
&\phantom{=,}
-q(q-1)\Pi_\chi (E^{\gp}_{2,1})\zeta_{(\chi;l,q-2)}.
\end{align*}
\end{lem}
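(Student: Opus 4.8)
The plan is to exploit the $K$-module decomposition $\gp_{\bC}\simeq V_{(0,0)}\oplus V_{(1,-1)}$ from \eqref{eqn:C2_gp_Kisom} together with the branching rule for $V_{(1,-1)}\otimes_{\bC}V_{\lambda}$, in much the same spirit as Proposition \ref{prop:R3_DSE_ps}. Write $\lambda =(d_1+l,d_2-l)$ and $\lambda '=(d_1+l+1,d_2-l-1)=\lambda +(1,-1)$. The key observation is that, by \eqref{eqn:Cn_ps_Kstr}, the target $\Hom_K(V_{\lambda '},H(\chi)_K)$ is one dimensional, spanned by $\hat{\eta}_{(\chi;\lambda ')}$, since $\lambda '\in\Lambda(\chi)$ and no other occurrence of $V_{\lambda '}$ appears. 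On the other hand, Lemma \ref{lem:C2_tensor} (applied with $\alpha=(1,-1)$, $\beta=\lambda$, so $\alpha+\beta=\lambda '$) gives a distinguished $K$-embedding $\mathrm{I}^{(1,-1),\lambda}_{\lambda '}\colon V_{\lambda '}\to V_{(1,-1)}\otimes_{\bC}V_{\lambda}$, and composing with $X^{\gp}_{\bullet}\otimes\hat{\eta}_{(\chi;\lambda)}$ and the multiplication map $\mathrm{P}_\chi\colon \gp_{\bC}\otimes H(\chi)_K\to H(\chi)_K$, $X\otimes f\mapsto \Pi_\chi(X)f$, yields another element of $\Hom_K(V_{\lambda '},H(\chi)_K)$. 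Hence there is a scalar $c$ with
\[
c\,\hat{\eta}_{(\chi;\lambda ')} \;=\; \mathrm{P}_\chi\circ\bigl((X^{\gp}_{\bullet})\otimes\hat{\eta}_{(\chi;\lambda)}\bigr)\circ \mathrm{I}^{(1,-1),\lambda}_{\lambda '},
\]
where $(X^{\gp}_{\bullet})\colon V_{(1,-1)}\to\gp_{\bC}$ denotes the $K$-isomorphism $v_{(1,-1),q}\mapsto X^{\gp}_q$.

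First I would spell out, using the explicit formula for $\mathrm{I}^{\alpha,\beta}_{\alpha+\beta}$ in Lemma \ref{lem:C2_tensor} with $\alpha_1-\alpha_2=2$, the image $\mathrm{I}^{(1,-1),\lambda}_{\lambda '}(v_{\lambda ',q})$ as a sum of at most three terms $v_{(1,-1),i}\otimes v_{\lambda,q-i}$ with $i\in\{0,1,2\}$, reading off the binomial coefficients $\binom{q}{i}\binom{d_1-d_2+2l+2-q}{2-i}$. Evaluating the right-hand side on $v_{\lambda ',q}$ then gives precisely the three-term combination
\[
\binom{d_1-d_2+2l+2-q}{2}\Pi_\chi(E^{\gp}_{1,2})\zeta_{(\chi;l,q)}
- q\,\binom{d_1-d_2+2l+2-q}{1}\Pi_\chi(E^{\gp}_{1,1}-E^{\gp}_{2,2})\zeta_{(\chi;l,q-1)}
+ \binom{q}{2}\Pi_\chi(E^{\gp}_{2,1})\zeta_{(\chi;l,q-2)},
\]
after I match $X^{\gp}_0=2E^{\gp}_{1,2}$, $X^{\gp}_1=E^{\gp}_{2,2}-E^{\gp}_{1,1}$, $X^{\gp}_2=-2E^{\gp}_{2,1}$ and clear the factors of $2$; this reproduces the stated coefficients $(d_1-d_2+2l+2-q)(d_1-d_2+2l+1-q)$, $q(d_1-d_2+2l+2-q)$, $q(q-1)$ up to the sign built into $X^{\gp}$ and the value $\hat{\eta}_{(\chi;\lambda ')}(v_{\lambda ',q})=\zeta_{(\chi;l+1,q)}$.

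It remains to pin down $c$. I would compute $c$ by evaluating $c\,\hat{\eta}_{(\chi;\lambda ')}(v_{\lambda ',0})$ at $1_2$: the left side is $c\,\zeta_{(\chi;l+1,0)}(1_2)$, but $\zeta_{(\chi;l+1,0)}(1_2)=\delta_{0,l+1}=0$, which is unhelpful, so instead I would evaluate at $v_{\lambda ',l+1}$ where $\hat{\eta}_{(\chi;\lambda ')}(v_{\lambda ',l+1})(1_2)=1$. On the right side, $\mathrm{I}^{(1,-1),\lambda}_{\lambda '}(v_{\lambda ',l+1})$ is a sum of $v_{(1,-1),i}\otimes v_{\lambda,l+1-i}$; applying $\hat{\eta}_{(\chi;\lambda)}$ kills every term except $i$ with $l+1-i=l$, i.e. $i=1$, leaving a multiple of $X^{\gp}_1\otimes \zeta_{(\chi;l,l)}=(E^{\gp}_{2,2}-E^{\gp}_{1,1})\otimes\zeta_{(\chi;l,l)}$, plus possibly $i=0,2$ contributions that vanish after $\hat\eta$. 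Using \eqref{eqn:Cn_gact_value1}–\eqref{eqn:Cn_gact_value3} (with $n=2$), $(\Pi_\chi(E^{\gp}_{1,1})f)(1_2)$ and $(\Pi_\chi(E^{\gp}_{2,2})f)(1_2)$ are computed from the parameters, and this evaluates $c$ to $-(l+1)(d_1-d_2+l+1)(2\nu_1-2\nu_2+d_1-d_2+2l+2)$ after bookkeeping with the binomial factors. The main obstacle is purely computational: getting the branching coefficients from Lemma \ref{lem:C2_tensor} and the normalization of $c$ to line up exactly with the asserted polynomial coefficients, including all the binomials $\binom{\cdot}{2}$, $\binom{\cdot}{1}$ and the sign conventions hidden in $X^{\gp}_0,X^{\gp}_1,X^{\gp}_2$; I expect no conceptual difficulty, only careful algebra, and I would double-check the constant by testing the identity on $q=d_1-d_2+2l+2$ where the $E^{\gp}_{1,2}$ term drops out.
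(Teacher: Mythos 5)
Your proposal follows the paper's own argument step for step: realize $V_{(1,-1)}$ inside $\gp_\bC$ via $X^{\gp}_0,X^{\gp}_1,X^{\gp}_2$; form the $K$-map sending $v_{(1,-1),i}\otimes v_{\lambda,q}\mapsto\Pi_\chi(X^{\gp}_i)\zeta_{(\chi;l,q)}$ (this is exactly $\mathrm{P}_\chi^\lambda$ in the paper); precompose with the embedding $\mathrm{I}^{(1,-1),\lambda}_{\lambda+(1,-1)}$ of Lemma~\ref{lem:C2_tensor}; invoke one-dimensionality of $\Hom_K(V_{\lambda+(1,-1)},H(\chi)_K)$ to extract a scalar $c$; expand on $v_{\lambda+(1,-1),q}$ to read off the right-hand side; and pin down $c$ by evaluating on $v_{\lambda+(1,-1),l+1}$ at $1_2$. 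The expansion of the tensor coefficients and the translation $X^{\gp}_0=2E^{\gp}_{1,2}$, $X^{\gp}_1=E^{\gp}_{2,2}-E^{\gp}_{1,1}$, $X^{\gp}_2=-2E^{\gp}_{2,1}$ into the stated polynomial coefficients are also what the paper does.

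There is, however, a genuine error in your sketch of the computation of $c$. You assert that, after applying $\hat{\eta}_{(\chi;\lambda)}$ to $\mathrm{I}^{(1,-1),\lambda}_{\lambda+(1,-1)}(v_{\lambda+(1,-1),l+1})$, only the $i=1$ summand survives at $1_2$, with the $i=0,2$ contributions vanishing. That is false: $\hat{\eta}_{(\chi;\lambda)}(v_{\lambda,q})=\zeta_{(\chi;l,q)}$ is never zero for $q$ in range, and the Kronecker selectivity $\zeta_{(\chi;l,q)}(1_2)=\delta_{q,l}$ from \eqref{eqn:C2_value_ps} only kicks in \emph{after} the action of $\Pi_\chi(X^{\gp}_i)$, by which time the $\gk$-parts of $X^{\gp}_0$ and $X^{\gp}_2$ have already shifted the second index. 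Concretely, writing $E^{\gp}_{1,2}=2E^{\g}_{1,2}-E^{\gk}_{1,2}$, the equalities \eqref{eqn:Cn_gact_value1} and \eqref{eqn:C2_gkact_ps12} give $(\Pi_\chi(E^{\gp}_{1,2})\zeta_{(\chi;l,l+1)})(1_2)=-(l+1)$; and writing $E^{\gp}_{2,1}=2\widetilde{E}^{\g}_{1,2}+E^{\gk}_{2,1}$, \eqref{eqn:Cn_gact_value1} and \eqref{eqn:C2_gkact_ps21} give $(\Pi_\chi(E^{\gp}_{2,1})\zeta_{(\chi;l,l-1)})(1_2)=d_1-d_2+l+1$. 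These two contributions supply the $(d_1-d_2+l)+l$ part of the bracket in $c=-(l+1)(d_1-d_2+l+1)\bigl((d_1-d_2+l)+(2\nu_1-2\nu_2+2)+l\bigr)$; dropping them, as your sketch does, would yield $c=-(l+1)(d_1-d_2+l+1)(2\nu_1-2\nu_2+2)$, which is wrong. The remedy is simply to carry all three $i$-summands through the evaluation at $1_2$, which is what the paper's proof does.
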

\begin{proof}
Let $\lambda =(d_1+l,d_2-l) \in \Lambda (\chi)$. 
We define a $\bC$-linear map $\mathrm{P}_{\chi}^{\lambda}
\colon V_{(1,-1)}\otimes_{\bC}V_{\lambda}\to H(\chi)_K$ 
by \begin{align*}
&v_{(1,-1),i}\otimes v_{\lambda ,q}\mapsto 
\Pi_{\chi}(X^{\gp}_i)\zeta_{(\chi;l,q)}&
&(0\leq i\leq 2,\ 0\leq q\leq d_1-d_2+2l).
\end{align*} 
Because of the isomorphism (\ref{eqn:C2_gp_Kisom}) and 
the definition of $\zeta_{(\chi;l,q)}$, 
we know that $\mathrm{P}_{\chi}^{\lambda}$ is a $K$-homomorphism. 
Let $\mathrm{I}_{\lambda +(1,-1)}^{(1,-1),\lambda}\colon 
V_{\lambda +(1,-1)}\to V_{(1,-1)}\otimes_{\bC}V_{\lambda}$ 
be the $K$-homomorphism in Lemma \ref{lem:C2_tensor}. 
Since the composite 
$\mathrm{P}_{\chi}^{\lambda} \circ 
\mathrm{I}_{\lambda +(1,-1)}^{(1,-1),\lambda}$ 
is an element of 
$\Hom_K(V_{\lambda +(1,-1)}, 
H(\chi)_K)=\bC \, \hat{\eta}_{(\chi ;\lambda +(1,-1))}$, 
there is a constant $c$ such that 
\begin{align*}
c\,\hat{\eta}_{(\chi ;\lambda +(1,-1))}=
\mathrm{P}_{\chi}^{\lambda} \circ 
\mathrm{I}_{\lambda +(1,-1)}^{(1,-1),\lambda}.
\end{align*}
Considering the image of $v_{\lambda +(1,-1),q}$ 
under the both sides of this equality, we have 
\begin{align*}
c\,\zeta_{(\chi;l+1,q)}
=&\,
(d_1-d_2+2l+2-q)(d_1-d_2+2l+1-q)
\Pi_\chi (E^{\gp}_{1,2})\zeta_{(\chi;l,q)}\\
&-q(d_1-d_2+2l+2-q)
\Pi_\chi (E^{\gp}_{1,1}-E^{\gp}_{2,2})\zeta_{(\chi;l,q-1)}\\
&-q(q-1)\Pi_\chi (E^{\gp}_{2,1})\zeta_{(\chi;l,q-2)}
\end{align*}
for $0\leq q\leq d_1-d_2+2l+2$. 
For $q=l+1$, we have 
\begin{align*}
c&=c\,\zeta_{(\chi;l+1,l+1)}(1_2)\\
&=
(d_1-d_2+l+1)(d_1-d_2+l)
(\Pi_\chi (2E_{1,2}^{\g}-E_{1,2}^{{\gk}})
\zeta_{(\chi;l,l+1)})(1_2)\\
&\phantom{=}\,
-(l+1)(d_1-d_2+l+1)
(\Pi_\chi (E_{1,1}^{\g}+\widetilde{E}_{1,1}^{\g}
-E_{2,2}^{\g}-\widetilde{E}_{2,2}^{\g})\zeta_{(\chi;l,l)})(1_2)\\
&\phantom{=}\,
-(l+1)l
(\Pi_\chi (2\widetilde{E}_{1,2}^{\g}+E_{2,1}^{{\gk}})
\zeta_{(\chi;l,l-1)})(1_2)\\
&=
-(l+1)(d_1-d_2+l+1)(2\nu_1-2\nu_2+d_1-d_2+2l+2),
\end{align*}
using 
(\ref{eqn:Cn_gact_value1}), 
(\ref{eqn:Cn_gact_value2}), 
(\ref{eqn:Cn_gact_value3}), 
(\ref{eqn:C2_gkact_ps12}), 
(\ref{eqn:C2_gkact_ps21}) 
and (\ref{eqn:C2_value_ps}). 
This completes a proof. 
\end{proof}

From the above proof and the irreducible 
decomposition (\textit{cf}. Propsition \ref{prop:C32_tensor_K2}) 
\begin{align*}
&V_{(1,-1)}\otimes_\bC V_\lambda \simeq 
\bigoplus_{i=0}^{\min \{\lambda_1-\lambda_2,2\}}
V_{\lambda +(1-i,i-1)}&
&(\lambda =(\lambda_1,\lambda_2)\in \Lambda_2)
\end{align*}
as a $K$-module, 
we know that, if there exists $m\in \bZ_{\geq 0}$ such that 
$2\nu_1-2\nu_2+d_1-d_2+2=-2m$, 
then the space 
\begin{align}
\bigoplus_{l=0}^m 
\left(\bigoplus_{q=0}^{d_1-d_2+2l}
\bC \zeta_{(\chi;l,q)}\right)
\end{align}
is closed under the action of elements of $\gp_\bC$, 
and is a $(\g_\bC,K)$-submodule of $H(\chi)_K$.  
Hence, if $\Pi_\chi$ is irreducible, it holds that 
\[
2\nu_1-2\nu_2+d_1-d_2+2\not\in 2\bZ_{\leq 0}.
\]

\section{Principal series Whittaker functions}
\label{subsec:C2_ps_whittaker}

We use the notation in \S \ref{subsec:C2_ps}. 
We will give the explicit formulas of Whittaker functions 
of $\Pi_\chi$ with 
$\chi =\chi_{(\nu_1,d_1)}\boxtimes \chi_{(\nu_2,d_2)}$ $(d_1\geq d_2)$. 
Let $\Phi \in {\cI}_{\Pi_\chi ,\psi_1}$. 
We note that $\Phi$ is characterized by 
\begin{align*}
&\Phi ({\zeta}_{(\chi;l,q)} )(y)&&(l \in \bZ_{\geq 0},\ 
0\leq q \leq d_1-d_2+2l )
\end{align*}
with $y=\diag(y_1y_2,y_2)\in A$. 
We set $\partial_i = y_i \dfrac{\partial}{\partial y_i}$ for $i=1,2$.

\begin{lem}
\label{lem:C2_Zg1_original}
Retain the notation. Let $l\in \bZ_{\geq 0}$. 

\noindent (1) 
For $0\leq q\leq d_1-d_2+2l$, it holds that 
\begin{align}
&\label{eqn:C2_PDE1_C1}
(\partial_2-2\nu_1-2\nu_2)
\Phi ({\zeta}_{(\chi;l,q)})(y)=0,\\[1mm]
\begin{split}
&
\{(\partial_1-d_1-l+q-1)
(-\partial_{1}+\partial_2-d_2+l-q+1)\\
&\label{eqn:C2_PDE1_C2}
-(2\nu_1-d_1)(2\nu_2-d_2)+(4\pi y_1)^2\}
\Phi ({\zeta}_{(\chi;l,q)})(y)\\
&+8\pi \sI y_1q
\Phi ({\zeta}_{(\chi;l,q-1)})(y)=0,
\end{split}\\[1mm]
\begin{split}
&
\{(\partial_1+d_1+l-q-1)(-\partial_{1}+\partial_2+d_2-l+q+1)\\
&\label{eqn:C2_PDE1_C2omote}
-(2\nu_1+d_1)(2\nu_2+d_2)+(4\pi y_1)^2\}\Phi ({\zeta}_{(\chi;l,q)})(y)\\
&-8\pi \sI y_1(d_1-d_2+2l-q)
\Phi ({\zeta}_{(\chi;l,q+1)})(y)=0.
\end{split}
\end{align}
\noindent (2) It holds that 
\begin{align}
\label{eqn:C2_PDE1_DSE}
\begin{split}
&-(l+1)(d_1-d_2+l+1)(2\nu_1-2\nu_2+d_1-d_2+2l+2)
\Phi (\zeta_{(\chi;l+1,0)})(y)\\
&=(d_1-d_2+2l+2)(d_1-d_2+2l+1)
(4\pi \sI y_1)\Phi (\zeta_{(\chi;l,0)})(y).
\end{split}
\end{align}
\end{lem}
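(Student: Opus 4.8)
The plan is to derive each of the four identities by the routine used for the analogous lemmas in the real case: apply a suitable element of $Z(\g_\bC)$ (for part (1)) or the intertwining identity of Lemma~\ref{lem:C2_DSE_ps} (for part (2)) to the vector $\zeta_{(\chi;l,q)}$, carry it through $\Phi$ using $R(X)\Phi(\zeta_{(\chi;l,q)})=\Phi(\Pi_\chi(X)\zeta_{(\chi;l,q)})$, and then turn the abstract action into differential operators on $A$ by means of Lemma~\ref{lem:Cn_g_act_Cpsi} together with the $K$-action formulas (\ref{eqn:C2_gkact_ps11})--(\ref{eqn:C2_gkact_ps21}). Only four types of elements of $U(\g_\bC)$ occur: the diagonal $E_{i,i}^{\g},\widetilde{E}_{i,i}^{\g}$, the superdiagonal $E_{1,2}^{\g},\widetilde{E}_{1,2}^{\g}$, the subdiagonal $E_{2,1}^{\g},\widetilde{E}_{2,1}^{\g}$, and elements of $\gk_\bC$. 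The first two types are handled directly by Lemma~\ref{lem:Cn_g_act_Cpsi} (the superdiagonal ones act by multiplication by $2\pi\sI y_1$), the $\gk_\bC$-elements act on the $K$-type $\zeta_{(\chi;l,q)}$ through $\tau_\lambda$ and shift $q$ by $\pm1$ according to (\ref{eqn:C2_gkact_ps12})--(\ref{eqn:C2_gkact_ps21}), and the subdiagonal ones are eliminated using $E_{i,j}^{\gk}=E_{i,j}^{\g}-\widetilde{E}_{j,i}^{\g}$, which gives $E_{2,1}^{\g}=E_{2,1}^{\gk}+\widetilde{E}_{1,2}^{\g}$ and $\widetilde{E}_{2,1}^{\g}=E_{1,2}^{\g}-E_{1,2}^{\gk}$.

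For part (1): to get (\ref{eqn:C2_PDE1_C1}) I would apply $\cC_1^{\g}+\widetilde{\cC}_1^{\g}=(E_{1,1}^{\g}+\widetilde{E}_{1,1}^{\g})+(E_{2,2}^{\g}+\widetilde{E}_{2,2}^{\g})$, which by Proposition~\ref{prop:Cn_Ch_eigenvalue} acts by the scalar $2(\nu_1+\nu_2)$ while on $A$ the $\gk_\bC$-parts cancel and $R(E_{i,i}^{\g}+\widetilde{E}_{i,i}^{\g})$ becomes $-\partial_{i-1}+\partial_i$, so the sum over $i=1,2$ is $\partial_2$. To get (\ref{eqn:C2_PDE1_C2}) and (\ref{eqn:C2_PDE1_C2omote}) I would apply $4\widetilde{\cC}_2^{\g}$ and $4\cC_2^{\g}$ respectively, using $\cC_2^{\g}=(E_{1,1}^{\g}-\tfrac{1}{2})(E_{2,2}^{\g}+\tfrac{1}{2})-E_{1,2}^{\g}E_{2,1}^{\g}$ and the corresponding expression for $\widetilde{\cC}_2^{\g}$. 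After eliminating the subdiagonal entries as above, the diagonal product gives the stated product of the two first-order operators once one notes $2R(E_{1,1}^{\g})\Phi(\zeta_{(\chi;l,q)})=(\partial_1+d_1+l-q)\Phi(\zeta_{(\chi;l,q)})$ and the three analogous identities for $E_{2,2}^{\g},\widetilde{E}_{1,1}^{\g},\widetilde{E}_{2,2}^{\g}$; the $E_{1,2}^{\g}\widetilde{E}_{1,2}^{\g}$-type contribution produces the $(4\pi y_1)^2$ term (using $(2\pi\sI y_1)^2=-(2\pi y_1)^2$), and the mixed contribution $E_{1,2}^{\g}E_{2,1}^{\gk}$ (resp. $\widetilde{E}_{1,2}^{\g}E_{1,2}^{\gk}$) gives, via the index shift, the single extra term $-8\pi\sI y_1(d_1-d_2+2l-q)\Phi(\zeta_{(\chi;l,q+1)})$ (resp. $+8\pi\sI y_1 q\,\Phi(\zeta_{(\chi;l,q-1)})$). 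Matching with the eigenvalues $4(\nu_1\pm\tfrac{d_1}{2})(\nu_2\pm\tfrac{d_2}{2})$ from Proposition~\ref{prop:Cn_Ch_eigenvalue} and rearranging yields (\ref{eqn:C2_PDE1_C2}) and (\ref{eqn:C2_PDE1_C2omote}).

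For part (2): I would specialize the identity of Lemma~\ref{lem:C2_DSE_ps} to $q=0$, so that its second and third right-hand terms vanish, and apply $\Phi$. It then remains to compute $R(E_{1,2}^{\gp})\Phi(\zeta_{(\chi;l,0)})$; writing $E_{1,2}^{\gp}=E_{1,2}^{\g}+\widetilde{E}_{2,1}^{\g}=2E_{1,2}^{\g}-E_{1,2}^{\gk}$, using $\Pi_\chi(E_{1,2}^{\gk})\zeta_{(\chi;l,0)}=0$ from (\ref{eqn:C2_gkact_ps12}) and $2R(E_{1,2}^{\g})\Phi(\zeta)=4\pi\sI y_1\Phi(\zeta)$ on $A$, one gets $R(E_{1,2}^{\gp})\Phi(\zeta_{(\chi;l,0)})(y)=4\pi\sI y_1\Phi(\zeta_{(\chi;l,0)})(y)$, which is exactly (\ref{eqn:C2_PDE1_DSE}).

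There is no serious obstacle here: the substantive inputs, namely the central-character computation (Proposition~\ref{prop:Cn_Ch_eigenvalue}) and the $\gp_\bC$-shift identity (Lemma~\ref{lem:C2_DSE_ps}), are already available, so the work is purely organizational. The points that require care are the correct elimination of the subdiagonal elements $E_{2,1}^{\g},\widetilde{E}_{2,1}^{\g}$ in favour of operators that Lemma~\ref{lem:Cn_g_act_Cpsi} and the $K$-action can evaluate, the bookkeeping of the several $q\mapsto q\pm1$ shifts produced by the $\gk_\bC$-factors, and keeping track of which of $\cC_2^{\g}$, $\widetilde{\cC}_2^{\g}$ produces (\ref{eqn:C2_PDE1_C2}) and which produces (\ref{eqn:C2_PDE1_C2omote}), so that the weights $d_1+l-q$, $d_2-l+q$ and the eigenvalues $(2\nu_i\pm d_i)$ are matched consistently.
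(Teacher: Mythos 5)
Your proposal is correct and follows essentially the same route as the paper: apply $\widetilde{\cC}_2^{\g}$ and $\cC_2^{\g}$ (after eliminating the subdiagonal generators via $E_{2,1}^{\g}=\widetilde{E}_{1,2}^{\g}+E_{2,1}^{\gk}$ and $\widetilde{E}_{2,1}^{\g}=E_{1,2}^{\g}-E_{1,2}^{\gk}$) together with Proposition~\ref{prop:Cn_Ch_eigenvalue}, Lemma~\ref{lem:Cn_g_act_Cpsi} and the $K$-action formulas for part~(1), and specialize Lemma~\ref{lem:C2_DSE_ps} to $q=0$ with $E_{1,2}^{\gp}=2E_{1,2}^{\g}-E_{1,2}^{\gk}$ for part~(2). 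The only (immaterial) deviation is using $\cC_1^{\g}+\widetilde{\cC}_1^{\g}$ rather than $\widetilde{\cC}_1^{\g}$ alone to obtain (\ref{eqn:C2_PDE1_C1}); as the paper's own remark notes, either choice yields the same equation.
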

\begin{proof}
By Proposition \ref{prop:Cn_Ch_eigenvalue}, we have 
\begin{align}
\label{eqn:C2_pf_PDE001}
&(R(\widetilde{\cC}_1^{{\g}})\Phi (\zeta_{(\chi;l,q)}))(y)
-\bigl(\nu_1+\nu_2-\tfrac{d_1+d_2}{2}\bigr)\Phi (\zeta_{(\chi;l,q)})(y)=0,\\
\label{eqn:C2_pf_PDE002}
&(R(\widetilde{\cC}_2^{{\g}})\Phi (\zeta_{(\chi;l,q)}))(y)
-
\bigl(\nu_1-\tfrac{d_1}{2}\bigr)
\bigl(\nu_2-\tfrac{d_2}{2}\bigr)\Phi (\zeta_{(\chi;l,q)})(y)=0,\\
\label{eqn:C2_pf_PDE003}
&(R(\cC_2^{{\g}})\Phi (\zeta_{(\chi;l,q)}))(y)
-\bigl(\nu_1+\tfrac{d_1}{2}\bigr)
\bigl(\nu_2+\tfrac{d_2}{2}\bigr)\Phi (\zeta_{(\chi;l,q)})(y)=0
\end{align}
for $0\leq q\in d_1-d_2+2l$. 
The explicit forms of $\widetilde{\cC}_1^{\g}$, $\widetilde{\cC}_2^{\g}$
and $\cC_2^{\g}$ are given by 
\begin{align}
&\label{eqn:C2_pf_PDE004}
\widetilde{\cC}_1^{\g}=\widetilde{E}_{1,1}^{\g}
+\widetilde{E}_{2,2}^{\g},\\ 
&\nonumber 
\widetilde{\cC}_2^{\g}
=\bigl(\widetilde{E}_{1,1}^{\g}-\tfrac{1}{2}\bigr)
\bigl(\widetilde{E}_{2,2}^{\g}+\tfrac{1}{2}\bigr)
-\widetilde{E}_{1,2}^{\g}\widetilde{E}_{2,1}^{\g},\\
&\nonumber 
\cC_2^{\g}
=\bigl(E_{1,1}^{\g}-\tfrac{1}{2}\bigr)
\bigl(E_{2,2}^{\g}+\tfrac{1}{2}\bigr)
-E_{1,2}^{\g}E_{2,1}^{\g}.
\end{align}
By the equalities $E_{2,1}^{{\g}}=\widetilde{E}_{1,2}^{{\g}}+E_{2,1}^{{\gk}}$ 
and $\widetilde{E}_{2,1}^{{\g}}=E_{1,2}^{{\g}}-E_{1,2}^{{\gk}}$, we have 
\begin{align}
&\label{eqn:C2_pf_PDE005}
\widetilde{\cC}_2^{\g}
=\bigl(\widetilde{E}_{1,1}^{\g}-\tfrac{1}{2}\bigr)
\bigl(\widetilde{E}_{2,2}^{\g}+\tfrac{1}{2}\bigr)
-\widetilde{E}_{1,2}^{\g}E_{1,2}^{{\g}}
+\widetilde{E}_{1,2}^{\g}E_{1,2}^{{\gk}},\\
&\label{eqn:C2_pf_PDE006}
\cC_2^{\g}
=\bigl(E_{1,1}^{\g}-\tfrac{1}{2}\bigr)
\bigl(E_{2,2}^{\g}+\tfrac{1}{2}\bigr)
-E_{1,2}^{\g}\widetilde{E}_{1,2}^{{\g}}-E_{1,2}^{\g}E_{2,1}^{{\gk}}.
\end{align}
Applying (\ref{eqn:C2_gkact_ps11}), (\ref{eqn:C2_gkact_ps22}), 
(\ref{eqn:C2_gkact_ps12}), (\ref{eqn:C2_gkact_ps21}) 
and Lemma \ref{lem:Cn_g_act_Cpsi} 
to the equalities 
(\ref{eqn:C2_pf_PDE001}), 
(\ref{eqn:C2_pf_PDE002}) and (\ref{eqn:C2_pf_PDE003})
with the expressions 
(\ref{eqn:C2_pf_PDE004}), 
(\ref{eqn:C2_pf_PDE005}) and (\ref{eqn:C2_pf_PDE006}), 
we obtain the statement (1).

By Lemma \ref{lem:C2_DSE_ps} with $q=0$, 
we have 
\begin{align*}
&-(l+1)(d_1-d_2+l+1)(2\nu_1-2\nu_2+d_1-d_2+2l+2)
\Phi (\zeta_{(\chi;l+1,0)})(y)\\
&=
(d_1-d_2+2l+2)(d_1-d_2+2l+1)
(R(E^{\gp}_{1,2})\Phi (\zeta_{(\chi;l,0)}))(y).
\end{align*}
Applying (\ref{eqn:C2_gkact_ps12}) and 
Lemma \ref{lem:Cn_g_act_Cpsi} 
to this equality with 
$E_{1,2}^{\gp}=2E_{1,2}^{\g}-E_{1,2}^{{\gk}}$, 
we obtain the statement (2). 
\end{proof}

\begin{rem}
The partial differential equation obtained from the action of 
$\cC_1^{\g}$ coincides with (\ref{eqn:C2_PDE1_C1}), 
which is obtained from the action of $\widetilde{\cC}_1^{\g}$.
\end{rem}

By (\ref{eqn:C2_PDE1_C1}), 
for $l\in \bZ_{\geq 0}$ 
and $0\leq q\leq d_1-d_2+2l$, 
we can define a function $\hat{\varphi}_{l,q}$ on $\bR_+$ by
\begin{align}
\label{eqn:R2_def_hat_varphi}
\Phi ({\zeta}_{(\chi ;l,q)})(y)
=(\sI )^{-q-l}y_1y_2^{2\nu_1+2\nu_2}\hat{\varphi}_{l,q}(y_1)
\end{align}
with $y=\diag (y_1y_2,y_2)\in A$. 
It is convenient to set $\hat{\varphi}_{l,q}=0$ 
unless $0\leq q\leq d_1-d_2+2l$. 
Then, for $l\in \bZ_{\geq 0}$ and 
$0\leq q\leq d_1-d_2+2l$, 
the equations (\ref{eqn:C2_PDE1_C2}), 
(\ref{eqn:C2_PDE1_C2omote}) and (\ref{eqn:C2_PDE1_DSE}) 
lead that 
\begin{align}
\begin{split}
&\label{eqn:C2_PDE2_C2}
\{-(\partial_1-2\nu_1-l+q)(\partial_1-2\nu_2-d_1+d_2-l+q)
+(4\pi y_1)^2\}
\hat{\varphi}_{l,q}\\
&-8\pi y_1q
\hat{\varphi}_{l,q-1}=0,
\end{split}\\[1mm]
\label{eqn:C2_PDE2_C2omote}
\begin{split}
&\{-(\partial_1-2\nu_1+l-q)(\partial_1-2\nu_2+d_1-d_2+l-q)
+(4\pi y_1)^2\}\hat{\varphi}_{l,q}\\
&-8\pi y_1(d_1-d_2+2l-q)\hat{\varphi}_{l,q+1}=0.
\end{split}
\end{align}
and  
\begin{align}
\label{eqn:C2_PDE2_DSE}
\begin{split}
&(l+1)(d_1-d_2+l+1)(2\nu_1-2\nu_2+d_1-d_2+2l+2)\hat{\varphi}_{l+1,0}\\
&=(d_1-d_2+2l+2)(d_1-d_2+2l+1)
(4\pi y_1)\hat{\varphi}_{l,0},
\end{split}
\end{align}
respectively. 
Here we understand $\partial_1 = y_1\dfrac{d}{dy_1}$.

Until the end of this chapter, we assume 
$2\nu_1-2\nu_2+d_1-d_2+2\not\in 2\bZ_{\leq 0}$. Then 
the functions $\hat{\varphi}_{l,q}$ 
$(l\in \bZ_{\geq 0},\,0\leq q\leq d_1-d_2+2l)$ 
are determined from $\hat{\varphi}_{0,0}$ by 
the equations (\ref{eqn:C2_PDE2_C2omote}) 
and (\ref{eqn:C2_PDE2_DSE}). 
Hence, we know that 
$\hat{\varphi}_{0,0}=0$ if and only if $\Phi =0$. 
We first consider explicit formulas of 
Whittaker functions at the minimal $K$-type $\tau_{(d_1,d_2)}$.

\begin{thm}
\label{thm:C2_ps_Whittaker}
Let $\chi = \chi_{(\nu_1,d_1)}\boxtimes \chi_{(\nu_2,d_2)}$ with 
$\nu_1,\nu_2\in \bC$ and $(d_1,d_2) \in \Lambda_2$ 
such that $2\nu_1-2\nu_2+d_1-d_2+2\not\in 2\bZ_{\leq 0}$. 

\noindent  
(1) There exists a homomorphism 
$\Phi_{\chi}^{\rm mg} \in {\cI}_{\Pi_\chi ,\psi_1}^{\rm mg}$ 
with the radial part 
\begin{align*}
&\Phi_{\chi}^{\rm mg}({\zeta}_{(\chi;0,q)})(y) 
=8(\sI)^{-q}y_1^{\nu_1+\nu_2+\tfrac{d_1-d_2}{2}+1}
y_2^{2\nu_1+2\nu_2}
K_{\nu_1-\nu_2+q-(d_1-d_2)/2}(4\pi y_1)\\
&\phantom{====}
=(\sI)^{-q}\frac{y_1y_2^{2\nu_1+2\nu_2}}{2\pi \sI}
\int_{s}
\Gamma_{\bC}\bigl(s+\nu_1+\tfrac{q}{2}\bigr)\Gamma_{\bC}
\bigl(s+\nu_2+\tfrac{d_1-d_2-q}{2}\bigr)y_1^{-2s}ds.
\end{align*}
Here $y=\diag (y_1y_2,y_2)\in A $, 
and the path of integration $\int_{s}$ is the vertical line 
from $\mathrm{Re}(s)-\sI \infty$ to $\mathrm{Re}(s)+\sI \infty$ 
with the sufficiently large real part to keep the poles of the integrand 
on its left.  \\
\noindent 
(2) Assume $2\nu_1-2\nu_2 \notin d_1-d_2+2\bZ$. 
Then there exist homomorphisms 
$\Phi_{\chi}^{+}$ and $\Phi_{\chi}^{-}$ in ${\cI}_{\Pi_\chi ,\psi_1}$ 
with the radial parts 
\begin{align*}
\Phi_{\chi}^{\pm }({\zeta}_{(\chi;0,q)})(y) 
= &\,4(\sI)^{-q}y_1^{\nu_1+\nu_2+\tfrac{d_1-d_2}{2}+1}
y_2^{2\nu_1+2\nu_2}\\
&\times \hat{I}_{\pm (\nu_1-\nu_2+q-(d_1-d_2)/2)}(4\pi y_1).
\end{align*}
Here $y=\diag (y_1y_2,y_2)\in A $. 
Moreover, $\{\Phi_{\chi}^{+},\Phi_{\chi}^{-}\}$ forms a basis of
${\cI}_{\Pi_\chi ,\psi_1}$ and satisfies 
$\Phi_{\chi}^{\rm mg}  = \Phi_{\chi}^{+} + \Phi_{\chi}^{-}$. 
\end{thm}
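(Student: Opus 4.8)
The plan is to mimic the strategy of Theorem~\ref{thm:R2_ps_Whittaker} in the $GL(2,\bR)$ case, reducing everything to a single ordinary differential equation of Bessel type satisfied by $\hat{\varphi}_{0,0}$. First I would analyze the equation for $q=0$ in the minimal $K$-type. By (\ref{eqn:C2_PDE2_C2omote}) with $l=0$ and $q=0$, the function $\hat{\varphi}_{0,0}$ satisfies
\[
\{-(\partial_1-2\nu_1)(\partial_1-2\nu_2+d_1-d_2)+(4\pi y_1)^2\}\hat{\varphi}_{0,0}=0.
\]
Substituting $f(z)=z^{-(\nu_1+\nu_2+(d_1-d_2)/2)}\hat{\varphi}_{0,0}((4\pi)^{-1}z)$ turns this into the modified Bessel differential equation (\ref{eqn:Fn_bessel_DE}) with parameter $r=\nu_1-\nu_2-(d_1-d_2)/2$. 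Since the space of smooth solutions on $\bR_+$ of this ODE is $2$-dimensional and $\dim_\bC{\cI}_{\Pi_\chi,\psi_1}^{\mathrm{mg}}=1$ (by (\ref{eqn:Rn_dimWh_gps}) for $F=\bC$, $n=2$), while $\dim_\bC{\cI}_{\Pi_\chi,\psi_1}$ equals $2$ when $\Pi_\chi$ is irreducible, solutions of this ODE correspond bijectively to elements of ${\cI}_{\Pi_\chi,\psi_1}$. This gives the existence of $\Phi_\chi^{\mathrm{mg}}$, $\Phi_\chi^{\pm}$ with $\hat{\varphi}_{0,0}$ expressed via $K_r(4\pi y_1)$ and $\hat{I}_{\pm r}(4\pi y_1)$ respectively, using the formulas in \S\ref{subsec:Fn_special1}; the relation $\Phi_\chi^{\mathrm{mg}}=\Phi_\chi^++\Phi_\chi^-$ then follows from (\ref{eqn:expansion_K_Bessel}).

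Next I would pin down the normalization constant $8$ (resp.\ $4$) and the $(\sI)^{-q}$-twist, and propagate from $q=0$ to general $q$ within the minimal $K$-type. The key point is that the values $\Phi_\chi^{\mathrm{mg}}(\zeta_{(\chi;0,q)})$ for $0\le q\le d_1-d_2$ are not all independent data: by (\ref{eqn:C2_PDE2_C2omote}) with $l=0$ each step $q\to q+1$ is determined,
\[
8\pi y_1(d_1-d_2-q)\hat{\varphi}_{0,q+1}
=\{-(\partial_1-2\nu_1-q)(\partial_1-2\nu_2+d_1-d_2-q)+(4\pi y_1)^2\}\hat{\varphi}_{0,q}.
\]
Starting from the Bessel expression for $\hat{\varphi}_{0,0}$ and using the contiguity relations for $K_r$ (equivalently, the functional equation $\Gamma_\bC(s+1)=(2\pi)^{-1}s\,\Gamma_\bC(s)$ of (\ref{eqn:Fn_FE_GammaRC}) applied inside the Mellin--Barnes integral), I would verify by induction on $q$ that $\hat{\varphi}_{0,q}$ equals a constant multiple of $K_{\nu_1-\nu_2+q-(d_1-d_2)/2}(4\pi y_1)$, and track the constant to see it is independent of $q$ once the $(\sI)^{-q}$ factor is absorbed. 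The Mellin--Barnes form asserted in the statement is then obtained from (\ref{eqn:Mellin_K_Bessel}): after the substitution and using $\Gamma_\bC(s)=2(2\pi)^{-s}\Gamma(s)$, the integrand $\Gamma_\bC(s+\nu_1+q/2)\Gamma_\bC(s+\nu_2+(d_1-d_2-q)/2)$ is exactly the Mellin kernel of $K_{\nu_1-\nu_2+q-(d_1-d_2)/2}$ up to the prefactor $8$. The same computation with $K_r$ replaced by $\hat{I}_{\pm r}$ (which satisfy the same contiguity relations, see (\ref{eqn:expansion_K_Bessel})) yields part~(2).

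Finally I would check that the $\Phi_\chi^{\mathrm{mg}}$, $\Phi_\chi^{\pm}$ so defined genuinely lie in ${\cI}_{\Pi_\chi,\psi_1}^{\mathrm{mg}}$ resp.\ ${\cI}_{\Pi_\chi,\psi_1}$: this is automatic because $\hat{\varphi}_{0,0}$ solves the defining ODE and the remaining $\hat{\varphi}_{l,q}$ are then forced by (\ref{eqn:C2_PDE2_C2omote}) and (\ref{eqn:C2_PDE2_DSE}), so the resulting function on $A$ extends to a well-defined element of the (at most two-dimensional) intertwining space; moderate growth of $\Phi_\chi^{\mathrm{mg}}$ follows from the moderate growth of $K_r$. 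Linear independence of $\Phi_\chi^+$ and $\Phi_\chi^-$ under the hypothesis $2\nu_1-2\nu_2\notin d_1-d_2+2\bZ$ comes from the linear independence of $\hat{I}_r$ and $\hat{I}_{-r}$ when $r\notin\bZ$, which is guaranteed precisely by that hypothesis; a dimension count against $\dim_\bC{\cI}_{\Pi_\chi,\psi_1}=2$ then shows they form a basis. I expect the main obstacle to be the bookkeeping in the induction on $q$: keeping the constant $C_q$ truly constant requires carefully matching the gamma-factor shifts coming from (\ref{eqn:Fn_FE_GammaRC}) against the combinatorial coefficients $q$ and $d_1-d_2-q$ in the recurrence, exactly as in the $GL(2,\bR)$ proof but now with $\Gamma_\bC$ in place of $\Gamma_\bR$; the conceptual content (Bessel ODE $\leftrightarrow$ intertwining space, contiguity relations $\leftrightarrow$ $\gp_\bC$-action) is routine given the earlier results.
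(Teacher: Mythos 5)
Your overall strategy matches the paper's: reduce to a Bessel-type ODE for $\hat\varphi_{0,0}$, use the dimension count to identify ODE solutions with the intertwining space, and propagate to general $q$ within the minimal $K$-type via a first-order recurrence. But the opening step contains a slip that would derail the argument whenever $d_1>d_2$. You cite (\ref{eqn:C2_PDE2_C2omote}) with $l=q=0$ as yielding a pure ODE for $\hat\varphi_{0,0}$; however, that relation couples $\hat\varphi_{l,q}$ to $\hat\varphi_{l,q+1}$ through the term $-8\pi y_1(d_1-d_2+2l-q)\hat\varphi_{l,q+1}$, which at $l=q=0$ equals $-8\pi y_1(d_1-d_2)\hat\varphi_{0,1}$ and does not vanish unless $d_1=d_2$. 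The correct starting point — and the one the paper in fact uses — is (\ref{eqn:C2_PDE2_C2}), whose coupling term $-8\pi y_1\,q\,\hat\varphi_{l,q-1}$ dies automatically at $q=0$, giving the genuine ODE $\{(\partial_1-2\nu_1)(\partial_1-2\nu_2-d_1+d_2)-(4\pi y_1)^2\}\hat\varphi_{0,0}=0$.

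Your written ODE, with the factor $(\partial_1-2\nu_2+d_1-d_2)$ in place of $(\partial_1-2\nu_2-d_1+d_2)$, is also inconsistent with your own next sentence: the substitution $f(z)=z^{-(\nu_1+\nu_2+(d_1-d_2)/2)}\hat\varphi_{0,0}((4\pi)^{-1}z)$ and the Bessel parameter $r=\nu_1-\nu_2-(d_1-d_2)/2$ are exactly what the \emph{correct} ODE produces; from your sign one would instead be forced to take the exponent $-(\nu_1+\nu_2-(d_1-d_2)/2)$ and would land on $r=\nu_1-\nu_2+(d_1-d_2)/2$, which does not match the statement. Once the citation and sign are repaired, the rest of your proposal — the dimension count equating smooth ODE solutions with ${\cI}_{\Pi_\chi,\psi_1}$, the propagation across the minimal $K$-type by (\ref{eqn:C2_PDE2_C2omote}) using $\Gamma_\bC(s+1)=(2\pi)^{-1}s\,\Gamma_\bC(s)$, the decomposition $\Phi_\chi^{\rm mg}=\Phi_\chi^++\Phi_\chi^-$ via (\ref{eqn:expansion_K_Bessel}), and the observation that $r\notin\bZ$ is precisely the hypothesis $2\nu_1-2\nu_2\notin d_1-d_2+2\bZ$ — is exactly the paper's argument.
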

\begin{proof}
From the equation (\ref{eqn:C2_PDE2_C2}) with $l=q=0$, 
the function $\hat{\varphi}_{0,0}$ satisfies the 
following differential equation 
\begin{align}
&\label{eqn:C2_PDE3_C2}
\{(\partial_1-2\nu_1)(\partial_1-2\nu_2-d_1+d_2)
-(4\pi y_1)^2\}\hat{\varphi}_{0,0}=0.
\end{align}
This implies that 
$f(z)=z^{-\nu_1-\nu_2-\frac{d_1-d_2}{2}}
\hat{\varphi}_{0,0}((4\pi)^{-1}z)$ satisfies 
the Bessel differential equation (\ref{eqn:Fn_bessel_DE}) 
with $r=\nu_1-\nu_2-\tfrac{d_1-d_2}{2}$.  
Since the dimensions of the solution space 
and ${\cI}_{\Pi_\chi ,\psi_1}$ are both $2$, 
we know that smooth solutions of (\ref{eqn:C2_PDE3_C2}) 
correspond to elements of ${\cI}_{\Pi_\chi ,\psi_1}$. 
By means of the formulas in \S \ref{subsec:Fn_special1}, 
we obtain the explicit expression of $\hat{\varphi}_{0,0}$. 
Moreover, we can get 
the explicit expressions of $\hat{\varphi}_{0,q}$ 
($0\leq q\leq d_1-d_2$)  
from that of $\hat{\varphi}_{0,0}$ by 
(\ref{eqn:C2_PDE2_C2omote}) with $\Gamma (s+1)=s\Gamma (s)$. 
Hence, we obtain the assertion. 
\end{proof}

Now we want to describe the radial parts of 
moderate growth Whittaker functions for $\Pi_{\chi}$, 
explicitly. 
It is convenient to set 
\begin{align}
\label{eqn:C2_def_psi}
\begin{split}
\Psi (a_1, a_2)(y_1)
=&\,8y_1^{a_1+a_2}K_{a_1-a_2}(4\pi y_1)\\
=&\,\frac{1}{2\pi \sI}
\int_{s}
\Gamma_{\bC}(s+a_1)\Gamma_{\bC}(s+a_2)y_1^{-2s}ds
\end{split}
\end{align}
for $y_1\in \bR_+$ and $a_1,a_2\in \bC$. 
Here the path of integration $\int_{s}$ is the vertical line 
from $\mathrm{Re}(s)-\sI \infty$ to $\mathrm{Re}(s)+\sI \infty$ 
with the sufficiently large real part to keep the poles of the integrand 
on its left. 
Until the end of this chapter, 
we assume that $\Phi$ is the homomorphism 
$\Phi_\chi^{\rm mg}$ in Theorem \ref{thm:C2_ps_Whittaker}. 
Then we have 
\[
\hat{\varphi}_{0,0}(y_1)=\Psi \bigl(\nu_1,\, 
\nu_2+\tfrac{d_1-d_2}{2}\bigr)(y_1). 
\]
The functions 
$\hat{\varphi}_{l,0}$ ($l\in \bZ_{\geq 0}$) 
can be uniquely determined from $\hat{\varphi}_{0,0}$ 
by (\ref{eqn:C2_PDE2_DSE}), and it is easy to see that  
\begin{align}
\label{eqn:C2_varphi_l0}
\begin{split}
\hat{\varphi}_{l,0}
=&\,\binom{d_1-d_2+2l}{l}
\frac{\Gamma _\bC \bigl(\nu_1-\nu_2+1+\tfrac{d_1-d_2}{2}\bigr)}
{\Gamma_\bC \bigl(\nu_1-\nu_2+1+\tfrac{d_1-d_2}{2}+l\bigr)}\\
&\times \Psi \bigl(\nu_1+\tfrac{l}{2},\,
\nu_2+\tfrac{d_1-d_2+l}{2}\bigr).
\end{split}
\end{align}
For every $l\in \bZ_{\geq 0}$, 
the functions $\hat{\varphi}_{l,q}$ ($0\leq q\leq d_1-d_2+2l$) 
can be uniquely determined from $\hat{\varphi}_{l,0}$ 
by (\ref{eqn:C2_PDE2_C2omote}). 
In order to give the explicit description of $\hat{\varphi}_{l,q}$, 
we prepare the following lemma. 

\begin{lem}
\label{lem:C2_reccurence}
For $a_1,a_2,b_1,b_2\in \bC$ and $y_1\in \bR_{+}$, it holds that 
\begin{align*}
&(8\pi y_1)^{-1}\{(\partial_1-2b_1)(\partial_1-2b_2)-(4\pi y_1)^2\}
\Psi (a_1, a_2)(y_1)\\
&=-(a_1+a_2-b_1-b_2)\Psi (a_1+1/2, a_2-1/2)(y_1)\\
&\phantom{=.}
+(2\pi )^{-1}(a_1-b_1)(a_1-b_2)\Psi (a_1-1/2, a_2-1/2)(y_1).
\end{align*}
\end{lem}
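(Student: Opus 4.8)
The plan is to work entirely from the Mellin--Barnes representation
\[
\Psi (a_1,a_2)(y_1)=\frac{1}{2\pi \sI }\int_s \Gamma_\bC (s+a_1)\Gamma_\bC (s+a_2)\,y_1^{-2s}\,ds
\]
recorded in (\ref{eqn:C2_def_psi}), and to reduce the assertion to an elementary polynomial identity in the contour variable $s$. As in the proof of Lemma \ref{lem:Fn_Sol_mg}, the Stirling estimate for $\Gamma $ along vertical lines makes this integral absolutely and locally uniformly convergent, so that differentiation in $y_1$ and shifts of the path of integration may be carried out under the integral sign; I will use this freely. It is convenient to multiply the asserted identity through by $8\pi y_1$, so that the left-hand side becomes $\{(\partial_1-2b_1)(\partial_1-2b_2)-(4\pi y_1)^2\}\Psi (a_1,a_2)(y_1)$.

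First I would evaluate this left-hand side. Since $\partial_1 y_1^{-2s}=-2s\,y_1^{-2s}$, applying $(\partial_1-2b_1)(\partial_1-2b_2)$ under the integral multiplies the integrand by $4(s+b_1)(s+b_2)$. For the term $(4\pi y_1)^2\Psi (a_1,a_2)(y_1)$ I would substitute $s\mapsto s+1$ and apply the functional equation $\Gamma_\bC (s+1)=(2\pi )^{-1}s\,\Gamma_\bC (s)$ from (\ref{eqn:Fn_FE_GammaRC}) twice; this term then equals $\frac{1}{2\pi \sI }\int_s 4(s+a_1)(s+a_2)\Gamma_\bC (s+a_1)\Gamma_\bC (s+a_2)\,y_1^{-2s}\,ds$. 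Hence
\[
\{(\partial_1-2b_1)(\partial_1-2b_2)-(4\pi y_1)^2\}\Psi (a_1,a_2)(y_1)
=\frac{1}{2\pi \sI }\int_s 4\bigl[(s+b_1)(s+b_2)-(s+a_1)(s+a_2)\bigr]\Gamma_\bC (s+a_1)\Gamma_\bC (s+a_2)\,y_1^{-2s}\,ds .
\]

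Next I would rewrite the two $\Psi $-terms on the right-hand side over the same contour. Substituting $s\mapsto s+\tfrac12$ in their defining integrals and using (\ref{eqn:Fn_FE_GammaRC}) once more, one finds that $8\pi y_1\,\Psi (a_1+\tfrac12,a_2-\tfrac12)(y_1)$ equals $\frac{1}{2\pi \sI }\int_s 4(s+a_1)\Gamma_\bC (s+a_1)\Gamma_\bC (s+a_2)\,y_1^{-2s}\,ds$, while $8\pi y_1\,\Psi (a_1-\tfrac12,a_2-\tfrac12)(y_1)$ equals $\frac{1}{2\pi \sI }\int_s 8\pi \,\Gamma_\bC (s+a_1)\Gamma_\bC (s+a_2)\,y_1^{-2s}\,ds$. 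Therefore both sides of the (multiplied) identity are expressed as $\frac{1}{2\pi \sI }\int_s$ of $\Gamma_\bC (s+a_1)\Gamma_\bC (s+a_2)y_1^{-2s}$ times an explicit polynomial in $s$, and the proof reduces to the identity
\[
(s+b_1)(s+b_2)-(s+a_1)(s+a_2)=-(a_1+a_2-b_1-b_2)(s+a_1)+(a_1-b_1)(a_1-b_2),
\]
which holds since both sides expand to $(b_1+b_2-a_1-a_2)s+(b_1b_2-a_1a_2)$.

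I do not expect a genuine obstacle: the only point requiring care is keeping the several contour shifts consistent so that all four $\Psi $-type integrals appear over one common vertical line with the common factor $y_1^{-2s}$; once that bookkeeping is done the statement is the one-line polynomial check above. Alternatively the recurrence could be deduced from the classical contiguous relations for $K_\nu $ together with $\Psi (a_1,a_2)(y_1)=8y_1^{a_1+a_2}K_{a_1-a_2}(4\pi y_1)$, but the Mellin--Barnes route avoids case distinctions and matches the style of the surrounding lemmas.
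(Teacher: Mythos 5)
Your proposal is correct and follows the same route as the paper, which dismisses the lemma with ``direct computation using the functional equation $\Gamma_\bC (s+1)=(2\pi)^{-1}s\Gamma_\bC (s)$''; you have simply spelled out what that direct computation is, working from the Mellin--Barnes form of $\Psi$ and reducing everything to the (correct) polynomial identity $(s+b_1)(s+b_2)-(s+a_1)(s+a_2)=-(a_1+a_2-b_1-b_2)(s+a_1)+(a_1-b_1)(a_1-b_2)$.
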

\begin{proof}
We obtain the assertion 
by direct computation using the functional equation 
$\Gamma_\bC (s+1)=(2\pi)^{-1}s\Gamma_\bC (s)$ of the Gamma function. 
\end{proof}

By (\ref{eqn:C2_varphi_l0}), (\ref{eqn:C2_PDE2_C2omote}) and 
Lemma \ref{lem:C2_reccurence}, 
we obtain the formula 
\begin{align*}
\hat{\varphi}_{l,q}
=&\sum_{i=0}^{\min \{q,l\}}
\binom{q}{i}\binom{d_1-d_2+2l-i}{l-i}
\dfrac{(-1)^i\,\Gamma _\bC \bigl(\nu_1-\nu_2+1+\tfrac{d_1-d_2}{2}\bigr)}
{\Gamma_\bC \bigl(\nu_1-\nu_2+1+\tfrac{d_1-d_2}{2}+l-i\bigr)}\\
&\phantom{====}
\times \Psi \bigl(\nu_1+\tfrac{q+l}{2}-i,\,
\nu_2+\tfrac{d_1-d_2-q+l}{2}\bigr)
\end{align*}
for $0\leq q \leq d_1-d_2+2l$, 
recursively. 
Hence, we have the following.

\begin{thm}
\label{thm:C2_ps_Whittaker2}
Let $\chi = \chi_{(\nu_1,d_1)}\boxtimes \chi_{(\nu_2,d_2)}$ with 
$\nu_1,\nu_2\in \bC$ and $(d_1,d_2)\in \Lambda_2$ such that 
$2\nu_1-2\nu_2+d_1-d_2+2\not\in 2\bZ_{\leq 0}$. 
Let $\Phi_{\chi}^{\rm mg}\in {\cI}_{\Pi_\chi ,\psi_1}^{\rm mg}$ 
be the homomorphism in Theorem \ref{thm:C2_ps_Whittaker}. 
For $l\in \bZ_{\geq 0}$, $0\leq q \leq d_1-d_2+2l$ 
and $y=\diag (y_1y_2,y_2)\in A $, 
it holds that 
\begin{align*}
&\Phi_{\chi}^{\rm mg} ({\zeta}_{(\chi ;l,q)})(y) 
=(\sI )^{-q-l}y_1y_2^{2\nu_1+2\nu_2}\\
&\phantom{==}
\times \sum_{i=0}^{\min \{q,l\}}
\binom{q}{i}\binom{d_1-d_2+2l-i}{l-i}
\dfrac{(-1)^i\,
\Gamma _\bC \bigl(\nu_1-\nu_2+1+\tfrac{d_1-d_2}{2}\bigr)}
{\Gamma_\bC \bigl(\nu_1-\nu_2+1+\tfrac{d_1-d_2}{2}+l-i\bigr)}\\
&\phantom{==}
\times 
\frac{1}{2\pi \sI}
\int_{s}
\Gamma_{\bC}\bigl(s+\nu_1+\tfrac{q+l}{2}-i\bigr)
\Gamma_{\bC}\bigl(s+\nu_2+\tfrac{d_1-d_2-q+l}{2}\bigr)
y_1^{-2s}ds.
\end{align*}
Here the path of integration $\int_{s}$ is the vertical line 
from $\mathrm{Re}(s)-\sI \infty$ to $\mathrm{Re}(s)+\sI \infty$ 
with the sufficiently large real part to keep the poles of the integrand 
on its left.  
\end{thm}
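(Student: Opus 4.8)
The plan is to assemble the statement directly from the recursive formulas already established in this section, with no new analytic input required. By construction (see the displayed identity just before the theorem and the formulas (\ref{eqn:C2_varphi_l0}) and (\ref{eqn:C2_PDE2_C2omote})), the functions $\hat{\varphi}_{l,q}$ attached to $\Phi_\chi^{\mathrm{mg}}$ are obtained from $\hat{\varphi}_{0,0}=\Psi\bigl(\nu_1,\,\nu_2+\tfrac{d_1-d_2}{2}\bigr)$ first by iterating (\ref{eqn:C2_PDE2_DSE}) to get the $\hat{\varphi}_{l,0}$, then by iterating (\ref{eqn:C2_PDE2_C2omote}) in $q$ for each fixed $l$. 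Thus the theorem is just the closed-form evaluation of this double recursion, combined with the defining relation (\ref{eqn:R2_def_hat_varphi}) between $\Phi_\chi^{\mathrm{mg}}(\zeta_{(\chi;l,q)})(y)$ and $\hat{\varphi}_{l,q}(y_1)$.

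First I would record the formula for $\hat{\varphi}_{l,0}$, which is (\ref{eqn:C2_varphi_l0}); its proof is a one-line induction on $l$ using (\ref{eqn:C2_PDE2_DSE}) together with the identity $\binom{d_1-d_2+2l}{l}\tfrac{(d_1-d_2+2l+1)(d_1-d_2+2l+2)}{(l+1)(d_1-d_2+l+1)}=\binom{d_1-d_2+2l+2}{l+1}$ and the functional equation $\Gamma_\bC(s+1)=(2\pi)^{-1}s\Gamma_\bC(s)$ (see (\ref{eqn:Fn_FE_GammaRC})), which is precisely what makes the raising operator $(4\pi y_1)$ act on $\Psi(a_1,a_2)$ as a shift $a_1\mapsto a_1+\tfrac12$, $a_2\mapsto a_2+\tfrac12$. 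Next I would apply Lemma \ref{lem:C2_reccurence}: the recursion (\ref{eqn:C2_PDE2_C2omote}) expresses $\hat{\varphi}_{l,q+1}$ as $(8\pi y_1(d_1-d_2+2l-q))^{-1}$ times the operator $\{-(\partial_1-2\nu_1+l-q)(\partial_1-2\nu_2+d_1-d_2+l-q)+(4\pi y_1)^2\}$ applied to $\hat{\varphi}_{l,q}$, and Lemma \ref{lem:C2_reccurence} (with $b_1=\nu_1-\tfrac{l-q}{2}$, $b_2=\nu_2+\tfrac{d_1-d_2+l-q}{2}$ applied termwise to each $\Psi$ in the presumed expansion of $\hat{\varphi}_{l,q}$) rewrites the right-hand side again as a linear combination of functions $\Psi\bigl(\nu_1+\tfrac{q+1+l}{2}-i,\,\nu_2+\tfrac{d_1-d_2-q-1+l}{2}\bigr)$.

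The core of the argument is then a bookkeeping induction on $q$: assuming
\[
\hat{\varphi}_{l,q}=\sum_{i=0}^{\min\{q,l\}}\binom{q}{i}\binom{d_1-d_2+2l-i}{l-i}\frac{(-1)^i\,\Gamma_\bC\bigl(\nu_1-\nu_2+1+\tfrac{d_1-d_2}{2}\bigr)}{\Gamma_\bC\bigl(\nu_1-\nu_2+1+\tfrac{d_1-d_2}{2}+l-i\bigr)}\,\Psi\bigl(\nu_1+\tfrac{q+l}{2}-i,\,\nu_2+\tfrac{d_1-d_2-q+l}{2}\bigr),
\]
apply (\ref{eqn:C2_PDE2_C2omote}) and Lemma \ref{lem:C2_reccurence} to each summand; the $\Psi(a_1+\tfrac12,a_2-\tfrac12)$ terms reindex the $i$-sum (shifting $i\mapsto i$ with $q\mapsto q+1$), while the $\Psi(a_1-\tfrac12,a_2-\tfrac12)$ terms contribute to the $i+1$ slot. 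The binomial and $\Gamma_\bC$-quotient coefficients then have to combine via the Pascal-type identity $\binom{q}{i}+\binom{q}{i-1}=\binom{q+1}{i}$ after clearing the factors $(d_1-d_2+2l-q)^{-1}$, $(a_1-b_1)$, $(a_1-b_2)$ produced by the lemma; this is the one spot where the precise normalization $(8\pi y_1)^{-1}$ in the lemma, the factor $(d_1-d_2+2l-q)$ in (\ref{eqn:C2_PDE2_C2omote}), and the shifts in the $\Gamma_\bC$-quotient all have to line up, so I expect the coefficient-matching in this induction step to be the main (though entirely elementary) obstacle. Finally I would substitute the resulting closed form for $\hat{\varphi}_{l,q}$ into (\ref{eqn:R2_def_hat_varphi}), using the Mellin–Barnes representation in (\ref{eqn:C2_def_psi}) for each $\Psi$, to obtain exactly the integral expression in the statement; the path of integration is as in Theorem \ref{thm:C2_ps_Whittaker}(1), and moderate growth is inherited from $\Phi_\chi^{\mathrm{mg}}$.
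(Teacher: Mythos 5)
Your proposal follows exactly the paper's route: first establish the closed form (\ref{eqn:C2_varphi_l0}) for $\hat{\varphi}_{l,0}$ by iterating (\ref{eqn:C2_PDE2_DSE}), then iterate (\ref{eqn:C2_PDE2_C2omote}) in $q$ term by term via Lemma \ref{lem:C2_reccurence} to obtain the displayed closed form for $\hat{\varphi}_{l,q}$, and finally substitute into the defining relation for $\Phi_{\chi}^{\mathrm{mg}}(\zeta_{(\chi;l,q)})$; the paper treats the same pieces in the same order and likewise leaves the bookkeeping induction implicit. One slip worth fixing: matching (\ref{eqn:C2_PDE2_C2omote}) to Lemma \ref{lem:C2_reccurence} requires $b_2=\nu_2-\tfrac{d_1-d_2+l-q}{2}$ (not $+$), since the relevant factor in the recursion is $\partial_1-2\nu_2+d_1-d_2+l-q$; with your stated sign the Pascal-type coefficient matching would not close.
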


\begin{rem}
By the argument in \S \ref{subsec:C2_ps}, we note that, 
for $\chi = \chi_{(\nu_1,d_1)}\boxtimes \chi_{(\nu_2,d_2)}$ 
with $\nu_1,\nu_2\in \bC$ and $ d_1,d_2 \in \bZ$ 
$(d_1\geq d_2)$, 
the condition $2\nu_1-2\nu_2+d_1-d_2+2\not\in 2\bZ_{\leq 0}$ 
holds if $\Pi_\chi$ is irreducible. 
\end{rem}

\chapter{Whittaker functions on $GL(3,\bC)$}
\label{sec:C3_whittaker}

Throughout this chapter, we set $n=3$ and $F=\bC $.

\section{Representations of $U(3)$}
\label{subsec:C3_rep_K}

In this section, we discuss 
the representation theory of ${K}=U(3)$.

Let $\Lambda_3 =\{\mu =(\mu_1,\mu_2,\mu_3)
\in \bZ^{3}\mid \mu_1\geq \mu_2\geq \mu_3\}$. 
For $\mu =(\mu_1,\mu_2,\mu_3)\in \Lambda_3$, 
let $\mathcal{P}_\mu =\mathcal{P}_\mu^{(3)}$ be 
the $\bC$-vector space of 
polynomials of six variables 
${z}_1$, ${z}_2$, ${z}_3$, $\tilde{z}_{1}$, $\tilde{z}_{2}$, $\tilde{z}_{3}$, 
which are 
degree $(\mu_1-\mu_2)$ homogeneous with respect to 
three variables ${z}_1,{z}_2,{z}_3$ 
and are degree $(\mu_2-\mu_3)$ homogeneous 
with respect to three variables $\tilde{z}_{1},\tilde{z}_{2},\tilde{z}_{3}$. 
We define the action $T_\mu$ of $K$ on $\mathcal{P}_\mu$ by 
\begin{align*}
&(T_{\mu}(k)p)({z}_1,{z}_2,{z}_3,
\tilde{z}_{1},\tilde{z}_{2},\tilde{z}_{3})\\
&=(\det k)^{\mu_2}p(({z}_1,{z}_2,{z}_3)\cdot k,
(\tilde{z}_{1},\tilde{z}_{2},\tilde{z}_{3})\cdot \overline{k})&
&(k\in K,\ p\in \mathcal{P}_\mu ).
\end{align*}
Here $({z}_1,{z}_2,{z}_3)\cdot k$, 
$(\tilde{z}_{1},\tilde{z}_{2},\tilde{z}_{3})\cdot \overline{k}$ 
are the ordinal products of matrices, 
and $\overline{k}$ is the complex conjugate of $k$. 
Since ${z}_1\tilde{z}_{1}+{z}_2\tilde{z}_{2}+{z}_3\tilde{z}_{3}$ is 
${K}$-invariant, 
we can define the quotient representation $\tau_\mu =\tau_\mu^{(3)}$ 
of $T_\mu$ on the space 
\[
V_\mu=V_\mu^{(3)}=\mathcal{P}_\mu/
(({z}_1\tilde{z}_{1}+{z}_2\tilde{z}_{2}+{z}_3\tilde{z}_{3})
\mathcal{P}_{\mu -(1,0,-1)})
\] 
for $\mu \in \Lambda_3$. 
Here we put $\mathcal{P}_{\mu -(1,0,-1)}=\{0\}$ 
if $\mu -(1,0,-1)\not\in \Lambda_3$. 
Then the representation 
$(\tau_{\mu },V_{\mu })$ 
is irreducible. The set of equivalence classes of 
irreducible representations of ${K}$ is 
exhausted by $\{\tau_{\mu }\mid \mu \in \Lambda_3\}$.

For $\mu=(\mu_1,\mu_2,\mu_3)\in \Lambda_3$, we set 
\[
S_\mu =
\left\{l=(l_1,l_2,l_3,\tilde{l}_{1},\tilde{l}_{2},\tilde{l}_{3})
\in (\bZ_{\geq 0})^6
\ \left| \ 
\begin{array}{l}
l_1+l_2+l_3=\mu_1-\mu_2,\\[1mm] 
\tilde{l}_{1}+\tilde{l}_{2}+\tilde{l}_{3}=\mu_2-\mu_3
\end{array}
\right\}\right. . 
\]
For $l=(l_1,l_2,l_3,\tilde{l}_{1},\tilde{l}_{2},\tilde{l}_{3})
\in S_{\mu}$, let $u_{l}$ be the image of 
${z}_1^{\,l_1}{z}_2^{\,l_2}{z}_3^{\,l_3}{\tilde{z}_1}^{\,\tilde{l}_{1}}
{\tilde{z}_2}^{\,\tilde{l}_{2}}{\tilde{z}_3}^{\,\tilde{l}_{3}}$ 
under the natural surjection $\mathcal{P}_\mu \to V_{\mu}$. 
It is convenient to set $u_l =0$ if $l\not\in (\bZ_{\geq 0})^6$.

We note that $\{u_{l}\mid l\in S_\mu\}$ is a system 
of generators of $V_\mu$. 
If $\mu_1>\mu_2>\mu_3$, the relation 
\begin{align*}
&u_{l+\me_1+\tilde{\me}_1}+u_{l+\me_2+\tilde{\me}_2}+u_{l+\me_3+\tilde{\me}_3}=0&
&(l\in S_{\mu -(1,0,-1)})
\end{align*}
holds with 
\begin{align*}
&\me_1=(1,0,0,0,0,0),&
&\me_2=(0,1,0,0,0,0),&
&\me_3=(0,0,1,0,0,0),\\
&\tilde{\me}_{1}=(0,0,0,1,0,0),&
&\tilde{\me}_{2}=(0,0,0,0,1,0),&
&\tilde{\me}_{3}=(0,0,0,0,0,1),
\end{align*}
and otherwise, 
$\{u_{l}\mid l\in S_\mu\}$ is linearly independent. 
The action of 
\begin{align*}
K\cap M=\{\diag (m_1,m_2,m_3)\mid m_1,m_2,m_3\in U(1)\}
\end{align*}
on $\{u_{l}\mid l\in S_\mu\}$ is given by 
\begin{align}
\label{eqn:C3_Mact}
&\tau_{\mu }(m)u_{l}
=\left(\prod_{i=1}^3
m_i^{l_i-\tilde{l}_i+\mu_2}\right)\!u_{l}&
&(m=\diag (m_1,m_2,m_3)\in K\cap M).
\end{align}
We denote the differential of $\tau_{\mu }$ 
again by $\tau_{\mu }$. Then we have 
\begin{align}
\label{eqn:C3_gkact}
&\tau_{\mu }(E^{\gk}_{i,i})u_l 
=(l_i-\tilde{l}_{i}+\mu_2)u_l ,&
&\tau_{\mu }(E^{\gk}_{i,j})u_l=
l_ju_{l-\me_j+\me_i} -\tilde{l}_{i}u_{l-\tilde{\me}_{i}+\tilde{\me}_{j}}
\end{align}
for $1\leq i\neq j\leq 3$ and 
$l=(l_1,l_2,l_3,\tilde{l}_{1},\tilde{l}_{2},\tilde{l}_{3})\in S_\mu$.

We regard $\gp_\bC$ as a $K$-module via the adjoint action $\Ad$. 
For later use, we prepare the following lemma. 
\begin{lem}
\label{lem:C3_tensor1}
\noindent (1) Let $\mu =(\mu_1,\mu_2,\mu_3)\in \Lambda_3$ 
with $\mu_1>\mu_2$. Then it holds that 
\begin{align}
\label{eqn:C3_tensor100_1}
\Hom_K(V_{(1,0,0)}\otimes_{\bC}V_{\mu -(1,0,0)}, 
V_{\mu })
=\bC \,\mathrm{B}_{\mu }, 
\end{align}
where $\mathrm{B}_{\mu }\colon 
V_{(1,0,0)}\otimes_{\bC}V_{\mu -(1,0,0)}\to 
V_{\mu}$ is a surjective $\bC$-linear map characterized by  
$\mathrm{B}_{\mu }(u_{\me_i}\otimes u_l)=u_{l+\me_i}$\ 
$(1\leq i\leq 3,\ l\in S_{\mu -(1,0,0)})$. 
For $\mu'=(\mu_1',\mu_2',\mu_3')\in \Lambda_3$, 
it holds that 
\begin{align}
\label{eqn:C3_tensor100_2}
\Hom_K(V_{(1,0,0)}\otimes_{\bC}V_{\mu -(1,0,0)}, 
V_{\mu'})
=\{0\}
\end{align}
if $\mu_1'+\mu_2'+\mu_3'\neq 
\mu_1 +\mu_2+\mu_3$ or $\mu_1'>\mu_1 $ 
or $\mu_3'<\mu_3$. 
\\[2pt]
\noindent (2) We define a $\bC$-linear map 
$\mathrm{I}^{\gp}_{(1,0,0)}\colon 
V_{(1,0,0)}\to \gp_{\bC}\otimes_{\bC}V_{(1,0,0)}$ by 
\begin{align*}
&\mathrm{I}^{\gp}_{(1,0,0)}(u_{\me_i})
=\sum_{j=1}^3E_{i,j}^{\gp}\otimes u_{\me_j}&
&(1\leq i\leq 3).
\end{align*}
Then $\mathrm{I}^{\gp}_{(1,0,0)}$ is a $K$-homomorphism. 
\end{lem}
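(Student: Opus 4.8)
The plan is to adapt the proof of Lemma~\ref{lem:R3_tensor} to $K=U(3)$, using the full diagonal torus $K\cap M$ in place of the circle subgroup that appeared there, together with the polynomial model of the $V_\mu$'s.

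First I would prove the vanishing statement~(\ref{eqn:C3_tensor100_2}). By (\ref{eqn:C3_Mact}) the weight of $u_l$ in $V_\lambda$ (for $\lambda=(\lambda_1,\lambda_2,\lambda_3)\in\Lambda_3$) is $(l_i-\tilde l_i+\lambda_2)_{1\le i\le 3}$ with $\sum_i l_i=\lambda_1-\lambda_2$ and $\sum_i\tilde l_i=\lambda_2-\lambda_3$. In particular $t1_3\in K\cap M$ acts on $V_\lambda$ by $t^{\lambda_1+\lambda_2+\lambda_3}$, so comparing central characters forces $\Hom_K=\{0\}$ unless $\mu_1'+\mu_2'+\mu_3'=\mu_1+\mu_2+\mu_3$. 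The same formula shows that every weight $(w_1,w_2,w_3)$ of $V_{(1,0,0)}$ satisfies $w_1\le 1$ and $w_3\ge 0$, and every weight of $V_{\mu-(1,0,0)}$ satisfies $w_1\le\mu_1-1$ and $w_3\ge\mu_3$; hence every weight occurring in $V_{(1,0,0)}\otimes_{\bC}V_{\mu-(1,0,0)}$ has $w_1\le\mu_1$ and $w_3\ge\mu_3$. Since a nonzero $K$-homomorphism into the irreducible $V_{\mu'}$ would make $V_{\mu'}$, hence its highest weight $\mu'$, occur in the tensor product, we get $\mu_1'\le\mu_1$ and $\mu_3'\ge\mu_3$, proving~(\ref{eqn:C3_tensor100_2}).

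Next, for~(\ref{eqn:C3_tensor100_1}), the multiplicity of $V_\mu$ in $V_{(1,0,0)}\otimes_{\bC}V_{\mu-(1,0,0)}$ is at most the dimension of the $\mu$-weight space of the tensor product, and I would check that this weight space is one dimensional: to reach total weight $\mu$ the $V_{(1,0,0)}$-factor must be $u_{\me_1}$ (the only weight of $V_{(1,0,0)}$ with first coordinate $1$), and the remaining factor is then forced to have weight $\mu-(1,0,0)$, i.e.\ to lie on the highest weight line of $V_{\mu-(1,0,0)}$. On the other hand $\mathrm{B}_\mu$ is the map induced on quotients by the $K$-equivariant multiplication $\mathcal{P}_{(1,0,0)}\otimes_{\bC}\mathcal{P}_{\mu-(1,0,0)}\to\mathcal{P}_\mu$, $p_1\otimes p_2\mapsto p_1p_2$; here the hypothesis $\mu_1>\mu_2$ is what makes $\mathcal{P}_{\mu-(1,0,0)}$ have the right bidegree and guarantees that multiplication by $\mathcal{P}_{(1,0,0)}$ carries $(z_1\tilde z_1+z_2\tilde z_2+z_3\tilde z_3)\mathcal{P}_{\mu-(2,0,-1)}$ into $(z_1\tilde z_1+z_2\tilde z_2+z_3\tilde z_3)\mathcal{P}_{\mu-(1,0,-1)}$, so $\mathrm{B}_\mu$ is a well-defined $K$-homomorphism with $\mathrm{B}_\mu(u_{\me_i}\otimes u_l)=u_{l+\me_i}$. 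It is surjective because each $m\in S_\mu$ has $\sum_i m_i=\mu_1-\mu_2\ge 1$, hence some $m_i\ge 1$ and $u_m=\mathrm{B}_\mu(u_{\me_i}\otimes u_{m-\me_i})$. Thus $\mathrm{B}_\mu\ne 0$, the multiplicity is exactly $1$, and $\Hom_K(V_{(1,0,0)}\otimes_{\bC}V_{\mu-(1,0,0)},V_\mu)=\bC\,\mathrm{B}_\mu$.

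For part~(2) I would follow Lemma~\ref{lem:R3_tensor}(2), but more cheaply since $U(3)$ is connected. Using (\ref{eqn:C3_gkact}) one has $\tau_{(1,0,0)}(E^{\gk}_{a,b})u_{\me_i}=\delta_{i,b}u_{\me_a}$, and with the $U(3)$ analogue of (\ref{eqn:C2_gKact_gp}), namely $\adj(E^{\gk}_{a,b})E^{\gp}_{i,j}=\delta_{b,i}E^{\gp}_{a,j}-\delta_{j,a}E^{\gp}_{i,b}$, a Leibniz computation gives
\[
(\adj\otimes\tau_{(1,0,0)})(E^{\gk}_{a,b})\,\mathrm{I}^{\gp}_{(1,0,0)}(u_{\me_i})=\delta_{b,i}\,\mathrm{I}^{\gp}_{(1,0,0)}(u_{\me_a})=\mathrm{I}^{\gp}_{(1,0,0)}\bigl(\tau_{(1,0,0)}(E^{\gk}_{a,b})u_{\me_i}\bigr),
\]
so $\mathrm{I}^{\gp}_{(1,0,0)}$ is a $\gk_\bC$-homomorphism, and connectedness of $K$ upgrades this to a $K$-homomorphism. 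I do not expect a genuine obstacle here; the only points that need attention are the one-dimensionality of the $\mu$-weight space and the bidegree bookkeeping in the construction and surjectivity of $\mathrm{B}_\mu$, both of which rest on the assumption $\mu_1>\mu_2$.
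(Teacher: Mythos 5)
Your proposal is correct and follows essentially the same strategy as the paper's proof: the determinant/central-character constraint for $\mu_1'+\mu_2'+\mu_3'$, componentwise weight bounds coming from the polynomial model to rule out $\mu_1'>\mu_1$ and $\mu_3'<\mu_3$, one-dimensionality of the $\mu$-weight space to get $\dim\Hom_K\le 1$, realization of $\mathrm{B}_\mu$ as descended polynomial multiplication, and for part (2) the same Leibniz computation together with connectedness of $U(3)$. You spell out a few steps the paper leaves implicit (well-definedness and surjectivity of $\mathrm{B}_\mu$, the single-contribution argument for the $\mu$-weight space), but there is no substantive difference.
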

\begin{proof}
Let $\mu =(\mu_1,\mu_2,\mu_3),\,
\mu'=(\mu_1',\mu_2',\mu_3)
\in \Lambda_3$ with $\mu_1>\mu_2$. 
Since 
\begin{align*}
&\tau_{\mu '}(E_{1,1}^{\gk}+E_{2,2}^{\gk}+E_{3,3}^{\gk})
=\mu_1'+\mu_2'+\mu_3',\\
&(\tau_{(1,0,0)}\otimes \tau_{\mu -(1,0,0)})
(E_{1,1}^{\gk}+E_{2,2}^{\gk}+E_{3,3}^{\gk})
=\mu_1+\mu_2+\mu_3,
\end{align*}
we know that (\ref{eqn:C3_tensor100_2}) holds 
if $\mu_1'+\mu_2'+\mu_3'\neq 
\mu_1+\mu_2+\mu_3$. 
By (\ref{eqn:C3_gkact}), 
we know that there is a unique element $v$ of $V_{\mu'}$ such that 
$\tau_{\mu'}(E_{i,i}^{\gk})v=\mu_i'v$ $(1\leq i\leq 3)$, 
up to scalar multiple, and 
there is no element $v$ of 
$V_{(1,0,0)}\otimes_{\bC}V_{\mu -(1,0,0)}$ such that 
\begin{align*}
&(\tau_{(1,0,0)}\otimes \tau_{\mu -(1,0,0)})(E_{i,i}^{\gk})v
=\mu_i'v&&(1\leq i\leq 3)
\end{align*} 
if $\mu_1'>\mu_1 $ or $\mu_3'<\mu_3$. 
Therefore, (\ref{eqn:C3_tensor100_2}) holds 
if $\mu_1'>\mu_1 $ or $\mu_3'<\mu_3$. 
Moreover, by (\ref{eqn:C3_gkact}), there is a unique element $v$ of 
$V_{(1,0,0)}\otimes_{\bC}V_{\mu -(1,0,0)}$ such that 
\begin{align*}
&(\tau_{(1,0,0)}\otimes \tau_{\mu -(1,0,0)})(E_{i,i}^{\gk})v
=\mu_iv&&(1\leq i\leq 3),
\end{align*}
up to scalar multiple. 
Hence we note that 
\[
\dim_\bC  \Hom_K(V_{(1,0,0)}\otimes_{\bC}V_{\mu -(1,0,0)}, 
V_{\mu })\leq 1. 
\]
Since $\mathrm{B}_{\mu}$ 
is a $\bC$-linear map induced from 
\begin{align*}
\mathcal{P}_{(1,0,0)}\otimes_{\bC}
\mathcal{P}_{\mu -(1,0,0)}\ni 
p_1\otimes 
p_2\mapsto 
p_1p_2
\in \mathcal{P}_{\mu }, 
\end{align*}
it is obvious that 
$\mathrm{B}_{\mu }$ is a surjective $K$-homomorphism. 
Hence, we obtain (\ref{eqn:C3_tensor100_1}), 
and complete a proof of the statement (1). 

By (\ref{eqn:C3_gkact}), we have 
\begin{align*}
&(\adj \otimes \tau_{(1,0,0)})(E_{a,b}^{\gk})
\mathrm{I}^{\gp}_{(1,0,0)}(u_{\me_i})\\
&=\sum_{j=1}^3\bigl\{(\adj (E_{a,b}^{\gk})E_{i,j}^{\gp})\otimes u_{\me_j}
+E_{i,j}^{\gp}\otimes (\tau_{(1,0,0)}(E_{a,b}^{\gk})u_{\me_j})\bigr\}\\
&=\sum_{j=1}^3\bigl\{(\delta_{b,i}E_{a,j}^{\gp}-\delta_{a,j}E_{i,b}^{\gp})
\otimes u_{\me_j}
+E_{i,j}^{\gp}\otimes (\delta_{b,j}u_{\me_a})\bigr\}\\
&=\delta_{b,i}\sum_{j=1}^3E_{a,j}^{\gp}\otimes u_{\me_j}
-E_{i,b}^{\gp}\otimes u_{\me_a}
+E_{i,b}^{\gp}\otimes u_{\me_a}
=\delta_{b,i}\mathrm{I}^{\gp}_{(1,0,0)}(u_{\me_a})
\end{align*}
for $1\leq a,b,i\leq 3$. 
Comparing this equality with (\ref{eqn:C3_gkact}), 
we know that $\mathrm{I}^{\gp}_{(1,0,0)}$ is a $\gk_\bC $-homomorphism. 
Since $K=U(3)$ is connected, 
we know that $\mathrm{I}^{\gp}_{(1,0,0)}$ is an $K$-homomorphism, 
and obtain the statement (2). 
\end{proof}

\begin{lem}
\label{lem:C3_tensor2}
\noindent (1) Let $\mu =(\mu_1,\mu_2,\mu_3)\in \Lambda_3$ 
with $\mu_2>\mu_3$. Then it holds that 
\begin{align}
\label{eqn:C3_tensor001_1}
\Hom_K(V_{(0,0,-1)}\otimes_{\bC}V_{\mu +(0,0,1)}, 
V_{\mu })
=\bC \,\widetilde{\mathrm{B}}_{\mu }, 
\end{align}
where $\widetilde{\mathrm{B}}_{\mu }\colon 
V_{(0,0,-1)}\otimes_{\bC}V_{\mu +(0,0,1)}\to 
V_{\mu}$ is a surjective $\bC$-linear map characterized by  
$\widetilde{\mathrm{B}}_{\mu }(u_{\tilde{\me}_i}\otimes u_l)
=u_{l+\tilde{\me}_i}$\ 
$(1\leq i\leq 3,\ l\in S_{\mu +(0,0,1)})$. 
For $\mu'=(\mu_1',\mu_2',\mu_3')\in \Lambda_3$, 
it holds that 
\begin{align}
\label{eqn:C3_tensor001_2}
\Hom_K(V_{(0,0,-1)}\otimes_{\bC}V_{\mu +(0,0,1)}, 
V_{\mu'})
=\{0\}
\end{align}
if $\mu_1'+\mu_2'+\mu_3'\neq 
\mu_1 +\mu_2+\mu_3$ or 
$\mu_1'>\mu_1 $ or $\mu_3'<\mu_3$. \\[2pt]
\noindent (2) We define a $\bC$-linear map 
$\mathrm{I}^{\gp}_{(0,0,-1)}\colon V_{(0,0,-1)}\to 
\gp_{\bC}\otimes_{\bC}V_{(0,0,-1)}$ by 
\begin{align*}
&\mathrm{I}^{\gp}_{(0,0,-1)}(u_{\tilde{\me}_i})
=\sum_{j=1}^3E_{j,i}^{\gp}\otimes u_{\tilde{\me}_j}&
&(1\leq i\leq 3).
\end{align*}
Then $\mathrm{I}^{\gp}_{(0,0,-1)}$ is a $K$-homomorphism. 
\end{lem}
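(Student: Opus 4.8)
The plan is to follow the proof of Lemma \ref{lem:C3_tensor1} almost verbatim, with the standard $K$-module $V_{(1,0,0)}$ replaced by its contragredient $V_{(0,0,-1)}$ and each $E_{i,j}^{\gp}$ replaced by $E_{j,i}^{\gp}$. In fact Lemma \ref{lem:C3_tensor1} and the present lemma correspond to one another under the involution $\mu\mapsto(-\mu_3,-\mu_2,-\mu_1)$ of $\Lambda_3$ (which sends $V_\mu$ to its dual) together with the transpose $E_{i,j}^{\gp}\leftrightarrow E_{j,i}^{\gp}$, so no new idea is needed; it is nonetheless cleanest to redo the computations directly.

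For part (1), I would first bound $\dim_\bC\Hom_K(V_{(0,0,-1)}\otimes_{\bC}V_{\mu+(0,0,1)},V_{\mu'})$ and prove the vanishing (\ref{eqn:C3_tensor001_2}) by the weight argument used for Lemma \ref{lem:C3_tensor1}. By (\ref{eqn:C3_gkact}), the element $E_{1,1}^{\gk}+E_{2,2}^{\gk}+E_{3,3}^{\gk}$ acts on $V_{\mu'}$ by $\mu_1'+\mu_2'+\mu_3'$ and on $V_{(0,0,-1)}\otimes_{\bC}V_{\mu+(0,0,1)}$ by $(-1)+(\mu_1+\mu_2+\mu_3+1)=\mu_1+\mu_2+\mu_3$, which forces (\ref{eqn:C3_tensor001_2}) when $\mu_1'+\mu_2'+\mu_3'\neq\mu_1+\mu_2+\mu_3$. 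Next, by (\ref{eqn:C3_gkact}) the weight vectors of $V_{(0,0,-1)}$ are $u_{\tilde{\me}_j}$ with $\tau_{(0,0,-1)}(E_{i,i}^{\gk})u_{\tilde{\me}_j}=-\delta_{i,j}u_{\tilde{\me}_j}$, while $V_{\mu'}$ contains, up to scalar, a unique vector $v$ with $\tau_{\mu'}(E_{i,i}^{\gk})v=\mu_i'v$ $(1\le i\le 3)$; one checks that $V_{(0,0,-1)}\otimes_{\bC}V_{\mu+(0,0,1)}$ contains no nonzero vector of that weight type when $\mu_1'>\mu_1$ or $\mu_3'<\mu_3$ (yielding the remaining cases of (\ref{eqn:C3_tensor001_2})) and exactly one such vector when $\mu'=\mu$ (yielding $\dim_\bC\Hom_K(V_{(0,0,-1)}\otimes_{\bC}V_{\mu+(0,0,1)},V_\mu)\le 1$). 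Finally, $\widetilde{\mathrm{B}}_\mu$ is the map induced by the multiplication $\mathcal{P}_{(0,0,-1)}\otimes_{\bC}\mathcal{P}_{\mu+(0,0,1)}\to\mathcal{P}_\mu$, $p_1\otimes p_2\mapsto p_1p_2$; it descends to the quotient modules (the defining relation enters only through the second tensor factor, since $\mathcal{P}_{(0,0,-1)-(1,0,-1)}=\{0\}$) and is evidently a surjective $K$-homomorphism, so together with the dimension bound we obtain (\ref{eqn:C3_tensor001_1}).

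For part (2), I would redo the computation in the proof of Lemma \ref{lem:C3_tensor1}(2). Using (\ref{eqn:C2_gKact_gp}) in the form $\adj(E_{a,b}^{\gk})E_{j,i}^{\gp}=\delta_{b,j}E_{a,i}^{\gp}-\delta_{i,a}E_{j,b}^{\gp}$ and (\ref{eqn:C3_gkact}) in the form $\tau_{(0,0,-1)}(E_{a,b}^{\gk})u_{\tilde{\me}_j}=-\delta_{a,j}u_{\tilde{\me}_b}$, one obtains
\[
(\adj\otimes\tau_{(0,0,-1)})(E_{a,b}^{\gk})\,\mathrm{I}^{\gp}_{(0,0,-1)}(u_{\tilde{\me}_i})
=-\delta_{a,i}\sum_{j=1}^3E_{j,b}^{\gp}\otimes u_{\tilde{\me}_j}
=\mathrm{I}^{\gp}_{(0,0,-1)}\bigl(\tau_{(0,0,-1)}(E_{a,b}^{\gk})u_{\tilde{\me}_i}\bigr)
\]
for $1\le a,b,i\le 3$, the two copies of $E_{a,i}^{\gp}\otimes u_{\tilde{\me}_b}$ cancelling. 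Hence $\mathrm{I}^{\gp}_{(0,0,-1)}$ is a $\gk_\bC$-homomorphism, and since $K=U(3)$ is connected it is a $K$-homomorphism.

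I do not expect a genuine obstacle, as the statement is the mirror image of Lemma \ref{lem:C3_tensor1}; the only point needing a moment's care is the descent of the multiplication map to the quotient modules $V_\bullet$ in part (1), which is immediate once one notes $\mathcal{P}_{(0,0,-1)-(1,0,-1)}=\{0\}$.
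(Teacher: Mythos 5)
Your proposal is correct and takes essentially the same approach as the paper, which disposes of this lemma with the single sentence ``Similar to the proof of Lemma \ref{lem:C3_tensor1}, we obtain the assertion''; your mirrored weight argument, the observation that $\mathcal{P}_{(0,0,-1)-(1,0,-1)}=\{0\}$ so the multiplication map descends, and the explicit $\gk_\bC$-equivariance calculation (where the two $E_{a,i}^{\gp}\otimes u_{\tilde{\me}_b}$ terms cancel) all check out.
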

\begin{proof}
Similar to the proof of Lemma \ref{lem:C3_tensor1}, 
we obtain the assertion.  
\end{proof}

\section{Principal series representations}
\label{subsec:C3_ps}

Let $\chi =\chi_{(\nu_1,d_1)}\boxtimes \chi_{(\nu_2,d_2)}
\boxtimes \chi_{(\nu_3,d_3)}$ 
with $(\nu_1,\nu_2,\nu_3)\in \bC^3$ and 
$(d_1,d_2,d_3)\in \Lambda_3$. 
In this section, 
we consider the action of $\g_\bC$ at the minimal $K$-type of $\Pi_\chi$. 
We define a $\bC$-linear map $\eta_{\chi }\colon V_{(d_1,d_2,d_3)}
\to \bC_\chi $ by 
\begin{align*}
&\eta_{\chi}(u_l)
=\left\{\begin{array}{ll}
1&\text{ if }\ l=(d_1-d_2,0,0,0,0,d_2-d_3),\\[1mm]
0&\text{ otherwise}
\end{array}\right.&
&(l\in S_{(d_1,d_2,d_3)}). 
\end{align*}
Let $\mu =(\mu_1,\mu_2,\mu_3)\in \Lambda_3$. 
Because of (\ref{eqn:C3_Mact}), we have 
\begin{align*}
&\Hom_{{K}\cap M}(V_{(d_1,d_2,d_3)},\bC_{\chi})=\bC\, \eta_{\chi},&
&\Hom_{{K}\cap M}(V_{\mu },\bC_{\chi})=\{0\}
\end{align*}
if $d_1+d_2+d_3\neq \mu_1 +\mu_2+\mu_3$ or 
$d_1>\mu_1$ or $d_3<\mu_3$. We set 
\begin{align*}
&\hat{\eta}_{\chi }(v)(k)=\eta_{\chi }
(\tau_{(d_1,d_2,d_3)}(k)v)&
&(v\in V_{(d_1,d_2,d_3)},\ k\in {K}). 
\end{align*}
Then, by the Frobenius reciprocity law, we have 
\begin{align}
\label{eqn:C3_minKtype}
&\Hom_{{K}}(V_{(d_1,d_2,d_3)},H({\chi})_K)=\bC\, \hat{\eta}_{\chi},&
&\Hom_{{K}}(V_{\mu },H({\chi})_K)=\{0\}
\end{align}
if $d_1+d_2+d_3\neq \mu_1 +\mu_2+\mu_3$ or 
$d_1>\mu_1$ or $d_3<\mu_3$. 
We call $\tau_{(d_1,d_2,d_3)}$ 
the minimal $K$-type of $\Pi_\chi$. 
\begin{prop}
\label{prop:C3_DSE_ps}
Retain the notation. 
If $d_1>d_2$, then it holds that 
\begin{align}
\label{eqn:C3_DSE_ps1}
&\sum_{j=1}^3\Pi_\chi (E_{i,j}^{\gp})
\hat{\eta}_{\chi }(u_{l+\me_j})
=2\nu_1\hat{\eta}_{\chi }(u_{l+\me_i})&
&(1\leq i\leq 3,\ l\in S_{(d_1-1,d_2,d_3)}).
\end{align}
If $d_2>d_3$, then it holds that 
\begin{align}
\label{eqn:C3_DSE_ps2}
&\sum_{j=1}^3\Pi_\chi (E_{j,i}^{\gp})
\hat{\eta}_{\chi }(u_{l+\tilde{\me}_j})
=2\nu_3\hat{\eta}_{\chi }(u_{l+\tilde{\me}_i})&
&(1\leq i\leq 3,\ l\in S_{(d_1,d_2,d_3+1)}).
\end{align}
\end{prop}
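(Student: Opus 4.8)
The plan is to adapt the proof of Proposition~\ref{prop:R3_ZgDSE_gps} to $GL(3,\bC)$, using the complex $K$-homomorphisms supplied by Lemmas~\ref{lem:C3_tensor1} and \ref{lem:C3_tensor2}. I treat the case $d_1>d_2$, which gives \eqref{eqn:C3_DSE_ps1}; the case $d_2>d_3$ is entirely parallel, with $\mathrm{I}^{\gp}_{(0,0,-1)}$ and $\widetilde{\mathrm{B}}_{(d_1,d_2,d_3)}$ replacing $\mathrm{I}^{\gp}_{(1,0,0)}$ and $\mathrm{B}_{(d_1,d_2,d_3)}$. The first step is to check that
\[
\Hom_K\bigl(V_{(1,0,0)}\otimes_{\bC}V_{(d_1-1,d_2,d_3)},\,H(\chi)_K\bigr)=\bC\,\hat{\eta}_{\chi}\circ \mathrm{B}_{(d_1,d_2,d_3)}.
\]
Indeed, by Lemma~\ref{lem:C3_tensor1}~(1) every irreducible constituent $V_{\mu'}$ of the source satisfies $\mu_1'+\mu_2'+\mu_3'=d_1+d_2+d_3$, $\mu_1'\le d_1$ and $\mu_3'\ge d_3$, whereas by \eqref{eqn:C3_minKtype} the only $K$-types occurring in $H(\chi)_K$ with the same infinitesimal data and with $\mu_1'\ge d_1$, $\mu_3'\le d_3$ are those isomorphic to $\tau_{(d_1,d_2,d_3)}$; this constituent occurs in the tensor product with multiplicity one (again Lemma~\ref{lem:C3_tensor1}~(1)), so the Hom space is one dimensional, and it is spanned by the nonzero $K$-homomorphism $\hat{\eta}_{\chi}\circ \mathrm{B}_{(d_1,d_2,d_3)}$ (surjectivity of $\mathrm{B}_{(d_1,d_2,d_3)}$ and injectivity of $\hat{\eta}_{\chi}$).

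Next I would let $\mathrm{P}_{\chi}\colon \gp_{\bC}\otimes_{\bC}H(\chi)_K\to H(\chi)_K$ be the natural $K$-homomorphism $X\otimes f\mapsto \Pi_{\chi}(X)f$. Then the composite
\[
\mathrm{P}_{\chi}\circ\bigl(\id_{\gp_{\bC}}\otimes(\hat{\eta}_{\chi}\circ \mathrm{B}_{(d_1,d_2,d_3)})\bigr)\circ\bigl(\mathrm{I}^{\gp}_{(1,0,0)}\otimes\id_{V_{(d_1-1,d_2,d_3)}}\bigr)
\]
is a $K$-homomorphism $V_{(1,0,0)}\otimes_{\bC}V_{(d_1-1,d_2,d_3)}\to H(\chi)_K$, hence equals $c\,\hat{\eta}_{\chi}\circ \mathrm{B}_{(d_1,d_2,d_3)}$ for some scalar $c$. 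Evaluating both sides on $u_{\me_i}\otimes u_l$ and using $\mathrm{I}^{\gp}_{(1,0,0)}(u_{\me_i})=\sum_{j=1}^3 E_{i,j}^{\gp}\otimes u_{\me_j}$ together with $\mathrm{B}_{(d_1,d_2,d_3)}(u_{\me_j}\otimes u_l)=u_{l+\me_j}$, one obtains
\[
\sum_{j=1}^3\Pi_{\chi}(E_{i,j}^{\gp})\hat{\eta}_{\chi}(u_{l+\me_j})=c\,\hat{\eta}_{\chi}(u_{l+\me_i})
\]
for all $1\le i\le 3$ and all $l\in S_{(d_1-1,d_2,d_3)}$.

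Finally I would pin down $c$ by evaluating at $1_3$. Choosing $i=1$ and $l=(d_1-1-d_2,0,0,0,0,d_2-d_3)\in S_{(d_1-1,d_2,d_3)}$, we have $l+\me_1=(d_1-d_2,0,0,0,0,d_2-d_3)$, so $\hat{\eta}_{\chi}(u_{l+\me_1})(1_3)=\eta_{\chi}(u_{l+\me_1})=1$. Writing $E_{1,j}^{\gp}=E_{1,j}^{\g}+\widetilde{E}_{j,1}^{\g}$ and, for $j\neq 1$, $\widetilde{E}_{j,1}^{\g}=E_{1,j}^{\g}-E_{1,j}^{\gk}$ as in \S\ref{subsec:Cn_g_gen_Zg}, and applying the vanishing \eqref{eqn:Cn_gact_value1}, the eigenvalue formulas \eqref{eqn:Cn_gact_value2}--\eqref{eqn:Cn_gact_value3}, and the $\gk_{\bC}$-action \eqref{eqn:C3_gkact} (which gives $\tau_{(d_1,d_2,d_3)}(E_{1,j}^{\gk})u_{l+\me_j}=u_{l+\me_1}$ for $j=2,3$), one computes that the $j=1$ summand contributes $2\nu_1+2$ and each of the $j=2,3$ summands contributes $-1$, so $c=2\nu_1$; the parallel computation with $i=3$ and $l=(d_1-d_2,0,0,0,0,d_2-d_3-1)$ produces $c=2\nu_3$ when $d_2>d_3$. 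I expect the only slightly delicate point to be the multiplicity-one and spanning statement of the first step—verifying that among the (at most three) $K$-types appearing in $V_{(1,0,0)}\otimes_{\bC}V_{(d_1-1,d_2,d_3)}$ only $\tau_{(d_1,d_2,d_3)}$ can occur in $H(\chi)_K$; everything afterward is the same bookkeeping as in the real case.
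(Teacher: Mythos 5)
Your proposal is correct and follows the paper's own proof essentially step for step: establish $\dim_\bC\Hom_K(V_{(1,0,0)}\otimes V_{(d_1-1,d_2,d_3)},H(\chi)_K)=1$ via (\ref{eqn:C3_minKtype}) and Lemma \ref{lem:C3_tensor1} (1), identify the composite with $\mathrm{I}^{\gp}_{(1,0,0)}$ as a scalar multiple of $\hat{\eta}_\chi\circ\mathrm{B}_{(d_1,d_2,d_3)}$, and pin down $c=2\nu_1$ by evaluating at $1_3$ with $l=(d_1-d_2-1,0,0,0,0,d_2-d_3)$ (and symmetrically for $d_2>d_3$ with Lemma \ref{lem:C3_tensor2}). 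The only cosmetic difference is that you split $E^{\gp}_{1,1}$ as $E^{\g}_{1,1}+\widetilde E^{\g}_{1,1}$ while the paper writes all three summands uniformly, but the arithmetic is identical.
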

\begin{proof}
Let $\mathrm{P}_\chi 
\colon \gp_{\bC}\otimes_{\bC}H(\chi)_K\to H(\chi)_K$ 
be a natural $K$-homomorphism defined by 
$X\otimes f\mapsto \Pi_{\chi}(X)f$. 
Assume $d_1>d_2$. 
By (\ref{eqn:C3_minKtype}) and 
Lemma \ref{lem:C3_tensor1} (1), we have 
\[
\Hom_K(V_{(1,0,0)}\otimes_{\bC}V_{(d_1-1,d_2,d_3)}, 
H(\chi)_K)
=\bC\,\hat{\eta}_{\chi }\circ 
\mathrm{B}_{(d_1,d_2,d_3)}.
\]
Since this space contains the composite 
\[
\mathrm{P}_\chi \circ 
(\id_{\gp_{\bC}}\otimes (\hat{\eta}_{\chi}
\circ \mathrm{B}_{(d_1,d_2,d_3)}))\circ 
(\mathrm{I}^\gp_{(1,0,0)}\otimes \id_{V_{(d_1-1,d_2,d_3)}}),
\]
there is a constant $c\in \bC $ such that 
\begin{align*}
c\,\hat{\eta}_{\chi }\circ \mathrm{B}_{(d_1,d_2,d_3)}=
\mathrm{P}_\chi \circ 
(\id_{\gp_{\bC}}\otimes (\hat{\eta}_{\chi }
\circ \mathrm{B}_{(d_1,d_2,d_3)}))\circ 
(\mathrm{I}^\gp_{(1,0,0)}\otimes \id_{V_{(d_1-1,d_2,d_3)}}).
\end{align*}
For $1\leq i\leq 3$ and $l\in S_{(d_1-1,d_2,d_3)}$, 
considering the image of $u_{\me_i}\otimes u_l$ 
under the both sides of this equality, we have 
\begin{align*}
c\,\hat{\eta}_{\chi }(u_{l+\me_i})
=\sum_{j=1}^3\Pi_\chi (E_{i,j}^{\gp})
\hat{\eta}_{\chi }(u_{l+\me_j}).
\end{align*}
For $l=(d_1-d_2-1,0,0,0,0,d_2-d_3)$, 
we have 
\begin{align*}
c&=c\,\hat{\eta}_{\chi }(u_{l+\me_1})(1_3)
=\sum_{j=1}^3(\Pi_\chi (E_{1,j}^{\gp})\hat{\eta}_{\chi }(u_{l+\me_j}))(1_3)\\
&=
(\Pi_\chi (E_{1,1}^{\gp})\hat{\eta}_{\chi}(u_{l+\me_1}))(1_3)\\
&\hphantom{=,}
+2(\Pi_\chi (E_{1,2}^{\g})\hat{\eta}_{\chi}(u_{l+\me_2}))(1_3)
-\hat{\eta}_{\chi}
(\tau_{(d_1,d_2,d_3)}(E_{1,2}^{\gk})u_{l+\me_2})(1_3)\\
&\hphantom{=,}
+2(\Pi_\chi (E_{1,3}^{\g})\hat{\eta}_{\chi}(u_{l+\me_3}))(1_3)
-\hat{\eta}_{\chi}
(\tau_{(d_1,d_2,d_3)}(E_{1,3}^{\gk})u_{l+\me_3})(1_3)\\
&=(2\nu_1+2)+0-1+0-1=2\nu_1,
\end{align*}
and obtain (\ref{eqn:C3_DSE_ps1}). 
By the similar argument using Lemma \ref{lem:C3_tensor2} 
instead of Lemma \ref{lem:C3_tensor1}, 
we obtain (\ref{eqn:C3_DSE_ps2}). 
\end{proof}

\section{Principal series Whittaker functions}
\label{subsec:C3_ps_whittaker}

We use the notation in \S \ref{subsec:C3_ps}. 
In this section, 
we will give the explicit formulas of Whittaker functions 
for an irreducible principal series representation $\Pi_\chi$ of $G$ 
at the minimal $K$-type $\tau_{(d_1,d_2,d_3)}$. 
Let $\varphi \colon V_{(d_1,d_2,d_3)} \to 
{\mathrm{Wh}}(\Pi_\chi ,\psi_1)$ be a $K$-homomorphism. 
We note that $\varphi$ is characterized by 
\begin{align*}
&\varphi (u_l)(y)&
&(\,l\in S_{(d_1,d_2,d_3)}\,)
\end{align*}
with $y=\diag (y_1y_2y_3,y_2y_3,y_3)\in A$. 
We set $\partial_i=y_i\dfrac{\partial}{\partial y_i}$ 
for $1\leq i\leq 3$. 
 
\begin{lem}
\label{lem:C3_ZgDSE1_original}
Retain the notation. 

\noindent (1) For $l\in S_{(d_1,d_2,d_3)}$, it holds that 
\begin{align}
&\label{eqn:C3_PDE1_C1}
(\partial_3-2\nu_1-2\nu_2-2\nu_3)\varphi (u_l)(y)=0,\\[3pt] 
\begin{split}
&
\{(\partial_1-d_1-2)(-\partial_1+\partial_2-d_2)
+(\partial_2-d_1-d_2-2)(-\partial_2+\partial_3-d_3+2)\\
&
+(4\pi y_1)^2+(4\pi y_2)^2
-(2\nu_1-d_1)(2\nu_2-d_2)-(2\nu_1-d_1)(2\nu_3-d_3)\\
&\label{eqn:C3_PDE1_C2}
-(2\nu_2-d_2)(2\nu_3-d_3)\}\varphi (u_{(d_1-d_2,0,0,0,0,d_2-d_3)})(y)=0,
\end{split}\\[3pt]
\begin{split}
&
\{(\partial_1-d_1-2)(-\partial_1+\partial_2-d_2)
(-\partial_2+\partial_3-d_3+2)\\
&
+(4\pi y_1)^2(-\partial_2+\partial_3-d_3+2)
+(4\pi y_2)^2(\partial_1-d_1-2)\\
&\label{eqn:C3_PDE1_C3}
-(2\nu_1-d_1)(2\nu_2-d_2)(2\nu_3-d_3)\}
\varphi (u_{(d_1-d_2,0,0,0,0,d_2-d_3)})(y)=0.
\end{split}
\end{align}

\noindent (2) If $d_1>d_2$, 
for $l=(l_1,l_2,l_3,\tilde{l}_1,\tilde{l}_2,\tilde{l}_3)
\in S_{(d_1-1,d_2,d_3)}$, it holds that 
\begin{align}
&\label{eqn:C3_PDE1_DSE1}
(\partial_1-\tilde{l}_1-l_2-l_3-2-2\nu_1)\varphi (u_{l+\me_1})(y)
+4\pi \sI y_1\varphi (u_{l+\me_2})(y)=0,\\[2pt]
\begin{split}
&
(-\partial_1+\partial_2+l_1-l_3-2\nu_1)\varphi (u_{l+\me_2})(y)\\
&
+4\pi \sI y_2\varphi (u_{l+\me_3})(y)+4\pi \sI y_1\varphi (u_{l+\me_1})(y)\\
&\label{eqn:C3_PDE1_DSE2}
-\tilde{l}_2\{\varphi (u_{l+\me_1-\tilde{\me}_2+\tilde{\me}_1})(y)
-\varphi (u_{l+\me_3-\tilde{\me}_2+\tilde{\me}_3})(y)\}=0,
\end{split}\\[2pt]
\begin{split}
&(-\partial_2+\partial_3+l_1+l_2+\tilde{l}_3+2-2\nu_1)
\varphi (u_{l+\me_3})(y)\\
&\nonumber 
+4\pi \sI y_2\varphi (u_{l+\me_2})(y)=0.
\end{split}
\end{align}
If $d_2>d_3$, 
for $l=(l_1,l_2,l_3,\tilde{l}_1,\tilde{l}_2,\tilde{l}_3)
\in S_{(d_1,d_2,d_3+1)}$, it holds that 
\begin{align}
&\nonumber 
(\partial_1-l_1-\tilde{l}_2-\tilde{l}_3-2-2\nu_3)
\varphi (u_{l+\tilde{\me}_1})(y)
+4\pi \sI y_1\varphi (u_{l+\tilde{\me}_2})(y)=0,\\
\begin{split}
&(-\partial_1+\partial_2+\tilde{l}_1-\tilde{l}_3-2\nu_3)
\varphi (u_{l+\tilde{\me}_2})(y)\\
&+4\pi \sI y_1\varphi (u_{l+\tilde{\me}_1})(y)
+4\pi \sI y_2\varphi (u_{l+\tilde{\me}_3})(y)\\
&\label{eqn:C3_PDE1_DSE5}
-l_2\{\varphi (u_{l-\me_2+\me_1+\tilde{\me}_1})(y)
-\varphi (u_{l-\me_2+\me_3+\tilde{\me}_3})(y)\}=0,
\end{split}\\
\begin{split}
&(-\partial_2+\partial_3+\tilde{l}_1+\tilde{l}_2+l_3+2-2\nu_3)
\varphi (u_{l+\tilde{\me}_3})(y)\\
&\label{eqn:C3_PDE1_DSE6}
+4\pi \sI y_2\varphi (u_{l+\tilde{\me}_2})(y)=0.
\end{split}
\end{align}
\end{lem}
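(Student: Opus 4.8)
\textbf{Proof proposal for Lemma \ref{lem:C3_ZgDSE1_original}.}

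The plan is to proceed exactly as in the proofs of Lemmas \ref{lem:R3_ps1_ZgDSE1_original}, \ref{lem:R3_ps3_ZgDSE1_original} and \ref{lem:R3_gps_ZgDSE1_original}, adapting the bookkeeping to the complex case $K=U(3)$. First I would note that since $\Pi_\chi$ is a principal series representation, there is a homomorphism $\Phi \in {\cI}_{\Pi_\chi ,\psi_1}$ with $\varphi = \Phi \circ \hat{\eta}_\chi$, where $\hat{\eta}_\chi$ is the minimal-$K$-type embedding from \S\ref{subsec:C3_ps}. Applying Proposition \ref{prop:Cn_Ch_eigenvalue} (with $n=3$) to the generators $\cC_1^{\g}$, $\cC_2^{\g}$, $\cC_3^{\g}$ and $\widetilde{\cC}_1^{\g}$, $\widetilde{\cC}_2^{\g}$, $\widetilde{\cC}_3^{\g}$ of $Z(\g_\bC)$ gives the eigenvalue equations $(R(\cC_h^{\g})\varphi(u_l))(y) = e_h(\nu+\tfrac{d}{2})\,\varphi(u_l)(y)$ and similarly with tildes and $\nu-\tfrac{d}{2}$, for each $l\in S_{(d_1,d_2,d_3)}$; here $e_h$ denotes the $h$-th elementary symmetric polynomial. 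Since the complex group satisfies $E_{j,i}^{\g} = \widetilde{E}_{i,j}^{\g}+E_{i,j}^{\gk}$ and $\widetilde{E}_{j,i}^{\g}=E_{i,j}^{\g}-E_{i,j}^{\gk}$, I would first rewrite $\cC_2^{\g}$, $\cC_3^{\g}$ (and the tilde versions) modulo $E_{1,3}^{\g}U(\g_\bC)$ and $\widetilde{E}_{1,3}^{\g}U(\g_\bC)$ respectively — the complex analogue of (\ref{eqn:R3_C2_modE13}), (\ref{eqn:R3_C3_modE13}) — so that only operators of the form $E_{i,i}^{\g}$, $E_{i,i+1}^{\g}$, $\widetilde{E}_{i,i}^{\g}$, $\widetilde{E}_{i,i+1}^{\g}$ and the compact generators $E_{i,i+1}^{\gk}$ remain after evaluation.

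The second step is to feed these expressions into Lemma \ref{lem:Cn_g_act_Cpsi} and (\ref{eqn:C3_gkact}). Lemma \ref{lem:Cn_g_act_Cpsi} converts the action of the diagonal $E_{i,i}^{\g}$, $\widetilde{E}_{i,i}^{\g}$ into the operators $\partial_{i-1},\partial_i$ together with the $\gk_\bC$-action, and the action of $E_{i,i+1}^{\g}$, $\widetilde{E}_{i,i+1}^{\g}$ into multiplication by $2\pi\sI y_i$ (note the factor $4\pi\sI y_i$ after accounting for the $\tfrac12$ in that lemma). The crucial simplification specific to the complex case is that the combination $\cC_h^{\g}+\widetilde{\cC}_h^{\g}$ (appropriately) produces $\partial_i$-type differential operators while the $\gk_\bC$-contributions and the off-diagonal root vectors act on the finite-dimensional $K$-type $V_{(d_1,d_2,d_3)}$ through (\ref{eqn:C3_gkact}), producing the shift terms $l_j u_{l-\me_j+\me_i}-\tilde{l}_i u_{l-\tilde\me_i+\tilde\me_j}$. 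Assembling everything — the degree-$1$ equation from $\cC_1^{\g}$ (or $\widetilde{\cC}_1^{\g}$), the degree-$2$ equation from $\cC_2^{\g}$ and $\widetilde{\cC}_2^{\g}$, and the degree-$3$ equation from $\cC_3^{\g}$ and $\widetilde{\cC}_3^{\g}$ — yields (\ref{eqn:C3_PDE1_C1}), (\ref{eqn:C3_PDE1_C2}), (\ref{eqn:C3_PDE1_C3}); for (\ref{eqn:C3_PDE1_C2}) and (\ref{eqn:C3_PDE1_C3}) one specializes to $l=(d_1-d_2,0,0,0,0,d_2-d_3)$ so that the shift terms involving $E_{1,2}^{\gk}$, $E_{2,3}^{\gk}$ applied to $u_l$ simplify, exactly mirroring how (\ref{eqn:R3_ps1_PDE1_C2}) was obtained. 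This establishes part (1).

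For part (2) I would invoke Proposition \ref{prop:C3_DSE_ps}: when $d_1>d_2$, equation (\ref{eqn:C3_DSE_ps1}) gives $\sum_{j}\Pi_\chi(E_{i,j}^{\gp})\hat{\eta}_\chi(u_{l+\me_j})=2\nu_1\hat{\eta}_\chi(u_{l+\me_i})$ for $l\in S_{(d_1-1,d_2,d_3)}$, hence the same relation for $\varphi$ after composing with $\Phi$. Using the decomposition $E_{i,j}^{\gp}=E_{j,i}^{\gp}=2E_{i,j}^{\g}-E_{i,j}^{\gk}$ (for $i\le j$, with the convention $E_{i,i}^{\gp}=E_{i,i}^{\g}+\widetilde{E}_{i,i}^{\g}$ on the diagonal) together with Lemma \ref{lem:Cn_g_act_Cpsi} and (\ref{eqn:C3_gkact}), the three components $i=1,2,3$ become (\ref{eqn:C3_PDE1_DSE1})--(\ref{eqn:C3_PDE1_DSE2}) and the displayed third equation; the $\tilde l_2$-dependent shift term in (\ref{eqn:C3_PDE1_DSE2}) is precisely the image of $\tau(E_{1,2}^{\gk})$ or $\tau(E_{2,3}^{\gk})$ under (\ref{eqn:C3_gkact}). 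The case $d_2>d_3$ is handled symmetrically using (\ref{eqn:C3_DSE_ps2}) and $\mathrm{I}^{\gp}_{(0,0,-1)}$ from Lemma \ref{lem:C3_tensor2}, giving (\ref{eqn:C3_PDE1_DSE5})--(\ref{eqn:C3_PDE1_DSE6}) and its companion. I expect the main obstacle to be purely organizational: carefully tracking the two families of diagonal operators ($E_{i,i}^{\g}$ versus $\widetilde{E}_{i,i}^{\g}$) and the precise combinatorial coefficients of the $u_{l-\me_j+\me_i}$ and $u_{l-\tilde\me_i+\tilde\me_j}$ shift terms, since in the complex case each Capelli element contributes both a holomorphic and an antiholomorphic piece and sign/indexing errors are easy to make — but no new idea beyond those already used for $GL(3,\bR)$ is required.
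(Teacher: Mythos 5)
Your outline is essentially the paper's proof: write $\varphi = \Phi \circ \hat{\eta}_\chi$, apply the Capelli eigenvalue equations from Proposition \ref{prop:Cn_Ch_eigenvalue}, reduce the Capelli elements modulo $E_{1,3}^{\g}U(\g_\bC) + \widetilde{E}_{1,3}^{\g}U(\g_\bC)$ via $\widetilde{E}_{j,i}^{\g} = E_{i,j}^{\g} - E_{i,j}^{\gk}$, and then convert everything to differential operators and shift terms using Lemma \ref{lem:Cn_g_act_Cpsi} and (\ref{eqn:C3_gkact}); for part (2) you correctly invoke Proposition \ref{prop:C3_DSE_ps} with the decomposition $E_{i,j}^{\gp}=2E_{i,j}^{\g}-E_{i,j}^{\gk}=2\widetilde{E}_{j,i}^{\g}+E_{i,j}^{\gk}$. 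One detail to correct: for part (1) the paper uses \emph{only} $\widetilde{\cC}_1^{\g},\widetilde{\cC}_2^{\g},\widetilde{\cC}_3^{\g}$, not both families and not any combination $\cC_h^{\g}+\widetilde{\cC}_h^{\g}$; this is visible in (\ref{eqn:C3_PDE1_C2})--(\ref{eqn:C3_PDE1_C3}), whose constants $(2\nu_i-d_i)$ are twice the $\widetilde{\cC}_h^{\g}$-eigenvalues $\nu_i-\tfrac{d_i}{2}$, and Lemma \ref{lem:Cn_g_act_Cpsi} already supplies the $\gk$-corrections to the diagonal generators on its own, so there is no averaging of holomorphic and antiholomorphic Capelli pieces (unlike the $GL(2,\bC)$ case, where $\cC_2^{\g}$ and $\widetilde{\cC}_2^{\g}$ both appear as separate equations).
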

\begin{proof}
We note that there is a homomorphism $\Phi \in {\cI}_{\Pi_\chi ,\psi_1}$ 
such that 
$\varphi =\Phi \circ \hat{\eta}_\chi$. 
Hence, by Proposition \ref{prop:Cn_Ch_eigenvalue}, we have 
\begin{align}
\label{eqn:C3_pf_PDE001}
&(R(\widetilde{\cC}_1^{{\g}})\varphi (u_l))(y)
-\bigl(\nu_1+\nu_2+\nu_3-\tfrac{d_1+d_2+d_3}{2}\bigr)\varphi (u_l)(y)=0,\\
\label{eqn:C3_pf_PDE002}
&(R(\widetilde{\cC}_2^{{\g}})\varphi (u_l))(y)-\left\{\sum_{1\leq i<j\leq 3}
\bigl(\nu_i-\tfrac{d_i}{2}\bigr)
\bigl(\nu_j-\tfrac{d_j}{2}\bigr)\right\}\varphi (u_l)(y)=0,\\
\label{eqn:C3_pf_PDE003}
&(R(\widetilde{\cC}_3^{{\g}})\varphi (u_l))(y)-
\bigl(\nu_1-\tfrac{d_1}{2}\bigr)
\bigl(\nu_2-\tfrac{d_2}{2}\bigr)
\bigl(\nu_3-\tfrac{d_3}{2}\bigr)\varphi (u_l)(y)=0
\end{align}
for $l\in S_{(d_1,d_2,d_3)}$. 
The explicit forms of $\widetilde{\cC}_1^{{\g}}$, $\widetilde{\cC}_2^{{\g}}$ 
and $\widetilde{\cC}_3^{{\g}}$ 
are given by 
\begin{align}
\label{eqn:C3_C1}
\widetilde{\cC}_1^{\g}
=&\widetilde{E}_{1,1}^{\g}+\widetilde{E}_{2,2}^{\g}
+\widetilde{E}_{3,3}^{\g},\\ 
\nonumber 
\widetilde{\cC}_2^{\g}
=&(\widetilde{E}_{1,1}^{\g}-1)\widetilde{E}_{2,2}^{\g}
+(\widetilde{E}_{1,1}^{\g}-1)(\widetilde{E}_{3,3}^{\g}+1)
+\widetilde{E}_{2,2}^{\g}(\widetilde{E}_{3,3}^{\g}+1)\\
&\nonumber 
-\widetilde{E}_{1,2}^{\g}\widetilde{E}_{2,1}^{\g}
-\widetilde{E}_{1,3}^{\g}\widetilde{E}_{3,1}^{\g}
-\widetilde{E}_{2,3}^{\g}\widetilde{E}_{3,2}^{\g},\\
\nonumber \widetilde{\cC}_3^{\g}=&
(\widetilde{E}_{1,1}^{\g}-1)\widetilde{E}_{2,2}^{\g}
(\widetilde{E}_{3,3}^{\g}+1)
+\widetilde{E}_{1,2}^{\g}\widetilde{E}_{2,3}^{\g}
\widetilde{E}_{3,1}^{\g}
+\widetilde{E}_{1,3}^{\g}\widetilde{E}_{2,1}^{\g}
\widetilde{E}_{3,2}^{\g}\\
&\nonumber 
-\widetilde{E}_{1,2}^{\g}\widetilde{E}_{2,1}^{\g}
(\widetilde{E}_{3,3}^{\g}+1)
-\widetilde{E}_{1,3}^{\g}\widetilde{E}_{2,2}^{\g}\widetilde{E}_{3,1}^{\g}
-(\widetilde{E}_{1,1}^{\g}-1)\widetilde{E}_{2,3}^{\g}
\widetilde{E}_{3,2}^{\g}.
\end{align}
By the equality $\widetilde{E}_{j,i}^{{\g}}=E_{i,j}^{{\g}}-E_{i,j}^{{\gk}}$ 
$(1\leq i,j\leq 3)$, we have 
\begin{align}
\begin{split}
\widetilde{\cC}_2^{\g}
\equiv \,
&(\widetilde{E}_{1,1}^{\g}-1)\widetilde{E}_{2,2}^{\g}
+(\widetilde{E}_{1,1}^{\g}-1)(\widetilde{E}_{3,3}^{\g}+1)
+\widetilde{E}_{2,2}^{\g}(\widetilde{E}_{3,3}^{\g}+1)\\
&\label{eqn:C3_C2_mod}
-\widetilde{E}_{1,2}^{\g}{E}_{1,2}^{\g}
-\widetilde{E}_{2,3}^{\g}{E}_{2,3}^{\g}
+\widetilde{E}_{1,2}^{\g}{E}_{1,2}^{\gk}
+\widetilde{E}_{2,3}^{\g}{E}_{2,3}^{\gk}\\
&\hspace{122pt}\mod E_{1,3}^{\g}U(\g_\bC)+\widetilde{E}_{1,3}^{\g}U(\g_\bC),
\end{split}\\
\begin{split}
\label{eqn:C3_C3_mod}
\widetilde{\cC}_3^{\g}
\equiv \,
&
(\widetilde{E}_{1,1}^{\g}-1)\widetilde{E}_{2,2}^{\g}
(\widetilde{E}_{3,3}^{\g}+1)
-\widetilde{E}_{1,2}^{\g}{E}_{1,2}^{\g}
(\widetilde{E}_{3,3}^{\g}+1)\\
&
-\widetilde{E}_{2,3}^{\g}
{E}_{2,3}^{\g}(\widetilde{E}_{1,1}^{\g}-1)
+\widetilde{E}_{1,2}^{\g}
(\widetilde{E}_{3,3}^{\g}+1){E}_{1,2}^{\gk}
-\widetilde{E}_{1,2}^{\g}\widetilde{E}_{2,3}^{\g}
\widetilde{E}_{1,3}^{\gk}\\
&+\widetilde{E}_{2,3}^{\g}
(\widetilde{E}_{1,1}^{\g}-1){E}_{2,3}^{\gk}
\hspace{30pt}\mod E_{1,3}^{\g}U(\g_\bC)+\widetilde{E}_{1,3}^{\g}U(\g_\bC).
\end{split}
\end{align}
Applying Lemma \ref{lem:Cn_g_act_Cpsi} and (\ref{eqn:C3_gkact}) 
to (\ref{eqn:C3_pf_PDE001}), 
(\ref{eqn:C3_pf_PDE002}) and (\ref{eqn:C3_pf_PDE003}) 
with the expressions (\ref{eqn:C3_C1}), (\ref{eqn:C3_C2_mod}) and 
(\ref{eqn:C3_C3_mod}), we obtain the statement (1).

By Proposition \ref{prop:C3_DSE_ps}, if $d_1>d_2$, it holds that 
\begin{align*}
&\sum_{j=1}^3(R(E_{i,j}^{\gp})\varphi (u_{l+\me_j}))(y)
-2\nu_1\varphi (u_{l+\me_i})(y)=0&
&(1\leq i\leq 3,\ l\in S_{(d_1-1,d_2,d_3)}),
\end{align*}
and if $d_2>d_3$, it holds that 
\begin{align*}
&\sum_{j=1}^3(R(E_{j,i}^{\gp})\varphi (u_{l+\tilde{\me}_j}))(y)
-2\nu_3\varphi (u_{l+\tilde{\me}_i})(y)=0&
&(1\leq i\leq 3,\ l\in S_{(d_1,d_2,d_3+1)}).
\end{align*}
Applying Lemma \ref{lem:Cn_g_act_Cpsi} and (\ref{eqn:C3_gkact}) 
to these equalities 
with 
\begin{align*}
&E_{i,j}^{{\gp}}=2E_{i,j}^{\g}-E_{i,j}^{{\gk}}
=2\widetilde{E}_{j,i}^{\g}+E_{i,j}^{{\gk}}&
&(1\leq i,j\leq 3), 
\end{align*}
we obtain the statement (2). 
\end{proof}

By the equation (\ref{eqn:C3_PDE1_C1}), 
for $l=(l_1,l_2,l_3,\tilde{l}_1,\tilde{l}_2,\tilde{l}_3)
\in S_{(d_1,d_2,d_3)}$, 
we can define a function $\hat{\varphi}_l$ on $(\bR_+)^2$ by  
\begin{align}
\label{eqn:C3_def_varphi_l}
\varphi (u_l)(y)=(\sI)^{l_1+\tilde{l}_1-l_3-\tilde{l}_3}
y_1^2y_2^2(y_2y_3)^{2\nu_1+2\nu_2+2\nu_3}\hat{\varphi}_l(y_1,y_2). 
\end{align}
with $y=\diag (y_1y_2y_3,y_2y_3,y_3)\in A$. 
It is convenient to set $\hat{\varphi}_l=0$ 
if $l\not\in (\bZ_{\geq 0})^6$. 

\begin{lem}
\label{lem:C3_ZgDSE2}
Retain the notation. 

\noindent (1) 
Assume $d_1>d_2$, and 
let $l=(l_1,l_2,l_3,\tilde{l}_1,\tilde{l}_2,\tilde{l}_3)
\in S_{(d_1-1,d_2,d_3)}$. 
It holds that 
\begin{align}
&\label{eqn:C3_PDE2_DSE1}
(-\partial_1+\tilde{l}_1+l_2+l_3+2\nu_1)\hat{\varphi}_{l+\me_1}
-4\pi y_1\hat{\varphi}_{l+\me_2}=0.
\end{align}
If $\tilde{l}_2=0$, it holds that 
\begin{align}
&\label{eqn:C3_PDE2_DSE2}
(-\partial_1+\partial_2+l_1-l_3+2\nu_2+2\nu_3)\hat{\varphi}_{l+\me_2}
-4\pi y_1\hat{\varphi}_{l+\me_1}
+4\pi y_2\hat{\varphi}_{l+\me_3}=0.
\end{align}
\noindent (2) 
Assume $d_2>d_3$, and 
let $l=(l_1,l_2,l_3,\tilde{l}_1,\tilde{l}_2,\tilde{l}_3)
\in S_{(d_1,d_2,d_3+1)}$. 
It holds that 
\begin{align}
&\label{eqn:C3_PDE2_DSE6}
(-\partial_2+\tilde{l}_1+\tilde{l}_2+l_3-2\nu_3)
\hat{\varphi}_{l+\tilde{\me}_3}
-4\pi y_2\hat{\varphi}_{l+\tilde{\me}_2}=0.
\end{align}
If ${l}_2=0$, it holds that 
\begin{align}
&\label{eqn:C3_PDE2_DSE5}
(-\partial_1+\partial_2+\tilde{l}_1-\tilde{l}_3+2\nu_1+2\nu_2)
\hat{\varphi}_{l+\tilde{\me}_2}
-4\pi y_1\hat{\varphi}_{l+\tilde{\me}_1}
+4\pi y_2\hat{\varphi}_{l+\tilde{\me}_3}=0.
\end{align}
\end{lem}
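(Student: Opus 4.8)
The plan is to substitute the defining relation (\ref{eqn:C3_def_varphi_l}) into the first-order differential–shift equations (\ref{eqn:C3_PDE1_DSE1}), (\ref{eqn:C3_PDE1_DSE2}), (\ref{eqn:C3_PDE1_DSE5}) and (\ref{eqn:C3_PDE1_DSE6}) of Lemma \ref{lem:C3_ZgDSE1_original} (2), and simply read off the resulting identities among the $\hat{\varphi}_l$. Writing $P(y) = y_1^2 y_2^2 (y_2 y_3)^{2\nu_1+2\nu_2+2\nu_3}$ for the common prefactor and $e(l) = l_1 + \tilde{l}_1 - l_3 - \tilde{l}_3$, we have $\varphi(u_l)(y) = (\sI)^{e(l)} P(y)\, \hat{\varphi}_l(y_1,y_2)$. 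First I would record $\partial_1 P = 2P$, $\partial_2 P = (2 + 2\nu_1+2\nu_2+2\nu_3)P$, $\partial_3 P = (2\nu_1+2\nu_2+2\nu_3)P$, and the crucial facts that $(\partial_2-\partial_3)P = 2P$ and $\partial_3 \hat{\varphi}_l = 0$, since $\hat{\varphi}_l$ depends only on $(y_1,y_2)$. Then I would track the exponent $e$: it goes up by $1$ under $l\mapsto l+\me_1$ or $l\mapsto l+\tilde{\me}_1$, is unchanged under $l\mapsto l+\me_2$ or $l\mapsto l+\tilde{\me}_2$, and goes down by $1$ under $l\mapsto l+\me_3$ or $l\mapsto l+\tilde{\me}_3$; consequently a factor $4\pi\sI y_i$ attached to a term whose index carries exponent $e\pm1$ becomes $\mp 4\pi y_i$ once everything is normalized to one fixed power of $\sI$, because $(\sI)^2=-1$.

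With these preliminaries in hand, each of the cited equations collapses to $(\sI)^{c}P(y)$ times the asserted identity, for a fixed integer $c$: applying $\partial_1$, $-\partial_1+\partial_2$, or $-\partial_2+\partial_3$ to $(\sI)^{e'}P\hat{\varphi}_{l'}$ and using $\partial_3\hat{\varphi}_{l'}=0$, the parameter shifts $+2$, $+2\nu_1$, etc.\ appearing in (\ref{eqn:C3_PDE1_DSE1})–(\ref{eqn:C3_PDE1_DSE6}) are exactly absorbed by $\partial_1 P = 2P$ and $(\partial_2-\partial_3)P = 2P$, leaving $(-\partial_1+\tilde{l}_1+l_2+l_3+2\nu_1)\hat{\varphi}_{l+\me_1}$, $(-\partial_1+\partial_2+l_1-l_3+2\nu_2+2\nu_3)\hat{\varphi}_{l+\me_2}$, and so on. Dividing out the nowhere-vanishing factor $(\sI)^{c}P(y)$ then yields (\ref{eqn:C3_PDE2_DSE1}), (\ref{eqn:C3_PDE2_DSE2}), (\ref{eqn:C3_PDE2_DSE5}) and (\ref{eqn:C3_PDE2_DSE6}). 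The only place a hypothesis intervenes is that the derivation of (\ref{eqn:C3_PDE2_DSE2}) requires the cross term $\tilde{l}_2\{\varphi(u_{l+\me_1-\tilde{\me}_2+\tilde{\me}_1})(y) - \varphi(u_{l+\me_3-\tilde{\me}_2+\tilde{\me}_3})(y)\}$ in (\ref{eqn:C3_PDE1_DSE2}) to drop out, which is precisely the assumption $\tilde{l}_2=0$; symmetrically, $l_2=0$ kills the analogous term in (\ref{eqn:C3_PDE1_DSE5}) that must vanish for (\ref{eqn:C3_PDE2_DSE5}).

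The computation is entirely routine and parallels Lemmas \ref{lem:R3_ps3_ZgDSE2} and \ref{lem:R3_gps_ZgDSE2}, so there is no genuine obstacle; the only thing needing care is the bookkeeping of the $\sI$-powers (and the signs they generate via $(\sI)^2=-1$) together with the cancellation of the integer and $\nu_i$-shifts against the logarithmic derivatives of $P$. Accordingly I expect the write-up to amount to little more than ``the equations follow immediately from (\ref{eqn:C3_PDE1_DSE1})–(\ref{eqn:C3_PDE1_DSE6}) via (\ref{eqn:C3_def_varphi_l}), using $\partial_3\hat{\varphi}_l=0$.''
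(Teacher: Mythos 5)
Your proposal is correct and is exactly the computation the paper compresses into ``follows immediately from \eqref{eqn:C3_PDE1_DSE1}--\eqref{eqn:C3_PDE1_DSE6}.'' Your bookkeeping of the powers of $\sI$ under the index shifts, the identities $\partial_1 P = 2P$ and $(\partial_2-\partial_3)P = 2P$ together with $\partial_3\hat{\varphi}_l=0$, and the observation that $\tilde{l}_2=0$ (resp.\ $l_2=0$) kills the cross term in \eqref{eqn:C3_PDE1_DSE2} (resp.\ \eqref{eqn:C3_PDE1_DSE5}) all check out.
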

\begin{proof}
The equations (\ref{eqn:C3_PDE2_DSE1}), (\ref{eqn:C3_PDE2_DSE2}), 
(\ref{eqn:C3_PDE2_DSE6}) and (\ref{eqn:C3_PDE2_DSE5}) follow 
immediately from (\ref{eqn:C3_PDE1_DSE1}), (\ref{eqn:C3_PDE1_DSE2}), 
(\ref{eqn:C3_PDE1_DSE6}) and (\ref{eqn:C3_PDE1_DSE5}), respectively. 
\end{proof}

Let $l_{\rm ht}=(d_1-d_2,0,0,0,0,d_2-d_3)$. 
The functions $\hat{\varphi}_{l}$ 
$(l\in S_{(d_1,d_2,d_3)})$ 
are determined from $\hat{\varphi}_{l_{\rm ht}}$ by 
the equations in Lemma \ref{lem:C3_ZgDSE2}. 
Therefore, we know that $\varphi $ 
is uniquely determined from $\hat{\varphi}_{l_{\rm ht}}$.

\begin{lem}
\label{lem:C3_PDE3}
Retain the notation. 
Then it holds that 
\begin{align}
\begin{split}
&
\{
-\partial_1^2+\partial_1\partial_2-\partial_2^2
+(2\nu_1+2\nu_2+2\nu_3+d_1-d_2)\partial_1\\
&
-(2\nu_1+2\nu_2+2\nu_3-d_2+d_3)\partial_2
-4(\nu_1\nu_2+\nu_1\nu_3+\nu_2\nu_3)
\\
&\label{eqn:C3_PDE3_2}
-2\nu_1(d_1-d_2)+2\nu_3(d_2-d_3)
+(4\pi y_1)^2+(4\pi y_2)^2\}
\hat{\varphi}_{l_{\rm ht}}=0,
\end{split}\\[3pt]
\begin{split}
&
\{(\partial_1-2\nu_1)
(\partial_1-2\nu_2-d_1+d_2)
(\partial_1-2\nu_3-d_1+d_3)\\
&\label{eqn:C3_PDE3_3}
-(4\pi y_1)^2(\partial_1+\partial_2-d_1+d_3+2)
\}
\hat{\varphi}_{l_{\rm ht}}=0.
\end{split}
\end{align}
\end{lem}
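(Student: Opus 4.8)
The plan is to obtain (\ref{eqn:C3_PDE3_2}) and (\ref{eqn:C3_PDE3_3}) from the three central eigenvalue equations (\ref{eqn:C3_PDE1_C1}), (\ref{eqn:C3_PDE1_C2}) and (\ref{eqn:C3_PDE1_C3}) of Lemma \ref{lem:C3_ZgDSE1_original}(1), following the same scheme as the proof of Lemma \ref{lem:R3_ps1_PDE3}. Equation (\ref{eqn:C3_PDE1_C1}) has already been used to define $\hat{\varphi}_{l_{\rm ht}}$ via (\ref{eqn:C3_def_varphi_l}); in particular $\hat{\varphi}_{l_{\rm ht}}$ does not depend on $y_3$, so $\partial_3\hat{\varphi}_{l_{\rm ht}}=0$. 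Substituting (\ref{eqn:C3_def_varphi_l}) amounts to conjugating the operators in (\ref{eqn:C3_PDE1_C2}) and (\ref{eqn:C3_PDE1_C3}) by the prefactor $y_1^2y_2^2(y_2y_3)^{2\nu_1+2\nu_2+2\nu_3}$, which replaces $\partial_1$ by $\partial_1+2$, $\partial_2$ by $\partial_2+2+2(\nu_1+\nu_2+\nu_3)$ and $\partial_3$ by $\partial_3+2(\nu_1+\nu_2+\nu_3)$, after which $\partial_3$ annihilates $\hat{\varphi}_{l_{\rm ht}}$. Thus, for instance, the factor $-\partial_2+\partial_3-d_3+2$ occurring in (\ref{eqn:C3_PDE1_C2}) and (\ref{eqn:C3_PDE1_C3}) is turned into $-\partial_2-d_3$.

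First I would substitute (\ref{eqn:C3_def_varphi_l}) into (\ref{eqn:C3_PDE1_C2}): the leftmost factor $\partial_1-d_1-2$ becomes $\partial_1-d_1$, the factor $-\partial_1+\partial_2-d_2$ becomes $-\partial_1+\partial_2+2(\nu_1+\nu_2+\nu_3)-d_2$, the factor $\partial_2-d_1-d_2-2$ becomes $\partial_2+2(\nu_1+\nu_2+\nu_3)-d_1-d_2$, while the $(4\pi y_1)^2$ and $(4\pi y_2)^2$ terms are unchanged; collecting the coefficients of $\partial_1$, of $\partial_2$ and the constant term then gives precisely (\ref{eqn:C3_PDE3_2}). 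Next I would substitute (\ref{eqn:C3_def_varphi_l}) into (\ref{eqn:C3_PDE1_C3}); the same replacements turn it into the reduced equation
\begin{align*}
&\{(\partial_1-d_1)(-\partial_1+\partial_2+2(\nu_1+\nu_2+\nu_3)-d_2)(-\partial_2-d_3)
+(4\pi y_1)^2(-\partial_2-d_3)\\
&+(4\pi y_2)^2(\partial_1-d_1)
-(2\nu_1-d_1)(2\nu_2-d_2)(2\nu_3-d_3)\}\hat{\varphi}_{l_{\rm ht}}=0.
\end{align*}
Finally, multiplying (\ref{eqn:C3_PDE3_2}) on the left by $-(\partial_1-d_1)$ and adding the result to this reduced equation cancels the mixed top-order terms $\partial_1^2\partial_2$, $\partial_1\partial_2^2$, $\partial_2^2$ and $\partial_1\partial_2$ as well as the $(4\pi y_2)^2$-term, and (using $\partial_1(4\pi y_1)^2=(4\pi y_1)^2(\partial_1+2)$) leaves exactly (\ref{eqn:C3_PDE3_3}).

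The argument is purely computational; the only genuine difficulty is the bookkeeping of the uniform shifts $\partial_i\mapsto\partial_i+c_i$ coming from the prefactor together with the three integer parameters $d_1,d_2,d_3$ propagating through the third-order expansions, which makes sign and indexing slips easy. There is no conceptual obstacle: one checks that (\ref{eqn:C3_PDE1_C2}) reproduces the polynomial coefficients of (\ref{eqn:C3_PDE3_2}) term by term, and that the combination of the reduced form of (\ref{eqn:C3_PDE1_C3}) with $-(\partial_1-d_1)$ times (\ref{eqn:C3_PDE3_2}) reproduces those of (\ref{eqn:C3_PDE3_3}) — in particular that the $\partial_1^2$-coefficient matches $-(2\nu_1+2\nu_2+2\nu_3+2d_1-d_2-d_3)$ — exactly as in the proof of Lemma \ref{lem:R3_ps1_PDE3}.
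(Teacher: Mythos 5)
Your proposal is correct and follows essentially the same route as the paper: equation~(\ref{eqn:C3_PDE3_2}) is read off directly from the conjugated form of~(\ref{eqn:C3_PDE1_C2}), the reduced equation you write is exactly the paper's~(\ref{eqn:C3_PDE2_C3}), and multiplying~(\ref{eqn:C3_PDE3_2}) by $-(\partial_1-d_1)$ on the left and adding is precisely the step leading to~(\ref{eqn:C3_PDE3_3}). The explicit bookkeeping of the shifts $\partial_1\mapsto\partial_1+2$, $\partial_2\mapsto\partial_2+2+2(\nu_1+\nu_2+\nu_3)$, $\partial_3\mapsto\partial_3+2(\nu_1+\nu_2+\nu_3)$ and the commutation $\partial_1(4\pi y_1)^2=(4\pi y_1)^2(\partial_1+2)$ all check out.
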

\begin{proof}
The equation (\ref{eqn:C3_PDE3_2}) follows immediately 
from (\ref{eqn:C3_PDE1_C2}). 
We have  
\begin{align}
\begin{split}
&
\{(\partial_1-d_1)(-\partial_1+\partial_2+2\nu_1+2\nu_2+2\nu_3-d_2)
(-\partial_2-d_3)\\
&
-(4\pi y_1)^2(\partial_2+d_3)
+(4\pi y_2)^2(\partial_1-d_1)\\
&\label{eqn:C3_PDE2_C3}
-(2\nu_1-d_1)(2\nu_2-d_2)(2\nu_3-d_3)\}
\hat{\varphi}_{l_{\rm ht}}=0
\end{split}
\end{align}
from (\ref{eqn:C3_PDE1_C3}). 
Multiplying the both sides of (\ref{eqn:C3_PDE3_2}) 
by $-(\partial_1-d_1)$ from the left, we have 
\begin{align}
\begin{split}
&\bigl\{(\partial_1-d_1)\{
\partial_1^2-\partial_1\partial_2+\partial_2^2
-(2\nu_1+2\nu_2+2\nu_3+d_1-d_2)\partial_1\\
&+(2\nu_1+2\nu_2+2\nu_3-d_2+d_3)\partial_2
+4(\nu_1\nu_2+\nu_1\nu_3+\nu_2\nu_3)\\
&+2\nu_1(d_1-d_2)-2\nu_3(d_2-d_3)\}
-(4\pi y_1)^2(\partial_1-d_1+2)\\
&\label{eqn:C3_PDE2_D1C2}
-(4\pi y_2)^2(\partial_1-d_1)\bigr\}
\hat{\varphi}_{l_{\rm ht}}=0.
\end{split}
\end{align}
Adding the respective sides of 
(\ref{eqn:C3_PDE2_C3}) and (\ref{eqn:C3_PDE2_D1C2}), 
we obtain (\ref{eqn:C3_PDE3_3}). 
\end{proof}

\begin{lem}
\label{lem:C3_MW_sub1}
Let $\nu_1,\nu_2,\nu_3\in \bC $ and $(d_1,d_2,d_3)\in \Lambda_3$. Set 
\begin{align*}
r=(r_1,r_2,r_3)=(2\nu_1-d_1+d_2,2\nu_2,2\nu_3+d_2-d_3).
\end{align*}
(1) The space of smooth solutions of the system in 
Lemma \ref{lem:C3_PDE3} on $(\bR_+)^2$ is at most $6$ dimensional. \\
\noindent (2) Set  
\[
\hat{\varphi}^{\mathrm{mg}}_{l_{\rm ht}}(y_1,y_2)
=\frac{1}{(2\pi \sqrt{-1})^2} \int_{s_2}\int_{s_1} 
\cV_{l_{\rm ht}}(s_1,s_2) \,
y_1^{-2s_1} y_2^{-2s_2} \,ds_1ds_2
\]
with 
\begin{align*}
\cV_{l_{\rm ht}}(s_1,s_2)=&
\frac{
\Gamma_{\bC}\bigl(s_1+\tfrac{r_1+d_1-d_2}{2}\bigr)
\Gamma_{\bC}\bigl(s_1+\tfrac{r_2+d_1-d_2}{2}\bigr)
\Gamma_{\bC}\bigl(s_1+\tfrac{r_3+d_1-d_2}{2}\bigr)}
{\Gamma_{\bC}\bigl(s_1+s_2+\tfrac{d_1-d_3}{2}\bigr)}\\
&\times 
\Gamma_{\bC}\bigl(s_2+\tfrac{-r_1+d_2-d_3}{2}\bigr)
\Gamma_{\bC}\bigl(s_2+\tfrac{-r_2+d_2-d_3}{2}\bigr)
\Gamma_{\bC}\bigl(s_2+\tfrac{-r_3+d_2-d_3}{2}\bigr).
\end{align*}
Here the path of the integration $\int_{s_i}$ is the vertical line 
from $\mathrm{Re}(s_i)-\sI \infty$ to $\mathrm{Re}(s_i)+\sI \infty$ 
with sufficiently large real part to keep the poles of the integrand 
on its left. 
Then $\hat{\varphi}_{l_{\rm ht}}
=\hat{\varphi}^{\mathrm{mg}}_{l_{\rm ht}}$ is 
a moderate growth solution of the system in Lemma \ref{lem:C3_PDE3} 
on $(\bR_+)^2$. 
\\[1mm]
(3) Assume $r_p-r_q \notin 2\bZ$ for any $1\leq p\neq q\leq 3$. 
For a permutation $(i,j,k)$ of $\{1,2,3\}$, set  
\begin{align*}
&\hat{\varphi}^{{(i,j,k)}}_{l_{\rm ht}}(y_1,y_2)
=
\sum_{m_1,m_2 \geq 0}
C_{l_{\rm ht},(m_1,m_2)}^{(i,j,k)}
(2\pi y_1)^{2m_1+d_1-d_2+r_i} (2\pi y_2)^{2m_2+d_2-d_3-r_j}
\end{align*}
with 
\begin{align*}
C_{l_{\rm ht},(m_1,m_2)}^{(i,j,k)}=
&\frac{(-1)^{m_1+m_2}\Gamma \bigl(-m_1-\tfrac{r_i-r_j}{2}\bigr)
\Gamma \bigl(-m_1-\tfrac{r_i-r_k}{2}\bigr)}
{2^{-5}(2\pi)^{d_1-d_3}
m_1!m_2!\Gamma \bigl(-m_1-m_2-\tfrac{r_i-r_j}{2}\bigr)}\\
&\times 
\Gamma \bigl(-m_2-\tfrac{r_i-r_j}{2}\bigr)
\Gamma \bigl(-m_2-\tfrac{r_k-r_j}{2}\bigr).
\end{align*}
Then $\{\hat{\varphi}^{{(i,j,k)}}_{l_{\rm ht}}\mid 
\{i,j,k\}=\{1,2,3\}\,\}$ forms a basis of 
the space of smooth solutions of the system in Lemma \ref{lem:C3_PDE3} 
on $(\bR_+)^2$. Moreover, it holds that 
\begin{align*}
\hat{\varphi}^{\mathrm{mg}}_{l_{\rm ht}}
& = \sum_{ (i,j,k) } \hat{\varphi}^{(i,j,k)}_{l_{\rm ht}},
\end{align*}
where $ (i,j,k) $ runs all permutations of $\{1,2,3\}$. 
\end{lem}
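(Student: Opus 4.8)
The plan is to reduce the entire lemma to the already-proved results about the holonomic system defining $\mathrm{Sol}(r)$ in \S\ref{subsec:Fn_special2}, exactly as was done in the proofs of Lemmas \ref{lem:R3_ps3_MW_sub1} and \ref{lem:R3_gps_MW_sub1}. First I would introduce the change of variables and unknown function that turns the system in Lemma \ref{lem:C3_PDE3} into the system (\ref{eqn:Fn_Sol_PDE2})--(\ref{eqn:Fn_Sol_PDE3}). Concretely, set
\[
f(z_1,z_2)=z_1^{-(d_1-d_2)}\,\hat{\varphi}_{l_{\rm ht}}\bigl((2\pi)^{-1}z_1,\,(2\pi)^{-1}z_2\bigr),
\]
with $r=(r_1,r_2,r_3)=(2\nu_1-d_1+d_2,\,2\nu_2,\,2\nu_3+d_2-d_3)$ as in the statement. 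One checks that substituting this into (\ref{eqn:C3_PDE3_2}) and (\ref{eqn:C3_PDE3_3}) and using $\partial_{z_i}=z_i\partial/\partial z_i$ reproduces precisely (\ref{eqn:Fn_Sol_PDE2}) and (\ref{eqn:Fn_Sol_PDE3}) for this $r$; the exponent shift $z_1^{-(d_1-d_2)}$ is exactly what absorbs the constant terms $2\nu_1(d_1-d_2)$, etc., and converts the operators $(\partial_1-d_1-2)$, $(-\partial_1+\partial_2-d_2)$ appearing in $\Delta_2,\Delta_3$ into $\partial_{z_1}$, $-\partial_{z_1}+\partial_{z_2}$ plus the correct $r_i$-shifts. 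This is the one genuine computation in the proof, and it is a routine verification of the same type already carried out twice in the real case.

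With that identification in place, each part follows from an earlier result. For (1), Lemma \ref{lem:Fn_Sol_dim} gives $\dim_\bC\mathrm{Sol}(r)\le 6$, and since $f\mapsto\hat{\varphi}_{l_{\rm ht}}$ is an invertible linear change of variables, the solution space of the system in Lemma \ref{lem:C3_PDE3} has dimension at most $6$ as well. For (2), I would take $f=f^{\mathrm{mg}}_r$ from Lemma \ref{lem:Fn_Sol_mg} and unwind the change of variables: the Mellin--Barnes kernel $\cV(s_1,s_2)$ there, under $s_i\mapsto s_i+\tfrac{d_1-d_2}{2}$ on the first variable and $s_i\mapsto s_i+\tfrac{d_2-d_3}{2}$ on the second (together with the rescaling $z_i\to 2\pi y_i$ and the conversion $\Gamma\leftrightarrow\Gamma_\bC$ via $\Gamma_\bC(s)=2(2\pi)^{-s}\Gamma(s)$), becomes exactly $\cV_{l_{\rm ht}}(s_1,s_2)$ up to the stated constant normalization; Lemma \ref{lem:Fn_Sol_mg} then guarantees moderate growth and membership in the solution space. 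For (3), the hypothesis $r_p-r_q\notin 2\bZ$ is precisely the hypothesis of Lemma \ref{lem:Fn_Sol_power_series}, so $\{f^{(i,j,k)}_r\}$ is a basis of $\mathrm{Sol}(r)$; pulling back through the change of variables and matching the coefficients $C^{(i,j,k)}_{m_1,m_2}$ of Lemma \ref{lem:Fn_Sol_power_series} with $C^{(i,j,k)}_{l_{\rm ht},(m_1,m_2)}$ (the factor $2^{-5}(2\pi)^{d_1-d_3}$ and the powers $(2\pi y_i)^{\cdots}$ are bookkeeping from the substitution) yields the claimed basis $\{\hat{\varphi}^{(i,j,k)}_{l_{\rm ht}}\}$; finally the expansion formula $\hat{\varphi}^{\mathrm{mg}}_{l_{\rm ht}}=\sum_{(i,j,k)}\hat{\varphi}^{(i,j,k)}_{l_{\rm ht}}$ is the image of (\ref{eqn:Fn_Sol_sol_expansion}) under the same map. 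Thus the proof can be compressed to essentially one sentence invoking Lemmas \ref{lem:Fn_Sol_dim}, \ref{lem:Fn_Sol_power_series} and \ref{lem:Fn_Sol_mg}.

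The main obstacle, such as it is, is purely clerical: getting the exponent shift, the rescaling $y_i\mapsto 2\pi y_i$, the $\Gamma_\bC$-versus-$\Gamma$ conversion, and the overall normalizing constant to line up so that the abstract kernel $\cV$ and coefficients $C^{(i,j,k)}_{m_1,m_2}$ of \S\ref{subsec:Fn_special2} match the explicitly normalized $\cV_{l_{\rm ht}}$ and $C^{(i,j,k)}_{l_{\rm ht},(m_1,m_2)}$ in the statement. There is no new analytic or representation-theoretic input beyond what is already in the excerpt; the work is to confirm that the substitution $f(z_1,z_2)=z_1^{-(d_1-d_2)}\hat{\varphi}_{l_{\rm ht}}((2\pi)^{-1}z_1,(2\pi)^{-1}z_2)$ indeed carries (\ref{eqn:C3_PDE3_2})--(\ref{eqn:C3_PDE3_3}) to (\ref{eqn:Fn_Sol_PDE2})--(\ref{eqn:Fn_Sol_PDE3}), after which every assertion is a direct quotation of Lemmas \ref{lem:Fn_Sol_dim}, \ref{lem:Fn_Sol_power_series}, and \ref{lem:Fn_Sol_mg}.
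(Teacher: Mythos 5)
Your overall strategy is exactly the paper's: perform a monomial change of the unknown, rescale by $2\pi$, and quote Lemmas~\ref{lem:Fn_Sol_dim}, \ref{lem:Fn_Sol_power_series} and \ref{lem:Fn_Sol_mg}. However, the change of variables you propose is wrong, and this is precisely the step you yourself flagged as ``the one genuine computation.'' You write
\[
f(z_1,z_2)=z_1^{-(d_1-d_2)}\,\hat{\varphi}_{l_{\rm ht}}\bigl((2\pi)^{-1}z_1,(2\pi)^{-1}z_2\bigr),
\]
but the required substitution also carries a power of $z_2$:
\[
f(z_1,z_2)=z_1^{-(d_1-d_2)}z_2^{-(d_2-d_3)}\,\hat{\varphi}_{l_{\rm ht}}\bigl((2\pi)^{-1}z_1,(2\pi)^{-1}z_2\bigr).
\]
To see that the $z_2$-power is essential, note that the $y_2$-exponent shift is what converts $\partial_2$ acting on $\hat{\varphi}_{l_{\rm ht}}$ into $\partial_{z_2}+(d_2-d_3)$ acting on $f$. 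In (\ref{eqn:C3_PDE3_3}) the factor $(\partial_1+\partial_2-d_1+d_3+2)$ must turn into $(\partial_{z_1}+\partial_{z_2}+2)$; with only $\partial_1\mapsto\partial_{z_1}+(d_1-d_2)$ and $\partial_2\mapsto\partial_{z_2}$ you get instead $\partial_{z_1}+\partial_{z_2}-(d_2-d_3)+2$, which does not match unless $d_2=d_3$. Similarly, in (\ref{eqn:C3_PDE3_2}) the coefficient of $\partial_2$ is $-(2\nu_1+2\nu_2+2\nu_3-d_2+d_3)$ and the constant includes $+2\nu_3(d_2-d_3)$; without the $z_2$-shift you cannot reach the target $-(r_1+r_2+r_3)\partial_{z_2}$ and $-r_1r_2-r_1r_3-r_2r_3$.

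The likely source of the error is pattern-matching against the real cases: in Lemmas~\ref{lem:R3_ps3_MW_sub1} and \ref{lem:R3_gps_MW_sub1} the substitution really does involve only a $z_1$-power, because for $GL(3,\bR)$ the minimal $K$-type $(\mu_1,\mu_2)$ contributes only a one-parameter shift. For $GL(3,\bC)$ the minimal $K$-type is three-parameter, and the highest weight vector $u_{l_{\rm ht}}$ with $l_{\rm ht}=(d_1-d_2,0,0,0,0,d_2-d_3)$ produces shifts in both $z_1$ and $z_2$. Once you insert the missing $z_2^{-(d_2-d_3)}$, the rest of your argument — the transfer of dimension bound, the Mellin--Barnes kernel matching via $\Gamma_\bC(s)=2(2\pi)^{-s}\Gamma(s)$, the basis from Lemma~\ref{lem:Fn_Sol_power_series}, and the expansion formula — is correct and coincides with the paper's proof.
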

\begin{proof}
If we set 
$f(z_1,z_2)=z_1^{-d_1+d_2}z_2^{-d_2+d_3}
\hat{\varphi}_{l_{\rm ht}}((2\pi)^{-1}z_1,(2\pi )^{-1}z_2)$, 
then (\ref{eqn:C3_PDE3_2}) and (\ref{eqn:C3_PDE3_3}) 
imply (\ref{eqn:Fn_Sol_PDE2}) and (\ref{eqn:Fn_Sol_PDE3}), 
respectively. 
Hence, the assertion follows from Lemmas \ref{lem:Fn_Sol_dim}, 
\ref{lem:Fn_Sol_power_series} and \ref{lem:Fn_Sol_mg}. 
\end{proof}

\begin{thm}[{\cite{Hirano_Oda_001}}]
\label{thm:C3_Whittaker}
Let $\chi = \chi_{(\nu_1,d_1)}\boxtimes \chi_{(\nu_2,d_2)}\boxtimes 
\chi_{(\nu_3,d_3)}$ with 
$\nu_1,\nu_2,\nu_3\in \bC$ and $(d_1,d_2,d_3)\in \Lambda_3$ 
such that $\Pi_\chi$ is irreducible. 
For $l=(l_1,l_2,l_3,\tilde{l}_1,\tilde{l}_2,\tilde{l}_3)
\in S_{(d_1,d_2,d_3)}$, let 
\begin{align*}
&\xi_1(l)=\tilde{l}_1+l_2+l_3,&
&\xi_2(l)=l_1+\tilde{l}_1,&
&\xi_3(l)=l_1+\tilde{l}_2+\tilde{l}_3,\\
&\tilde{\xi}_1(l)=l_1+l_2+\tilde{l}_3,&
&\tilde{\xi}_2(l)=l_3+\tilde{l}_3,&
&\tilde{\xi}_3(l)=\tilde{l}_1+\tilde{l}_2+l_3.
\end{align*}
\noindent 
(1) There exists a $K$-homomorphism 
\[
\varphi^{\mathrm{mg}}_\chi \colon V_{(d_1,d_2,d_3)}\to 
{\mathrm{Wh}}(\Pi_{\chi},\psi_1)^{\mathrm{mg}},
\]
whose radial part is given by 
\begin{align*}
\begin{split}
&\varphi^{\mathrm{mg}}_\chi (u_{l})(y) =
(\sI)^{l_1+\tilde{l}_1-l_3-\tilde{l}_3}
y_1^2y_2^2(y_2y_3)^{2\nu_1+2\nu_2+2\nu_3}\\
&\times \frac{1}{(2\pi \sqrt{-1})^2} \int_{s_2}\int_{s_1}
\frac{\Gamma_{\bC}\bigl(s_1+\nu_1+\tfrac{\xi_1(l)}{2}\bigr)
\Gamma_{\bC}\bigl(s_1+\nu_2+\tfrac{\xi_2(l)}{2}\bigr)
\Gamma_{\bC}\bigl(s_1+\nu_3+\tfrac{\xi_3(l)}{2}\bigr)}
{ \Gamma_{\bC}\bigl(s_1+s_2+\tfrac{\xi_2(l)+\tilde{\xi}_2(l)}{2}\bigr)}\\
&\times 
\Gamma_{\bC}\bigl(s_2-\nu_1+\tfrac{\tilde{\xi}_1(l)}{2}\bigr)
\Gamma_{\bC}\bigl(s_2-\nu_2+\tfrac{\tilde{\xi}_2(l)}{2}\bigr)
\Gamma_{\bC}\bigl(s_2-\nu_3+\tfrac{\tilde{\xi}_3(l)}{2}\bigr)
\,y_1^{-2s_1} y_2^{-2s_2} \,ds_1ds_2
\end{split}
\end{align*}
with $l=(l_1,l_2,l_3,\tilde{l}_1,\tilde{l}_2,\tilde{l}_3)
\in S_{(d_1,d_2,d_3)}$ and $y=\diag (y_1y_2y_3,y_2y_3,y_3)\in A$. 
Here the path of the integration $\int_{s_i}$ is the vertical line 
from $\mathrm{Re}(s_i)-\sI \infty$ to $\mathrm{Re}(s_i)+\sI \infty$ 
with sufficiently large real part to keep the poles of the integrand 
on its left. \\[2pt]
(2) Assume 
$2\nu_p-2\nu_q-d_p+d_q\not\in 2\bZ$ for any $1\leq p\neq q\leq 3$. 
For a permutation $(i,j,k)$ of $\{1,2,3\}$, 
there exists a $K$-homomorphism 
\[
\varphi^{(i,j,k)}_\chi \colon V_{(d_1,d_2,d_3)}\to 
{\mathrm{Wh}}(\Pi_{\chi},\psi_1),
\]
whose radial part is given by the power series 
\begin{align*}
&\varphi^{(i,j,k)}_\chi (u_{l})(y) =
2^5(2\pi)^{-d_1+d_3}(\sI)^{l_1+\tilde{l}_1-l_3-\tilde{l}_3}
y_1^2y_2^2(y_2y_3)^{2\nu_1+2\nu_2+2\nu_3}\\
&\phantom{=}\times \sum_{m_1,m_2 \geq 0}
\frac{
\Gamma \bigl(-m_1-\nu_i+\nu_j-\tfrac{\xi_i(l)-\xi_j(l)}{2}\bigr)
\Gamma \bigl(-m_1-\nu_i+\nu_k-\tfrac{\xi_i(l)-\xi_k(l)}{2}\bigr)}
{m_1!m_2!\Gamma \bigl(-m_1-m_2-\nu_i+\nu_j
-\tfrac{\xi_i(l)+\tilde{\xi}_j(l)-\xi_2(l)-\tilde{\xi}_2(l)}{2}\bigr)}\\
&\phantom{=}\times 
\Gamma \bigl(-m_2-\nu_i+\nu_j+\tfrac{\tilde{\xi}_i(l)-\tilde{\xi}_j(l)}{2}\bigr)
\Gamma \bigl(-m_2-\nu_k+\nu_j+\tfrac{\tilde{\xi}_k(l)-\tilde{\xi}_j(l)}{2}\bigr)\\
&\phantom{=}\times 
(-1)^{m_1+m_2}(2\pi y_1)^{2m_1+2\nu_i+\xi_i(l)}
(2\pi y_2)^{2m_2-2\nu_j+\tilde{\xi}_j(l)}
\end{align*}
with $l=(l_1,l_2,l_3,\tilde{l}_1,\tilde{l}_2,\tilde{l}_3)
\in S_{(d_1,d_2,d_3)}$ and 
$y=\diag (y_1y_2y_3,y_2y_3,y_3)\in A$. 
Moreover, $\{\varphi^{(i,j,k)}_\chi \mid \{i,j,k\}=\{1,2,3\}\,\}$ 
forms a basis of $\Hom_K(V_{(d_1,d_2,d_3)},
{\mathrm{Wh}}(\Pi_{\chi},\psi_1))$, and satisfies 
\begin{align*}
\varphi^{\mathrm{mg}}_\chi 
& = \sum_{ (i,j,k) } \varphi^{(i,j,k)}_\chi ,
\end{align*}
where $(i,j,k)$ runs all permutations of $\{1,2,3\}$. 
\end{thm}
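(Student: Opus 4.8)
The plan is to mirror the proofs of Theorems~\ref{thm:R3_ps1_Whittaker}, \ref{thm:R3_ps31_Whittaker} and \ref{thm:R3_gps1_Whittaker}. By Lemma~\ref{lem:C3_ZgDSE2}, every radial component $\hat{\varphi}_l$ ($l\in S_{(d_1,d_2,d_3)}$) of a $K$-homomorphism $\varphi\colon V_{(d_1,d_2,d_3)}\to{\mathrm{Wh}}(\Pi_\chi,\psi_1)$ is uniquely determined by $\hat{\varphi}_{l_{\rm ht}}$, so via \eqref{eqn:C3_def_varphi_l} one obtains an injective $\bC$-linear map $\Hom_K(V_{(d_1,d_2,d_3)},{\mathrm{Wh}}(\Pi_\chi,\psi_1))\ni\varphi\mapsto\hat{\varphi}_{l_{\rm ht}}\in\mathrm{Sol}(\Pi_\chi;\psi_1)$, where $\mathrm{Sol}(\Pi_\chi;\psi_1)$ is the space of smooth solutions on $(\bR_+)^2$ of the system in Lemma~\ref{lem:C3_PDE3}. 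Since $\dim_{\bC}\Hom_K(V_{(d_1,d_2,d_3)},H(\chi)_K)=1$ by \eqref{eqn:C3_minKtype} and $\Pi_\chi$ is irreducible, the source has dimension $\dim_{\bC}\cI_{\Pi_\chi,\psi_1}=6$ (the dimension formula stated in \S\ref{subsec:Cn_def_ps}), while the target has dimension at most $6$ by Lemma~\ref{lem:C3_MW_sub1}~(1); hence the map is bijective. This reduces the theorem to transporting the explicit descriptions of $\mathrm{Sol}(\Pi_\chi;\psi_1)$ in Lemma~\ref{lem:C3_MW_sub1}~(2),(3) along the difference relations of Lemma~\ref{lem:C3_ZgDSE2}.

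For part~(1), I would let $\varphi^{\mathrm{mg}}_\chi$ be the preimage of $\hat{\varphi}^{\mathrm{mg}}_{l_{\rm ht}}$ under the above bijection; as this is, up to scalar, the unique moderate growth element of $\mathrm{Sol}(\Pi_\chi;\psi_1)$ and $\dim_{\bC}\cI^{\mathrm{mg}}_{\Pi_\chi,\psi_1}=1$, it lands in ${\mathrm{Wh}}(\Pi_\chi,\psi_1)^{\mathrm{mg}}$. The remaining work is to compute $\hat{\varphi}_l$ for all $l$ by applying \eqref{eqn:C3_PDE2_DSE1}, \eqref{eqn:C3_PDE2_DSE2} and their tilde-counterparts \eqref{eqn:C3_PDE2_DSE5}, \eqref{eqn:C3_PDE2_DSE6} step by step, starting from the Mellin--Barnes integral for $\hat{\varphi}_{l_{\rm ht}}$. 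Each elementary step replaces one $\Gamma_{\bC}$-factor in the integrand $\cV_{l_{\rm ht}}$ by its shift by $\pm\tfrac12$ in a single slot, and one checks, exactly as in Lemmas~\ref{lem:R3_gps_WW_sub2} and \ref{lem:R3_ps3_WW_sub2}, that after the substitution $s_i\mapsto s_i\mp1$ and use of the functional equation $\Gamma_{\bC}(s+1)=(2\pi)^{-1}s\,\Gamma_{\bC}(s)$ from \eqref{eqn:Fn_FE_GammaRC}, the integrand becomes the one in the statement with arguments shifted by the combinatorial quantities $\xi_i(l),\tilde\xi_i(l)$. A straightforward induction then yields the claimed formula, consistency of the various orders of application being automatic from the uniqueness in Lemma~\ref{lem:C3_ZgDSE2}.

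For part~(2), under the hypothesis $2\nu_p-2\nu_q-d_p+d_q\notin2\bZ$ for all $p\neq q$ (equivalently $r_p-r_q\notin2\bZ$ for $r$ as in Lemma~\ref{lem:C3_MW_sub1}), the power series $\hat{\varphi}^{(i,j,k)}_{l_{\rm ht}}$ of Lemma~\ref{lem:C3_MW_sub1}~(3) form a basis of $\mathrm{Sol}(\Pi_\chi;\psi_1)$, so their preimages $\varphi^{(i,j,k)}_\chi$ form a basis of $\Hom_K(V_{(d_1,d_2,d_3)},{\mathrm{Wh}}(\Pi_\chi,\psi_1))$, and the expansion $\varphi^{\mathrm{mg}}_\chi=\sum_{(i,j,k)}\varphi^{(i,j,k)}_\chi$ follows from \eqref{eqn:Fn_Sol_sol_expansion} together with the linearity of the transport. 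It then remains to propagate the power series coefficients $C^{(i,j,k)}_{l_{\rm ht},(m_1,m_2)}$ to general $l$ through Lemma~\ref{lem:C3_ZgDSE2}, as in Lemma~\ref{lem:R3_gps_MW_sub2}: each difference relation multiplies the coefficient by a ratio of linear functions of $(m_1,m_2)$ and shifts the exponents of $y_1,y_2$, and one verifies by induction that the closed form in the statement satisfies all of these relations, using the Pochhammer identities \eqref{eqn:Fn_Pochhammer_rel} and, where $\Gamma$- versus $\Gamma_{\bC}$-normalizations must be matched, the duplication formula \eqref{eqn:Fn_gamma_duplication}.

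The main obstacle I expect is the bookkeeping in these two propagation steps: there are six indices $l_1,l_2,l_3,\tilde l_1,\tilde l_2,\tilde l_3$, and one must see that, after all the shifts prescribed by Lemma~\ref{lem:C3_ZgDSE2}, the arguments of the twelve $\Gamma_{\bC}$-factors (six in the numerator of $\cV$ and six in the power series coefficient) organise precisely into the combinations $\xi_i(l)$ and $\tilde\xi_i(l)$ recorded in the theorem. Each individual manipulation is a one-line identity for a single $\Gamma_{\bC}$-factor, but tracking the dependence on $l$ across the two chains of relations — the $\me$-chain governed by $d_1>d_2$ and the $\tilde\me$-chain governed by $d_2>d_3$ — and accommodating the degenerate cases $d_1=d_2$ or $d_2=d_3$, where one chain is empty, is where the care is needed.
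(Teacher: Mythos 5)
Your argument reproduces the paper's proof: it defines the injection \eqref{eqn:C3_Wh_to_Sol} into $\mathrm{Sol}(\Pi_\chi;\psi_1)$, establishes bijectivity via $\dim_\bC \cI_{\Pi_\chi,\psi_1}=6\geq\dim_\bC\mathrm{Sol}(\Pi_\chi;\psi_1)$, and transports the explicit solutions of Lemma~\ref{lem:C3_MW_sub1} through the recurrences of Lemma~\ref{lem:C3_ZgDSE2} exactly as you describe. One small inaccuracy to flag: the duplication formula \eqref{eqn:Fn_gamma_duplication} is not needed in the complex case (it enters only the real generalized principal series argument, to merge $\Gamma_\bR\cdot\Gamma_\bR$ into $\Gamma_\bC$); here the propagation is handled by $\Gamma_\bC(s+1)=(2\pi)^{-1}s\,\Gamma_\bC(s)$ and the Pochhammer identities alone.
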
 
\begin{proof}
We denote by $\mathrm{Sol}(\Pi_{\chi};\psi_1)$ 
the space of smooth solutions of 
the system in Lemma \ref{lem:C3_PDE3} on $(\bR_+)^2$. 
We define an injective homomorphism 
\begin{align}
\label{eqn:C3_Wh_to_Sol}
\Hom_K(V_{(d_1,d_2,d_3)},{\mathrm{Wh}}(\Pi_{\chi},\psi_1))
\ni \varphi 
\mapsto \hat{\varphi}_{l_{\rm ht}}
\in \mathrm{Sol}(\Pi_{\chi};\psi_1)
\end{align}
of $\bC$-vector spaces by (\ref{eqn:C3_def_varphi_l}). 

Since $\dim_\bC \Hom_K(V_{(d_1,d_2,d_3)} ,H(\chi)_K)=1$ and 
$\Pi_\chi$ is irreducible, we have 
\begin{align*}
\dim_\bC \Hom_K(V_{(d_1,d_2,d_3)},
{\mathrm{Wh}}(\Pi_{\chi} ,\psi_1))
&=\dim_\bC {\cI}_{\Pi_{\chi},\psi_1}=6
\geq 
\dim_{\bC}\mathrm{Sol}(\Pi_{\chi};\psi_1).
\end{align*}
Here the last inequality follows from Lemma \ref{lem:C3_MW_sub1} (1).
This inequality implies that (\ref{eqn:C3_Wh_to_Sol}) 
is bijective. Hence, the assertion follows from 
Lemma \ref{lem:C3_MW_sub1} and the computation 
using Lemma \ref{lem:C3_ZgDSE2}. 
\end{proof}

\part{Archimedean zeta integrals for $GL(3)\times GL(2)$}
\label{part:2}

\chapter{Preliminaries}
\label{sec:F32_zeta}

\section{The aim of Part \ref{part:2}}
\label{subsec:Fmn_main_result}

Let ${F}$ be $\bR$ or $\bC$. 
We define a norm $|\cdot|_F$ on $F$ by 
\begin{align*}
&|t|_\bR =|t|,&
&|t|_\bC =|t|^2,
\end{align*}
where $|\cdot |$ is the ordinary absolute value. 

Let $n$ be a positive integer. 
Let $\Pi$ and ${\Pi'}$ be irreducible admissible large representations 
of $G_{n+1}=GL(n+1,F)$ and $G_{n}=GL(n,F)$, respectively. 
We denote by $L(s,\Pi \times \Pi')$ the local $L$-factor 
for $\Pi \times \Pi'$
defined from the Langlands parameters of $\Pi$ and $\Pi'$
(\textit{cf.} \S \ref{subsec:Rmn_langlands}, \S \ref{subsec:Cmn_landlands}).

Let $\varepsilon \in \{\pm 1\}$. 
For $W\in \mathrm{Wh}(\Pi,\psi_{\varepsilon })^{\mathrm{mg}}$ 
and $W'\in \mathrm{Wh}({\Pi'},\psi_{-\varepsilon })^{\mathrm{mg}}$, 
we define the local zeta integral $Z(s,W,W')$ for $G_{n+1}\times G_n$ by 
\begin{align*}
Z(s,W,W')=&\int_{N_{n}\backslash G_{n}}
W\!\left(
\begin{array}{cc}
g& \\
 &1
\end{array}
\right) 
W'(g)|\det g|_{F}^{s-\frac{1}{2}}
d\dot{g},
\end{align*}
where $d\dot{g}$ is the right $G_{n}$-invariant measure 
on $N_{n}\backslash G_{n}$. 
In this paper, we normalize $d\dot{g}$ so that, 
for any compactly supported continuous function $f$ on $N_n\backslash G_n$, 
\begin{align*}
\int_{N_n\backslash G_n}f(g)\,d\dot{g}
=\int_{(\bR_+)^n}  
\left(\int_{K_n}f(yk)\,dk\right)
\prod_{i=1}^n|y_i|_F^{-(n-i)i}
\frac{2dy_i}{y_i}
\end{align*} 
with $y=\diag (y_1y_2\cdots y_n,\,y_2\cdots y_n,\,\cdots ,\,y_n)\in A_n$ and 
$dk$ is the Haar measure on $K_n$ such that 
$\int_{K_n}dk=1$.

The local zeta integral $Z(s,W,W')$ converges 
if $\mathrm{Re}(s)$ is sufficiently large.  
Jacquet and Shalika \cite{Jacuqet_Shalika_001} show that 
the ratio $Z(s,W,W')/L(s,\Pi \times \Pi')$ 
extends to an entire function of $s$, 
and satisfies the local functional equation. 
Moreover, Jacquet proves the following theorem.

\begin{thm}[{\cite[Theorem 2.7 (i)]{Jacquet_001}}]
Retain the notation. 
There exist 
$W_i\in \mathrm{Wh}(\Pi ,\psi_{\varepsilon })^{\mathrm{mg}}$, 
$W_i'\in \mathrm{Wh}(\Pi',\psi_{-\varepsilon })^{\mathrm{mg}}$ 
$(i=1$, $2$, $\cdots$, $m)$ such that 
\[
\sum_{i=1}^mZ(s,W_{i},W_{i}')=L(s,\Pi\times \Pi'). 
\]
\end{thm}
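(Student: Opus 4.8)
The plan is to recover this statement along the lines of Jacquet's original argument, using only the two structural facts already recalled above --- the theorem of Jacquet--Shalika, so that $Z(s,W,W')/L(s,\Pi\times\Pi')$ is entire and of moderate growth in every vertical strip and the local functional equation $Z(s,W,W')=\gamma(s,\Pi\times\Pi',\psi_\varepsilon)^{-1}\widetilde Z(1-s,\widetilde W,\widetilde W')$ holds with $\gamma(s,\Pi\times\Pi',\psi_\varepsilon)=\epsilon(s,\Pi\times\Pi',\psi_\varepsilon)L(1-s,\widetilde\Pi\times\widetilde\Pi')/L(s,\Pi\times\Pi')$ --- together with the explicit Whittaker function formulas of Part \ref{part:1} for the low-rank cases. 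Write $\mathcal{Z}$ for the $\bC$-vector space of all functions $s\mapsto\sum_{i=1}^m Z(s,W_i,W_i')$ arising from finite families with $W_i\in\mathrm{Wh}(\Pi,\psi_\varepsilon)^{\mathrm{mg}}$ and $W_i'\in\mathrm{Wh}(\Pi',\psi_{-\varepsilon})^{\mathrm{mg}}$. By Jacquet--Shalika, $\mathcal{Z}\subseteq L(s,\Pi\times\Pi')\,\mathcal{O}$, where $\mathcal{O}$ is the ring of entire functions of polynomial growth on vertical strips; the assertion is exactly that $L(s,\Pi\times\Pi')\in\mathcal{Z}$, equivalently that the image of $\mathcal{Z}$ in $\mathcal{O}$ under division by $L(s,\Pi\times\Pi')$ contains the constant $1$.

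First I would record a module structure on $\mathcal{Z}$. Right translation preserves the moderate growth Whittaker spaces, and the substitution $g\mapsto gz$ ($z\in F^\times$) in the defining integral gives
\[
Z\bigl(s,\Pi(\diag(z1_n,1))W,W'\bigr)=\omega_{\Pi'}(z)^{-1}|z|_F^{-n(s-\frac12)}\,Z(s,W,W'),
\]
with $\Pi(\diag(z1_n,1))W$ again in $\mathrm{Wh}(\Pi,\psi_\varepsilon)^{\mathrm{mg}}$. Integrating $z$ against $\phi\in C_c^\infty(F^\times)$ replaces $W$ by a moderate growth Whittaker function and multiplies $Z(s,W,W')$ by $\int_{F^\times}\omega_{\Pi'}(z)^{-1}|z|_F^{-n(s-\frac12)}\phi(z)\,d^\times z$; as $\phi$ and $z$ vary, these multipliers fill an algebra $R$ of entire functions containing all Paley--Wiener functions and all exponentials $A^{-s}$ ($A>0$). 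Thus $\mathcal{Z}$ is a nonzero $R$-submodule of $L(s,\Pi\times\Pi')\,\mathcal{O}$.

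The decisive step is to show that the normalized integrals have no common zero: for every $s_0\in\bC$ there are $W,W'$ with $Z(s_0,W,W')/L(s_0,\Pi\times\Pi')\ne 0$. Away from the poles of $L(s,\Pi\times\Pi')$ this follows by localizing $W$ and $W'$ near a single point of $N_n\backslash G_n$ (Dixmier--Malliavin) so that the convergent integral $Z(s_0,W,W')$ is visibly nonzero. At a pole of $L(s,\Pi\times\Pi')$ one uses the asymptotic expansion of $W$ along the torus $A_{n+1}$ toward the walls: the occurring exponents are governed by the Langlands parameters of $\Pi$ and $\Pi'$, the leading term of $W\,W'$ reproduces exactly the poles of $L(s,\Pi\times\Pi')$, and choosing $W$ with a prescribed leading exponent and $W'$ dually shows that the residue of $Z(s,W,W')$ there is a nonzero multiple of that of $L(s,\Pi\times\Pi')$. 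For $n\le 2$ all of this is transparent from Part \ref{part:1}: the Mellin--Barnes kernels $\mathcal{V}$ of the moderate growth Whittaker functions are explicit ratios of $\Gamma_\bR$'s and $\Gamma_\bC$'s, the integral over $N_n\backslash G_n$ reduces by the Iwasawa decomposition and orthogonality over $K_n$ to an integral over $(\bR_+)^n$, and the latter evaluates by Barnes-type identities (as in Stade's computations) to $L(s,\Pi\times\Pi')$ times an ``extra factor'' $G(s)$ which is a ratio of $\Gamma$-factors --- visibly nonvanishing, and varying over rational functions of $s$ as one moves to neighbouring $K$-types --- so that a suitable finite difference among such pairs cancels $G(s)$ and leaves $L(s,\Pi\times\Pi')$ exactly.

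Finally I would assemble: $\mathcal{Z}$ is a nonzero $R$-submodule of $L(s,\Pi\times\Pi')\,\mathcal{O}$ whose image under division by $L(s,\Pi\times\Pi')$ has no common zero in $\bC$; a fractional-ideal argument --- the archimedean counterpart of the one at finite places, in which one invokes the functional equation above together with the fact that $\epsilon(s,\Pi\times\Pi',\psi_\varepsilon)$ is a nonvanishing entire function of exponential type to control the behaviour at infinity --- then forces $1$ into that image, i.e. $L(s,\Pi\times\Pi')\in\mathcal{Z}$; the Whittaker functions in the resulting finite sum lie in the stated moderate growth spaces by construction. \textbf{The main obstacle is this last closing step.} The multiplier algebra coming from central translations is only a proper ideal of $\mathcal{O}$, so the absence of common zeros is not by itself enough to place $L(s,\Pi\times\Pi')$ in $\mathcal{Z}$; one genuinely has to bring in the functional equation (or, in low rank, the explicit Barnes-integral evaluation of the previous paragraph) to rule out that $\mathcal{Z}$ is a proper sub-ideal, and this is the content that makes Jacquet's Theorem 2.7 more than a formality.
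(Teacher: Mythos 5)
The paper does not prove this theorem; it is imported verbatim from Jacquet's paper (\cite{Jacquet_001}, Theorem 2.7(i)) and used as a black box, so there is no internal proof against which your proposal can be compared. As a reconstruction of Jacquet's external argument, the outline you give is reasonable and you correctly identify the genuine difficulty yourself: knowing that the normalized integrals $Z/L$ have no common zero, together with the functional equation, is not by itself enough to place $L(s,\Pi\times\Pi')$ in $\mathcal{Z}$, because the archimedean multiplier algebra $R$ you construct from central translations is far from a principal ideal domain, and the nonarchimedean shortcut (where one works in $\bC[q^s,q^{-s}]$ and the fractional-ideal argument closes immediately) is unavailable. Jacquet's actual closing step is constructive rather than purely ideal-theoretic: he realizes $\Pi$ and $\Pi'$ as quotients of representations induced from standard parabolics, builds Whittaker functions from sections that depend holomorphically on the inducing parameters, and exhibits the $L$-factor as a finite sum of the corresponding integrals by reducing, in the end, to concrete $\Gamma$-function identities; the functional equation is used to pass poles and reducibility points, not to force generation of a fractional ideal.

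Your final paragraph --- appealing to explicit Barnes-integral evaluations in ``low rank'' --- is actually closer in spirit to what this paper does than to Jacquet's general argument: the whole of Part~\ref{part:2} is a direct construction of Whittaker functions achieving $Z(s,W,W')=L(s,\Pi\times\Pi')$ exactly (with $m=1$) for $GL(3)\times GL(2)$ and its degenerations, using the Mellin--Barnes kernels of Part~\ref{part:1} and Barnes-type lemmas. That direct construction refines the quoted theorem in rank $\le 3$ but does not, of course, reprove it in general, and the paper makes no claim to do so.
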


We expect that there exist Whittaker functions 
$W_0\in \mathrm{Wh}(\Pi,\psi_{\varepsilon })^{\mathrm{mg}}$ 
and $W_0'\in \mathrm{Wh}({\Pi'},\psi_{-\varepsilon })^{\mathrm{mg}}$ 
such that 
\begin{align}
\label{eqn:Fmn_zeta_L_coincide}
Z(s,W_0,W_0')=L(s,\Pi \times \Pi').
\end{align}
There are the following results, 
which support our expectation. 
\begin{thm}[\cite{Stade_002}]
\label{thm:Fmn_stade_zeta}
We set ${F}=\bR$. 
Let $\sigma =\chi_{(\nu_1,0)}\boxtimes 
\chi_{(\nu_2,0)}\boxtimes \cdots \boxtimes \chi_{(\nu_{n+1},0)}$ and 
$\sigma'=\chi_{(\nu_1',0)}\boxtimes 
\chi_{(\nu_2',0)}\boxtimes \cdots \boxtimes \chi_{(\nu_{n}',0)}$ 
such that $\Pi_\sigma $ and $\Pi_{\sigma'}$ are irreducible. 
Let $\varepsilon \in \{\pm 1\}$. Then it holds that 
\[
Z(s,W_{0},W_{0}')=L(s,\Pi_{\sigma }\times \Pi_{\sigma'}), 
\]
where 
$W_0\in \mathrm{Wh}(\Pi_{\sigma},\psi_{\varepsilon })^{\mathrm{mg}}$ and 
$W_0'\in \mathrm{Wh}(\Pi_{\sigma'},\psi_{-\varepsilon })^{\mathrm{mg}}$ 
are the normalized Whittaker functions at the trivial 
$K_{n+1}$- and $K_n$-types, respectively. 
\end{thm}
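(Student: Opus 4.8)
The statement to be proved is Theorem~\ref{thm:Fmn_stade_zeta}, which is the class-one real $GL(n+1)\times GL(n)$ case and is attributed to Stade. Since the theorem is already attributed to \cite{Stade_002}, my task is essentially to explain how the explicit data assembled in Part~\ref{part:1} reduce the claim to Stade's integral evaluation, and to indicate the mechanism of that evaluation. The plan is to take $W_0$ and $W_0'$ to be the moderate-growth Whittaker functions at the trivial $K_{n+1}$- and $K_n$-types, whose radial parts are the Mellin--Barnes integrals of the shape appearing in Theorem~\ref{thm:R3_ps1_Whittaker}(1) (and its $GL(n)$ analogue), i.e.\ the so-called Stade kernel $\prod_i\Gamma_\bR(s_1+\nu_i)\prod_i\Gamma_\bR(s_2-\nu_i)/\Gamma_\bR(s_1+s_2)$ and its lower-rank cousins. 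First I would insert these explicit radial parts into the definition of $Z(s,W_0,W_0')$, using the normalization of the invariant measure $d\dot g$ on $N_n\backslash G_n$ given in \S\ref{subsec:Fmn_main_result}: because both Whittaker functions are right $K_n$-invariant at the trivial $K$-type, the $K_n$-integral is trivial and the zeta integral collapses to an iterated integral over $(\bR_+)^n$ against $\prod_i |y_i|_F^{\,s\cdot(\text{linear})}$.

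\textbf{Key steps.} The computation then proceeds by repeatedly applying the Mellin transform of the $K$-Bessel function, formula~\eqref{eqn:Mellin_K_Bessel}, together with the Barnes-type first and second lemmas. Concretely: (i) write each $y_i$-integral as a Mellin convolution; (ii) the innermost integral (the one carrying the $\Gamma_\bR$-product from the $GL(n+1)$ Whittaker function paired against the $GL(n)$ one over the overlapping variables) is evaluated by the first Barnes lemma, producing a new quotient of $\Gamma_\bR$-factors; (iii) the remaining integrals are handled inductively, each step being another Barnes-lemma contraction, and the $\Gamma_\bR(s_1+s_2)$ denominators are exactly what make these contractions telescope cleanly; (iv) after all $n$ integrations the surviving expression is a single product $\prod_{i,j}\Gamma_\bR(s+\nu_i+\nu_j')$, which by definition (see \S\ref{subsec:Rmn_langlands}) is $L(s,\Pi_\sigma\times\Pi_{\sigma'})$. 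The role of the sign $\varepsilon$ is harmless: replacing $\psi_1$ by $\psi_\varepsilon$ amounts to the isomorphism $\Xi_{(\varepsilon,\dots,\varepsilon)}$ of~\eqref{eqn:Fn_psi_change}, which acts on radial parts by $y_i\mapsto y_i$ up to signs absorbed into $|\cdot|_F$, so one may assume $\varepsilon=1$ throughout. The convergence for $\mathrm{Re}(s)\gg0$ is guaranteed by Jacquet--Shalika and by the Stirling estimates on the Mellin--Barnes integrands (cf.\ the proof of Lemma~\ref{lem:Fn_Sol_mg}), so all interchanges of integration are legitimate in that range, and the final identity of meromorphic functions follows by analytic continuation.

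\textbf{Main obstacle.} The genuinely hard part is not any single Barnes-lemma application but the bookkeeping of the inductive contraction: one must track how the "mixed" Gamma factors $\Gamma_\bR(s+\nu_i+\nu_j')$ are built up one index-pair at a time while the "diagonal" factors $\Gamma_\bR(s_k+\nu_i)$, $\Gamma_\bR(s_k-\nu_j')$ and the denominators $\Gamma_\bR(s_k+s_{k+1})$ cancel in a precise pattern. This is exactly the combinatorial heart of Stade's argument \cite{Stade_002}; for $n=2$ it is a short computation using \eqref{eqn:Mellin_K_Bessel} and one Barnes lemma, but for general $n$ it requires the careful induction on rank that Stade carries out, and reproving it in full would duplicate \cite{Stade_002}. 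Accordingly, for the statement as given I would cite Stade for the induction and only spell out the reduction from the Whittaker formulas of Part~\ref{part:1} to the input of that induction; the case $n=2$, which is all that is needed for the main Theorem of this paper, can be verified directly and will be recorded separately (see the appendix).
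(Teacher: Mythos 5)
The paper offers no proof of this theorem; it is stated purely as a citation of \cite{Stade_002}, and your proposal correctly recognizes this and takes the identical stance. Your sketch of the underlying mechanism (Mellin--Barnes radial parts at the trivial $K$-type, collapse of the $K_n$-integral by the measure normalization, the telescoping of the $\Gamma_\bR(s_k+s_{k+1})$ denominators via iterated Barnes contractions, and the reduction of $\psi_\varepsilon$ to $\psi_1$ via $\Xi_{(\varepsilon,\dots,\varepsilon)}$) is an accurate description of what Stade does and is consistent with how the paper carries out the same kind of computation explicitly in the $n=2$ cases of Part~\ref{part:2}.
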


\begin{thm}[{\cite{Jacquet_Langlands_001}, \cite{Popa_001}, 
Propositions \ref{prop:R21_zeta_L_coincide} and 
\ref{prop:C21_zeta_L_coincide}}]
\label{thm:Fmn_zeta_GL21}
Let $\Pi$ be an irreducible admissible large representation 
of $G_2=GL(2,F)$. 
Let $\chi$ be a character of $G_1=GL(1,F)$. Let $\varepsilon \in \{\pm 1\}$. 
Then there exists 
$W_0\in \mathrm{Wh}(\Pi ,\psi_{\varepsilon })^{\mathrm{mg}}$ such that 
\begin{align*}
&Z(s,W_{0},\chi )=L(s,\Pi \times \chi ).
\end{align*}
\end{thm}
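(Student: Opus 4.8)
The plan is to reduce Theorem~\ref{thm:Fmn_zeta_GL21} (the $GL(2)\times GL(1)$ case) to an explicit Mellin-transform computation using the Whittaker function formulas established in Part~\ref{part:1}. First I would recall that by the isomorphism $\Pi_\chi\simeq\Pi_{w(\chi)}$ under permutations of the inducing data, and since every irreducible admissible large representation of $G_2$ is infinitesimally equivalent to an irreducible (generalized) principal series, it suffices to treat two families: $\sigma=\chi_{(\nu_1,\delta_1)}\boxtimes\chi_{(\nu_2,\delta_2)}$ on $GL(2,\bR)$ (with $\delta_1\ge\delta_2$), the essentially discrete series case $D_{(\nu,\kappa)}$ being subsumed via the realization in \S\ref{subsec:R2_ds_whittaker}, and $\chi=\chi_{(\nu_1,d_1)}\boxtimes\chi_{(\nu_2,d_2)}$ on $GL(2,\bC)$ (with $d_1\ge d_2$). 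On the $GL(1,F)$ side the Whittaker function of the character $\chi'=\chi_{(\nu',\delta')}$ (resp.\ $\chi_{(\nu',d')}$) is simply $\chi'$ itself (up to the twist by $\psi_{-\varepsilon}$), which is already of moderate growth, so $W'$ is essentially unique up to scalar.

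Next I would write out the zeta integral $Z(s,W_0,\chi')$ explicitly. Using the normalization of $d\dot g$ on $N_1\backslash G_1$ from \S\ref{subsec:Fmn_main_result}, the integral collapses to a one-dimensional Mellin integral: parametrizing $g=y\,k$ with $y=\diag(y_1,1)$ (in the $n=1$ coordinates) and integrating out $K_1$, one gets
\begin{align*}
Z(s,W_0,\chi')=\int_{\bR_+}W_0\!\left(\begin{array}{cc}y_1&\\&1\end{array}\right)\chi'(y_1)\,|y_1|_F^{\,s-\frac12}\,\frac{2\,dy_1}{y_1},
\end{align*}
possibly after absorbing a constant and the $K_1$-average into the choice of $W_0$ (one picks $W_0$ in the minimal $K_2$-type so that $W_0$ restricted to $A_2$ is one of the explicit radial parts from Theorems~\ref{thm:R2_ps_Whittaker}, \ref{thm:R2_ps_Whittaker2}, \ref{cor:R2_ds_Whittaker} over $\bR$, or Theorem~\ref{thm:C2_ps_Whittaker} over $\bC$). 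The point is that these radial parts are given precisely as Mellin--Barnes integrals whose kernels are products of $\Gamma_\bR$- or $\Gamma_\bC$-factors: e.g.\ in the $\delta_1=\delta_2$ real case the radial part is $2y_1^{(\nu_1+\nu_2+1)/2}K_{(\nu_1-\nu_2)/2}(2\pi y_1)$ times a power of the central variable.

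The key step is then Mellin inversion: substituting the Mellin--Barnes representation of $W_0$ into the integral and using \eqref{eqn:Mellin_K_Bessel} (or its $\Gamma_\bC$-analogue, or directly the Mellin transform of $\Psi(a_1,a_2)$ from \eqref{eqn:C2_def_psi}), the $y_1$-integral produces exactly a product $\Gamma_\bR(s+\nu_1+\nu')\Gamma_\bR(s+\nu_2+\nu')$ (real, trivial-$\delta$ case) or the appropriate shifted version with $\delta$'s/$d$'s, which by definition is $L(s,\Pi\times\chi')$ up to the standard archimedean $L$-factor conventions recalled in \S\ref{subsec:Rmn_langlands}/\S\ref{subsec:Cmn_landlands}. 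One must check the parity/sign bookkeeping: the twist $\psi_{-\varepsilon}$ versus $\psi_\varepsilon$ is handled by the isomorphism $\Xi_{(c)}$ of \eqref{eqn:Fn_psi_change} together with $\Xi_{(-1)}$, which only rescales the argument and contributes harmless constants, and the discrete-series subcase needs the explicit formula in Corollary~\ref{cor:R2_ds_Whittaker} together with the relation $\Gamma_\bR(\kappa+\delta)/\Gamma_\bC(\kappa)=\Gamma_\bR(\kappa+\delta)\Gamma_\bR(\kappa+1-\delta)^{-1}\cdot(\text{const})$ from \eqref{eqn:Fn_gammaRC_duplication} to match the $GL(2)$ $L$-factor of a discrete series.

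I expect the main obstacle to be purely bookkeeping rather than conceptual: correctly matching the archimedean $L$-factor conventions (the powers of $2$ and $\pi$ hidden in $\Gamma_\bR$ and $\Gamma_\bC$, and the shift $s\mapsto s-\tfrac12$ in the zeta integral against the $|\det|^{(n+1-2i)/2}$-type twists built into the induced representation) so that the equality is an \emph{exact} identity $Z(s,W_0,\chi')=L(s,\Pi\times\chi')$ and not merely a proportionality. A secondary subtlety is ensuring convergence of the $y_1$-integral in a right half-plane so that Mellin inversion is legitimate, and then invoking the Jacquet--Shalika analytic continuation (the ratio $Z/L$ is entire) to conclude the identity holds identically in $s$; this is standard but should be stated. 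Since the abstract one-dimensionality of $\mathrm{Wh}(\Pi,\psi_\varepsilon)^{\mathrm{mg}}$ pins $W_0$ down up to scalar, the only real freedom is a normalizing constant, which the explicit formulas fix, so the proof reduces to the (routine but slightly tedious) Mellin computation in each of the four cases $(\bR,\delta_1=\delta_2)$, $(\bR,(\delta_1,\delta_2)=(1,0))$, $(\bR,\text{discrete series})$, and $(\bC)$.
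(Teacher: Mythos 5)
Your overall strategy---write $Z(s,W_0,\chi)$ as a one-dimensional Mellin integral, substitute the Mellin--Barnes representations of the Whittaker functions from Part~\ref{part:1}, and close with Mellin inversion (Lemma~\ref{lem:F32_Mellin})---is exactly what the paper's proofs of Propositions~\ref{prop:R21_zeta_L_coincide} and~\ref{prop:C21_zeta_L_coincide} do, so the method is right.

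However, the step where you ``pick $W_0$ in the minimal $K_2$-type'' contains a genuine gap. The $K_1$-average hidden in the zeta integral forces the $K_1$-character determined by $\chi$ to occur in the restriction to $K_1$ of the $K_2$-type carrying $W_0$; when it does not, the $K_1$-integral vanishes identically. For a concrete failure: take $F=\bR$, $\sigma=\chi_{(\nu_1,0)}\boxtimes\chi_{(\nu_2,0)}$ (so the minimal $K_2$-type is the trivial $\tau_{(0,0)}$) and $\chi=\chi_{(\nu',1)}$. Then $W_0$ at the minimal $K_2$-type is $\diag(-1,1)$-invariant, so $W_0(\diag(y_1,1))-W_0(\diag(-y_1,1))=0$ and $Z(s,W_0,\chi)\equiv 0$, while $L(s,\Pi_\sigma\times\chi)=\Gamma_\bR(s+\nu_1+\nu'+1)\Gamma_\bR(s+\nu_2+\nu'+1)\neq 0$. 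The paper avoids this by leaving the minimal $K_2$-type when the parity mismatches: over $\bR$ it applies the shift operator $R(E_{1,2}^{\g_2})$ to $\varphi_\sigma^{[\varepsilon]}(v_{(0,1-\delta'),0})$ (second bullet of Proposition~\ref{prop:R21_zeta_L_coincide}, using $\Ad(k_{(-1,1)}^{(1,1)})E_{1,2}^{\g_2}=-E_{1,2}^{\g_2}$ to flip the sign character), and over $\bC$ it replaces $(d_1,d_2)$ by $\lambda=(d_1+l_0,d_2-l_0)$ with $l_0=\max\{0,-d_1-d',d_2+d'\}$ so that the $U(1)$-weight $-d'$ lies in $[\lambda_2,\lambda_1]$ (Proposition~\ref{prop:C21_zeta_L_coincide}, using the $K_2$-homomorphisms $\varphi_{\chi,\lambda}^{[\varepsilon]}$ constructed in \S\ref{subsec:C32_Wh_GL2} precisely for non-minimal $\lambda$). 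Relatedly, ``$\dim\mathrm{Wh}(\Pi,\psi_\varepsilon)^{\mathrm{mg}}=1$'' is not correct---it is the intertwining space $\cI_{\Pi,\psi_\varepsilon}^{\mathrm{mg}}$ that is one-dimensional---so choosing the $K_2$-type and the weight vector inside it is a real degree of freedom that must be fixed explicitly, not absorbed into a scalar. Once you add the escape from the minimal $K_2$-type and specify the weight vector, the rest of your computation (Mellin inversion against the $\Gamma_\bR/\Gamma_\bC$-kernels, matching to the $L$-factor conventions) goes through as you describe.
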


The aim of Part \ref{part:2} is to give Whittaker functions 
$W_0$ and $W_0'$ satisfying (\ref{eqn:Fmn_zeta_L_coincide}) 
for $n=2$, explicitly. 
Our calculation based on the explicit formulas of Whittaker functions 
in Part \ref{part:1}.

\section{Some formulas for the calculation}
\label{subsec:F}

In this section, we prepare some formulas for integrations and 
summations, which play important roles on our calculation.

\begin{lem}
\label{lem:F32_Mellin}
Let $r\in \bR$ and $c\in \bR_{+}$. 
Let $f(z)$ be 
a holomorphic function on $\mathrm{Re}(z)>r$ 
such that, for 
any $x>r$, 
a function $y\mapsto f(x+\sI y)$ is 
contained in $L^1(\bR )\cap L^2(\bR )$. Then it holds that 
\[
\frac{1}{2\pi \sI}
\int_{0}^\infty 
\left\{
\int_t f(t)
y^{-ct}dt
\right\}
y^{cs}\frac{c\,dy}{y}=f(s)
\]
for $s\in \bC$ such that $\mathrm{Re}(s)>r$. 
Here the path of the integration $\int_{t}$ is the vertical line 
from $\mathrm{Re}(t) -\sI \infty$ to $\mathrm{Re}(t)+\sI \infty$ 
with $\mathrm{Re}(t)>r$. 
\end{lem}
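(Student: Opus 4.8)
The plan is to reduce the identity to the classical Mellin/Fourier inversion theorem. First I would eliminate the parameter $c$ by the substitution $w=y^{c}$, under which $\tfrac{c\,dy}{y}=\tfrac{dw}{w}$, $y^{cs}=w^{s}$ and $y^{-ct}=w^{-t}$; this reduces the claim to the case $c=1$, i.e.\ to showing
\[
\int_{0}^{\infty}h(w)\,w^{s}\,\frac{dw}{w}=f(s),\qquad
h(w):=\frac{1}{2\pi\sI}\int_{t}f(t)\,w^{-t}\,dt ,
\]
for $\mathrm{Re}(s)>r$, where the inner path is any vertical line $\mathrm{Re}(t)=x_{0}>r$. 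Since $f(x_{0}+\sI\,\cdot\,)\in L^{1}(\bR)$, the integral defining $h$ converges absolutely and defines a continuous function of $w\in\bR_{+}$; moreover $h$ is independent of the choice of $x_{0}\in(r,\infty)$, by Cauchy's theorem applied to the rectangle bounded by two such vertical lines, together with the decay of $f$ along vertical lines in $\mathrm{Re}(z)>r$. Hence I may and do take $x_{0}=\mathrm{Re}(s)$ below.

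Next I would pass to additive coordinates via $w=e^{-u}$, $u\in\bR$. Writing $\sigma_{0}=\mathrm{Re}(s)$, $\sigma_{1}=\mathrm{Im}(s)$ and $g(\tau):=f(\sigma_{0}+\sI\tau)$ (so $g\in L^{1}(\bR)\cap L^{2}(\bR)$, and $g$ is continuous, being the restriction of a holomorphic function), the substitution $t=\sigma_{0}+\sI\tau$ turns the inner integral into
\[
h(e^{-u})=\frac{e^{\sigma_{0}u}}{2\pi}\int_{\bR}g(\tau)\,e^{\sI u\tau}\,d\tau ,
\]
so that $\check{g}(u):=e^{-\sigma_{0}u}h(e^{-u})$ is the inverse Fourier transform of $g$. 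Performing the same substitution in the outer integral gives
\[
\int_{0}^{\infty}h(w)\,w^{s}\,\frac{dw}{w}
=\int_{\bR}h(e^{-u})\,e^{-su}\,du
=\int_{\bR}\check{g}(u)\,e^{-\sI\sigma_{1}u}\,du ,
\]
which is precisely the forward Fourier transform of $\check{g}$ evaluated at $\sigma_{1}$. By the Fourier inversion theorem this equals $g(\sigma_{1})=f(\sigma_{0}+\sI\sigma_{1})=f(s)$, which is the assertion.

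The step needing care is the final one: justifying pointwise Fourier inversion (and the implicit Fubini) with only $g\in L^{1}(\bR)\cap L^{2}(\bR)$ available, rather than Schwartz-type decay. I would handle it by noting that $g\in L^{1}$ makes $\check{g}\in C_{0}(\bR)$ and that $g\in L^{2}$ gives $\check{g}\in L^{2}(\bR)$ by Plancherel, whence $\widehat{\check{g}}=g$ in the $L^{2}$ sense; since $g$ is moreover continuous (indeed real-analytic) at $\sigma_{1}$, the inversion integral converges to $g(\sigma_{1})$ in the pointwise (improper-integral, or Cesàro) sense that the statement uses. Together with the contour-independence of $h$ — which relies on a genuine, if routine, estimate for the decay of $f$ on vertical lines inside $\mathrm{Re}(z)>r$ — these are the only non-formal points; everything else is change of variables.
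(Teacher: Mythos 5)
Your proof is correct and takes the same route as the paper: reduce to $c=1$ by the substitution $w=y^{c}$, then recognize the resulting identity as Fourier/Mellin inversion. The paper simply cites Bump for this last step, whereas you carry out the logarithmic change of variables and the reduction to the Fourier inversion theorem explicitly (and flag the same analytical subtleties — contour independence of $h$ and the pointwise vs.\ $L^{2}$ sense of the inversion — that the citation is implicitly absorbing).
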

\begin{proof}
Substituting $y^c\to y$, we have 
\[
\frac{1}{2\pi \sI}
\int_{0}^\infty 
\left\{
\int_t f(t)
y^{-ct}dt
\right\}
y^{cs}\frac{c\,dy}{y}
=\frac{1}{2\pi \sI}
\int_{0}^\infty 
\left\{
\int_t f(t)
y^{-t}dt
\right\}
y^{s}\frac{dy}{y}.
\]
Hence, it suffices to show the statement for $c=1$. 
It is wellknown that the statement for $c=1$ follows from 
the Fourier inversion formula (See, for example, \cite[\S 1.5]{Bump_004} ). 
\end{proof}

\begin{lem}[{Barnes' lemma}]
\label{lem:F32_Barnes_1st}
For $F=\bR,\bC$ and $a_1,a_2,b_1,b_2\in \bC $ such that 
$\mathrm{Re}(a_i+b_j)>0$ $(1\leq i,j\leq 2)$, it holds that  
\begin{align*}
&\frac{1}{4\pi \sI}\int_z
\Gamma_{F} (z+a_1)\Gamma_{F} (z+a_2)\Gamma_{F} (-z+b_1)\Gamma_{F} (-z+b_2)dz\\
&=\frac{\Gamma_{F} (a_1+b_1)\Gamma_{F} (a_1+b_2)
\Gamma_{F} (a_2+b_1)\Gamma_{F} (a_2+b_2)}{\Gamma_{F} (a_1+a_2+b_1+b_2)}.
\end{align*}
Here the path of integration $\int_{z}$ is the vertical line 
from $\mathrm{Re}(z)-\sI \infty$ to $\mathrm{Re}(z)+\sI \infty$ 
with the real part 
\begin{align*}
&\max \{-\mathrm{Re}(a_1),-\mathrm{Re}(a_2)\}<\mathrm{Re}(z)
<\min \{\mathrm{Re}(b_1),\mathrm{Re}(b_2)\}.
\end{align*}  
\end{lem}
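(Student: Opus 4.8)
The plan is to reduce the assertion, for each of $F=\bC$ and $F=\bR$ separately, to the classical first Barnes lemma \cite[\S 14.52]{Whittaker_Watson_001}, which states that
\[
\frac{1}{2\pi \sI}\int_w \Gamma (w+\alpha_1)\Gamma (w+\alpha_2)\Gamma (-w+\beta_1)\Gamma (-w+\beta_2)\,dw=\frac{\Gamma (\alpha_1+\beta_1)\Gamma (\alpha_1+\beta_2)\Gamma (\alpha_2+\beta_1)\Gamma (\alpha_2+\beta_2)}{\Gamma (\alpha_1+\alpha_2+\beta_1+\beta_2)}
\]
whenever $\mathrm{Re}(\alpha_i+\beta_j)>0$ $(1\leq i,j\leq 2)$, the path of integration being a vertical line separating the poles of $\Gamma(w+\alpha_1)\Gamma(w+\alpha_2)$ from those of $\Gamma(-w+\beta_1)\Gamma(-w+\beta_2)$. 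In each case I substitute the definitions of $\Gamma_\bC$ and $\Gamma_\bR$ from \S \ref{subsec:Fn_special1} into the left-hand side, observe that the elementary (non-$\Gamma$) factors are independent of the integration variable, pull them out of the integral, apply the classical lemma, and then re-expand the right-hand side in the same notation to see that the constants match.

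For $F=\bC$, using $\Gamma_\bC(s)=2(2\pi)^{-s}\Gamma(s)$ the product of the four factors $(2\pi)^{-(z+a_1)}(2\pi)^{-(z+a_2)}(2\pi)^{-(-z+b_1)}(2\pi)^{-(-z+b_2)}$ collapses to $(2\pi)^{-(a_1+a_2+b_1+b_2)}$, so the integrand equals $16(2\pi)^{-(a_1+a_2+b_1+b_2)}$ times the classical Barnes integrand with $\alpha_i=a_i$, $\beta_j=b_j$; the hypothesis $\mathrm{Re}(a_i+b_j)>0$ and the stated position of the contour are exactly the convergence and separation conditions of the classical lemma. Writing $\frac{1}{4\pi\sI}=\tfrac12\cdot\frac{1}{2\pi\sI}$ then gives the left-hand side as $8(2\pi)^{-(a_1+a_2+b_1+b_2)}$ times the ratio of ordinary Gamma functions, and expanding the right-hand side with $\Gamma_\bC$ produces the factor $2^4/2=8$ together with the $2\pi$-power $(2\pi)^{-(2a_1+2a_2+2b_1+2b_2)+(a_1+a_2+b_1+b_2)}=(2\pi)^{-(a_1+a_2+b_1+b_2)}$, so the two sides coincide.

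For $F=\bR$, where $\Gamma_\bR(s)=\pi^{-s/2}\Gamma(s/2)$, I first make the substitution $z=2w$, $dz=2\,dw$, which turns $\frac{1}{4\pi\sI}\int_z\,dz$ into $\frac{1}{2\pi\sI}\int_w\,dw$ and sends the contour condition to $\max\{-\mathrm{Re}(a_1/2),-\mathrm{Re}(a_2/2)\}<\mathrm{Re}(w)<\min\{\mathrm{Re}(b_1/2),\mathrm{Re}(b_2/2)\}$. Again the $\pi$-powers are independent of $w$ and contribute $\pi^{-(a_1+a_2+b_1+b_2)/2}$, leaving the classical Barnes integrand with $\alpha_i=a_i/2$, $\beta_j=b_j/2$, whose convergence condition $\mathrm{Re}(a_i/2+b_j/2)>0$ is precisely $\mathrm{Re}(a_i+b_j)>0$. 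Applying the classical lemma and re-expanding the right-hand side in terms of $\Gamma_\bR$ shows both sides equal $\pi^{-(a_1+a_2+b_1+b_2)/2}$ times $\Gamma(\tfrac{a_1+b_1}{2})\Gamma(\tfrac{a_1+b_2}{2})\Gamma(\tfrac{a_2+b_1}{2})\Gamma(\tfrac{a_2+b_2}{2})/\Gamma(\tfrac{a_1+a_2+b_1+b_2}{2})$.

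There is no genuine analytic obstacle here: the only thing requiring attention is the bookkeeping of the powers of $2$, $2\pi$, and $\pi$, and the verification that the vertical line prescribed in the statement corresponds, after the substitution $z=2w$ in the real case, exactly to the separating contour demanded by the classical Barnes lemma. Both are routine once the substitutions above are carried out, and no input beyond the classical lemma is needed.
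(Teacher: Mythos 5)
Your proof is correct and takes essentially the same route as the paper, which simply cites the classical Barnes lemma from Whittaker--Watson and asserts that the result follows immediately; you have only filled in the straightforward bookkeeping (pulling out the $2$, $2\pi$, and $\pi$ factors, and for $F=\bR$ substituting $z=2w$) that the paper leaves to the reader.
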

\begin{proof}
The assertion follows immediately from 
Barnes' lemma (\cite[\S 14.52]{Whittaker_Watson_001}): 
\begin{align*}
&\frac{1}{2\pi \sI}\int_z
\Gamma (z+a_1)\Gamma (z+a_2)\Gamma (-z+b_1)\Gamma (-z+b_2)dz\\
&=\frac{\Gamma (a_1+b_1)\Gamma (a_1+b_2)
\Gamma (a_2+b_1)\Gamma (a_2+b_2)}{\Gamma (a_1+a_2+b_1+b_2)}.
\end{align*}
\end{proof}

\begin{lem}
\label{lem:F32_Barnes_eqn}
For $F=\bR,\bC$ and $a_1,a_2,a_3,b_1,b_2,b_3\in \bC $ such that 
$\mathrm{Re}(a_i+b_j)>0$ $(1\leq i,j\leq 3)$, it holds that 
\begin{align*}
&\int_{z}
\frac{\Gamma_{F} (z+a_1)\Gamma_{F} (z+a_2)
\Gamma_{F} (-z+b_1)\Gamma_{F} (-z+b_2)
\Gamma_{F} (-z+b_3)}
{\Gamma_{F} (-z+a_3+b_1+b_2)}dz\\
&=
\frac{\Gamma_{F} (a_1+b_3)\Gamma_{F} (a_2+b_3)}
{\Gamma_{F}(a_3+b_1)\Gamma_{F}(a_3+b_2)}\\
&\phantom{=}\times 
\int_{t}
\frac{\Gamma_{F}(t+b_1)\Gamma_{F}(t+b_2)
\Gamma_{F}(-t+a_1)
\Gamma_{F}(-t+a_2)
\Gamma_{F}(-t+a_3)}
{\Gamma_{F}(-t+a_1+a_2+b_3)}dt.
\end{align*}
Here the paths of integrations $\int_{z}$ and $\int_t$ are 
the vertical lines 
from $\mathrm{Re}(z)-\sI \infty$ to $\mathrm{Re}(z)+\sI \infty$, 
and from $\mathrm{Re}(t)-\sI \infty$ to $\mathrm{Re}(t)+\sI \infty$, 
respectively, 
with the real parts 
\begin{align*}
&\max \{-\mathrm{Re}(a_1),-\mathrm{Re}(a_2)\}<\mathrm{Re}(z)
<\min \{\mathrm{Re}(b_1),\mathrm{Re}(b_2),\mathrm{Re}(b_3)\},\\
&\max \{-\mathrm{Re}(b_1),-\mathrm{Re}(b_2)\}<\mathrm{Re}(t)
<\min \{\mathrm{Re}(a_1),\mathrm{Re}(a_2),\mathrm{Re}(a_3)\}.
\end{align*}
\end{lem}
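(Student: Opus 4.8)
The plan is to deduce this second Barnes-type identity (Lemma~\ref{lem:F32_Barnes_eqn}) from the first Barnes lemma (Lemma~\ref{lem:F32_Barnes_1st}) by a Fubini argument. First I would note that it suffices to treat $F=\bR$ and $F=\bC$ simultaneously, since both $\Gamma_\bR$ and $\Gamma_\bC$ are, up to an elementary exponential-in-$s$ and constant factor, just $\Gamma$ evaluated at an affine function of the variable; in fact the cleanest route is to prove the statement for the ordinary $\Gamma$ with the substitutions $s\mapsto s/2$, $s\mapsto s$ respectively, exactly as Lemma~\ref{lem:F32_Barnes_1st} was reduced to the classical Barnes lemma. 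So I would phrase the core computation with $\Gamma$ and affine shifts, then specialize.

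The key step is to express the quotient $1/\Gamma_F(-z+a_3+b_1+b_2)$ appearing on the left-hand side as a Barnes integral. By Lemma~\ref{lem:F32_Barnes_1st} (read with $a_1,a_2,b_1,b_2$ suitably renamed), a product of four Gamma factors over a single Gamma factor equals a contour integral of a product of four Gamma factors; turning this around, $\Gamma_F(a+b)/\bigl(\text{product}\bigr)$-type expressions can be written as $\frac{1}{4\pi\sI}\int_t(\cdots)\,dt$. Concretely I would write
\begin{align*}
\frac{\Gamma_F(-z+b_3)}{\Gamma_F(-z+a_3+b_1+b_2)}
&=\frac{1}{\Gamma_F(a_3+b_1)\Gamma_F(a_3+b_2)}\cdot\frac{1}{4\pi\sI}\int_t
\Gamma_F(t+b_1)\Gamma_F(t+b_2)\Gamma_F(-z-t+b_3)\Gamma_F(z+t+a_3)\,dt,
\end{align*}
which is precisely Lemma~\ref{lem:F32_Barnes_1st} applied with parameters $(b_1,b_2;\, -z+b_3,\, z+a_3)$, valid when the real parts are arranged so that the $t$-contour separates the two families of poles. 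Then I substitute this into the left-hand integrand, interchange the $z$- and $t$-integrations (justified by absolute convergence from Stirling's estimate, \cite[\S13.6]{Whittaker_Watson_001}, on the chosen vertical lines), and collect the $z$-dependent factors: they are $\Gamma_F(z+a_1)\Gamma_F(z+a_2)\Gamma_F(-z+b_1)\Gamma_F(-z+b_2)\Gamma_F(z+t+a_3)\Gamma_F(-z-t+b_3)$. Wait — that is six Gamma factors in $z$, not four, so a direct second application of Barnes' lemma does not literally apply; instead I would group them so that the inner $z$-integral is $\frac{1}{4\pi\sI}\int_z \Gamma_F(z+a_1)\Gamma_F(z+a_2)\Gamma_F(-z+b_1)\Gamma_F(-z+b_2)$ times the remaining pair, and realize that the correct bookkeeping requires choosing the $t$-contour and the pole-separation so that after the interchange the $z$-integral is exactly the four-Gamma Barnes integrand with shifted parameters $b_1,b_2$ replaced appropriately — the extra pair $\Gamma_F(z+t+a_3)\Gamma_F(-z-t+b_3)$ should be absorbed by instead applying Barnes' lemma in the form that produces a quotient, i.e. running the identity in the reverse direction once more. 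The cleanest organization is: apply Lemma~\ref{lem:F32_Barnes_1st} to the $z$-integral of the \emph{four} factors $\Gamma_F(z+a_1)\Gamma_F(z+a_2)\Gamma_F(-z+b_1)\Gamma_F(-z+b_2)$ is not what happens; rather one leaves one of $b_1,b_2$ coupled to $t$. I would therefore pick the pairing $\Gamma_F(z+a_1)\Gamma_F(z+a_2)\Gamma_F(-z-t+b_3)\Gamma_F(-z+b_1')$ etc. and tune which Gamma factors carry the $t$-shift so that Barnes' lemma collapses the $z$-integral into $\Gamma_F$'s with the needed arguments; then what survives is the $t$-integral of five Gamma factors over one, which is the right-hand side after renaming $t\leftrightarrow -t$ and using $\Gamma_F$ functional equations only if a sign mismatch appears.

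The main obstacle, and where I would spend the most care, is precisely this contour bookkeeping: verifying that there exist vertical lines $\mathrm{Re}(z)$ and $\mathrm{Re}(t)$ simultaneously satisfying all the pole-separation inequalities needed for (i) the preliminary application of Barnes' lemma to rewrite the ratio, (ii) the Fubini interchange, and (iii) the final application of Barnes' lemma to the inner integral — and that the conditions $\mathrm{Re}(a_i+b_j)>0$ for all $1\le i,j\le 3$ are exactly what guarantees such a choice exists. Stirling's formula gives the decay making every interchange legitimate, so the analytic input is routine; the combinatorial matching of the six parameters $a_1,a_2,a_3,b_1,b_2,b_3$ against the two Barnes-lemma applications is the real content. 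Once the $F=\bR$ and $F=\bC$ cases are obtained from the $\Gamma$-version by the same rescaling used for Lemma~\ref{lem:F32_Barnes_1st}, the proof is complete.
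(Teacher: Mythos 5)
Your strategy is the paper's strategy in outline — rewrite the ratio by Barnes' lemma, apply Fubini, then apply Barnes' lemma again to the inner $z$-integral — but the concrete rewrite you propose is wrong, and the difficulty you flag ("six Gamma factors in $z$") is an artifact of that error, not an intrinsic obstacle.

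Your displayed identity
\[
\frac{\Gamma_F(-z+b_3)}{\Gamma_F(-z+a_3+b_1+b_2)}
=\frac{1}{\Gamma_F(a_3+b_1)\Gamma_F(a_3+b_2)}\cdot\frac{1}{4\pi\sI}\int_t
\Gamma_F(t+b_1)\Gamma_F(t+b_2)\Gamma_F(-z-t+b_3)\Gamma_F(z+t+a_3)\,dt
\]
is not an instance of Lemma~\ref{lem:F32_Barnes_1st}. First, as written the integrand has three factors with $+t$ and one with $-t$, so it is not even of Barnes form; and if one corrects $\Gamma_F(z+t+a_3)$ to $\Gamma_F(z-t+a_3)$ so that the parameters are $(b_1,b_2;\,b_3-z,\,z+a_3)$, Barnes' lemma evaluates the integral to
\[
\frac{\Gamma_F(b_1+b_3-z)\Gamma_F(b_1+z+a_3)\Gamma_F(b_2+b_3-z)\Gamma_F(b_2+z+a_3)}{\Gamma_F(a_3+b_1+b_2+b_3)},
\]
whose denominator $\Gamma_F(a_3+b_1+b_2+b_3)$ is $z$-independent and does not match $\Gamma_F(-z+a_3+b_1+b_2)$. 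The issue is that you isolated $\Gamma_F(-z+b_3)$ over the bad denominator; the correct move is to isolate $\Gamma_F(-z+b_1)\Gamma_F(-z+b_2)$. With Barnes parameters $(b_1,b_2;-z,a_3)$ one gets
\[
\frac{\Gamma_F(-z+b_1)\Gamma_F(-z+b_2)}{\Gamma_F(-z+a_3+b_1+b_2)}
=\frac{1}{\Gamma_F(a_3+b_1)\Gamma_F(a_3+b_2)}\cdot\frac{1}{4\pi\sI}\int_t
\Gamma_F(t+b_1)\Gamma_F(t+b_2)\Gamma_F(-t-z)\Gamma_F(-t+a_3)\,dt,
\]
and after substitution the $z$-dependent part of the integrand is exactly $\Gamma_F(z+a_1)\Gamma_F(z+a_2)\Gamma_F(-z+b_3)\Gamma_F(-z-t)$ — four factors, two with $+z$ and two with $-z$, so the second application of Barnes' lemma (parameters $(a_1,a_2;\,b_3,\,-t)$) goes through cleanly and produces the right-hand side. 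Your paragraph about "tuning which Gamma factors carry the $t$-shift" is gesturing at this choice without making it; since that choice is the actual content of the proof, the proposal as written is not a proof.
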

\begin{proof}
By direct computation, we have 
\begin{align*}
&\int_{z}
\frac{\Gamma_{F} ({z}+a_1)\Gamma_{F} ({z}+a_2)
\Gamma_{F} (-{z}+b_1)\Gamma_{F} (-{z}+b_2)\Gamma_{F} (-{z}+b_3)}
{\Gamma_{F} (-{z}+a_3+b_1+b_2)}dz\\
&=
\frac{1}{4\pi \sI}\int_z
\frac{\Gamma_{F} ({z}+a_1)\Gamma_{F} ({z}+a_2)\Gamma_{F} (-{z}+b_3)}
{\Gamma_{F}(a_3+b_1)\Gamma_{F}(a_3+b_2)}\\
&\phantom{=.}\times 
\left\{
\int_{t}
\Gamma_{F}(t+b_1)\Gamma_{F}(t+b_2)
\Gamma_{F}(-t-{z})\Gamma_{F}(-t+a_3)
dt\right\}d{z}\\
&=
\frac{1}{4\pi \sI}\int_{t}
\frac{\Gamma_{F}(t+b_1)\Gamma_{F}(t+b_2)
\Gamma_{F}(-t+a_3)}
{\Gamma_{F}(a_3+b_1)\Gamma_{F}(a_3+b_2)}\\
&\phantom{=.}\times 
\left\{\int_{z}
\Gamma_{F} ({z}+a_1)\Gamma_{F} ({z}+a_2)
\Gamma_{F}(-{z}-t)\Gamma_{F} (-{z}+b_3)d{z}
\right\}dt\\
&=
\frac{\Gamma_{F} (a_1+b_3)\Gamma_{F} (a_2+b_3)}
{\Gamma_{F}(a_3+b_1)\Gamma_{F}(a_3+b_2)}\\
&\phantom{=.}\times 
\int_{t}
\frac{\Gamma_{F}(t+b_1)\Gamma_{F}(t+b_2)
\Gamma_{F}(-t+a_1)
\Gamma_{F}(-t+a_2)
\Gamma_{F}(-t+a_3)}
{\Gamma_{F}(-t+a_1+a_2+b_3)}dt.
\end{align*}
Here the first and third equalities follow from 
Lemma \ref{lem:F32_Barnes_1st}. 
\end{proof}

\begin{lem}[{Barnes' second lemma}]
\label{lem:F32_Barnes2nd}
For $F=\bR,\bC$ and $a_1,a_2,b_1,b_2,b_3\in \bC $ such that 
$\mathrm{Re}(a_i+b_j)>0$ 
$(1\leq i\leq 2$, $1\leq j\leq 3)$, it holds that 
\begin{align*}
&\nonumber 
\frac{1}{4\pi \sI}\int_{z}
\frac{\Gamma_{F} (z+a_1)\Gamma_{F} (z+a_2)
\Gamma_{F} (-z+b_1)\Gamma_{F} (-z+b_2)
\Gamma_{F} (-z+b_3)}
{\Gamma_{F} (-z+a_1+a_2+b_1+b_2+b_3)}dz\\
&\nonumber 
=\frac{\Gamma_{F}(a_1+b_1)\Gamma_{F}(a_1+b_2)\Gamma_{F} (a_1+b_3)
\Gamma_{F}(a_2+b_1)\Gamma_{F}(a_2+b_2)\Gamma_{F} (a_2+b_3)}
{\Gamma_{F}(a_1+a_2+b_1+b_2)
\Gamma_{F}(a_1+a_2+b_1+b_3)\Gamma_{F}(a_1+a_2+b_2+b_3)}.
\end{align*}
Here the path of integration $\int_{z}$ 
is the vertical line 
from $\mathrm{Re}(z)-\sI \infty$ to $\mathrm{Re}(z)+\sI \infty$ 
with the real part 
\begin{align*}
&\max \{-\mathrm{Re}(a_1),-\mathrm{Re}(a_2)\}<\mathrm{Re}(z)
<\min \{\mathrm{Re}(b_1),\mathrm{Re}(b_2),\mathrm{Re}(b_3)\}.
\end{align*}
\end{lem}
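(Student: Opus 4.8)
The plan is to derive Barnes' second lemma for $\Gamma_F$ directly from the classical Barnes' second lemma for the ordinary gamma function $\Gamma$, exactly as Lemma~\ref{lem:F32_Barnes_1st} was obtained from the classical first Barnes' lemma. I will treat the two cases $F=\bR$ and $F=\bC$ separately, since the relation between $\Gamma_F$ and $\Gamma$ differs: $\Gamma_\bC(s)=2(2\pi)^{-s}\Gamma(s)$, while $\Gamma_\bR(s)=\pi^{-s/2}\Gamma(s/2)$.

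First I would recall the classical Barnes' second lemma (\cite[\S 14.53]{Whittaker_Watson_001}): for parameters with the appropriate positivity of real parts,
\[
\frac{1}{2\pi\sI}\int_z \frac{\Gamma(z+a_1)\Gamma(z+a_2)\Gamma(-z+b_1)\Gamma(-z+b_2)\Gamma(-z+b_3)}{\Gamma(-z+a_1+a_2+b_1+b_2+b_3)}\,dz
=\frac{\prod_{i=1,2}\prod_{j=1,2,3}\Gamma(a_i+b_j)}{\prod \Gamma(a_1+a_2+b_j+b_k)},
\]
the product in the denominator running over the three pairs $\{j,k\}\subset\{1,2,3\}$. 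For the case $F=\bC$, I would substitute $\Gamma_\bC(s)=2(2\pi)^{-s}\Gamma(s)$ into both sides of the desired identity. On the left, each of the five gamma factors contributes a factor $2(2\pi)^{-(\cdot)}$ and the denominator contributes $(2(2\pi)^{-(\cdot)})^{-1}$, so the net power-of-$2$ count is $2^{5}/2 = 2^4$ and the powers of $2\pi$ in the exponent telescope: the exponents of the numerator gammas sum to $(z+a_1)+(z+a_2)+(-z+b_1)+(-z+b_2)+(-z+b_3) = -z+a_1+a_2+b_1+b_2+b_3$, which exactly cancels against the denominator's $-(-z+a_1+a_2+b_1+b_2+b_3)$. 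Hence the integrand equals $2^4$ times the classical integrand, and after absorbing $2\pi$ into $ds$ versus the $4\pi\sI$ normalization one checks the constant matches the right-hand side, where an identical bookkeeping of powers of $2$ and $2\pi$ shows $\Gamma_\bC$ on the right equals the same rescaling of the classical product. For the case $F=\bR$, I would instead use the duplication-type relation $\Gamma_\bR(s)\Gamma_\bR(s+1)=\Gamma_\bC(s)$ from (\ref{eqn:Fn_gammaRC_duplication}), or more directly substitute $\Gamma_\bR(s)=\pi^{-s/2}\Gamma(s/2)$ and rescale the contour variable $z\mapsto z/2$; the exponents of $\pi$ again telescope by the same cancellation, reducing the claim to the classical Barnes' second lemma with halved parameters.

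The key steps, in order: (1) state the classical Barnes' second lemma and note the contour-placement hypotheses translate correctly; (2) handle $F=\bC$ by the direct substitution $\Gamma_\bC(s)=2(2\pi)^{-s}\Gamma(s)$, tracking the constant $2^4$ and verifying the exponents of $2\pi$ cancel between numerator and denominator on each side; (3) handle $F=\bR$ by the substitution $\Gamma_\bR(s)=\pi^{-s/2}\Gamma(s/2)$ together with the change of variable $z\mapsto z/2$, again checking the $\pi$-exponents telescope; (4) confirm in both cases that the prescribed real part of $\mathrm{Re}(z)$ matches the region of validity of the classical lemma. The main obstacle — really the only delicate point — is the careful bookkeeping of the multiplicative constants and the telescoping of the exponential prefactors ($2\pi$ for $\bC$, $\pi$ for $\bR$); the cancellation works precisely because the sum of the arguments of the five numerator gamma factors equals the argument of the single denominator gamma factor, which is exactly the structural feature that makes Barnes' second lemma "balanced." Everything else is routine substitution, parallel to the proofs of Lemmas~\ref{lem:F32_Barnes_1st} and \ref{lem:F32_Barnes_eqn}.
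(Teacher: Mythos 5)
Your proof is correct, but it takes a genuinely different route from the paper's. You reduce the $\Gamma_F$ identity directly to the classical Barnes' second lemma (Whittaker--Watson \S 14.53) by substituting $\Gamma_\bC(s)=2(2\pi)^{-s}\Gamma(s)$ (tracking a net factor of $2^3$ on each side, with the $(2\pi)$-exponents telescoping because the sum of the five numerator gamma arguments equals the single denominator argument) and $\Gamma_\bR(s)=\pi^{-s/2}\Gamma(s/2)$ together with the contour rescaling $z\mapsto 2z$. The paper instead derives the result \emph{internally}: it applies Lemma~\ref{lem:F32_Barnes_eqn} with the degenerate choice $a_3=a_1+a_2+b_3$, which cancels one denominator gamma against the extra numerator gamma in that identity and leaves a balanced four-gamma integral, to which Lemma~\ref{lem:F32_Barnes_1st} (the $\Gamma_F$ first Barnes lemma) applies. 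Your approach buys brevity and transparency at the price of citing the classical second lemma as an external input; the paper's approach is slightly longer but keeps the classical input to the first Barnes lemma alone, with the second lemma appearing as a by-product of the machinery (Lemma~\ref{lem:F32_Barnes_eqn}) that the paper needs anyway for the zeta-integral computations. Both proofs are sound.
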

\begin{proof}
By Lemma \ref{lem:F32_Barnes_eqn} with $a_3=a_1+a_2+b_3$, we have 
\begin{align*}
&\frac{1}{4\pi \sI}\int_{z}
\frac{\Gamma_{F} (z+a_1)\Gamma_{F} (z+a_2)
\Gamma_{F} (-z+b_1)\Gamma_{F} (-z+b_2)
\Gamma_{F} (-z+b_3)}
{\Gamma_{F} (-z+a_1+a_2+b_1+b_2+b_3)}dz\\
&=
\frac{\Gamma_{F} (a_1+b_3)\Gamma_{F} (a_2+b_3)}
{\Gamma_{F}(a_1+a_2+b_1+b_3)\Gamma_{F}(a_1+a_2+b_2+b_3)}\\
&\phantom{=}\times 
\frac{1}{4\pi \sI}\int_{t}
\Gamma_{F}(t+b_1)\Gamma_{F}(t+b_2)
\Gamma_{F}(-t+a_1)
\Gamma_{F}(-t+a_2)dt.
\end{align*}
Hence, by Lemma \ref{lem:F32_Barnes_1st}, we obtain the assertion. 
\end{proof}

\begin{lem}
\label{lem:R32_Barnes2nd_sum}
For $a_1,a_2,b_1,b_2,b_3\in \bC$ such that 
$\mathrm{Re}(a_i+b_j)>0$ 
$(1\leq i\leq 2$, $1\leq j\leq 3)$, it holds that 
\begin{align*}
&\nonumber 
\frac{1}{4\pi \sI}
\int_z
\biggl(\frac{
\Gamma_\bR (z+a_1)\Gamma_\bR (z+a_2+1)
\Gamma_{\bR}(-z+b_1)\Gamma_{\bR}(-z+b_2)\Gamma_{\bR}(-z+b_3+1)}
{\Gamma_{\bR}(-z+a_1+a_2+b_1+b_2+b_3)}\\
&\nonumber 
+\frac{\Gamma_\bR (z+a_1+1)\Gamma_\bR (z+a_2)
\Gamma_{\bR}(-z+b_1+1)\Gamma_{\bR}(-z+b_2+1)
\Gamma_{\bR}(-z+b_3)}
{\Gamma_{\bR}(-z+a_1+a_2+b_1+b_2+b_3+1)}
\biggr)dz\\
&\nonumber =
\frac{\Gamma_{\bR}(a_1+b_1)
\Gamma_{\bR}(a_1+b_2)
\Gamma_{\bR} (a_1+b_3+1)}
{\Gamma_{\bR}(a_1+a_2+b_1+b_2+1)
\Gamma_{\bR}(a_1+a_2+b_1+b_3)\Gamma_{\bR}(a_1+a_2+b_2+b_3)} \\
&\phantom{=}\times 
\Gamma_{\bR}(a_2+b_1+1)
\Gamma_{\bR}(a_2+b_2+1)
\Gamma_{\bR} (a_2+b_3).
\end{align*}
Here the path of integration $\int_{z}$ 
is the vertical line 
from $\mathrm{Re}(z)-\sI \infty$ to $\mathrm{Re}(z)+\sI \infty$ 
with the real part 
\begin{align*}
&\max \{-\mathrm{Re}(a_1),-\mathrm{Re}(a_2)\}<\mathrm{Re}(z)
<\min \{\mathrm{Re}(b_1),\mathrm{Re}(b_2),\mathrm{Re}(b_3)\}.
\end{align*}
\end{lem}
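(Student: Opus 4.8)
The plan is to reduce this identity to Barnes' second lemma (Lemma~\ref{lem:F32_Barnes2nd}) applied twice, using the functional equation (\ref{eqn:Fn_FE_GammaRC}) to pull a factor $(2\pi)$ out at the right moments. First I would rewrite each of the two $\Gamma_\bR$-integrands so that the integration variable appears in the ``Barnes pattern'' $\Gamma_\bR(z+\alpha_1)\Gamma_\bR(z+\alpha_2)\Gamma_\bR(-z+\beta_1)\Gamma_\bR(-z+\beta_2)\Gamma_\bR(-z+\beta_3)/\Gamma_\bR(-z+\alpha_1+\alpha_2+\beta_1+\beta_2+\beta_3)$. For the first term the shift parameters are naturally $(a_1,a_2+1;b_1,b_2,b_3+1)$, and for the second term $(a_1+1,a_2;b_1+1,b_2+1,b_3)$; in the second term the denominator argument is $-z+a_1+a_2+b_1+b_2+b_3+1$, which indeed equals $-z+(a_1+1)+a_2+(b_1+1)+(b_2+1)+b_3-2$, so a small bookkeeping check is needed to see it is \emph{not} literally in Barnes form — this is where the two terms must be combined rather than evaluated separately.

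The key step is therefore to recognize that the \emph{sum} of the two integrands telescopes after one application of (\ref{eqn:Fn_FE_GammaRC}). Concretely I would write $\Gamma_\bR(-z+b_3+1) = (2\pi)(-z+b_3)^{-1}\Gamma_\bR(-z+b_3+2)$ — or, more usefully, go the other way and use $\Gamma_\bR(s+2)=(2\pi)^{-1}s\,\Gamma_\bR(s)$ to lower the arguments in the second integrand until both share a common denominator $\Gamma_\bR(-z+a_1+a_2+b_1+b_2+b_3)$; the ``correction'' polynomial factors in $z$ then recombine, via the recurrences $\Gamma_\bR(z+a+1)=(2\pi)^{-1}(z+a)\Gamma_\bR(z+a)$ etc., into a single clean integrand of the form $\Gamma_\bR(z+a_1)\Gamma_\bR(z+a_2)\Gamma_\bR(-z+b_1)\Gamma_\bR(-z+b_2)\Gamma_\bR(-z+b_3)$ times a rational function, whose partial-fraction expansion produces exactly the two Barnes-type pieces with the desired shifted parameters. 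I expect the identity
\[
z+a_2 \;+\; (-z+b_3)\frac{\text{(something)}}{\text{(something)}} \;=\;\text{sum of the two admissible shifts}
\]
to be the arithmetic heart of the computation, and I would verify it by clearing denominators and comparing the two linear-in-$z$ polynomials.

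Once the sum is written as a single Barnes integral (possibly plus a residual lower-order Barnes integral), I would apply Lemma~\ref{lem:F32_Barnes2nd} directly. For the first piece this gives, with $(a_1',a_2')=(a_1,a_2+1)$ and $(b_1',b_2',b_3')=(b_1,b_2,b_3+1)$,
\[
\frac{\Gamma_\bR(a_1+b_1)\Gamma_\bR(a_1+b_2)\Gamma_\bR(a_1+b_3+1)\Gamma_\bR(a_2+b_1+1)\Gamma_\bR(a_2+b_2+1)\Gamma_\bR(a_2+b_3+2)}{\Gamma_\bR(a_1+a_2+b_1+b_2+1)\Gamma_\bR(a_1+a_2+b_1+b_3+2)\Gamma_\bR(a_1+a_2+b_2+b_3+2)},
\]
and similarly for the second piece; the $\Gamma_\bR(a_2+b_3+2)$ and the two $\Gamma_\bR(\cdots+2)$ in the denominator are then reduced by (\ref{eqn:Fn_FE_GammaRC}) to $\Gamma_\bR(a_2+b_3)$ and $\Gamma_\bR(\cdots)$ up to explicit factors of $(2\pi)$ and linear polynomials in the $a_i+b_j$. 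The final step is to check that after this reduction the polynomial factors from the two pieces add up to cancel the discrepancy and leave precisely the right-hand side claimed. The main obstacle I anticipate is purely organizational: keeping the half-dozen $(2\pi)$-factors and the several linear factors $a_i+a_j+b_k+b_l$ correctly matched so that the two Barnes evaluations combine into the single stated product; there is no conceptual difficulty beyond (\ref{eqn:Fn_FE_GammaRC}) and Lemma~\ref{lem:F32_Barnes2nd}, but the cancellation is delicate enough that I would carry an intermediate normalization (say, factor out $\Gamma_\bR(a_1+b_1)\Gamma_\bR(a_1+b_2)\Gamma_\bR(a_2+b_1+1)\Gamma_\bR(a_2+b_2+1)$ at once) to make it transparent.
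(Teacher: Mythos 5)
Your plan rests on two steps that do not go through as described.  First, the recurrence you invoke, ``$\Gamma_\bR(z+a+1)=(2\pi)^{-1}(z+a)\Gamma_\bR(z+a)$'', is not a valid identity: the functional equation (\ref{eqn:Fn_FE_GammaRC}) is $\Gamma_\bR(s+2)=(2\pi)^{-1}s\Gamma_\bR(s)$, a step-$2$ recurrence, and there is no rational relation between $\Gamma_\bR(s+1)$ and $\Gamma_\bR(s)$.  Since the two denominators $\Gamma_\bR(-z+a_1+a_2+b_1+b_2+b_3)$ and $\Gamma_\bR(-z+a_1+a_2+b_1+b_2+b_3+1)$ in the two summands differ by exactly $1$, no number of applications of (\ref{eqn:Fn_FE_GammaRC}) will bring them to a common denominator, so the ``telescoping'' you rely on cannot happen.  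Second, the evaluation you write down for ``the first piece'' via Lemma~\ref{lem:F32_Barnes2nd} with parameters $(a_1,a_2+1;b_1,b_2,b_3+1)$ is not legitimate: Barnes' second lemma would require the denominator $\Gamma_\bR(-z+a_1+a_2+b_1+b_2+b_3+2)$, and the actual denominator is off by $2$ from this; the discrepancy introduces a residual linear factor $(-z+a_1+a_2+b_1+b_2+b_3)$ that does not vanish, and chasing it by splitting and reabsorbing into the numerator $\Gamma_\bR$'s does not terminate (the numerator-sum-minus-denominator-argument mismatch stays equal to $2$ at each step).

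The ingredient you are missing is Lemma~\ref{lem:F32_Barnes_eqn}, the Barnes transformation whose denominator argument $a_3+b_1+b_2$ carries an independent free parameter $a_3$ and therefore accommodates both summands exactly (with $a_3 = a_1+a_2+b_3$ for the first term and $a_3 = a_1+a_2+b_3-1$ for the second).  Applying it to each term separately produces two new $t$-integrals whose numerators, after the substitution $t\to t-1$ in the second, share the factors $\Gamma_\bR(t+b_1)\Gamma_\bR(t+b_2)\Gamma_\bR(-t+a_1)\Gamma_\bR(-t+a_2+1)$; the residual factors $\Gamma_\bR(-t+a_1+a_2+b_3+k)$ together with the shifted prefactors combine, via a \emph{single} legitimate application of the step-$2$ equation (\ref{eqn:Fn_FE_GammaRC}), into a rational function whose two pieces sum exactly to $1$.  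What remains is then Barnes' \emph{first} lemma (Lemma~\ref{lem:F32_Barnes_1st}), not Barnes' second.  So the proof is a ``first-lemma'' argument preceded by one transformation, rather than a sum of two second-lemma evaluations, and without Lemma~\ref{lem:F32_Barnes_eqn} the bookkeeping you describe will not close.
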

\begin{proof}
By Lemma \ref{lem:F32_Barnes_eqn}, we have 
\begin{align*}
&\frac{1}{4\pi \sI}
\int_z
\biggl(\frac{
\Gamma_\bR (z+a_1)\Gamma_\bR (z+a_2+1)
\Gamma_{\bR}(-z+b_1)\Gamma_{\bR}(-z+b_2)\Gamma_{\bR}(-z+b_3+1)}
{\Gamma_{\bR}(-z+a_1+a_2+b_1+b_2+b_3)}\\
&+\frac{\Gamma_\bR (z+a_1+1)\Gamma_\bR (z+a_2)
\Gamma_{\bR}(-z+b_1+1)\Gamma_{\bR}(-z+b_2+1)
\Gamma_{\bR}(-z+b_3)}
{\Gamma_{\bR}(-z+a_1+a_2+b_1+b_2+b_3+1)}
\biggr)dz\\
&=
\frac{\Gamma_{\bR} (a_1+b_3+1)\Gamma_{\bR} (a_2+b_3+2)}
{\Gamma_{\bR}(a_1+a_2+b_1+b_3)\Gamma_{\bR}(a_1+a_2+b_2+b_3)}\\
&\phantom{==}\times 
\frac{1}{4\pi \sI}\int_{t}
\frac{\Gamma_{\bR}(t+b_1)\Gamma_{\bR}(t+b_2)}
{\Gamma_{\bR}(-t+a_1+a_2+b_3+2)}\\
&\phantom{==}\times 
\Gamma_{\bR}(-t+a_1)
\Gamma_{\bR}(-t+a_2+1)
\Gamma_{\bR}(-t+a_1+a_2+b_3)dt\\
&\phantom{=}+
\frac{\Gamma_{\bR} (a_1+b_3+1)\Gamma_{\bR} (a_2+b_3)}
{\Gamma_{\bR}(a_1+a_2+b_1+b_3)\Gamma_{\bR}(a_1+a_2+b_2+b_3)}\\
&\phantom{==}\times 
\frac{1}{4\pi \sI}\int_{t}
\frac{\Gamma_{\bR}(t+b_1+1)\Gamma_{\bR}(t+b_2+1)}
{\Gamma_{\bR}(-t+a_1+a_2+b_3+1)}\\
&\phantom{==}\times 
\Gamma_{\bR}(-t+a_1+1)
\Gamma_{\bR}(-t+a_2)
\Gamma_{\bR}(-t+a_1+a_2+b_3-1)dt\\
&=
\frac{\Gamma_{\bR} (a_1+b_3+1)\Gamma_{\bR} (a_2+b_3)}
{\Gamma_{\bR}(a_1+a_2+b_1+b_3)\Gamma_{\bR}(a_1+a_2+b_2+b_3)}\\
&\phantom{==}\times 
\frac{1}{4\pi \sI}\int_{t}
\Gamma_{\bR}(t+b_1)\Gamma_{\bR}(t+b_2)
\Gamma_{\bR}(-t+a_1)
\Gamma_{\bR}(-t+a_2+1)dt.
\end{align*}
Here the second equality follows from 
the substitution $t\to t-1$ in the second term 
and the equality $\Gamma_{\bR}(s+2)=(2\pi)^{-s}\Gamma_{\bR}(s)$. 
Applying Lemma \ref{lem:F32_Barnes_1st} to the right hand side, 
we obtain the assertion. 
\end{proof}

\begin{lem}
\label{lem:F32_gauss_sum}
For $z_1,z_2\in \bC$, $m\in \bZ_{\geq 0}$ such that 
$\mathrm{Re}(z_1)>0$, $\mathrm{Re}(z_2)>m$, it holds that 
\begin{align*}
&\sum_{j=0}^{m}\binom{m}{j}
\Gamma_{\bC}(z_1+j)\Gamma_{\bC}(z_2-j)
=\frac{\Gamma_{\bC}(z_1)\Gamma_{\bC}(z_1+z_2)\Gamma_{\bC}(z_2-m)}
{\Gamma_{\bC}(z_1+z_2-m)}.
\end{align*}
\end{lem}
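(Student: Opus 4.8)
The statement to prove is the identity
\[
\sum_{j=0}^{m}\binom{m}{j}
\Gamma_{\bC}(z_1+j)\Gamma_{\bC}(z_2-j)
=\frac{\Gamma_{\bC}(z_1)\Gamma_{\bC}(z_1+z_2)\Gamma_{\bC}(z_2-m)}
{\Gamma_{\bC}(z_1+z_2-m)},
\]
for $z_1,z_2\in\bC$ with $\mathrm{Re}(z_1)>0$, $\mathrm{Re}(z_2)>m$, $m\in\bZ_{\geq 0}$.

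\textbf{Approach.} The plan is to reduce everything to the ordinary $\Gamma$-function via the definition $\Gamma_\bC(s)=2(2\pi)^{-s}\Gamma(s)$, so that the powers of $2(2\pi)^{-1}$ bookkeep correctly, and then prove the resulting identity for $\Gamma$ by recognizing the left-hand side as a terminating hypergeometric sum. Concretely, writing $\Gamma_\bC(z_1+j)\Gamma_\bC(z_2-j)=4(2\pi)^{-z_1-z_2}\Gamma(z_1+j)\Gamma(z_2-j)$, every term on the left carries the same prefactor $4(2\pi)^{-z_1-z_2}$, and on the right $\Gamma_\bC(z_1)\Gamma_\bC(z_1+z_2)\Gamma_\bC(z_2-m)/\Gamma_\bC(z_1+z_2-m)$ also equals $4(2\pi)^{-z_1-z_2}$ times the corresponding ratio of ordinary $\Gamma$'s; so it suffices to prove
\[
\sum_{j=0}^{m}\binom{m}{j}\Gamma(z_1+j)\Gamma(z_2-j)
=\frac{\Gamma(z_1)\Gamma(z_1+z_2)\Gamma(z_2-m)}{\Gamma(z_1+z_2-m)}.
\]

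\textbf{Key steps.} First I would rewrite the summand using Pochhammer symbols: by (\ref{eqn:Fn_Pochhammer_rel}), $\Gamma(z_1+j)=\Gamma(z_1)(z_1)_j$ and $\Gamma(z_2-j)=\Gamma(z_2-m)(z_2-m)_{m-j}$, and $\binom{m}{j}=\dfrac{(-1)^j(-m)_j}{j!}$. Substituting and factoring out $\Gamma(z_1)\Gamma(z_2-m)$, the sum becomes $\Gamma(z_1)\Gamma(z_2-m)\sum_{j=0}^m \dfrac{(-m)_j(z_1)_j}{j!}\cdot\dfrac{(z_2-m)_{m-j}}{(-1)^j}$; after expressing $(z_2-m)_{m-j}$ in terms of $(z_2-m)_m$ and a reversed Pochhammer, one recognizes $\sum_j \dfrac{(-m)_j(z_1)_j}{(1-z_2)_j\,j!}$, i.e. the Gauss series ${}_2F_1(-m,z_1;1-z_2;1)$ up to an explicit constant. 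Then I would invoke the Chu--Vandermonde summation ${}_2F_1(-m,a;c;1)=\dfrac{(c-a)_m}{(c)_m}$ (a standard terminating case of Gauss's theorem, found in \cite{Whittaker_Watson_001} or \cite[\S 1.9]{Bump_004}) with $a=z_1$, $c=1-z_2$, giving $(1-z_2-z_1)_m/(1-z_2)_m$. Finally I would translate $(1-z_2-z_1)_m$ and $(1-z_2)_m$ back to $\Gamma$-quotients via (\ref{eqn:Fn_Pochhammer_rel}), collect the terms $\Gamma(z_1)$, $\Gamma(z_2-m)$, and simplify to obtain $\Gamma(z_1)\Gamma(z_1+z_2)\Gamma(z_2-m)/\Gamma(z_1+z_2-m)$; a short induction on $m$ using $\Gamma(s+1)=s\Gamma(s)$ is an equally valid substitute for citing Chu--Vandermonde. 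The convergence hypotheses $\mathrm{Re}(z_1)>0$, $\mathrm{Re}(z_2)>m$ only ensure that all the $\Gamma$-values appearing are finite and nonzero in denominators; the identity itself is an identity of rational expressions in the Pochhammer symbols and holds for all $z_1,z_2$ where both sides are defined.

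\textbf{Main obstacle.} There is no serious analytic difficulty; the only care needed is in the bookkeeping of the reversal $(z_2-m)_{m-j}$ into the form $(1-z_2)_j$ (signs and index shifts), and in checking that the powers of $2$ and $2\pi$ from $\Gamma_\bC$ cancel exactly — both left and right sides should produce precisely one factor $4(2\pi)^{-z_1-z_2}$. Alternatively, and perhaps most cleanly for a self-contained write-up, I would prove the $\Gamma$-version by induction on $m$: the case $m=0$ is trivial, and the step uses Pascal's rule $\binom{m+1}{j}=\binom{m}{j}+\binom{m}{j-1}$ together with $\Gamma(z_1+j)=\Gamma(z_1+j+1)/(z_1+j)$-type shifts, reducing the $m+1$ sum to the $m$ sum with $z_1$ replaced by $z_1$ and $z_2$ by $z_2+1$, then simplifying the ratio on the right. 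This induction avoids any external citation and is the route I would actually take.
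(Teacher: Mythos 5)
Your proposal is correct and follows essentially the same route as the paper: both rewrite the sum as $\Gamma_\bC(z_1)\Gamma_\bC(z_2)\,{}_2F_1(-m,z_1;1-z_2;1)$ and then apply the Chu--Vandermonde summation cited from \cite[\S 14.11]{Whittaker_Watson_001}. The only cosmetic difference is that the paper works directly with $\Gamma_\bC$ via the adapted Pochhammer identities in (\ref{eqn:F32_Poch}), whereas you pass through the ordinary $\Gamma$-function and track the common prefactor $4(2\pi)^{-z_1-z_2}$ explicitly.
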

\begin{proof}
Using the equalities 
\begin{align}
\label{eqn:F32_Poch}
&(z)_j=\frac{\Gamma_{\bC}(z+j)}{(2\pi)^{-j}\Gamma_{\bC}(z)}
=(-1)^j\frac{(2\pi)^{j}\Gamma_{\bC}(1-z)}{\Gamma_{\bC}(1-z-j)}\qquad 
(z\in \bC,\ j\in \bZ_{\geq 0}),\\ 
\nonumber 
&(m-j)!=(-1)^j\dfrac{m!}{(-m)_j}\hspace{20mm} 
(m,j\in \bZ_{\geq 0}\ \text{such that}\ m\geq j),
\end{align}
we have 
\begin{align}
\label{eqn:F32_pf_gauss}
&\sum_{j=0}^{m}
\binom{m}{j}
\Gamma_{\bC}(z_1+j)\Gamma_{\bC}(z_2-j\bigr)
=\Gamma_{\bC}(z_1)\Gamma_{\bC}(z_2)\,
{}_2F_1\!\left(\begin{array}{c}
-m,\ z_1\\
1-z_2
\end{array};1\right)
\end{align}
with the Gaussian hypergeometric series 
\begin{align*}
{}_2F_1\left(\begin{array}{c}
a_1,\ a_2\\
b_1
\end{array};z\right)=\sum_{j=0}^\infty \frac{(a_1)_j(a_2)_j}{(b_1)_j}
\frac{z^j}{j!}.
\end{align*}
Applying the formula (\cite[\S 14.11]{Whittaker_Watson_001})
\begin{align*}
&{}_2F_1\left(\begin{array}{c}
-m,\ a_2\\
\phantom{-}b_1
\end{array};1
\right) 
=\frac{(b_1-a_2)_m}{(b_1)_m}\\
&(m\in \bZ_{\geq 0},\ 
\text{the both sides are rational functions of 
$a_2,b_1$})
\end{align*}
and (\ref{eqn:F32_Poch}) to (\ref{eqn:F32_pf_gauss}), 
we obtain the assertion.
\end{proof}

\chapter{The local zeta integrals for $GL(3,\bR)\times GL(2,\bR)$}
\label{sec:R32_zeta}

Throughout this chapter, we set $F=\bR$.

\section{The local Langlands correspondence for $GL(n,\bR)$}
\label{subsec:Rmn_langlands}

We recall the theory of finite dimensional 
semisimple representations of the Weil group $W_\bR$. 
The Weil group $W_\bR$ for the real field $\bR$ is given by 
$W_\bR =\bC^\times \cup (\bC^\times \,\mathtt{j})
\subset \bH^\times$. 
Here we regard $W_\bR$ as a subgroup of the multiplicative group 
$\bH^\times$ of the Hamilton quaternion algebra 
$\bH =\bC \oplus ({\bC}\, \mathtt{j})$ ($\mathtt{j}^2=-1$ and 
$\mathtt{j}z\mathtt{j}^{-1}=\overline{z}$ for $z\in \bC$). 
We recall the irreducible representations of $W_\bR$ and 
the corresponding $L$-factor. 
\begin{itemize}
\item {\textit{Characters.}} 
We define characters $\phi^\delta_\nu \ (\nu \in \bC ,\delta \in \{0,1\} )$ 
of $W_\bR$ by 
\begin{align*}
\hspace{10mm}
&\phi^\delta_\nu (z)=|z|^{2\nu }\hspace{20mm}(z\in \bC^\times ),\\
&\phi^\delta_\nu (\mathtt{j})=(-1)^\delta .
\end{align*}
We define the corresponding $L$-factor by 
$L(s,\phi^\delta_\nu )=\Gamma_\bR (s+\nu +\delta )$. 

\item {\textit{Two dimensional representations.}} 
We define two dimensional representations 
$\phi_{\nu ,\kappa }\colon W_\bR \to GL(2,\bC )\ 
(\nu \in \bC ,\kappa \in \bZ_{\geq 0})$ by 
\begin{align*}
\hspace{10mm}&\phi_{\nu ,\kappa }(z)
=|z|^{2\nu -\kappa }\left(
\begin{array}{cc}
\overline{z}^{\,\kappa}&0\\
0&z^{\kappa}
\end{array}
\right) &
(z \in \bC^\times ),\\
&\phi_{\nu ,\kappa }(\mathtt{j})
=\left(
\begin{array}{cc}
0&(-1)^\kappa \\
1&0
\end{array}
\right).
\end{align*}
Here $\phi_{\nu ,\kappa }$ is 
irreducible if $\kappa >0$, and $\phi_{\nu ,0}\simeq \phi_{\nu }^0\oplus 
\phi_{\nu }^1$. For $\phi_{\nu ,\kappa }\ (\kappa >0)$, 
we define the corresponding $L$-factor by 
$L(s,\phi_{\nu ,\kappa } )=\Gamma_\bC \bigl(s+\nu +\tfrac{\kappa}{2})$. 
\end{itemize}
Then the set 
\[
\Sigma_\bR 
=\left\{\phi_{\nu }^{\delta}\mid 
\nu \in \bC ,\ \delta \in \{0,1\} \right\} \cup 
\left\{\phi_{\nu ,\kappa }\mid 
\nu \in \bC ,\ \kappa  \in \bZ_{\geq 1}\right\}
\]
exhausts the equivalence classes of 
irreducible representations of $W_\bR$. 
Moreover, the equivalences 
\begin{align}
\label{eqn:Rmn_weil_tensor}
\begin{split}
&\phi_{\nu}^{\delta}\otimes \phi_{\nu'}^{\delta'}
\simeq \phi_{\nu +\nu'}^{|\delta -\delta'|},\hspace{20mm} 
\phi_{\nu,\kappa }\otimes \phi_{\nu'}^{\delta'}
\simeq \phi_{\nu +\nu',\kappa },\\
&\phi_{\nu ,\kappa }\otimes \phi_{\nu',\kappa '} 
\simeq \phi_{\nu +\nu',\kappa +\kappa'}\oplus 
\phi_{\nu +\nu',|\kappa -\kappa'|}
\end{split}
\end{align}
hold for $\nu,\nu'\in\bC$, $\delta,\delta'\in \{0,1\}$ and 
$\kappa,\kappa'\in \bZ_{\geq 0}$.

For a finite dimensional semisimple representation $\phi$ of $W_\bR$, 
we define the corresponding $L$-factor by 
\begin{align*}
L(s,\phi )&=\prod_{i=1}^mL(s,\phi_i ),
\end{align*}
where 
$\phi \simeq \bigoplus_{i=1}^m\phi_i$ ($\phi_i\in \Sigma_\bR$) 
is the irreducible decomposition of $\phi$.

The Langlands classification of 
irreducible admissible representations of $G_n$ is 
given as follows:
\begin{thm}[{\cite[Theorem 1]{Knapp_003}}]
\label{thm:Rn_langlands_clas}
We use the notation in \S \ref{subsec:Rn_def_gps}. 

\noindent (1) If the parameters $\nu_i$ of $\sigma_i$ satisfy 
\[
{\mathrm{Re}(\nu_1)}\geq 
{\mathrm{Re}(\nu_2)}\geq 
\cdots \geq {\mathrm{Re}(\nu_l)},
\]
then $H(\sigma )$ has a unique irreducible quotient $J(\sigma)$.

\noindent (2) 
Any irreducible admissible representation of $G_n$ is infinitesimally 
equivalent to $J(\sigma )$ for some $\sigma$. 

\noindent (3) A quotient $J(\sigma )$ is infinitesimally equivalent 
to $J(\sigma')$ if and only if $\sigma'=w(\sigma)$ for some $w\in \gS_l$. 
\end{thm}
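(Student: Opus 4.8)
\textbf{Proof proposal for Theorem \ref{thm:Rn_langlands_clas}.}
The plan is to reduce the statement to the Langlands classification for real reductive groups as presented in Knapp's work, translating the general formalism into the concrete language of $GL(n,\bR)$ used in \S\ref{subsec:Rn_def_gps}. First I would observe that $P_{\mn}$ is a parabolic subgroup of $G_n=GL(n,\bR)$ with Levi factor $M_{\mn}\simeq GL(n_1,\bR)\times\cdots\times GL(n_l,\bR)$, and that each factor $\sigma_i$ is either a character $\chi_{(\nu_i,\delta_i)}$ of $GL(1,\bR)$ or an essentially discrete series $D_{(\nu_i,\kappa_i)}$ of $GL(2,\bR)$. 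Writing $\sigma_i=\sigma_i^{0}\otimes|\det|^{\nu_i}$ where $\sigma_i^{0}$ is tempered, the induced representation $H(\sigma)=\Ind_{P_{\mn}}^{G_n}(\sigma\otimes\rho_{\mn})$ is precisely a standard module in the sense of the Langlands classification, with the exponents governed by $\mathrm{Re}(\nu_1)\geq\cdots\geq\mathrm{Re}(\nu_l)$ being the dominance condition on the Langlands parameter. Thus part (1) is the existence and uniqueness of the Langlands quotient $J(\sigma)$ for a standard module whose inducing data is dominant, and part (2) is the surjectivity of the Langlands correspondence; both are exactly \cite[Theorem 1]{Knapp_003} once one checks that every tempered representation of $GL(n_i,\bR)$ arising as an irreducible constituent is of the form $\chi_{(\nu_i,\delta_i)}$ with $\mathrm{Re}(\nu_i)=0$ (for $n_i=1$) or $D_{(\nu_i,\kappa_i)}$ with $\mathrm{Re}(\nu_i)=0$ (for $n_i=2$) — and more generally that allowing arbitrary $\nu_i\in\bC$ subject only to the weak inequality on real parts still produces a standard module with a unique irreducible quotient. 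I would cite Knapp directly here rather than reproving it.

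For part (3), the equivalence $J(\sigma)\simeq J(\sigma')$ if and only if $\sigma'=w(\sigma)$ for some $w\in\gS_l$: the ``if'' direction follows from the fact, already recorded in the excerpt (cf.\ \cite[Corollary 2.8]{Speh_Vogan_001}), that permuting the blocks of an induced representation does not change its irreducible constituents, hence does not change the Langlands quotient. The ``only if'' direction is the uniqueness part of the Langlands classification: two standard modules have infinitesimally equivalent Langlands quotients precisely when their inducing data agree up to association of parabolics, which for the block-diagonal parabolics of $GL(n,\bR)$ means up to the action of $\gS_l$ permuting the blocks (and, within blocks of equal size, one must check that the discrete series $D_{(\nu,\kappa)}$ and the characters $\chi_{(\nu,\delta)}$ are mutually inequivalent and are determined by $(\nu,\kappa)$ resp.\ $(\nu,\delta)$ — this is the $GL(2,\bR)$ and $GL(1,\bR)$ classification recalled in \S\ref{subsec:Rn_def_gps}). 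I would assemble this by invoking the same Knapp reference, noting that the relevant Weyl-group-of-the-Levi orbit condition specializes to $\gS_l$ here because the blocks all lie inside a single $GL(n,\bR)$.

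The key steps, in order, are: (i) identify $H(\sigma)$ with a standard module and the condition $\mathrm{Re}(\nu_1)\geq\cdots\geq\mathrm{Re}(\nu_l)$ with dominance; (ii) apply \cite[Theorem 1]{Knapp_003} to get existence and uniqueness of $J(\sigma)$, giving (1); (iii) invoke surjectivity of the Langlands correspondence, giving (2); (iv) for (3), combine the block-permutation invariance from \cite{Speh_Vogan_001} with the uniqueness/injectivity statement in the Langlands classification, using the classification of tempered representations of $GL(1,\bR)$ and $GL(2,\bR)$ to pin down the parameters within each block. I expect the main obstacle to be purely expository: making precise the dictionary between the abstract Langlands data (a parabolic, a tempered representation of its Levi, a dominant real character) and the very explicit data $(\mn,\nu_i,\delta_i,\kappa_i)$ of the present setup, in particular verifying that the weak inequality on real parts (rather than a strict one, and with no constraint relating the $\kappa_i$ or $\delta_i$) is exactly what is needed for the standard-module machinery to apply without modification. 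Since this is a known result quoted for reference in Part \ref{part:1}, the proof will ultimately be a short remark citing \cite{Knapp_003} and \cite{Speh_Vogan_001}, rather than an independent argument.
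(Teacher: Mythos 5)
Your proposal is correct and takes essentially the same approach as the paper: the paper does not prove Theorem~\ref{thm:Rn_langlands_clas} at all, but simply states it with the citation to \cite[Theorem 1]{Knapp_003}, and your reduction to that reference (with the auxiliary observation about block permutations from \cite{Speh_Vogan_001} for part (3)) is the correct and intended dictionary. Your elaboration of the translation between the concrete $(\mn,\nu_i,\delta_i,\kappa_i)$-data and the abstract Langlands data is accurate and, since the paper records none of it, is harmless additional exposition rather than a deviation.
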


By Theorem \ref{thm:Rn_langlands_clas} (2), 
for an irreducible admissible representation $\Pi$ of $G_n$, 
there exists some $J(\sigma )$ with 
$\sigma =\sigma_1\boxtimes \sigma_2\boxtimes 
\cdots \boxtimes \sigma_l$, which is infinitesimally equivalent 
to $\Pi$. Then we define the Langlands parameter $\phi [\Pi ]$ of $\Pi$ by 
\[
\phi [\Pi ]=\bigoplus_{i=1}^l\phi [\sigma_i], 
\]
where $\phi [\chi_{(\nu ,\delta )}]
= \phi^{\delta }_{\nu }$ and $\phi [D_{(\nu ,\kappa )}]
= \phi_{\nu ,\kappa -1}$. 
By Theorem \ref{thm:Rn_langlands_clas} (3), this definition is well-defined 
and we obtain the following theorem, which is called 
the local Langlands correspondence over $\bR$. 
\begin{thm}[{\cite[Theorem 2]{Knapp_003}}]
\label{thm:Rn_langlands_corresp}
The correspondence $\Pi \leftrightarrow \phi [\Pi]$ gives 
a bijection between the set of infinitesimal 
equivalence classes of irreducible admissible representations 
of $G_n=GL(n,\bR)$ and the set of equivalence classes of $n$-dimensional 
semisimple representations of $W_\bR$. 
\end{thm}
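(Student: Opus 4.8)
The statement to prove is Theorem~\ref{thm:Rn_langlands_corresp}, the bijectivity of the correspondence $\Pi \leftrightarrow \phi[\Pi]$ between infinitesimal equivalence classes of irreducible admissible representations of $G_n = GL(n,\bR)$ and equivalence classes of $n$-dimensional semisimple representations of $W_\bR$. The plan is to reduce everything to the Langlands classification (Theorem~\ref{thm:Rn_langlands_clas}) on the representation-theoretic side and to the explicit description of $\widehat{W_\bR}$ on the Galois side, and then to match the two combinatorially.

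First I would verify that the map $\Pi \mapsto \phi[\Pi]$ is well-defined. By Theorem~\ref{thm:Rn_langlands_clas}(2) every irreducible admissible $\Pi$ is infinitesimally equivalent to some $J(\sigma)$ with $\sigma = \sigma_1 \boxtimes \cdots \boxtimes \sigma_l$, and by part (3) the data $\sigma$ is unique up to the action of $\gS_l$ permuting the blocks. Since $\phi[\Pi] = \bigoplus_{i=1}^l \phi[\sigma_i]$ is manifestly invariant under permuting the summands, the definition does not depend on the choice of $\sigma$, so the map is well-defined. Here I also need the elementary fact that each $\phi[\sigma_i]$ is irreducible of the correct dimension: $\phi_\nu^\delta$ is $1$-dimensional and $\phi_{\nu,\kappa-1}$ is $2$-dimensional and irreducible precisely because $\kappa \geq 2$ forces $\kappa - 1 \geq 1$. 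Hence $\phi[\Pi]$ is an honest $n$-dimensional semisimple representation of $W_\bR$.

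Next I would establish injectivity. Suppose $\phi[\Pi] \simeq \phi[\Pi']$ with $\Pi \sim J(\sigma)$, $\Pi' \sim J(\sigma')$. Decomposing into irreducibles, $\bigoplus_i \phi[\sigma_i] \simeq \bigoplus_j \phi[\sigma_j']$, and since the irreducible constituents of a semisimple representation are unique up to permutation and multiplicity, the multiset $\{\phi[\sigma_i]\}_i$ equals $\{\phi[\sigma_j']\}_j$. The key observation is that the assignment $\chi_{(\nu,\delta)} \mapsto \phi_\nu^\delta$ and $D_{(\nu,\kappa)} \mapsto \phi_{\nu,\kappa-1}$ is injective from the set of $\sigma_i$'s occurring (characters of $GL(1,\bR)$ and essentially discrete series of $GL(2,\bR)$) to $\Sigma_\bR$: distinct $(\nu,\delta)$ give distinct characters, distinct $(\nu,\kappa)$ with $\kappa \geq 2$ give distinct $\phi_{\nu,\kappa-1}$, and a $1$-dimensional $\phi_\nu^\delta$ is never equal to a $2$-dimensional $\phi_{\nu',\kappa'}$. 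Therefore $\sigma'$ is obtained from $\sigma$ by permuting blocks, i.e. $\sigma' = w(\sigma)$ for some $w \in \gS_l$, and Theorem~\ref{thm:Rn_langlands_clas}(3) gives $\Pi \sim \Pi'$.

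Finally I would establish surjectivity. Given an $n$-dimensional semisimple $\phi$ of $W_\bR$, write its irreducible decomposition $\phi \simeq \bigoplus_{i=1}^m \phi_i$ with each $\phi_i \in \Sigma_\bR$; since every element of $\Sigma_\bR$ is either $1$- or $2$-dimensional, $\sum_i \dim \phi_i = n$ exhibits $n$ as a sum of $1$'s and $2$'s. For each $\phi_i$ choose the preimage: $\sigma_i = \chi_{(\nu,\delta)}$ if $\phi_i = \phi_\nu^\delta$, and $\sigma_i = D_{(\nu,\kappa+1)}$ if $\phi_i = \phi_{\nu,\kappa}$ with $\kappa \geq 1$ (noting $\kappa + 1 \geq 2$, so $D_{(\nu,\kappa+1)}$ is defined). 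After reordering the blocks so that $\mathrm{Re}(\nu_1) \geq \cdots \geq \mathrm{Re}(\nu_l)$ — which is harmless since permuting blocks does not change $\phi$ — form $\sigma = \sigma_1 \boxtimes \cdots \boxtimes \sigma_l$ and set $\Pi = J(\sigma)$, which exists by Theorem~\ref{thm:Rn_langlands_clas}(1). Then $\phi[\Pi] = \bigoplus_i \phi[\sigma_i] \simeq \bigoplus_i \phi_i = \phi$, so the map is onto. The main obstacle, such as it is, is purely bookkeeping: one must be careful that the two small-rank dictionaries ($GL(1)$ characters $\leftrightarrow$ $1$-dim'l Weil reps, essentially discrete series of $GL(2)$ $\leftrightarrow$ $2$-dim'l irreducible Weil reps) are genuine bijections and that the degenerate case $\phi_{\nu,0} \simeq \phi_\nu^0 \oplus \phi_\nu^1$ is correctly excluded on the $GL(2)$ side by the constraint $\kappa \geq 2$; once this is checked, the theorem follows formally from Theorem~\ref{thm:Rn_langlands_clas} and the classification of $\Sigma_\bR$.
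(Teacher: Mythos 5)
The paper does not prove this theorem; it simply records it with the citation \cite[Theorem 2]{Knapp_003}. So there is no ``paper's own proof'' to compare against. What you have written is essentially Knapp's argument: reduce the well-definedness, injectivity and surjectivity of $\Pi \mapsto \phi[\Pi]$ to the Langlands classification (Theorem~\ref{thm:Rn_langlands_clas}) on the representation side and to the fact that $\Sigma_\bR$ exhausts the irreducibles of $W_\bR$ on the Galois side, then match the two rank-$\leq 2$ dictionaries block by block. The argument is sound and complete in outline.

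A few points you flag but do not actually verify, which would need to be filled in for the proof to be self-contained: (i) that distinct pairs $(\nu,\delta)$ give non-isomorphic $\phi_\nu^\delta$ (immediate from $\phi_\nu^\delta(z)=|z|^{2\nu}$ and $\phi_\nu^\delta(\mathtt{j})=(-1)^\delta$); (ii) that distinct pairs $(\nu,\kappa)$ with $\kappa\geq 1$ give non-isomorphic $\phi_{\nu,\kappa}$, so that the shift $\kappa\mapsto\kappa-1$ from $D_{(\nu,\kappa)}$ to $\phi_{\nu,\kappa-1}$ is a bijection onto the two-dimensional irreducibles (note $\kappa\geq 2 \Leftrightarrow \kappa-1\geq 1$, which is exactly where $\phi_{\nu,0}\simeq\phi_\nu^0\oplus\phi_\nu^1$ is excluded, as you observe); and (iii) that distinct parameters give non-isomorphic $\chi_{(\nu,\delta)}$ and non-isomorphic $D_{(\nu,\kappa)}$, so that the left-hand dictionary is also injective. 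All three are routine, but they are precisely the content of the theorem at this level of granularity, so they should be stated explicitly rather than deferred to ``bookkeeping.''
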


Let $n$ and $n'$ be positive integers. 
Let $\Pi$ and $\Pi'$ be 
irreducible admissible representations of $G_n$ and $G_{n'}$, respectively. 
Then we define the local $L$-factor $L (s,\Pi \times \Pi')$ by 
\begin{align*}
&L (s,\Pi \times \Pi')
=L(s,\phi [\Pi] \otimes \phi[\Pi']).
\end{align*}
Let $\Pi_\sigma $ and $\Pi_{\sigma'}$ be 
irreducible generalized principal series representations of $G_3$ and $G_2$, 
respectively. 
By (\ref{eqn:Rmn_weil_tensor}), we obtain the explicit form of 
the local $L$-factor for $\Pi_\sigma \times \Pi_{\sigma'}$ as follows:
\begin{itemize}
\item When $\sigma =
\chi_{(\nu_1 ,\delta_1 )}\boxtimes \chi_{(\nu_2 ,\delta_2 )}
\boxtimes \chi_{(\nu_3 ,\delta_3 )}$ 
and $\sigma' =\chi_{(\nu_1',\delta_1')}\boxtimes \chi_{(\nu_2',\delta_2')}$, 
we have 
\begin{align*}
&L (s,\Pi_\sigma \times \Pi_{\sigma'})=
\prod_{i=1}^3\prod_{j=1}^2
\Gamma_\bR (s+\nu_i+\nu_j'+|\delta_i-\delta_j'|).
\end{align*}

\item When $\sigma =
\chi_{(\nu_1 ,\delta_1 )}\boxtimes \chi_{(\nu_2 ,\delta_2 )}
\boxtimes \chi_{(\nu_3 ,\delta_3 )}$ 
and $\sigma' =D_{(\nu',\kappa')}$, 
we have 
\begin{align*}
&L (s,\Pi_\sigma \times \Pi_{\sigma'})=
\prod_{i=1}^3
\Gamma_\bC \bigl(s+\nu_i+\nu '+\tfrac{\kappa '-1}{2}\bigr).
\end{align*}

\item When $\sigma =
D_{(\nu_1 ,\kappa_1 )}\boxtimes \chi_{(\nu_2 ,\delta_2 )}$ 
and $\sigma' =\chi_{(\nu_1',\delta_1')}\boxtimes \chi_{(\nu_2',\delta_2')}$, 
we have 
\begin{align*}
\hspace{15mm}
&L (s,\Pi_\sigma \times \Pi_{\sigma'})=
\prod_{j=1}^2
\Gamma_\bC \bigl(s+\nu_1+\nu_j'+\tfrac{\kappa_1-1}{2}\bigr)
\Gamma_\bR (s+\nu_2+\nu_j'+|\delta_2-\delta_j'|).
\end{align*}

\item When $\sigma =
D_{(\nu_1 ,\kappa_1 )}\boxtimes \chi_{(\nu_2 ,\delta_2 )}$ 
and $\sigma' =D_{(\nu',\kappa')}$, 
we have 
\begin{align*}
\hspace{15mm}
L (s,\Pi_\sigma \times \Pi_{\sigma'})=\,&
\Gamma_\bC \bigl(s+\nu_1+\nu '+\tfrac{\kappa_1+\kappa '-2}{2}\bigr)
\Gamma_\bC \bigl(s+\nu_1+\nu '+\tfrac{|\kappa_1-\kappa '|}{2}\bigr)\\
&\times \Gamma_\bC \bigl(s+\nu_2+\nu '+\tfrac{\kappa '-1}{2}\bigr).
\end{align*}

\end{itemize}

\section{Preparations for $O(2)$-modules}
\label{subsec:R32_rep_of_K2}

We use the notation 
in \S \ref{subsec:R2_rep_K} and \S \ref{subsec:R3_rep_K}. 
Let $\lambda =(\lambda_1,\lambda_2)\in \Lambda_2$. 
We define a $\bC$-bilinear pairing 
$\langle \cdot ,\cdot \rangle $ 
on $V_{\lambda}^{(2)}\otimes_\bC V_{\lambda}^{(2)}$ by 
\begin{align}
\label{eqn:R32_def_K2_inv_pair}
&\langle v_{\lambda ,q'},\,
v_{\lambda,q}\rangle
=\delta_{0,q+q'}&
&(q,q'\in Q_{\lambda}).
\end{align} 
By (\ref{eqn:R2_Kact}), 
we know that the pairing $\langle \cdot ,\cdot \rangle $ is 
$K_2$-invariant. Moreover, we have 
\begin{align}
&\langle v',v\rangle =\langle v,v'\rangle &
&(v,v'\in V_{\lambda}^{(2)}).
\end{align}

We regard $K_2$ as a subgroup of $K_3$ via the embedding 
\begin{align}
\label{eqn:R32_K2_to_K3}
K_2\ni k\mapsto \left(\begin{array}{c|c}
k&\\ \hline &1
\end{array}\right)\in K_3. 
\end{align}
Then the following lemma holds. 
\begin{lem}
\label{lem:R32_K32_restriction}
Let $\mu =(\mu_1,\mu_2)\in \Lambda_3$. 
Then it holds that  
\begin{align}
\label{eqn:R32_K32_restriction}
V^{(3)}_{\mu}
\simeq \bigoplus_{\lambda \in \Omega (\mu )}V^{(2)}_{\lambda } 
\end{align}
as $K_2$-modules, where 
$\Omega (\mu ) =\{(0,\mu_2)\}\cup 
\{(\lambda_1,0)\mid \lambda_1\in \bZ,\ 
1\leq \lambda_1\leq \mu_1\}$. 
For $\lambda =(\lambda_1,\lambda_2)\in \Omega (\mu )$, 
there is a $K_2$-homomorphism 
$\iota_{\lambda}^{\mu}\colon 
V^{(2)}_{\lambda }\to V^{(3)}_{\mu }$ such that 
\begin{align*}
&\iota_{\lambda}^{\mu}(v_{\lambda ,q})=\sgn (q)^{\mu_2}v_{q}^{\mu}& 
&(q\in Q_{\lambda}).
\end{align*}
\end{lem}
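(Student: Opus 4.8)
The plan is to construct the $K_2$-homomorphisms $\iota_\lambda^\mu$ explicitly on the bases set up in \S\ref{subsec:R2_rep_K} and \S\ref{subsec:R3_rep_K}, verify that the collection of their images fills $V^{(3)}_\mu$ in direct-sum fashion, and conclude by a dimension count. First I would record that under the embedding (\ref{eqn:R32_K2_to_K3}) the rotation $k_\theta^{(2)}\in K_2$ is sent to $k_\theta^{(2,1)}\in K_3$ and the diagonal element $k_{(\varepsilon_1,\varepsilon_2)}^{(1,1)}$ is sent to $k_{(\varepsilon_1,\varepsilon_2,1)}^{(1,1,1)}$, and that $k_\theta^{(2)}$ ($\theta\in\bR$) together with $k_{(\varepsilon_1,\varepsilon_2)}^{(1,1)}$ ($\varepsilon_1,\varepsilon_2\in\{\pm1\}$) generate $O(2)$; thus a $\bC$-linear map $V^{(2)}_\lambda\to V^{(3)}_\mu$ is a $K_2$-homomorphism as soon as it intertwines the actions of these two families of elements.

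Then I would define $\iota_\lambda^\mu$ on the basis $\{v_{\lambda,q}\mid q\in Q_\lambda\}$ by the asserted formula $\iota_\lambda^\mu(v_{\lambda,q})=\sgn(q)^{\mu_2}v_q^\mu$; this is meaningful since $\lambda\in\Omega(\mu)$ forces $|q|=\lambda_1\le\mu_1$, so $v_q^\mu$ is one of the basis vectors of $V^{(3)}_\mu$. The intertwining property for $k_\theta^{(2)}$ is immediate from (\ref{eqn:R2_Kact}) and (\ref{eqn:R3_M21act_basis}), both $v_{\lambda,q}$ and $v_q^\mu$ being weight vectors of weight $q$. For $k_{(\varepsilon_1,\varepsilon_2)}^{(1,1)}$ one compares, via (\ref{eqn:R2_Kact}) and (\ref{eqn:R3_M111act_basis}) with $\varepsilon_3=1$, the scalar $\varepsilon_1^{\lambda_2}\varepsilon_2^{\lambda_1+\lambda_2}\sgn(\varepsilon_1\varepsilon_2q)^{\mu_2}$ against $\sgn(q)^{\mu_2}\varepsilon_1^{\mu_2}\varepsilon_2^{\mu_2+q}$; splitting into the case $q=0$ (where necessarily $\lambda=(0,\mu_2)$, hence $\lambda_2=\mu_2$) and the case $q\ne0$ (where $\lambda=(\lambda_1,0)$ with $\lambda_1=|q|$), and using $\varepsilon_i=\pm1$ together with $|q|\equiv q\bmod 2$, the two scalars coincide. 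Hence each $\iota_\lambda^\mu$ is a $K_2$-homomorphism, and it is visibly injective since it carries a basis of $V^{(2)}_\lambda$ to $\pm$ distinct basis vectors $v_q^\mu$.

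Finally I would assemble the map $\bigoplus_{\lambda\in\Omega(\mu)}\iota_\lambda^\mu\colon\bigoplus_{\lambda\in\Omega(\mu)}V^{(2)}_\lambda\to V^{(3)}_\mu$: the image of $\iota_{(0,\mu_2)}^\mu$ is $\bC v_0^\mu$, and that of $\iota_{(\lambda_1,0)}^\mu$ is $\bC v_{\lambda_1}^\mu\oplus\bC v_{-\lambda_1}^\mu$ for $1\le\lambda_1\le\mu_1$, so these subspaces are independent and together span $\bigoplus_{q=-\mu_1}^{\mu_1}\bC v_q^\mu=V^{(3)}_\mu$; equivalently, $\dim_\bC\bigoplus_{\lambda\in\Omega(\mu)}V^{(2)}_\lambda=1+2\mu_1=\dim_\bC V^{(3)}_\mu$, and injectivity then forces the isomorphism (\ref{eqn:R32_K32_restriction}). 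The only step requiring genuine care is the sign and determinant-twist bookkeeping in the reflection computation of the second paragraph; the remaining verifications are routine.
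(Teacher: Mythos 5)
Your proof is correct and follows essentially the same route as the paper, which disposes of the lemma in a single sentence citing (\ref{eqn:R2_Kact}), (\ref{eqn:R3_M21act_basis}), and (\ref{eqn:R3_M111act_basis}); you have simply spelled out the intertwining check and the dimension count that the authors leave implicit. The sign bookkeeping in your second paragraph (reducing to $\varepsilon_2^{|q|}=\varepsilon_2^{q}$ via $|q|\equiv q\bmod 2$) is exactly the computation the paper has in mind.
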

\begin{proof}
The assertion follows immediately from (\ref{eqn:R2_Kact}), 
(\ref{eqn:R3_M21act_basis}) and 
(\ref{eqn:R3_M111act_basis}). 
\end{proof}

For later use, we specify the irreducible components of 
the tensor product of irreducible representations of $K_2$.

\begin{lem}
\label{lem:R32_tensor_K2}
Let $\alpha =(\alpha_1,\alpha_2), 
\beta =(\beta_1,\beta_2)\in \Lambda_2$. 
Then it holds that  
\begin{align}
\label{eqn:R32_tensor1}
V^{(2)}_{\alpha }\otimes_{\bC}V^{(2)}_{\beta }
\simeq \bigoplus_{\lambda \in \Omega (\alpha,\beta)}
V^{(2)}_{\lambda } 
\end{align}
as $K_2$-modules, 
where 
\begin{align*}
&\Omega (\alpha ,\beta )=\!
\left\{\!\begin{array}{ll}
\{(0,|\alpha_2-\beta_2|)\}
&\text{if}\ \alpha_1=\beta_1=0,\\
\{(\alpha_1+\beta_1,0),(0,0),(0,1)\}
&\text{if}\ \alpha_1=\beta_1>0,\\
\{(\alpha_1+\beta_1,0)\}
&\text{if}\ \alpha_1> \beta_1=0\ \text{or}\ \beta_1>\alpha_1=0,\\
\{(\alpha_1+\beta_1,0),(|\alpha_1-\beta_1|,0)\}
&\text{if}\ \alpha_1> \beta_1>0\ \text{or}\ \beta_1>\alpha_1>0.
\end{array}\right.
\end{align*}
For each $\lambda \in \Omega (\alpha ,\beta )$, 
there is a $K_2$-homomorphism 
$\mathrm{I}_{\lambda}^{\alpha ,\beta}\colon 
V^{(2)}_{\lambda }\to V^{(2)}_{\alpha }\otimes_{\bC}V^{(2)}_{\beta }$ 
which is characterized by 
the following equalities: 
\begin{itemize}
\item[(i)] The case $\lambda =(0,|\alpha_2-\beta_2|)$ 
$(\alpha_1=\beta_1=0)$: \quad 
\[
\mathrm{I}_{\lambda}^{\alpha ,\beta}(v_{\lambda ,0})
=v_{\alpha ,0}\otimes v_{\beta ,0}.
\]

\item[(ii)] The case $\lambda =(0,\lambda_2)$ $(\alpha_1=\beta_1>0)$:
\begin{align*}
&\mathrm{I}_{\lambda}^{\alpha ,\beta}(v_{\lambda ,0})
=v_{\alpha ,\alpha_1}\otimes v_{\beta ,-\alpha_1}
+(-1)^{\lambda_2}v_{\alpha ,-\alpha_1}\otimes v_{\beta ,\alpha_1}.
\end{align*}

\item[(iii)] The case $\lambda =(\alpha_1+\beta_1,0)$ $(\alpha_1+\beta_1>0)$:
\begin{align*}
&\mathrm{I}_{\lambda}^{\alpha ,\beta}(v_{\lambda ,\alpha_1+\beta_1})
=v_{\alpha ,\alpha_1}\otimes v_{\beta ,\beta_1},\\
&\mathrm{I}_{\lambda}^{\alpha ,\beta}(v_{\lambda ,-\alpha_1-\beta_1})
=(-1)^{\alpha_2+\beta_2}v_{\alpha ,-\alpha_1}\otimes v_{\beta ,-\beta_1}.
\end{align*}

\item[(iv)] The case $\lambda =(|\alpha_1-\beta_1|,0)$ 
$(\alpha_1> \beta_1>0\ \text{or}\ \beta_1>\alpha_1>0)$:
\begin{align*}
&\mathrm{I}_{\lambda}^{\alpha ,\beta}(v_{\lambda ,\alpha_1-\beta_1})
=v_{\alpha ,\alpha_1}\otimes v_{\beta ,-\beta_1},\\
&\mathrm{I}_{\lambda}^{\alpha ,\beta}(v_{\lambda ,-\alpha_1+\beta_1})
=v_{\alpha ,-\alpha_1}\otimes v_{\beta ,\beta_1}.
\end{align*}
\end{itemize}
\end{lem}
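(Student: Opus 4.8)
The plan is to establish \eqref{eqn:R32_tensor1} together with the explicit formulas for the intertwiners $\mathrm{I}_{\lambda}^{\alpha,\beta}$ by reducing everything to the branching law in Lemma \ref{lem:R32_K32_restriction} and the tensor product decomposition of $SO(2)$-representations. First I would recall that $K_2=O(2)$ acts on $V^{(2)}_{\alpha}$ through the basis $\{v_{\alpha,q}\mid q\in Q_\alpha\}$ via \eqref{eqn:R2_Kact}: the rotation $k_\theta^{(2)}$ acts by $e^{\sI q\theta}$ and the reflection $k_{(\varepsilon_1,\varepsilon_2)}^{(1,1)}$ sends $v_{\alpha,q}$ to $\varepsilon_1^{\alpha_2}\varepsilon_2^{\alpha_1+\alpha_2}v_{\alpha,\varepsilon_1\varepsilon_2 q}$. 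Restricting to $SO(2)$, the tensor product $V^{(2)}_{\alpha}\otimes_{\bC}V^{(2)}_{\beta}$ has the weight spaces spanned by $v_{\alpha,q}\otimes v_{\beta,q'}$ of weight $q+q'$; enumerating the weights $\{q+q'\mid q\in Q_\alpha,\ q'\in Q_\beta\}$ with multiplicity gives the candidate list $\Omega(\alpha,\beta)$ in each of the four cases (this is just a finite combinatorial bookkeeping of how many pairs sum to each integer, together with the extra subtlety that a zero weight space of dimension two in the $\alpha_1=\beta_1>0$ case must split into $V_{(0,0)}\oplus V_{(0,1)}$ because $O(2)$ distinguishes the trivial and determinant characters).

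Next I would pin down the intertwiners. For each $\lambda\in\Omega(\alpha,\beta)$ with $\lambda_1>0$, the highest weight line (weight $\lambda_1$) inside $V^{(2)}_{\alpha}\otimes_{\bC}V^{(2)}_{\beta}$ is one-dimensional, so there is a unique $SO(2)$-map sending $v_{\lambda,\lambda_1}$ to a chosen vector in that line; I would take $v_{\alpha,\alpha_1}\otimes v_{\beta,\beta_1}$ in case (iii) and $v_{\alpha,\alpha_1}\otimes v_{\beta,-\beta_1}$ in case (iv). The value on $v_{\lambda,-\lambda_1}$ is then forced by $O(2)$-equivariance: apply the reflection $k_{(-1,1)}^{(1,1)}$ (or $k_{(1,-1)}^{(1,1)}$) and read off the sign from \eqref{eqn:R2_Kact}, which produces the factors $(-1)^{\alpha_2+\beta_2}$ in (iii) and the plain swap in (iv). That $\mathrm{I}_{\lambda}^{\alpha,\beta}$ defined on the two extreme weight vectors extends to a genuine $K_2$-homomorphism on all of $V^{(2)}_{\lambda}$ follows because $V^{(2)}_{\lambda}$ is two-dimensional and irreducible, generated as an $O(2)$-module by either extreme vector, so the map is determined and well-defined; one checks compatibility with the rotation action trivially since both vectors have the correct weights $\pm\lambda_1$. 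For the one-dimensional cases (i) and (ii) ($\lambda_1=0$), I would verify directly that the asserted vector is fixed up to the sign $(\det k)^{\lambda_2}$: in case (i) this is immediate, and in case (ii) the combination $v_{\alpha,\alpha_1}\otimes v_{\beta,-\alpha_1}+(-1)^{\lambda_2}v_{\alpha,-\alpha_1}\otimes v_{\beta,\alpha_1}$ is an $SO(2)$-invariant whose transformation under $k_{(-1,1)}^{(1,1)}$ picks up exactly $(-1)^{\lambda_2}$, using $\alpha_2+\beta_2\equiv\lambda_2\pmod 2$ forced by the weight bookkeeping; here $\lambda_2\in\{0,1\}$ is the unique choice making $\Omega(\alpha,\beta)\subset\Lambda_2$ consistent.

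Finally I would assemble the direct sum statement: the images $\mathrm{I}_{\lambda}^{\alpha,\beta}(V^{(2)}_{\lambda})$ for distinct $\lambda\in\Omega(\alpha,\beta)$ are linearly independent subspaces (their extreme weight vectors are linearly independent monomials $v_{\alpha,q}\otimes v_{\beta,q'}$, or in the $\lambda_1=0$ cases are distinguished by the $\det$-character), and a dimension count $\sum_{\lambda\in\Omega(\alpha,\beta)}(\lambda_1-\lambda_2+1)$ — or rather $\dim_\bC V^{(2)}_\lambda$, which is $1$ or $2$ — matches $(\dim_\bC V^{(2)}_\alpha)(\dim_\bC V^{(2)}_\beta)$ in each of the four cases. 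This forces \eqref{eqn:R32_tensor1} and simultaneously shows each $\mathrm{I}_{\lambda}^{\alpha,\beta}$ is injective. The main obstacle I anticipate is not any single deep step but the careful case analysis in the $\alpha_1=\beta_1>0$ situation, where the two-dimensional zero weight space of the $SO(2)$-decomposition must be resolved into the correct pair $V_{(0,0)}\oplus V_{(0,1)}$ of $O(2)$-types and the parity $\lambda_2\equiv\alpha_2+\beta_2\pmod 2$ tracked consistently through the reflection action; getting the signs in (ii) and (iii) exactly right is where the computation demands attention.
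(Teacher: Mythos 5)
Your approach---checking directly via (\ref{eqn:R2_Kact}) that the displayed maps are $K_2$-equivariant, then concluding by the dimension count $\dim_\bC V^{(2)}_\alpha \cdot \dim_\bC V^{(2)}_\beta = \sum_{\lambda\in\Omega(\alpha,\beta)}\dim_\bC V^{(2)}_\lambda$---is exactly the paper's proof, fleshed out with $SO(2)$-weight bookkeeping.

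However, one supporting claim in your treatment of case (ii) is wrong. You assert that $\alpha_2+\beta_2\equiv\lambda_2\pmod 2$ is ``forced by the weight bookkeeping'' and that $\lambda_2$ is ``the unique choice.'' Neither holds: since $\alpha_1=\beta_1>0$, the definition of $\Lambda_2$ forces $\alpha_2=\beta_2=0$, so $\alpha_2+\beta_2=0$ always, yet $\Omega(\alpha,\beta)$ contains \emph{both} $(0,0)$ and $(0,1)$, so $\lambda_2$ ranges over $\{0,1\}$ (as your own enumeration earlier acknowledges). Thus the parity relation you invoke cannot be the source of the sign $(-1)^{\lambda_2}$. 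The correct argument is simpler: because $\alpha_2=\beta_2=0$, the reflection $k^{(1,1)}_{(-1,1)}$ sends $v_{\alpha,\alpha_1}\otimes v_{\beta,-\alpha_1}$ to $v_{\alpha,-\alpha_1}\otimes v_{\beta,\alpha_1}$ and vice versa with no extra sign; hence the symmetric combination (the $\lambda_2=0$ map) is fixed and the antisymmetric one ($\lambda_2=1$) changes sign, which is precisely the eigenvalue $(-1)^{\lambda_2}$ of $\tau_{(0,\lambda_2)}\bigl(k^{(1,1)}_{(-1,1)}\bigr)$. With this local repair the rest of your argument goes through and matches the paper.
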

\begin{proof}
Using (\ref{eqn:R2_Kact}), it is easy to show that 
the non-zero $\bC $-linear maps 
$\mathrm{I}_{\lambda}^{\alpha ,\beta}$ ($\lambda \in \Omega (\alpha ,\beta )$)
characterized by (i), (ii), (iii) and (iv) are $K_2$-homomorphisms. 
Since 
$\dim_\bC  (V^{(2)}_{\alpha }\otimes_{\bC}V^{(2)}_{\beta })=
\sum_{\lambda \in \Omega (\alpha,\beta)}
\dim_\bC  V^{(2)}_{\lambda }$, we obtain the assertion. 
\end{proof}

\section{Whittaker functions on $GL(2,\bR )$}
\label{subsec:R32_Wh_GL2}

Let $\varepsilon \in \{\pm 1\}$ and take $\Xi_{(\varepsilon )}$ 
as (\ref{eqn:Fn_psi_change}). 
Let $\Pi_\sigma $ be 
an irreducible generalized principal series representation of $G_2$. 
We construct a $K_2$-homomorphism $\varphi_\sigma^{[\varepsilon ]}$ 
from the minimal $K_2$-type of $\Pi_\sigma $ 
to $\mathrm{Wh}(\Pi_\sigma ,\psi_{\varepsilon })^{\mathrm{mg}}$ 
as follows: 
\begin{itemize}
\item The case $\sigma = \chi_{(\nu_1,\delta_1)}\boxtimes 
\chi_{(\nu_2,\delta_2)}$ ($\delta_1\geq \delta_2$): 
We define a $K_2$-homomorphism 
$\varphi_\sigma^{[\varepsilon ]} \colon 
V^{(2)}_{(\delta_1-\delta_2,\delta_2)}\to 
\mathrm{Wh}(\Pi_\sigma ,\psi_{\varepsilon })^{\mathrm{mg}}$ by 
\[
\varphi_{\sigma}^{[\varepsilon ]}
=\varepsilon^{\delta_2}(\sI )^{\delta_2-\delta_1}\, 
\Xi_{(\varepsilon )}\circ \Phi_{\sigma}^{\rm mg}\circ 
\hat{\eta}_{(\sigma ;(\delta_1-\delta_2,\delta_2))}, 
\]
where $\Phi_{\sigma}^{\rm mg}$ and 
$\hat{\eta}_{(\sigma ;(\delta_1-\delta_2,\delta_2))}$ are 
given in Theorem \ref{thm:R2_ps_Whittaker} and 
(\ref{eqn:R2_def_hateta}), respectively. 
Then, for $q\in Q_{(\delta_1-\delta_2,\delta_2)}$ and 
$y=\diag (y_1y_2,y_2)\in A_2$, we have 
\begin{align*}
\hspace{15mm}
\varphi_{\sigma}^{[\varepsilon ]}
(v_{(\delta_1-\delta_2,\delta_2),q})(y)
=&
\frac{y_1^{1/2}y_2^{\nu_1+\nu_2}}{4\pi \sI}
\int_t
\{\Gamma_\bR (t+\nu_1+\delta_1-\delta_2)
\Gamma_\bR (t+\nu_2)\\
&+\varepsilon q\Gamma_\bR (t+\nu_1)\Gamma_\bR (t+\nu_2+\delta_1-\delta_2)
\}y_1^{-t}dt.
\end{align*}

\item The case $\sigma = D_{(\nu ,\kappa )}$: 
We define a $K_2$-homomorphism $
\varphi_\sigma^{[\varepsilon ]} \colon V^{(2)}_{(\kappa ,0)}\to 
\mathrm{Wh}(\Pi_\sigma ,\psi_{\varepsilon })^{\mathrm{mg}}$ by
\[
\varphi_{\sigma}^{[\varepsilon ]}
=(\sI)^{-\kappa }\frac{\Gamma_{\bC}(\kappa )}{\Gamma_{\bR}(\kappa +\delta )}
\Xi_{(\varepsilon )}\circ \Phi_{\widehat{\sigma}}^{\rm mg}\circ 
\hat{\eta}_{(\widehat{\sigma} ;(\kappa ,0))}, 
\]
where $\widehat{\sigma}$, 
$\Phi_{\widehat{\sigma}}^{\rm mg}$ and 
$\hat{\eta}_{(\widehat{\sigma};(\kappa ,0))}$ are 
given in (\ref{eqn:R2_param_gps_eds}), 
Corollary \ref{cor:R2_ds_Whittaker} and (\ref{eqn:R2_def_hateta}), 
respectively. 
Then, for $y=\diag (y_1y_2,y_2)\in A_2$, we have 
\begin{align*}
\varphi_{\sigma}^{[\varepsilon ]}
(v_{(\kappa ,0),\varepsilon \kappa })(y)
\,&=
2y_1^{\nu  +\tfrac{\kappa }{2}}y_2^{2\nu  }
\exp (-2\pi y_1)\\
\,&=
\frac{y_1^{1/2}y_2^{2\nu  }}{2\pi \sI}
\int_t \Gamma_\bC \bigl(t+\nu  +\tfrac{\kappa  -1}{2}\bigr)y_1^{-t}dt,\\
\varphi_{\sigma}^{[\varepsilon ]}
(v_{(\kappa ,0),-\varepsilon \kappa })(y)
\,&=0.
\end{align*}
\end{itemize}
Here the path of integration $\int_{t}$ is the vertical line 
from $\mathrm{Re}(t)-\sI \infty$ to $\mathrm{Re}(t)+\sI \infty$ 
with the sufficiently large real part to keep the poles of the integrand 
on its left.

\section{Whittaker functions on $GL(3,\bR )$}
\label{subsec:R32_Wh_GL3}

Let $\varepsilon \in \{\pm 1\}$, 
and take $\Xi_{(\varepsilon ,\varepsilon )}$ as (\ref{eqn:Fn_psi_change}). 
Let $\Pi_\sigma $ be 
an irreducible generalized principal series representation of $G_3$. 
Let $\tau_\mu^{(3)}$ be the minimal $K_3$-type of $\Pi_\sigma$ with 
$\mu =(\mu_1,\mu_2)\in \Lambda_3$. 
We define 
$\varphi^{[\varepsilon ]}_\sigma \colon V_{\mu}^{(3)}\to 
{\mathrm{Wh}}(\Pi_{\sigma},\psi_{\varepsilon })^{\mathrm{mg}}$ by 
\[
\varphi^{[\varepsilon ]}_\sigma 
=(\sI)^{\mu_1}
\Xi_{(\varepsilon ,\varepsilon )}\circ \varphi^{\mathrm{mg}}_\sigma,
\]
where $\varphi^{\mathrm{mg}}_\sigma $ is 
the $K_3$-homomorphism in Theorems 
\ref{thm:R3_ps1_Whittaker}, 
\ref{thm:R3_ps31_Whittaker}, 
\ref{thm:R3_ps33_Whittaker} 
and \ref{thm:R3_gps1_Whittaker}. 
Then $\mu$ and 
the radial part of $\varphi^{[\varepsilon ]}_\sigma$ 
is given as follows. 
\begin{itemize}
\item The case $\sigma = \chi_{(\nu_1,\delta_1)}\boxtimes 
\chi_{(\nu_2,\delta_2)}\boxtimes \chi_{(\nu_3,\delta_3)}$ 
($\delta_1\geq \delta_2\geq \delta_3$): We have 
$\mu =(\delta_1-\delta_3,\delta_2)$ and 
\begin{align*}
\hspace{15mm}
&\varphi^{[\varepsilon ]}_\sigma (u_{l})(y) =
\varepsilon^{\delta_2}
(-1)^{l_1}(\varepsilon \sI )^{l_2}y_1y_2(y_2y_3)^{\nu_1+\nu_2+\nu_3}\\
&\phantom{.}\times 
\frac{1}{(4\pi \sqrt{-1})^2} \int_{t_2}\int_{t_1}
\frac{\Gamma_{\bR}(t_1+\nu_2+l_1)\Gamma_{\bR}(t_2-\nu_2+l_3)}
{\Gamma_{\bR}(t_1+t_2+l_1+l_3)}\\
&\phantom{.}\times 
\Gamma_{\bR}(t_1+\nu_1+|\delta_1-\delta_2-l_1|)
\Gamma_{\bR}(t_1+\nu_3+|\delta_2-\delta_3-l_1|)\\
&\phantom{.}\times 
\Gamma_{\bR}(t_2-\nu_1+|\delta_1-\delta_2-l_3|)
\Gamma_{\bR}(t_2-\nu_3+|\delta_2-\delta_3-l_3|)y_1^{-t_1} y_2^{-t_2} 
dt_1dt_2.
\end{align*}

\item The case 
$\sigma = D_{(\nu_1,\kappa_1)}\boxtimes \chi_{(\nu_2,\delta_2)}$: 
We have $\mu =(\kappa_1,\delta_2)$ and 
\begin{align*}
\hspace{15mm}
\varphi^{[\varepsilon ]}_\sigma (u_{l})(y) =\,&
\varepsilon^{\delta_2}
(-1)^{l_1}(\varepsilon \sI )^{l_2}y_1y_2
(y_2y_3)^{2\nu_1+\nu_2}\\
&\times \frac{1}{(4\pi \sqrt{-1})^2} \int_{t_2}\int_{t_1}
\frac{\Gamma_{\bC}\bigl(t_1+\nu_1+\tfrac{\kappa_1-1}{2}\bigr)
\Gamma_{\bR}(t_1+\nu_2+l_1)}
{ \Gamma_{\bR}(t_1+t_2+l_1+l_3)}\\
&\times \Gamma_{\bC}\bigl(t_2-\nu_1+\tfrac{\kappa_1-1}{2}\bigr)
\Gamma_{\bR}(t_2-\nu_2+l_3)y_1^{-t_1} y_2^{-t_2} \,dt_1dt_2.
\end{align*}

\end{itemize}
Here $l=(l_1,l_2,l_3)\in S_{\mu}$, 
$y=\diag (y_1y_2y_3,y_2y_3,y_3)\in A_3$ 
and the path of the integration $\int_{t_i}$ is the vertical line 
from $\mathrm{Re}(t_i)-\sI \infty$ to $\mathrm{Re}(t_i)+\sI \infty$ 
with sufficiently large real part to keep the poles of the integrand 
on its left.

We regard $K_2$ as a subgroup of $K_3$ via the embedding 
(\ref{eqn:R32_K2_to_K3}), and regard $U(\g_{3\bC})$ as a $K_2$-module 
via the adjoint action $\Ad$. 
For $\alpha =(\alpha_1,0)\in \Lambda_2$, 
we define a $\bC$-linear map 
$\mathrm{I}^{\gn_3}_{\alpha}\colon V_\alpha^{(2)}\to U(\g_{3\bC})$ by 
\begin{align*}
&\mathrm{I}^{\gn_3}_{\alpha}(v_{\alpha,q})=
(\sgn (q) E^{{\g_3}}_{1,3}+\sI E^{{\g_3}}_{2,3})^{|q|}&
&(\,q \in Q_\alpha =\{\pm \alpha_1\}\,). 
\end{align*}
We define a $\bC$-linear map 
$\mathrm{I}^{\gp_3}_{(2,0)}\colon V_{(2,0)}^{(2)}\to U(\g_{3\bC})$ by 
\begin{align*}
&\mathrm{I}^{\gp_3}_{(2,0)}(v_{(2,0),q})=
E_{1,1}^{\gp_3}-E_{2,2}^{\gp_3}+q\sI E_{1,2}^{\gp_3}&
&(\,q \in Q_{(2,0)}=\{\pm 2\}\,). 
\end{align*}
Then it is easy to see that the $\bC$-linear maps 
$\mathrm{I}^{\gn_3}_{\alpha}$ 
($\alpha =(\alpha_1,0)\in \Lambda_2$) and $\mathrm{I}^{\gp_3}_{(2,0)}$ 
are $K_2$-homomorphisms. 
Moreover, we have 
\begin{align*}
&\bigl(R\bigl(\mathrm{I}^{\gn_3}_{\alpha}(v_{\alpha,q})\bigr)f\bigr)(y)
=(-2\pi \varepsilon y_2)^{\alpha_1}f(y)
\end{align*}
for $q\in Q_\alpha$, $f\in C^\infty (N_3\backslash G_3;\psi_\varepsilon )$ 
and $y=\diag (y_1y_2y_3,y_2y_3,y_3)\in A_3$.

We define a $K_2$-homomorphism 
$\mathrm{I}^{\g_3}_{(0,1)}\colon V_{(0,1)}^{(2)}\to U(\g_{3\bC})$ by 
\begin{align*}
\mathrm{I}^{\g_3}_{(0,1)}=\mathrm{P}_{\g_3}
\circ (\mathrm{I}^{\gn_3}_{(2,0)}\otimes \mathrm{I}^{\gp_3}_{(2,0)})
\circ \mathrm{I}_{(0,1)}^{(2,0) ,(2,0)},
\end{align*}
where $\mathrm{I}_{(0,1)}^{(2,0) ,(2,0)}\colon V_{(0,1)}^{(2)}\to 
V_{(2,0)}^{(2)}\otimes_\bC V_{(2,0)}^{(2)}$ is the $K_2$-homomorphism in 
Lemma \ref{lem:R32_tensor_K2}, and 
$\mathrm{P}_{\g_3}\colon U(\g_{3\bC})\otimes_\bC U(\g_{3\bC})\to U(\g_{3\bC})$ 
is a natural $K_2$-homomorphism defined by $X_1\otimes X_2\mapsto X_1X_2$. 
Then we have 
\begin{align*}
\mathrm{I}_{(0,1)}^{\g_3}(v_{(0,1),0})
=\,&(E^{{\g_3}}_{1,3}+\sI E^{{\g_3}}_{2,3})^{2}
(E_{1,1}^{\gp_3}-E_{2,2}^{\gp_3}-2\sI E_{1,2}^{\gp_3})\\
&-
(-E^{{\g_3}}_{1,3}+\sI E^{{\g_3}}_{2,3})^{2}
(E_{1,1}^{\gp_3}-E_{2,2}^{\gp_3}+2\sI E_{1,2}^{\gp_3})\\
\equiv \,&8\sI (E^{{\g_3}}_{2,3})^{2}E_{1,2}^{\g_3}\quad 
\mod \ E^{{\g_3}}_{1,3}U(\g_{3\bC})+U(\g_{3\bC})E_{1,2}^{\gk_3}.
\end{align*}
Hence, 
for $y=\diag (y_1y_2y_3,y_2y_3,y_3)\in A_3$ and 
$f\in C^\infty (N_3\backslash G_3;\psi_\varepsilon )$ 
such that $R(E_{1,2}^{\gk_3})f=0$, we have 
\begin{align*}
&\bigl(R\bigl(\mathrm{I}_{(0,1)}^{\g_3}(v_{(0,1),0})\bigr)f\bigr)(y)
=\varepsilon (4\pi)^3 y_1y_2^2f(y).
\end{align*}

We define a $K_2$-homomorphism 
$\mathrm{P}_{G_3}\colon U(\g_{3\bC})\otimes_\bC C^\infty (G_3)$ 
by $X\otimes f\mapsto R(X)f$. 
Using the $K_2$-homomorphisms in Lemmas \ref{lem:R32_K32_restriction}, 
\ref{lem:R32_tensor_K2} and the above, 
we define a $K_2$-homomorphism 
\[
\varphi^{[\varepsilon ]}_{\sigma ,\lambda}\colon  V_\lambda^{(2)}\to 
{\mathrm{Wh}}(\Pi_{\sigma},\psi_{\varepsilon })^{\mathrm{mg}}
\]
for each $\lambda =(\lambda_1,\lambda_2)\in \Lambda_2$, as follows: 
\begin{itemize}
\item 
The case $\lambda \in \Omega (\mu )$: 
We define $\varphi^{[\varepsilon ]}_{\sigma ,\lambda}$ by 
$\varphi^{[\varepsilon ]}_{\sigma ,\lambda}
=
\varepsilon^{\lambda_1+\lambda_2}(-1)^{\lambda_1}
\varphi^{[\varepsilon ]}_{\sigma }\circ \iota_{\lambda}^{\mu}$. 
Then, for $q\in Q_{\lambda}$, we have 
\begin{align*}
\hspace{15mm}
&\varphi^{[\varepsilon ]}_{\sigma ,\lambda}(v_{\lambda ,q})
=\varepsilon^{\lambda_1+\lambda_2}(-1)^{\lambda_1}
\sgn (q)^{\mu_2}\varphi^{[\varepsilon ]}_{\sigma }(v_{q}^{\mu})\\
&=\varepsilon^{\lambda_1+\lambda_2}(-1)^{\lambda_1}
\sum_{j=0}^{|q|}\binom{|q|}{j}\sgn (q)^{\mu_2+|q|-j}(\sqrt{-1})^j
\varphi^{[\varepsilon ]}_\sigma (u_{(|q|-j,j,\mu_1-|q|)}).
\end{align*}

\item 
The case $\lambda =(\lambda_1,0)$ with $\lambda_1>\mu_1$: 
We define $\varphi^{[\varepsilon ]}_{\sigma ,\lambda}$ by 
\begin{align*}
\hspace{15mm}
&\varphi^{[\varepsilon ]}_{\sigma ,\lambda}
=(-\varepsilon )^{\lambda_1}(2\pi )^{-\lambda_1+\mu_1}\\
&\times 
\left\{\begin{array}{ll}
\mathrm{P}_{G_3}\circ 
\bigl(\mathrm{I}_{(\lambda_1-\mu_1,0)}^{\gn_3}
\otimes 
\bigl(\varphi^{[\varepsilon ]}_{\sigma }\circ \iota_{(\mu_1,0)}^{\mu}\bigr)
\bigr)
\circ \mathrm{I}_{\lambda }^{(\lambda_1-\mu_1,0) ,(\mu_1,0)}&
\text{if }\ \mu_1>0,\\[1mm]
\mathrm{P}_{G_3}\circ \bigl(\mathrm{I}_{(\lambda_1,0)}^{\gn_3}
\otimes \bigl(\varphi^{[\varepsilon ]}_{\sigma }\circ \iota_{(0,\mu_2)}^{\mu}
\bigr)\bigr)
\circ \mathrm{I}_{\lambda }^{(\lambda_1,0) ,(0,\mu_2)}&\text{if }\ \mu_1=0.
\end{array}\right.
\end{align*}
Then, for $y=\diag (y_1y_2y_3,y_2y_3,y_3)\in A_3$ and 
$q\in Q_{\lambda}$, we have 
\begin{align*}
\hspace{15mm}
&\varphi^{[\varepsilon ]}_{\sigma ,\lambda}(v_{\lambda ,q})(y)
=(-\varepsilon )^{\mu_1}
\sgn (q)^{\mu_2}y_2^{\lambda_1-\mu_1}
\varphi^{[\varepsilon ]}_{\sigma }(v_{\sgn (q)\mu_1}^{\mu})(y)\\
&=(-\varepsilon )^{\mu_1}
\sum_{j=0}^{\mu_1}\binom{\mu_1}{j}\sgn (q)^{\mu_2+\mu_1-j}(\sqrt{-1})^j
y_2^{\lambda_1-\mu_1}
\varphi^{[\varepsilon ]}_\sigma (u_{(\mu_1-j,j,0)})(y). 
\end{align*}

\item 
The case $\lambda =(0,1-\mu_2)$ ($\mu_1>0$): 
We define $\varphi^{[\varepsilon ]}_{\sigma ,\lambda}$ by 
\begin{align*}
\hspace{15mm}
\varphi^{[\varepsilon ]}_{\sigma ,\lambda}
=(-\varepsilon )^{1-\mu_2}(4\pi )^{-1}
\mathrm{P}_{G_3}\circ (\mathrm{I}_{(1,0)}^{\gn_3}
\otimes (\varphi^{[\varepsilon ]}_{\sigma }\circ \iota_{(1,0)}^{\mu}))
\circ \mathrm{I}_{\lambda }^{(1,0) ,(1,0)}.
\end{align*}
Then, for $y=\diag (y_1y_2y_3,y_2y_3,y_3)\in A_3$, we have 
\begin{align*}
&\varphi^{[\varepsilon ]}_{\sigma ,\lambda}(v_{\lambda ,0})(y)
=-\varepsilon^{\mu_2}
y_2\varphi^{[\varepsilon ]}_{\sigma }(u_{(1,0,\mu_1-1)})(y).
\end{align*}

\item 
The case $\lambda =(0,1-\mu_2)$ ($\mu_1=0$): 
We define $\varphi^{[\varepsilon ]}_{\sigma ,\lambda}$ by 
\begin{align*}
\hspace{15mm}
\varphi^{[\varepsilon ]}_{\sigma ,\lambda}=
\varepsilon^{1-\mu_2} (4\pi )^{-3}
\mathrm{P}_{G_3}\circ (\mathrm{I}_{(0,1)}^{\g_3}
\otimes (\varphi^{[\varepsilon ]}_{\sigma }\circ \iota_{(0,\mu_2)}^{\mu}))
\circ \mathrm{I}_{\lambda }^{(0,1) ,(0,\mu_2)}.
\end{align*}
Then, for $y=\diag (y_1y_2y_3,y_2y_3,y_3)\in A_3$, we have 
\begin{align*}
&\varphi^{[\varepsilon ]}_{\sigma ,\lambda}(v_{\lambda ,0})(y)
=\varepsilon^{\mu_2}y_1y_2^2\varphi^{[\varepsilon ]}_{\sigma }(u_{(0,0,0)})(y).
\end{align*}

\end{itemize}

\section{The local zeta integrals for $GL(3,\bR)\times GL(2,\bR)$}

Let $\Pi_\sigma$ and $\Pi_{\sigma'}$ be irreducible generalized principal 
series representations of $G_3$ and $G_2$, respectively. 
Let $\varepsilon \in \{\pm 1\}$. 
We consider 
the local zeta integral 
\begin{align*}
Z(s,W,W')=&\int_{N_{2}\backslash G_{2}}
W\!\left(
\begin{array}{cc}
g& \\
 &1
\end{array}
\right) 
W'(g)|\det g|^{s-\frac{1}{2}}
d\dot{g}
\end{align*}
defined for $W\in \mathrm{Wh}(\Pi_\sigma,\psi_{\varepsilon })^{\mathrm{mg}}$ 
and $W'\in \mathrm{Wh}(\Pi_{\sigma'},\psi_{-\varepsilon })^{\mathrm{mg}}$. 
Here the right $G_2$-invariant measure 
$d\dot{g}$ on $N_2\backslash G_2$ is normalized so that, 
for any compactly supported continuous function $f$ on $N_2\backslash G_2$, 
\begin{align}
\label{eqn:R32_normalize_N2_G2}
\int_{N_2\backslash G_2}f(g)\,d\dot{g}
=\int_{0}^\infty \int_{0}^\infty 
\left(\int_{K_2}f(yk)\,dk\right)
y_1^{-1}
\frac{2dy_1}{y_1}
\frac{2dy_2}{y_2}
\end{align} 
with $y=\diag (y_1y_2,y_2)\in A_2$ and 
the normalized Haar measure $dk$ on $K_2$ such that 
$\int_{K_2}dk=1$.

We regard $K_2$ as a subgroup of $K_3$ via the embedding 
(\ref{eqn:R32_K2_to_K3}). Let $\varphi \colon V_{\lambda}^{(2)}\to 
\mathrm{Wh}(\Pi_{\sigma},\psi_{\varepsilon })^{\mathrm{mg}}$ and 
$\varphi' \colon V_{\lambda'}^{(2)}\to 
\mathrm{Wh}(\Pi_{\sigma'},\psi_{-\varepsilon })^{\mathrm{mg}}$ 
be $K_2$-homomorphisms with $\lambda ,\lambda'\in \Lambda_2$. 
Then the following lemma holds.

\begin{lem}
\label{lem:R32_zeta32_schur}
Retain the notation. For $v\in V_{\lambda}^{(2)}$ and 
$v'\in V_{\lambda'}^{(2)}$, it holds that  
\begin{align}
\label{eqn:R32_zeta_schur}
\begin{split}
&Z(s,\varphi (v),\varphi'(v'))\\
&=\frac{\langle v,v'\rangle}{\dim_\bC  V_\lambda^{(2)}}
\sum_{q\in Q_\lambda }
\int_{0}^\infty \int_{0}^\infty   
\varphi (v_{\lambda,q})(\hat{y})
\varphi'(v_{\lambda,-q})(y)
y_1^{s-\frac{3}{2}}y_2^{2s-1}
\frac{2dy_1}{y_1}\frac{2dy_2}{y_2}
\end{split}
\end{align}
if $\lambda '=\lambda$, 
and $Z(s,\varphi (v),\varphi'(v'))=0$ otherwise. Here 
$y=\diag (y_1y_2,y_2)\in A_2$ and 
$\hat{y}=\diag (y_1y_2,y_2,1)\in A_3$. 
\end{lem}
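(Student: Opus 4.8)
The plan is to unwind the local zeta integral using the Iwasawa decomposition and the normalization of $d\dot g$ given in (\ref{eqn:R32_normalize_N2_G2}), and then to exploit the $K_2$-equivariance of $\varphi$ and $\varphi'$ together with Schur orthogonality on $K_2$. First I would substitute $g = yk$ with $y = \diag(y_1y_2, y_2) \in A_2$ and $k \in K_2$ into the definition of $Z(s,W,W')$. Writing $W = \varphi(v)$ and $W' = \varphi'(v')$, the quasi-character $\rho$-twist and the factor $|\det g|^{s-1/2}$ become explicit powers of $y_1, y_2$ on the torus (note $|\det y| = y_1 y_2^2$, and the diagonal matrix $\diag(y_1y_2,y_2,1) = \hat y$ is exactly the element of $A_3$ needed to evaluate the $GL(3)$-Whittaker function on the image of $\diag(y_1y_2,y_2) \in A_2$ under the embedding (\ref{eqn:R32_K2_to_K3})). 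Using (\ref{eqn:Fn_whitt_ngk}), the integrand at $yk$ equals $\varphi(\tau^{(2)}_\lambda(k)v)(\hat y)\,\varphi'(\tau^{(2)}_{\lambda'}(k)v')(y)$ times the torus factors, so the $K_2$-integral is $\int_{K_2} \varphi(\tau_\lambda(k)v)(\hat y)\,\varphi'(\tau_{\lambda'}(k)v')(y)\,dk$.

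Next I would analyze that $K_2$-integral by fixing $\hat y$ and $y$ and regarding $v \mapsto \varphi(v)(\hat y)$ and $v' \mapsto \varphi'(v')(y)$ as elements of the dual spaces $(V_\lambda^{(2)})^*$ and $(V_{\lambda'}^{(2)})^*$ respectively. The map $(v \otimes v') \mapsto \int_{K_2} \varphi(\tau_\lambda(k)v)(\hat y)\,\varphi'(\tau_{\lambda'}(k)v')(y)\,dk$ is then a $K_2$-invariant bilinear form on $V_\lambda^{(2)} \otimes_\bC V_{\lambda'}^{(2)}$, obtained by averaging. By Schur's lemma / orthogonality of matrix coefficients on the compact group $K_2$, this form vanishes unless $V_\lambda^{(2)} \cong (V_{\lambda'}^{(2)})^*$, which for $O(2)$-representations means $\lambda' = \lambda$ (the representations $V_\lambda^{(2)}$ are self-dual), and when $\lambda' = \lambda$ it is proportional to the invariant pairing $\langle \cdot , \cdot\rangle$ of (\ref{eqn:R32_def_K2_inv_pair}) with proportionality constant $(\dim_\bC V_\lambda^{(2)})^{-1} \sum_{q \in Q_\lambda} \varphi(v_{\lambda,q})(\hat y)\,\varphi'(v_{\lambda,-q})(y)$; this last identity is just evaluating the averaged form against the pair of dual bases $\{v_{\lambda,q}\}$, $\{v_{\lambda,-q}\}$ and using that a $K_2$-invariant form on an irreducible times its dual is a multiple of the canonical pairing.

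Putting this together with the torus factors from the Iwasawa step yields precisely (\ref{eqn:R32_zeta_schur}): the $2dy_1/y_1 \cdot 2dy_2/y_2$ measure comes straight from (\ref{eqn:R32_normalize_N2_G2}), the exponent $y_1^{s-3/2}$ collects the $\rho$-factor $y_1^{-1}$ from the invariant measure together with the $|\det|^{s-1/2}$ contribution in the $y_1$-direction, and $y_2^{2s-1}$ is the corresponding $y_2$-contribution. The main obstacle is bookkeeping rather than conceptual: one must track carefully the power of $y_1$ hidden in the invariant-measure normalization (\ref{eqn:R32_normalize_N2_G2}) versus the power coming from $|\det g|^{s-1/2}$, and make sure the $\rho_{\mn}$-type factors built into the definitions of $W$ and $W'$ are consistent with the chosen coordinates, so that the final exponents are exactly $s - 3/2$ and $2s-1$. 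A secondary point requiring care is the interchange of the $K_2$-integration with the $A_2$-integration, which is justified once $\mathrm{Re}(s)$ is large enough that the zeta integral converges absolutely (the moderate-growth hypothesis on $W, W'$ guarantees this, as recalled before Theorem in \S\ref{subsec:Fmn_main_result}).
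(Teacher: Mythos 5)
Your proposal is correct and takes essentially the same route as the paper: Iwasawa decomposition to rewrite $Z$ as an iterated integral over $A_2 \times K_2$ using the normalization (\ref{eqn:R32_normalize_N2_G2}), then Schur orthogonality on $K_2$ to collapse the $K_2$-integral. The paper implements the Schur step concretely by expanding $\tau_\lambda(k)v$ and $\tau_{\lambda'}(k)v'$ in the $v_{\lambda,q}$-basis via the pairing and then invoking the matrix-coefficient form (\ref{eqn:R32_schur}) directly; you instead phrase it abstractly as a $K_2$-invariant bilinear form being a multiple of the canonical pairing, which is equivalent. One small step you leave implicit is why $\sum_q B(v_{\lambda,q},v_{\lambda,-q})$ equals $\sum_q \varphi(v_{\lambda,q})(\hat y)\varphi'(v_{\lambda,-q})(y)$ with no residual $k$-integral: this uses that $\sum_q v_{\lambda,q}\otimes v_{\lambda,-q}$ is a $K_2$-fixed tensor in $V_\lambda^{(2)}\otimes V_\lambda^{(2)}$ (it corresponds to $\mathrm{id}_{V_\lambda}$ under the pairing), so the integrand is already $k$-independent; the paper's basis-expansion formulation makes this automatic.
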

\begin{proof}
Let $v\in V_{\lambda}^{(2)}$ and $v'\in V_{\lambda'}^{(2)}$. 
By definition, we have 
\begin{align*}
&Z(s,\varphi (v),\varphi'(v'))
=\int_{N_2\backslash G_2}
\varphi (v)\!\left(
\begin{array}{cc}
g& \\
 &1
\end{array}
\right) 
\varphi'(v')(g)|\det g|^{s-\frac{1}{2}}
d\dot{g}\\
&=\int_{0}^\infty \int_{0}^\infty   
\left(\int_{K_2}\varphi (\tau_{\lambda}(k)v)(\hat{y})
\varphi'(\tau_{\lambda'}(k)v')(y)\,dk\right)
y_1^{s-\frac{3}{2}}y_2^{2s-1}
\frac{2dy_1}{y_1}\frac{2dy_2}{y_2}.
\end{align*}
By the decompositions 
\begin{align*}
&\tau_{\lambda}(k)v
=\sum_{q\in Q_\lambda }
\langle \tau_{\lambda}(k)v,v_{\lambda,-q}\rangle v_{\lambda,q},&
&\tau_{\lambda'}(k)v'
=\sum_{q'\in Q_{\lambda'} }
\langle \tau_{\lambda'}(k)v',v_{\lambda',-q'}\rangle v_{\lambda',q'},&
\end{align*}
we have 
\begin{align*}
Z(s,\varphi (v),\varphi'(v'))=
&\sum_{q\in Q_\lambda }\sum_{q'\in Q_{\lambda'} }
\left(\int_{K_2}\langle \tau_{\lambda}(k)v,v_{\lambda,-q}\rangle 
\langle \tau_{\lambda'}(k)v',v_{\lambda',-q'}\rangle \,dk\right)\\
&\times \int_{0}^\infty \int_{0}^\infty   
\varphi (v_{\lambda,q})(\hat{y})
\varphi'(v_{\lambda',q'})(y)
y_1^{s-\frac{3}{2}}y_2^{2s-1}
\frac{2dy_1}{y_1}\frac{2dy_2}{y_2}.
\end{align*}
Applying Schur's orthogonality relation 
(\cite[Proposition 4.4]{Brocker_001}) 
\begin{align}
\begin{split}
\label{eqn:R32_schur}
\int_{K_2}\langle \tau_{\lambda }(k)v,w\rangle 
\langle \tau_{\lambda'}(k)v',w' \rangle \,dk
=\left\{\begin{array}{ll}
\displaystyle 
\frac{\langle v,v'\rangle 
\langle w,w'\rangle }{\dim_\bC V_{\lambda}^{(2)}}
&\text{ if }\lambda'=\lambda ,\\[3mm]
0&\text{ otherwise}
\end{array}\right.&\\
(v,w\in V_{\lambda}^{(2)},\ 
v',w'\in V_{\lambda'}^{(2)})&
\end{split}
\end{align}
to the right hand side of this equality, 
we obtain the assertion. 
\end{proof}

\begin{thm}
\label{thm:R32_zeta_L_coincide}
Let $\Pi_\sigma$ and $\Pi_{\sigma'}$ be irreducible generalized principal 
series representations of $G_3=GL(3,\bR)$ and $G_2=GL(2,\bR)$, respectively. 
Let $\varepsilon \in \{\pm 1\}$. 
Let $\lambda\in \Lambda_2$ such that 
$\tau_{\lambda}^{(2)}$ is the minimal $K_2$-type of $\Pi_{\sigma'}$. 
Then, for $v,v'\in V_{\lambda}^{(2)}$ and 
$s\in \bC$ with sufficiently large real part, it holds that 
\begin{align}
\label{eqn:R32_zeta_L_coincide}
Z\bigl(s,\varphi_{\sigma ,\lambda}^{[\varepsilon ]}(v),
\varphi_{\sigma'}^{[-\varepsilon ]}(v')\bigr)
=\langle v,v'\rangle L(s,\Pi_\sigma \times \Pi_{\sigma'}),
\end{align}
where $\varphi_{\sigma ,\lambda}^{[\varepsilon ]}$ and 
$\varphi_{\sigma'}^{[-\varepsilon ]}$ are 
the $K_2$-homomorphisms in \S \ref{subsec:R32_Wh_GL3}
and \S \ref{subsec:R32_Wh_GL2}, respectively. 
\end{thm}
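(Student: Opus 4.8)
The plan is to reduce the theorem, via Lemma \ref{lem:R32_zeta32_schur}, to an explicit computation of a double Mellin--Barnes integral over $(\bR_+)^2$, and then match it against the explicit $L$-factors listed at the end of \S \ref{subsec:Rmn_langlands}. First I would invoke Lemma \ref{lem:R32_zeta32_schur}: since $\tau_\lambda^{(2)}$ is the minimal $K_2$-type of $\Pi_{\sigma'}$, the homomorphism $\varphi_{\sigma'}^{[-\varepsilon]}$ has source $V_\lambda^{(2)}$, so the zeta integral vanishes unless the source of $\varphi_{\sigma,\lambda}^{[\varepsilon]}$ is also $V_\lambda^{(2)}$ (which it is, by construction of $\varphi_{\sigma,\lambda}^{[\varepsilon]}$), and in that case equals
\[
\frac{\langle v,v'\rangle}{\dim_\bC V_\lambda^{(2)}}
\sum_{q\in Q_\lambda}\int_0^\infty\!\!\int_0^\infty
\varphi_{\sigma,\lambda}^{[\varepsilon]}(v_{\lambda,q})(\hat y)\,
\varphi_{\sigma'}^{[-\varepsilon]}(v_{\lambda,-q})(y)\,
y_1^{s-3/2}y_2^{2s-1}\,\frac{2dy_1}{y_1}\frac{2dy_2}{y_2}.
\]
So it suffices to show the sum of integrals equals $\dim_\bC V_\lambda^{(2)}\cdot L(s,\Pi_\sigma\times\Pi_{\sigma'})$; the factor $\langle v,v'\rangle$ then appears on the nose.

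The computation splits into the cases for the pair $(\sigma,\sigma')$ matching the four bullet lists of $L$-factors. In each case the radial part of $\varphi_{\sigma,\lambda}^{[\varepsilon]}(v_{\lambda,q})$ on $A_3$ (restricted to $\hat y=\diag(y_1y_2,y_2,1)$, i.e.\ with the third diagonal block equal to $1$, meaning we set the $GL(3)$ variable $y_3=1$) is a double Mellin--Barnes integral taken from \S \ref{subsec:R32_Wh_GL3}, and that of $\varphi_{\sigma'}^{[-\varepsilon]}(v_{\lambda,-q})$ is a single Mellin--Barnes integral from \S \ref{subsec:R32_Wh_GL2}. I would substitute these, perform the $y_2$-integration first using Lemma \ref{lem:F32_Mellin} (Mellin inversion) to collapse one contour integral against the $GL(2)$-side Gamma factors, then perform the $y_1$-integration by Lemma \ref{lem:F32_Mellin} again to collapse another; this leaves a single contour integral of a ratio of products of $\Gamma_\bR$'s (or $\Gamma_\bC$'s). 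The remaining contour integral, once summed over $q\in Q_\lambda$, should be evaluated using Barnes' first lemma (Lemma \ref{lem:F32_Barnes_1st}), Barnes' second lemma (Lemma \ref{lem:F32_Barnes2nd}), the variant Lemma \ref{lem:R32_Barnes2nd_sum} (which is tailor-made to handle the two-term sum over $q=\pm\lambda_1$ arising from the two basis vectors when $\dim_\bC V_\lambda^{(2)}=2$), and in the complex-parameter bullets the Gauss-sum identity Lemma \ref{lem:F32_gauss_sum}. The normalizing constants built into $\varphi_\sigma^{[\varepsilon]}$, $\varphi_{\sigma'}^{[\varepsilon]}$ and $\varphi_{\sigma,\lambda}^{[\varepsilon]}$ (the powers of $2$, $\pi$, $\sI$, the signs $\varepsilon^{\bullet}$, and the ratios of Gamma factors such as $\Gamma_\bC(\kappa)/\Gamma_\bR(\kappa+\delta)$) are precisely what is needed to cancel the $\dim_\bC V_\lambda^{(2)}$, the $(4\pi)$ and $(2\pi)$ factors from the measure normalization \eqref{eqn:R32_normalize_N2_G2}, and the $2^{1-2s}$-type factors produced by Barnes' lemmas, so that the final answer is exactly $L(s,\Pi_\sigma\times\Pi_{\sigma'})$ with no stray constant.

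One technical point requiring care throughout: for $\lambda=(\lambda_1,\lambda_2)$ with $\lambda_1>\mu_1$ (where $\tau_\mu^{(3)}$ is the minimal $K_3$-type of $\Pi_\sigma$), or $\lambda=(0,1-\mu_2)$, the homomorphism $\varphi_{\sigma,\lambda}^{[\varepsilon]}$ is not simply a restriction but is built by applying raising operators $\mathrm{I}^{\gn_3}_\bullet$, $\mathrm{I}^{\gp_3}_{(2,0)}$, $\mathrm{I}^{\g_3}_{(0,1)}$; however, \S \ref{subsec:R32_Wh_GL3} already records the explicit effect of these operators on the radial parts (e.g.\ $R(\mathrm{I}^{\gn_3}_\alpha(v_{\alpha,q}))f=(-2\pi\varepsilon y_2)^{\alpha_1}f$ and $R(\mathrm{I}^{\g_3}_{(0,1)}(v_{(0,1),0}))f=\varepsilon(4\pi)^3y_1y_2^2 f$), so in practice one substitutes these closed forms and the integral has the same shape as the generic case, only with shifted Gamma-factor arguments and absolute values $|\delta_i-\delta_j'-l_1|$ etc. The main obstacle I anticipate is bookkeeping rather than conceptual: organizing the case analysis so that the various $|\delta_i-\delta_j\mp l|$ absorb correctly into the $|\delta_i-\delta_j'|$ appearing in the $GL(3)\times GL(2)$ $L$-factor, and verifying that every normalization constant cancels exactly in each of the (roughly eight, counting sub-cases of $\lambda$) configurations. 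I would handle this by first doing the cleanest case — $\sigma$ and $\sigma'$ both fully induced from characters with $\lambda=(0,\delta_2)$ one-dimensional — to fix the pattern and the constant, then treating the discrete-series cases $\sigma=D_{(\nu_1,\kappa_1)}\boxtimes\chi$, $\sigma'=D_{(\nu',\kappa')}$ where $\dim_\bC V_\lambda^{(2)}=\kappa_1+1$ or $\kappa'+1$ and the $q$-sum becomes a genuine Gauss-type sum handled by Lemma \ref{lem:F32_gauss_sum}, and finally the mixed and higher-$K$-type cases.
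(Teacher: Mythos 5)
Your overall plan --- reduce via Lemma \ref{lem:R32_zeta32_schur}, substitute the Mellin--Barnes radial formulas from \S\ref{subsec:R32_Wh_GL3} and \S\ref{subsec:R32_Wh_GL2}, collapse the $(y_1,y_2)$-integrals by Mellin inversion (Lemma \ref{lem:F32_Mellin}), and evaluate the surviving contour integral by Barnes' lemmas (Lemmas \ref{lem:F32_Barnes_1st}, \ref{lem:F32_Barnes2nd}, the two-term variant \ref{lem:R32_Barnes2nd_sum}, and the Gauss-sum identity \ref{lem:F32_gauss_sum}), treating each of the nine $(\sigma,\sigma')$-configurations separately --- is exactly the strategy carried out in the paper. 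However, two of your specifics are wrong and would derail an actual attempt.

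First, you claim $\dim_\bC V_\lambda^{(2)} = \kappa_1+1$ or $\kappa'+1$ in the discrete-series configurations; you appear to be importing the $U(2)$ picture. Here $K_2=O(2)$, so $V_\lambda^{(2)}$ is one- or two-dimensional: $\dim_\bC V^{(2)}_{(\kappa',0)}=2$ for every $\kappa'\geq 1$, and $Q_\lambda=\{\pm\lambda_1\}$ always has at most two elements. Consequently the $q$-sum coming from Lemma \ref{lem:R32_zeta32_schur} has only two terms, never $\kappa'+1$. The Gauss-sum lemma does enter Case 3-3 ($\sigma=D_{(\nu_1,\kappa_1)}\boxtimes\chi_{(\nu_2,\delta_2)}$, $\sigma'=D_{(\nu',\kappa')}$), but it handles the binomial sum $\sum_{j=0}^{\kappa}\binom{\kappa}{j}$ that arises inside the definition of $\varphi^{[\varepsilon]}_{\sigma,(\kappa',0)}(v_{(\kappa',0),\varepsilon\kappa'})$, coming from expanding $v^{\mu}_{\sgn(q)\mu_1}$ in the basis $\{u_l\}$ via $(\sgn(q)z_1+\sI z_2)^{|q|}$ --- it is not the $q$-sum of the Schur-orthogonality step. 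Second, your description of "collapsing the $GL(2)$-side Gamma factors via the $y_2$-integration" is inverted. The $GL(2)$ Whittaker function enters as a single contour in $t$ paired with $y_1^{-t}$, not $y_2$; only the $GL(3)$ Whittaker function carries $y_2^{-t_2}$. The correct order, and the one the paper uses in every case, is to substitute $t_1\to t_1-t$ so that $y_1^{-t-t_1}$ becomes $y_1^{-t_1}$, then apply Mellin inversion in $y_1$ to kill the $t_1$ contour and in $y_2$ to kill the $t_2$ contour, leaving a single $t$-contour in which both the $GL(2)$ Gammas and the residual $GL(3)$ Gammas sit, ready for Barnes. Neither misconception is conceptually fatal --- you would catch both when you actually set up the integrals --- but as stated the plan does not describe a computation that works.
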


\section{The calculation for $\sigma'=\chi_{(\nu_1',\delta_2')}\boxtimes \chi_{(\nu_2',\delta_2')}$}

In this section, we give a proof of 
Theorem \ref{thm:R32_zeta_L_coincide} for 
$\sigma'=\chi_{(\nu_1',\delta_2')}\boxtimes \chi_{(\nu_2',\delta_2')}$. 
By Lemma \ref{lem:R32_zeta32_schur} and 
\begin{align*}
\varphi_{\sigma'}^{[-\varepsilon ]}
(v_{(0,\delta_2'),0})(y)
=&\frac{y_1^{1/2}y_2^{\nu_1'+\nu_2'}}{4\pi \sI}
\int_t\Gamma_\bR (t+\nu_1')\Gamma_\bR (t+\nu_2')y_1^{-t}dt
\end{align*}
for $y=\diag (y_1y_2,y_2)\in A_2$, we have 
\begin{align*}
&Z\bigl(s,\varphi_{\sigma ,(0,\delta_2')}^{[\varepsilon ]}(v),
\varphi_{\sigma'}^{[-\varepsilon ]}(v')\bigr)\\
&=\langle v,v'\rangle 
\int_{0}^\infty \int_{0}^\infty   
\varphi_{\sigma ,(0,\delta_2')}^{[\varepsilon ]}(v_{(0,\delta_2'),0})(\hat{y})
\varphi_{\sigma'}^{[-\varepsilon ]}(v_{(0,\delta_2'),0})(y)
y_1^{s-\frac{3}{2}}y_2^{2s-1}
\frac{2dy_1}{y_1}\frac{2dy_2}{y_2}\\
&=\frac{\langle v,v'\rangle }{4\pi \sI}
\int_{0}^\infty \int_{0}^\infty 
\left(\int_{t}
\varphi_{\sigma ,(0,\delta_2')}^{[\varepsilon ]}(v_{(0,\delta_2'),0})
(\hat{y})
\Gamma_\bR (t+\nu_1')\Gamma_\bR (t+\nu_2')y_1^{-t}dt\right)\\
&\phantom{=.}\times 
y_1^{s-1}y_2^{2s+\nu_1'+\nu_2'-1}
\frac{2dy_1}{y_1}\frac{2dy_2}{y_2}
\end{align*}
with $\hat{y}=\diag (y_1y_2,y_2,1)\in A_3$. \vspace{2mm}

\underline{\bf Case 1-1:}  
$\sigma = \chi_{(\nu_1,\delta_1)}\boxtimes 
\chi_{(\nu_2,\delta_2)}\boxtimes \chi_{(\nu_3,\delta_3)}$ 
($\delta_1\geq \delta_2\geq \delta_3$) with $\delta_2'=\delta_2$. 
Since 
\begin{align*}
&\varphi^{[\varepsilon ]}_{\sigma ,(0,\delta_2)}(v_{(0,\delta_2),0})(\hat{y})
=\varepsilon^{\delta_2}
\varphi^{[\varepsilon ]}_\sigma (u_{(0,0,\delta_1-\delta_3)})(\hat{y})\\
&=\frac{y_1y_2^{\nu_1+\nu_2+\nu_3+1}}{(4\pi \sqrt{-1})^2} \int_{t_2}\int_{t_1}
\frac{\Gamma_{\bR}(t_1+\nu_2)\Gamma_{\bR}(t_2-\nu_2+\delta_1-\delta_3)}
{\Gamma_{\bR}(t_1+t_2+\delta_1-\delta_3)}\\
&\phantom{=.}\times 
\Gamma_{\bR}(t_1+\nu_1+\delta_1-\delta_2)
\Gamma_{\bR}(t_1+\nu_3+\delta_2-\delta_3)\\
&\phantom{=.}\times 
\Gamma_{\bR}(t_2-\nu_1+\delta_2-\delta_3)
\Gamma_{\bR}(t_2-\nu_3+\delta_1-\delta_2)y_1^{-t_1} y_2^{-t_2} 
dt_1dt_2
\end{align*}
for $\hat{y}=\diag (y_1y_2,y_2,1)\in A_3$, 
we have 
\begin{align*}
&Z\bigl(s,\varphi_{\sigma ,(0,\delta_2')}^{[\varepsilon ]}(v),
\varphi_{\sigma'}^{[-\varepsilon ]}(v')\bigr)\\
&=\frac{\langle v,v'\rangle }{2(2\pi \sI)^3}
\int_{0}^\infty \int_{0}^\infty 
\biggl(\int_{t}
\int_{t_2}\int_{t_1}
\frac{\Gamma_{\bR}(t_1+\nu_2)\Gamma_{\bR}(t_2-\nu_2+\delta_1-\delta_3)}
{\Gamma_{\bR}(t_1+t_2+\delta_1-\delta_3)}\\
&\phantom{=.}\times 
\Gamma_{\bR}(t_1+\nu_1+\delta_1-\delta_2)
\Gamma_{\bR}(t_1+\nu_3+\delta_2-\delta_3)\\
&\phantom{=.}\times 
\Gamma_{\bR}(t_2-\nu_1+\delta_2-\delta_3)
\Gamma_{\bR}(t_2-\nu_3+\delta_1-\delta_2)
\Gamma_\bR (t+\nu_1')\Gamma_\bR (t+\nu_2')\\
&\phantom{=.}\times 
y_1^{-t-t_1} y_2^{-t_2} 
dt_1dt_2dt\biggr)y_1^{s}y_2^{2s+\nu_1+\nu_2+\nu_3+\nu_1'+\nu_2'}
\frac{dy_1}{y_1}\frac{dy_2}{y_2}.
\end{align*}
Substituting $t_1\to t_1-t$ and applying Lemma \ref{lem:F32_Mellin} twice, 
we have 
\begin{align*}
&Z\bigl(s,\varphi_{\sigma ,(0,\delta_2')}^{[\varepsilon ]}(v),
\varphi_{\sigma'}^{[-\varepsilon ]}(v')\bigr)\\
&=\langle v,v'\rangle \Biggl\{\prod_{1\leq i<j\leq 3}
\Gamma_{\bR}(2s+\nu_i+\nu_j+\nu_1'+\nu_2'+\delta_i-\delta_j)\Biggr\}\\
&\phantom{=.}\times 
\frac{1}{4\pi \sI}
\int_{t}
\frac{\Gamma_\bR (t+\nu_1')\Gamma_\bR (t+\nu_2')
\Gamma_{\bR}(-t+s+\nu_1+\delta_1-\delta_2)}
{\Gamma_{\bR}(-t+3s+\nu_1+\nu_2+\nu_3+\nu_1'+\nu_2'+\delta_1-\delta_3)}\\
&\phantom{=.}\times \Gamma_{\bR}(-t+s+\nu_2)
\Gamma_{\bR}(-t+s+\nu_3+\delta_2-\delta_3)
dt.
\end{align*}
By Lemma \ref{lem:F32_Barnes2nd}, we have 
\begin{align*}
&Z\bigl(s,\varphi_{\sigma ,(0,\delta_2')}^{[\varepsilon ]}(v),
\varphi_{\sigma'}^{[-\varepsilon ]}(v')\bigr)\\
&=\langle v,v'\rangle 
\Gamma_{\bR}(s+\nu_1+\nu_1'+\delta_1-\delta_2)
\Gamma_{\bR}(s+\nu_2+\nu_1')
\Gamma_{\bR}(s+\nu_3+\nu_1'+\delta_2-\delta_3)\\
&\phantom{=}\times 
\Gamma_{\bR}(s+\nu_1+\nu_2'+\delta_1-\delta_2)
\Gamma_{\bR}(s+\nu_2+\nu_2')
\Gamma_{\bR}(s+\nu_3+\nu_2'+\delta_2-\delta_3).
\end{align*}
Hence we obtain the assertion in this case. \vspace{2mm}

\underline{\bf Case 1-2:} $\sigma = \chi_{(\nu_1,\delta_1)}\boxtimes 
\chi_{(\nu_2,\delta_2)}\boxtimes \chi_{(\nu_3,\delta_3)}$ 
($\delta_1\geq \delta_2\geq \delta_3$) with $\delta_2'=1-\delta_2$. 
Our proof in this case is similar to that in Case 1-1. 
Since 
\begin{align*}
&\varphi^{[\varepsilon ]}_{\sigma ,(0,1-\delta_2)}(v_{(0,1-\delta_2),0})
(\hat{y})
=\varepsilon^{\delta_2}(-1)^{\delta_1-\delta_3}
y_1^{1-\delta_1+\delta_3}y_2^{2-\delta_1+\delta_3}
\varphi^{[\varepsilon ]}_{\sigma }(u_{(\delta_1-\delta_3,0,0)})(\hat{y})\\
&=
\frac{y_1y_2^{\nu_1+\nu_2+\nu_3+1}}{(4\pi \sqrt{-1})^2} \int_{t_2}\int_{t_1}
\frac{\Gamma_{\bR}(t_1+\nu_2+1)\Gamma_{\bR}(t_2-\nu_2+2-\delta_1+\delta_3)}
{\Gamma_{\bR}(t_1+t_2+3-\delta_1+\delta_3)}\\
&\phantom{=.}\times 
\Gamma_{\bR}(t_1+\nu_1+1-\delta_1+\delta_2)
\Gamma_{\bR}(t_1+\nu_3+1-\delta_2+\delta_3)\\
&\phantom{=.}\times 
\Gamma_{\bR}(t_2-\nu_1+2-\delta_2+\delta_3)
\Gamma_{\bR}(t_2-\nu_3+2-\delta_1+\delta_2)y_1^{-t_1} y_2^{-t_2} 
dt_1dt_2
\end{align*}
for $\hat{y}=\diag (y_1y_2,y_2,1)\in A_3$, 
we have 
\begin{align*}
&Z\bigl(s,\varphi_{\sigma ,(0,\delta_2')}^{[\varepsilon ]}(v),
\varphi_{\sigma'}^{[-\varepsilon ]}(v')\bigr)\\
&=\langle v,v'\rangle 
\Gamma_{\bR}(s+\nu_1+\nu_1'+1-\delta_1+\delta_2)
\Gamma_{\bR}(s+\nu_2+\nu_1'+1)\\
&\phantom{=}\times 
\Gamma_{\bR}(s+\nu_3+\nu_1'+1-\delta_2+\delta_3)
\Gamma_{\bR}(s+\nu_1+\nu_2'+1-\delta_1+\delta_2)\\
&\phantom{=}\times 
\Gamma_{\bR}(s+\nu_2+\nu_2'+1)
\Gamma_{\bR}(s+\nu_3+\nu_2'+1-\delta_2+\delta_3)
\end{align*}
by Lemmas \ref{lem:F32_Mellin} and \ref{lem:F32_Barnes2nd}. 
Hence we obtain the assertion in this case. \vspace{2mm}

\underline{\bf Case 1-3:} $\sigma = D_{(\nu_1,\kappa_1)}\boxtimes 
\chi_{(\nu_2,\delta_2)}$. Let $\delta =|\delta_2-\delta_2'|$. 
Since  
\begin{align*}
&\varphi^{[\varepsilon ]}_{\sigma ,(0,\delta_2')}
(v_{(0,\delta_2'),0})(\hat{y})
=\varepsilon^{\delta_2}(-y_2)^\delta 
\varphi^{[\varepsilon ]}_\sigma (u_{(\delta ,0,\kappa_1-\delta )})(\hat{y})\\
&=\frac{y_1y_2^{2\nu_1+\nu_2+1}}{(4\pi \sqrt{-1})^2}
\int_{t_2}\int_{t_1}
\frac{\Gamma_{\bC}\bigl(t_1+\nu_1+\tfrac{\kappa_1-1}{2}\bigr)
\Gamma_{\bR}(t_1+\nu_2+\delta )}
{ \Gamma_{\bR}(t_1+t_2+\kappa_1+\delta )}\\
&\phantom{=}\times 
\Gamma_{\bC}\bigl(t_2-\nu_1+\tfrac{\kappa_1-1}{2}+\delta \bigr)
\Gamma_{\bR}(t_2-\nu_2+\kappa_1)y_1^{-t_1} y_2^{-t_2} \,dt_1dt_2
\end{align*}
for $\hat{y}=\diag (y_1y_2,y_2,1)\in A_3$, 
we have 
\begin{align*}
&Z\bigl(s,\varphi_{\sigma ,(0,\delta_2')}^{[\varepsilon ]}(v),
\varphi_{\sigma'}^{[-\varepsilon ]}(v')\bigr)\\
&=\frac{\langle v,v'\rangle }{2(2\pi \sI)^3}
\int_{0}^\infty \int_{0}^\infty 
\biggl(\int_{t}
\int_{t_2}\int_{t_1}
\frac{\Gamma_{\bC}\bigl(t_1+\nu_1+\tfrac{\kappa_1-1}{2}\bigr)
\Gamma_{\bR}(t_1+\nu_2+\delta )}
{\Gamma_{\bR}(t_1+t_2+\kappa_1+\delta )}\\
&\phantom{=.}\times 
\Gamma_{\bC}\bigl(t_2-\nu_1+\tfrac{\kappa_1-1}{2}+\delta \bigr)
\Gamma_{\bR}(t_2-\nu_2+\kappa_1)
\Gamma_\bR (t+\nu_1')\Gamma_\bR (t+\nu_2')\\
&\phantom{=.}\times 
y_1^{-t_1-t} y_2^{-t_2} \,dt_1dt_2dt\biggr)
y_1^{s}y_2^{2s+2\nu_1+\nu_2+\nu_1'+\nu_2'}
\frac{dy_1}{y_1}\frac{dy_2}{y_2}.
\end{align*}
Substituting $t_1\to t_1-t$ and applying Lemma \ref{lem:F32_Mellin} twice, 
we have 
\begin{align*}
&Z\bigl(s,\varphi_{\sigma ,(0,\delta_2')}^{[\varepsilon ]}(v),
\varphi_{\sigma'}^{[-\varepsilon ]}(v')\bigr)\\
&=\langle v,v'\rangle 
\Gamma_{\bC}\bigl(2s+\nu_1+\nu_2+\nu_1'+\nu_2'
+\tfrac{\kappa_1-1}{2}+\delta \bigr)
\Gamma_{\bR}(2s+2\nu_1+\nu_1'+\nu_2'+\kappa_1)\\
&\phantom{=}\times 
\frac{1}{4\pi \sI}\int_{t}
\frac{\Gamma_\bR (t+\nu_1')\Gamma_\bR (t+\nu_2')
\Gamma_{\bC}\bigl(-t+s+\nu_1+\tfrac{\kappa_1-1}{2}\bigr)}
{\Gamma_{\bR}(-t+3s+2\nu_1+\nu_2+\nu_1'+\nu_2'+\kappa_1+\delta )}\\
&\phantom{=}\times \Gamma_{\bR}(-t+s+\nu_2+\delta )dt.
\end{align*}
By Lemma \ref{lem:F32_Barnes2nd} with the duplication formula 
(\ref{eqn:Fn_gammaRC_duplication}), we have 
\begin{align*}
&Z\bigl(s,\varphi_{\sigma ,(0,\delta_2')}^{[\varepsilon ]}(v),
\varphi_{\sigma'}^{[-\varepsilon ]}(v')\bigr)\\
&=\langle v,v'\rangle 
\prod_{i=1}^2
\Gamma_{\bC} \bigl(s+\nu_1+\nu_i'+\tfrac{\kappa_1-1}{2}\bigr)
\Gamma_{\bR} (s+\nu_2+\nu_i'+\delta).
\end{align*}
Hence we obtain the assertion in this case. \vspace{2mm}

\section{The calculation for $\sigma'=\chi_{(\nu_1',1)}\boxtimes \chi_{(\nu_2',0)}$}

In this section, we give a proof of 
Theorem \ref{thm:R32_zeta_L_coincide} for 
$\sigma'=\chi_{(\nu_1',1)}\boxtimes \chi_{(\nu_2',0)}$. 
By Lemma \ref{lem:R32_zeta32_schur} and 
\begin{align*}
\varphi_{\sigma'}^{[-\varepsilon ]}
(v_{(1,0),q})(y)
=&
\frac{y_1^{1/2}y_2^{\nu_1'+\nu_2'}}{4\pi \sI}
\int_t
\{\Gamma_\bR (t+\nu_1'+1)
\Gamma_\bR (t+\nu_2')\\
&-\varepsilon q\Gamma_\bR (t+\nu_1')\Gamma_\bR (t+\nu_2'+1)
\}y_1^{-t}dt
\end{align*}
for $y=\diag (y_1y_2,y_2)\in A_2$ and $q\in Q_{(1,0)}=\{\pm 1\}$, 
we have 
\begin{align*}
&Z\bigl(s,\varphi_{\sigma ,(1,0)}^{[\varepsilon ]}(v),
\varphi_{\sigma'}^{[-\varepsilon ]}(v')\bigr)\\
&=\frac{\langle v,v'\rangle}{2}
\biggl\{
\int_{0}^\infty \int_{0}^\infty   
\varphi_{\sigma ,(1,0)}^{[\varepsilon ]}(v_{(1,0),1})(\hat{y})
\varphi_{\sigma'}^{[-\varepsilon ]}(v_{(1,0),-1})(y)
y_1^{s-\frac{3}{2}}y_2^{2s-1}
\frac{2dy_1}{y_1}\frac{2dy_2}{y_2}\\
&\phantom{=}
+\int_{0}^\infty \int_{0}^\infty   
\varphi_{\sigma ,(1,0)}^{[\varepsilon ]}(v_{(1,0),-1})(\hat{y})
\varphi_{\sigma'}^{[-\varepsilon ]}(v_{(1,0),1})(y)
y_1^{s-\frac{3}{2}}y_2^{2s-1}
\frac{2dy_1}{y_1}\frac{2dy_2}{y_2}\biggr\}\\
&=\frac{\langle v,v'\rangle}{8\pi \sI}
\int_{0}^\infty \int_{0}^\infty 
\biggl\{\int_t
\bigl\{\varphi_{\sigma ,(1,0)}^{[\varepsilon ]}(v_{(1,0),1}+v_{(1,0),-1})
(\hat{y})\Gamma_\bR (t+\nu_1'+1)\Gamma_\bR (t+\nu_2')\\
&\phantom{=}+\varepsilon 
\varphi_{\sigma ,(1,0)}^{[\varepsilon ]}(v_{(1,0),1}-v_{(1,0),-1})
(\hat{y})\Gamma_\bR (t+\nu_1')\Gamma_\bR (t+\nu_2'+1)\bigr\}
\\
&\phantom{=}\times 
y_1^{-t}dt\biggr\}y_1^{s-1}y_2^{2s+\nu_1'+\nu_2'-1}
\frac{2dy_1}{y_1}\frac{2dy_2}{y_2}
\end{align*}
with $\hat{y}=\diag (y_1y_2,y_2,1)\in A_3$. \vspace{2mm}

\underline{\bf Case 2-1:} $\sigma = \chi_{(\nu_1,\delta_2)}\boxtimes 
\chi_{(\nu_2,\delta_2)}\boxtimes \chi_{(\nu_3,\delta_2)}$. 
Since 
\begin{align*}
&\varepsilon^{\delta_2}\varphi_{\sigma ,(1,0)}^{[\varepsilon ]}
(v_{\lambda,1}+(-1)^{\delta_2}v_{\lambda,-1})(\hat{y})
=2\varepsilon^{\delta_2}
y_2\varphi^{[\varepsilon ]}_{\sigma }(u_{(0,0,0)})(\hat{y})\\
&=\frac{y_1y_2^{\nu_1+\nu_2+\nu_3+1}}{2(2\pi \sqrt{-1})^2} \int_{t_2}\int_{t_1}
\frac{\Gamma_{\bR}(t_1+\nu_1)\Gamma_{\bR}(t_1+\nu_2)
\Gamma_{\bR}(t_1+\nu_3)}
{\Gamma_{\bR}(t_1+t_2+1)}\\
&\phantom{=}\times 
\Gamma_{\bR}(t_2-\nu_1+1)\Gamma_{\bR}(t_2-\nu_2+1)
\Gamma_{\bR}(t_2-\nu_3+1)y_1^{-t_1} y_2^{-t_2} 
dt_1dt_2
\end{align*}
and 
\begin{align*}
&\varepsilon^{1-\delta_2}
\varphi_{\sigma ,(1,0)}^{[\varepsilon ]}
(v_{\lambda,1}-(-1)^{\delta_2}v_{\lambda,-1})(\hat{y})=0
\end{align*}
for $\hat{y}=\diag (y_1y_2,y_2,1)\in A_3$, 
we have 
\begin{align*}
&Z\bigl(s,\varphi_{\sigma ,(1,0)}^{[\varepsilon ]}(v),
\varphi_{\sigma'}^{[-\varepsilon ]}(v')\bigr)\\
&=\frac{\langle v,v'\rangle}{2(2\pi \sI)^3}
\int_{0}^\infty \int_{0}^\infty 
\biggl\{\int_t\int_{t_2}\int_{t_1}
\frac{\Gamma_{\bR}(t_1+\nu_1)\Gamma_{\bR}(t_1+\nu_2)
\Gamma_{\bR}(t_1+\nu_3)}
{\Gamma_{\bR}(t_1+t_2+1)}\\
&\phantom{=}\times 
\Gamma_{\bR}(t_2-\nu_1+1)\Gamma_{\bR}(t_2-\nu_2+1)
\Gamma_{\bR}(t_2-\nu_3+1)
\Gamma_\bR (t+\nu_1'+1-\delta_2)\\
&\phantom{=}\times 
\Gamma_\bR (t+\nu_2'+\delta_2)y_1^{-t_1-t} y_2^{-t_2} 
dt_1dt_2dt\biggr\}
y_1^{s}y_2^{2s+\nu_1+\nu_2+\nu_3+\nu_1'+\nu_2'}
\frac{dy_1}{y_1}\frac{dy_2}{y_2}.
\end{align*}
Substituting $t_1\to t_1-t$ and applying Lemma \ref{lem:F32_Mellin} twice, 
we have 
\begin{align*}
&Z\bigl(s,\varphi_{\sigma ,(1,0)}^{[\varepsilon ]}(v),
\varphi_{\sigma'}^{[-\varepsilon ]}(v')\bigr)\\
&=\langle v,v'\rangle 
\Biggl\{\prod_{1\leq i<j\leq 3}
\Gamma_{\bR}(2s+\nu_i+\nu_j+\nu_1'+\nu_2'+1)\Biggr\}\\
&\phantom{=}\times 
\frac{1}{4\pi \sI}\int_t
\frac{\Gamma_\bR (t+\nu_1'+1-\delta_2)
\Gamma_\bR (t+\nu_2'+\delta_2)}
{\Gamma_{\bR}(-t+3s+\nu_1+\nu_2+\nu_3+\nu_1'+\nu_2'+1)}\\
&\phantom{=}\times 
\Gamma_{\bR}(-t+s+\nu_1)\Gamma_{\bR}(-t+s+\nu_2)
\Gamma_{\bR}(-t+s+\nu_3) dt.
\end{align*}
By Lemma \ref{lem:F32_Barnes2nd}, we have 
\begin{align*}
&Z\bigl(s,\varphi_{\sigma ,(0,\delta_2')}^{[\varepsilon ]}(v),
\varphi_{\sigma'}^{[-\varepsilon ]}(v')\bigr)\\
&=\langle v,v'\rangle 
\prod_{i=1}^3
\Gamma_{\bR}(s+\nu_i+\nu_1'+1-\delta_2)
\Gamma_{\bR}(s+\nu_i+\nu_2'+\delta_2).
\end{align*}
Hence we obtain the assertion in this case. \vspace{2mm}

\underline{\bf Case 2-2:} $\sigma = \chi_{(\nu_1,1)}\boxtimes 
\chi_{(\nu_2,\delta_2)}\boxtimes \chi_{(\nu_3,0)}$. 
Since 
\begin{align*}
&\varphi^{[\varepsilon ]}_{\sigma ,(1,0)}(v_{(1,0),1}+v_{(1,0),-1})(\hat{y})
=-2\varepsilon (\sqrt{-1})^{1-\delta_2}
\varphi^{[\varepsilon ]}_\sigma (u_{(\delta_2,1-\delta_2,0)})(\hat{y})\\
&=
\frac{y_1y_2^{\nu_1+\nu_2+\nu_3+1}}{2(2\pi \sqrt{-1})^2} 
\int_{t_2}\int_{t_1}
\frac{\Gamma_{\bR}(t_1+\nu_1+1)\Gamma_{\bR}(t_1+\nu_2+\delta_2)
\Gamma_{\bR}(t_1+\nu_3)}
{\Gamma_{\bR}(t_1+t_2+\delta_2)}\\
&\phantom{=}\times 
\Gamma_{\bR}(t_2-\nu_1+1-\delta_2)\Gamma_{\bR}(t_2-\nu_2)
\Gamma_{\bR}(t_2-\nu_3+\delta_2)y_1^{-t_1} y_2^{-t_2} 
dt_1dt_2
\end{align*}
and 
\begin{align*}
&\varepsilon 
\varphi^{[\varepsilon ]}_{\sigma ,(1,0)}(v_{(1,0),1}-v_{(1,0),-1})(\hat{y})
=-2(\sqrt{-1})^{\delta_2}
\varphi^{[\varepsilon ]}_\sigma (u_{(1-\delta_2,\delta_2,0)})(\hat{y})\\
&=\frac{y_1y_2^{\nu_1+\nu_2+\nu_3+1}}{2(2\pi \sqrt{-1})^2} \int_{t_2}\int_{t_1}
\frac{\Gamma_{\bR}(t_1+\nu_1)\Gamma_{\bR}(t_1+\nu_2+1-\delta_2)
\Gamma_{\bR}(t_1+\nu_3+1)}
{\Gamma_{\bR}(t_1+t_2+1-\delta_2)}\\
&\phantom{=}\times 
\Gamma_{\bR}(t_2-\nu_1+1-\delta_2)\Gamma_{\bR}(t_2-\nu_2)
\Gamma_{\bR}(t_2-\nu_3+\delta_2)y_1^{-t_1} y_2^{-t_2} 
dt_1dt_2
\end{align*}
for $\hat{y}=\diag (y_1y_2,y_2,1)\in A_3$, 
we have 
\begin{align*}
&Z\bigl(s,\varphi_{\sigma ,(1,0)}^{[\varepsilon ]}(v),
\varphi_{\sigma'}^{[-\varepsilon ]}(v')\bigr)\\
&=\frac{\langle v,v'\rangle}{2(2\pi \sI)^3}
\int_{0}^\infty \int_{0}^\infty 
\biggl\{\int_t
\int_{t_2}\int_{t_1}
\biggl(
\frac{\Gamma_{\bR}(t_1+\nu_1+1)\Gamma_{\bR}(t_1+\nu_2+\delta_2)}
{\Gamma_{\bR}(t_1+t_2+\delta_2)}\\
&\phantom{=}\times \Gamma_{\bR}(t_1+\nu_3)
\Gamma_\bR (t+\nu_1'+1)\Gamma_\bR (t+\nu_2')\\
&\phantom{=}+
\frac{\Gamma_{\bR}(t_1+\nu_1)\Gamma_{\bR}(t_1+\nu_2+1-\delta_2)
\Gamma_{\bR}(t_1+\nu_3+1)\Gamma_\bR (t+\nu_1')}
{\Gamma_{\bR}(t_1+t_2+1-\delta_2)}\\
&\phantom{=}\times \Gamma_\bR (t+\nu_2'+1)\biggr)
\Gamma_{\bR}(t_2-\nu_1+1-\delta_2)\Gamma_{\bR}(t_2-\nu_2)
\Gamma_{\bR}(t_2-\nu_3+\delta_2)\\
&\phantom{=}\times 
y_1^{-t_1-t} y_2^{-t_2} 
dt_1dt_2dt\biggr\}y_1^{s}y_2^{2s+\nu_1+\nu_2+\nu_3+\nu_1'+\nu_2'}
\frac{dy_1}{y_1}\frac{dy_2}{y_2}.
\end{align*}
Substituting $t_1\to t_1-t$ and applying Lemma \ref{lem:F32_Mellin} twice, 
we have 
\begin{align*}
&Z\bigl(s,\varphi_{\sigma ,(1,0)}^{[\varepsilon ]}(v),
\varphi_{\sigma'}^{[-\varepsilon ]}(v')\bigr)\\
&=\langle v,v'\rangle \Gamma_{\bR}(2s+\nu_1+\nu_3+\nu_1'+\nu_2')\\
&\phantom{=}\times 
\Gamma_{\bR}(2s+\nu_2+\nu_3+\nu_1'+\nu_2'+1-\delta_2)
\Gamma_{\bR}(2s+\nu_1+\nu_2+\nu_1'+\nu_2'+\delta_2)\\
&\phantom{=}\times 
\frac{1}{4\pi \sI}
\int_t
\biggl(
\frac{\Gamma_\bR (t+\nu_1'+1)\Gamma_\bR (t+\nu_2')
\Gamma_{\bR}(-t+s+\nu_1+1)}
{\Gamma_{\bR}(-t+3s+\nu_1+\nu_2+\nu_3+\nu_1'+\nu_2'+\delta_2)}\\
&\phantom{=}\times \Gamma_{\bR}(-t+s+\nu_2+\delta_2)
\Gamma_{\bR}(-t+s+\nu_3)\\
&\phantom{=}+
\frac{\Gamma_\bR (t+\nu_1')\Gamma_\bR (t+\nu_2'+1)
\Gamma_{\bR}(-t+s+\nu_1)}
{\Gamma_{\bR}(-t+3s+\nu_1+\nu_2+\nu_3+\nu_1'+\nu_2'+1-\delta_2)}\\
&\phantom{=}\times \Gamma_{\bR}(-t+s+\nu_2+1-\delta_2)
\Gamma_{\bR}(-t+s+\nu_3+1)\biggr)dt.
\end{align*}
Applying Lemma \ref{lem:R32_Barnes2nd_sum} for 
\begin{align*}
\left\{\begin{array}{llllll}
a_1=\nu_2',&a_2=\nu_1',&
b_1=s+\nu_3,&b_2=s+\nu_2,&b_3=s+\nu_1&\text{if}\ \delta_2=0,\\
a_1=\nu_1',&a_2=\nu_2',&
b_1=s+\nu_1,&b_2=s+\nu_2,&b_3=s+\nu_3&\text{if}\ \delta_2=1,
\end{array}\right.
\end{align*}
we have 
\begin{align*}
&Z\bigl(s,\varphi_{\sigma ,(1,0)}^{[\varepsilon ]}(v),
\varphi_{\sigma'}^{[-\varepsilon ]}(v')\bigr)\\
&=\langle v,v'\rangle 
\Gamma_{\bR}(s+\nu_1+\nu_1')
\Gamma_{\bR}(s+\nu_2+\nu_1'+1-\delta_2)
\Gamma_{\bR}(s+\nu_3+\nu_1'+1)\\
&\phantom{=}\times 
\Gamma_{\bR}(s+\nu_1+\nu_2'+1)
\Gamma_{\bR}(s+\nu_2+\nu_2'+\delta_2)
\Gamma_{\bR}(s+\nu_3+\nu_2').
\end{align*}
Hence we obtain the assertion in this case. \vspace{2mm}

\underline{\bf Case 2-3:} $\sigma = D_{(\nu_1,\kappa_1)}\boxtimes 
\chi_{(\nu_2,\delta_2)}$. Since 
\begin{align*}
&\varphi^{[\varepsilon ]}_{\sigma ,(1,0)}(v_{(1,0),1}+v_{(1,0),-1})(\hat{y})
=-2\varepsilon (\sqrt{-1})^{1-\delta_2}\varphi^{[\varepsilon ]}_\sigma 
(u_{(\delta_2,1-\delta_2,\kappa_1-1)})(\hat{y})\\
&=\frac{y_1y_2^{2\nu_1+\nu_2+1}}{2(2\pi \sqrt{-1})^2} 
\int_{t_2}\int_{t_1}
\frac{\Gamma_{\bC}\bigl(t_1+\nu_1+\tfrac{\kappa_1-1}{2}\bigr)
\Gamma_{\bR}(t_1+\nu_2+\delta_2)}
{ \Gamma_{\bR}(t_1+t_2+\kappa_1-1+\delta_2)}\\
&\phantom{=}\times \Gamma_{\bC}\bigl(t_2-\nu_1+\tfrac{\kappa_1-1}{2}\bigr)
\Gamma_{\bR}(t_2-\nu_2+\kappa_1-1)y_1^{-t_1} y_2^{-t_2} \,dt_1dt_2
\end{align*}
and 
\begin{align*}
&\varepsilon 
\varphi^{[\varepsilon ]}_{\sigma ,(1,0)}(v_{(1,0),1}-v_{(1,0),-1})(\hat{y})
=-2(\sqrt{-1})^{\delta_2}\varphi^{[\varepsilon ]}_\sigma 
(u_{(1-\delta_2,\delta_2,\kappa_1-1)})(\hat{y})\\
&=\frac{y_1y_2^{2\nu_1+\nu_2+1}}{2(2\pi \sqrt{-1})^2} \int_{t_2}\int_{t_1}
\frac{\Gamma_{\bC}\bigl(t_1+\nu_1+\tfrac{\kappa_1-1}{2}\bigr)
\Gamma_{\bR}(t_1+\nu_2+1-\delta_2)}
{ \Gamma_{\bR}(t_1+t_2+\kappa_1-\delta_2)}\\
&\phantom{=}\times \Gamma_{\bC}\bigl(t_2-\nu_1+\tfrac{\kappa_1-1}{2}\bigr)
\Gamma_{\bR}(t_2-\nu_2+\kappa_1-1)y_1^{-t_1} y_2^{-t_2} \,dt_1dt_2
\end{align*}
for $\hat{y}=\diag (y_1y_2,y_2,1)\in A_3$, 
we have 
\begin{align*}
&Z\bigl(s,\varphi_{\sigma ,(1,0)}^{[\varepsilon ]}(v),
\varphi_{\sigma'}^{[-\varepsilon ]}(v')\bigr)\\
&=\frac{\langle v,v'\rangle}{2(2\pi \sI)^3}
\int_{0}^\infty \int_{0}^\infty 
\biggl\{\int_t\int_{t_2}\int_{t_1}
\biggl(
\frac{
\Gamma_{\bR}(t_1+\nu_2+\delta_2)\Gamma_\bR (t+\nu_1'+1)\Gamma_\bR (t+\nu_2')}
{ \Gamma_{\bR}(t_1+t_2+\kappa_1-1+\delta_2)}\\
&\phantom{=}+
\frac{\Gamma_{\bR}(t_1+\nu_2+1-\delta_2)
\Gamma_\bR (t+\nu_1')\Gamma_\bR (t+\nu_2'+1)}
{ \Gamma_{\bR}(t_1+t_2+\kappa_1-\delta_2)}\biggr)\\
&\phantom{=}\times 
\Gamma_{\bC}\bigl(t_1+\nu_1+\tfrac{\kappa_1-1}{2}\bigr)
\Gamma_{\bC}\bigl(t_2-\nu_1+\tfrac{\kappa_1-1}{2}\bigr)
\Gamma_{\bR}(t_2-\nu_2+\kappa_1-1)\\
&\phantom{=}\times 
y_1^{-t_1-t} y_2^{-t_2} \,dt_1dt_2dt\biggr\}
y_1^{s}y_2^{2s+2\nu_1+\nu_2+\nu_1'+\nu_2'}
\frac{dy_1}{y_1}\frac{dy_2}{y_2}.
\end{align*}
Substituting $t_1\to t_1-t$ and applying Lemma \ref{lem:F32_Mellin} twice, 
we have 
\begin{align*}
&Z\bigl(s,\varphi_{\sigma ,(1,0)}^{[\varepsilon ]}(v),
\varphi_{\sigma'}^{[-\varepsilon ]}(v')\bigr)\\
&=\langle v,v'\rangle
\Gamma_{\bC}\bigl(2s+\nu_1+\nu_2+\nu_1'+\nu_2'+\tfrac{\kappa_1-1}{2}\bigr)
\Gamma_{\bR}(2s+2\nu_1+\nu_1'+\nu_2'+\kappa_1-1)\\
&\phantom{=}\times 
\frac{1}{4\pi \sI}
\int_t\biggl(\frac{\Gamma_\bR (t+\nu_1'+1)\Gamma_\bR (t+\nu_2')
\Gamma_{\bR}(-t+s+\nu_2+\delta_2)}
{\Gamma_{\bR}(-t+3s+2\nu_1+\nu_2+\nu_1'+\nu_2'+\kappa_1-1+\delta_2)}\\
&\phantom{=}+
\frac{\Gamma_\bR (t+\nu_1')\Gamma_\bR (t+\nu_2'+1)
\Gamma_{\bR}(-t+s+\nu_2+1-\delta_2)}
{ \Gamma_{\bR}(-t+3s+2\nu_1+\nu_2+\nu_1'+\nu_2'+\kappa_1-\delta_2)}\biggr)\\
&\phantom{=}\times 
\Gamma_{\bC}\bigl(-t+s+\nu_1+\tfrac{\kappa_1-1}{2}\bigr)dt.
\end{align*}
Applying Lemma \ref{lem:R32_Barnes2nd_sum} for 
\begin{align*}
&\left\{\begin{array}{lll}
a_1=\nu_2',&a_2=\nu_1'
&\text{if}\ \delta_2=0,\\
a_1=\nu_1',&a_2=\nu_2'
&\text{if}\ \delta_2=1,
\end{array}\right.&
&b_1=s+\nu_2,&
&b_2=b_3=s+\nu_1+\tfrac{\kappa_1-1}{2}
\end{align*}
with the duplication formula (\ref{eqn:Fn_gammaRC_duplication}), 
we have 
\begin{align*}
&Z\bigl(s,\varphi_{\sigma ,(1,0)}^{[\varepsilon ]}(v),
\varphi_{\sigma'}^{[-\varepsilon ]}(v')\bigr)\\
&=\langle v,v'\rangle
\Gamma_{\bC}\bigl(s+\nu_1+\nu_1'+\tfrac{\kappa_1-1}{2}\bigr)
\Gamma_{\bR}(s+\nu_2+\nu_1'+1-\delta_2)\\
&\phantom{=}\times 
\Gamma_{\bC}\bigl(s+\nu_1+\nu_2'+\tfrac{\kappa_1-1}{2}\bigr)
\Gamma_{\bR}(s+\nu_2+\nu_2'+\delta_2).
\end{align*}
Hence we obtain the assertion in this case. \vspace{2mm}

\section{The calculation for $\sigma'=D_{(\nu',\kappa')}$}

In this section, we give a proof of 
Theorem \ref{thm:R32_zeta_L_coincide} for 
$\sigma'=D_{(\nu',\kappa')}$. 
By Lemma \ref{lem:R32_zeta32_schur} and 
\begin{align*}
&\varphi_{\sigma}^{[-\varepsilon ]}
(v_{(\kappa',0),-\varepsilon \kappa'})(y)
=
\frac{y_1^{1/2}y_2^{2\nu'}}{2\pi \sI}
\int_s \Gamma_\bC \bigl(t+\nu' +\tfrac{\kappa' -1}{2}\bigr)y_1^{-t}dt,\\
&\varphi_{\sigma}^{[-\varepsilon ]}
(v_{(\kappa',0),\varepsilon \kappa' })(y)=0
\end{align*}
for $y=\diag (y_1y_2,y_2)\in A_2$, 
we have 
\begin{align*}
&Z\bigl(s,\varphi_{\sigma ,(\kappa',0)}^{[\varepsilon ]}(v),
\varphi_{\sigma'}^{[-\varepsilon ]}(v')\bigr)\\
&=\frac{\langle v,v'\rangle}{2}
\sum_{q\in \{\pm \kappa'\}}
\int_{0}^\infty \int_{0}^\infty   
\varphi_{\sigma ,(\kappa',0)}^{[\varepsilon ]}(v_{(\kappa',0),q})(\hat{y})
\varphi_{\sigma'}^{[-\varepsilon ]}(v_{(\kappa',0),-q})(y)\\
&\phantom{=}\times y_1^{s-\frac{3}{2}}y_2^{2s-1}
\frac{2dy_1}{y_1}\frac{2dy_2}{y_2}\\
&=\frac{\langle v,v'\rangle}{4\pi \sI}
\int_{0}^\infty \int_{0}^\infty   
\biggl\{\int_t\varphi_{\sigma ,(\kappa',0)}^{[\varepsilon ]}
(v_{(\kappa',0),\varepsilon \kappa'})(\hat{y})
\Gamma_\bC \bigl(t+\nu' +\tfrac{\kappa' -1}{2}\bigr)y_1^{-t}dt\biggr\}\\
&\phantom{=}\times  
y_1^{s-1}y_2^{2s+2\nu'-1}
\frac{2dy_1}{y_1}\frac{2dy_2}{y_2}
\end{align*}
with $\hat{y}=\diag (y_1y_2,y_2,1)\in A_3$. \vspace{2mm}

\underline{\bf Case 3-1:}  
$\sigma = \chi_{(\nu_1,\delta_2)}\boxtimes 
\chi_{(\nu_2,\delta_2)}\boxtimes \chi_{(\nu_3,\delta_2)}$. 
Since 
\begin{align*}
&\varphi_{\sigma ,(\kappa',0)}^{[\varepsilon ]}
(v_{(\kappa',0),\varepsilon \kappa'})(\hat{y})
=\varepsilon^{\delta_2}y_2^{\kappa'}
\varphi^{[\varepsilon ]}_\sigma (u_{(0,0,0)})(\hat{y})\\
&=
\frac{y_1y_2^{\nu_1+\nu_2+\nu_3+\kappa'+1}}
{(4\pi \sqrt{-1})^2} \int_{t_2}\int_{t_1}
\frac{\Gamma_{\bR}(t_1+\nu_1)\Gamma_{\bR}(t_1+\nu_2)
\Gamma_{\bR}(t_1+\nu_3)}{\Gamma_{\bR}(t_1+t_2)}\\
&\phantom{..}\times 
\Gamma_{\bR}(t_2-\nu_1)\Gamma_{\bR}(t_2-\nu_2)
\Gamma_{\bR}(t_2-\nu_3)y_1^{-t_1} y_2^{-t_2} 
dt_1dt_2
\end{align*}
for $\hat{y}=\diag (y_1y_2,y_2,1)\in A_3$, we have 
\begin{align*}
&Z\bigl(s,\varphi_{\sigma ,(\kappa',0)}^{[\varepsilon ]}(v),
\varphi_{\sigma'}^{[-\varepsilon ]}(v')\bigr)\\
&=\frac{\langle v,v'\rangle}{2(2\pi \sI)^3}
\int_{0}^\infty \int_{0}^\infty   
\biggl\{\int_t\int_{t_2}\int_{t_1}
\frac{\Gamma_{\bR}(t_1+\nu_1)\Gamma_{\bR}(t_1+\nu_2)
\Gamma_{\bR}(t_1+\nu_3)}{\Gamma_{\bR}(t_1+t_2)}\\
&\phantom{=}\times 
\Gamma_{\bR}(t_2-\nu_1)\Gamma_{\bR}(t_2-\nu_2)
\Gamma_{\bR}(t_2-\nu_3)
\Gamma_\bC \bigl(t+\nu' +\tfrac{\kappa' -1}{2}\bigr)\\
&\phantom{=}\times 
y_1^{-t_1-t} y_2^{-t_2}
dt_1dt_2dt\biggr\}
y_1^{s}y_2^{2s+\nu_1+\nu_2+\nu_3+2\nu'+\kappa'}
\frac{dy_1}{y_1}\frac{dy_2}{y_2}.
\end{align*}
Substituting $t_1\to t_1-t$ and applying Lemma \ref{lem:F32_Mellin} twice, 
we have 
\begin{align*}
&Z\bigl(s,\varphi_{\sigma ,(\kappa',0)}^{[\varepsilon ]}(v),
\varphi_{\sigma'}^{[-\varepsilon ]}(v')\bigr)\\
&=\langle v,v'\rangle 
\Biggl\{\prod_{1\leq i<j\leq 3}
\Gamma_{\bR}(2s+\nu_i+\nu_j+2\nu'+\kappa')
\Biggr\}\frac{1}{4\pi \sI}\\
&\phantom{=}\times 
\int_t
\frac{\Gamma_\bC \bigl(t+\nu' +\tfrac{\kappa' -1}{2}\bigr)
\Gamma_{\bR}(-t+s+\nu_1)\Gamma_{\bR}(-t+s+\nu_2)
\Gamma_{\bR}(-t+s+\nu_3)}
{\Gamma_{\bR}(-t+3s+\nu_1+\nu_2+\nu_3+2\nu'+\kappa')}
dt.
\end{align*}
Applying Lemma \ref{lem:F32_Barnes2nd} 
with the duplication formula (\ref{eqn:Fn_gammaRC_duplication}), 
we have 
\begin{align*}
&Z\bigl(s,\varphi_{\sigma ,(\kappa',0)}^{[\varepsilon ]}(v),
\varphi_{\sigma'}^{[-\varepsilon ]}(v')\bigr)
=\langle v,v'\rangle 
\prod_{i=1}^3
\Gamma_\bC \bigl(s+\nu_i+\nu' +\tfrac{\kappa' -1}{2}\bigr).
\end{align*}
Hence we obtain the assertion in this case. \vspace{2mm}

\underline{\bf Case 3-2:}  
$\sigma = \chi_{(\nu_1,1)}\boxtimes 
\chi_{(\nu_2,\delta_2)}\boxtimes \chi_{(\nu_3,0)}$. 
Since 
\begin{align*}
&\varphi_{\sigma ,(\kappa',0)}^{[\varepsilon ]}
(v_{(\kappa',0),\varepsilon \kappa'})(\hat{y})
=-\varepsilon^{\delta_2}y_2^{\kappa'-1}
\{\varphi^{[\varepsilon ]}_\sigma (u_{(1,0,0)})(\hat{y})
+\varepsilon \sqrt{-1}
\varphi^{[\varepsilon ]}_\sigma (u_{(0,1,0)})(\hat{y})\}\\
&=
\frac{y_1y_2^{\nu_1+\nu_2+\nu_3+\kappa'}}{(4\pi \sqrt{-1})^2} 
\int_{t_2}\int_{t_1}
\biggl(
\frac{\Gamma_{\bR}(t_1+\nu_1+\delta_2)\Gamma_{\bR}(t_1+\nu_2+1)
\Gamma_{\bR}(t_1+\nu_3+1-\delta_2)}
{\Gamma_{\bR}(t_1+t_2+1)}\\
&\phantom{=}+\frac{\Gamma_{\bR}(t_1+\nu_1+1-\delta_2)
\Gamma_{\bR}(t_1+\nu_2)
\Gamma_{\bR}(t_1+\nu_3+\delta_2)}
{\Gamma_{\bR}(t_1+t_2)}\biggr)\\
&\phantom{=}\times 
\Gamma_{\bR}(t_2-\nu_1+1-\delta_2)
\Gamma_{\bR}(t_2-\nu_2)
\Gamma_{\bR}(t_2-\nu_3+\delta_2)
y_1^{-t_1} y_2^{-t_2} 
dt_1dt_2
\end{align*}
for $\hat{y}=\diag (y_1y_2,y_2,1)\in A_3$, we have 
\begin{align*}
&Z\bigl(s,\varphi_{\sigma ,(\kappa',0)}^{[\varepsilon ]}(v),
\varphi_{\sigma'}^{[-\varepsilon ]}(v')\bigr)\\
&=\frac{\langle v,v'\rangle}{2(2\pi \sI)^3}
\int_{0}^\infty \int_{0}^\infty   
\biggl\{\int_t
\int_{t_2}\int_{t_1}
\biggl(
\frac{\Gamma_{\bR}(t_1+\nu_1+\delta_2)\Gamma_{\bR}(t_1+\nu_2+1)}
{\Gamma_{\bR}(t_1+t_2+1)}\\
&\phantom{=}\times \Gamma_{\bR}(t_1+\nu_3+1-\delta_2)
+\frac{\Gamma_{\bR}(t_1+\nu_1+1-\delta_2)
\Gamma_{\bR}(t_1+\nu_2)
\Gamma_{\bR}(t_1+\nu_3+\delta_2)}
{\Gamma_{\bR}(t_1+t_2)}\biggr)\\
&\phantom{=}\times 
\Gamma_{\bR}(t_2-\nu_1+1-\delta_2)
\Gamma_{\bR}(t_2-\nu_2)
\Gamma_{\bR}(t_2-\nu_3+\delta_2)
\Gamma_\bC \bigl(t+\nu' +\tfrac{\kappa' -1}{2}\bigr)\\
&\phantom{=}\times  
y_1^{-t_1-t} y_2^{-t_2} dt_1dt_2dt\biggr\}
y_1^{s}y_2^{2s+\nu_1+\nu_2+\nu_3+2\nu'+\kappa'-1}
\frac{dy_1}{y_1}\frac{dy_2}{y_2}.
\end{align*}
Substituting $t_1\to t_1-t$ and applying Lemma \ref{lem:F32_Mellin} twice, 
we have 
\begin{align*}
&Z\bigl(s,\varphi_{\sigma ,(\kappa',0)}^{[\varepsilon ]}(v),
\varphi_{\sigma'}^{[-\varepsilon ]}(v')\bigr)\\
&=\langle v,v'\rangle
\Gamma_{\bR}(2s+\nu_2+\nu_3+2\nu'+\kappa'-\delta_2)
\Gamma_{\bR}(2s+\nu_1+\nu_3+2\nu'+\kappa'-1)\\
&\phantom{=}\times 
\Gamma_{\bR}(2s+\nu_1+\nu_2+2\nu'+\kappa'-1+\delta_2)
\frac{1}{4\pi \sI}
\int_t\Gamma_\bC \bigl(t+\nu' +\tfrac{\kappa' -1}{2}\bigr)\\
&\phantom{=}\times 
\biggl(
\frac{\Gamma_{\bR}(-t+s+\nu_1+\delta_2)\Gamma_{\bR}(-t+s+\nu_2+1)
\Gamma_{\bR}(-t+s+\nu_3+1-\delta_2)}
{\Gamma_{\bR}(-t+3s+\nu_1+\nu_2+\nu_3+2\nu'+\kappa')}\\
&\phantom{=}
+\frac{\Gamma_{\bR}(-t+s+\nu_1+1-\delta_2)
\Gamma_{\bR}(-t+s+\nu_2)
\Gamma_{\bR}(-t+s+\nu_3+\delta_2)}
{\Gamma_{\bR}(-t+3s+\nu_1+\nu_2+\nu_3+2\nu'+\kappa'-1)}\biggr)dt.
\end{align*}
Applying Lemma \ref{lem:R32_Barnes2nd_sum} for 
\begin{align*}
&a_1=a_2=\nu' +\tfrac{\kappa' -1}{2},&
\left\{\begin{array}{llll}
b_1=s+\nu_3,&b_2=s+\nu_2,&b_2=s+\nu_1
&\text{if}\ \delta_2=0,\\
b_1=s+\nu_1,&b_2=s+\nu_2,&b_2=s+\nu_3
&\text{if}\ \delta_2=1
\end{array}\right.
\end{align*}
with the duplication formula (\ref{eqn:Fn_gammaRC_duplication}), 
we have 
\begin{align*}
&Z\bigl(s,\varphi_{\sigma ,(\kappa',0)}^{[\varepsilon ]}(v),
\varphi_{\sigma'}^{[-\varepsilon ]}(v')\bigr)
=\langle v,v'\rangle 
\prod_{i=1}^3
\Gamma_\bC \bigl(s+\nu_i+\nu' +\tfrac{\kappa' -1}{2}\bigr).
\end{align*}
Hence we obtain the assertion in this case. \vspace{2mm}

\underline{\bf Case 3-3:} $\sigma = D_{(\nu_1,\kappa_1)}\boxtimes 
\chi_{(\nu_2,\delta_2)}$. Let $\kappa =\min \{\kappa_1,\kappa'\}$. 
Since 
\begin{align*}
&\varphi_{\sigma ,(\kappa',0)}^{[\varepsilon ]}
(v_{(\kappa',0),\varepsilon \kappa'})(\hat{y})
=\varepsilon^{\delta_2}(-1)^{{\kappa}}
\sum_{j=0}^{{\kappa}}\binom{{\kappa}}{j}(\varepsilon \sqrt{-1})^j
y_2^{\kappa'-{\kappa}}
\varphi^{[\varepsilon ]}_\sigma 
(u_{({\kappa}-j,j,\kappa_1-{\kappa})})(\hat{y})\\
&=
\sum_{j=0}^{{\kappa}}\binom{{\kappa}}{j}
\frac{y_1y_2^{2\nu_1+\nu_2+\kappa'-{\kappa}+1}}{(4\pi \sqrt{-1})^2}
\int_{t_2}\int_{t_1}
\frac{\Gamma_{\bC}\bigl(t_1+\nu_1+\tfrac{\kappa_1-1}{2}\bigr)
\Gamma_{\bR}(t_1+\nu_2+{\kappa}-j)}
{ \Gamma_{\bR}(t_1+t_2+\kappa_1-j)}\\
&\phantom{=}
\times \Gamma_{\bC}\bigl(t_2-\nu_1+\tfrac{\kappa_1-1}{2}\bigr)
\Gamma_{\bR}(t_2-\nu_2+\kappa_1-{\kappa})y_1^{-t_1} y_2^{-t_2} \,dt_1dt_2,
\end{align*}
we have 
\begin{align*}
&Z\bigl(s,\varphi_{\sigma ,(\kappa',0)}^{[\varepsilon ]}(v),
\varphi_{\sigma'}^{[-\varepsilon ]}(v')\bigr)\\
&=\langle v,v'\rangle 
\sum_{j=0}^{{\kappa}}\binom{{\kappa}}{j}
\frac{1}{2(2\pi \sI)^3}
\int_{0}^\infty \int_{0}^\infty   
\biggl\{\int_t
\int_{t_2}\int_{t_1}
\frac{\Gamma_{\bC}\bigl(t_1+\nu_1+\tfrac{\kappa_1-1}{2}\bigr)}
{ \Gamma_{\bR}(t_1+t_2+\kappa_1-j)}\\
&\phantom{=}
\times \Gamma_{\bR}(t_1+\nu_2+{\kappa}-j)
\Gamma_{\bC}\bigl(t_2-\nu_1+\tfrac{\kappa_1-1}{2}\bigr)
\Gamma_{\bR}(t_2-\nu_2+\kappa_1-{\kappa})\\
&\phantom{=}\times 
\Gamma_\bC \bigl(t+\nu' +\tfrac{\kappa' -1}{2}\bigr)
y_1^{-t_1-t} y_2^{-t_2} \,dt_1dt_2dt\biggr\}
y_1^{s}y_2^{2s+2\nu_1+\nu_2+2\nu'+\kappa'-{\kappa}}
\frac{dy_1}{y_1}\frac{dy_2}{y_2}.
\end{align*}
Substituting $t_1\to t_1-t$ and applying Lemma \ref{lem:F32_Mellin} twice, 
we have 
\begin{align*}
&Z\bigl(s,\varphi_{\sigma ,(\kappa',0)}^{[\varepsilon ]}(v),
\varphi_{\sigma'}^{[-\varepsilon ]}(v')\bigr)\\
&=\langle v,v'\rangle 
\Gamma_{\bR}(2s+2\nu_1+2\nu'+\kappa'+\kappa_1-2\kappa )\\
&\phantom{=}\times 
\Gamma_{\bC}\bigl(2s+\nu_1+\nu_2+2\nu'+\kappa'-{\kappa}
+\tfrac{\kappa_1-1}{2}\bigr)\\
&\phantom{=}\times 
\sum_{j=0}^{{\kappa}}\binom{{\kappa}}{j}
\frac{1}{4\pi \sI}
\int_t
\frac{\Gamma_\bC \bigl(t+\nu' +\tfrac{\kappa' -1}{2}\bigr)
\Gamma_{\bC}\bigl(-t+s+\nu_1+\tfrac{\kappa_1-1}{2}\bigr)}
{ \Gamma_{\bR}(-t+3s+2\nu_1+\nu_2+2\nu'+\kappa'+\kappa_1-{\kappa}-j)}\\
&\phantom{=}
\times \Gamma_{\bR}(-t+s+\nu_2+{\kappa}-j)\,dt.
\end{align*}
Substituting $t\to t-j$ and applying Lemma \ref{lem:F32_gauss_sum}, 
we have 
\begin{align*}
&Z\bigl(s,\varphi_{\sigma ,(\kappa',0)}^{[\varepsilon ]}(v),
\varphi_{\sigma'}^{[-\varepsilon ]}(v')\bigr)\\
&=\langle v,v'\rangle 
\Gamma_{\bR}(2s+2\nu_1+2\nu'+\kappa'+\kappa_1-2\kappa )\\
&\phantom{=}\times 
\frac{\Gamma_{\bC}\bigl(2s+\nu_1+\nu_2+2\nu'+\kappa'-{\kappa}
+\tfrac{\kappa_1-1}{2}\bigr)
\Gamma_{\bC}(s+\nu_1+\nu' +\tfrac{\kappa_1+\kappa' -2}{2})}
{\Gamma_{\bC}(s+\nu_1+\nu' +\tfrac{\kappa_1+\kappa' -2\kappa -2}{2})}\\
&\phantom{=}\times 
\frac{1}{4\pi \sI}
\int_t
\frac{\Gamma_{\bC}(t+\nu' +\tfrac{\kappa' -1}{2}-{\kappa})
\Gamma_{\bC}(-t+s+\nu_1+\tfrac{\kappa_1-1}{2})}
{ \Gamma_{\bR}(-t+3s+2\nu_1+\nu_2+2\nu'+\kappa'+\kappa_1-{\kappa})}\\
&\phantom{=}\times 
\Gamma_{\bR}(-t+s+\nu_2+{\kappa})\,dt.
\end{align*}
Applying Lemma \ref{lem:F32_Barnes2nd} 
with the duplication formula (\ref{eqn:Fn_gammaRC_duplication}), 
we have 
\begin{align*}
&Z\bigl(s,\varphi_{\sigma ,(\kappa',0)}^{[\varepsilon ]}(v),
\varphi_{\sigma'}^{[-\varepsilon ]}(v')\bigr)
=\langle v,v'\rangle 
\Gamma_{\bC}\bigl(s+\nu_1+\nu' +\tfrac{\kappa_1+\kappa' -2}{2}\bigr)\\
&\phantom{=======}\times 
\Gamma_{\bC}\bigl(s+\nu_1+\nu' +\tfrac{\kappa_1+\kappa' -2\kappa }{2}\bigr)
\Gamma_{\bC}\bigl(s+\nu_2+\nu' +\tfrac{\kappa' -1}{2}\bigr).
\end{align*}
Hence we obtain the assertion in this case. \vspace{2mm}

\chapter{The local zeta integrals for $GL(3,\bC)\times GL(2,\bC)$}
\label{sec:C32_zeta}

\section{The local Langlands correspondence for $GL(n,\bC)$}
\label{subsec:Cmn_landlands}

We recall the theory of finite dimensional semisimple representations 
of the Weil group $W_\bC$. 
The Weil group $W_\bC$ for the field $\bC$ is given by 
$W_\bC =\bC^\times =GL(1,\bC)$. 
Hence the characters $\chi_{(\nu ,d)}$ ($\nu \in \bC$, $d\in \bZ$) 
exhaust the irreducible representations of $W_\bC$, up to isomorphism. 
Moreover, the equivalence 
\begin{align}
\label{eqn:Cmn_weil_tensor}
&\chi_{(\nu ,d)}\otimes \chi_{(\nu',d')}
\simeq \chi_{(\nu+\nu',d+d')}
\end{align}
holds for $\nu,\nu'\in\bC$ and $d,d'\in \bZ$.

We define the $L$-factor 
corresponding to $\chi_{(\nu ,d)}$ by 
\begin{align*}
&L(s,\chi_{(\nu ,d)})=\Gamma_\bC \bigl(s+\nu +\tfrac{|d|}{2}\bigr).
\end{align*}
For a finite dimensional semisimple representation $\phi$ of $W_\bC$, 
we take an irreducible decomposition 
$\phi \simeq \bigoplus_{i=1}^m\chi_{(\nu_i,d_i)}$. 
We define its $L$-factor by 
\begin{align*}
&L(s,\phi )=\prod_{i=1}^mL(s,\chi_{(\nu_i,d_i)})
=\prod_{i=1}^m\Gamma_\bC \bigl(s+\nu_i+\tfrac{|d_i|}{2}\bigr).
\end{align*}

The Langlands classification of 
irreducible admissible representations of $G_n$ is 
given as follows:
\begin{thm}[{\cite[Theorem 4]{Knapp_003}}]
\label{thm:Cn_landlands_clas}
We use the notation in \S \ref{subsec:Cn_def_ps}. 

\noindent (1) If ${\mathrm{Re}(\nu_1)}\geq {\mathrm{Re}(\nu_2)}\geq 
\cdots \geq {\mathrm{Re}(\nu_n)}$, 
then 
$H(\chi )$ has 
a unique irreducible quotient 
$J(\chi )$. 

\noindent (2) Any irreducible admissible representation of $G_n$ 
is infinitesimally equivalent to $J(\chi )$ for some $\chi$. 

\noindent (3) A quotient $J(\chi)$ 
is infinitesimally equivalent to $J(\chi')$ 
if and only if there exists $w\in \gS_n$ such that 
$\chi'=w(\chi)$. 
\end{thm}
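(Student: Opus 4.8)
The plan is to obtain this as the specialization to $G_n=GL(n,\bC)$ of the general Langlands classification of irreducible admissible representations of a real reductive group, in exact parallel with the case over $\bR$ recorded in Theorem \ref{thm:Rn_langlands_clas}; in this explicit form the result is due to Knapp, so the shortest route is to cite \cite[Theorem 4]{Knapp_003}. For a self-contained argument I would proceed as follows.

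The first ingredient is that $GL(m,\bC)$ has no discrete series, all of its Cartan subgroups being non-compact, so that the tempered representations of any Levi $\prod_i GL(m_i,\bC)$ are precisely the unitary principal series induced from unitary characters of the full diagonal torus, and these are irreducible. Hence, given $\chi=\chi_{(\nu_1,d_1)}\boxtimes\cdots\boxtimes\chi_{(\nu_n,d_n)}$ with $\mathrm{Re}(\nu_1)\geq\cdots\geq\mathrm{Re}(\nu_n)$, grouping the indices into maximal blocks of equal real part exhibits $H(\chi)$ as a standard module: the representation induced from a standard parabolic $P'$ of the (irreducible, tempered) tensor product of unitary principal series on the Levi of $P'$, twisted by a strictly dominant character. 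The Langlands quotient theorem --- realized through the long standard intertwining operator $A(w_0,\chi)\colon H(\chi)\to H(w_0\chi)$, which converges in this range and has irreducible image --- then yields the unique irreducible quotient $J(\chi)$ of $H(\chi)$. This gives (1).

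For (2) I would invoke Casselman's subrepresentation theorem: every irreducible admissible representation $\Pi$ of $G_n$ embeds in some principal series, hence, passing to contragredients (the contragredient of a principal series being again one), $\Pi$ is a quotient of some $H(\chi)$; choosing the $\gS_n$-translate of $\chi$ with $\mathrm{Re}(\nu_1)\geq\cdots\geq\mathrm{Re}(\nu_n)$ and using the uniqueness in (1) identifies $\Pi$ with $J(\chi)$. For the ``if'' part of (3), if $\chi$ and $\chi'=w(\chi)$ both have weakly decreasing real parts then $w$ permutes only indices with equal $\mathrm{Re}(\nu_i)$, and on each such block the principal series of the corresponding $GL$-factor, after untwisting, is induced from a unitary character and so is irreducible; hence the tempered part, and therefore the whole standard module, is unchanged up to isomorphism, giving $J(\chi)\cong J(\chi')$. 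For the ``only if'' part, $J(\chi)\cong J(\chi')$ forces equality of infinitesimal characters, which by Proposition \ref{prop:Cn_Ch_eigenvalue} fixes the two multisets $\{\nu_i+d_i/2\}_i$ and $\{\nu_i-d_i/2\}_i$; combined with the asymptotics of the $K$-finite matrix coefficients of $J(\chi)$, which recover the leading exponents and hence the ordered Langlands data, this determines $\chi$ up to the action of $\gS_n$.

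The hard part is this last step: that $J(\chi)\cong J(\chi')$ implies $\chi'=w(\chi)$. The infinitesimal character by itself does not separate Langlands quotients, so one genuinely needs the disjointness of standard modules at distinct Langlands parameters --- equivalently, the exactness of the Langlands classification --- and it is exactly here that I would lean on \cite{Knapp_003} rather than reproduce the matrix-coefficient asymptotics. Everything else is a routine transcription of the $GL(n,\bR)$ argument underlying Theorem \ref{thm:Rn_langlands_clas}.
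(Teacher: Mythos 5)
The paper gives no proof of this theorem: it is stated as a citation to Knapp's article, exactly the route you identify as the shortest one. Your additional self-contained sketch (no discrete series for $GL(m,\bC)$, hence irreducible tempered inductions from the diagonal torus; Langlands quotient via the long intertwining operator; Casselman's subrepresentation theorem plus contragredients for exhaustion; infinitesimal character plus leading-exponent asymptotics, i.e.\ disjointness of standard modules, for (3)) is a correct outline of the standard argument, but it is not reproduced in the paper, and you are right that the disjointness step in (3) is precisely the part that would have to be imported from \cite{Knapp_003} or the general Langlands classification anyway.
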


By Theorem \ref{thm:Cn_landlands_clas} (2), 
for an irreducible admissible representation $\Pi$ of $G_n$, 
there exists some $J(\chi)$ with 
$\chi =\chi_{(\nu_{1} ,d_{1})}\boxtimes 
\chi_{(\nu_{2} ,d_{2})}\boxtimes 
\cdots \boxtimes \chi_{(\nu_{n} ,d_{n})}$, 
which is infinitesimally equivalent to $\Pi$. 
Then we define the Langlands parameter $\phi [\Pi ]$ of $\Pi$ by 
$\phi [\Pi ]=\bigoplus_{i=1}^n\chi_{(\nu_i,d_i)}$. 
By Theorem \ref{thm:Cn_landlands_clas} (3), 
this definition is well-defined and 
we obtain the local Langlands correspondence over $\bC$: 
\begin{thm}[{\cite[Theorem 5]{Knapp_003}}]
\label{thm:Cn_landlands_corresp}
The correspondence $\Pi \leftrightarrow \phi [\Pi]$ gives 
a bijection between the set of infinitesimal 
equivalence classes of irreducible admissible representations 
of $G_n=GL(n,\bC)$ and 
the set of equivalence classes of $n$-dimensional 
semisimple representations of $W_\bC$. 
\end{thm}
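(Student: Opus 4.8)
The plan is to deduce Theorem \ref{thm:Cn_landlands_corresp} from the Langlands classification in Theorem \ref{thm:Cn_landlands_clas} together with the elementary representation theory of the abelian group $W_\bC$. First I would record the structure of the target side: since $W_\bC = GL(1,\bC) = \bC^\times$ is abelian, every finite-dimensional semisimple complex representation of $W_\bC$ is a direct sum of characters, and the characters of $\bC^\times$ are exactly the $\chi_{(\nu ,d)}$ with $\nu \in \bC$, $d\in \bZ$ (a continuous homomorphism $\bC^\times \to \bC^\times$ is determined by its behaviour on the unit circle and on $\bR_+$, which supplies the two parameters). Consequently an $n$-dimensional semisimple representation of $W_\bC$ amounts, up to equivalence, to an unordered $n$-tuple $\{(\nu_1,d_1),\dots,(\nu_n,d_n)\}$ of such parameters, and $\bigoplus_{i=1}^n\chi_{(\nu_i,d_i)} \simeq \bigoplus_{i=1}^n\chi_{(\nu_i',d_i')}$ if and only if the two ordered tuples differ by a permutation in $\gS_n$.

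Next I would verify that $\Pi \mapsto \phi[\Pi]$ is well-defined on infinitesimal equivalence classes. Given an irreducible admissible $\Pi$, Theorem \ref{thm:Cn_landlands_clas}(2) furnishes $\chi = \chi_{(\nu_1,d_1)}\boxtimes \cdots \boxtimes \chi_{(\nu_n,d_n)}$ with $\Pi$ infinitesimally equivalent to $J(\chi)$, and one sets $\phi[\Pi] = \bigoplus_{i=1}^n\chi_{(\nu_i,d_i)}$. If $J(\chi)$ and $J(\chi')$ are infinitesimally equivalent, then Theorem \ref{thm:Cn_landlands_clas}(3) gives $w\in \gS_n$ with $\chi' = w(\chi)$, so the parameter tuples of $\chi$ and $\chi'$ differ by $w$, and by the previous paragraph $\bigoplus_i\chi_{(\nu_i,d_i)} \simeq \bigoplus_i\chi_{(\nu_i',d_i')}$; hence $\phi[\Pi]$ depends only on the class of $\Pi$.

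Finally I would establish bijectivity. For surjectivity, an arbitrary $n$-dimensional semisimple representation of $W_\bC$ is $\bigoplus_{i=1}^n\chi_{(\nu_i,d_i)}$ for some parameters, and it equals $\phi[J(\chi)]$ for $\chi = \chi_{(\nu_1,d_1)}\boxtimes \cdots \boxtimes \chi_{(\nu_n,d_n)}$. For injectivity, if $\phi[\Pi]\simeq \phi[\Pi']$ with $\Pi$ infinitesimally equivalent to $J(\chi)$ and $\Pi'$ to $J(\chi')$, then the parameter multisets of $\chi$ and $\chi'$ coincide, so $\chi' = w(\chi)$ for some $w\in \gS_n$, and Theorem \ref{thm:Cn_landlands_clas}(3) yields $J(\chi)\simeq J(\chi')$, i.e.\ $\Pi$ and $\Pi'$ are infinitesimally equivalent. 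There is no real obstacle in this argument; the only point demanding a little care is the bookkeeping that identifies equivalence classes of semisimple representations of the abelian group $W_\bC$ with $\gS_n$-orbits of parameter tuples, since that identification is precisely what matches the equivalence relation on the $J(\chi)$ side coming from Theorem \ref{thm:Cn_landlands_clas}(3).
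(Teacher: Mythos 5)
Your proposal is correct and follows essentially the argument the paper sketches in the paragraph preceding the theorem statement: well-definedness of $\Pi\mapsto\phi[\Pi]$ via Theorem \ref{thm:Cn_landlands_clas}(2) and (3), with bijectivity reduced to matching $\gS_n$-orbits of parameter tuples with isomorphism classes of $n$-dimensional semisimple representations of the abelian group $W_\bC$. The paper itself does not give a proof but cites Knapp \cite[Theorem 5]{Knapp_003}; you have simply supplied the routine bookkeeping.
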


Let $n$ and $n'$ be positive integers. 
Let $\Pi$ and $\Pi'$ be 
irreducible admissible representations of $G_n$ and $G_{n'}$, respectively. 
Then we define the local $L$-factor $L (s,\Pi \times \Pi')$ by 
\begin{align*}
&L (s,\Pi \times \Pi')
=L(s,\phi [\Pi] \otimes \phi[\Pi']).
\end{align*}
Let $\Pi_\chi $ and $\Pi_{\chi'}$ be 
irreducible principal series representations of $G_3$ and $G_2$, respectively, 
where 
\begin{align*}
&\chi =\chi_{(\nu_1,d_1)}\boxtimes \chi_{(\nu_2,d_2)}
\boxtimes \chi_{(\nu_3 ,d_3)},&
&\chi'=\chi_{(\nu_1',d_1')}\boxtimes \chi_{(\nu_2',d_2')}.
\end{align*}
Then the explicit forms of the local $L$-factor for 
$\Pi_\chi \times \Pi_{\chi'}$ are given by 
\begin{align*}
&L (s,\Pi_{\chi}\times \Pi_{\chi'})=
\prod_{i=1}^3\prod_{j=1}^2
\Gamma_\bC \Bigl(s+\nu_i+\nu_j'+\tfrac{|d_i+d_j'|}{2}\Bigr).
\end{align*}

\section{Preparations for $U(2)$-modules}
\label{subsec:C32_rep_of_K2}

We use the notation in \S \ref{subsec:C2_rep_K} and \S \ref{subsec:C3_rep_K}. 
For $\lambda =(\lambda_1,\lambda_2)\in \Lambda_2$, 
we set $\widetilde{\lambda}=(-\lambda_2,-\lambda_1)$. 
We define a $\bC$-bilinear pairing 
$\langle \cdot ,\cdot \rangle $ 
on $V_{\lambda}^{(2)}\otimes_\bC V_{\widetilde{\lambda}}^{(2)}$ by 
\begin{align}
\label{eqn:C32_def_K2_inv_pair}
&\langle v_{\lambda,q},\,v_{\widetilde{\lambda},q'}\rangle
=\frac{(-1)^{\lambda_1-q}}
{\binom{\lambda_1-\lambda_2}{q}}\,
\delta_{\lambda_1-\lambda_2,q'+q}&
&(q\in Q_{\lambda},\ q'\in Q_{\widetilde{\lambda}}). 
\end{align} 
By the equalities 
(\ref{eqn:C2_gKact11_22}), 
(\ref{eqn:C2_gKact12_21}) and the connectedness of $K_2$, 
we know that 
the pairing $\langle \cdot ,\cdot \rangle $ is $K_2$-invariant. 
Moreover, it holds that 
\begin{align}
&\langle v',v\rangle =\langle v,v'\rangle &
&(v\in V_{\lambda}^{(2)},\ v'\in V_{\widetilde{\lambda}}^{(2)}).
\end{align}

We regard $K_2$ as a subgroup of $K_3$ via the embedding 
\begin{align}
\label{eqn:C32_K2_to_K3}
K_2\ni k\mapsto \left(\begin{array}{c|c}
k&\\ \hline &1
\end{array}\right)\in K_3. 
\end{align}
Then the following lemma holds. 
\begin{lem}
\label{lem:C32_K32_restriction}
Let $\mu =(\mu_1,\mu_2,\mu_3)\in \Lambda_3$. 
Then it holds that  
\begin{align}
\label{eqn:C32_K32_restriction}
V^{(3)}_{\mu}
\simeq \bigoplus_{\lambda \in \Omega (\mu )}V^{(2)}_{\lambda }
\end{align}
as $K_2$-modules, 
where 
$\Omega (\mu ) =\{(\lambda_1,\lambda_2)\in \Lambda_2\mid 
\mu_1 \geq \lambda_1\geq \mu_2 \geq \lambda_2\geq \mu_3 \}$. 
For $\lambda =(\lambda_1,\lambda_2)\in \Omega (\mu )$, 
there is a $K_2$-homomorphism 
$\iota_{\lambda}^{\mu}\colon 
V^{(2)}_{\lambda }\to V^{(3)}_{\mu }$ such that, 
for $q\in Q_{\lambda}$, 
\begin{align}
\label{eqn:C32_K32_res_inj}
&\iota_{\lambda}^{\mu}(v_{\lambda ,q})
=\sum_{i=\max \{0,q-\lambda_1+\mu_2\}
}^{\min \{q,\mu_2-\lambda_2\}}
(-1)^i\binom{q}{i}\binom{\lambda_1-\lambda_2-q}{\mu_2-\lambda_2-i}
u_{l(\lambda ;q-i,i)}
\end{align}
with $l(\lambda;i,j)
=(\lambda_1-\mu_2-i,i,\mu_1-\lambda_1,j,\mu_2-\lambda_2-j,\lambda_2-\mu_3)
\in S_\mu$. 
\end{lem}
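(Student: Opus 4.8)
The plan is to prove Lemma~\ref{lem:C32_K32_restriction} by the same branching/highest-weight strategy used for the $GL(3,\bR)$ analogue (Lemma~\ref{lem:R32_K32_restriction}) and for the tensor-product lemmas in \S\ref{subsec:C3_rep_K}. First I would establish the abstract branching rule \eqref{eqn:C32_K32_restriction}. Restricting $\tau_\mu^{(3)}$ from $U(3)$ to $U(2)$ (embedded by \eqref{eqn:C32_K2_to_K3}), the classical $GL(n)\downarrow GL(n-1)$ branching rule gives exactly the multiplicity-free decomposition over the interlacing set $\Omega(\mu)=\{(\lambda_1,\lambda_2)\in\Lambda_2\mid \mu_1\ge\lambda_1\ge\mu_2\ge\lambda_2\ge\mu_3\}$. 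I would instead derive this directly inside the polynomial model: use \eqref{eqn:C3_Mact} together with the action of the diagonal torus $K_2\cap M$ on the generators $u_l$ to identify, for each $(\lambda_1,\lambda_2)\in\Omega(\mu)$, the $U(2)$-highest weight vectors; the point is that a vector $v\in V_\mu^{(3)}$ with $\tau_\mu(E_{1,1}^{\gk_3})v=\lambda_1 v$, $\tau_\mu(E_{2,2}^{\gk_3})v=\lambda_2 v$ and killed by $\tau_\mu(E_{1,2}^{\gk_3})$ exists (uniquely up to scalar) precisely when the interlacing conditions hold, and each such vector generates a copy of $V_\lambda^{(2)}$. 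Counting dimensions via $\dim_\bC V_\mu^{(3)}=\sum_{\lambda\in\Omega(\mu)}\dim_\bC V_\lambda^{(2)}$ (a Gelfand--Tsetlin count) then forces \eqref{eqn:C32_K32_restriction}.

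Next I would construct the explicit embedding $\iota_\lambda^\mu$. Fix $\lambda=(\lambda_1,\lambda_2)\in\Omega(\mu)$ and consider the vector defined by the claimed formula \eqref{eqn:C32_K32_res_inj} at $q=0$, namely $\iota_\lambda^\mu(v_{\lambda,0})=\sum_i (-1)^i\binom{\lambda_1-\lambda_2}{\mu_2-\lambda_2-i}\,u_{l(\lambda;-i,i)}$ (appropriately interpreted), which I expect to be the $U(2)$-highest weight vector of weight $(\lambda_1,\lambda_2)$ identified in the first step; one checks using \eqref{eqn:C3_gkact} that $\tau_\mu(E_{1,2}^{\gk_3})$ annihilates it, the key cancellation coming from the relation $u_{l+\me_1+\tilde\me_1}+u_{l+\me_2+\tilde\me_2}+u_{l+\me_3+\tilde\me_3}=0$ together with the binomial identity $\binom{n}{k}=\binom{n-1}{k}+\binom{n-1}{k-1}$. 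Then I would define $\iota_\lambda^\mu$ on the remaining basis vectors by repeatedly applying $\tau_\mu(E_{2,1}^{\gk_3})$, using the $K_2$-equivariance requirement together with \eqref{eqn:C2_gKact12_21}: since $\tau_\lambda(E_{2,1}^{\gk_2})v_{\lambda,q}=(\lambda_1-\lambda_2-q)v_{\lambda,q+1}$, the value $\iota_\lambda^\mu(v_{\lambda,q+1})$ is forced as a normalized image of $\tau_\mu(E_{2,1}^{\gk_3})\iota_\lambda^\mu(v_{\lambda,q})$. Computing this action on the generators $u_{l(\lambda;i,j)}$ via \eqref{eqn:C3_gkact} and matching coefficients should reproduce \eqref{eqn:C32_K32_res_inj}; the bookkeeping is a Vandermonde-type binomial identity, exactly parallel to the computation of $\mathrm{I}_{\alpha+\beta}^{\alpha,\beta}$ in Lemma~\ref{lem:C2_tensor}.

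Finally I would verify $K_2$-equivariance of the resulting $\bC$-linear map: it suffices to check compatibility with $E_{1,1}^{\gk_2},E_{2,2}^{\gk_2}$ (immediate from the torus weights of the $u_l$, via \eqref{eqn:C3_gkact}), with $E_{2,1}^{\gk_2}$ (true by construction), and with $E_{1,2}^{\gk_2}$; the last follows since $\mathrm{Hom}_{\gk_{2\bC}}(V_\lambda^{(2)},V_\mu^{(3)})$ is at most one-dimensional and the candidate map is nonzero on the highest weight line, so equivariance under the raising operator is automatic once it holds under the Cartan and lowering operators (and $U(2)$ being connected, $\gk_{2\bC}$-equivariance upgrades to $K_2$-equivariance, as in Lemma~\ref{lem:C3_tensor1}).

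I expect the main obstacle to be the second step: pinning down the precise normalization constants and the summation range $\max\{0,q-\lambda_1+\mu_2\}\le i\le\min\{q,\mu_2-\lambda_2\}$ in \eqref{eqn:C32_K32_res_inj}, and proving the binomial identity that emerges after applying $\tau_\mu(E_{2,1}^{\gk_3})$ repeatedly and re-expressing the result in the basis $\{u_{l(\lambda;i,j)}\}$ modulo the syzygy $u_{l+\me_1+\tilde\me_1}+u_{l+\me_2+\tilde\me_2}+u_{l+\me_3+\tilde\me_3}=0$. Everything else (the abstract branching rule, the dimension count, and the equivariance argument) is routine representation theory of compact unitary groups.
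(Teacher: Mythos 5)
Your approach is essentially the paper's: identify a $K_2$-highest-weight vector of weight $\lambda$ for each $\lambda\in\Omega(\mu)$, use a dimension count to force \eqref{eqn:C32_K32_restriction}, and produce \eqref{eqn:C32_K32_res_inj} by iterating the lowering operator $\tau_\mu(E_{2,1}^{\gk_3})$ as dictated by $K_2$-equivariance. One small correction to your second step: the candidate highest-weight vector is \emph{not} a nontrivial sum requiring the syzygy and a binomial cancellation to be killed by $\tau_\mu(E_{1,2}^{\gk_3})$. At $q=0$ the sum in \eqref{eqn:C32_K32_res_inj} collapses to a single term (the factor $\binom{q}{i}=\binom{0}{i}$ vanishes for $i\ge 1$; alternatively, $l(\lambda;-i,i)$ has a negative entry for $i\ge 1$, so $u_{l(\lambda;-i,i)}=0$), namely the monomial generator $u_{l(\lambda;0,0)}$ with $l(\lambda;0,0)=(\lambda_1-\mu_2,0,\mu_1-\lambda_1,0,\mu_2-\lambda_2,\lambda_2-\mu_3)$, whose second entry and fourth entry both vanish; by \eqref{eqn:C3_gkact}, $\tau_\mu(E_{1,2}^{\gk_3})u_l=l_2u_{l-\me_2+\me_1}-\tilde l_1u_{l-\tilde\me_1+\tilde\me_2}$, which is then identically zero with no cancellation at all. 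The genuinely computational step is the one you flag last: checking that the recursion $\iota_\lambda^\mu(v_{\lambda,q+1})=(\lambda_1-\lambda_2-q)^{-1}\tau_\mu(E_{2,1}^{\gk_3})\iota_\lambda^\mu(v_{\lambda,q})$ reproduces the stated coefficients, which is where the Pascal-type binomial identity enters. With that adjustment your outline matches the paper's proof.
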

\begin{proof}
Let $\lambda =(\lambda_1,\lambda_2)\in \Omega (\mu )$. 
By the equality (\ref{eqn:C3_gkact}), we have 
\begin{align*}
&\tau_{\lambda}^{(3)}(E^{\gk_3}_{i,i})u_{l(\lambda;0,0)}
=\lambda_iu_{l(\lambda;0,0)}
\quad (i=1,2),&
&\tau_{\lambda}^{(3)}(E^{\gk_3}_{1,2})u_{l(\lambda;0,0)}=0,
\end{align*}
and know that $u_{l(\lambda;0,0)}$ is a highest weight vector 
in a $K_2$-module $V_{\mu}^{(3)}$ with highest weight $\lambda$. 
Hence, because of the highest weight theory, there is 
a $K_2$-homomorphism $\iota_{\lambda}^{\mu}\colon 
V^{(2)}_{\lambda }\to V^{(3)}_{\mu }$ such that
\begin{align*}
&\iota_{\lambda}^{\mu}(v_{\lambda ,q})
=\binom{\lambda_1-\lambda_2}{\mu_2-\lambda_2}
u_{l(\lambda ;0,0)}.
\end{align*}
Hence, we obtain (\ref{eqn:C32_K32_restriction}) by 
$\dim_\bC  V^{(3)}_{\mu}
=\sum_{\lambda \in \Omega (\mu )}\dim_\bC V^{(2)}_{\lambda }$. 
Using 
\begin{align}
&\iota^\mu_\lambda (v_{\lambda,q+1})
=(\lambda_1-\lambda_2-q)^{-1}\tau_{\lambda }^{(3)}(E_{2,1}^{\gk_3} )
\iota^\mu_\lambda (v_{\lambda,q})&
&(q\in Q_\lambda ),
\end{align}
we have (\ref{eqn:C32_K32_res_inj}), recursively. 
\end{proof}

For later use, we specify the irreducible components of 
the tensor product of irreducible representations of $K_2$. 
Let $m_1,m_2\in \bZ_{\geq 0}$ and $m_3\in \bZ$. 
Let $\mathcal{P}(m_1,m_2,m_3)$ be 
the $\bC$-vector space of 
polynomials of four variables 
$z_1$, $z_2$, $w_1$, $w_2$, 
which are 
degree $m_1$ homogeneous 
with respect to two variables $z_1$, $z_2$, 
and are degree $m_2$ homogeneous 
with respect to two variables $w_1$, $w_2$. 
We regard $\mathcal{P}(m_1,m_2,m_3)$ as 
a $K_2$-module via the action 
\begin{align*}
&(T_{(m_1,m_2,m_3)}^{(2,2)}(k)p)
(z_1,z_2,w_1,w_2)
=(\det k)^{m_3}
p((z_1,z_2)\cdot k,(w_1,w_2)\cdot k)
\end{align*}
for $k\in K_2$ and 
$p\in \mathcal{P}(m_1,m_2,m_3)$. 
Here $(z_1,z_2)\cdot k$ and $(w_1,w_2)\cdot k$ 
are the ordinal products of matrices. 
For $i\in \bZ_{\geq 0}$, there is a $K_2$-embedding 
\[
\mathrm{I}_{(m_1,m_2,m_3)}^{\mathcal{P},i}
\colon \mathcal{P}(m_1,m_2,m_3)\to 
\mathcal{P}(m_1+i,m_2+i,m_3-i)
\]
defined by $p(z_1,z_2,w_1,w_2)\mapsto 
(z_1w_2-z_2w_1)^ip(z_1,z_2,w_1,w_2)$. 
Moreover, for $\alpha =(\alpha_1,\alpha_2), 
\beta =(\beta_1,\beta_2)\in \Lambda_2$, 
there is a $K_2$-isomorphism 
\[
\iota_{\alpha ,\beta}^{\mathcal{P}} 
\colon V_\alpha^{(2)}\otimes_\bC V_\beta^{(2)}
\to \mathcal{P}(\alpha_1-\alpha_2,\beta_1-\beta_2,\alpha_2+\beta_2)
\]
defined by 
$p_1(z_1,z_2)\otimes p_2(z_1,z_2)\mapsto 
p_1(z_1,z_2)p_2(w_1,w_2)$. 

\begin{prop}
\label{prop:C32_tensor_K2}
Let $\alpha =(\alpha_1,\alpha_2), 
\beta =(\beta_1,\beta_2)\in \Lambda_2$. 
Then it holds that  
\begin{align}
\label{eqn:C32_tensor1}
V^{(2)}_{\alpha }\otimes_{\bC}V^{(2)}_{\beta }
\simeq \bigoplus_{\lambda \in \Omega (\alpha,\beta)}
V^{(2)}_{\lambda } 
\end{align}
as $K_2$-modules, where 
\begin{align*}
&\Omega (\alpha ,\beta )=
\{(\alpha_1+\beta_1-i,\alpha_2+\beta_2+i)\in \Lambda_2\mid 
0\leq i\leq \min \{\alpha_1-\alpha_2,\beta_1-\beta_2\}\}.
\end{align*}
For $\lambda =(\alpha_1+\beta_1-i,\alpha_2+\beta_2+i)
\in \Omega (\alpha ,\beta )$, 
there is a $K_2$-homomorphism 
$\mathrm{I}_{\lambda}^{\alpha ,\beta}\colon 
V^{(2)}_{\lambda }\to V^{(2)}_{\alpha }\otimes_{\bC}V^{(2)}_{\beta }$ 
such that, for $q\in Q_{\lambda}$, 
\begin{align*}
\mathrm{I}_{\lambda }^{\alpha ,\beta}
(v_{\lambda ,q})=
\sum_{j=\max \{0,q+i-\beta_1 +\beta_2\}}^{\min \{q+i,\alpha_1-\alpha_2\}}
C_{(\lambda ;q,j)}^{\alpha ,\beta}
v_{\alpha ,j}\otimes v_{\beta,q+i-j}
\end{align*}
with 
\begin{align*}
C_{(\lambda ;q,j)}^{\alpha ,\beta }
=&\sum_{k=\max \{0,j-q,i+j-\alpha_1+\alpha_2\}}^{\min \{i,j,
\beta_1-\beta_2-q-i+j\}}
(-1)^k\binom{i}{k}\binom{q}{j-k}\\
&\times 
\binom{\alpha_1+\beta_1-\alpha_2-\beta_2-2i-q}{\alpha_1-\alpha_2-i-j+k}.
\end{align*}
\end{prop}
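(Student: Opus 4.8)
The plan is to establish the decomposition \eqref{eqn:C32_tensor1} first, then exhibit the explicit intertwining maps. The decomposition $V^{(2)}_{\alpha}\otimes_{\bC}V^{(2)}_{\beta}\simeq\bigoplus_{\lambda\in\Omega(\alpha,\beta)}V^{(2)}_{\lambda}$ is the Clebsch--Gordan rule for $U(2)$: writing $m_1=\alpha_1-\alpha_2$, $m_2=\beta_1-\beta_2$, the tensor product $V^{(2)}_{\alpha}\otimes_{\bC}V^{(2)}_{\beta}$ is $K_2$-isomorphic to $\mathcal{P}(m_1,m_2,\alpha_2+\beta_2)$ via $\iota^{\mathcal{P}}_{\alpha,\beta}$, and the classical fact that $\mathcal{P}(m_1,m_2,m_3)$ decomposes as $\bigoplus_{i=0}^{\min\{m_1,m_2\}}V^{(2)}_{(m_1+m_2-i+m_3,\,i+m_3)}$ follows by iterating the harmonic decomposition with respect to the invariant $z_1w_2-z_2w_1$, using the embeddings $\mathrm{I}^{\mathcal{P},i}_{(m_1,m_2,m_3)}$. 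A clean way to conclude is to count dimensions: since $\dim_\bC\bigl(V^{(2)}_{\alpha}\otimes_{\bC}V^{(2)}_{\beta}\bigr)=(m_1+1)(m_2+1)=\sum_{i=0}^{\min\{m_1,m_2\}}(m_1+m_2-2i+1)=\sum_{\lambda\in\Omega(\alpha,\beta)}\dim_\bC V^{(2)}_{\lambda}$, it suffices to show each $V^{(2)}_{\lambda}$ with $\lambda\in\Omega(\alpha,\beta)$ occurs, which the explicit map below witnesses.

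Next I would verify that the stated $\bC$-linear maps $\mathrm{I}^{\alpha,\beta}_{\lambda}$ are $K_2$-homomorphisms. Since $K_2=U(2)$ is connected, it is enough to check $\gk_{2\bC}$-equivariance, and since the formula pins down the highest weight vector (the case $q=0$, where $\mathrm{I}^{\alpha,\beta}_{\lambda}(v_{\lambda,0})$ should be a highest weight vector of weight $\lambda$ in $V^{(2)}_{\alpha}\otimes_{\bC}V^{(2)}_{\beta}$), the natural route is: (i) check that $v_{\lambda,0}\mapsto\sum_{j}C^{\alpha,\beta}_{(\lambda;0,j)}v_{\alpha,j}\otimes v_{\beta,i-j}$ lands in the kernel of $(\tau_\alpha\otimes\tau_\beta)(E^{\gk_2}_{1,2})$ and is an eigenvector of $(\tau_\alpha\otimes\tau_\beta)(E^{\gk_2}_{1,1})$, $(\tau_\alpha\otimes\tau_\beta)(E^{\gk_2}_{2,2})$ with the correct eigenvalues, using \eqref{eqn:C2_gKact11_22} and \eqref{eqn:C2_gKact12_21}; (ii) define $\mathrm{I}^{\alpha,\beta}_{\lambda}(v_{\lambda,q})$ recursively by applying $(\tau_\alpha\otimes\tau_\beta)(E^{\gk_2}_{2,1})$, exactly as in the proof of Lemma \ref{lem:C2_tensor} and Lemma \ref{lem:C32_K32_restriction}; (iii) check by induction on $q$ that the recursion reproduces the closed form with coefficients $C^{\alpha,\beta}_{(\lambda;q,j)}$. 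Step (iii) is where the nested binomial sum must be matched against the Leibniz-type expansion coming from repeated application of $E^{\gk_2}_{2,1}$; this is essentially a Vandermonde/Chu identity bookkeeping.

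The main obstacle I anticipate is verifying the precise form of $C^{\alpha,\beta}_{(\lambda;q,j)}$, i.e.\ that the triple-indexed sum over $k$ (which encodes both the harmonic projection killing powers of $z_1w_2-z_2w_1$ and the binomial expansion of $E^{\gk_2}_{2,1}{}^{q}$ acting on $v_{\alpha,0}\otimes v_{\beta,i}$) collapses to the stated expression after applying the Chu--Vandermonde identity. Concretely, one expects $\mathrm{I}^{\alpha,\beta}_{\lambda}$ to factor as the composite
\[
V^{(2)}_{\lambda}\xrightarrow{\ \mathrm{I}^{\alpha,\beta}_{(\alpha_1+\beta_1,\alpha_2+\beta_2)}\text{-type map}\ }(\text{lower harmonic piece})\hookrightarrow V^{(2)}_{\alpha}\otimes_{\bC}V^{(2)}_{\beta},
\]
the inclusion being the $i$-fold multiplication by $z_1w_2-z_2w_1$ transported through $\iota^{\mathcal{P}}_{\alpha,\beta}$; unwinding this composite and expanding $(z_1w_2-z_2w_1)^i$ produces the $k$-sum. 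I would first handle the boundary cases $i=0$ (where the map reduces to $\mathrm{I}^{\alpha,\beta}_{\alpha+\beta}$, recovering the formula of Lemma \ref{lem:C2_tensor}) and $i=\min\{m_1,m_2\}$, then treat the general $i$ by the harmonic filtration, so that the combinatorial identity need only be checked once in a uniform way.
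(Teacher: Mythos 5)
Your proposal is correct and, in its final paragraph, converges to exactly what the paper does: the map $\mathrm{I}_{\lambda}^{\alpha,\beta}$ is realized as the composite of the Cartan-component map $I_\lambda^{(\alpha_1-i,\alpha_2),(\beta_1,\beta_2+i)}$ from Lemma \ref{lem:C2_tensor}, the identification $\iota^{\mathcal{P}}$ with the polynomial model, $i$-fold multiplication by the invariant $z_1w_2-z_2w_1$, and $(\iota^{\mathcal{P}}_{\alpha,\beta})^{-1}$; the decomposition \eqref{eqn:C32_tensor1} then follows from the dimension count $\dim_\bC(V^{(2)}_\alpha\otimes V^{(2)}_\beta)=\sum_{\lambda\in\Omega(\alpha,\beta)}\dim_\bC V^{(2)}_\lambda$. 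The earlier part of your write-up, which proposes to verify $K_2$-equivariance by checking the highest-weight condition at $q=0$ and then lowering with $E^{\gk_2}_{2,1}$ as in Lemma \ref{lem:C32_K32_restriction}, is a legitimate alternative but is more laborious (it would require inductively collapsing the nested binomial sum via Chu--Vandermonde, precisely the ``bookkeeping'' you flag); the paper avoids this entirely by presenting the composite directly, since each factor in the composite is already known to be a $K_2$-homomorphism and the coefficients $C^{\alpha,\beta}_{(\lambda;q,j)}$ fall out from expanding $(z_1w_2-z_2w_1)^i$ against the coefficients of Lemma \ref{lem:C2_tensor}.
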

\begin{proof}
For $\lambda =(\alpha_1+\beta_1-i,\alpha_2+\beta_2+i)
\in \Omega (\alpha ,\beta )$, 
the $K_2$-homomorphism $\mathrm{I}_{\lambda}^{\alpha ,\beta}$ 
in the statement is obtained as the composite 
\begin{align*}
(\iota_{\alpha,\beta}^{\mathcal{P}})^{-1}
\circ \mathrm{I}_{(\alpha_1-\alpha_2-i,
\beta_1-\beta_2-i,\alpha_2+\beta_2+i)}^{\mathcal{P},i}
\circ \iota_{(\alpha_1-i,\alpha_2),
(\beta_1,\beta_2+i)}^{\mathcal{P}} 
\circ I_\lambda^{(\alpha_1-i,\alpha_2),(\beta_1,\beta_2+i)}, 
\end{align*}
where $I_\lambda^{(\alpha_1-i,\alpha_2),(\beta_1,\beta_2+i)}$ is 
the $K_2$-homomorphism in Lemma \ref{lem:C2_tensor}. 
Moreover, since 
$\dim_\bC  (V^{(2)}_{\alpha }\otimes_{\bC}V^{(2)}_{\beta })=
\sum_{\lambda \in \Omega (\alpha,\beta)}
\dim_\bC  V^{(2)}_{\lambda }$, we obtain (\ref{eqn:C32_tensor1}). 
\end{proof}

\section{Whittaker functions on $GL(2,\bC )$}
\label{subsec:C32_Wh_GL2}

Let $\varepsilon \in \{\pm 1\}$, 
and take $\Xi_{(\varepsilon )}$ as (\ref{eqn:Fn_psi_change}). 
Let $\Pi_\chi $ be 
an irreducible principal series representation of $G_2$ 
with $\chi = \chi_{(\nu_1,d_1)}\boxtimes \chi_{(\nu_2,d_2)}$ 
($d_1\geq d_2$). 
Let $\Phi_{\chi}^{\rm mg}\in {\cI}_{\Pi_\chi ,\psi_1}^{\rm mg}$ 
be the homomorphism in Theorems \ref{thm:C2_ps_Whittaker} and 
\ref{thm:C2_ps_Whittaker2}. 
For $\lambda =(\lambda_1,\lambda_2)=(d_1+l,d_2-l)\in \Lambda (\chi)$, 
we define a $K_2$-homomorphism $\varphi_{\chi ,\lambda}^{[\varepsilon ]}
\colon V_{\lambda}^{(2)}\to 
\mathrm{Wh}(\Pi_\chi ,\psi_{\varepsilon })^{\mathrm{mg}}$ by 
\begin{align*}
\varphi_{\chi ,\lambda}^{[\varepsilon ]}=
\frac{(\sI)^{\lambda_1+l}}{\binom{d_1-d_2+2l}{l}}
\frac{\Gamma_\bC \bigl(\nu_1-\nu_2+1+\frac{d_1-d_2}{2}+l\bigr)}
{\Gamma _\bC \bigl(\nu_1-\nu_2+1+\frac{d_1-d_2}{2}\bigr)}\,
\Xi_{(\varepsilon )}\circ \Phi_{\chi}^{\rm mg}
\circ \hat{\eta}_{(\chi ;\lambda )},
\end{align*}
where $\hat{\eta}_{(\chi ;\lambda )}$ is defined by 
(\ref{eqn:C2_eta_chi_lambda}). 

\begin{prop}
\label{prop:C32_Wh_GL2}
Retain the notation. 
For $y=\diag (y_1y_2,y_2)\in A_2$ and $q\in Q_\lambda$, 
it holds that  
\begin{align}
\nonumber 
&\varphi_{\chi ,\lambda}^{[\varepsilon ]}(v_{\lambda ,q})(y)\\
\begin{split}
\label{eqn:C32_Wh_GL2_1st}
&\!=(\varepsilon \sI )^{\lambda_1-q}
\sum_{i=0}^{\min \{q,l\}}\binom{q}{i}
\dfrac{(-l)_i\bigl(-\nu_1+\nu_2-\frac{\lambda_1-\lambda_2}{2}\bigr)_i}
{(2\pi )^i(-\lambda_1+\lambda_2)_i}\\
&\phantom{=}
\times \frac{y_1y_2^{2\nu_1+2\nu_2}}{2\pi \sI}\int_t
\Gamma_{\bC}\bigl(t+\nu_1+\tfrac{q+l}{2}-i\bigr)
\Gamma_{\bC}\bigl(t+\nu_2+\tfrac{\lambda_1-\lambda_2-q-l}{2}\bigr)
y_1^{-2t}dt
\end{split}\\
\begin{split}
\label{eqn:C32_Wh_GL2_2nd}
&\!=(\varepsilon \sI )^{\lambda_1-q}\!
\sum_{i=0}^{\min \{\lambda_1-\lambda_2-q,l\}}\!\binom{\lambda_1-\lambda_2-q}{i}
\dfrac{(-l)_i\bigl(-\nu_2+\nu_1-\frac{\lambda_1-\lambda_2}{2}\bigr)_i}
{(2\pi )^i(-\lambda_1+\lambda_2)_i}\\
&\phantom{=}
\times \frac{y_1y_2^{2\nu_1+2\nu_2}}{2\pi \sI}\int_t
\Gamma_{\bC}\bigl(t+\nu_2+\tfrac{\lambda_1-\lambda_2-q+l}{2}-i\bigr)
\Gamma_{\bC}\bigl(t+\nu_1+\tfrac{q-l}{2}\bigr)
y_1^{-2t}dt.
\end{split}
\end{align}
Here the path of integration $\int_t$ is the vertical line 
from $\mathrm{Re}(t)-\sI \infty$ to $\mathrm{Re}(t)+\sI \infty$ 
with the sufficiently large real part to keep the poles of the integrand 
on its left.  
\end{prop}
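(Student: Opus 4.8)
The plan is to trace through the definition of $\varphi_{\chi,\lambda}^{[\varepsilon]}$ and apply the explicit formulas from Part~\ref{part:1}. First I would unravel the composition: by definition $\varphi_{\chi,\lambda}^{[\varepsilon]}(v_{\lambda,q})$ is a scalar multiple of $\Xi_{(\varepsilon)}\bigl(\Phi_\chi^{\mathrm{mg}}(\hat{\eta}_{(\chi;\lambda)}(v_{\lambda,q}))\bigr)$, and $\hat{\eta}_{(\chi;\lambda)}(v_{\lambda,q}) = \zeta_{(\chi;l,q)}$ by the very definition of $\zeta_{(\chi;l,q)}$ preceding Theorem~\ref{thm:C2_ps_Whittaker2}. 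Then Theorem~\ref{thm:C2_ps_Whittaker2} gives $\Phi_\chi^{\mathrm{mg}}(\zeta_{(\chi;l,q)})(y)$ as an explicit sum over $i=0$ to $\min\{q,l\}$ of binomial coefficients times ratios of $\Gamma_\bC$-values times a Mellin--Barnes integral. The effect of $\Xi_{(\varepsilon)}$ on the radial part is to replace $y_1 \mapsto |\varepsilon| y_1 = y_1$ up to the sign bookkeeping that is already packaged into the phase factors; concretely, since $m_{(\varepsilon)} = \diag(\varepsilon,1)$, the radial part on $A_2$ is multiplied by an appropriate power of $\varepsilon$. Collecting the prefactor $(\sI)^{\lambda_1+l}/\binom{d_1-d_2+2l}{l}$ times the ratio of Gamma factors, together with the $(\sI)^{-q-l}$ from \eqref{eqn:R2_def_hat_varphi} and the explicit coefficients of Theorem~\ref{thm:C2_ps_Whittaker2}, should assemble into exactly \eqref{eqn:C32_Wh_GL2_1st}. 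The main bookkeeping is verifying that
\[
\binom{d_1-d_2+2l-i}{l-i}\Big/\binom{d_1-d_2+2l}{l}
= \dfrac{(-l)_i}{(-\lambda_1+\lambda_2)_i},\qquad
\dfrac{\Gamma_\bC(\nu_1-\nu_2+1+\tfrac{d_1-d_2}{2}+l)}
{\Gamma_\bC(\nu_1-\nu_2+1+\tfrac{d_1-d_2}{2}+l-i)}
= (2\pi)^{-i}\bigl(-\nu_1+\nu_2-\tfrac{\lambda_1-\lambda_2}{2}\bigr)_i
\]
after using $\lambda_1-\lambda_2 = d_1-d_2+2l$, $\Gamma_\bC(s+1)=(2\pi)^{-1}s\Gamma_\bC(s)$, and the reflection identity \eqref{eqn:Fn_Pochhammer_rel} for Pochhammer symbols; and that the sign $(-1)^i$ in Theorem~\ref{thm:C2_ps_Whittaker2} is absorbed into $(-l)_i$.

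For the second expression \eqref{eqn:C32_Wh_GL2_2nd}, the natural route is symmetry. The representation $\Pi_\chi$ with $\chi = \chi_{(\nu_1,d_1)}\boxtimes\chi_{(\nu_2,d_2)}$ is equivalent to $\Pi_{\chi''}$ with $\chi'' = \chi_{(\nu_2,d_2)}\boxtimes\chi_{(\nu_1,d_1)}$ only after a Weyl-group twist, which is not literally available since we need $d_1 \geq d_2$; instead I would argue directly. Note that $\eqref{eqn:C32_Wh_GL2_2nd}$ differs from $\eqref{eqn:C32_Wh_GL2_1st}$ by the substitution $q \mapsto \lambda_1-\lambda_2-q$ together with swapping the roles of $\nu_1$ and $\nu_2$ inside the integrand and the Pochhammer factor. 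The cleanest proof is to observe that both \eqref{eqn:C32_Wh_GL2_1st} and \eqref{eqn:C32_Wh_GL2_2nd}, as functions of $q$, are candidate radial parts at the $K_2$-type $\tau_\lambda^{(2)}$, and that the space of $K_2$-homomorphisms into $\mathrm{Wh}(\Pi_\chi,\psi_\varepsilon)^{\mathrm{mg}}$ is one-dimensional (by the multiplicity-one theorem quoted after \eqref{eqn:Fn_whitt_ngk}, together with $\dim_\bC\Hom_{K_2}(V_\lambda^{(2)},H(\chi)_K)=1$). Hence it suffices to check that the two expressions agree at a single value of $q$, say $q=0$, where both sums collapse to their $i=0$ term and give $(\varepsilon\sI)^{\lambda_1}\tfrac{y_1 y_2^{2\nu_1+2\nu_2}}{2\pi\sI}\int_t\Gamma_\bC(t+\nu_1+\tfrac{l}{2})\Gamma_\bC(t+\nu_2+\tfrac{\lambda_1-\lambda_2-l}{2})y_1^{-2t}dt$. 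A parallel check at $q=\lambda_1-\lambda_2$ handles the symmetric endpoint, and since a $K_2$-homomorphism is determined by its value on one highest (or lowest) weight vector, this pins down the whole map.

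Alternatively, and perhaps more robustly, \eqref{eqn:C32_Wh_GL2_2nd} can be derived by re-running the recursion leading to Theorem~\ref{thm:C2_ps_Whittaker2} but starting from $\hat\varphi_{l,d_1-d_2+2l}$ instead of $\hat\varphi_{l,0}$ and descending via \eqref{eqn:C2_PDE2_C2} rather than ascending via \eqref{eqn:C2_PDE2_C2omote}; the symmetric recurrence yields the second binomial sum with $\nu_1,\nu_2$ interchanged. Either way, the main obstacle is purely combinatorial: confirming that the two closed-form binomial-hypergeometric sums coincide term by term would require a Chu--Vandermonde-type identity, so the one-dimensionality argument is the safer path since it reduces everything to a single endpoint evaluation. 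I would therefore structure the proof as: (i) unwind the definitions to reduce to Theorem~\ref{thm:C2_ps_Whittaker2} and the action of $\Xi_{(\varepsilon)}$; (ii) simplify the prefactors using the Gamma and Pochhammer identities to obtain \eqref{eqn:C32_Wh_GL2_1st}; (iii) invoke $\dim_\bC\Hom_{K_2}\bigl(V_\lambda^{(2)},\mathrm{Wh}(\Pi_\chi,\psi_\varepsilon)^{\mathrm{mg}}\bigr)=1$ and match the $q=0$ term to conclude \eqref{eqn:C32_Wh_GL2_2nd}.
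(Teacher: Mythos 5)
For the first expression \eqref{eqn:C32_Wh_GL2_1st}, your approach matches the paper's: unwind the definition to Theorem~\ref{thm:C2_ps_Whittaker2} and simplify the prefactors. Your arithmetic is essentially right, though the remark that ``the sign $(-1)^i$ in Theorem~\ref{thm:C2_ps_Whittaker2} is absorbed into $(-l)_i$'' is slightly off: the ratio $\binom{\lambda_1-\lambda_2-i}{l-i}\big/\binom{\lambda_1-\lambda_2}{l}=(-l)_i/(-\lambda_1+\lambda_2)_i$ carries no net sign, and the $(-1)^i$ from Theorem~\ref{thm:C2_ps_Whittaker2} actually cancels the $(-1)^i$ that comes out of the Gamma ratio via $\Gamma_\bC(z+1)=(2\pi)^{-1}z\Gamma_\bC(z)$. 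The bookkeeping works, but not for the reason you state.

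For the second expression \eqref{eqn:C32_Wh_GL2_2nd}, your main argument has a genuine gap. You invoke $\dim_\bC\Hom_{K_2}\bigl(V_\lambda^{(2)},\mathrm{Wh}(\Pi_\chi,\psi_\varepsilon)^{\mathrm{mg}}\bigr)=1$ and propose to compare the two formulas at $q=0$, but one-dimensionality only forces two elements of the Hom space to be proportional \emph{if both are already known to lie in it}. Formula \eqref{eqn:C32_Wh_GL2_1st} is known to be the radial part of $\varphi_{\chi,\lambda}^{[\varepsilon]}$, but nothing in your argument establishes that the right-hand side of \eqref{eqn:C32_Wh_GL2_2nd} is the radial part of a $K_2$-homomorphism into the moderate-growth Whittaker space at all; that is precisely what needs proof, so the argument is circular as stated. (If \eqref{eqn:C32_Wh_GL2_2nd} were a random binomial sum, an endpoint match at $q=0$ would prove nothing.) Your alternative route --- restarting the recursion from $\hat\varphi_{l,d_1-d_2+2l}$ using \eqref{eqn:C2_PDE2_C2} rather than \eqref{eqn:C2_PDE2_C2omote} --- is the right kind of fix, but you only sketch it; carried out it would independently establish that \eqref{eqn:C32_Wh_GL2_2nd} is a Whittaker radial part, after which the identity with \eqref{eqn:C32_Wh_GL2_1st} follows. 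The paper avoids all of this by a direct manipulation: rewrite the sum in \eqref{eqn:C32_Wh_GL2_1st} as a terminating ${}_3F_2$ at argument $1$ inside the Mellin--Barnes integral, apply the ${}_3F_2$ transformation identity from \cite[7.4.4.1]{Prudnikov_Brychkov_Marichev_001}, and unpack; this turns the first binomial sum into the second with no appeal to uniqueness and no Chu--Vandermonde detour.
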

\begin{proof}
The first expression (\ref{eqn:C32_Wh_GL2_1st}) of 
$\varphi_{\chi ,\lambda}^{[\varepsilon ]}(v_{\lambda ,q})(y)$ 
follows immediately 
from Theorem \ref{thm:C2_ps_Whittaker2}. Using the equalities 
\begin{align}
\label{eqn:C32_pf_EF_Wh2}
&\binom{m}{i}=(-1)^i\frac{(-m)_i}{i!},&
&\Gamma_{\bC}(z-i)=(2\pi)^i\frac{(-1)^i}{(1-z)_i}\Gamma_\bC (z)
\end{align}
for $z\in \bC$ and $m,i\in \bZ$ such that $m\geq i\geq 0$, 
we obtain from (\ref{eqn:C32_Wh_GL2_1st}) that 
\begin{align*}
&\varphi_{\chi ,\lambda}^{[\varepsilon ]}(v_{\lambda ,q})(y)\\
&=(\varepsilon \sI )^{\lambda_1-q}
\frac{y_1y_2^{2\nu_1+2\nu_2}}{2\pi \sI}
\int_t
{}_3F_2\left(\begin{array}{c}
-l,\ -q,\ -\nu_1+\nu_2-\frac{\lambda_1-\lambda_2}{2}\\[1mm]
-\lambda_1+\lambda_2,\ 1-t-\nu_1-\tfrac{q+l}{2}
\end{array};1\right)
\\
&\phantom{=.}\times 
\Gamma_{\bC}\bigl(t+\nu_1+\tfrac{q+l}{2}\bigr)
\Gamma_{\bC}\bigl(t+\nu_2+\tfrac{\lambda_1-\lambda_2-q-l}{2}\bigr)
y_1^{-2t}dt
\end{align*}
with the generalized hypergeometric series 
\[
{}_3F_2\left(\begin{array}{c}
a_1,\ a_2,\ a_3\\
b_1,\ b_2
\end{array};z\right)
=\sum_{i=0}^\infty 
\frac{(a_1)_i(a_2)_i(a_3)_i}{(b_1)_i(b_2)_i}\frac{z^i}{i!}.
\]
Applying the formula \cite[7.4.4.1]{Prudnikov_Brychkov_Marichev_001}
\begin{align*}
{}_3F_2\left(\!\begin{array}{c}
\!-l,\ a_2,\ a_3\\
b_1,\ b_2
\end{array}\!;1\right)
=\frac{(b_1+b_2-a_2-a_3)_l}{(b_2)_l}
{}_3F_2\left(\!\begin{array}{c}
-l,\ b_1-a_2,\ b_1-a_3\\
b_1,\ b_1+b_2-a_2-a_3
\end{array}\!;1\right)&\\
(l\in \bZ_{\geq 0},\ \text{the both sides are rational functions of 
$a_2,a_3,b_1,b_2$})&
\end{align*}
to this equality, we have 
\begin{align*}
\varphi_{\chi ,\lambda}^{[\varepsilon ]}(v_{\lambda ,q})(y)\,
&=(\varepsilon \sI )^{\lambda_1-q}
\frac{y_1y_2^{2\nu_1+2\nu_2}}{2\pi \sI}
\int_t
\frac{\bigl(1-t-\nu_2-\tfrac{\lambda_1-\lambda_2-q+l}{2}\bigr)_l}
{\bigl(1-t-\nu_1-\tfrac{q+l}{2}\bigr)_l}\\
&\phantom{=.}
\times 
{}_3F_2\left(\!\begin{array}{c}
-l,\ -\lambda_1+\lambda_2+q,\ \nu_1-\nu_2-\frac{\lambda_1-\lambda_2}{2}\\
-\lambda_1+\lambda_2,\ 1-t-\nu_2-\tfrac{\lambda_1-\lambda_2-q+l}{2}
\end{array}\!;1\right)\\
&\phantom{=.}
\times \Gamma_{\bC}\bigl(t+\nu_1+\tfrac{q+l}{2}\bigr)
\Gamma_{\bC}\bigl(t+\nu_2+\tfrac{\lambda_1-\lambda_2-q-l}{2}\bigr)y_1^{-2t}dt.
\end{align*}
From this expression, 
we obtain the second expression (\ref{eqn:C32_Wh_GL2_2nd}) 
using (\ref{eqn:C32_pf_EF_Wh2}). 
\end{proof}

We note that the formulas in Proposition \ref{prop:C32_Wh_GL2} 
are simplified in some cases. 
The expressions (\ref{eqn:C32_Wh_GL2_1st}) and (\ref{eqn:C32_Wh_GL2_2nd}) 
are both simplified as 
\begin{align}
\label{eqn:C32_Wh_GL2_min}
\begin{split}
&\varphi_{\chi ,(d_1,d_2)}^{[\varepsilon ]}(v_{(d_1,d_2),q})(y)
=(\varepsilon \sI )^{d_1-q}y_1y_2^{2\nu_1+2\nu_2}\\
&\times \frac{1}{2\pi \sI}\int_t
\Gamma_{\bC}\bigl(t+\nu_1+\tfrac{q}{2}\bigr)
\Gamma_{\bC}\bigl(t+\nu_2+\tfrac{d_1-d_2-q}{2}\bigr)
y_1^{-2t}dt
\end{split}
\end{align}
if $l=0$. 
The expression (\ref{eqn:C32_Wh_GL2_1st}) is simplified as 
\begin{align}
\begin{split}
\label{eqn:C32_Wh_GL2_1st_simple}
&\varphi_{\chi ,\lambda}^{[\varepsilon ]}(v_{\lambda ,0})(y)
=(\varepsilon \sI )^{\lambda_1}y_1y_2^{2\nu_1+2\nu_2}\\
&\times \frac{1}{2\pi \sI}\int_t
\Gamma_{\bC}\bigl(t+\nu_1+\tfrac{l}{2}\bigr)
\Gamma_{\bC}\bigl(t+\nu_2+\tfrac{\lambda_1-\lambda_2-l}{2}\bigr)
y_1^{-2t}dt
\end{split}
\end{align}
if $q=0$. The expression (\ref{eqn:C32_Wh_GL2_2nd}) is simplified as 
\begin{align}
\begin{split}
\label{eqn:C32_Wh_GL2_2nd_simple}
&\varphi_{\chi ,\lambda}^{[\varepsilon ]}
(v_{\lambda ,\lambda_1-\lambda_2})(y)
=(\varepsilon \sI )^{\lambda_2}y_1y_2^{2\nu_1+2\nu_2}\\
&\times 
\frac{1}{2\pi \sI}\int_t
\Gamma_{\bC}\bigl(t+\nu_1+\tfrac{\lambda_1-\lambda_2-l}{2}\bigr)
\Gamma_{\bC}\bigl(t+\nu_2+\tfrac{l}{2}\bigr)
y_1^{-2t}dt
\end{split}
\end{align}
if $q=\lambda_1-\lambda_2$.

\section{Whittaker functions on $GL(3,\bC )$}
\label{subsec:C32_Wh_GL3}

Let $\varepsilon \in \{\pm 1\}$, 
and take $\Xi_{(\varepsilon ,\varepsilon )}$ as (\ref{eqn:Fn_psi_change}). 
Let $\Pi_\chi $ be 
an irreducible principal series representation of $G_3$ with 
$\chi =\chi_{(\nu_1,d_1)}\boxtimes 
\chi_{(\nu_2,d_2)}\boxtimes \chi_{(\nu_3,d_3)}$ 
($d_1\geq d_2\geq d_3$). 
The minimal $K_3$-type of $\Pi_\chi$ is given by 
$\tau^{(3)}_{(d_1,d_2,d_3)}$. 
Let $\varphi^{[\varepsilon ]}_\chi \colon V_{(d_1,d_2,d_3)}^{(3)}\to 
{\mathrm{Wh}}(\Pi_{\chi},\psi_{\varepsilon })^{\mathrm{mg}}$ 
be a $K_3$-homomorphism defined by 
\[
\varphi^{[\varepsilon ]}_\chi 
=(\sI)^{d_1+d_2-d_3}
\Xi_{(\varepsilon ,\varepsilon )}\circ \varphi^{\mathrm{mg}}_\chi,
\]
where $\varphi^{\mathrm{mg}}_\chi $ is 
the $K_3$-homomorphism in Theorem 
\ref{thm:C3_Whittaker}. 
Then the radial part of $\varphi^{[\varepsilon ]}_\chi$ 
is given by 
\begin{align*}
\begin{split}
&\varphi^{[\varepsilon ]}_\chi (u_{l})(y) =
(-1)^{l_1+\tilde{l}_1}
(\varepsilon \sI)^{d_2+l_2+\tilde{l}_2}
y_1^2y_2^2(y_2y_3)^{2\nu_1+2\nu_2+2\nu_3}\\
&\times \frac{1}{(2\pi \sqrt{-1})^2} \int_{t_2}\int_{t_1}
\frac{\Gamma_{\bC}\bigl(t_1+\nu_1+\tfrac{\xi_1(l)}{2}\bigr)
\Gamma_{\bC}\bigl(t_1+\nu_2+\tfrac{\xi_2(l)}{2}\bigr)
\Gamma_{\bC}\bigl(t_1+\nu_3+\tfrac{\xi_3(l)}{2}\bigr)}
{ \Gamma_{\bC}\bigl(t_1+t_2+\tfrac{\xi_2(l)+\tilde{\xi}_2(l)}{2}\bigr)}\\
&\times 
\Gamma_{\bC}\bigl(t_2-\nu_1+\tfrac{\tilde{\xi}_1(l)}{2}\bigr)
\Gamma_{\bC}\bigl(t_2-\nu_2+\tfrac{\tilde{\xi}_2(l)}{2}\bigr)
\Gamma_{\bC}\bigl(t_2-\nu_3+\tfrac{\tilde{\xi}_3(l)}{2}\bigr)
\,y_1^{-2t_1} y_2^{-2t_2} \,dt_1dt_2
\end{split}
\end{align*}
for $l=(l_1,l_2,l_3,\tilde{l}_1,\tilde{l}_2,\tilde{l}_3)
\in S_{(d_1,d_2,d_3)}$ and 
$y=\diag (y_1y_2y_3,y_2y_3,y_3)\in A_3$. 
Here 
\begin{align*}
&\xi_1(l)=\tilde{l}_1+l_2+l_3,&
&\xi_2(l)=l_1+\tilde{l}_1,&
&\xi_3(l)=l_1+\tilde{l}_2+\tilde{l}_3,\\
&\tilde{\xi}_1(l)=l_1+l_2+\tilde{l}_3,&
&\tilde{\xi}_2(l)=l_3+\tilde{l}_3,&
&\tilde{\xi}_3(l)=\tilde{l}_1+\tilde{l}_2+l_3.
\end{align*}
and the path of the integration $\int_{t_i}$ is the vertical line 
from $\mathrm{Re}(t_i)-\sI \infty$ to $\mathrm{Re}(t_i)+\sI \infty$ 
with sufficiently large real part to keep the poles of the integrand 
on its left.

We regard $K_2$ as a subgroup of $K_3$ via the embedding 
(\ref{eqn:C32_K2_to_K3}). 
We  regard $U(\g_{3\bC})$ as a $K_2$-module 
via the adjoint action $\Ad$. 
For $\alpha =(\alpha_1,0)\in \Lambda_2$, 
we define a $\bC$-linear map 
$\mathrm{I}^{\gn_3}_{\alpha}\colon V_\alpha^{(2)}\to U(\g_{3\bC})$ by 
\begin{align*}
&\mathrm{I}^{\gn_3}_{\alpha}(v_{\alpha,q})=
(E_{1,3}^{\g_3})^{\alpha_1-q}(E_{2,3}^{\g_3})^q&
&(q \in Q_\alpha ). 
\end{align*}
For $\alpha =(0,\alpha_2)\in \Lambda_2$, 
we define a $\bC$-linear map 
$\mathrm{I}^{\gn_3}_{\alpha}\colon V_\alpha^{(2)}\to U(\g_{3\bC})$ by 
\begin{align*}
&\mathrm{I}^{\gn_3}_{\alpha}(v_{\alpha,q})=
(-\widetilde{E}_{1,3}^{\g_3})^{q}
(\widetilde{E}_{2,3}^{\g_3})^{-\alpha_2-q}&
&(q \in Q_\alpha ). 
\end{align*}
Since 
\begin{align*}
&\adj (E_{i,j}^{{\gk_3}})E_{k,3}^{\g_3}
=\delta_{j,k}E_{i,3}^{\g_3},&
&\adj (E_{i,j}^{{\gk_3}})\widetilde{E}_{k,3}^{\g_3}
=-\delta_{i,k}\widetilde{E}_{j,3}^{\g_3}
\end{align*}
for $1\leq i,j,k\leq 2$, 
we know that the $\bC$-linear map 
$\mathrm{I}^{\gn_3}_{\alpha}$ 
is a $K_2$-homomorphism 
for $\alpha =(\alpha_1,\alpha_2)\in \Lambda_2$ such that 
$\alpha_1=0$ or $\alpha_2=0$. 
Moreover, 
for $\alpha =(\alpha_1,\alpha_2)\in \Lambda_2$ such that 
$\alpha_1>0>\alpha_2$, 
we define a $K_2$-homomorphism 
$\mathrm{I}^{\gn_3}_{\alpha}\colon V_{\alpha}^{(2)}\to U(\g_{3\bC})$ by 
\begin{align*}
\mathrm{I}^{\gn_3}_{\alpha}=\mathrm{P}_{\g_3}
\circ (\mathrm{I}^{\gn_3}_{(\alpha_1,0)}\otimes 
\mathrm{I}^{\gn_3}_{(0,\alpha_2)})
\circ \mathrm{I}_{\alpha }^{(\alpha_1,0) ,(0,\alpha_2)},
\end{align*}
where $\mathrm{I}_{\alpha }^{(\alpha_1,0) ,(0,\alpha_2)}
\colon V_{\alpha}^{(2)}\to 
V_{(\alpha_1,0)}^{(2)}\otimes_\bC V_{(0,\alpha_2)}^{(2)}$ 
is the $K_2$-homomorphism in Lemma \ref{lem:C2_tensor}, and 
$\mathrm{P}_{\g_3}\colon U(\g_{3\bC})\otimes_\bC U(\g_{3\bC})\to U(\g_{3\bC})$ 
is a natural $K_2$-homomorphism defined by $X_1\otimes X_2\mapsto X_1X_2$. 
Then, for $q\in Q_\alpha $ and $\alpha =(\alpha_1,\alpha_2)\in \Lambda_2$ 
such that $\alpha_1\geq 0\geq \alpha_2$, we have 
\begin{align*}
\mathrm{I}_{\alpha}^{\gn_3}(v_{\alpha,q})
=&\sum_{i=\max \{0,q+\alpha_2\}}^{\min \{q,\alpha_1\}}
\binom{q}{i}
\binom{\alpha_1-\alpha_2-q}{\alpha_1-i}\\
&\times (E_{1,3}^{\g_3})^{\alpha_1-i}(E_{2,3}^{\g_3})^i
(-\widetilde{E}_{1,3}^{\g_3})^{q-i}
(\widetilde{E}_{2,3}^{\g_3})^{-\alpha_2-q+i}
\end{align*}
and 
\begin{align*}
\bigl(R\bigl(\mathrm{I}_{\alpha}^{\gn_3}(v_{\alpha,q})\bigr)f\bigr)(y)
=\left\{\begin{array}{ll}
(2\pi \varepsilon \sI y_2)^{\alpha_1-\alpha_2}f(y)&
\text{if}\ q=\alpha_1,\\
0&\text{otherwise}
\end{array}
\right.&\\
(f\in C^\infty (N_3\backslash G_3;\psi_\varepsilon ),\ 
y=\diag (y_1y_2y_3,y_2y_3,y_3)\in A_3)&.
\end{align*}
We define a $K_2$-homomorphism 
$\mathrm{P}_{G_3}\colon U(\g_{3\bC})\otimes_\bC C^\infty (G_3)$ 
by $X\otimes f\mapsto R(X)f$.

Using the $K_2$-homomorphisms in Lemma \ref{lem:C32_K32_restriction}, 
Proposition \ref{prop:C32_tensor_K2} and above, 
for $\lambda =(\lambda_1,\lambda_2)\in \Lambda_2$ 
such that $d_1\geq \lambda_2$ and $\lambda_1\geq d_3$, 
we define a $K_2$-homomorphism 
$\varphi^{[\varepsilon ]}_{\chi ,\lambda}\colon  V_\lambda^{(2)}\to 
{\mathrm{Wh}}(\Pi_{\chi},\psi_{\varepsilon })^{\mathrm{mg}}$ by 
\begin{align*}
\varphi^{[\varepsilon ]}_{\chi ,\lambda}
=
\frac{(-1)^{\alpha_{12}}}{\binom{\alpha_{12}-\alpha_{21}}{\alpha_{12}}}
\frac{(\varepsilon \sI)^{-\beta_1+\beta_2}}
{(2\pi \varepsilon \sI )^{\alpha_1-\alpha_2}}
\mathrm{P}_{G_3}\circ 
\bigl(\mathrm{I}_{\alpha}^{\gn_3}
\otimes 
\bigl(\varphi^{[\varepsilon ]}_{\chi }\circ 
\iota_{\beta}^{(d_1,d_2,d_3)}\bigr)\bigr)
\circ \mathrm{I}_{\lambda }^{\alpha ,\beta },
\end{align*}
where $\alpha =(\alpha_1,\alpha_2)$ and $\beta =(\beta_1,\beta_2)$ 
are determined by 
\begin{align}
\label{eqn:C32_def_alpha_beta}
\begin{split}
&\alpha_i=\alpha_{i1}+\alpha_{i2},\hspace{22.5mm} 
\beta_j=\lambda_j-\alpha_{1j}-\alpha_{2j},\\
&\alpha_{1j}=\max \{0,\lambda_j-d_j\},\hspace{10mm}
\alpha_{2j}=\min \{0,\lambda_j-d_{j+1}\}
\end{split}
\end{align}
for $i,j\in \{1,2\}$. 
Then, for $y=\diag (y_1y_2y_3,y_2y_3,y_3)\in A_3$ and $q\in Q_\lambda$, 
we have 
\begin{align*}
&\varphi^{[\varepsilon ]}_{\chi ,\lambda}(v_{\lambda ,q})(y)\\
&=
\binom{q}{\alpha_{11}}\binom{\lambda_1-\lambda_2-q}{-\alpha_{22}}
\sum_{i=\max \{0,q-\lambda_1+d_2\}
}^{\min \{q-\lambda_1+\beta_1,d_2-\beta_2\}}
\binom{q-\lambda_1+\beta_1}{i}
\binom{\lambda_1-\beta_2-q}{d_2-\beta_2-i}\\
&\phantom{=.}
\times (-1)^i(\varepsilon \sI)^{-\beta_1+\beta_2}
y_2^{\alpha_{1}-\alpha_{2}}\varphi^{[\varepsilon ]}_{\chi }\bigl(
u_{l(\beta ;q-\lambda_1+\beta_1-i,i)}\bigr)(y)\\
&=(\varepsilon \sI)^{\lambda_1-q}
y_1^2y_2^{\alpha_{1}-\alpha_{2}+2}(y_2y_3)^{2\nu_1+2\nu_2+2\nu_3}\\
&\phantom{=.}
\times 
\binom{q}{\alpha_{11}}\binom{\lambda_1-\lambda_2-q}{-\alpha_{22}}
\sum_{i=\max \{0,q-\lambda_1+d_2\}
}^{\min \{q-\lambda_1+\beta_1,d_2-\beta_2\}}
\binom{q-\lambda_1+\beta_1}{i}
\binom{\lambda_1-\beta_2-q}{d_2-\beta_2-i}\\
&\phantom{=.}\times \frac{1}{(2\pi \sqrt{-1})^2} \int_{t_2}\int_{t_1}
\frac{\Gamma_{\bC}\bigl(t_1+\nu_2+\tfrac{\lambda_1-d_2-q}{2}+i\bigr)
\Gamma_{\bC}\bigl(t_2-\nu_2+\tfrac{d_1-d_3-\beta_1+\beta_2}{2}\bigr)}
{ \Gamma_{\bC}\bigl(t_1+t_2+
\tfrac{-q+\lambda_1+d_1-d_2-d_3-\beta_1+\beta_2}{2}+i\bigr)}\\
&\phantom{=.}\times 
\Gamma_{\bC}\bigl(t_1+\nu_1+\tfrac{q-\lambda_1+d_1}{2}\bigr)
\Gamma_{\bC}\bigl(t_1+\nu_3+\tfrac{\lambda_1-d_3-q}{2}\bigr)\\
&\phantom{=.}\times 
\Gamma_{\bC}\bigl(t_2-\nu_1+\tfrac{\beta_1+\beta_2-d_2-d_3}{2}\bigr)
\Gamma_{\bC}\bigl(t_2-\nu_3+\tfrac{d_1+d_2-\beta_1-\beta_2}{2}\bigr)
\,y_1^{-2t_1} y_2^{-2t_2} \,dt_1dt_2
\end{align*}
if $\alpha_{11}\leq q\leq \lambda_1-\lambda_2+\alpha_{22}$, 
and $\varphi^{[\varepsilon ]}_{\chi ,\lambda}(v_{\lambda ,q})(y)=0$ 
otherwise.

\section{The local zeta integrals for $GL(3,\bC)\times GL(2,\bC)$}

Let $\varepsilon \in \{\pm 1\}$. 
Let $\Pi_\chi$ and $\Pi_{\chi'}$ be irreducible principal series 
representations of $G_3$ and $G_2$, respectively, with 
$\chi =\chi_{(\nu_1,d_1)}\boxtimes 
\chi_{(\nu_2,d_2)}\boxtimes \chi_{(\nu_3,d_3)}$ 
$(d_1\geq d_2\geq d_3)$ and $\chi' = \chi_{(\nu_1',d_1')}
\boxtimes \chi_{(\nu_2',d_2')}$ $(d_1'\geq d_2')$. 
We consider the local zeta integral 
\begin{align*}
Z(s,W,W')=&\int_{N_{2}\backslash G_{2}}
W\!\left(
\begin{array}{cc}
g& \\
 &1
\end{array}
\right) 
W'(g)|\det g|^{2s-1}
d\dot{g}
\end{align*}
defined for $W\in \mathrm{Wh}(\Pi_{\chi},\psi_{\varepsilon })^{\mathrm{mg}}$ 
and $W'\in \mathrm{Wh}(\Pi_{\chi'},\psi_{-\varepsilon })^{\mathrm{mg}}$. 
Here the right $G_2$-invariant measure 
$d\dot{g}$ on $N_2\backslash G_2$ is normalized so that, 
for any compactly supported continuous function $f$ on $N_2\backslash G_2$, 
\begin{align}
\label{eqn:C32_normalize_N2_G2}
\int_{N_2\backslash G_2}f(g)\,d\dot{g}
=\int_{0}^\infty \int_{0}^\infty 
\left(\int_{K_2}f(yk)\,dk\right)
y_1^{-2}
\frac{2dy_1}{y_1}
\frac{2dy_2}{y_2}
\end{align} 
with $y=\diag (y_1y_2,y_2)\in A_2$ and 
the normalized Haar measure $dk$ on $K_2$ such that 
$\int_{K_2}dk=1$. 

We regard $K_2$ as a subgroup of $K_3$ via the embedding 
(\ref{eqn:C32_K2_to_K3}). 
Let $\varphi \colon V_{\lambda}^{(2)}\to 
\mathrm{Wh}(\Pi_{\chi},\psi_{\varepsilon })^{\mathrm{mg}}$ and 
$\varphi' \colon V_{\lambda'}^{(2)}\to 
\mathrm{Wh}(\Pi_{\chi'},\psi_{-\varepsilon })^{\mathrm{mg}}$ 
be $K_2$-homomorphisms with 
$\lambda =(\lambda_1,\lambda_2),\lambda'=(\lambda_1',\lambda_2')
\in \Lambda_2$. Then the following lemma holds. 

\begin{lem}
\label{lem:C32_zeta32_schur}
Retain the notation. 
For $v\in V_{\lambda}^{(2)}$ and $v'\in V_{\lambda'}^{(2)}$, 
it holds that  
\begin{align}
\label{eqn:C32_zeta_schur}
\begin{split}
&Z(s,\varphi (v),\varphi'(v'))
=\frac{\langle v,v'\rangle}{\dim_\bC  V_\lambda^{(2)}}
\sum_{q\in Q_\lambda }
(-1)^{\lambda_1-q}
\binom{\lambda_1-\lambda_2}{q}\\
&\hspace{10mm}
\times \int_{0}^\infty \int_{0}^\infty   
\varphi (v_{\lambda,q})(\hat{y})
\varphi'(v_{\widetilde{\lambda},\lambda_1-\lambda_2-q})(y)
y_1^{2s-3}y_2^{4s-2}
\frac{2dy_1}{y_1}\frac{2dy_2}{y_2}
\end{split}
\end{align}
if $\lambda '=\widetilde{\lambda}$, 
and $Z(s,\varphi (v),\varphi'(v'))=0$ otherwise. Here 
$y=\diag (y_1y_2,y_2)\in A_2$ and $\hat{y}=\diag (y_1y_2,y_2,1)\in A_3$. 
\end{lem}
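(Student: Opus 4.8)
The plan is to follow the proof of Lemma \ref{lem:R32_zeta32_schur} step for step, replacing the real normalizations and the self-dual $O(2)$-pairing used there by their complex analogues: the measure (\ref{eqn:C32_normalize_N2_G2}) and the $K_2$-invariant pairing (\ref{eqn:C32_def_K2_inv_pair}) between $V_{\lambda}^{(2)}$ and $V_{\widetilde{\lambda}}^{(2)}$. First I would unwind the definition of $Z(s,\varphi(v),\varphi'(v'))$ by writing $g=yk$ with $y=\diag(y_1y_2,y_2)\in A_2$ and $k\in K_2$. Since $\det y=y_1y_2^2$ and $|\det k|=1$, the factor $|\det g|^{2s-1}$ together with the Jacobian $y_1^{-2}$ appearing in (\ref{eqn:C32_normalize_N2_G2}) produces the weight $y_1^{2s-3}y_2^{4s-2}$. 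Using that $\mathrm{Wh}(\Pi_{\chi},\psi_{\varepsilon})$ and $\mathrm{Wh}(\Pi_{\chi'},\psi_{-\varepsilon})$ are $G_3$- and $G_2$-modules under right translation, hence $K_2$-modules via the embedding (\ref{eqn:C32_K2_to_K3}), and that $\varphi,\varphi'$ are $K_2$-homomorphisms, one has
\[
\varphi(v)(\hat{y}k)=\varphi\bigl(\tau_{\lambda}^{(2)}(k)v\bigr)(\hat{y})
\qquad\text{and}\qquad
\varphi'(v')(yk)=\varphi'\bigl(\tau_{\lambda'}^{(2)}(k)v'\bigr)(y),
\]
with $\hat{y}=\diag(y_1y_2,y_2,1)\in A_3$; thus $Z(s,\varphi(v),\varphi'(v'))$ becomes the integral over $(\bR_+)^2$ of $\bigl(\int_{K_2}\varphi(\tau_{\lambda}^{(2)}(k)v)(\hat{y})\,\varphi'(\tau_{\lambda'}^{(2)}(k)v')(y)\,dk\bigr)y_1^{2s-3}y_2^{4s-2}$ against $\tfrac{2dy_1}{y_1}\tfrac{2dy_2}{y_2}$, the interchange of integrations being legitimate when $\mathrm{Re}(s)$ is large, where the Mellin--Barnes formulas of \S \ref{subsec:C32_Wh_GL2} and \S \ref{subsec:C32_Wh_GL3} give absolute convergence.

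The second step is to expand $\tau_{\lambda}^{(2)}(k)v$ and $\tau_{\lambda'}^{(2)}(k)v'$ in the bases $\{v_{\lambda,q}\mid q\in Q_\lambda\}$ and $\{v_{\lambda',q'}\mid q'\in Q_{\lambda'}\}$ by means of the pairing (\ref{eqn:C32_def_K2_inv_pair}). Because this pairing realizes $V_{\widetilde{\lambda}}^{(2)}$ as the contragredient of $V_{\lambda}^{(2)}$, the basis of $V_{\widetilde{\lambda}}^{(2)}$ dual to $\{v_{\lambda,q}\}$ is $\bigl\{(-1)^{\lambda_1-q}\binom{\lambda_1-\lambda_2}{q}v_{\widetilde{\lambda},\lambda_1-\lambda_2-q}\bigr\}$, so that $v=\sum_{q\in Q_\lambda}(-1)^{\lambda_1-q}\binom{\lambda_1-\lambda_2}{q}\langle v,v_{\widetilde{\lambda},\lambda_1-\lambda_2-q}\rangle\,v_{\lambda,q}$; this is the origin of the binomial weights in (\ref{eqn:C32_zeta_schur}). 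Substituting these expansions and applying the $U(2)$-analogue of Schur's orthogonality relation (\textit{cf.} the relation (\ref{eqn:R32_schur}) and \cite[Proposition 4.4]{Brocker_001}) to the $K_2$-integral forces $\tau_{\lambda'}^{(2)}$ to be isomorphic to the contragredient of $\tau_{\lambda}^{(2)}$, i.e.\ $\lambda'=\widetilde{\lambda}$; when $\lambda'\ne\widetilde{\lambda}$ the integral vanishes, and when $\lambda'=\widetilde{\lambda}$ the double sum over $(q,q')$ collapses to a single sum over $q\in Q_\lambda$. Reassembling $\langle v,v'\rangle$ out of the coefficients $\langle v,v_{\widetilde{\lambda},\cdot}\rangle$ and $\langle v',v_{\lambda,\cdot}\rangle$ then yields (\ref{eqn:C32_zeta_schur}).

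I expect the only delicate point to be the constant bookkeeping in the Schur step: since (\ref{eqn:C32_def_K2_inv_pair}) is normalized by the factor $(-1)^{\lambda_1-q}/\binom{\lambda_1-\lambda_2}{q}$ rather than so as to make the matrix coefficients of $\tau_{\lambda}^{(2)}$ orthonormal, one must verify that these factors, the dual-basis factors $(-1)^{\lambda_1-q}\binom{\lambda_1-\lambda_2}{q}$, and the constant $1/\dim_{\bC}V_\lambda^{(2)}$ combine to produce exactly the coefficient $\tfrac{\langle v,v'\rangle}{\dim_{\bC}V_\lambda^{(2)}}(-1)^{\lambda_1-q}\binom{\lambda_1-\lambda_2}{q}$ displayed in (\ref{eqn:C32_zeta_schur}). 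This is pure $U(2)$-representation theory with no analytic content, and the remainder of the argument is identical to that of Lemma \ref{lem:R32_zeta32_schur}.
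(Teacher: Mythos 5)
Your proposal is correct and follows essentially the same route as the paper's proof: unwind the zeta integral via the Iwasawa decomposition and the normalization (\ref{eqn:C32_normalize_N2_G2}), expand $\tau_{\lambda}^{(2)}(k)v$ and $\tau_{\lambda'}^{(2)}(k)v'$ in the bases $\{v_{\lambda,q}\}$, $\{v_{\lambda',q'}\}$ via the dual-basis coefficients coming from the pairing (\ref{eqn:C32_def_K2_inv_pair}), and then apply Schur's orthogonality relation (\ref{eqn:C32_schur}). Your constant bookkeeping in the final step -- reconciling the $(-1)^{\lambda_1-q}/\binom{\lambda_1-\lambda_2}{q}$ normalization of the pairing with the dual-basis factors and $1/\dim_{\bC}V_{\lambda}^{(2)}$ -- indeed works out to the coefficient displayed in (\ref{eqn:C32_zeta_schur}).
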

\begin{proof}
Let $v\in V_{\lambda}^{(2)}$ and $v'\in V_{\lambda'}^{(2)}$. 
By definition, we have 
\begin{align*}
&Z(s,\varphi (v),\varphi'(v'))
=\int_{N_2\backslash G_2}
\varphi (v)\!\left(
\begin{array}{cc}
g& \\
 &1
\end{array}
\right) 
\varphi'(v')(g)|\det g|^{2s-1}
d\dot{g}\\
&=\int_{0}^\infty \int_{0}^\infty   
\left(\int_{K_2}\varphi (\tau_{\lambda}(k)v)(\hat{y})
\varphi'(\tau_{\lambda'}(k)v')(y)\,dk\right)
y_1^{2s-3}y_2^{4s-2}
\frac{2dy_1}{y_1}\frac{2dy_2}{y_2}.
\end{align*}
By the decompositions 
\begin{align*}
&\tau_{\lambda}(k)v
=\sum_{q\in Q_\lambda }
(-1)^{\lambda_1-q}\binom{\lambda_1-\lambda_2}{q}
\langle \tau_{\lambda}(k)v,
v_{\widetilde{\lambda},\lambda_1-\lambda_2-q}\rangle 
v_{\lambda,q},\\
&\tau_{\lambda'}(k)v'
=\sum_{q'\in Q_{\lambda'} }
(-1)^{\lambda_1'-q'}\binom{\lambda_1'-\lambda_2'}{q'}
\langle \tau_{\lambda'}(k)v',
v_{\widetilde{\lambda'},\lambda_1'-\lambda_2'-q'}\rangle 
v_{\lambda',q'},&
\end{align*}
we have 
\begin{align*}
Z(s,\varphi (v),\varphi'(v'))=
&\sum_{q\in Q_\lambda }\sum_{q'\in Q_{\lambda'} }
(-1)^{\lambda_1+\lambda_1'-q'-q}
\binom{\lambda_1-\lambda_2}{q}\binom{\lambda_1'-\lambda_2'}{q'}\\
&\times \left(\int_{K_2}\langle \tau_{\lambda}(k)v,
v_{\widetilde{\lambda},\lambda_1-\lambda_2-q}\rangle 
\langle \tau_{\lambda'}(k)v',
v_{\widetilde{\lambda'},\lambda_1'-\lambda_2'-q'}\rangle 
\,dk\right)\\
&\times \int_{0}^\infty \int_{0}^\infty   
\varphi (v_{\lambda,q})(\hat{y})
\varphi'(v_{\lambda',q'})(y)
y_1^{2s-3}y_2^{4s-2}
\frac{2dy_1}{y_1}\frac{2dy_2}{y_2}.
\end{align*}
Applying Schur's orthogonality relation 
(\cite[Proposition 4.4]{Brocker_001}) 
\begin{align}
\begin{split}
\label{eqn:C32_schur}
\int_{K_2}\langle \tau_{\lambda }(k)v,w\rangle 
\langle \tau_{\lambda'}(k)v',w' \rangle \,dk
=\left\{\begin{array}{ll}
\displaystyle 
\frac{\langle v,v'\rangle 
\langle w,w'\rangle }{\dim_\bC  V_{\lambda}^{(2)}}
&\text{ if }\lambda'=\widetilde{\lambda },\\[3mm]
0&\text{ otherwise}
\end{array}\right.&\\
(v\in V_{\lambda}^{(2)},\ 
w\in V_{\widetilde{\lambda}}^{(2)},\ 
v'\in V_{\lambda'}^{(2)},\ 
w'\in V_{\widetilde{\lambda'}}^{(2)})&
\end{split}
\end{align}
to the right hand side of this equality, 
we obtain the assertion. 
\end{proof}

By this lemma, it suffices to consider 
the right handside of (\ref{eqn:C32_zeta_schur}) in the case of 
$\lambda'=\widetilde{\lambda}$. 
By the arguments in \S \ref{subsec:C2_ps}, we note that 
$\tau_{\lambda'}^{(2)}$ is the $K_2$-type of $\Pi_{\chi'}$ 
if and only if 
$\lambda'$ is of the form $\lambda'=(d_1'+l,d_2'-l)$ 
($l\in \bZ_{\geq 0}$). 
In \S \ref{subsec:C32_Wh_GL3}, 
for $\lambda =(\lambda_1,\lambda_2)\in \Lambda_2$ satisfying 
$\lambda_1\geq d_3$ and $d_1\geq \lambda_2$, 
we construct a $K_2$-homomorphism from $V_{\lambda}^{(2)}$ 
to $\mathrm{Wh}(\Pi_{\chi},\psi_{\varepsilon })^{\mathrm{mg}}$. 
When $\widetilde{\lambda}=\lambda'=(d_1'+l,d_2'-l)$ ($l\in \bZ_{\geq 0}$), 
it is easy to see that the inequalities 
$\lambda_1\geq d_3$ and $d_1\geq \lambda_2$ hold 
if and only if $l\geq l_0$ with 
$l_0=\max \{0,-d_1-d_1',d_3+d_2'\}$.

\begin{thm}
\label{thm:C32_zeta_L_coincide}
Let $\Pi_\chi$ and $\Pi_{\chi'}$ be irreducible principal series 
representations of $G_3=GL(3,\bC)$ and $G_2=GL(2,\bC)$, respectively, with 
$\chi =\chi_{(\nu_1,d_1)}\boxtimes 
\chi_{(\nu_2,d_2)}\boxtimes \chi_{(\nu_3,d_3)}$ 
$(d_1\geq d_2\geq d_3)$ and $\chi' = \chi_{(\nu_1',d_1')}
\boxtimes \chi_{(\nu_2',d_2')}$ $(d_1'\geq d_2')$. 
Set $\lambda =(\lambda_1,\lambda_2)=(-d_2'+l_0,-d_1'-l_0)$ with 
$l_0=\max \{0,-d_1-d_1',d_3+d_2'\}$. Let $\varepsilon \in \{\pm 1\}$. 
Then, for $v\in V_{\lambda}^{(2)}$, $v'\in V_{\widetilde{\lambda}}^{(2)}$ and 
$s\in \bC$ with sufficiently large real part, it holds that 
\begin{align}
\label{eqn:C32_zeta_L_coincide}
Z\bigl(s,\varphi^{[\varepsilon ]}_{\chi ,\lambda}(v),
\varphi^{[-\varepsilon ]}_{\chi',\widetilde{\lambda}}(v')\bigr)
=C(\chi,\chi')\langle v,v'\rangle L(s,\Pi_{\chi}\times \Pi_{\chi'}),
\end{align}
where $\varphi_{\chi ,\lambda}^{[\varepsilon ]}$, 
$\varphi_{\chi',\widetilde{\lambda}}^{[-\varepsilon ]}$ are 
the $K_2$-homomorphisms in \S \ref{subsec:C32_Wh_GL3}, 
\S \ref{subsec:C32_Wh_GL2}, respectively, and 
\begin{align*}
&C(\chi,\chi')=\frac{2(\lambda_1-\lambda_2)!}
{(\lambda_1-\lambda_2+1)\alpha_{11}!(-\alpha_{22})!(\beta_1-d_2-\alpha_{12})!
(d_2-\beta_2+\alpha_{21})!}
\end{align*}
with $\alpha_{ij}$, $\beta_i$ $(1\leq i,j\leq 2)$ 
in (\ref{eqn:C32_def_alpha_beta}). 
\end{thm}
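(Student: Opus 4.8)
The plan is to mimic the proof of Theorem~\ref{thm:R32_zeta_L_coincide}, with one structural simplification: in the complex case every irreducible admissible large representation of $G_3$ and of $G_2$ is a principal series, so there is no case division. The first step is to apply Lemma~\ref{lem:C32_zeta32_schur} to $\varphi^{[\varepsilon]}_{\chi,\lambda}$ and $\varphi^{[-\varepsilon]}_{\chi',\widetilde\lambda}$; here one uses that $\tau^{(2)}_{\widetilde\lambda}$ is a $K_2$-type of $\Pi_{\chi'}$ (since $\widetilde\lambda=(d_1'+l_0,d_2'-l_0)$) and that the inequalities $\lambda_1\ge d_3$, $d_1\ge\lambda_2$ making $\varphi^{[\varepsilon]}_{\chi,\lambda}$ well defined hold exactly because $l\ge l_0$. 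This pulls the pairing $\langle v,v'\rangle$ out of the zeta integral and reduces~(\ref{eqn:C32_zeta_L_coincide}) to computing, for each $q\in Q_\lambda$, the double integral over $(\bR_+)^2$ of the product of the radial parts $\varphi^{[\varepsilon]}_{\chi,\lambda}(v_{\lambda,q})(\hat y)$ and $\varphi^{[-\varepsilon]}_{\chi',\widetilde\lambda}(v_{\widetilde\lambda,\lambda_1-\lambda_2-q})(y)$ against $y_1^{2s-3}y_2^{4s-2}$, then summing over $q$ with weight $(-1)^{\lambda_1-q}\binom{\lambda_1-\lambda_2}{q}/\dim_\bC V^{(2)}_\lambda$ and matching with $C(\chi,\chi')\,L(s,\Pi_\chi\times\Pi_{\chi'})$.

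Next I would substitute the explicit Mellin--Barnes expressions: for the $G_3$-factor the double-contour integral from \S\ref{subsec:C32_Wh_GL3} (six $\Gamma_\bC$ in the numerator, one $\Gamma_\bC$ in the denominator, times a finite binomial sum over an index $i$), and for the $G_2$-factor the single-contour integral of Proposition~\ref{prop:C32_Wh_GL2}, choosing between~(\ref{eqn:C32_Wh_GL2_1st}), (\ref{eqn:C32_Wh_GL2_2nd}) and the simplified forms~(\ref{eqn:C32_Wh_GL2_min}), (\ref{eqn:C32_Wh_GL2_1st_simple}), (\ref{eqn:C32_Wh_GL2_2nd_simple}) whichever keeps the hypergeometric bookkeeping shortest. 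After interchanging the (absolutely convergent, by Stirling) sums and integrals, I would substitute $t_1\to t_1-t$ to decouple $y_1$ from the $G_2$-contour variable $t$, and carry out the $y_1$- and $y_2$-integrations via the Mellin inversion formula Lemma~\ref{lem:F32_Mellin} twice. This freezes two of the three remaining contour variables into two ``external'' factors $\Gamma_\bC(2s+\cdots)$ and leaves a single contour integral in $t$ with five $\Gamma_\bC$ in the numerator and one $\Gamma_\bC$ in the denominator.

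That one-variable integral is evaluated by Barnes' second lemma (Lemma~\ref{lem:F32_Barnes2nd}), if necessary after applying the transformation Lemma~\ref{lem:F32_Barnes_eqn} (or Barnes' first lemma) to put it in the required shape; together with the two frozen $\Gamma_\bC(2s+\cdots)$-factors this produces a product of $\Gamma_\bC$-factors multiplied by the leftover finite sums over $q$ and over the hypergeometric index $i$. Those sums are then collapsed by the Gauss-type summation formula Lemma~\ref{lem:F32_gauss_sum} (and the Vandermonde/Saalsch\"utz-type identities already exploited in the proof of Proposition~\ref{prop:C32_Wh_GL2}), after which the answer should organize itself as $\prod_{i=1}^3\prod_{j=1}^2\Gamma_\bC\!\bigl(s+\nu_i+\nu_j'+\tfrac{|d_i+d_j'|}{2}\bigr)=L(s,\Pi_\chi\times\Pi_{\chi'})$ times the explicit constant $C(\chi,\chi')$.

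The main obstacle I expect is combinatorial rather than analytic: reproducing the absolute values $|d_i+d_j'|$ in the $L$-factor. The shifts in the $\Gamma_\bC$-arguments coming out of the Mellin--Barnes integrals depend on the relative sizes of the weights, and the whole reason for the definitions $l_0=\max\{0,-d_1-d_1',d_3+d_2'\}$ and of $\alpha_{ij},\beta_i$ in~(\ref{eqn:C32_def_alpha_beta}) (through those $\max$/$\min$ expressions) is to force, after all summations, exactly the factors $\Gamma_\bC(s+\nu_i+\nu_j'+|d_i+d_j'|/2)$ to survive with the correct shifts. Verifying this matching and pinning down the numerical constant $C(\chi,\chi')$ — including the binomials $\binom{\lambda_1-\lambda_2}{q}$ from the pairing~(\ref{eqn:C32_def_K2_inv_pair}) and $\dim_\bC V^{(2)}_\lambda=\lambda_1-\lambda_2+1$ — is where the real care is required. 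Justifying the interchanges of summation and integration and the contour placements is routine: it follows from Stirling's formula for $\mathrm{Re}(s)$ sufficiently large, exactly as in Chapter~\ref{sec:R32_zeta}.
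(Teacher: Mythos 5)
Your overall framework — Schur orthogonality via Lemma~\ref{lem:C32_zeta32_schur}, the Mellin--Barnes representations from \S\ref{subsec:C32_Wh_GL2} and \S\ref{subsec:C32_Wh_GL3}, the substitution $t_1\to t_1-t$, and two applications of Lemma~\ref{lem:F32_Mellin} — coincides with the paper's. The gap is in the final step: you propose to evaluate the surviving one-variable contour integral by Barnes' second lemma first, and only afterwards collapse the finite $q$- and $i$-sums with Lemma~\ref{lem:F32_gauss_sum}. This order fails. For fixed $q$ (and $i$), the remaining $t$-integral is a non-balanced Mellin--Barnes integral: the argument of the $\Gamma_\bC$ in the denominator is not equal to $a_1+a_2+b_1+b_2+b_3$, so neither Lemma~\ref{lem:F32_Barnes2nd} nor a reduction via Lemma~\ref{lem:F32_Barnes_eqn} yields a closed form. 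The paper proceeds in the opposite order: bring the $q$-sum (and the $i$-sum) inside the $t$-integral, substitute $t\to t+\tfrac{q}{2}$ (or $t\to t+\tfrac{i-p}{2}$ in the middle regime) to isolate the $q$-dependence, and collapse it with Lemma~\ref{lem:F32_gauss_sum}; only after this collapse does the denominator argument become the required full sum, so that Barnes' second lemma applies.

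A second, smaller correction: the complex calculation does involve a case division — not on representation type, as you rightly note, but on the regime of weights. The paper treats $d_2>-d_2'$, $-d_1'>d_2$, and $-d_2'\geq d_2\geq -d_1'$ separately, with further subdivisions of the first two, because the $\max/\min$ expressions in (\ref{eqn:C32_def_alpha_beta}) and the formula for $l_0$ resolve to different explicit values in each regime, and it is precisely this regime-dependence that produces the absolute values $|d_i+d_j'|$ in $L(s,\Pi_\chi\times\Pi_{\chi'})$. You correctly flagged this as the place where care is needed; just be prepared to carry it out regime by regime, much as the real case works type by type.
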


\section{The calculation in the case $d_2>-d_2'$}
\label{subsec:C32_pf_Main_1}

Here we give a proof of Theorem \ref{thm:C32_zeta_L_coincide} 
in the case $d_2>-d_2'$. 
In this case, we note that 
$\alpha_1=\alpha_{11}=\alpha_{12}=0$ and 
\begin{align*}
&l_0=\max \{0,d_3+d_2'\},&
&\alpha_{21}=\lambda_1-d_2,&
&\alpha_{22}=\min \{0,\lambda_2-d_3\},\\
&\alpha_2=\lambda_1-d_2+\alpha_{22},&
&\beta_1=d_2,&
&\beta_2=\lambda_2-\alpha_{22}.
\end{align*}
For $0\leq q\leq \lambda_1-\lambda_2$ 
and $\hat{y}=\diag (y_1y_2,y_2,1)\in A_3$, 
we have 
\begin{align*}
&\varphi^{[\varepsilon ]}_{\chi ,\lambda}(v_{\lambda ,q})(\hat{y})
=(\varepsilon \sI)^{\lambda_1-q}
y_1^2y_2^{2\nu_1+2\nu_2+2\nu_3-\lambda_1+d_2-\alpha_{22}+2}\\
&\times 
\binom{\lambda_1-\lambda_2-q}{-\alpha_{22}}
\frac{1}{(2\pi \sqrt{-1})^2} \int_{t_2}\int_{t_1}
\frac{\Gamma_{\bC}\bigl(t_1+\nu_2+\tfrac{q-\lambda_1+d_2}{2}\bigr)}
{\Gamma_{\bC}\bigl(t_1+t_2+
\tfrac{q-\lambda_1+\lambda_2+d_1-d_3-\alpha_{22}}{2}\bigr)}\\
&\times 
\Gamma_{\bC}\bigl(t_2-\nu_2+\tfrac{\lambda_2+d_1-d_2-d_3-\alpha_{22}}{2}\bigr)
\Gamma_{\bC}\bigl(t_1+\nu_1+\tfrac{q-\lambda_1+d_1}{2}\bigr)
\Gamma_{\bC}\bigl(t_1+\nu_3+\tfrac{\lambda_1-d_3-q}{2}\bigr)\\
&\times 
\Gamma_{\bC}\bigl(t_2-\nu_1+\tfrac{\lambda_2-d_3-\alpha_{22}}{2}\bigr)
\Gamma_{\bC}\bigl(t_2-\nu_3+\tfrac{-\lambda_2+d_1+\alpha_{22}}{2}\bigr)
\,y_1^{-2t_1} y_2^{-2t_2} \,dt_1dt_2
\end{align*}
if $0\leq q\leq \lambda_1-\lambda_2+\alpha_{22}$, 
and $\varphi^{[\varepsilon ]}_{\chi ,\lambda}(v_{\lambda ,q})(y)=0$ otherwise. 
Hence, by Lemma \ref{lem:C32_zeta32_schur}, we have 
\begin{align*}
&Z\bigl(s,\varphi^{[\varepsilon ]}_{\chi ,\lambda}(v),
\varphi^{[-\varepsilon ]}_{\chi',\widetilde{\lambda}}(v')\bigr)\\
&=\frac{\langle v,v'\rangle}{\lambda_1-\lambda_2+1}
\sum_{q=0}^{\lambda_1-\lambda_2}
(-1)^{\lambda_1-q}
\binom{\lambda_1-\lambda_2}{q}\int_{0}^\infty \int_{0}^\infty   
\varphi^{[\varepsilon ]}_{\chi ,\lambda}(v_{\lambda ,q})(\hat{y})\\
&\phantom{=}\times 
\varphi^{[-\varepsilon ]}_{\chi',\widetilde{\lambda}}
(v_{\widetilde{\lambda},\lambda_1-\lambda_2-q})(y)
y_1^{2s-3}y_2^{4s-2}
\frac{2dy_1}{y_1}\frac{2dy_2}{y_2}\\
&=\frac{\langle v,v'\rangle}{\lambda_1-\lambda_2+1}
\sum_{q=0}^{\lambda_1-\lambda_2+\alpha_{22}}
(-\varepsilon \sI)^{\lambda_1-q}
\binom{\lambda_1-\lambda_2}{q}
\binom{\lambda_1-\lambda_2-q}{-\alpha_{22}}
\frac{1}{(2\pi \sqrt{-1})^2}\\
&\phantom{=}\times 
\int_{0}^\infty \int_{0}^\infty 
\biggl\{\int_{t_2}\int_{t_1}
\frac{\Gamma_{\bC}\bigl(t_1+\nu_2+\tfrac{q-\lambda_1+d_2}{2}\bigr)
\Gamma_{\bC}\bigl(t_2-\nu_2+\tfrac{\lambda_2+d_1-d_2-d_3-\alpha_{22}}{2}\bigr)}
{\Gamma_{\bC}\bigl(t_1+t_2+
\tfrac{q-\lambda_1+\lambda_2+d_1-d_3-\alpha_{22}}{2}\bigr)}\\
&\phantom{=}\times 
\Gamma_{\bC}\bigl(t_1+\nu_1+\tfrac{q-\lambda_1+d_1}{2}\bigr)
\Gamma_{\bC}\bigl(t_1+\nu_3+\tfrac{\lambda_1-d_3-q}{2}\bigr)\\
&\phantom{=}\times 
\Gamma_{\bC}\bigl(t_2-\nu_1+\tfrac{\lambda_2-d_3-\alpha_{22}}{2}\bigr)
\Gamma_{\bC}\bigl(t_2-\nu_3+\tfrac{-\lambda_2+d_1+\alpha_{22}}{2}\bigr)
\varphi^{[-\varepsilon ]}_{\chi',\widetilde{\lambda}}
(v_{\widetilde{\lambda},\lambda_1-\lambda_2-q})(y)\\
&\phantom{=}\times 
y_1^{-2t_1} y_2^{-2t_2} \,dt_1dt_2\biggr\}
y_1^{2s-1}y_2^{4s+2\nu_1+2\nu_2+2\nu_3-\lambda_1+d_2-\alpha_{22}}
\frac{2dy_1}{y_1}\frac{2dy_2}{y_2}.
\end{align*}

\vspace{2mm}

\underline{\bf Case 4-1:} $d_3>-d_2'$. 
Since 
$l_0=d_3+d_2'$, 
$\alpha_{22}=-2d_3-d_1'-d_2'$, 
$\lambda_1=d_3$, 
$\lambda_2=-d_3-d_1'-d_2'$ and 
\begin{align*}
&\varphi_{\chi',\widetilde{\lambda}}^{[-\varepsilon ]}
(v_{\widetilde{\lambda},\lambda_1-\lambda_2})(y)
=(-\varepsilon \sI )^{-d_3}y_1y_2^{2\nu_1'+2\nu_2'}\\
&\hspace{2cm}
\times 
\frac{1}{2\pi \sI}\int_t
\Gamma_{\bC}\bigl(t+\nu_1'+\tfrac{d_3+d_1'}{2}\bigr)
\Gamma_{\bC}\bigl(t+\nu_2'+\tfrac{d_3+d_2'}{2}\bigr)
y_1^{-2t}dt
\end{align*}
for $y=\diag (y_1y_2,y_2)\in A_2$, we have 
\begin{align*}
&Z\bigl(s,\varphi^{[\varepsilon ]}_{\chi ,\lambda}(v),
\varphi^{[-\varepsilon ]}_{\chi',\widetilde{\lambda}}(v')\bigr)\\
&=\frac{\langle v,v'\rangle}{2d_3+d_1'+d_2'+1}
\frac{1}{(2\pi \sqrt{-1})^3}
\int_{0}^\infty \int_{0}^\infty 
\biggl\{\int_t\int_{t_2}\int_{t_1}
\frac{\Gamma_{\bC}\bigl(t_1+\nu_2+\tfrac{d_2-d_3}{2}\bigr)}
{\Gamma_{\bC}\bigl(t_1+t_2+\tfrac{d_1-d_3}{2}\bigr)}\\
&\phantom{=}\times 
\Gamma_{\bC}\bigl(t_2-\nu_2+\tfrac{d_1-d_2}{2}\bigr)
\Gamma_{\bC}\bigl(t_1+\nu_1+\tfrac{d_1-d_3}{2}\bigr)
\Gamma_{\bC}(t_1+\nu_3)\Gamma_{\bC}(t_2-\nu_1)\\
&\phantom{=}\times 
\Gamma_{\bC}\bigl(t_2-\nu_3+\tfrac{d_1-d_3}{2}\bigr)
\Gamma_{\bC}\bigl(t+\nu_1'+\tfrac{d_3+d_1'}{2}\bigr)
\Gamma_{\bC}\bigl(t+\nu_2'+\tfrac{d_3+d_2'}{2}\bigr)\\
&\phantom{=}\times 
y_1^{-2t_1-2t} y_2^{-2t_2} \,dt_1dt_2dt\biggr\}
y_1^{2s}y_2^{4s+2\nu_1+2\nu_2+2\nu_3+2\nu_1'+2\nu_2'+d_2+d_3+d_1'+d_2'}
\frac{2dy_1}{y_1}\frac{2dy_2}{y_2}.
\end{align*}
Substituting $t_1\to t_1-t$ and applying Lemma \ref{lem:F32_Mellin} twice, 
we have 
\begin{align*}
&Z\bigl(s,\varphi^{[\varepsilon ]}_{\chi ,\lambda}(v),
\varphi^{[-\varepsilon ]}_{\chi',\widetilde{\lambda}}(v')\bigr)\\
&=\frac{2\langle v,v'\rangle}{2d_3+d_1'+d_2'+1}
\Gamma_{\bC}\bigl(2s+\nu_1+\nu_2+\nu_1'+\nu_2'
+\tfrac{d_1+d_2+d_1'+d_2'}{2}\bigr)\\
&\phantom{=}\times 
\Gamma_{\bC}\bigl(2s+\nu_1+\nu_3+\nu_1'+\nu_2'
+\tfrac{d_1+d_3+d_1'+d_2'}{2}\bigr)\\
&\phantom{=}\times 
\Gamma_{\bC}\bigl(2s+\nu_2+\nu_3+\nu_1'+\nu_2'
+\tfrac{d_2+d_3+d_1'+d_2'}{2}\bigr)\\
&\phantom{=}\times 
\frac{1}{4\pi \sqrt{-1}}
\int_t
\frac{\Gamma_{\bC}\bigl(t+\nu_1'+\tfrac{d_3+d_1'}{2}\bigr)
\Gamma_{\bC}\bigl(t+\nu_2'+\tfrac{d_3+d_2'}{2}\bigr)
\Gamma_{\bC}\bigl(-t+s+\nu_1+\tfrac{d_1-d_3}{2}\bigr)}
{\Gamma_{\bC}\bigl(-t+3s+\nu_1+\nu_2+\nu_3+\nu_1'+\nu_2'
+\tfrac{d_1+d_2+d_1'+d_2'}{2}\bigr)}\\
&\phantom{=}\times 
\Gamma_{\bC}\bigl(-t+s+\nu_2+\tfrac{d_2-d_3}{2}\bigr)
\Gamma_{\bC}(-t+s+\nu_3)dt.
\end{align*}
Applying Lemma \ref{lem:F32_Barnes2nd}, we have 
\begin{align*}
&Z\bigl(s,\varphi^{[\varepsilon ]}_{\chi ,\lambda}(v),
\varphi^{[-\varepsilon ]}_{\chi',\widetilde{\lambda}}(v')\bigr)
=\frac{2\langle v,v'\rangle}{2d_3+d_1'+d_2'+1}
\prod_{i=1}^3\prod_{j=1}^2
\Gamma_{\bC}\bigl(s+\nu_i+\nu_j'+\tfrac{d_i+d_j'}{2}\bigr).
\end{align*}
Hence we obtain the assertion in this case. \vspace{2mm}

\underline{\bf Case 4-2:} $d_2>-d_2'\geq d_3$. 
Since $l_0=0$, $\alpha_{22}=\min \{0,-d_3-d_1'\}$, 
$\lambda_1=-d_2'$, $\lambda_2=-d_1'$ and 
\begin{align*}
&\binom{d_1'-d_2'}{q}
\binom{d_1'-d_2'-q}{-\alpha_{22}}
=\binom{d_1'-d_2'}{-\alpha_{22}}
\binom{d_1'-d_2'+\alpha_{22}}{q},\\
&\varphi_{\chi',\widetilde{\lambda}}^{[-\varepsilon ]}
(v_{\widetilde{\lambda},d_1'-d_2'-q})(y)
=(-\varepsilon \sI )^{d_2'+q}y_1y_2^{2\nu_1'+2\nu_2'}\\
&\hspace{2cm}\times \frac{1}{2\pi \sI}\int_t
\Gamma_{\bC}\bigl(t+\nu_1'+\tfrac{d_1'-d_2'-q}{2}\bigr)
\Gamma_{\bC}\bigl(t+\nu_2'+\tfrac{q}{2}\bigr)
y_1^{-2t}dt
\end{align*}
for $0\leq q\leq d_1'-d_2'$ and $y=\diag (y_1y_2,y_2)\in A_2$, 
we have 
\begin{align*}
&Z\bigl(s,\varphi^{[\varepsilon ]}_{\chi ,\lambda }(v),
\varphi^{[-\varepsilon ]}_{\chi',\widetilde{\lambda}}(v')\bigr)\\
&=\frac{\langle v,v'\rangle}{d_1'-d_2'+1}
\binom{d_1'-d_2'}{-\alpha_{22}}
\sum_{q=0}^{d_1'-d_2'+\alpha_{22}}
\binom{d_1'-d_2'+\alpha_{22}}{q}
\frac{1}{(2\pi \sqrt{-1})^3}\\
&\phantom{=}\times 
\int_{0}^\infty \int_{0}^\infty 
\biggl\{\int_t\int_{t_2}\int_{t_1}
\frac{\Gamma_{\bC}\bigl(t_1+\nu_2+\tfrac{q+d_2+d_2'}{2}\bigr)
\Gamma_{\bC}\bigl(t_2-\nu_2+\tfrac{d_1-d_2-d_3-d_1'-\alpha_{22}}{2}\bigr)}
{\Gamma_{\bC}\bigl(t_1+t_2+
\tfrac{q+d_1-d_3-d_1'+d_2'-\alpha_{22}}{2}\bigr)}\\
&\phantom{=}\times 
\Gamma_{\bC}\bigl(t_1+\nu_1+\tfrac{q+d_1+d_2'}{2}\bigr)
\Gamma_{\bC}\bigl(t_1+\nu_3+\tfrac{-d_3-d_2'-q}{2}\bigr)
\Gamma_{\bC}\bigl(t_2-\nu_1+\tfrac{-d_3-d_1'-\alpha_{22}}{2}\bigr)\\
&\phantom{=}\times 
\Gamma_{\bC}\bigl(t_2-\nu_3+\tfrac{d_1+d_1'+\alpha_{22}}{2}\bigr)
\Gamma_{\bC}\bigl(t+\nu_1'+\tfrac{d_1'-d_2'-q}{2}\bigr)
\Gamma_{\bC}\bigl(t+\nu_2'+\tfrac{q}{2}\bigr)\\
&\phantom{=}\times 
y_1^{-2t_1-2t} y_2^{-2t_2} \,dt_1dt_2dt\biggr\}
y_1^{2s}y_2^{4s+2\nu_1+2\nu_2+2\nu_3+2\nu_1'+2\nu_2'+d_2+d_2'-\alpha_{22}}
\frac{2dy_1}{y_1}\frac{2dy_2}{y_2}.
\end{align*}
Substituting $t_1\to t_1-t$ and applying Lemma \ref{lem:F32_Mellin} twice, 
we have 
\begin{align*}
&Z\bigl(s,\varphi^{[\varepsilon ]}_{\chi ,\lambda }(v),
\varphi^{[-\varepsilon ]}_{\chi',\widetilde{\lambda}}(v')\bigr)\\
&=
\Gamma_{\bC}\bigl(2s+\nu_1+\nu_2+\nu_1'+\nu_2'
+\tfrac{d_1+d_2+d_1'+d_2'}{2}\bigr)\\
&\phantom{=}\times 
\Gamma_{\bC}\bigl(2s+\nu_1+\nu_3+\nu_1'+\nu_2'
+\tfrac{d_1-d_3-d_1'+d_2'}{2}-\alpha_{22}\bigr)\\
&\phantom{=}\times 
\Gamma_{\bC}\bigl(2s+\nu_2+\nu_3+\nu_1'+\nu_2'
+\tfrac{d_2-d_3-d_1'+d_2'}{2}-\alpha_{22}\bigr)\\
&\phantom{=}\times 
\frac{\langle v,v'\rangle}{d_1'-d_2'+1}
\binom{d_1'-d_2'}{-\alpha_{22}}
\sum_{q=0}^{d_1'-d_2'+\alpha_{22}}
\binom{d_1'-d_2'+\alpha_{22}}{q}
\frac{1}{2\pi \sqrt{-1}}\\
&\phantom{=}\times 
\int_t
\frac{\Gamma_{\bC}\bigl(t+\nu_1'+\tfrac{d_1'-d_2'-q}{2}\bigr)
\Gamma_{\bC}\bigl(t+\nu_2'+\tfrac{q}{2}\bigr)
\Gamma_{\bC}\bigl(-t+s+\nu_1+\tfrac{q+d_1+d_2'}{2}\bigr)}
{\Gamma_{\bC}\bigl(-t+3s+\nu_1+\nu_2+\nu_3+\nu_1'+\nu_2'
+\tfrac{d_1+d_2-d_3-d_1'+q}{2}+d_2'-\alpha_{22}\bigr)}\\
&\phantom{=}\times 
\Gamma_{\bC}\bigl(-t+s+\nu_2+\tfrac{q+d_2+d_2'}{2}\bigr)
\Gamma_{\bC}\bigl(-t+s+\nu_3+\tfrac{-d_3-d_2'-q}{2}\bigr)
dt.
\end{align*}
Substituting $t\to t+\tfrac{q}{2}$ and applying 
Lemma \ref{lem:F32_gauss_sum}, we have 
\begin{align*}
&Z\bigl(s,\varphi^{[\varepsilon ]}_{\chi ,\lambda }(v),
\varphi^{[-\varepsilon ]}_{\chi',\widetilde{\lambda}}(v')\bigr)\\
&=
\Gamma_{\bC}\bigl(2s+\nu_1+\nu_2+\nu_1'+\nu_2'
+\tfrac{d_1+d_2+d_1'+d_2'}{2}\bigr)\\
&\phantom{=}\times 
\Gamma_{\bC}\bigl(2s+\nu_1+\nu_3+\nu_1'+\nu_2'
+\tfrac{d_1-d_3-d_1'+d_2'}{2}-\alpha_{22}\bigr)\\
&\phantom{=}\times 
\Gamma_{\bC}\bigl(2s+\nu_2+\nu_3+\nu_1'+\nu_2'
+\tfrac{d_2-d_3-d_1'+d_2'}{2}-\alpha_{22}\bigr)\\
&\phantom{=}\times 
\frac{2\langle v,v'\rangle}{d_1'-d_2'+1}
\binom{d_1'-d_2'}{-\alpha_{22}}
\frac{\Gamma_{\bC}\bigl(s+\nu_3+\nu_2'+\tfrac{-d_3-d_2'}{2}\bigr)}
{\Gamma_{\bC}\bigl(s+\nu_3+\nu_2'+\tfrac{-d_3+d_2'}{2}-d_1'-\alpha_{22}\bigr)}
\frac{1}{4\pi \sqrt{-1}}\\
&\phantom{=}\times 
\int_t
\frac{\Gamma_{\bC}\bigl(t+\nu_1'+\tfrac{d_1'-d_2'}{2}\bigr)
\Gamma_{\bC}(t+\nu_2')
\Gamma_{\bC}\bigl(-t+s+\nu_1+\tfrac{d_1+d_2'}{2}\bigr)}
{\Gamma_{\bC}\bigl(-t+3s+\nu_1+\nu_2+\nu_3+\nu_1'+\nu_2'
+\tfrac{d_1+d_2-d_3-d_1'}{2}+d_2'-\alpha_{22}\bigr)}\\
&\phantom{=}\times 
\Gamma_{\bC}\bigl(-t+s+\nu_2+\tfrac{d_2+d_2'}{2}\bigr)
\Gamma_{\bC}\bigl(-t+s+\nu_3+\tfrac{-d_3+d_2'}{2}-d_1'-\alpha_{22}\bigr)dt.
\end{align*}
Applying Lemma \ref{lem:F32_Barnes2nd}, we have 
\begin{align*}
&Z\bigl(s,\varphi^{[\varepsilon ]}_{\chi ,\lambda }(v),
\varphi^{[-\varepsilon ]}_{\chi',\widetilde{\lambda}}(v')\bigr)\\
&=\frac{2\langle v,v'\rangle}{d_1'-d_2'+1}
\binom{d_1'-d_2'}{-\alpha_{22}}
\Gamma_{\bC}\bigl(s+\nu_1+\nu_1'+\tfrac{d_1+d_1'}{2}\bigr)
\Gamma_{\bC}\bigl(s+\nu_1+\nu_2'+\tfrac{d_1+d_2'}{2}\bigr)\\
&\phantom{=}\times 
\Gamma_{\bC}\bigl(s+\nu_2+\nu_1'+\tfrac{d_2+d_1'}{2}\bigr)
\Gamma_{\bC}\bigl(s+\nu_2+\nu_2'+\tfrac{d_2+d_2'}{2}\bigr)
\\
&\phantom{=}\times 
\Gamma_{\bC}\bigl(s+\nu_3+\nu_1'+\tfrac{-d_3-d_1'}{2}-\alpha_{22}\bigr)
\Gamma_{\bC}(s+\nu_3+\nu_2'+\tfrac{-d_3-d_2'}{2}).
\end{align*}
Hence we obtain the assertion in this case.

\section{The calculation in the case $-d_1'>d_2$}

Here we give a proof of Theorem \ref{thm:C32_zeta_L_coincide} 
in the case $-d_1'>d_2$. 
Our proof in this section is similar to that 
in \S \ref{subsec:C32_pf_Main_1}. 

In the case $-d_1'>d_2$, we note that 
$\alpha_2=\alpha_{21}=\alpha_{22}=0$ and 
\begin{align*}
&l_0=\max \{0,-d_1-d_1'\},&
&\alpha_{11}=\max \{0,\lambda_1-d_1\},&
&\alpha_{12}=\lambda_2-d_2,\\
&\alpha_1=\lambda_2-d_2+\alpha_{11},&
&\beta_1=\lambda_1-\alpha_{11},&
&\beta_2=d_2.
\end{align*}
For $0\leq q\leq \lambda_1-\lambda_2$ 
and $\hat{y}=\diag (y_1y_2,y_2,1)\in A_3$, 
we have 
\begin{align*}
&\varphi^{[\varepsilon ]}_{\chi ,\lambda}(v_{\lambda ,\lambda_1-\lambda_2-q})
(\hat{y})=(\varepsilon \sI)^{\lambda_2+q}
y_1^2y_2^{2\nu_1+2\nu_2+2\nu_3+\lambda_2-d_2+\alpha_{11}+2}\\
&\phantom{==}\times 
\binom{\lambda_1-\lambda_2-q}{\alpha_{11}}
\frac{1}{(2\pi \sqrt{-1})^2} \int_{t_2}\int_{t_1}
\frac{\Gamma_{\bC}\bigl(t_1+\nu_2+\tfrac{\lambda_2-d_2+q}{2}\bigr)}
{ \Gamma_{\bC}\bigl(t_1+t_2+
\tfrac{q-\lambda_1+\lambda_2+d_1-d_3+\alpha_{11}}{2}\bigr)}\\
&\phantom{==}\times 
\Gamma_{\bC}\bigl(t_2-\nu_2+\tfrac{d_1+d_2-d_3-\lambda_1+\alpha_{11}}{2}\bigr)
\Gamma_{\bC}\bigl(t_1+\nu_1+\tfrac{-\lambda_2+d_1-q}{2}\bigr)\\
&\phantom{==}\times 
\Gamma_{\bC}\bigl(t_1+\nu_3+\tfrac{\lambda_2-d_3+q}{2}\bigr)
\Gamma_{\bC}\bigl(t_2-\nu_1+\tfrac{\lambda_1-d_3-\alpha_{11}}{2}\bigr)
\\
&\phantom{==}\times 
\Gamma_{\bC}\bigl(t_2-\nu_3+\tfrac{d_1-\lambda_1+\alpha_{11}}{2}\bigr)
\,y_1^{-2t_1} y_2^{-2t_2} \,dt_1dt_2
\end{align*}
if $0\leq q\leq \lambda_1-\lambda_2-\alpha_{11}$, 
and $\varphi^{[\varepsilon ]}_{\chi ,\lambda}(v_{\lambda ,q})(y)=0$ otherwise. 
Hence, by Lemma \ref{lem:C32_zeta32_schur}, we have 
\begin{align*}
&Z\bigl(s,\varphi^{[\varepsilon ]}_{\chi ,\lambda}(v),
\varphi^{[-\varepsilon ]}_{\chi',\widetilde{\lambda}}(v')\bigr)\\
&=\frac{\langle v,v'\rangle}{\lambda_1-\lambda_2+1}
\sum_{q=0}^{\lambda_1-\lambda_2}
(-1)^{\lambda_2+q}\binom{\lambda_1-\lambda_2}{q}
\int_{0}^\infty \int_{0}^\infty   
\varphi^{[\varepsilon ]}_{\chi ,\lambda}
(v_{\lambda ,\lambda_1-\lambda_2-q})(\hat{y})\\
&\phantom{=}\times 
\varphi^{[-\varepsilon ]}_{\chi',\widetilde{\lambda}}
(v_{\widetilde{\lambda},q})(y)
y_1^{2s-3}y_2^{4s-2}
\frac{2dy_1}{y_1}\frac{2dy_2}{y_2}\\
&=\frac{\langle v,v'\rangle}{\lambda_1-\lambda_2+1}
\sum_{q=0}^{\lambda_1-\lambda_2-\alpha_{11}}
(-\varepsilon \sI)^{\lambda_2+q}
\binom{\lambda_1-\lambda_2}{q}
\binom{\lambda_1-\lambda_2-q}{\alpha_{11}}
\frac{1}{(2\pi \sqrt{-1})^2}\\
&\phantom{=}\times 
\int_{0}^\infty \int_{0}^\infty   
\biggl\{\int_{t_2}\int_{t_1}
\frac{\Gamma_{\bC}\bigl(t_1+\nu_2+\tfrac{\lambda_2-d_2+q}{2}\bigr)
\Gamma_{\bC}\bigl(t_2-\nu_2+\tfrac{d_1+d_2-d_3-\lambda_1+\alpha_{11}}{2}\bigr)}
{ \Gamma_{\bC}\bigl(t_1+t_2+
\tfrac{q-\lambda_1+\lambda_2+d_1-d_3+\alpha_{11}}{2}\bigr)}\\
&\phantom{==}\times 
\Gamma_{\bC}\bigl(t_1+\nu_1+\tfrac{-\lambda_2+d_1-q}{2}\bigr) 
\Gamma_{\bC}\bigl(t_1+\nu_3+\tfrac{\lambda_2-d_3+q}{2}\bigr)\\
&\phantom{==}\times 
\Gamma_{\bC}\bigl(t_2-\nu_1+\tfrac{\lambda_1-d_3-\alpha_{11}}{2}\bigr)
\Gamma_{\bC}\bigl(t_2-\nu_3+\tfrac{d_1-\lambda_1+\alpha_{11}}{2}\bigr)
\varphi^{[-\varepsilon ]}_{\chi',\widetilde{\lambda}}
(v_{\widetilde{\lambda},q})(y)\\
&\phantom{==}\times 
y_1^{-2t_1} y_2^{-2t_2} \,dt_1dt_2\biggr\}
y_1^{2s-1}y_2^{4s+2\nu_1+2\nu_2+2\nu_3+\lambda_2-d_2+\alpha_{11}}
\frac{2dy_1}{y_1}\frac{2dy_2}{y_2}.
\end{align*}

\vspace{2mm}

\underline{\bf Case 5-1:} $-d_1'>d_1$. 
Our proof in this case is similar to that in Case 4-1. 
Since $l_0=-d_1-d_1'$, 
$\alpha_{11}=-2d_1-d_1'-d_2'$, 
$\lambda_1=-d_1-d_1'-d_2'$, 
$\lambda_2=d_1$ and 
\begin{align*}
&\varphi_{\chi',\widetilde{\lambda}}^{[-\varepsilon ]}(v_{\widetilde{\lambda},0})(y)
=(-\varepsilon \sI )^{-d_1}y_1y_2^{2\nu_1'+2\nu_2'}\\
&\hspace{2cm}
\times \frac{1}{2\pi \sI}\int_t
\Gamma_{\bC}\bigl(t+\nu_1'+\tfrac{-d_1-d_1'}{2}\bigr)
\Gamma_{\bC}\bigl(t+\nu_2'+\tfrac{-d_1-d_2'}{2}\bigr)
y_1^{-2t}dt
\end{align*}
for $y=\diag (y_1y_2,y_2)\in A_2$, we have 
\begin{align*}
&Z\bigl(s,\varphi^{[\varepsilon ]}_{\chi ,\lambda}(v),
\varphi^{[-\varepsilon ]}_{\chi',\widetilde{\lambda}}(v')\bigr)
=\frac{2\langle v,v'\rangle}{-2d_1-d_1'-d_2'+1}\prod_{i=1}^3\prod_{j=1}^2
\Gamma_{\bC}\bigl(s+\nu_i+\nu_j'+\tfrac{-d_i-d_j'}{2}\bigr)
\end{align*}
by Lemmas \ref{lem:F32_Mellin} and \ref{lem:F32_Barnes2nd}.
Hence we obtain the assertion in this case. \vspace{2mm}

\underline{\bf Case 5-2:} $d_1\geq -d_1'>d_2$. 
Our proof in this case is similar to that in Case 4-2. 
Since $l_0=0$, $\alpha_{11}=\max \{0,-d_1-d_2'\}$, 
$\lambda_1=-d_2'$, $\lambda_2=-d_1'$ and 
\begin{align*}
&\binom{d_1'-d_2'}{q}
\binom{d_1'-d_2'-q}{\alpha_{11}}
=
\binom{d_1'-d_2'}{\alpha_{11}}
\binom{d_1'-d_2'-\alpha_{11}}{q},\\
&\varphi_{\chi',\widetilde{\lambda}}^{[-\varepsilon ]}
(v_{\widetilde{\lambda},q})(y)
=(-\varepsilon \sI )^{d_1'-q}y_1y_2^{2\nu_1'+2\nu_2'}\\
&\hspace{2cm}
\times \frac{1}{2\pi \sI}\int_t
\Gamma_{\bC}\bigl(t+\nu_1'+\tfrac{q}{2}\bigr)
\Gamma_{\bC}\bigl(t+\nu_2'+\tfrac{d_1'-d_2'-q}{2}\bigr)
y_1^{-2t}dt
\end{align*}
for $0\leq q\leq d_1'-d_2'$ and $y=\diag (y_1y_2,y_2)\in A_2$, 
we have 
\begin{align*}
&Z\bigl(s,\varphi^{[\varepsilon ]}_{\chi ,\lambda }(v),
\varphi^{[-\varepsilon ]}_{\chi',\widetilde{\lambda}}(v')\bigr)
=\frac{2\langle v,v'\rangle}{d_1'-d_2'+1}
\binom{d_1'-d_2'}{\alpha_{11}}\\
&\hspace{2cm}\times 
\Gamma_{\bC}(s+\nu_1+\nu_1'+\tfrac{d_1+d_1'}{2})
\Gamma_{\bC}\bigl(s+\nu_1+\nu_2'+\tfrac{d_1+d_2'}{2}+\alpha_{11}\bigr)\\
&\hspace{2cm}\times 
\Gamma_{\bC}\bigl(s+\nu_2+\nu_1'+\tfrac{-d_2-d_1'}{2}\bigr)
\Gamma_{\bC}\bigl(s+\nu_2+\nu_2'+\tfrac{-d_2-d_2'}{2}\bigr)\\
&\hspace{2cm}\times 
\Gamma_{\bC}\bigl(s+\nu_3+\nu_1'+\tfrac{-d_3-d_1'}{2}\bigr)
\Gamma_{\bC}\bigl(s+\nu_3+\nu_2'+\tfrac{-d_3-d_2'}{2}\bigr)
\end{align*}
by Lemmas \ref{lem:F32_Mellin}, \ref{lem:F32_Barnes2nd} 
and \ref{lem:F32_gauss_sum}. 
Hence we obtain the assertion in this case. \vspace{2mm}

\section{The calculation in the case $-d_2'\geq d_2\geq -d_1'$}

Here we give a proof of Theorem \ref{thm:C32_zeta_L_coincide} 
in the case 
$-d_2'\geq d_2\geq -d_1'$. 
In this case, we note that $l_0=\alpha_{21}=\alpha_{12}=0$ and 
\begin{align*}
&\lambda_1=-d_2',&
&\alpha_1=\alpha_{11}=\max \{0,-d_2'-d_1\},&
&\beta_1=-d_2'-\alpha_1,\\
&\lambda_2=-d_1',&
&\alpha_2=\alpha_{22}=\min \{0,-d_1'-d_3\},&
&\beta_2=-d_1'-\alpha_2.
\end{align*}
For $0\leq q\leq d_1'-d_2'$ and $\hat{y}=\diag (y_1y_2,y_2,1)\in A_3$, 
we have 
\begin{align*}
&\varphi^{[\varepsilon ]}_{\chi ,\lambda }(v_{\lambda ,q})(\hat{y})
=(\varepsilon \sI)^{-d_2'-q}
y_1^2y_2^{2\nu_1+2\nu_2+2\nu_3+\alpha_1-\alpha_2+2}\\
&\phantom{=.}
\times 
\binom{q}{\alpha_1}\binom{d_1'-d_2'-q}{-\alpha_2}
\sum_{i=\max \{0,q+d_2+d_2'\}
}^{\min \{q-\alpha_1,d_2+d_1'+\alpha_2\}}
\binom{q-\alpha_1}{i}
\binom{d_1'-d_2'+\alpha_2-q}{d_2+d_1'+\alpha_2-i}\\
&\phantom{=.}\times \frac{1}{(2\pi \sqrt{-1})^2} \int_{t_2}\int_{t_1}
\frac{\Gamma_{\bC}\bigl(t_1+\nu_2+\tfrac{-d_2-d_2'-q}{2}+i\bigr)}
{ \Gamma_{\bC}\bigl(t_1+t_2+
\tfrac{-q+d_1-d_2-d_3-d_1'+\alpha_1-\alpha_2}{2}+i\bigr)}\\
&\phantom{=.}\times 
\Gamma_{\bC}\bigl(t_2-\nu_2
+\tfrac{d_1-d_3-d_1'+d_2'+\alpha_1-\alpha_2}{2}\bigr)
\Gamma_{\bC}\bigl(t_1+\nu_1+\tfrac{q+d_1+d_2'}{2}\bigr)\\
&\phantom{=.}\times 
\Gamma_{\bC}\bigl(t_1+\nu_3+\tfrac{-d_3-d_2'-q}{2}\bigr)
\Gamma_{\bC}\bigl(t_2-\nu_1
+\tfrac{-d_2-d_3-d_1'-d_2'-\alpha_1-\alpha_2}{2}\bigr)\\
&\phantom{=.}\times 
\Gamma_{\bC}\bigl(t_2-\nu_3
+\tfrac{d_1+d_2+d_1'+d_2'+\alpha_1+\alpha_2}{2}\bigr)
\,y_1^{-2t_1} y_2^{-2t_2} \,dt_1dt_2
\end{align*}
if $\alpha_1\leq q\leq d_1'-d_2'+\alpha_2$, 
and $\varphi^{[\varepsilon ]}_{\chi ,\lambda}
(v_{\lambda ,q})(\hat{y})=0$ otherwise. 
For $0\leq q\leq d_1'-d_2'$ and $y=\diag (y_1y_2,y_2)\in A_2$, 
we have 
\begin{align*}
&\varphi_{\chi',\widetilde{\lambda}}^{[-\varepsilon ]}
(v_{\widetilde{\lambda},d_1'-d_2'-q})
(y)\\
&=(-\varepsilon \sI )^{d_2'+q}
\frac{y_1y_2^{2\nu_1'+2\nu_2'}}{2\pi \sI}\int_t
\Gamma_{\bC}\bigl(t+\nu_1'+\tfrac{d_1'-d_2'-q}{2}\bigr)
\Gamma_{\bC}\bigl(t+\nu_2'+\tfrac{q}{2}\bigr)
y_1^{-2t}dt. 
\end{align*}
By Lemma \ref{lem:C32_zeta32_schur} and the equality 
\begin{align*}
&\binom{d_1'-d_2'}{q}
\binom{q}{\alpha_1}\binom{d_1'-d_2'-q}{-\alpha_2}
\binom{q-\alpha_1}{i}
\binom{d_1'-d_2'+\alpha_2-q}{d_2+d_1'+\alpha_2-i}\\
&=
\binom{d_1'-d_2'}{d_2+d_1'}
\binom{-d_2-d_2'}{\alpha_1}
\binom{d_2+d_1'}{-\alpha_2}
\binom{-d_2-d_2'-\alpha_1}{q-\alpha_1-i}
\binom{d_2+d_1'+\alpha_2}{i},
\end{align*}
we have 
\begin{align*}
&Z\bigl(s,\varphi_{\chi ,\lambda}^{[\varepsilon ]}(v),
\varphi_{\chi',\widetilde{\lambda}}^{[-\varepsilon ]}(v')\bigr)\\
&=\frac{\langle v,v'\rangle}{d_1'-d_2'+1}
\sum_{q=0}^{d_1'-d_2'}
(-1)^{-d_2'-q}
\binom{d_1'-d_2'}{q}\int_{0}^\infty \int_{0}^\infty   
\varphi^{[\varepsilon ]}_{\chi ,\lambda }(v_{\lambda ,q})(\hat{y})\\
&\phantom{=}\times 
\varphi_{\chi',\widetilde{\lambda}}^{[-\varepsilon ]}
(v_{\widetilde{\lambda},d_1'-d_2'-q})(y)
y_1^{2s-3}y_2^{4s-2}
\frac{2dy_1}{y_1}\frac{2dy_2}{y_2}\\
&=
\frac{\langle v,v'\rangle }{d_1'-d_2'+1}
\binom{d_1'-d_2'}{d_2+d_1'}
\binom{-d_2-d_2'}{\alpha_1}
\binom{d_2+d_1'}{-\alpha_2}\\
&\phantom{=}\times 
\sum_{q=\alpha_1}^{d_1'-d_2'+\alpha_2}
\sum_{i=\max \{0,q+d_2+d_2'\}}^{\,\min \{q-\alpha_1,d_2+d_1'+\alpha_2\}}
\binom{-d_2-d_2'-\alpha_1}{q-\alpha_1-i}
\binom{d_2+d_1'+\alpha_2}{i}\\
&\phantom{=}\times 
\frac{1}{(2\pi \sqrt{-1})^3}
\int_{0}^\infty \int_{0}^\infty \biggl\{ 
\int_t\int_{t_2}\int_{t_1}
\frac{\Gamma_{\bC}\bigl(t_1+\nu_2+\tfrac{-d_2-d_2'-q}{2}+i\bigr)}
{\Gamma_{\bC}\bigl(t_1+t_2+
\tfrac{-q+d_1-d_2-d_3-d_1'+\alpha_1-\alpha_2}{2}+i\bigr)}\\
&\phantom{=}\times 
\Gamma_{\bC}\bigl(t_2-\nu_2
+\tfrac{d_1-d_3-d_1'+d_2'+\alpha_1-\alpha_2}{2}\bigr)
\Gamma_{\bC}\bigl(t_1+\nu_1+\tfrac{q+d_1+d_2'}{2}\bigr)\\
&\phantom{=}\times 
\Gamma_{\bC}\bigl(t_1+\nu_3+\tfrac{-d_3-d_2'-q}{2}\bigr)
\Gamma_{\bC}\bigl(t_2-\nu_1
+\tfrac{-d_2-d_3-d_1'-d_2'-\alpha_1-\alpha_2}{2}\bigr)\\
&\phantom{=}\times 
\Gamma_{\bC}\bigl(t_2-\nu_3
+\tfrac{d_1+d_2+d_1'+d_2'+\alpha_1+\alpha_2}{2}\bigr)
\Gamma_{\bC}\bigl(t+\nu_1'+\tfrac{d_1'-d_2'-q}{2}\bigr)
\Gamma_{\bC}\bigl(t+\nu_2'+\tfrac{q}{2}\bigr)\\
&\phantom{=}\times 
\,y_1^{-2t_1-2t} y_2^{-2t_2} \,dt_1dt_2dt
\biggr\}
y_1^{2s}y_2^{4s+2\nu_1+2\nu_2+2\nu_3+2\nu_1'+2\nu_2'+\alpha_1-\alpha_2}
\frac{2dy_1}{y_1}\frac{2dy_2}{y_2}.
\end{align*}
Substituting $t_1\to t_1-t$ and applying Lemma \ref{lem:F32_Mellin} twice, 
we have 
\begin{align*}
&Z\bigl(s,\varphi_{\chi ,\lambda }^{[\varepsilon ]}(v),
\varphi_{\chi',\widetilde{\lambda}}^{[-\varepsilon ]}(v')\bigr)\\
&=\frac{2\langle v,v'\rangle }{d_1'-d_2'+1}
\binom{d_1'-d_2'}{d_2+d_1'}
\binom{-d_2-d_2'}{\alpha_1}
\binom{d_2+d_1'}{-\alpha_2}\\
&\phantom{=}\times 
\Gamma_{\bC}\bigl(2s+\nu_1+\nu_2+\nu_1'+\nu_2'
+\tfrac{d_1+d_2+d_1'+d_2'}{2}+\alpha_1\bigr)\\
&\phantom{=}\times 
\Gamma_{\bC}\bigl(2s+\nu_1+\nu_3+\nu_1'+\nu_2'
+\tfrac{d_1-d_3-d_1'+d_2'}{2}+\alpha_1-\alpha_2\bigr)\\
&\phantom{=}\times 
\Gamma_{\bC}\bigl(2s+\nu_2+\nu_3+\nu_1'+\nu_2'
+\tfrac{-d_2-d_3-d_1'-d_2'}{2}-\alpha_2\bigr)\\
&\phantom{=}\times 
\sum_{q=\alpha_1}^{d_1'-d_2'+\alpha_2}
\sum_{i=\max \{0,q+d_2+d_2'\}}^{\,\min \{q-\alpha_1,d_2+d_1'+\alpha_2\}}
\binom{-d_2-d_2'-\alpha_1}{q-\alpha_1-i}
\binom{d_2+d_1'+\alpha_2}{i}
\frac{1}{4\pi \sqrt{-1}}\\
&\phantom{=}\times 
\int_t
\frac{\Gamma_{\bC}\bigl(t+\nu_1'+\tfrac{d_1'-d_2'-q}{2}\bigr)
\Gamma_{\bC}\bigl(t+\nu_2'+\tfrac{q}{2}\bigr)
\Gamma_{\bC}\bigl(-t+s+\nu_2+\tfrac{-d_2-d_2'-q}{2}+i\bigr)}
{\Gamma_{\bC}\bigl(-t+3s+\nu_1+\nu_2+\nu_3+\nu_1'+\nu_2'
+\tfrac{-q+d_1-d_2-d_3-d_1'}{2}+\alpha_1-\alpha_2+i\bigr)}\\
&\phantom{=}\times 
\Gamma_{\bC}\bigl(-t+s+\nu_1+\tfrac{q+d_1+d_2'}{2}\bigr)
\Gamma_{\bC}\bigl(-t+s+\nu_3+\tfrac{-d_3-d_2'-q}{2}\bigr)
dt.
\end{align*}
Replacing $q-\alpha_1-i\to p$ and substituting 
$t\to t+\tfrac{i-p}{2}$, we have 
\begin{align*}
&Z\bigl(s,\varphi_{\chi ,\lambda}^{[\varepsilon ]}(v),
\varphi_{\chi',\widetilde{\lambda}}^{[-\varepsilon ]}(v')\bigr)\\
&=\frac{2\langle v,v'\rangle }{d_1'-d_2'+1}
\binom{d_1'-d_2'}{d_2+d_1'}
\binom{-d_2-d_2'}{\alpha_1}
\binom{d_2+d_1'}{-\alpha_2}\\
&\phantom{=}\times 
\Gamma_{\bC}\bigl(2s+\nu_1+\nu_3+\nu_1'+\nu_2'
+\tfrac{d_1-d_3-d_1'+d_2'}{2}+\alpha_1-\alpha_2\bigr)\\
&\phantom{=}\times 
\Gamma_{\bC}\bigl(2s+\nu_2+\nu_3+\nu_1'+\nu_2'
+\tfrac{-d_2-d_3-d_1'-d_2'}{2}-\alpha_2\bigr)\\
&\phantom{=}\times 
\Gamma_{\bC}\bigl(2s+\nu_1+\nu_2+\nu_1'+\nu_2'
+\tfrac{d_1+d_2+d_1'+d_2'}{2}+\alpha_1\bigr)\\
&\phantom{=}\times 
\sum_{p=0}^{-d_2-d_2'-\alpha_1}
\sum_{i=0}^{d_2+d_1'+\alpha_2}
\binom{-d_2-d_2'-\alpha_1}{p}
\binom{d_2+d_1'+\alpha_2}{i}
\frac{1}{4\pi \sqrt{-1}}\\
&\phantom{=}\times 
\int_t
\frac{
\Gamma_{\bC}\bigl(-t+s+\nu_1+\tfrac{d_1+d_2'+\alpha_1}{2}+p\bigr)
\Gamma_{\bC}\bigl(t+\nu_1'+\tfrac{d_1'-d_2'-\alpha_1}{2}-p\bigr)}
{\Gamma_{\bC}\bigl(-t+3s+\nu_1+\nu_2+\nu_3+\nu_1'+\nu_2'
+\tfrac{\alpha_1+d_1-d_2-d_3-d_1'}{2}-\alpha_2\bigr)}\\
&\phantom{=}\times 
\Gamma_{\bC}\bigl(t+\nu_2'+\tfrac{\alpha_1}{2}+i\bigr)
\Gamma_{\bC}\bigl(-t+s+\nu_3+\tfrac{-d_3-d_2'-\alpha_1}{2}-i\bigr)\\
&\phantom{=}\times 
\Gamma_{\bC}\bigl(-t+s+\nu_2+\tfrac{-d_2-d_2'-\alpha_1}{2}\bigr)dt.
\end{align*}
Applying Lemma \ref{lem:F32_gauss_sum} twice, and 
Lemma \ref{lem:F32_Barnes2nd}, successively, we have 
\begin{align*}
&Z\bigl(s,\varphi_{\chi ,\lambda}^{[\varepsilon ]}(v),
\varphi_{\chi',\widetilde{\lambda}}^{[-\varepsilon ]}(v')\bigr)\\
&=\frac{2\langle v,v'\rangle }{d_1'-d_2'+1}
\binom{d_1'-d_2'}{d_2+d_1'}
\binom{-d_2-d_2'}{\alpha_1}
\binom{d_2+d_1'}{-\alpha_2}\\
&\phantom{=}\times 
\Gamma_{\bC}\bigl(s+\nu_1+\nu_1'+\tfrac{d_1+d_1'}{2}\bigr)
\Gamma_{\bC}\bigl(s+\nu_1+\nu_2'+\tfrac{d_1+d_2'}{2}+\alpha_1\bigr)\\
&\phantom{=}\times 
\Gamma_{\bC}\bigl(s+\nu_2+\nu_1'+\tfrac{d_2+d_1'}{2}\bigr)
\Gamma_{\bC}\bigl(s+\nu_2+\nu_2'+\tfrac{-d_2-d_2'}{2}\bigr)\\
&\phantom{=}\times 
\Gamma_{\bC}\bigl(s+\nu_3+\nu_1'+\tfrac{-d_3-d_1'}{2}-\alpha_2\bigr)
\Gamma_{\bC}\bigl(s+\nu_3+\nu_2'+\tfrac{-d_3-d_2'}{2}\bigr).
\end{align*}
Hence we obtain the assertion in this case.

\appendix
\chapter{Archimedean zeta integrals for $GL(2)\times GL(m)$ ($m=1,2$)}

The archimedean zeta integrals for $GL(2)\times GL(1)$ and $GL(2)\times GL(2)$ 
were fully computed by Jacquet \cite{Jacquet_003}, 
Popa \cite{Popa_001} and the third author \cite{Miyazaki_003}. 
In the appendix, we rewrite these results in our fashion.

\section{The local zeta integrals for $GL(2,\bR)\times GL(1,\bR )$}
\label{sec:R21_arch_zeta}

In this section, we set $F=\bR$. 
Let $\Pi_\sigma $ be an 
irreducible generalized principal series representation of $G_2$. 
Let $\chi_{(\nu',\delta')}$ be 
a one dimensional representation of $G_1=\bR^\times$ 
in \S \ref{subsec:Rn_def_gps}. 
By (\ref{eqn:Rmn_weil_tensor}), we obtain the explicit form of 
the local $L$-factor for $\Pi_\sigma \times \chi_{(\nu',\delta')}$ as follows:
\begin{align*}
&L (s,\Pi_\sigma \times \chi_{(\nu',\delta')})\\
&=\left\{\begin{array}{ll}
\displaystyle 
\prod_{i=1}^2\Gamma_\bR (s+\nu_i+\nu'+|\delta_i-\delta'|)
&\text{if }\ \sigma = 
\chi_{(\nu_1 ,\delta_1 )}\boxtimes \chi_{(\nu_2 ,\delta_2 )},\\[4mm]
\displaystyle 
\Gamma_\bC \bigl(s+\nu +\nu'+\tfrac{\kappa -1}{2}\bigr)
&\text{if }\ \sigma =D_{(\nu ,\kappa )}.
\end{array}\right.
\end{align*}

Let $\varepsilon \in \{\pm 1\}$. 
For $W\in \mathrm{Wh}(\Pi_\sigma ,\psi_{\varepsilon })^{\mathrm{mg}}$, 
we consider the local zeta integral 
$Z(s,W,\chi_{(\nu',\delta')})$ for $G_{2}\times G_1$, 
which is defined in \S \ref{subsec:Fmn_main_result}, that is, 
\begin{align*}
&Z(s,W,\chi_{(\nu',\delta')})=
\int_{\bR^\times}
W\!\left(\begin{array}{cc}
h& \\
 &1
\end{array}\right) 
\chi_{(\nu',\delta')}(h)|h|^{s-\frac{1}{2}}
\frac{dh}{|h|}\\
&=
\int_0^\infty 
\left\{
W\!\left(\begin{array}{cc}
y_1& \\
 &1
\end{array}\right)
+(-1)^{\delta'}W\!\left(\begin{array}{cc}
-y_1& \\
 &1
\end{array}\right)
\right\} 
y_1^{s+\nu'-\frac{1}{2}}
\frac{dy_1}{y_1}.
\end{align*}

Jacquet--Langlands \cite[Theorem 5.1.5 (ii)]{Jacquet_Langlands_001} and 
Popa \cite[Theorem 1]{Popa_001} show the following proposition. 

\begin{prop}
\label{prop:R21_zeta_L_coincide}
Let $\Pi_\sigma$ be an irreducible generalized principal 
series representation of $G_2=GL(2,\bR )$. 
Let $\nu'\in \bC$, $\delta'\in \{0,1\}$ and $\varepsilon \in \{\pm 1\}$. 
Let $\varphi_{\sigma}^{[\varepsilon ]}$ be 
the $K_2$-homomorphism in \S \ref{subsec:R32_Wh_GL2}. 
Then, for $s\in \bC$ with sufficiently large real part, it holds that 
$Z(s,W,\chi_{(\nu',\delta')})
=L(s,\Pi_\sigma \times \chi_{(\nu',\delta')})$ 
with 
\begin{align*}
&W=\left\{\begin{array}{ll}
\displaystyle 
\varphi_{\sigma}^{[\varepsilon ]}(v_{(0,\delta'),0})
&\text{if }\ \sigma = 
\chi_{(\nu_1 ,\delta' )}\boxtimes \chi_{(\nu_2 ,\delta' )},\\[2mm]
\displaystyle 
(2\pi \varepsilon \sI)^{-1}R(E_{1,2}^{{\g_2}})
\varphi_{\sigma}^{[\varepsilon ]}(v_{(0,1-\delta'),0})
&\text{if }\ \sigma = 
\chi_{(\nu_1 ,1-\delta')}\boxtimes \chi_{(\nu_2 ,1-\delta')},\\[2mm]
\displaystyle 
\varphi_{\sigma}^{[\varepsilon ]}(v_{(1,0),\varepsilon })
&\text{if }\ \sigma = \chi_{(\nu_1 ,1)}\boxtimes \chi_{(\nu_2 ,0)},\\[2mm]
\displaystyle 
\varphi_{\sigma}^{[\varepsilon ]}(v_{(\kappa ,0),\varepsilon \kappa })
&\text{if }\ \sigma =D_{(\nu ,\kappa )}.
\end{array}\right.
\end{align*}
\end{prop}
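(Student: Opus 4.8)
The plan is to reduce Proposition~\ref{prop:R21_zeta_L_coincide} to the explicit Mellin--Barnes descriptions of $\varphi_\sigma^{[\varepsilon]}$ collected in \S\ref{subsec:R32_Wh_GL2}, and then evaluate the resulting one-variable Barnes-type integral using Lemma~\ref{lem:F32_Barnes_1st} (Barnes' lemma). First I would fix $\varepsilon\in\{\pm1\}$ and a generalized principal series $\Pi_\sigma$, and make the standard reduction: because $\psi_1$ suffices by \eqref{eqn:Fn_psi_change}, and because $Z(s,W,\chi_{(\nu',\delta')})$ only depends on $W$ restricted to $\diag(h,1)\in A_2$ up to the parity twist by $\delta'$, the integral collapses to a single integral over $y_1\in\bR_+$ of a radial part, against $y_1^{s+\nu'-1/2}\,dy_1/y_1$. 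In each of the four cases the radial part of the relevant $W$ is given as $y_1^{1/2}$ times (a power of $y_2$, here trivial since $y_2=1$) times a contour integral $\tfrac{1}{2\pi\sI}\int_t (\text{product of }\Gamma_\bR\text{ or }\Gamma_\bC)\,y_1^{-t}\,dt$; the formulas in \S\ref{subsec:R32_Wh_GL2} already make the $y_1^{1/2}$ explicit, so the $y_1^{s+\nu'-1}$ integration is exactly a Mellin inversion.

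Second I would apply Lemma~\ref{lem:F32_Mellin} (with $c=1$, $r$ large) to the $y_1$-integral: interchanging the $y_1$- and $t$-integrals, $\int_0^\infty y_1^{s+\nu'-1}\{\int_t g(t)y_1^{-t}\,dt\}\,dy_1/y_1 = 2\pi\sI\, g(s+\nu')$, which instantly produces $L(s,\Pi_\sigma\times\chi_{(\nu',\delta')})$ in the cases where the radial part is a \emph{single} product of Gamma factors. Concretely: for $\sigma=\chi_{(\nu_1,\delta')}\boxtimes\chi_{(\nu_2,\delta')}$, $\varphi_\sigma^{[\varepsilon]}(v_{(0,\delta'),0})$ has radial part $\tfrac{1}{2\pi\sI}\int_t\Gamma_\bR(t+\nu_1)\Gamma_\bR(t+\nu_2)y_1^{-t}\,dt$ after the twist by $\varepsilon^{\delta'}(\sI)^{-\delta'+\delta'}=1$... (one must track the normalizing constant $\varepsilon^{\delta_2}(\sI)^{\delta_2-\delta_1}$ in the definition of $\varphi_\sigma^{[\varepsilon]}$ and check it cancels the parity factor $(-1)^{\delta'}$ coming from the $\pm y_1$ symmetrization), and evaluation at $t=s+\nu'$ gives $\Gamma_\bR(s+\nu_1+\nu')\Gamma_\bR(s+\nu_2+\nu')$, which is $L(s,\Pi_\sigma\times\chi_{(\nu',\delta')})$ since $\delta_i=\delta'$ forces all $|\delta_i-\delta'|=0$. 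The case $\sigma=D_{(\nu,\kappa)}$ with $W=\varphi_\sigma^{[\varepsilon]}(v_{(\kappa,0),\varepsilon\kappa})$ is identical: its radial part is $\tfrac{y_1^{1/2}}{2\pi\sI}\int_t\Gamma_\bC(t+\nu+\tfrac{\kappa-1}{2})y_1^{-t}\,dt$, and evaluation at $t=s+\nu'$ yields $\Gamma_\bC(s+\nu+\nu'+\tfrac{\kappa-1}{2})=L(s,\Pi_\sigma\times\chi_{(\nu',\delta')})$. The case $\sigma=\chi_{(\nu_1,1)}\boxtimes\chi_{(\nu_2,0)}$, $W=\varphi_\sigma^{[\varepsilon]}(v_{(1,0),\varepsilon})$ requires using the $q=\varepsilon\cdot(\pm1)$ specialization of the formula in \S\ref{subsec:R32_Wh_GL2}: at $q=\varepsilon$ (i.e. $\varepsilon q=1$) the bracket $\Gamma_\bR(t+\nu_1+1)\Gamma_\bR(t+\nu_2)+\varepsilon q\,\Gamma_\bR(t+\nu_1)\Gamma_\bR(t+\nu_2+1)$ does \emph{not} collapse to one term, but each of its two pieces integrates (via Lemma~\ref{lem:F32_Mellin}) to a product of two $\Gamma_\bR$'s, and their sum should be recognized --- using $\Gamma_\bR(s+2)=(2\pi)^{-1}s\Gamma_\bR(s)$, i.e. \eqref{eqn:Fn_FE_GammaRC} --- as $\Gamma_\bR(s+\nu_1+\nu'+|\,1-\delta'|)\Gamma_\bR(s+\nu_2+\nu'+|\,0-\delta'|)$; one checks the combination works out precisely for both values of $\delta'$, distinguishing $\delta'=0$ and $\delta'=1$.

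Third, the remaining case $\sigma=\chi_{(\nu_1,1-\delta')}\boxtimes\chi_{(\nu_2,1-\delta')}$ with $W=(2\pi\varepsilon\sI)^{-1}R(E_{1,2}^{\g_2})\varphi_\sigma^{[\varepsilon]}(v_{(0,1-\delta'),0})$ needs one extra ingredient: by Lemma~\ref{lem:Rn_g_act_Cpsi} (applied with $n=2$, $j=i+1=2$), $R(E_{1,2}^{\g_2})$ acts on a function in $C^\infty(N_2\backslash G_2;\psi_\varepsilon)$ restricted to $A_2$ as multiplication by $2\pi\varepsilon\sI\,y_1$; hence $W$ restricted to $\diag(y_1,1)$ equals $y_1$ times the radial part of $\varphi_\sigma^{[\varepsilon]}(v_{(0,1-\delta'),0})$, whose radial part in turn is $\tfrac{y_1^{1/2}}{2\pi\sI}\int_t\Gamma_\bR(t+\nu_1)\Gamma_\bR(t+\nu_2)y_1^{-t}\,dt$ up to the normalizing constant. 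The extra factor of $y_1$ shifts the Mellin variable, and Lemma~\ref{lem:F32_Mellin} now evaluates the integral at $t=s+\nu'+1$, giving $\Gamma_\bR(s+\nu_1+\nu'+1)\Gamma_\bR(s+\nu_2+\nu'+1)$, which equals $L(s,\Pi_\sigma\times\chi_{(\nu',\delta')})$ since the parity indices are now $|\,(1-\delta')-\delta'\,|=1$ when $\delta'\in\{0,1\}$ (indeed $|1-2\delta'|=1$). \textbf{The main obstacle} I expect is purely bookkeeping: getting every normalizing constant right --- the $\varepsilon$-powers, the factors $(\sI)^{\pm\delta}$, the $\Gamma_\bC/\Gamma_\bR(\kappa+\delta)$ ratio in the discrete-series normalization of $\varphi_\sigma^{[\varepsilon]}$, the $(-1)^{\delta'}$ from symmetrizing over $\pm y_1$, and the $(2\pi\varepsilon\sI)^{-1}$ in front of $R(E_{1,2}^{\g_2})$ --- so that the final answer is exactly $L(s,\Pi_\sigma\times\chi_{(\nu',\delta')})$ with coefficient $1$, and in the two ``non-collapsing'' cases ($\chi_{(\nu_1,1)}\boxtimes\chi_{(\nu_2,0)}$ and the discrete series-adjacent computation), correctly combining the two Gamma-products into a single product using the functional equation \eqref{eqn:Fn_FE_GammaRC}. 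No deep analytic input beyond Lemmas~\ref{lem:F32_Mellin} and the elementary Gamma identities is needed; Barnes' lemma is not actually required for the $GL(2)\times GL(1)$ case since the contour integrals are one-dimensional and reduce to Mellin inversion.
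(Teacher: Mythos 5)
Your overall plan --- reduce to the explicit Mellin--Barnes formulas for $\varphi_\sigma^{[\varepsilon]}$, apply Lemma~\ref{lem:F32_Mellin}, and read off the $L$-factor --- matches what the paper does, and you correctly observe that Barnes' lemma is unnecessary in this one-dimensional situation. You also correctly identify Lemma~\ref{lem:Rn_g_act_Cpsi} as the tool that turns $R(E_{1,2}^{\g_2})$ into multiplication by $2\pi\varepsilon\sI\,y_1$ on the radial part.

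However, your description of the mechanism by which the $\pm y_1$ symmetrization in the zeta integral is disposed of is incorrect, and for the $\sigma=\chi_{(\nu_1,1)}\boxtimes\chi_{(\nu_2,0)}$ case your proposed argument would fail. You claim the bracket $\Gamma_\bR(t+\nu_1+1)\Gamma_\bR(t+\nu_2)+\Gamma_\bR(t+\nu_1)\Gamma_\bR(t+\nu_2+1)$ at $q=\varepsilon$ Mellin-inverts to a sum of two products which you then propose to combine into a single product via \eqref{eqn:Fn_FE_GammaRC}; this is not possible --- that sum is not expressible as a single Gamma product, and moreover depends on nothing that could produce the $\delta'$-dependence of the $L$-factor. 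The actual mechanism is that the $\pm y_1$ symmetrization interacts with the $K$-action: by \eqref{eqn:R2_Kact}, $\tau_{(1,0)}(k_{(-1,1)}^{(1,1)})v_{(1,0),\varepsilon}=v_{(1,0),-\varepsilon}$, so $W(\diag(-y_1,1))=\varphi_\sigma^{[\varepsilon]}(v_{(1,0),-\varepsilon})(\diag(y_1,1))$, whose bracket has the opposite sign on the second term. Adding (respectively subtracting) the $q=\varepsilon$ and $q=-\varepsilon$ brackets according to $(-1)^{\delta'}$ yields $2\Gamma_\bR(t+\nu_1+1)\Gamma_\bR(t+\nu_2)$ for $\delta'=0$ and $2\Gamma_\bR(t+\nu_1)\Gamma_\bR(t+\nu_2+1)$ for $\delta'=1$ --- a single term, no functional equation needed. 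The same oversight propagates elsewhere: for $\sigma=D_{(\nu,\kappa)}$ you declare the case ``identical'' to the first, but the reason no factor of $2$ appears is that $\varphi_\sigma^{[\varepsilon]}(v_{(\kappa,0),-\varepsilon\kappa})=0$ (this is precisely why the paper cites \eqref{eqn:R2_Kact}); and for the case $\sigma=\chi_{(\nu_1,1-\delta')}\boxtimes\chi_{(\nu_2,1-\delta')}$ you must verify that $W(\diag(-y_1,1))=(-1)^{\delta'}W(\diag(y_1,1))$, which requires the equality $\Ad(k_{(-1,1)}^{(1,1)})E_{1,2}^{\g_2}=-E_{1,2}^{\g_2}$ that the paper explicitly invokes and your proof omits. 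So while your high-level strategy is the right one, you have a genuine gap: the computation of the $\pm y_1$ contribution via the $K_2$-equivariance \eqref{eqn:R2_Kact} (and the $\Ad$-equality for the differentiated case) is a necessary step that you either skip or replace with a false Gamma identity.
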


This proposition follows immediately from the explicit formula 
in \S \ref{subsec:R32_Wh_GL2} with (\ref{eqn:R2_Kact}), 
Lemmas \ref{lem:Rn_g_act_Cpsi}, \ref{lem:F32_Mellin} 
and the equality 
$\Ad \bigl(k_{(-1,1)}^{(1,1)}\bigr)E_{1,2}^{{\g_2}}
=-E_{1,2}^{{\g_2}}$.

\section{The local zeta integrals for $GL(2,\bR)\times GL(2,\bR )$}
\label{sec:R22_arch_zeta}

In this section, we set $F=\bR$. 
Let $\Pi_\sigma $ and $\Pi_{\sigma'}$ be 
irreducible generalized principal series representations of $G_2$. 
By (\ref{eqn:Rmn_weil_tensor}), we obtain the explicit form of 
the local $L$-factor for $\Pi_\sigma \times \Pi_{\sigma'}$ as follows:
\begin{itemize}
\item When $\sigma =
\chi_{(\nu_1 ,\delta_1 )}\boxtimes \chi_{(\nu_2 ,\delta_2 )}$ 
and $\sigma' =\chi_{(\nu_1',\delta_1')}\boxtimes \chi_{(\nu_2',\delta_2')}$, 
we have 
\begin{align*}
&L (s,\Pi_\sigma \times \Pi_{\sigma'})=
\prod_{i=1}^2\prod_{j=1}^2
\Gamma_\bR (s+\nu_i+\nu_j'+|\delta_i-\delta_j'|).
\end{align*}

\item When $\sigma =
\chi_{(\nu_1 ,\delta_1 )}\boxtimes \chi_{(\nu_2 ,\delta_2 )}$ 
and $\sigma' =D_{(\nu',\kappa')}$, we have 
\begin{align*}
&L (s,\Pi_\sigma \times \Pi_{\sigma'})=
\prod_{i=1}^2
\Gamma_\bC \bigl(s+\nu_i+\nu '+\tfrac{\kappa '-1}{2}\bigr).
\end{align*}


\item When $\sigma =D_{(\nu ,\kappa )}$ 
and $\sigma' =D_{(\nu',\kappa')}$, 
we have 
\begin{align*}
L (s,\Pi_\sigma \times \Pi_{\sigma'})=\,&
\Gamma_\bC \bigl(s+\nu +\nu '+\tfrac{\kappa +\kappa '-2}{2}\bigr)
\Gamma_\bC \bigl(s+\nu +\nu '+\tfrac{|\kappa -\kappa '|}{2}\bigr).
\end{align*}

\end{itemize}

Let $\cS (\bR^2)$ be the space of Schwartz functions on $\bR^2$. 
Let $\cS (\bR^2)^{\mathrm{std}}$ be the subspace of $\cS (\bR^2)$ 
consisting of all functions $f$ of the form 
\begin{align*}
&f(z_1,z_2)=p(z_1,z_2)
\exp (-\pi (z_1^2+z_2^2))&
&(z_1,z_2\in \bR )
\end{align*}
with polynomial functions $p$ on $\bR^2$. 
We call functions in $\cS (\bR^2)^{\mathrm{std}}$ 
standard Schwartz functions on $\bR^2$. 
For non-negative integers $a$ and $b$, 
we define a standard Schwartz function $f_{(a,b)} $ on $ \bR^2 $ by 
\begin{equation}
\label{eqn:R22_def_fab}
f_{(a,b)}(z_1,z_2)=(-\sI z_1+z_2)^a
(\sqrt{-1}z_1+z_2)^b \exp (-\pi (z_1^2+z_2^2)).
\end{equation}

Let $\varepsilon \in \{\pm 1\}$. 
For $W\in \mathrm{Wh}(\Pi_\sigma ,\psi_{\varepsilon })^{\mathrm{mg}}$, 
$W'\in \mathrm{Wh}(\Pi_{\sigma'} ,\psi_{-\varepsilon })^{\mathrm{mg}}$ 
and $f\in \cS (\bC^2)$, 
we define the local zeta integral $Z(s,W,W',f)$ for $G_2\times G_2$ by 
\begin{align*}
Z(s,W,W',f)
=&
\int_{N_2\backslash G_2}W(g)W'(g)f((0,1)g)|\det g|^{s}\,d\dot{g}\\
=&\sum_{\varepsilon_1 \in \{\pm 1\}}
\int_0^\infty \!\!\int_0^\infty  
\biggl(\int_{0}^{2\pi}
W(yk_{(\varepsilon_1,1)}^{(1,1)}k_{\theta}^{(2)})
W'(yk_{(\varepsilon_1,1)}^{(1,1)}k_{\theta}^{(2)})\\
&\times f(-y_2\sin \theta, y_2 \cos \theta)
\,\frac{d\theta}{\pi}\biggr)
(y_1y_2^2)^s\frac{dy_1}{y_1^2}\frac{dy_2}{y_2}
\end{align*} 
with $y=\diag (y_1y_2,y_2)\in A_2$. 
Here $d\dot{g}$ is the right invariant measure on $N_2\backslash G_2$ 
normalized by (\ref{eqn:R32_normalize_N2_G2}). 
The local zeta integral $Z(s,W,W',f)$ converges 
for $\mathrm{Re}(s)\gg 0$. Jacquet proves that the local zeta integral 
has the meromorphic continuation and satisfies the local functional equation 
(\cite{Jacquet_003}). 

In Theorems 17.2 (3) of \cite{Jacquet_003}, 
he asserts only that the associated 
local $L$-factor $L(s,\Pi_\sigma \times \Pi_{\sigma'})$ 
can be expressed as a finite sum of 
some local zeta integrals for $G_2\times G_2$. 
However, in the proof, he shows the following stronger result.

\begin{prop}
\label{prop:R22_zeta}
Let $\Pi_\sigma$ and $\Pi_{\sigma'}$ be 
irreducible generalized principal series representations of 
$G_2=GL(2,\bR )$. 
Let $\varepsilon \in \{\pm 1\}$. 
Let $\varphi_{\sigma}^{[\varepsilon ]}$ and 
$\varphi_{\sigma'}^{[-\varepsilon ]}$ be 
the $K_2$-homomorphisms in \S \ref{subsec:R32_Wh_GL2}. 
Then, for $s\in \bC$ with sufficiently large real part, it holds that 
\[
Z(s,W,W',f)
=L(s,\Pi_\sigma \times \Pi_{\sigma'}),
\]
where $W$, $W'$ and $f$ are given as follows:

\medskip

\noindent 
{\bf (Case 1)} 
$\sigma = \chi_{(\nu_1,\delta_1)} \boxtimes \chi_{(\nu_2,\delta_2)}$ 
and $\sigma' = \chi_{(\nu_1',\delta_1')} 
\boxtimes \chi_{(\nu_2',\delta_2')}$. \vspace{1mm}
\begin{description}
\item[(Case 1-1)] The case 
$\delta_1=\delta_2=\delta_1'=\delta_2'$: 

\noindent 
$W=\varphi_{\sigma}^{[\varepsilon ]}(v_{(0,\delta_2),0})$, 
$W'=\varphi_{\sigma'}^{[-\varepsilon ]}(v_{(0,\delta_2'),0})$ and 
$f= f_{(0,0)}$. \vspace{1mm}

\item[(Case 1-2)] The case 
$\delta_1=\delta_2=1$ and $\delta_1'=\delta_2'=0$: 

\noindent 
$W=(-8\pi \varepsilon )^{-1}
R(X_{2}^{\gp_2})\varphi_{\sigma}^{[\varepsilon ]}(v_{(0,1),0})$, 
$W'=\varphi_{\sigma'}^{[-\varepsilon ]}(v_{(0,0),0})$ and 
$f= f_{(0,2)}$. 

\noindent 
Here $X_{2}^{\gp_2}$ is defined by (\ref{eqn:R2_def_Xr}). 
\vspace{1mm}

\item[(Case 1-3)] The case 
$(\delta_1,\delta_2)=(1,0)$ and 
$\delta_1'=\delta_2'$: 

\noindent 
$W=\varphi_{\sigma}^{[\varepsilon ]}(v_{(1,0),1})$, 
$W'=\varepsilon^{\delta_2'}
\varphi_{\sigma'}^{[-\varepsilon ]}(v_{(0,\delta_2'),0})$ and 
$f= f_{(0,1)}$. \vspace{1mm}

\item[(Case 1-4)] The case 
$(\delta_1,\delta_2)=(\delta_1',\delta_2')=(1,0)$:

\noindent 
$W=\varphi_{\sigma}^{[\varepsilon ]}(v_{(1,0),1})$, 
$W'=\varphi_{\sigma'}^{[-\varepsilon ]}(v_{(1,0),-1})$ and 
$f= f_{(0,0)}$. \vspace{1mm}
\end{description}

\noindent
{\bf (Case 2)} 
$ \sigma = \chi_{(\nu_1,\delta_1)} \boxtimes \chi_{(\nu_2,\delta_2)}$ 
and $ \Pi \cong D_{(\nu',\kappa')} $.\vspace{1mm}
\begin{description}
\item[(Case 2-1)] The case $\delta_1=\delta_2$:

\noindent 
$W=\varepsilon^{\delta_2}
\varphi_{\sigma}^{[\varepsilon ]}(v_{(0,\delta_2),0})$, 
$W'=\varphi_{\sigma'}^{[-\varepsilon ]}(v_{(\kappa',0),-\kappa'})$ and 
$f= f_{(\kappa' ,0)}$. \vspace{1mm}

\item[(Case 2-2)] The case $ (\delta_1,\delta_2)=(1,0) $:

\noindent 
$W=\varphi_{\sigma}^{[\varepsilon ]}(v_{(1,0),1})$, 
$W'=\varphi_{\sigma'}^{[-\varepsilon ]}(v_{(\kappa',0),-\kappa'})$ and 
$f= f_{(\kappa'-1,0)}$. \vspace{1mm}

\end{description}

\noindent
{\bf (Case 3)}
$ \Pi \cong D_{(\nu,\kappa)} $ and $ \Pi' \cong D_{(\nu',\kappa')} $ 
$(\kappa \geq \kappa')$: 

$W=\varphi_{\sigma}^{[\varepsilon ]}(v_{(\kappa ,0),\kappa})$, 
$W'=\varphi_{\sigma'}^{[-\varepsilon ]}(v_{(\kappa',0),-\kappa'})$ and 
$f= 2^{\kappa'-1}f_{(0,\kappa -\kappa')}$. 

\end{prop}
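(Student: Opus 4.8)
The plan is to reduce Proposition~\ref{prop:R22_zeta} to a sequence of Mellin--Barnes integral manipulations, exactly paralleling the structure of the $GL(3)\times GL(2)$ computations in Chapter~\ref{sec:R32_zeta}. First I would unwind the definition of $Z(s,W,W',f)$. The crucial simplification comes from the Schur-orthogonality bookkeeping: the $\theta$-integral against $f(-y_2\sin\theta,y_2\cos\theta)$ picks out, by (\ref{eqn:R2_Kact}), a single matching pair of $K_2$-weights between $W$, $W'$ and the monomial content of $f_{(a,b)}$. So in each case the triple product $W(yk)W'(yk)f((0,1)yk)$ integrated over $k\in K_2$ collapses to a finite sum (often a single term) of products $\varphi^{[\varepsilon]}_\sigma(v_{\lambda,q})(\hat y)\,\varphi^{[-\varepsilon]}_{\sigma'}(v_{\lambda',q'})(y)$ times an explicit power of $y_2$ coming from the Gaussian-times-monomial factor in $f_{(a,b)}$; this is the analogue of Lemma~\ref{lem:R32_zeta32_schur}, which I would either cite in a suitably adapted form or re-derive in two lines. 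I would record the needed weight data for $f_{(a,b)}$ from (\ref{eqn:R22_def_fab}): it transforms under $k^{(2)}_\theta$ with weights running from $-a$ to $b$ in steps of two, and one reads off the evaluation $f_{(a,b)}(-y_2\sin\theta,y_2\cos\theta)=y_2^{a+b}e^{-\pi y_2^2}\sum \binom{\cdots}{\cdots}e^{\sI(\cdots)\theta}$.

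Next, for each case I would substitute the explicit radial formulas from \S\ref{subsec:R32_Wh_GL2} (i.e.\ the formulas for $\varphi^{[\varepsilon]}_\sigma(v_{\lambda,q})$ in terms of Mellin--Barnes integrals over $\Gamma_\bR$ or $\Gamma_\bC$ factors), and evaluate the two-dimensional integral $\int_0^\infty\int_0^\infty(\cdots)(y_1y_2^2)^s\,dy_1/y_1^2\,dy_2/y_2$. As in Chapter~\ref{sec:R32_zeta}, the $y_1$- and $y_2$-integrations are handled by Lemma~\ref{lem:F32_Mellin} (after the usual shift of a Mellin variable to absorb the contribution of the second Whittaker function's $\Gamma$-factors), producing products of $\Gamma_\bR$ (or $\Gamma_\bC$) evaluated at arguments like $2s+\nu_i+\nu_j'+|\delta_i-\delta_j'|$ together with a residual one-dimensional Barnes integral. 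That residual integral is then killed by Barnes' second lemma (Lemma~\ref{lem:F32_Barnes2nd}), or in the cases where $f_{(a,b)}$ contributes a genuine sum (Cases~2-1, 2-2 with $\kappa'>1$, and Case~3 with $\kappa>\kappa'$) by first applying the Gauss-sum identity Lemma~\ref{lem:F32_gauss_sum} to collapse the binomial sum, and then Lemma~\ref{lem:F32_Barnes2nd}. In each case the output should match the explicit $L$-factor listed at the top of \S\ref{sec:R22_arch_zeta}, and the normalization constants (the powers of $2$, $\pi$, $\varepsilon$, $\sI$ built into the definitions of $\varphi^{[\varepsilon]}_\sigma$ and into the exponents $(0,0),(0,1),(0,2),(\kappa',0)$ etc.\ of $f_{(a,b)}$) should conspire to give coefficient exactly $1$; I would verify this constant-tracking explicitly for Case~1-1 and then note that the remaining cases differ only by the application of a raising operator $R(E^{\gk_2}_{1,2})$, $R(X^{\gp_2}_2)$ or by a shift of weight, whose effect on the Mellin transform is the elementary substitution $\Gamma_\bR(s+1)=s\Gamma_\bR(s)$-type identity used throughout Chapter~3.

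For the cases involving a raising operator (Case~1-2, and implicitly the passage between minimal-$K$-type and higher-$K$-type vectors), I would use Lemma~\ref{lem:Rn_g_act_Cpsi} to see that $R(E^{\g_2}_{1,2})$ acts as multiplication by $2\pi\varepsilon\sI y_1$ on the radial variable, while $R(X^{\gp_2}_r)$ acts by the combination given in (\ref{eqn:R2_Xpm}); under the Mellin transform these become the shifts $s\mapsto s\pm 1$ together with linear factors, exactly as in the proof of Theorem~\ref{thm:R2_ps_Whittaker2}. The discrete-series input is handled by Corollary~\ref{cor:R2_ds_Whittaker}: the only nonvanishing radial part is $\varphi^{[\varepsilon]}_\sigma(v_{(\kappa,0),\varepsilon\kappa})(y)$, which is the clean single-$\Gamma_\bC$ Mellin integral, so Cases~2 and~3 reduce to one- or two-fold Barnes integrals after the Gauss sum.

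The main obstacle I expect is bookkeeping rather than conceptual: matching the $\varepsilon$-twists and the precise binomial coefficients between the monomial expansion of $f_{(a,b)}$ and the $K_2$-weight decomposition of the product $\tau_\lambda\otimes\tau_{\lambda'}$ on $V^{(2)}_\lambda\otimes V^{(2)}_{\lambda'}$ (via Lemma~\ref{lem:R32_tensor_K2}), and then confirming that in the ``$\kappa\neq\kappa'$'' discrete-series case the sum produced by $f_{(0,\kappa-\kappa')}$ is exactly the one Lemma~\ref{lem:F32_gauss_sum} evaluates, with the factor $2^{\kappa'-1}$ coming out right. A secondary subtlety is ensuring the normalization of $d\dot g$ in (\ref{eqn:R32_normalize_N2_G2}) together with the $d\theta/\pi$ in the definition of $Z(s,W,W',f)$ is consistently used; a sign or a power-of-$2$ slip here is the most likely source of error, so I would cross-check the final constant against the known $GL(2)\times GL(1)$ result of Proposition~\ref{prop:R21_zeta_L_coincide} in a degenerate limit.
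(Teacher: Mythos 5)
Your overall strategy — reduce via $K_2$-equivariance, insert the explicit Mellin–Barnes formulas from \S\ref{subsec:R32_Wh_GL2}, integrate out $y_2$ against the Gaussian, and close with a Barnes-type lemma — is the right one and matches the paper's proof in outline. But the proposal contains a concrete misreading of the structure of $f_{(a,b)}$ that would send you down a much more laborious (and in some cases simply wrong) path.

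You write that $f_{(a,b)}(-y_2\sin\theta,y_2\cos\theta)$ expands as $y_2^{a+b}e^{-\pi y_2^2}$ times a \emph{sum} of binomially-weighted exponentials $e^{\sI(\cdots)\theta}$, with ``weights running from $-a$ to $b$.'' That is what you would get for a monomial $z_1^a z_2^b$, but the paper's $f_{(a,b)}$ in (\ref{eqn:R22_def_fab}) is built out of $(-\sI z_1+z_2)^a(\sI z_1+z_2)^b$, which on the circle gives $(-\sI(-y_2\sin\theta)+y_2\cos\theta)^a(\sI(-y_2\sin\theta)+y_2\cos\theta)^b=y_2^{a+b}e^{\sI a\theta}e^{-\sI b\theta}$. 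So $f_{(a,b)}$ is a \emph{pure} weight vector of weight $a-b$, and the $\theta$-integral imposes a single weight constraint rather than a sum; this is exactly the content of (\ref{eqn:R22_fab_yk}), and it is the key simplification. Consequently no Gauss-sum identity (Lemma~\ref{lem:F32_gauss_sum}) is ever needed in the $GL(2)\times GL(2)$ proof — not in Cases~2-1, 2-2, nor in Case~3 with $\kappa>\kappa'$ as you anticipate. Relatedly, after the $y_2$-integration you are left with a single Barnes integral of \emph{four} gamma factors, so the tool is Barnes' first lemma (Lemma~\ref{lem:F32_Barnes_1st}), possibly combined with duplication as in (\ref{eqn:R22_CR_barnes}); Barnes' second lemma (Lemma~\ref{lem:F32_Barnes2nd}) is needed in the $GL(3)\times GL(2)$ computation because there one has five gammas and a denominator, but it does not appear here. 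You also do not anticipate that Case~3 has no Barnes integral at all: with two discrete-series Whittaker functions, both of which are pure exponentials (Corollary~\ref{cor:R2_ds_Whittaker}), the double integral is just a product of two Gaussian/Gamma integrals. Finally, Case~1-4 in the paper does produce a genuine sum of two terms, but the source is the two surviving weight pairings in the $K_2$-integral (not any expansion of $f$), and the two resulting Barnes integrals are combined via the functional equation (\ref{eqn:Fn_FE_GammaRC}), a step your proposal does not foresee. These are all fixable bookkeeping errors, but as written the argument would not go through in the claimed form.
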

\begin{proof}
From the definition of $f_{(a,b)} $ and (\ref{eqn:R2_Kact}), 
we know that
\begin{align}
\label{eqn:R22_fab_yk}
&f_{(a,b)}(-y_2\sin \theta, y_2 \cos \theta) 
 = e^{ \sqrt{-1}(a-b)\theta } 
y_2^{a+b} \exp (-\pi y_2^2)&
&(y_2>0,\ \theta \in \bR ).
\end{align}

\vspace{2mm}

\underline{\bf Case 1-1:} 
From (\ref{eqn:R2_Kact}) and (\ref{eqn:R22_fab_yk}), we have
\begin{align*}
&Z(s,W,W',f)\\
&=4\int_0^{\infty} \!\! \int_0^{\infty} 
\varphi_{\sigma}^{[\varepsilon ]}(v_{(0,\delta_2),0})(y) 
\varphi_{\sigma'}^{[-\varepsilon ]}(v_{(0,\delta_2'),0})(y)
\exp(-\pi y_2^2) 
y_1^sy_2^{2s} \frac{dy_1}{y_1^2} \frac{dy_2}{y_2}. 
\end{align*}
In view of the explicit formulas 
\begin{align*}
\varphi_{\sigma}^{[\varepsilon ]}
(v_{(0,\delta_2),0})(y)\,&=
\frac{y_1^{1/2}y_2^{\nu_1+\nu_2}}{4\pi \sI}
\int_t
\Gamma_\bR (t+\nu_1)
\Gamma_\bR (t+\nu_2)y_1^{-t}dt,\\
\varphi_{\sigma'}^{[-\varepsilon ]}
(v_{(0,\delta_2'),0})(y)\,&=
\frac{y_1^{1/2}y_2^{\nu_1'+\nu_2'}}{4\pi \sI}
\int_t
\Gamma_\bR (t+\nu_1')
\Gamma_\bR (t+\nu_2')y_1^{-t}dt
\end{align*}
in \S \ref{subsec:R32_Wh_GL2}, 
we can integrate with respect to $y_2$:  
\[
\int_0^{\infty} \exp(-\pi y_2^2) y_2^{2s+\nu_1+\nu_2+\nu_1'+\nu_2'}
\frac{dy_2}{y_2} = 2^{-1} \Gamma_{\bR}(2s+\nu_1+\nu_2+\nu_1'+\nu_2'). 
\] 
Further, Lemma \ref{lem:F32_Mellin} implies that 
\begin{align*}
& Z(s,W,W',f)
= \Gamma_{\bR}(2s+\nu_1+\nu_2+\nu_1'+\nu_2') \\
& \hspace{10mm}\times 
\frac{1}{4\pi \sqrt{-1}} \int_t \Gamma_{\bR}(t+\nu_1)\Gamma_{\bR}(t+\nu_2)
 \Gamma_{\bR}(s-t+\nu_1') \Gamma_{\bR}(s-t+\nu_2') dt.
\end{align*}
Thus, by Lemma \ref{lem:F32_Barnes_1st}, we get 
\[
Z(s,W,W',f) = \prod_{i=1}^2 \prod_{j=1}^2 \Gamma_{\bR}(s+\nu_i+\nu_j'). 
\]

\medskip

\underline{\bf Case 1-2:} 
By (\ref{eqn:R2_Kact}) and (\ref{eqn:R2_Kact_p2}), we have 
\begin{align*}
\bigl(R(X_{2}^{\gp_2})\varphi_{\sigma}^{[\varepsilon ]}(v_{(0,1),0})\bigr)
\bigl(yk_{(\varepsilon_1,1)}^{(1,1)}k_{\theta}^{(2)}\bigr)
&=
\bigl(R(k_{(\varepsilon_1,1)}^{(1,1)}k_{\theta}^{(2)})
R(X_{2}^{\gp_2})\varphi_{\sigma}^{[\varepsilon ]}(v_{(0,1),0})\bigr)(y)\\ 
&=
\varepsilon_1e^{2\sI \theta }\bigl(R(X_{2\varepsilon_1}^{\gp_2})
\varphi_{\sigma}^{[\varepsilon ]}(v_{(0,1),0})\bigr)(y),\\
\varphi_{\sigma'}^{[-\varepsilon ]}(v_{(0,0),0})
\bigl(yk_{(\varepsilon_1,1)}^{(1,1)}k_{\theta}^{(2)}\bigr)
&=\varphi_{\sigma'}^{[-\varepsilon ]}(v_{(0,0),0})(y)
\end{align*}
for $y=\diag (y_1y_2,y_2)\in A_2$, $\varepsilon_1\in \{\pm 1\}$ and 
$\theta \in \bR$. 
These equalities and (\ref{eqn:R22_fab_yk}) imply that 
\begin{align*}
Z(s,W,W',f)
=\,&
2\int_0^\infty \!\!\int_0^\infty 
(-8\pi \varepsilon )^{-1}
\bigl(R(X_{2}^{\gp_2}-X_{-2}^{\gp_2})
\varphi_{\sigma}^{[\varepsilon ]}(v_{(0,1),0})\bigr)(y)\\
&\times 
\varphi_{\sigma'}^{[-\varepsilon ]}(v_{(0,0),0})(y)
\exp(-\pi y_2^2)y_1^sy_2^{2s+2}\frac{dy_1}{y_1^2}\frac{dy_2}{y_2}
\end{align*} 
By Lemma \ref{lem:Rn_g_act_Cpsi}, 
(\ref{eqn:R2_gkact}) and the explicit formulas in \S \ref{subsec:R32_Wh_GL2}, 
we find that 
\begin{align*}
&(-8\pi \varepsilon )^{-1}
\bigl(R(X_{2}^{\gp_2}-X_{-2}^{\gp_2})\varphi_{\sigma}^{[\varepsilon ]}
(v_{(0,1),0})\bigr)(y)\\
&\hspace{15mm}=
\frac{1}{2\pi \varepsilon \sI }
\bigl(R(2E_{1,2}^{{\g}}-E_{1,2}^{{\gk}})
\varphi_{\sigma}^{[\varepsilon ]}
(v_{(0,1),0})\bigr)(y)
=2y_1\varphi_{\sigma}^{[\varepsilon ]}(v_{(0,1),0})(y)\\
&\hspace{15mm}=\frac{y_1^{1/2}y_2^{\nu_1+\nu_2}}{2\pi \sI}
\int_t\Gamma_\bR (t+\nu_1+1)
\Gamma_\bR (t+\nu_2+1)y_1^{-t}dt.
\end{align*}
Hence, as is Case 1-1, 
by Lemmas \ref{lem:F32_Mellin} and \ref{lem:F32_Barnes_1st}, we obtain 
\[
Z(s,W,W',f)= \prod_{i=1}^2 \prod_{j=1}^2 \Gamma_{\bR}(s+\nu_i+\nu_j'+1).
\]

\medskip

\underline{\bf Case 1-3:} 
The equalities (\ref{eqn:R2_Kact}) and (\ref{eqn:R22_fab_yk})
imply that 
\begin{align*}
Z(s,W,W',f)
=&2\int_0^\infty \!\!\int_0^\infty  \varphi_{\sigma}^{[\varepsilon ]}
(v_{(1,0),1}+(-1)^{\delta_2'}v_{(1,0),-1})(y)
\\
&\times 
\varepsilon^{\delta_2'}
\varphi_{\sigma'}^{[-\varepsilon ]}(v_{(0,\delta_2'),0})(y)
\exp(-\pi y_2^2)y_1^sy_2^{2s+1}\frac{dy_1}{y_1^2}\frac{dy_2}{y_2}.
\end{align*} 
According to the explicit formulas in \S \ref{subsec:R32_Wh_GL2}, 
we can see that
\begin{align*}
&\varphi_{\sigma}^{[\varepsilon ]}
(v_{(1,0),1}+(-1)^{\delta_2'}v_{(1,0),-1})(y) \\
& = 
\varepsilon^{\delta_2'} 
\frac{y_1^{1/2}y_2^{\nu_1+\nu_2}}{2\pi \sI}
\int_t
\Gamma_\bR (t+\nu_1+1-\delta_2')
\Gamma_\bR (t+\nu_2+\delta_2')
y_1^{-t}dt.
\end{align*}
Hence, as is Case 1-1, 
by Lemmas \ref{lem:F32_Mellin} and \ref{lem:F32_Barnes_1st}, we obtain 
\[
Z(s,W,W',f) = \prod_{j=1}^2
\Gamma_{\bR}(s+\nu_1+\nu_j'+1-\delta_2')
\Gamma_{\bR}(s+\nu_2+\nu_j'+\delta_2').
\]

\medskip

\underline{\bf Case 1-4:} 
The equalities (\ref{eqn:R2_Kact}) and (\ref{eqn:R22_fab_yk})
imply that 
\begin{align*}
&Z(s,W,W',f)\\
&=
2\int_0^\infty \!\!\int_0^\infty  
\bigl\{\varphi_{\sigma}^{[\varepsilon ]}(v_{(1,0),1})(y)
\varphi_{\sigma'}^{[-\varepsilon ]}(v_{(1,0),-1})(y)\\
&\phantom{=.}
+\varphi_{\sigma}^{[\varepsilon ]}(v_{(1,0),-1})(y)
\varphi_{\sigma'}^{[-\varepsilon ]}(v_{(1,0),1})(y)\bigr\}
\exp(-\pi y_2^2) y_1^sy_2^{2s} 
\frac{dy_1}{y_1^2}\frac{dy_2}{y_2}\\
&=
\int_0^\infty \!\!\int_0^\infty  
\bigl\{\varphi_{\sigma}^{[\varepsilon ]}(v_{(1,0),1}+v_{(1,0),-1})(y)
\varphi_{\sigma'}^{[-\varepsilon ]}(v_{(1,0),1}+v_{(1,0),-1})(y)\\
&\phantom{=.}
-\varphi_{\sigma}^{[\varepsilon ]}(v_{(1,0),1}-v_{(1,0),-1})(y)
\varphi_{\sigma'}^{[-\varepsilon ]}(v_{(1,0),1}-v_{(1,0),-1})(y)\bigr\}\\
&\phantom{=.}
\times \exp(-\pi y_2^2) y_1^sy_2^{2s} 
\frac{dy_1}{y_1^2}\frac{dy_2}{y_2}.
\end{align*} 
By the explicit formulas in \S \ref{subsec:R32_Wh_GL2} 
and Lemma \ref{lem:F32_Mellin}, we have 
\begin{align*}
&Z(s,W,W',f) \\
& =  \Gamma_{\bR}(2s+\nu_1+\nu_2+\nu_1'+\nu_2') \\
&\phantom{=} \times \frac{1}{4\pi \sqrt{-1}} \int_t 
    \{ \Gamma_{\bR}(t+\nu_1+1)\Gamma_{\bR}(t+\nu_2) 
       \Gamma_{\bR}(s-t+\nu_1'+1)\Gamma_{\bR}(s-t+\nu_2')
\\
&\phantom{=} + \Gamma_{\bR}(t+\nu_1)\Gamma_{\bR}(t+\nu_2+1) 
       \Gamma_{\bR}(s-t+\nu_1')\Gamma_{\bR}(s-t+\nu_2'+1) \} dt.
\end{align*}
By Lemma \ref{lem:F32_Barnes_1st} and (\ref{eqn:Fn_FE_GammaRC}), 
we obtain 
\begin{align*}
&Z(s,W,W',f) \\
& = \frac{\Gamma_{\bR}(2s+\nu_1+\nu_2+\nu_1'+\nu_2')}
     { \Gamma_{\bR}(2s+\nu_1+\nu_2+\nu_1'+\nu_2'+2)}
\\
& \phantom{..}
\times\{  \Gamma_{\bR}(s+\nu_1+\nu_1'+2)\Gamma_{\bR}(s+\nu_1+\nu_2'+1)
   \Gamma_{\bR}(s+\nu_2+\nu_1'+1) \Gamma_{\bR}(s+\nu_2+\nu_2')
\\
& \phantom{..}
+ \Gamma_{\bR}(s+\nu_1+\nu_1')\Gamma_{\bR}(s+\nu_1+\nu_2'+1)
   \Gamma_{\bR}(s+\nu_2+\nu_1'+1) \Gamma_{\bR}(s+\nu_2+\nu_2'+2)\}\\
& =\Gamma_{\bR}(s+\nu_1+\nu_1')\Gamma_{\bR}(s+\nu_1+\nu_2'+1)
   \Gamma_{\bR}(s+\nu_2+\nu_1'+1) \Gamma_{\bR}(s+\nu_2+\nu_2').
\end{align*}

\medskip

\underline{\bf Case 2-1:}
The equalities (\ref{eqn:R2_Kact}) and (\ref{eqn:R22_fab_yk})
imply that 
\begin{align*}
Z(s,W,W',f)\,
&=
2\int_0^\infty \!\!\int_0^\infty  
\varepsilon^{\delta_2}\varphi_{\sigma}^{[\varepsilon ]}(v_{(0,\delta_2),0})(y)
\bigl\{\varphi_{\sigma'}^{[-\varepsilon ]}(v_{(\kappa',0),-\kappa'})(y)\\
&\phantom{=.}
+(-1)^{\delta_2}
\varphi_{\sigma'}^{[-\varepsilon ]}(v_{(\kappa',0),\kappa'})(y)\bigr\}
\exp(-\pi y_2^2) y_1^sy_2^{2s+\kappa'} 
\frac{dy_1}{y_1^2}\frac{dy_2}{y_2}.
\end{align*} 
By the explicit formulas in \S \ref{subsec:R32_Wh_GL2} 
and Lemma \ref{lem:F32_Mellin}, we have 
\begin{align*}
&Z(s,W,W',f)
=
\Gamma_{\bR}(2s+\nu_1+\nu_2+2\nu'+\kappa')  \\
& \hspace{1.5cm}
\times \frac{1}{4\pi \sqrt{-1}} \int_t 
       \Gamma_{\bR}(t+\nu_1) \Gamma_{\bR}(t+\nu_2) 
       \Gamma_{\bC} \bigl(s-t+\nu'+\tfrac{\kappa'-1}{2}\bigr) dt.
\end{align*} 
By Lemma \ref{lem:F32_Barnes_1st} and 
(\ref{eqn:Fn_gammaRC_duplication}), 
we find that  
\begin{align}
\label{eqn:R22_CR_barnes}
&\frac{1}{4\pi \sI}\int_z
\Gamma_{\bR} (z+a_1)\Gamma_{\bR} (z+a_2)\Gamma_{\bC} (-z+b_1)dz
=\frac{\Gamma_{\bC} (a_1+b_1)
\Gamma_{\bC} (a_2+b_1)}{\Gamma_{\bR} (a_1+a_2+2b_1+1)}
\end{align}
for $a_1,a_2,b_1\in \bC $ such that 
$\mathrm{Re}(a_i+b_1)>0$ $(i=1,2)$, 
where the path of integration $\int_{z}$ is the vertical line 
from $\mathrm{Re}(z)-\sI \infty$ to $\mathrm{Re}(z)+\sI \infty$ 
with the real part 
$\max \{-\mathrm{Re}(a_1),-\mathrm{Re}(a_2)\}<\mathrm{Re}(z)
<\mathrm{Re}(b_1)$. 
Thus we obtain
\[
Z(s,W,W',f) = 
\Gamma_{\bC}\bigl(s+\nu_1+\nu'+\tfrac{\kappa'-1}{2}\bigr)
\Gamma_{\bC}\bigl(s+\nu_2+\nu'+\tfrac{\kappa'-1}{2}\bigr). 
\]

\medskip

\underline{\bf Case 2-2:}
The equalities (\ref{eqn:R2_Kact}) and (\ref{eqn:R22_fab_yk})
imply that 
\begin{align*}
&Z(s,W,W',f)\\
&=
2\int_0^\infty \!\!\int_0^\infty  
\bigl\{\varphi_{\sigma}^{[\varepsilon ]}(v_{(1,0),1})(y)
\varphi_{\sigma'}^{[-\varepsilon ]}(v_{(\kappa',0),-\kappa'})(y)\\
&\phantom{=.}
+\varphi_{\sigma}^{[\varepsilon ]}(v_{(1,0),-1})(y)
\varphi_{\sigma'}^{[-\varepsilon ]}(v_{(\kappa',0),\kappa'})(y)\bigr\}
\exp(-\pi y_2^2) y_1^sy_2^{2s+\kappa'-1} 
\frac{dy_1}{y_1^2}\frac{dy_2}{y_2}.
\end{align*} 
By the explicit formulas in \S \ref{subsec:R32_Wh_GL2} 
and Lemma \ref{lem:F32_Mellin}, we have 
\begin{align*}
&Z(s,W,W',f)= 
\Gamma_{\bR}(2s+\nu_1+\nu_2+2\nu'+\kappa'-1)  \\
& \hspace{15mm}
\times \frac{1}{4\pi \sqrt{-1}} \int_t 
       \{\Gamma_{\bR}(t+\nu_1+1) \Gamma_{\bR}(t+\nu_2) 
     +\Gamma_{\bR}(t+\nu_1) \Gamma_{\bR}(t+\nu_2+1) \} \\
& \hspace{15mm}\times 
       \Gamma_{\bC} \bigl(s-t+\nu'+\tfrac{\kappa'-1}{2}\bigr) dt.
\end{align*}
By (\ref{eqn:R22_CR_barnes}) and (\ref{eqn:Fn_FE_GammaRC}), 
we obtain 
\begin{align*}
 Z(s,W,W',f)
 = \,&\frac{\Gamma_{\bR}(2s+\nu_1+\nu_2+2\nu'+\kappa'-1)}
{\Gamma_{\bR}(2s+\nu_1+\nu_2+2\nu'+\kappa'+1)}
\\
&\times 
  \left\{ \Gamma_{\bC}\bigl(s+\nu_1+\nu'+\tfrac{\kappa'+1}{2}\bigr)
  \Gamma_{\bC}\bigl(s+\nu_2+\nu'+\tfrac{\kappa'-1}{2}\bigr) \right. \\
& \left. + 
\Gamma_{\bC}\bigl(s+\nu_1+\nu'+\tfrac{\kappa'-1}{2}\bigr)
  \Gamma_{\bC}\bigl(s+\nu_2+\nu'+\tfrac{\kappa'+1}{2} \bigr)\right\}\\
=&
\Gamma_{\bC}\bigl(s+\nu_1+\nu'+\tfrac{\kappa'-1}{2}\bigr)
\Gamma_{\bC}\bigl(s+\nu_2+\nu'+\tfrac{\kappa'-1}{2}\bigr).
\end{align*}

\medskip

\underline{\bf Case 3:}
The equalities (\ref{eqn:R2_Kact}) and (\ref{eqn:R22_fab_yk})
imply that 
\begin{align*}
&Z(s,W,W',f)\\
&=
2^{\kappa'}
\int_0^\infty \!\!\int_0^\infty  
\bigl\{\varphi_{\sigma}^{[\varepsilon ]}(v_{(\kappa,0),\kappa})(y)
\varphi_{\sigma'}^{[-\varepsilon ]}(v_{(\kappa',0),-\kappa'})(y)\\
&\phantom{=.}
+\varphi_{\sigma}^{[\varepsilon ]}(v_{(\kappa ,0),-\kappa })(y)
\varphi_{\sigma'}^{[-\varepsilon ]}(v_{(\kappa',0),\kappa'})(y)\bigr\}
\exp(-\pi y_2^2) y_1^sy_2^{2s+\kappa -\kappa'} 
\frac{dy_1}{y_1^2}\frac{dy_2}{y_2}.
\end{align*} 
By the explicit formulas in \S \ref{subsec:R32_Wh_GL2}, we have 
\begin{align*}
Z(s,W,W',f)
=\,&2^{\kappa'+2}
\left(
\int_0^\infty 
\exp (-4\pi y_1)
y_1^{s+\nu  +\nu' +\tfrac{\kappa +\kappa'-2}{2}}
\frac{dy_1}{y_1}
\right)\\
&\times \left(
\int_0^\infty  
\exp(-\pi y_2^2)y_2^{2s+2\nu +2\nu'+\kappa -\kappa'}\frac{dy_2}{y_2}
\right)\\
=\,&
\Gamma_\bC \bigl(s+\nu  +\nu' +\tfrac{\kappa +\kappa'-2}{2}\bigr)
\Gamma_\bC \bigl(s+\nu +\nu'+\tfrac{\kappa -\kappa'}{2}\bigr).
\end{align*} 
Thus we complete a proof of Proposition \ref{prop:R22_zeta}.
\end{proof}

\section{The local zeta integrals for $GL(2,\bC)\times GL(1,\bC )$}
\label{sec:C21_arch_zeta}

In this section, we set $F=\bC$. 
Let $\Pi_\chi $ be an 
irreducible principal series representation of $G_2$ 
with $\chi =\chi_{(\nu_1,d_1)}\boxtimes \chi_{(\nu_2,d_2)}$ 
$(d_1\geq d_2)$. 
Let $\chi_{(\nu',d')}$ be a character of $G_1=\bC^\times$ 
in \S \ref{subsec:Cn_def_ps}. 
Then the explicit form of the local $L$-factor for 
$\Pi_\chi \times \chi_{(\nu',d')}$ is given by 
\begin{align*}
&L (s,\Pi_{\chi}\times \chi_{(\nu',d')})=
\prod_{i=1}^2
\Gamma_\bC \Bigl(s+\nu_i+\nu'+\tfrac{|d_i+d'|}{2}\Bigr).
\end{align*}

Let $\varepsilon \in \{\pm 1\}$. 
For $W\in \mathrm{Wh}(\Pi_\chi ,\psi_{\varepsilon })^{\mathrm{mg}}$, 
we consider the local zeta integral 
$Z(s,W,\chi_{(\nu',d')})$ for $G_{2}\times G_1$, 
which is defined in \S \ref{subsec:Fmn_main_result}, that is, 
\begin{align*}
&Z(s,W,\chi_{(\nu',d')})=
\int_{\bC^\times}W\!\left(
\begin{array}{cc}
h& \\
 &1
\end{array}
\right) \chi_{(\nu',d')}(h)|h|^{2s-1}
\frac{dh}{\pi |h|^2}\\
&=
\int_{0}^\infty
\left\{\int_{0}^{2\pi}
e^{\sI d'\theta}W\!\left(\begin{array}{cc}
y_1e^{\sI \theta}& \\
 &1
\end{array}\right)\frac{d\theta }{2\pi}
\right\} 
y_1^{2s+2\nu'-1}
\frac{2dy_1}{y_1}.
\end{align*}

Jacquet--Langlands \cite[Theorem 6.4]{Jacquet_Langlands_001} and 
Popa \cite[Theorem 1]{Popa_001} show the following proposition.

\begin{prop}
\label{prop:C21_zeta_L_coincide}
Let $\Pi_\chi$ be an irreducible principal 
series representation of $G_2=GL(2,\bC )$ with 
$\chi =\chi_{(\nu_1,d_1)}\boxtimes \chi_{(\nu_2,d_2)}$ $(d_1\geq d_2)$. 
Let $\nu'\in \bC$, $d'\in \bZ$ and $\varepsilon \in \{\pm 1\}$. 
Set $\lambda =(\lambda_1,\lambda_2)=(d_1+l_0,d_2-l_0)$ with 
$l_0=\max \{0,-d_1-d',d_2+d'\}$. 
Then, for $s\in \bC$ with sufficiently large real part, it holds that 
\[
Z\bigl(s,\varphi_{\chi ,\lambda }^{[\varepsilon ]}
(v_{\lambda ,\lambda_1+d'}),\chi_{(\nu',d')}\bigr)
=(\varepsilon \sI )^{-d'}L(s,\Pi_\chi \times \chi_{(\nu',d')}),
\] 
where $\varphi_{\chi ,\lambda}^{[\varepsilon ]}$ is 
the $K_2$-homomorphism in \S \ref{subsec:C32_Wh_GL2}. 
\end{prop}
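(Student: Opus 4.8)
\textbf{Proof plan for Proposition \ref{prop:C21_zeta_L_coincide}.}
The plan is to reduce the zeta integral to a one-variable Mellin--Barnes integral and then evaluate it by Barnes' first lemma. First I would use the explicit description of the zeta integral from \S \ref{subsec:Fmn_main_result}: since $G_1=\bC^\times$ and $d\dot g$ is normalized as in that section with $n=1$, the integral $Z(s,W,\chi_{(\nu',d')})$ becomes an integral over $\bC^\times$, which in polar coordinates $h=y_1e^{\sI\theta}$ splits into the $\theta$-integral $\int_0^{2\pi}e^{\sI d'\theta}W(\diag(y_1e^{\sI\theta},1))\,\frac{d\theta}{2\pi}$ times a radial integral in $y_1$. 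The role of the $\theta$-integral is to pick out the $K_2$-isotypic component: by (\ref{eqn:C2_Mact}) the element $\diag(e^{\sI\theta},1)\in K_2\cap M$ acts on $v_{\lambda,q}$ by $e^{\sI(\lambda_1-q)\theta}$, so only the component $v_{\lambda,\lambda_1+d'}$ survives (and $\lambda_1+d'$ indeed lies in $Q_\lambda$ precisely because $l_0=\max\{0,-d_1-d',d_2+d'\}$, which is the content of the definition of $\lambda$). This explains why exactly $\varphi_{\chi,\lambda}^{[\varepsilon]}(v_{\lambda,\lambda_1+d'})$ appears.

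Next I would substitute the explicit radial formula for $\varphi_{\chi,\lambda}^{[\varepsilon]}(v_{\lambda,q})$ from Proposition \ref{prop:C32_Wh_GL2}. Taking $q=\lambda_1+d'$: if $d'\geq 0$ then $q=\lambda_1-\lambda_2$ is as large as possible so I would use the simplified form (\ref{eqn:C32_Wh_GL2_2nd_simple}), while if $d'<0$ then $q$ can be small and I would use (\ref{eqn:C32_Wh_GL2_1st_simple}) after noting $\lambda_1-\lambda_2-q=-d'$ or handle both cases uniformly; either way, since $l_0$ is chosen so that one of the two Gamma-factor parameters collapses (this is the whole point of the minimality of $l_0$), the Whittaker value at $v_{\lambda,\lambda_1+d'}$ reduces to $(\varepsilon\sI)^{\lambda_1-q}y_1y_2^{2\nu_1+2\nu_2}\frac{1}{2\pi\sI}\int_t\Gamma_\bC(t+\nu_1+\cdots)\Gamma_\bC(t+\nu_2+\cdots)y_1^{-2t}dt$ with the shifts matching $\tfrac{|d_i+d'|}{2}$. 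Plugging this into the radial integral, the factor $(\varepsilon\sI)^{\lambda_1-q}=(\varepsilon\sI)^{-d'}$ comes out in front, matching the claimed constant.

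Then the radial integral $\int_0^\infty(\cdots)y_1^{2s+2\nu'-1}\frac{2dy_1}{y_1}$ is, after the substitution $y_1^2\to y_1$ implicit in Lemma \ref{lem:F32_Mellin} with $c=2$, exactly a Mellin inversion: it evaluates the Mellin--Barnes integrand at the point $t=s+\nu'$. This yields $(\varepsilon\sI)^{-d'}\Gamma_\bC(s+\nu_1+\nu'+\cdots)\Gamma_\bC(s+\nu_2+\nu'+\cdots)$, where the arguments are $s+\nu_i+\nu'+\tfrac{|d_i+d'|}{2}$ precisely because the shifts in Proposition \ref{prop:C32_Wh_GL2} for the chosen $q$ and minimal $l_0$ unwind to $\tfrac{|d_i+d'|}{2}$. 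Comparing with the explicit local $L$-factor $L(s,\Pi_\chi\times\chi_{(\nu',d')})=\prod_{i=1}^2\Gamma_\bC(s+\nu_i+\nu'+\tfrac{|d_i+d'|}{2})$ from \S \ref{subsec:Cmn_landlands} completes the proof. The main obstacle, and the only genuinely delicate bookkeeping, is the case analysis on the sign of $d_1+d'$, $d_2+d'$ (equivalently the three cases defining $l_0$): in each case one must check that the chosen $q=\lambda_1+d'$ indeed forces the simplification in Proposition \ref{prop:C32_Wh_GL2} and that the two surviving shifts are $\tfrac{|d_1+d'|}{2}$ and $\tfrac{|d_2+d'|}{2}$; verifying $\lambda_1+d'\in Q_\lambda$ in all three cases is the small combinatorial lemma underpinning everything. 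Convergence for $\operatorname{Re}(s)\gg0$ is automatic from the moderate growth of $W$ and the rapid decay in $y_1$ built into the Mellin--Barnes representation.
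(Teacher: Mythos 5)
Your proposal is essentially the paper's proof, which the paper states in one line: it follows from the explicit formulas of \S\ref{subsec:C32_Wh_GL2}, the $K_2\cap M$-action formula (\ref{eqn:C2_Mact}), and Mellin inversion (Lemma~\ref{lem:F32_Mellin}). You have simply filled in the three-case check (according to which term achieves the $\max$ in $l_0$) verifying that $\lambda_1+d'\in Q_\lambda$ and that the corresponding simplified formula (\ref{eqn:C32_Wh_GL2_min}), (\ref{eqn:C32_Wh_GL2_1st_simple}), or (\ref{eqn:C32_Wh_GL2_2nd_simple}) produces exactly the two Gamma factors $\Gamma_\bC\bigl(t+\nu_i+\tfrac{|d_i+d'|}{2}\bigr)$ — all of which is correct. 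One small inaccuracy: the opening sentence announces Barnes' first lemma, but no Barnes-type integral arises for $GL(2)\times GL(1)$; after the $\theta$-integration one has a single radial Mellin--Barnes integral whose Mellin transform in $y_1$ is read off directly by Lemma~\ref{lem:F32_Mellin}, as you then correctly do. Strike that initial mention of Barnes' lemma and the argument is clean and matches the paper.
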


This proposition follows immediately from 
the explicit formulas in \S \ref{subsec:C32_Wh_GL2} 
with (\ref{eqn:C2_Mact}) and Lemma \ref{lem:F32_Mellin}.

\section{The local zeta integrals for $GL(2,\bC)\times GL(2,\bC )$}
\label{sec:C22_arch_zeta}

In this section, we set $F=\bC$. 
Let $\Pi_\chi $ and $\Pi_{\chi'}$ be 
irreducible principal series representations of $G_2$ 
where 
\begin{align*}
&\chi =\chi_{(\nu_1,d_1)}\boxtimes \chi_{(\nu_2,d_2)}\quad 
(d_1\geq d_2),&
&\chi'=\chi_{(\nu_1',d_1')}\boxtimes \chi_{(\nu_2',d_2')}\quad 
(d_1'\geq d_2').
\end{align*}
Then the explicit form of the local $L$-factor for 
$\Pi_\chi \times \Pi_{\chi'}$ is given by 
\begin{align*}
&L (s,\Pi_{\chi}\times \Pi_{\chi'})=
\prod_{i=1}^2\prod_{j=1}^2
\Gamma_\bC \Bigl(s+\nu_i+\nu_j'+\tfrac{|d_i+d_j'|}{2}\Bigr).
\end{align*}

Let $\cS (\bC^2)$ be the space of Schwartz functions on $\bC^2$. 
Let $\cS (\bC^2)^{\mathrm{std}}$ be the subspace of $\cS (\bC^2)$ 
consisting of all functions $f$ of the form 
\begin{align*}
&f(z_1,z_2)=p(z_1,z_2,\overline{z_1},\overline{z_2})
\exp \bigl(-2\pi (|z_1|^2+|z_2|^2)\bigr)&
&(z_1,z_2\in \bC )
\end{align*}
with polynomial functions $p$ on $\bC^4$. 
We call functions in $\cS (\bC^2)^{\mathrm{std}}$ 
standard Schwartz functions on $\bC^2$. 
For non-negative integers $a_1$, $a_2$, $b_1$ and $b_2$, 
we define a standard Schwartz function 
$f_{(a_1,a_2,b_1,b_2)}$ on $\bC^2$ by 
\begin{align*}
&f_{(a_1,a_2,b_1,b_2)}(z_1,z_2)=
z_1^{a_1}z_2^{a_2}
\overline{z_1}^{\,b_1}
\overline{z_2}^{\,b_2}\exp \bigl(-2\pi (|z_1|^2+|z_2|^2)\bigr).
\end{align*}

Let $\varepsilon \in \{\pm 1\}$. 
For $W\in \mathrm{Wh}(\Pi_\chi ,\psi_{\varepsilon })^{\mathrm{mg}}$, 
$W'\in \mathrm{Wh}(\Pi_{\chi'} ,\psi_{-\varepsilon })^{\mathrm{mg}}$ 
and $f\in \cS (\bC^2)$, 
we define the local zeta integral $Z(s,W,W',f)$ for $G_2\times G_2$ by 
\begin{align*}
&Z(s,W,W',f)=
\int_{N_2\backslash G_2}W(g)W'(g)f((0,1)g)|\det g|^{2s}\,d\dot{g}\\
&\hspace{10mm}=\int_0^\infty \!\!\int_0^\infty  
\left(\int_{K_2}
W(yk)W'(yk)f((0,1)yk)\,dk\right)
y_1^{2s}y_2^{4s}
\frac{2dy_1}{y_1^3}\frac{2dy_2}{y_2}
\end{align*} 
with $y=\diag (y_1y_2,y_2)\in A_2$. 
Here $d\dot{g}$ is the right invariant measure on $N_2\backslash G_2$ 
normalize by  (\ref{eqn:C32_normalize_N2_G2}), 
and $dk$ is the Haar measure on $K_2$ such that $\int_{K_2}dk=1$. 
The local zeta integral $Z(s,W,W',f)$ converges 
for $\mathrm{Re}(s)\gg 0$. Jacquet proves that the local zeta integral 
has the meromorphic continuation and satisfies the local functional equation 
(\cite{Jacquet_003}). 
In \cite{Miyazaki_003}, the third author proves the following.

\begin{prop}
\label{prop:C22_zeta_L_coincide}
Let $\Pi_\chi$ and $\Pi_{\chi'}$ be irreducible principal 
series representations of $G_2=GL(2,\bC )$ with 
$\chi =\chi_{(\nu_1,d_1)}\boxtimes \chi_{(\nu_2,d_2)}$ $(d_1\geq d_2)$ 
and  $\chi'=\chi_{(\nu_1',d_1')}\boxtimes \chi_{(\nu_2',d_2')}$ 
$(d_1'\geq d_2')$. 
Set $\lambda =(\lambda_1,\lambda_2)=(d_1+l_0,d_2-l_0)$ with 
$l_0=\max \{0,-d_1-d_1',d_2+d_2'\}$, 
and $\lambda'=(d_1',d_2')$. 
Then, for $s\in \bC$ with sufficiently large real part, it holds that 
\[
Z\bigl(s,\varphi_{\chi ,\lambda }^{[\varepsilon ]}(v_{\lambda ,0}),
\varphi_{\chi',\lambda'}^{[-\varepsilon ]}(v_{\lambda',d_1'-d_2'}),
f_{(a_1,a_2,b_1,b_2)}\bigr)
=C(\chi,\chi')L(s,\Pi_\chi \times \Pi_{\chi'}),
\] 
where $\varphi_{\chi ,\lambda}^{[\varepsilon ]}$, 
$\varphi_{\chi',\lambda'}^{[-\varepsilon ]}$ are 
the $K_2$-homomorphisms in \S \ref{subsec:C32_Wh_GL2}, and  
\begin{align}
\label{eqn:def_pi_qi}
&\left\{\begin{array}{lll}
a_i=-\lambda_i-d_{3-i}',&b_i=0&
\text{ if }\, \lambda_i+d_{3-i}'\leq 0,\\[1mm]
a_i=0,&b_i=\lambda_i+d_{3-i}'&
\text{ if }\, \lambda_i+d_{3-i}'\geq 0.
\end{array}\right.\quad (i=1,2),\\
&C(\chi,\chi')=
\frac{(-1)^{d_2'}(\lambda_1-\lambda_2)!(d_1'-d_2')!}
{(\lambda_1+d_1'+a_1+a_2+1)!(\lambda_1+d_1'-b_1-b_2)!}.
\end{align}
\end{prop}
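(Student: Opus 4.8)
The plan is to follow the same template used for the $GL(3)\times GL(2)$ computations in Chapter~\ref{sec:C32_zeta}, but specialized to the easier $GL(2)\times GL(2)$ setting. First I would invoke the analogue of Lemma~\ref{lem:C32_zeta32_schur} for $K_2\subset K_2$ (which is essentially Schur orthogonality on $K_2$ directly, via \cite[Proposition 4.4]{Brocker_001}), to reduce the $K_2$-integral in the definition of $Z(s,W,W',f)$ against the Schwartz function $f_{(a_1,a_2,b_1,b_2)}$ to a sum over $q\in Q_\lambda$ of products of radial parts. Here the key point is that $f_{(a_1,a_2,b_1,b_2)}((0,1)yk)$ transforms under $k\in K_2$ like a vector in $V^{(2)}_{(a_1+a_2,-b_1-b_2)}$ composed with the weight decomposition, so the integral over $K_2$ picks out a single matrix coefficient pairing; the parameters $(a_i,b_i)$ in \eqref{eqn:def_pi_qi} are chosen precisely so that the resulting $K_2$-type matches $\widetilde\lambda$ against $\lambda'$. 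This reduces $Z$ to a double Mellin--Barnes integral over $(y_1,y_2)\in(\bR_+)^2$.

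Next I would substitute the explicit formulas for $\varphi_{\chi,\lambda}^{[\varepsilon]}(v_{\lambda,0})$ and $\varphi_{\chi',\lambda'}^{[-\varepsilon]}(v_{\lambda',d_1'-d_2'})$ from \S\ref{subsec:C32_Wh_GL2}. Crucially, by the simplifications \eqref{eqn:C32_Wh_GL2_1st_simple} and \eqref{eqn:C32_Wh_GL2_2nd_simple} (valid since one evaluates at $q=0$ on one side and at $q=\lambda_1'-\lambda_2'$ on the other), both Whittaker functions become single Mellin--Barnes integrals with no hypergeometric sum, i.e. each is a product of two $\Gamma_\bC$-factors over one complex contour. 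The $y_2$-integral against $\exp(-2\pi(\cdots))$ (from the Gaussian part of $f$) then produces a single $\Gamma_\bC$-factor by the standard Mellin transform, and applying Lemma~\ref{lem:F32_Mellin} twice collapses the $y_1$-integral, leaving one contour integral in a single variable $t$ whose integrand is a product of $\Gamma_\bC$'s: two from $W$, two from $W'$ (after the $y_2$-transform ties them together), one from the Gaussian. Then Lemma~\ref{lem:F32_Barnes_1st} (first Barnes' lemma, the $F=\bC$ version) evaluates this contour integral in closed form.

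The remaining work is purely bookkeeping: one must check that the product of $\Gamma_\bC$-factors emerging from Barnes' lemma is exactly $\prod_{i,j}\Gamma_\bC(s+\nu_i+\nu_j'+\tfrac{|d_i+d_j'|}{2})=L(s,\Pi_\chi\times\Pi_{\chi'})$, and that all the combinatorial prefactors --- the binomial coefficients and factorials picked up from the Schur pairing normalization \eqref{eqn:C32_def_K2_inv_pair}, from the $K_2$-homomorphism normalizing constant in the definition of $\varphi_{\chi,\lambda}^{[\varepsilon]}$, and from the integer shifts in the $\Gamma_\bC$-arguments caused by the $d_i,d_j'$ --- assemble into the constant $C(\chi,\chi')$ displayed in the statement. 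The main obstacle is this last accounting step: tracking the absolute values $|d_i+d_j'|$ correctly requires splitting into the cases $\lambda_i+d_{3-i}'\lessgtr 0$ exactly as in \eqref{eqn:def_pi_qi}, and in each case verifying that the half-integer shifts in the Gamma arguments coming from the Schwartz exponents $(a_i,b_i)$, from $l_0$, and from the Whittaker formulas telescope to give the desired $\tfrac{|d_i+d_j'|}{2}$ and the correct factorials. I would organize this by first treating the "generic" case $d_2>-d_2'\ge d_2'$ (or whatever the analogue of Case~4-1 is here) where $l_0=0$ and all $(a_i,b_i)$ are determined by one sign pattern, then handling the boundary cases by the same substitutions $t\to t+\tfrac{q}{2}$ and applications of Lemma~\ref{lem:F32_gauss_sum} that appear in \S\ref{subsec:C32_pf_Main_1}, noting that here there is at most one such sum (not two) so the computation is strictly simpler than the $GL(3)\times GL(2)$ case.
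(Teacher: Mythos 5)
Your overall template is the right one and matches the paper's: Schur orthogonality on $K_2$ against $f_{(a_1,a_2,b_1,b_2)}$ (this is the content of Lemma \ref{lem:C22_schur}), reduction to a one-variable Mellin--Barnes integral via the Gaussian $y_2$-integral and Lemma \ref{lem:F32_Mellin}, evaluation by Barnes' lemma (Lemma \ref{lem:F32_Barnes_1st}), and a hypergeometric sum handled by Lemma \ref{lem:F32_gauss_sum}. However, you have the case structure backwards, and your explanation of \emph{why} the Whittaker functions simplify is wrong in the generic case.

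The specific issues: after Schur orthogonality, the $K_2$-integral leaves a \emph{sum} over $q$ of products $\varphi(v_{\lambda,q})(y')\,\varphi'(v_{\lambda',\lambda_1+d_1'-q})(y')$ with $q$ running over the range allowed by the delta conditions in Lemma \ref{lem:C22_schur}. You cannot therefore cite \eqref{eqn:C32_Wh_GL2_1st_simple} and \eqref{eqn:C32_Wh_GL2_2nd_simple}, which hold only at the extreme weight vectors, to claim the Whittaker functions are single contour integrals in the generic case $l_0=0$; the relevant simplification there is \eqref{eqn:C32_Wh_GL2_min}, which is valid for \emph{all} $q$ because both $\lambda$ and $\lambda'$ are the minimal $K_2$-types when $l_0=0$. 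Conversely, in the boundary cases $l_0=-d_1-d_1'$ and $l_0=d_2+d_2'$ the $K_2$-types $\lambda$ are not minimal, so \eqref{eqn:C32_Wh_GL2_min} does \emph{not} apply; instead the constraints from Lemma \ref{lem:C22_schur} pin $q$ to the single extreme value $0$ or $\lambda_1-\lambda_2$, and precisely there \eqref{eqn:C32_Wh_GL2_1st_simple} or \eqref{eqn:C32_Wh_GL2_2nd_simple} kicks in. As a consequence the Gaussian sum and Lemma \ref{lem:F32_gauss_sum} are needed only in the $l_0=0$ case (where the $q$-sum survives and must be collapsed after Barnes' lemma), not in the boundary cases as you claim --- in those the sum is already a single term and no Gauss summation is required. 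The computation is still strictly simpler than $GL(3)\times GL(2)$, but the "extra" Gauss sum appears in the generic case, which is the opposite of what you wrote.
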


\begin{lem}
\label{lem:C22_schur}
Let $y=\diag (y_1y_2,y_2)\in A_2$. 
For $\lambda =(\lambda_1,\lambda_2),\, 
\lambda' =(\lambda_1',\lambda_2')\in \Lambda_2$, 
$0\leq q\leq \lambda_1-\lambda_2$, 
$0\leq q'\leq \lambda_1'-\lambda_2'$ and 
$a_1,a_2,b_1,b_2\in \bZ_{\geq 0}$, the integral 
\begin{align*}
&\int_{K_2}
\bigl\langle 
\tau^{(2)}_{\lambda}(k)v_{\lambda ,0},
v_{\widetilde{\lambda},\widetilde{q}}\bigr\rangle 
\bigl\langle 
\tau^{(2)}_{\lambda'}(k)v_{\lambda',\lambda_1'-\lambda_2'}, 
v_{\widetilde{\lambda'},\widetilde{q'}}\bigr\rangle 
f_{(a_1,a_2,b_1,b_2)}((0,1)yk)dk
\end{align*}
is equal to 
\[
\frac{(-1)^{a_1+b_1-q}(q+a_1)!(q'+a_2)!}
{(\lambda_1+\lambda_1'+a_1+a_2+1)!}
y_2^{a_1+a_2+b_1+b_2}\exp (-2\pi y_2^2)
\]
if $\lambda_1+\lambda_2'+a_1-b_1
=\lambda_2+\lambda_1'+a_2-b_2
=\lambda_1+\lambda_1'-q-q'=0$, 
and is equal to $0$ otherwise. 
Here we set $\widetilde{q}=\lambda_1-\lambda_2-q$ 
and $\widetilde{q'}=\lambda_1'-\lambda_2'-q'$. 
\end{lem}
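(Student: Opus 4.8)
The plan is to unfold the $K_2$-integral into a single application of Schur orthogonality (\ref{eqn:C32_schur}), after making every factor of the integrand completely explicit.

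\textbf{Step 1 (the Schwartz factor).} First I would use $(0,1)y=(0,y_2)$ and the fact that the second row of $k\in K_2=U(2)$ has euclidean norm $1$ to obtain
\[
f_{(a_1,a_2,b_1,b_2)}((0,1)yk)=y_2^{\,a_1+a_2+b_1+b_2}e^{-2\pi y_2^2}\,
k_{21}^{\,a_1}k_{22}^{\,a_2}\,\overline{k_{21}}^{\,b_1}\overline{k_{22}}^{\,b_2}.
\]
The $y_2$-part already matches the claimed value, so it remains only to integrate over $K_2$ a product of six entry-functions of $k$.

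\textbf{Step 2 (monomials in the entries of $k$).} Since $v_{\lambda,0}$ and $v_{\lambda',\lambda_1'-\lambda_2'}$ are a highest and a lowest weight vector, I would expand $(\tau^{(2)}_\lambda(k)v_{\lambda,0})(z)=(\det k)^{\lambda_2}(z_1k_{11}+z_2k_{21})^{\lambda_1-\lambda_2}$ (and the analogous polynomial for $\lambda'$) and pair off against $v_{\widetilde\lambda,\widetilde q}$, $v_{\widetilde{\lambda'},\widetilde{q'}}$ using (\ref{eqn:C32_def_K2_inv_pair}); this gives
\[
\bigl\langle\tau^{(2)}_\lambda(k)v_{\lambda,0},v_{\widetilde\lambda,\widetilde q}\bigr\rangle=(-1)^{\lambda_1-q}(\det k)^{\lambda_2}k_{11}^{\,\widetilde q}k_{21}^{\,q},\qquad
\bigl\langle\tau^{(2)}_{\lambda'}(k)v_{\lambda',\lambda_1'-\lambda_2'},v_{\widetilde{\lambda'},\widetilde{q'}}\bigr\rangle=(-1)^{\lambda_1'-q'}(\det k)^{\lambda_2'}k_{12}^{\,\widetilde{q'}}k_{22}^{\,q'},
\]
while unitarity and Cramer's rule give $\overline{k_{21}}=-\overline{\det k}\,k_{12}$, $\overline{k_{22}}=\overline{\det k}\,k_{11}$, hence $\overline{k_{21}}^{\,b_1}\overline{k_{22}}^{\,b_2}=(-1)^{b_1}(\det k)^{-b_1-b_2}k_{12}^{\,b_1}k_{11}^{\,b_2}$. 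Multiplying the four pieces, the $K_2$-integrand becomes an explicit sign times
\[
(\det k)^{\lambda_2+\lambda_2'-b_1-b_2}\,k_{11}^{\,\widetilde q+b_2}k_{21}^{\,q+a_1}\,k_{12}^{\,\widetilde{q'}+b_1}k_{22}^{\,q'+a_2}.
\]

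\textbf{Step 3 (torus invariance and Schur orthogonality).} Testing invariance of $\int_{K_2}(\cdot)\,dk$ under left and right translation by the diagonal torus of $U(2)$ forces the three stated conditions (the fourth torus relation being their sum), and makes the integral vanish otherwise. When they hold, put $N=(\widetilde q+b_2)+(q+a_1)=\lambda_1+\lambda_1'+a_1+a_2$; one checks $(\widetilde{q'}+b_1)+(q'+a_2)=N$ and $\lambda_2+\lambda_2'-b_1-b_2=-N$, so $k_{11}^{\widetilde q+b_2}k_{21}^{q+a_1}$ and $k_{12}^{\widetilde{q'}+b_1}k_{22}^{q'+a_2}$ are matrix coefficients of $\tau^{(2)}_{(N,0)}$ from its highest and its lowest weight vector respectively. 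Distributing the factor $(\det k)^{-N}$ so as to turn these into matrix coefficients of $\tau^{(2)}_\mu$ and $\tau^{(2)}_{\widetilde\mu}$ for a suitable $\mu$ with $\mu_1-\mu_2=N$, I would then invoke (\ref{eqn:C32_schur}): the integral equals $(\dim_\bC V^{(2)}_\mu)^{-1}=(N+1)^{-1}$ times two instances of the pairing (\ref{eqn:C32_def_K2_inv_pair}). Collecting the binomial normalizations in that pairing converts $(N+1)^{-1}$ into $1/(\lambda_1+\lambda_1'+a_1+a_2+1)!$ and produces the factors $(q+a_1)!\,(q'+a_2)!$, and tracking the accumulated signs yields $(-1)^{a_1+b_1-q}$, which is the asserted value.

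\textbf{Main obstacle.} The structural steps are routine; the actual work is the bookkeeping in Step 3 — juggling the many binomial coefficients in (\ref{eqn:C32_def_K2_inv_pair}), the effect of the $\det$-twist on the pairing, and the accumulated signs $(-1)^{\lambda_1-q}$, $(-1)^{b_1}$, and so on — together with checking that the $\det$-twisted symmetric powers that occur are genuinely of the form $\tau^{(2)}_\mu$, $\tau^{(2)}_{\widetilde\mu}$ so that (\ref{eqn:C32_schur}) applies verbatim. Should this be awkward, an alternative to Step 3 is to write $(\det k)^{-N}=\overline{\det k}^{\,N}$, expand it by the binomial theorem, and integrate the resulting monomials in $k_{ij},\overline{k_{ij}}$ directly by Schur orthogonality, trading the representation-theoretic packaging for a combinatorial sum.
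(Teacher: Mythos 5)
Your proposal is correct and follows essentially the same route as the paper: compute the three factors explicitly as monomials in the entries of $k$ (Steps 1--2 reproduce the paper's direct computation, including the unitarity identity $\overline{k_{21}}^{\,b_1}\overline{k_{22}}^{\,b_2}=(-1)^{b_1}(\det k)^{-b_1-b_2}k_{12}^{\,b_1}k_{11}^{\,b_2}$), then absorb the monomials from the Schwartz factor into shifted matrix coefficients of $\tau^{(2)}_{\lambda+(a_1,-b_2)}$ and $\tau^{(2)}_{\lambda'+(a_2,-b_1)}$, and invoke Schur orthogonality (\ref{eqn:C32_schur}). The only difference is cosmetic: you separate out a torus-invariance argument to extract the three vanishing conditions, whereas the paper lets (\ref{eqn:C32_schur}) handle vanishing and nonvanishing in one application (the conditions $\lambda_1+\lambda_2'+a_1-b_1=0$ and $\lambda_2+\lambda_1'+a_2-b_2=0$ are exactly the condition $\lambda'+(a_2,-b_1)=\widetilde{\lambda+(a_1,-b_2)}$, and the third comes from the Kronecker delta in (\ref{eqn:C32_def_K2_inv_pair})); this is equivalent and neither approach is simpler.
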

\begin{proof}
By direct computation, we have  
\begin{align*}
&\bigl\langle 
\tau^{(2)}_{\lambda}(k)v_{\lambda ,0},
v_{\widetilde{\lambda},\widetilde{q}}\bigr\rangle 
=(-1)^{\lambda_1-q}(\det k)^{\lambda_2}k_{11}^{\widetilde{q}}k_{21}^{q},\\
&\bigl\langle 
\tau^{(2)}_{\lambda'}(k)v_{\lambda_1'-\lambda_2'}^{\lambda'},
v_{\widetilde{\lambda'},\widetilde{q'}}\bigr\rangle 
=(-1)^{\lambda_1'-q'}(\det k)^{\lambda_2'}
k_{12}^{\widetilde{q'}}k_{22}^{q'},\\
&f_{(a_1,a_2,b_1,b_2)}((0,1)yk)=(-1)^{b_1}(\det k)^{-b_1-b_2}
k_{21}^{a_1}k_{22}^{a_2}
k_{12}^{b_1}k_{11}^{b_2}\\
&\phantom{f_{(a_1,a_2,b_1,b_2)}((0,1)yk)=}
\times y_2^{a_1+a_2+b_1+b_2}\exp (-2\pi y_2^2)
\end{align*}
for $\displaystyle k=\left(\begin{array}{cc}
k_{11}&k_{12}\\
k_{21}&k_{22}
\end{array}\right)\in K_2$.
Hence, we have 
\begin{align*}
&\int_{K_2}
\bigl\langle 
\tau^{(2)}_{\lambda}(k)v_{\lambda ,0},
v_{\widetilde{\lambda},\widetilde{q}}\bigr\rangle 
\bigl\langle 
\tau^{(2)}_{\lambda'}(k)v_{\lambda',\lambda_1'-\lambda_2'},
 v_{\widetilde{\lambda'},\widetilde{q'}}\bigr\rangle 
f_{(a_1,a_2,b_1,b_2)}((0,1)yk)dk\\
&=(-1)^{b_1}y_2^{a_1+a_2+b_1+b_2}\exp (-2\pi y_2^2)\\
&\phantom{=}\times \int_{K_2}
\bigl\langle 
\tau^{(2)}_{\lambda +(a_1,-b_2)}(k)v_{\lambda +(a_1,-b_2),\,0},
v_{\widetilde{\lambda}+(b_2,-a_1),\,\widetilde{q}+b_2}
\bigr\rangle \\
&\phantom{=}\times 
\bigl\langle 
\tau^{(2)}_{\lambda'+(a_2,-b_1)}(k)
v_{\lambda'+(a_2,-b_1),\,\lambda_1'-\lambda_2'+a_2+b_1},
v_{\widetilde{\lambda'}+(b_1,-a_2),\,\widetilde{q'}+b_1}
\bigr\rangle dk.
\end{align*}
Applying Schur's orthogonality relation (\ref{eqn:C32_schur}) 
to the right hand side of this equality, we obtain the assertion. 
\end{proof}

\begin{proof}[Proof of Proposition \ref{prop:C22_zeta_L_coincide}]
In order to simplify the notation, we set 
\begin{align*}
&\varphi =\varphi_{\chi ,\lambda }^{[\varepsilon ]},&
&\varphi'=\varphi_{\chi',\lambda'}^{[-\varepsilon ]},&
&f=f_{(a_1,a_2,b_1,b_2)}.& 
\end{align*}
For $s\in \bC$ with sufficiently large real part, we have 
\begin{align*}
&Z(s,\varphi (v_{\lambda,0}),\varphi'(v_{\lambda',d_1'-d_2'}),f)\\
&=\int_0^\infty \!\!\int_0^\infty \biggl(\int_{K_2}
\varphi (\tau^{(2)}_\lambda (k)v_{\lambda,0})(y)
\varphi'(\tau^{(2)}_{\lambda'} (k)v_{\lambda',d_1'-d_2'})(y)\\
&\phantom{=.}
\times f((0,1)yk)\,dk\biggr)y_1^{2s}y_2^{4s}
\frac{2dy_1}{y_1^3}\frac{2dy_2}{y_2}
\end{align*}
with $y=\diag (y_1y_2,y_2)\in A_2$. 
Since  
\begin{align*}
&\tau^{(2)}_{\lambda}(k)v_{\lambda,0}=\sum_{q=0}^{\lambda_1-\lambda_2}
(-1)^{\lambda_1-q}\binom{\lambda_1-\lambda_2}{q}
\bigl\langle 
\tau^{(2)}_{\lambda}(k)v_{\lambda,0},
v_{\widetilde{\lambda},\tilde{q}}\bigr\rangle v_{\lambda ,q},\\
&\tau^{(2)}_{\lambda'}(k)v_{\lambda',d_1'-d_2'}=\sum_{q'=0}^{d_1'-d_2'}
(-1)^{d_1'-q'}\binom{d_1'-d_2'}{q'}
\bigl\langle 
\tau^{(2)}_{\lambda'}(k)v_{\lambda',d_1'-d_2'},
v_{\widetilde{\lambda'},\tilde{q'}}\bigr\rangle v_{\lambda',q'}
\end{align*}
with $\tilde{q}=\lambda_1-\lambda_2-q$ and 
$\tilde{q'}=d_1'-d_2'-q'$, we have 
\begin{align*}
&Z(s,\varphi (v_{\lambda,0}),\varphi'(v_{\lambda',d_1'-d_2'}),f)\\
&=\sum_{q=0}^{\lambda_1-\lambda_2}\sum_{q'=0}^{d_1'-d_2'}
(-1)^{\lambda_1+d_1'-q-q'}\binom{\lambda_1-\lambda_2}{q}\binom{d_1'-d_2'}{q'}
\\
&\phantom{=}\times 
\int_{0}^\infty \int_{0}^\infty 
\biggl(\int_{K_2}
\bigl\langle 
\tau^{(2)}_{\lambda}(k)v_{\lambda,0},
v_{\widetilde{\lambda},\tilde{q}}\bigr\rangle 
\bigl\langle 
\tau^{(2)}_{\lambda'}(k)v_{\lambda',d_1'-d_2'},
v_{\widetilde{\lambda'},\tilde{q'}}\bigr\rangle \\
&\phantom{=}\times f((0,1)yk)\,dk\biggr)
\varphi (v_{q}^{\lambda})(y)\varphi'(v_{\lambda',q'})(y)y_1^{2s}y_2^{4s}
\frac{2dy_1}{y_1^3}\frac{2dy_2}{y_2}.
\end{align*}
Applying Lemma \ref{lem:C22_schur} and 
integrating with respect to $y_2$:  
\begin{align*}
&\int_{0}^\infty 
\exp (-2\pi y_2^2)
y_2^{4s+2\nu_1+2\nu_2+2\nu_1'+2\nu_2'+a_1+a_2+b_1+b_2}\frac{4dy_2}{y_2}\\
&=\Gamma_{\bC}
\bigl(2s+\nu_1+\nu_2+\nu_1'+\nu_2'+\tfrac{a_1+a_2+b_1+b_2}{2}\bigr),
\end{align*}
we find that 
\begin{align*}
&Z(s,\varphi (v_{\lambda,0}),\varphi'(v_{\lambda',d_1'-d_2'}),f)
=\Gamma_{\bC}
\bigl(2s+\nu_1+\nu_2+\nu_1'+\nu_2'+\tfrac{a_1+a_2+b_1+b_2}{2}\bigr)\\
&\hspace{2cm}\times 
\sum_{q=b_1}^{\lambda_1+d_1'-b_2}
\frac{(-1)^{\lambda_1+d_2'-q}(\lambda_1-\lambda_2)!(d_1'-d_2')!}
{(\lambda_1+d_1'+a_1+a_2+1)!(q-b_1)!(\lambda_1+d_1'-b_2-q)!}\\
&\hspace{2cm}\times 
\int_{0}^\infty 
\varphi (v_{\lambda,q})(y')
\varphi'(v_{\lambda',\lambda_1+d_1'-q})(y')y_1^{2s}\frac{dy_1}{y_1^3}.
\end{align*}
with $y'=\diag (y_1,1)\in A_2$.

Let us consider the case of $l_0=0$, that is, 
$\lambda =(d_1,d_2)$. 
By Lemmas \ref{lem:F32_Mellin}, \ref{lem:F32_Barnes_1st} 
and the formula (\ref{eqn:C32_Wh_GL2_min}), we have 
\begin{align*}
&Z(s,\varphi (v_{\lambda,0}),\varphi'(v_{\lambda',d_1'-d_2'}),f)\\
&=
\frac{(-1)^{d_2'}(d_1-d_2)!(d_1'-d_2')!}
{(d_1+d_1'+a_1+a_2+1)!(d_1+d_1'-b_1-b_2)!}\\
&\phantom{=}
\times \Gamma_{\bC}\bigl(s+\nu_1+\nu_1'+\tfrac{d_1+d_1'}{2}\bigr)
\Gamma_{\bC}\bigl(s+\nu_2+\nu_2'+\tfrac{-d_2-d_2'}{2}\bigr)\\
&\phantom{=}\times 
\frac{\Gamma_{\bC}\bigl(2s+\nu_1+\nu_2+\nu_1'+\nu_2'
+\tfrac{a_1+a_2+b_1+b_2}{2}\bigr)}
{\Gamma_{\bC}\bigl(2s+\nu_1+\nu_2+\nu_1'+\nu_2'
+\tfrac{d_1-d_2+d_1'-d_2'}{2}\bigr)}
\sum_{q=b_1}^{d_1+d_1'-b_2}
\binom{d_1+d_1'-b_1-b_2}{q-b_1}\\
&\phantom{=}\times 
\Gamma_{\bC}
\bigl(s+\nu_1+\nu_2'+\tfrac{-d_1-d_2'}{2}+q\bigr)
\Gamma_{\bC}
\bigl(s+\nu_2+\nu_1'+\tfrac{2d_1-d_2+d_1'}{2}-q\bigr).
\end{align*}
Replacing $q\to j+b_1$  
and applying Lemma \ref{lem:F32_gauss_sum}, 
we obtain the assertion in this case.  

Let us consider the case of $l_0=-d_1-d_1'$. 
In this case, we note that 
\begin{align*}
&a_1=-\lambda_1-d_2',&
&a_2=-\lambda_2-d_1',&
&b_1=b_2=0,&
&\lambda_1=-d_1'.
\end{align*}
Hence, we have 
\begin{align*}
\begin{split}
&Z(s,\varphi (v_{\lambda,0}),\varphi'(v_{\lambda',d_1'-d_2'}),f)
=\Gamma_{\bC}
\bigl(2s+\nu_1+\nu_2+\nu_1'+\nu_2'+\tfrac{-\lambda_2-d_2'}{2}\bigr)\\
&\hspace{10mm}\times (-1)^{\lambda_1+d_2'}
\frac{(\lambda_1-\lambda_2)!(d_1'-d_2')!}
{(-\lambda_2-d_2'+1)!}\int_{0}^\infty 
\varphi (v_{\lambda,0})(y')
\varphi'(v_{\lambda',0})(y')y_1^{2s}
\frac{dy_1}{y_1^3}.
\end{split}
\end{align*}
By Lemmas \ref{lem:F32_Mellin}, \ref{lem:F32_Barnes_1st} 
and the formula (\ref{eqn:C32_Wh_GL2_1st_simple}), 
we obtain the assertion in this case.

Let us consider the case of $l_0=d_2+d_2'$. 
In this case, we note that 
\begin{align*}
&a_1=a_2=0,&
&b_1=\lambda_1+d_2',&
&b_2=\lambda_2+d_1',&
&\lambda_2=-d_2'.
\end{align*}
Hence, we have 
\begin{align*}
&Z(s,\varphi (v_{\lambda,0}),\varphi'(v_{\lambda',d_1'-d_2'}),f)
=\Gamma_{\bC}
\bigl(2s+\nu_1+\nu_2+\nu_1'+\nu_2'+\tfrac{\lambda_1+d_1'}{2}\bigr)\\
&\hspace{10mm}\times \frac{(\lambda_1-\lambda_2)!(d_1'-d_2')!}
{(\lambda_1+d_1'+1)!}
\int_{0}^\infty 
\varphi (v_{\lambda,\lambda_1-\lambda_2})(y')
\varphi'(v_{\lambda',d_1'-d_2'})(y')y_1^{2s}\frac{dy_1}{y_1^3}.
\end{align*}
By Lemmas \ref{lem:F32_Mellin}, \ref{lem:F32_Barnes_1st} 
and the formula (\ref{eqn:C32_Wh_GL2_2nd_simple}), 
we obtain the assertion in this case. 
\end{proof}

\backmatter
\def\cprime{$'$}

\printindex

\end{document}